



\documentclass{osuthesis}

\usepackage{amsfonts}
\usepackage{amsmath,amscd,amssymb,amsthm}
\usepackage{mathrsfs}
\usepackage{pstricks}
\usepackage{pst-plot}
\usepackage[cmtip,arrow]{xy}
\usepackage{pb-diagram,pb-xy}
\usepackage{eucal}
\usepackage[left=1.1875in,top=1in,text={6.5in,9in}]{geometry}
\usepackage{syntonly}
\usepackage{url}
\usepackage{verbatim}


\newtheorem{theorem}{Theorem \rm}
\theoremstyle{plain}
\newtheorem{lemma}[theorem]{Lemma}
\newtheorem{prop}[theorem]{Proposition}
\newtheorem{cor}[theorem]{Corollary}
\newtheorem{conj}[theorem]{Conjecture}
\theoremstyle{definition}
\newtheorem{definition}[theorem]{Definition}
\theoremstyle{remark}

\newtheorem{rmk}[theorem]{Remark}


\newcommand{\Z}{\mathbb{Z}}
\newcommand{\ltimescirc}{\textrm{\textcircled{$\ltimes$}}}
\newcommand{\ds}{\displaystyle}
\DeclareMathOperator*{\colim}{colim}
\DeclareMathOperator*{\coeq}{coequalizer}


\hyphenation{mo-noid-al}


\title{ON THE SYMMETRIC HOMOLOGY OF ALGEBRAS}
\author{Shaun Van Ault}
\presentdegrees{B.A., B.Mus.}
\unit{Graduate Program in Mathematics}
\gradyear{2008}
\advisor{Professor Zbigniew Fiedorowicz}
\member{Professor Dan Burghelea}
\member{Professor Roy Joshua}

\begin{document}

\frontmatter

\maketitle

\disscopyright

\begin{abstract}
\noindent
  The theory of symmetric homology, in which the symmetric groups
  $\Sigma_k^\mathrm{op}$, for $k \geq 0$, play the role that the cyclic
  groups do in cyclic homology, begins with the definition of the category
  $\Delta S$, containing the simplicial category $\Delta$ as subcategory.
  Symmetric homology of a unital algebra, $A$, over a commutative ground ring, $k$,
  is defined using derived functors and the symmetric bar construction of
  Fiedorowicz.  If $A = k[G]$ is a group ring, then $HS_*(k[G])$ is related to stable
  homotopy theory.
  Two chain complexes that compute $HS_*(A)$ are constructed, both making
  use of a symmetric monoidal category $\Delta S_+$ containing $\Delta S$, which
  also permits homology operations to be defined on $HS_*(A)$.  
  Two spectral sequences are found that aid in computing symmetric homology.
  In the second spectral sequence, the complex $Sym_*^{(p)}$ is constructed.  This
  complex turns out to be isomorphic to the suspension of the cycle-free chessboard
  complex, $\Omega_{p+1}$, of Vre\'{c}ica and \v{Z}ivaljevi\'{c}.  Recent results
  on the connectivity of $\Omega_n$ imply finite-dimensionality of the symmetric
  homology groups of finite-dimensional algebras.
  Finally, an explicit
  partial resolution is presented, permitting the calculation of $HS_0(A)$ and
  $HS_1(A)$ for a finite-dimensional algebra $A$.
\end{abstract}

\newpage

\dedication{
  {\it To my Mother, Crystal, whose steadfast encouragement spurred me to 
    begin this journey.}
  
  {\it To my Wife, Megan, whose love and support served as fuel for the journey.}
  
  {\it To my son, Joshua Leighton, and my daughter, Holley Anne, who will
   dream their own dreams and embark on their own journeys someday.}
}

\begin{acknowl}
  \medskip
  \noindent

  First, I would like to express my gratitude to my advisor, Zbigniew Fiedorowicz;
  without his hands-on guidance this dissertation would not be possible.
  Zig's extensive knowledge of the field and its literature, his incredible insight 
  into the problems, his extreme patience with me as I took the time to convince
  myself of the validity of his suggestions, and his sacrifice of time and energy
  proofing my work are qualities that I have come to admire greatly.
  
  I would also like to thank Dan Burghelea, who sparked my interest in algebraic
  topology in the summer of 2003, as he turned my interest in mathematical coding
  towards an application involving cyclic homology.  I would be remiss not to 
  acknowledge the support and contributions of many more
  faculty of the Ohio State University:  Roy Joshua, Thomas Kerler, Sergei Chmutov,
  Ian Leary, Mike Davis, Atabey Kaygun,
  and Markus Linckelmann (presently at the University of Aberdeen).
  
  A very special thanks goes out to Birgit Richter (Universit\"at Hamburg) whose
  work in the field has served as inspiration.  Moreover, after she graciously took the 
  time to proof an early draft and suggest many improvements, she was (still!) willing 
  to write a letter of recommendation on my behalf. 
  I am also grateful to Sini\v{s}a Vre\'{c}ica (Belgrade University) and 
  Rade \v{Z}ivaljevi\'{c} 
  (Math. Inst. SANU, Belgrade), whose results about cycle-free chessboard
  complexes contributed substantially to my research; Robert Lewis (Fordham
  University), whose program \verb+Fermat+ and helpful suggestions aided in computations;
  Fernando Muro (Universitat de Barcelona), for his answer to my question about 
  $\pi_2^s(B\Gamma)$
  and pointing out the paper by Brown and Loday; 
  Rainer Vogt (Universit\"at Osnabr\"uck) for pointing out the papers by Kapranov
  and Manin, as well as Dold's work on the universal coefficient theorem;
  and to Peter May and others at The 
  University of Chicago, for helpful comments and suggestions.
  
  In conclusion, I must acknowledge that it takes a village to raise a thesis, and I
  am deeply indebted to all those who have lent a hand along the way.
\end{acknowl}

\begin{vita}
  \begin{datelist}
  
  \item[April 30, 1979]{Born - Bellaire, OH, USA}
  \item[May 27, 2002]{B.A. Mathematics (Computer Science minor)}
  \item[May 27, 2002]{B.Mus. Music Composition}
  \item[June 2002 - May 2007] VIGRE Fellow, The Ohio State University
  \item[June 2007 - August 2008] Graduate Teaching Assistant, OSU

  \end{datelist}
    
  \begin{publist}
    \begin{itemize}
 
    \item 
    S. Ault and Z. Fiedorowicz.  \emph{Symmetric homology of algebras}.  Preprint on
    arXiv, arXiv:0708.1575v4 [math.AT] (11-5-07).
    
    \item
    R. Joshua and S. Ault.  \emph{Extension of Stanley's Algorithm for group imbeddings}.  
    Preprint.  Available at:  \url{http://www.math.ohio-state.edu/~ault/Stan9.pdf}
    (5-25-08).
    \end{itemize}
  \end{publist}

  \begin{fieldsstudy} 
    \majorfield{Mathematics}
    \specialization{Algebraic Topology}
    \begin{studieslist}
      \studyitem{Symmetric Homology}{Dr. Zbigniew Fiedorowicz}
      \studyitem{Computational Methods in Algebraic Geometry}{Dr. Roy Joshua}
      \studyitem{Computational Methods in Cyclic Homology}{Dr. Dan Burghelea}
    \end{studieslist}
  \end{fieldsstudy}

\end{vita}

\tableofcontents

\listoffigures

\listoftables 

\begin{tabular}{ll}
  $\#S$ &= The number of elements in the set $S$.\\

  $N\mathscr{C}$ &= The nerve of the category $\mathscr{C}$ (as a simplicial set).\\

  $B\mathscr{C}$ &= The geometric realization of the category $\mathscr{C}$.  (\textit{i.e.},
                    $|N\mathscr{C}|$.)\\
                    
  $E_*G$ &= The standard resolution of the group $G$ (as a simplicial set).\\
  
  $EG$ &= Contractible space on which $G$ acts.\\
  
  $\mathrm{Mor}\mathscr{C}$ &= The class of all morphisms of the category
                               $\mathscr{C}$.\\
                               
  $\mathrm{Mor}_{\mathscr{C}}(X, Y)$ &= The set of all morphisms in
                               $\mathscr{C}$ from $X$ to $Y$.\\

  $\mathrm{Obj}\mathscr{C}$ &= The class of all objects of the category 
                               $\mathscr{C}$.\\
 
  $S_n$ &= Symmetric group on the letters $\{1, 2, \ldots, n\}$. \\
 
  $\Sigma_n$ &= Symmetric group on the letters $\{0, 1, \ldots, n-1\}$.\\
  
  $\Delta$ &= The simplicial category.\\

  $\textbf{Sets}$ &= The category of sets and set maps.\\
  
  $\textbf{SimpSets}$ &= The category of simplicial sets and simplicial set maps.\\
  
  $\textbf{Mon}$ &= The category of monoids and monoid maps.\\
  
  $k$-\textbf{Mod} &= The category of left $k$-modules, for a ring $k$.\\

  $k$-\textbf{Alg} &= The category of $k$-algebras and algebra homomorphisms.\\

  $k$-\textbf{SimpMod} &= The category of simplicial left $k$-modules and $k$-linear chain maps.\\

  $k$-\textbf{Complexes} &= The category of complexes of $k$-modules and chain maps.\\
  
  $\textbf{Cat}$ &= The category of small categories and functors.\\
  
  $IS_\lambda$ &= The identity representation of a subgroup $S_\lambda$ of $S_n$.\\
  
  $AS_\lambda$ &= The alternating (sign) representation of a subgroup
  $S_\lambda$ of $S_n$.\\
  
  $R \uparrow G$ &= Representation of $G$ induced from a representation of
  a subgroup.\\
  
  $A^{\otimes n}$ &= The $n$-fold tensor product of the algebra $A$ over its ground ring.\\
\end{tabular}
\newpage

\mainmatter

\parindent=0pt

\chapter{PRELIMINARIES AND DEFINITIONS}\label{chap.pre_def}

\section{The Category $\Delta S$}\label{sec.deltas}                 %

Denote by $[n]$ the ordered set $\{0, 1, \ldots, n\}$.  The category $\Delta S$ 
has as objects, the sets $[n]$ for $n \geq 0$, and morphisms are pairs $(\phi,
g)$, where $\phi : [n] \to [m]$ is a non-decreasing map of sets (\textit{i.e.}, a
morphism in $\Delta$), and $g \in \Sigma_{n+1}^{\mathrm{op}}$.  The element $g$ 
represents
an automorphism of $[n]$, and as a set map, takes $i \in [n]$ to $g^{-1}(i)$.
Indeed, a morphism $(\phi, g): [n] \to [m]$ of $\Delta S$ may be represented as a 
diagram:
\[
  \begin{diagram}
    \node{[n]}
    \\
    \node{[n]}
    \arrow{n,l}{g}
    \arrow{e,t}{\phi}
    \node{[m]}
  \end{diagram}
\]
Equivalently, a morphism in $\Delta S$ is a morphism in $\Delta$ together with
a total ordering of the domain $[n]$.
Composition of morphisms is achieved as in~\cite{FL}, namely:
\[
  (\phi, g) \circ (\psi, h) := (\phi \cdot g^*(\psi), \psi^*(g) \cdot h),
\]
where $g^*(\psi)$ is the morphism of $\Delta$ defined by sending the first
$\#\psi^{-1}(g(0))$ points of $[n]$ to $0$, the next $\#\psi^{-1}(g(1))$ points
to $1$, etc.  Note, if $\#\psi^{-1}(g(i)) = 0$, then the point $i \in [m]$ is not 
hit. $\psi^*(g)$ is determined as follows:  For each
$i \in [m]$, $(g^*(\psi))^{-1}(g^{-1}(i))$ has the same number of elements as 
$\psi^{-1}(i)$  If these sets are non-empty, take the order-preserving bijection
$(g^*(\psi))^{-1}(g^{-1}(i)) \to \psi^{-1}(i)$.  

It is often helpful to represent morphisms of $\Delta S$ as diagrams of points
and lines, indicating images of set maps.  Using these diagrams, it is easy to 
see how $g^*(\psi)$ and $\psi^*(g)$ are related to $(\psi, g)$ (see 
Figure~\ref{diag.morphism}).

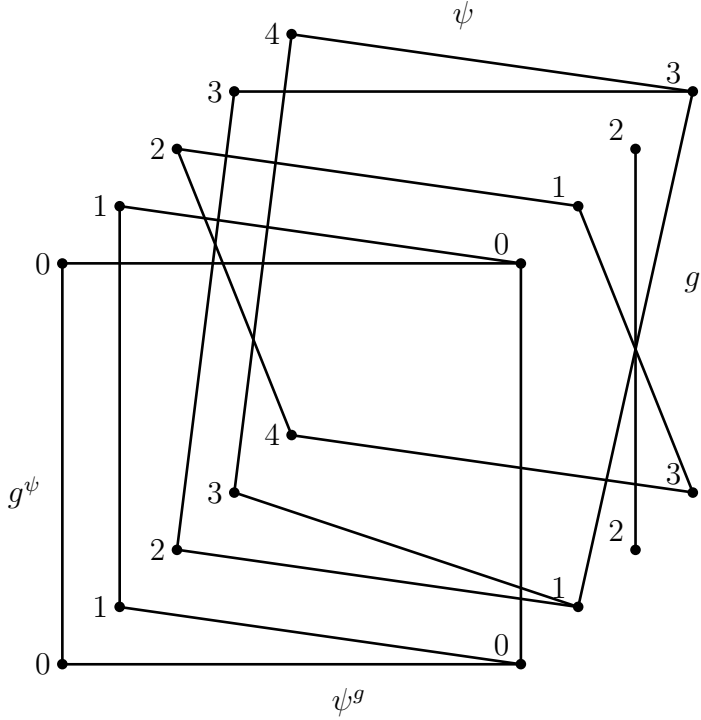
\begin{figure}[ht]
  \psset{unit=1in}
  \begin{pspicture}(4,4)
  
  \psdots[linecolor=black, dotsize=4pt]
  (0.3, 2.4)(0.6, 2.7)(0.9, 3.0)(1.2, 3.3)(1.5, 3.6)
  (2.7, 2.4)(3.0, 2.7)(3.3, 3.0)(3.6, 3.3)
  (2.7, 0.3)(3.0, 0.6)(3.3, 0.9)(3.6, 1.2)
  (0.3, 0.3)(0.6, 0.6)(0.9, 0.9)(1.2, 1.2)(1.5, 1.5)
  
  \psline[linewidth=1pt, linecolor=black](0.3, 2.4)(0.3, 0.3)
  \psline[linewidth=1pt, linecolor=black](0.6, 2.7)(0.6, 0.6)
  \psline[linewidth=1pt, linecolor=black](0.9, 3.0)(1.5, 1.5)
  \psline[linewidth=1pt, linecolor=black](1.2, 3.3)(0.9, 0.9)
  \psline[linewidth=1pt, linecolor=black](1.5, 3.6)(1.2, 1.2)

  \psline[linewidth=1pt, linecolor=black](0.3, 0.3)(2.7, 0.3)
  \psline[linewidth=1pt, linecolor=black](0.6, 0.6)(2.7, 0.3)
  \psline[linewidth=1pt, linecolor=black](0.9, 0.9)(3.0, 0.6)
  \psline[linewidth=1pt, linecolor=black](1.2, 1.2)(3.0, 0.6)
  \psline[linewidth=1pt, linecolor=black](1.5, 1.5)(3.6, 1.2)

  \psline[linewidth=1pt, linecolor=black](0.3, 2.4)(2.7, 2.4)
  \psline[linewidth=1pt, linecolor=black](0.6, 2.7)(2.7, 2.4)
  \psline[linewidth=1pt, linecolor=black](0.9, 3.0)(3.0, 2.7)
  \psline[linewidth=1pt, linecolor=black](1.2, 3.3)(3.6, 3.3)
  \psline[linewidth=1pt, linecolor=black](1.5, 3.6)(3.6, 3.3)

  \psline[linewidth=1pt, linecolor=black](2.7, 2.4)(2.7, 0.3)
  \psline[linewidth=1pt, linecolor=black](3.0, 2.7)(3.6, 1.2)
  \psline[linewidth=1pt, linecolor=black](3.3, 3.0)(3.3, 0.9)
  \psline[linewidth=1pt, linecolor=black](3.6, 3.3)(3.0, 0.6)

  \rput(0.2, 2.4){$0$}
  \rput(0.5, 2.7){$1$}
  \rput(0.8, 3.0){$2$}
  \rput(1.1, 3.3){$3$}
  \rput(1.4, 3.6){$4$}

  \rput(0.2, 0.3){$0$}
  \rput(0.5, 0.6){$1$}
  \rput(0.8, 0.9){$2$}
  \rput(1.1, 1.2){$3$}
  \rput(1.4, 1.5){$4$}
  
  \rput(2.6, 2.5){$0$}
  \rput(2.9, 2.8){$1$}
  \rput(3.2, 3.1){$2$}
  \rput(3.5, 3.4){$3$}
  
  \rput(2.6, 0.4){$0$}
  \rput(2.9, 0.7){$1$}
  \rput(3.2, 1.0){$2$}
  \rput(3.5, 1.3){$3$}
    
  \rput(0.1, 1.2){$g^\psi$}
  \rput(2.4, 3.7){$\psi$}
  \rput(3.6, 2.3){$g$}
  \rput(1.8, 0.1){$\psi^g$}

  \end{pspicture}

  \caption[Morphisms of $\Delta S$]{Morphisms of $\Delta S$: $g \cdot \psi = 
  \big(g^*(\psi), \psi^*(g)\big)  = \big(\psi^g, g^{\psi}\big)$}
  \label{diag.morphism}
\end{figure}

\newpage
\begin{rmk}
  Observe that the properties of $g^*(\phi)$ and $\phi^*(g)$ stated in 
  Prop.~1.6 of~\cite{FL} are 
  formally rather similar to the properties of exponents.  Indeed, if we denote:
  \[
    g^\phi := \phi^*(g), \qquad\qquad \phi^g := g^*(\phi),
  \]
  then Prop. 1.6 becomes:
  \begin{prop}\label{prop.comp}
    For $g, h \in G_n$ and $\phi, \psi \in \mathrm{Mor}{\Delta}$,
    \[
    \begin{array}{ll}
    (1.h)' \quad& g^{\phi \psi} = (g^\phi)^{\psi} \\
    (1.v)' \quad& \phi^{gh} = (\phi^g)^h \\
    (2.h)' \quad& (\phi\psi)^g = \phi^g \psi^{(g^\phi)} \\
    (2.v)' \quad& (gh)^\phi = g^\phi h^{(\phi^g)} \\
    (3.h)' \quad& g^{\mathrm{id_n}} = g, \qquad 1^\phi = 1 \\
    (3.v)' \quad& \phi^1 = \phi, \qquad \mathrm{id_n}^g = \mathrm{id_n} \\
    \end{array}
    \]
  \end{prop}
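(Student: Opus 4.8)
The plan is to derive all six identities directly from the composition formula
\[
  (\phi, g) \circ (\psi, h) = (\phi \cdot g^*(\psi),\ \psi^*(g) \cdot h),
\]
by tracking how the "twisted" morphisms $g^\phi = \phi^*(g)$ and $\phi^g = g^*(\phi)$ are forced by the requirement that $\circ$ be associative with identities $(\mathrm{id}_n, 1)$. The key observation is that the pair $\big(g^*(\phi),\phi^*(g)\big)$ is \emph{characterized} by a pullback-type universal property: given $\phi : [n] \to [m]$ in $\Delta$ and $g \in \Sigma_{n+1}^{\mathrm{op}}$ acting on the source, $\phi^g$ is the unique morphism of $\Delta$ whose fibers over each $i$ have cardinality $\#\phi^{-1}(g(i))$ (listed in the natural order), and $g^\phi$ is the unique permutation of $[n]$ (source of $\phi^g$) restricting to the order-preserving bijection $(\phi^g)^{-1}(g^{-1}(i)) \to \phi^{-1}(i)$ on each fiber. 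Equivalently — and this is the formulation I would actually use — the square with sides $g$, $\phi$, $\phi^g$, $g^\phi$ commutes as a diagram of set maps (reading $g$, $g^\phi$ as the induced set maps $i \mapsto g^{-1}(i)$), and among all such commuting squares it is the "minimal" / order-compatible one. The point-and-line picture in Figure~\ref{diag.morphism} is precisely this commuting square.

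Granting that characterization, each identity becomes a uniqueness argument. For $(1.h)'$ and $(1.v)'$: the composites $(g^\phi)^\psi$ and $g^{\phi\psi}$ both arise from stacking commuting squares, and by uniqueness of the order-preserving fiberwise bijections they must agree; similarly on the vertical side. For $(2.h)'$ and $(2.v)'$, I would simply compute both sides of $(\phi,1)\circ(\psi,1)$ versus the appropriate factorization, or — cleaner — paste two elementary commuting squares side by side and read off that the permutation decorating the outer rectangle factors as $\phi^g \cdot \psi^{(g^\phi)}$; this is just functoriality of the construction "restrict a permutation along a composite of $\Delta$-maps." The normalization identities $(3.h)'$ and $(3.v)'$ are immediate from the definitions: $\mathrm{id}_n^g$ has all fibers of size one so it is $\mathrm{id}_n$, and $1^\phi$, $\phi^1$ are read directly off the formula for $g^*(\psi)$ and $\psi^*(g)$ when one argument is trivial.

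In fact the slickest route, which I would prefer to present, is to \emph{not} reprove anything: observe that identities $(1.h)'$–$(3.v)'$ are a verbatim transcription of Prop.~1.6 of~\cite{FL} under the substitution $g^\phi := \phi^*(g)$, $\phi^g := g^*(\phi)$, so the content is already established there; the proposition is really a change-of-notation lemma asserting that the "exponential" bookkeeping $g^{\phi\psi}=(g^\phi)^\psi$, $(\phi\psi)^g=\phi^g\psi^{(g^\phi)}$, etc., is exactly what Fiedorowicz–Loday's crossed-simplicial-group axioms say. So the proof reduces to: (i) recall the statement of \cite[Prop.~1.6]{FL}; (ii) match each clause to $(1.h)'$ through $(3.v)'$ term by term; (iii) note that the one genuinely asymmetric-looking clause, $(2.h)'$, corresponds to the cocycle-type condition, and verify the index on the second factor is $g^\phi$ and not $g$ by chasing a two-step composition through the formula for $\circ$.

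The main obstacle is purely notational rather than mathematical: one must be scrupulous about the op-convention (an element of $\Sigma_{n+1}^{\mathrm{op}}$ acts as the set map $i \mapsto g^{-1}(i)$, so composition order flips) and about which object a given permutation decorates — $g^\phi$ lives on the source $[n']$ of $\phi^g$, not on $[n]$ — since a single slip there turns $(2.h)'$ into a false statement. I expect the side-by-side pasting argument for $(2.h)'$/$(2.v)'$ to require the most care, because that is where the "shift" in the exponent $\psi^{(g^\phi)}$ originates.
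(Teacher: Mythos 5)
Your ``slickest route''---observing that the six identities are a verbatim transcription of Prop.~1.6 of~\cite{FL} under the notational substitution $g^\phi := \phi^*(g)$, $\phi^g := g^*(\phi)$, and that nothing remains to prove beyond matching clauses---is exactly what the paper does: the proposition sits inside a remark whose entire content is precisely that renaming, and no independent proof is given. Your preliminary sketch of a direct derivation via commuting-square characterizations is an alternative route the paper does not pursue, but since you explicitly defer to the transcription argument, the two match.
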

  In what follows, the exponent notation may be used interchangeably with the standard
  notation.
\end{rmk}

The above construction for $\Delta S$ shows that the family of groups $\{
\Sigma_n\}_{n \geq 0}$ forms a crossed simplicial group in the sense of Def. 1.1
of~\cite{FL}.  The inclusion $\Delta \hookrightarrow \Delta S$ is given by
$\phi \mapsto (\phi, 1)$, where $1$ is the identity element of $\Sigma_{n+1}^
{\mathrm{op}}$, and $[n]$ is the domain of $\phi$.  For each $n$, let $\tau_n$
be the $(n+1)$-cycle $(0, n, n-1, \ldots, 1) \in \Sigma_{n+1}^\mathrm{op}$.  Thus,
the subgroup generated by $\tau_n$ is isomorphic to $\big(\Z / (n+1)\Z\big)^\mathrm{op}
= \Z / (n+1)\Z$.  We may define the category $\Delta C$ as the subcategory
of $\Delta S$ consisting of all objects $[n]$ for $n \geq 0$, together with those
morphisms $(\phi, g)$ of $\Delta S$ for which $g = \tau_n^i$ for some $i$ (cf.~\cite{L}).
In this way, we get a natural chain of inclusions,
\[
  \Delta \hookrightarrow \Delta C \hookrightarrow \Delta S
\]

An equivalent characterization of $\Delta S$ comes from Pirashvili (cf.~\cite{P}),
as the category $\mathcal{F}(\mathrm{as})$ of `non-commutative' sets.  The objects are 
sets $\underline{n} := \{1, 2, \ldots, n\}$ for $n \geq 0$.  By convention, 
$\underline{0}$ is the empty set.  A morphism in
$\mathrm{Mor}_{\mathcal{F}(\mathrm{as})}
(\underline{n}, \underline{m})$ consists of a map (of sets) $f : \underline{n} 
\to \underline{m}$ together with a total ordering, $\Pi_j$, on $f^{-1}(j)$ 
for all $j \in \underline{m}$.  In such a case, denote by $\Pi$ the partial
order generated by all $\Pi_j$.
If $(f, \Pi) : \underline{n} \to \underline{m}$ and
$(g, \Psi) : \underline{m} \to \underline{p}$, their composition will be
$(gf, \Phi)$, where $\Phi_j$ is the total ordering on $(gf)^{-1}(j)$ (for all $j \in 
\underline{p}$) induced by $\Pi$ and $\Psi$.  Explicitly, for each pair $i_1, i_2 \in
(gf)^{-1}(j)$, we have 
\begin{center}
  $i_1 < i_2$ under $\Phi$ if and only if [$f(i_1) < f(i_2)$ under
  $\Psi$] or [$f(i_1) = f(i_2)$ and $i_1 < i_2$ under $\Pi$].
\end{center}
 
For example, let $f : \underline{9} \to \underline{5}$ be given by:
\[
  f \;:\; \left\{
    \begin{array}{ll}
      1, 5, 8 &\mapsto 1\\
      2, 7 &\mapsto 2\\
      3, 9 &\mapsto 3\\
      4 &\mapsto 4\\
      6 &\mapsto 5
    \end{array}
    \right.
\]
Let the preordering $\Pi$ on pre-image sets be defined by: $8 < 1 < 5$, $2 < 7$, and
$9 < 3$.

Let $g : \underline{5} \to \underline{3}$ be given by:
\[
  g \;:\; \left\{
    \begin{array}{ll}
      1, 2 &\mapsto 2\\
      3, 4 &\mapsto 1\\
      5 &\mapsto 3
    \end{array}
    \right.
\]
Let the preordering $\Psi$ on pre-image sets be defined by: $1 < 2$ and
$3 < 4$.

Then, the composition $(g, \Psi)(f, \Pi) = (gf, \Phi)$ will consist of the map $gf$:
\[
  gf \;:\; \left\{
    \begin{array}{ll}
      1,2,5,7,8 &\mapsto 2\\
      3,4,9 &\mapsto 1\\
      6 &\mapsto 3
    \end{array}
    \right.
\]
and the corresponding preordering $\Phi$, defined by: $9 < 3 < 4$ and $8 < 1 < 5 < 2 < 7$.

There is an obvious inclusion of categories,
$\Delta S \hookrightarrow \mathcal{F}(\mathrm{as})$, taking $[n]$ to $\underline{n+1}$,
but there is no object of $\Delta S$ that maps to $\underline{0}$.  It will be
useful to define $\Delta S_+ \supset \Delta S$ which is isomorphic to
$\mathcal{F}(\mathrm{as})$:
\begin{definition}
  $\Delta S_+$ is the category consisting of all objects and morphisms of
  $\Delta S$, with the additional object $[-1]$, representing the empty set,
  and a unique morphism $\iota_n : [-1] \to [n]$ for each $n \geq -1$.
\end{definition}

\begin{rmk}
  Pirashvili's construction is a special case of a more general construction
  due to May and Thomason \cite{MT}. This construction associates to any topological operad
  $\{\mathcal{C}(n)\}_{n \geq 0}$ a topological category $\widehat{\mathcal{C}}$ together 
  with a functor $\widehat{\mathcal{C}} \to \mathcal{F}$, where $\mathcal{F}$ is the
  category of finite sets, such that the inverse image of any function
  $f: \underline{m} \to \underline{n}$ is the space 
  \[
    \prod_{i=1}^n \mathcal{C}(\# f^{-1}(i) ).
  \] 
  Composition in $\widehat{\mathcal{C}}$
  is defined using the composition of the operad. May and Thomason refer to  
  $\widehat{\mathcal{C}}$
  as the {\it category of operators} associated to $\mathcal{C}$. They were interested 
  in the case of an $E_\infty$ operad, but their construction evidently works for any 
  operad. The category of operators associated to the discrete $A_\infty$ operad 
  $\mathcal{A}ss$, which parametrizes monoid structures,  is precisely Pirashvili's 
  construction of $\mathcal{F}(as)$, {\it i.e.} $\Delta S_+$.  (See 
  Chapter~\ref{chap.prod} for more on operads.)
\end{rmk}

One very useful advantage in enlarging our category to $\Delta S$ to $\Delta S_+$
is the added structure inherent in $\Delta S_+$.

\begin{prop}\label{prop.deltaSpermutative}
  $\Delta S_+$ is a permutative category.
\end{prop}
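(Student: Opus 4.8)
The plan is to exhibit explicitly the permutative (i.e. symmetric strict monoidal) structure on $\Delta S_+ \cong \mathcal{F}(\mathrm{as})$, and then verify the axioms. First I would define the monoidal product on objects by $[n] \otimes [m] := [n+m+1]$, which under the identification with $\mathcal{F}(\mathrm{as})$ corresponds to the disjoint union $\underline{n+1} \sqcup \underline{m+1} = \underline{(n+1)+(m+1)}$, with $[-1]$ (the empty set) as the strict unit object. On morphisms, given $(f,\Pi):\underline{p}\to\underline{q}$ and $(f',\Pi'):\underline{p'}\to\underline{q'}$, I would set $(f,\Pi)\otimes(f',\Pi') := (f\sqcup f', \Pi\sqcup\Pi')$, where $f\sqcup f'$ sends the first block $\underline{p}$ via $f$ into the first block $\underline{q}$ and the second block via $f'$ into the second block, and the ordering on each preimage set is inherited from $\Pi$ or $\Pi'$ accordingly (preimages never mix the two blocks, so this is well-defined). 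Working in the $\mathcal{F}(\mathrm{as})$ picture makes it transparent that this is functorial: composition of morphisms in $\mathcal{F}(\mathrm{as})$ is computed blockwise on a disjoint union, so $\otimes$ respects composition and identities.

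Next I would check that $\otimes$ is strictly associative and strictly unital. On objects this is the equality $(n+m+1)+p+1 = n+(m+p+1)+1$ and $[-1]\otimes[n]=[n]=[n]\otimes[-1]$, and on morphisms it is the evident associativity of disjoint union of set maps together with the fact that the induced orderings are assembled associatively. Since all of these are genuine equalities (not just coherent isomorphisms), the associativity and unit constraints are identities, which is exactly what "permutative" requires.

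Then comes the symmetry. For objects $[n],[m]$ I would define the interchange $c_{[n],[m]}: [n+m+1]\to[m+n+1]$ to be the morphism $(\mathrm{id},\sigma)$ in $\Delta S_+$ whose permutation part $\sigma$ is the block transposition that swaps the first $n+1$ letters past the last $m+1$ letters (an order-preserving bijection $\underline{n+1}\sqcup\underline{m+1}\to\underline{m+1}\sqcup\underline{n+1}$ in $\mathcal{F}(\mathrm{as})$, which is an isomorphism). One checks $c_{[m],[n]}\circ c_{[n],[m]} = \mathrm{id}$ directly, naturality of $c$ in both variables follows because conjugating a blockwise map by a block swap just exchanges the two blocks, and strict unitality $c_{[-1],[n]} = \mathrm{id}_{[n]}$ is immediate since one block is empty.

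The only real work is verifying the coherence hexagon for $c$ — that $c_{[n]\otimes[m],[p]}$ factors as $(\mathrm{id}\otimes c_{[m],[p]})\circ(c_{[n],[m]}\otimes\mathrm{id})$ after accounting for (strict) associativity. I expect this to be the main obstacle, though not a deep one: it reduces to the identity in the symmetric groups $\Sigma_{n+m+p+1}$ saying that moving a block of size $n+1$ past a block of size $m+p+1$ equals first moving it past the size-$(m+1)$ block and then past the size-$(p+1)$ block, i.e. the standard fact that the "block permutations" assemble into a symmetric monoidal structure on the skeleton of finite sets. Since the associativity isomorphisms are identities here, the hexagon collapses to this single permutation identity, which can be checked by tracking where each letter goes. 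Assembling these verifications yields that $(\Delta S_+, \otimes, [-1], c)$ is a permutative category.
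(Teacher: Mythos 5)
Your proposal is correct and follows essentially the same route as the paper: the product $[n]\odot[m]=[n+m+1]$ acting blockwise on morphisms, unit $[-1]$, and block transpositions as the symmetry, with the coherence condition reducing to a block-permutation identity. One small slip worth flagging: the displayed factorization $c_{[n]\otimes[m],[p]} = (\mathrm{id}\otimes c_{[m],[p]})\circ(c_{[n],[m]}\otimes\mathrm{id})$ does not type-check, since after $c_{[n],[m]}\otimes\mathrm{id}$ the middle slot carries $[n]$ rather than $[m]$; the condition you describe in words (moving a block of size $n+1$ past one of size $m+p+1$ equals moving it past $m+1$ and then past $p+1$) corresponds to $c_{[n],[m]\odot[p]} = (\mathrm{id}_{[m]}\odot c_{[n],[p]})\circ(c_{[n],[m]}\odot\mathrm{id}_{[p]})$, while the paper verifies the dual $c_{[n]\odot[m],[p]} = (c_{[n],[p]}\odot\mathrm{id}_{[m]})\circ(\mathrm{id}_{[n]}\odot c_{[m],[p]})$ — either one suffices given the involutivity $c^2=\mathrm{id}$.
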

\begin{proof}
  Recall from Def.~4.1 of~\cite{M3} that a \textit{permutative category} is a 
  category $\mathscr{C}$ with the following additional structure:
  
  An associative bifunctor $\odot : \mathscr{C} \times \mathscr{C} \to \mathscr{C}$,
  
  A \textit{unit} object $e \in \mathrm{Obj}_\mathscr{C}$, which acts as two-sided identity
  for $\odot$.

  A natural transformation \mbox{$\gamma : \odot \to \odot T$}, where 
  \mbox{$T : \mathscr{C} \times \mathscr{C} \to \mathscr{C}$} is the transposition functor 
  \mbox{$(A,B) \mapsto (B,A)$}, such that 
  
  $\gamma^2 = \mathrm{id}$,
  
  $\gamma_{(A,e)} = \gamma_{(e, A)} = \mathrm{Id}_A$, for all objects $A$,
  
  and the following diagram is commutative for all objects $A, B, C$:
  \[
    \begin{diagram}
    \node{A \odot B \odot C}
    \arrow[2]{e,t}{\gamma_{A\odot B, C}}
    \arrow{se,l}{\mathrm{Id}\odot \gamma_{B,C}}
    \node[2]{C \odot A \odot B}\\
    \node[2]{A \odot C \odot B}
    \arrow{ne,r}{\gamma_{A,C} \odot \mathrm{Id}}
    \end{diagram}
  \]
  
  For $\Delta S_+$, let $\odot$ be the functor defined on objects by:
  \[
    [n] \odot [m] := [n+m+1], \qquad \textrm{(disjoint union of sets)},
  \]
  and for morphisms $(\phi, g) : [n] \to [n']$, $(\psi, h) :[m] \to [m']$,
  \[
    (\phi,g) \odot (\psi,h) = (\eta, k) : [n+m+1] \to [n'+m'+1],
  \]
  where
  \[
    \eta \;:\; i \mapsto
    \left\{
    \begin{array}{lll}
      \phi(i), &\qquad& \textrm{for $i=0,\ldots n$}\\
      \psi(i-n-1) + (n'+1), &\qquad& \textrm{for $i=n+1, \ldots n+m$}.
    \end{array}
    \right.
  \]
  and
  \[
    k \; : \; i \mapsto
    \left\{
    \begin{array}{lll}
     g^{-1}(i), &\qquad& \textrm{for $i=0,\ldots n$}\\
      h^{-1}(i-n-1) + (n'+1), &\qquad& \textrm{for $i=n+1, \ldots n+m$}.
    \end{array}
    \right.
  \]
  In short, $(\phi,g) \odot (\psi, h)$ is just the morphism $(\phi, g)$ acting on
  the first $n+1$ points of $[n+m+1]$, and $(\psi, h)$ acting on the remaining points.
  
  The unit object will be $[-1] = \emptyset$. $\odot$ is clearly associative, and
  $[-1]$ acts as two-sided identity.
  
  Finally, define $\gamma_{n,m} : [n] \odot [m] \to [m] \odot [n]$ to be the identity on
  objects, and on morphisms to be precomposition with the block transposition 
  $\beta_{n,m} : [n+m+1] \to [n+m+1]$.  That is, $\beta(i) = i + m+1$, if $i \leq n$, and
  $\beta(i) = i-n-1$, if $i > n$.  
  
  $\gamma_{m,n} \gamma_{n,m} = \mathrm{id}$, which is true since $\beta_{m,n} \beta_{n,m} = 
  \mathrm{id}$, and $\gamma_{m, -1}$ is precomposition by $\beta_{m, -1}$, which is clearly
  the identity (similarly for $\gamma_{-1, m}$).
  
  For $[n], [m], [p]$, we have the following commutative diagram:
  \[
    \begin{diagram}
    \node{ [n] \odot [m+p+1] }
    \arrow{e,t}{=}
    \arrow{s,r}{\mathrm{id}\odot \gamma}
    \node{ [n+m+1] \odot [p] }
    \arrow{e,t}{ \gamma }
    \node{ [p] \odot [n+m+1] }
    \arrow{s,l}{ = }
    \\
    \node{ [n] \odot [p+m+1] }
    \arrow{e,t}{ = }
    \node{ [n+p+1] \odot [m] }
    \arrow{e,t}{ \gamma \odot \mathrm{id} }
    \node{ [p+n+1] \odot [m] }
    \end{diagram}
  \]
  This diagram commutes because the block transposition $\beta_{n+m,p}$ can be accomplished
  by first transposing the blocks $\{n+1, \ldots, n+m+1\}$ and $\{n+m+2, \ldots n+m+p+2\}$ while
  keeping the block $\{0, \ldots n\}$ fixed, then transposing the blocks $\{0, \ldots, n\}$
  and $\{n+1, \ldots, n+p+1\}$ while keeping the block $\{n+p+2,\ldots,n+p+m+2\}$ fixed.
\end{proof}

For the purposes of computation, a morphism $\alpha : [n] \to [m]$ of $\Delta S$
may be conveniently represented as a tensor product of monomials in the variables $\{x_0,
x_1, \ldots, x_n\}$.  Let $\alpha = (\phi, g)$, with $\phi \in \mathrm{Mor}_
\Delta([n],[m])$ and $g \in \Sigma_{n+1}^\mathrm{op}$.  The tensor representation 
of $\alpha$ will have $m + 1$ tensor factors.  Each $x_i$ will occur exactly once,
in the order $x_{g(0)}, x_{g(1)}, \ldots, x_{g(n)}$.  The $i^{th}$ tensor factor
consists of the product of $\#\phi^{-1}(i-1)$ variables, with the convention that
the empty product will be denoted $1$.  Thus, the $i^{th}$ tensor factor 
records the total ordering of $\phi^{-1}(i)$.
As an example, the tensor representation of
the morphism depicted in Fig.~\ref{diag.morphism-tensor} is 
$x_1x_0 \otimes x_3x_4 \otimes 1 \otimes x_2$.  

\begin{figure}[ht]
  \psset{unit=1in}
  \begin{pspicture}(4,4)
  
  \psdots[linecolor=black, dotsize=4pt]
  (0.3, 2.4)(0.6, 2.7)(0.9, 3.0)(1.2, 3.3)(1.5, 3.6)
  (2.7, 0.3)(3.0, 0.6)(3.3, 0.9)(3.6, 1.2)
  (0.3, 0.3)(0.6, 0.6)(0.9, 0.9)(1.2, 1.2)(1.5, 1.5)
  
  \psline[linewidth=1pt, linecolor=black](0.3, 2.4)(0.6, 0.6)
  \psline[linewidth=1pt, linecolor=black](0.6, 2.7)(0.3, 0.3)
  \psline[linewidth=1pt, linecolor=black](0.9, 3.0)(1.5, 1.5)
  \psline[linewidth=1pt, linecolor=black](1.2, 3.3)(0.9, 0.9)
  \psline[linewidth=1pt, linecolor=black](1.5, 3.6)(1.2, 1.2)

  \psline[linewidth=1pt, linecolor=black](0.3, 0.3)(2.7, 0.3)
  \psline[linewidth=1pt, linecolor=black](0.6, 0.6)(2.7, 0.3)
  \psline[linewidth=1pt, linecolor=black](0.9, 0.9)(3.0, 0.6)
  \psline[linewidth=1pt, linecolor=black](1.2, 1.2)(3.0, 0.6)
  \psline[linewidth=1pt, linecolor=black](1.5, 1.5)(3.6, 1.2)

  \rput(0.2, 2.4){$0$}
  \rput(0.5, 2.7){$1$}
  \rput(0.8, 3.0){$2$}
  \rput(1.1, 3.3){$3$}
  \rput(1.4, 3.6){$4$}

  \rput(0.2, 0.3){$0$}
  \rput(0.5, 0.6){$1$}
  \rput(0.8, 0.9){$2$}
  \rput(1.1, 1.2){$3$}
  \rput(1.4, 1.5){$4$}
  
  \rput(2.6, 0.4){$0$}
  \rput(2.9, 0.7){$1$}
  \rput(3.2, 1.0){$2$}
  \rput(3.5, 1.3){$3$}
    
  \rput(2.7, 2.4){$x_1x_0 \otimes x_3x_4 \otimes 1 \otimes x_2$}

  \end{pspicture}

  \caption[Morphisms of $\Delta S$ as Tensors]{Morphisms of $\Delta S$ in tensor
  notation}
  \label{diag.morphism-tensor}
\end{figure}
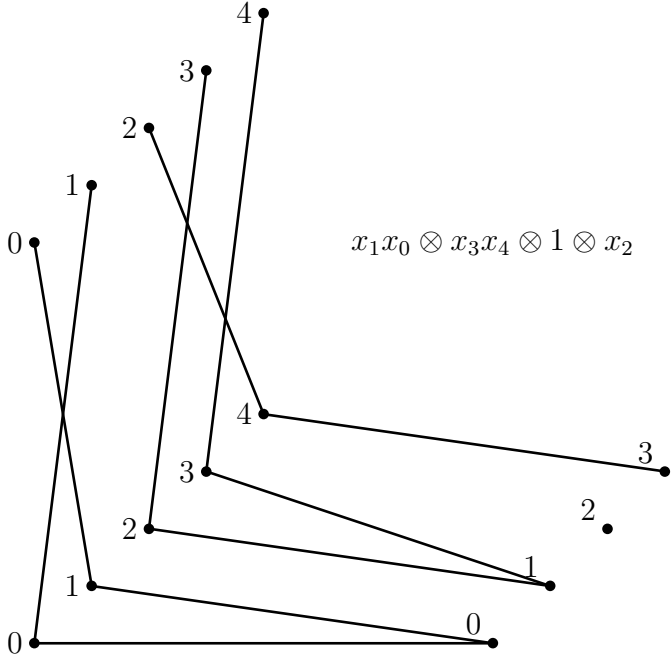

With this notation, the
composition of two morphisms $\alpha = X_0 \otimes X_1 \otimes \ldots \otimes X_m
: [n] \to [m]$ and $\beta = Y_1 \otimes Y_2 \otimes \ldots Y_n : [p] \to [n]$
is given by:
\[
  \alpha \cdot \beta = Z_0 \otimes Z_1 \otimes \ldots \otimes Z_m,
\]
where $Z_i$ is determined by replacing each variable in the monomial $X_i = 
x_{j_1} \ldots x_{j_s}$ in $\alpha$ by the corresponding monomials $Y_{j_k}$ in
$\beta$.  So, $Z_i = Y_{j_1} \ldots Y_{j_s}$.  Thus, for example,
\[
  x_4 x_0 \otimes 1 \otimes x_2 x_3 \otimes x_1 \;\cdot\; 
  x_1 x_6 x_0 \otimes x_7 x_4 \otimes 1 \otimes x_3 \otimes x_2 x_5 \;=\; 
  x_2 x_5 x_1 x_6 x_0 \otimes 1 \otimes x_3 \otimes x_7 x_4.
\]

\section{The Symmetric Bar Construction}\label{sec.symbar}          %

\begin{definition}\label{def.symbar}
  Let $A$ be an associative, unital algebra over a commutative ground ring $k$.  
  Following~\cite{F}, define a (covariant) functor $B_*^{sym}A : \Delta S \to 
  k$-\textbf{Mod} by:
  \[
    B_n^{sym}A := B_*^{sym}A[n] := A^{\otimes n+1}
  \]
  \[  
    B_*^{sym}A(\alpha) : (a_0 \otimes a_1 \otimes \ldots \otimes a_n) \mapsto
       \alpha(a_0, \ldots, a_n),
  \]
  where $\alpha : [n] \to [m]$ is represented in tensor notation, and evaluation
  at $(a_0, \ldots, a_n)$ simply amounts to substituting each $a_i$ for $x_i$
  and multiplying the resulting monomials in $A$.  If the pre-image
  $\alpha^{-1}(i)$ is empty, then the unit of $A$ is inserted.
\end{definition}

\begin{rmk}
  Fiedorowicz~\cite{F} defines the symmetric bar construction functor for morphisms
  $\alpha = (\phi, g)$, where $\phi \in \mathrm{Mor}\Delta([n],[m])$ and $g \in
  \Sigma_{n+1}^{\mathrm{op}}$, via
  \begin{eqnarray*}
    B_*^{sym}A(\phi)(a_0 \otimes a_1 \otimes \ldots \otimes a_n) &=&
    \left( \prod_{a_i \in \phi^{-1}(0)} a_i \right)\otimes \ldots
    \otimes \left( \prod_{a_i \in \phi^{-1}(n)} a_i \right)\\
    B_*^{sym}A(g)(a_0 \otimes a_1 \otimes \ldots \otimes a_n) &=&
    a_{g^{-1}(0)} \otimes a_{g^{-1}(1)} \otimes \ldots \otimes a_{g^{-1}(n)}
  \end{eqnarray*}
  However, in order that this becomes consistent with earlier notation, we should
  require $B_*^{sym}A(g)$ to permute the tensor factors in the inverse sense:
  \[
    B_*^{sym}A(g)(a_0 \otimes a_1 \otimes \ldots \otimes a_n) =
    a_{g(0)} \otimes a_{g(1)} \otimes \ldots \otimes a_{g(n)}.
  \]
\end{rmk}

\begin{prop}\label{prop.symbar-natural}
  The symmetric bar construction $B_*^{sym}A$ is natural in $A$.
\end{prop}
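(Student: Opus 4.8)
The plan is to read ``natural in $A$'' as the statement that $B_*^{sym}(-)$ is a functor from $k$-\textbf{Alg} to the functor category $\mathrm{Fun}(\Delta S, k\text{-}\mathbf{Mod})$; equivalently, that every $k$-algebra homomorphism $f : A \to A'$ induces a natural transformation $B_*^{sym}f : B_*^{sym}A \to B_*^{sym}A'$, and that this assignment respects identities and composition. First I would define, for each $n \geq 0$, the component $(B_*^{sym}f)_{[n]} := f^{\otimes n+1} : A^{\otimes n+1} \to A'^{\otimes n+1}$, which is $k$-linear because $f$ is.

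Next I would verify that these components assemble into a natural transformation, i.e. that for every morphism $\alpha : [n] \to [m]$ of $\Delta S$ the square with top edge $B_*^{sym}A(\alpha)$, bottom edge $B_*^{sym}A'(\alpha)$, and vertical edges $f^{\otimes n+1}$ and $f^{\otimes m+1}$ commutes. Writing $\alpha = (\phi, g)$ and using the factorization of $B_*^{sym}A(\alpha)$ into the permutation of tensor factors prescribed by $g$ followed by the block-multiplication prescribed by $\phi$ (with the unit of the algebra inserted on empty preimages), it suffices to check commutativity separately for $g$ and for $\phi$. For the $g$ part both composites send $a_0 \otimes \cdots \otimes a_n$ to $f(a_{g(0)}) \otimes \cdots \otimes f(a_{g(n)})$, so the square commutes on the nose. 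For the $\phi$ part, the $i$-th tensor factor of the image is $\prod_{a_j \in \phi^{-1}(i)} a_j$ (or $1_A$ if $\phi^{-1}(i) = \emptyset$), and since $f$ is a $k$-algebra homomorphism, $f\big(\prod a_j\big) = \prod f(a_j)$ and $f(1_A) = 1_{A'}$; hence applying $f$ factorwise after $\phi$ agrees with applying $f^{\otimes n+1}$ first and then $\phi$. Phrased in the tensor-monomial notation of Section~\ref{sec.deltas}, both routes simply substitute $f(a_j)$ for $x_j$ in the monomial recording $\alpha$, so they coincide.

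Finally I would check functoriality of $f \mapsto B_*^{sym}f$: the identity $\mathrm{id}_A$ induces $(\mathrm{id}_A)^{\otimes n+1} = \mathrm{id}_{A^{\otimes n+1}}$ in each degree, and a composite $A \to A' \to A''$ of algebra maps $f$, $f'$ induces $(f' f)^{\otimes n+1} = (f')^{\otimes n+1} \circ f^{\otimes n+1}$ in each degree, so $B_*^{sym}(f' f) = B_*^{sym}(f') \circ B_*^{sym}(f)$ as natural transformations.

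I do not anticipate a genuine obstacle here; the only point needing care is the insertion of the unit on empty preimages — which is precisely where unitality of $f$ enters — together with the bookkeeping of the tensor-factor permutation, which is already pinned down by the convention fixed in the remark preceding Proposition~\ref{prop.symbar-natural}.
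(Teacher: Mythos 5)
Your proof is correct and takes the same route as the paper: define $B_n^{sym}f := f^{\otimes(n+1)}$ and verify the naturality square. The paper simply asserts commutativity of the square, while you usefully spell out the verification (splitting $\alpha=(\phi,g)$ into the permutation and multiplication parts, noting where unitality of $f$ is used) and also check functoriality of $f \mapsto B_*^{sym}f$, which the paper leaves implicit.
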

\begin{proof}
  If $f : A \to A'$ is a morphism of
  $k$-algebras (sending $1_A \mapsto 1_{A'}$), then there is a family of induced functors
  $B_n^{sym}f : B_n^{sym}A \to B_n^{sym}A'$ defined by
  \[
    (B_n^{sym}f)( a_0 \otimes a_1 \otimes \ldots \otimes a_n )
    = f(a_0) \otimes f(a_1) \otimes \ldots \otimes f(a_n)
  \]
  It is easily verified that the square below commutes for each $\Delta S$
  morphism $\phi : [n] \to [m]$.
  \[
    \begin{diagram}
      \node{[n]}
      \arrow{s,r}{\phi}      
      \node{A^{\otimes(n+1)}}
      \arrow{s,l}{B_*^{sym}A(\phi)}
      \arrow{e,t}{f^{\otimes(n+1)}}
      \node{B^{\otimes(n+1)}}
      \arrow{s,r}{B_*^{sym}B(\phi)}
      \\
      \node{[m]}      
      \node{A^{\otimes(m+1)}}
      \arrow{e,t}{f^{\otimes(m+1)}}
      \node{B^{\otimes(m+1)}}
    \end{diagram}
  \]    
\end{proof}

Note that $B_*^{sym}A$ can be regarded as a simplicial \mbox{$k$-module} (\textit{i.e.},
a functor $\Delta^{\mathrm{op}} \to k$-\textbf{Mod}) via the chain of 
functors:
\begin{equation}\label{eq.delta-C-S-chain}
  \Delta^{\mathrm{op}} \hookrightarrow \Delta C^{\mathrm{op}} 
  \stackrel{\cong}{\to} \Delta C \hookrightarrow \Delta S.
\end{equation}
Here, the isomorphism $D : \Delta C^{\mathrm{op}} \to \Delta C$ is the standard
duality (see~\cite{L}), which is defined on generators by:
\[
  \left\{
  \begin{array}{llll}
  D(d_i) &=& \sigma_i, &(0 \leq i \leq n-1)\\
  D(d_n) &=& \sigma_0 \cdot \tau^{-1} & \\
  D(s_i) &=& \delta_{i+1}, &(0 \leq i \leq n-1)\\
  D(t) &=& \tau^{-1} &
  \end{array}
  \right.
\]
\begin{rmk}\label{rmk.cyclic}
  Fiedorowicz showed in~\cite{F} that $B_*^{sym}A \cdot D = B_*^{cyc}A$,
  the \textit{cyclic bar construction} (cf.~\cite{L}).  By duality of $\Delta C$,
  it is equivalent to use the functor $B_*^{sym}A$, restricted to morphisms of
  $\Delta C$ in order to do computations of $HC_*(A)$.
\end{rmk}

\section{Definition of Symmetric Homology}\label{sec.symhom}        %

\begin{definition}\label{def.C-modules}
  For a category $\mathscr{C}$, a covariant functor $F : \mathscr{C} \to
  \mbox{\textrm{$k$-\textbf{Mod}}}$ will be called a \mbox{$\mathscr{C}$-module}.  Similarly,
  a contravariant functor $G : \mathscr{C} \to \mbox{\textrm{$k$-\textbf{Mod}}}$ will be
  called a $\mathscr{C}^\mathrm{op}$-module (since $G^\mathrm{op} : \mathscr{C}^\mathrm{op}
  \to \mbox{\textrm{$k$-\textbf{Mod}}}$ is covariant).
\end{definition}

\begin{definition}\label{def.categorical_tensor}
  For a category $\mathscr{C}$, if $N$ is a \mbox{$\mathscr{C}$-module} and 
  $M$ is a \mbox{$\mathscr{C}^{\mathrm{op}}$-module}, define the tensor product 
  (over $\mathscr{C}$) thus:
  \[
    M \otimes_{\mathscr{C}} N := \bigoplus_{X \in \mathrm{Obj}\mathscr{C}}
    M(X) \otimes_k N(X) / \approx,
  \]
  where the equivalence $\approx$ is generated by the following:  For every morphism
  $f \in \mathrm{Mor}_{\mathscr{C}}(X, Y)$, and every $x \in N(X)$ and $y \in M(Y)$, 
  we have $ y \otimes f_*(x) \approx f^*(y) \otimes x$.
\end{definition}

Note, MacLane defines the tensor product as a \textit{coend}:
\[
  M \otimes_\mathscr{C} N := \int^X (MX) \otimes (NX),
\]
where we consider
$(M-)\otimes(N-)$ as a bifunctor $\mathscr{C}^\mathrm{op} \times \mathscr{C} 
\to \mathscr{L}$, for a given cocomplete category $\mathscr{L}$ equipped with
a functor $\otimes : \mathscr{L} \times \mathscr{L} \to \mathscr{L}$ (see~\cite{ML}).
When $\mathscr{L} = $ \mbox{$k$-\textbf{Mod}}, this construction yields the same as that above.

Alternatively, consider $k[\mathrm{Mor}\mathscr{C}]$, the free \mbox{$k$-module} with
basis $\mathrm{Mor}\mathscr{C}$.  We may define a ring structure on
$k[\mathrm{Mor}\mathscr{C}]$ by defining products of basis elements thus:
\[
  f \cdot g :=
  \left\{
  \begin{array}{ll}
    f \circ g, &\textrm{if $f$ is composable with $g$} \\
    0, &\textrm{otherwise}
  \end{array}
  \right.
\]
Note, $k[\mathrm{Mor}\mathscr{C}]$ will in general not have a unit, but only 
\textit{local units}.
Indeed, for any finitely generated $k$-submodule, $M$, with basis $\{v_1, \ldots,
v_t\}$, each $v_i$ is the sum of only finitely many terms $c_{ij} f_j$, with
$f_j \in \mathrm{Mor}\mathscr{C}$.  Let $\{f_1, f_2, \ldots, f_n\}$ be the set 
of those morphisms that occur in any of the $v_i$'s.  Then there is an element
in $k[\mathrm{Mor}\mathscr{C}]$ that acts as a two-sided unit 
for any element of $M$, namely: $\sum \mathrm{id}_{X}$, where the sum extends 
over all
those $X \in \mathrm{Obj}\mathscr{C}$ that appear as domains or codomains of the
$f_i$'s.

Now, the category of \mbox{$\mathscr{C}$-modules} (with natural transformations as
morphisms) is equivalent to the category of left \mbox{$k[\mathrm{Mor}\mathscr{C}]$-modules}.  
The correspondence is as follows:  For a \mbox{$\mathscr{C}$-module} $M$, let $\overline{M}$
be the \mbox{$k$-module} $\bigoplus_{X \in \mathrm{Obj}\mathscr{C}} MX$.  For a morphism
$f : X \to Y$ in $\mathrm{Mor}\mathscr{C}$ and a homogeneous $x \in \overline{M}$
(\text{i.e.}, $x \in MW$ for some $W \in \mathrm{Obj}\mathscr{C}$), put:
\[
  f . x :=
  \left\{
  \begin{array}{ll}
    f_*(x), &\textrm{if $x \in MX$} \\
    0, &\textrm{otherwise}
  \end{array}
  \right.
\]
Extend the action of $f$ to arbitrary $v \in \overline{M}$ by linearity.  This formula
provides a module structure, since if $x \in MX$ and $X \stackrel{g}{\to} Y
\stackrel{f}{\to} Z$ is a chain of morphisms in $\mathscr{C}$, we have
\[
  (f \cdot g). x = (fg).x= (fg)_*(x) = f_*\big(g_*(x)\big) = f.\big(g_*(x)\big)
  = f.(g.x).
\]

Similarly, the category of \mbox{$\mathscr{C}^\mathrm{op}$-modules} is equivalent to
the category of right $k[\mathrm{Mor}\mathscr{C}]$-modules, with action
$y \cdot f = f^*(y)$ for any $y \in Y$ and $f \in 
\mathrm{Mor}_\mathscr{C}(X, Y)$.

Under this equivalence, the tensor product $M \otimes_{\mathscr{C}} N$ construction
is simply the standard tensor product $\overline{M} 
\otimes_{k[\mathrm{Mor}\mathscr{C}]} \overline{N}$, and thus we can define the modules
\[
  \mathrm{Tor}_n^\mathscr{C}(M, N) := 
  \mathrm{Tor}_n^{k[\mathrm{Mor}\mathscr{C}]}\left(\overline{M}, \overline{N}\right).
\]
Note, it is also possible to define $\mathrm{Tor}_n^\mathscr{C}(M, N)$ directly as
the derived functors of the categorical tensor product construction
(see~\cite{FL}).

The trivial \mbox{$\mathscr{C}$-module}, denoted
by $\underline{k}$, is the functor $\mathscr{C} \to k$-\textbf{Mod} which takes 
every object
to $k$ and every morphism to $\mathrm{id}_k$.  Under the above equivalence,
this becomes the trivial left \mbox{$k[\mathrm{Mor}\mathscr{C}]$-module} 
$\overline{k} = \bigoplus_{X \in \mathrm{Obj}\mathscr{C}}k$.  
We also denote by $\underline{k}$ the trivial \mbox{$\mathscr{C}^\mathrm{op}$-module}.

\begin{definition}
  The \textbf{symmetric homology} of an associative, unital $k$-algebra, $A$,
  is denoted $HS_*(A)$, and is defined as:
  \[
    HS_*(A) := \mathrm{Tor}_*^{\Delta S}\left(\underline{k},B_*^{sym}A\right)
  \]
\end{definition}

\begin{rmk}
  Note, the existing literature based on the work of Connes, Loday and Quillen
  consistently defines the categorical tensor product in the reverse sense:
  $N \otimes_{\mathscr{C}} M$ is the direct sum of copies of $NX \otimes_k MX$
  modded out by the equivalence $x \otimes f^*(y) \approx f_*(x) \otimes y$ for
  all $\mathscr{C}$-morphisms $f : X \to Y$.  In this context, $N$ is covariant,
  while $M$ is contravariant.  I chose to follow the convention of Pirashvili
  and Richter \cite{PR} in writing tensor products as $M \otimes_{\mathscr{C}} N$ so 
  that the equivalence $\xi : $\mbox{$\mathscr{C}$-\textbf{Mod}} $\to$
  \mbox{$k[\mathrm{Mor}\mathscr{C}]$-\textbf{Mod}} passes to tensor products in a
  straightforward way: $\xi( M \otimes_{\mathscr{C}} N ) =
  \xi(M) \otimes_{k[\mathrm{Mor}\mathscr{C}]} \xi(N)$.
\end{rmk}

\begin{rmk}
  Since 
  \[
    \underline{k}\otimes_{\Delta S} M \cong \colim_{\Delta S}M,
  \]
  for any $\Delta S$-module $M$,
  we can alternatively describe symmetric homology as derived functors
  of $\colim$:
  \[
    HS_i(A) = {\colim_{\Delta S}}^{(i)}B_*^{sym}A.
  \]
  (To see the relation with higher colimits, we need to tensor a projective
  resolution of $B_*^{sym}A$ with $\underline{k}$).
\end{rmk}

\section{The Standard Resolution}\label{sec.stdres}                 %

Let $\mathscr{C}$ be a category.  Henceforth, we shall use interchangeably the
notion of \mbox{$\mathscr{C}$-module} and 
\mbox{$k[\mathrm{Mor}\mathscr{C}]$-module}, under
the equivalence mentioned in section~\ref{sec.symhom}.
The rank $1$ free \mbox{$\mathscr{C}$-module} is
$k[\mathrm{Mor}\mathscr{C}]$, with the left action of composition of morphisms.
Now as $k$-module, $k[\mathrm{Mor}\mathscr{C}]$ decomposes into the direct sum,
\[
  k[\mathrm{Mor}\mathscr{C}] = \bigoplus_{X \in \mathrm{Obj}\mathscr{C}}
  \left( \bigoplus_{Y \in \mathrm{Obj}\mathscr{C}} 
  k\left[\mathrm{Mor}_\mathscr{C}(X, Y)\right] \right).
\]
By abuse of notation, denote $\ds{\bigoplus_{Y \in \mathrm{Obj}\mathscr{C}}} 
  k\left[\mathrm{Mor}_\mathscr{C}(X, Y)\right]$ by 
  $k\left[\mathrm{Mor}_\mathscr{C}(X, -)\right]$.
So there is a direct sum decomposition as left \mbox{$\mathscr{C}$-module}),
\[
  k[\mathrm{Mor}\mathscr{C}] = \bigoplus_{X \in \mathrm{Obj}\mathscr{C}}
  k\left[\mathrm{Mor}_\mathscr{C}(X, -)\right].
\]
Thus, the submodules $k\left[\mathrm{Mor}_\mathscr{C}(X, -)\right]$ are projective left
\mbox{$\mathscr{C}$-modules}.

Similarly, $k[\mathrm{Mor}\mathscr{C}]$ is the rank $1$ free right 
\mbox{$\mathscr{C}$-module}, with right action of pre-composition of morphisms, and as such, 
decomposes as:
\[
  k[\mathrm{Mor}\mathscr{C}] = \bigoplus_{Y \in \mathrm{Obj}\mathscr{C}}
  k\left[\mathrm{Mor}_\mathscr{C}(-, Y)\right]
\]
Again, the notation $k\left[\mathrm{Mor}_\mathscr{C}(-, Y)\right]$ is shorthand
for $\ds{\bigoplus_{X \in \mathrm{Obj}\mathscr{C}}}
  k\left[\mathrm{Mor}_\mathscr{C}(X, Y)\right]$.
The submodules $k\left[\mathrm{Mor}_\mathscr{C}(-, Y)\right]$ are projective as right
\mbox{$\mathscr{C}$-modules}.

It will also be important to note that 
$k\left[\mathrm{Mor}_\mathscr{C}(-, Y)\right] \otimes_{\mathscr{C}} N
\cong N(Y)$ as $k$-module via the evaluation map $f \otimes y \mapsto f_*(y)$.
Similarly, $M \otimes_{\mathscr{C}} k\left[\mathrm{Mor}_\mathscr{C}(X, -)\right]
\cong M(X)$.

Following Quillen (\cite{Q}, Section~1), we make the following definition:
\begin{definition} 
  Given a functor $F : \mathscr{B} \to \mathscr{C}$ and a fixed object $Y$ in $\mathscr{C}$,
  let $F / Y$ denote the category whose objects are pairs $(X, \phi)$ where $X$ is an
  object of $\mathscr{B}$ and $\phi : FX \to Y$ is a morphism in $\mathscr{C}$.  A morphism
  from $(X, \phi)$ to $(X', \phi')$ is a morphism $\psi : X \to X'$ such that 
  $\phi' \circ F\psi= \phi$.  When $F$ is the identity functor on $\mathscr{C}$, this
  construction is called the over-category (objects over $Y$), and is denoted by
  $\mathscr{C} / Y$.
  
  Dually, let $Y \setminus F$ denote the category whose objects are pairs $(X, \phi)$ for 
  $X$ in $\mathscr{B}$ and $\phi : Y \to FX$ in $\mathscr{C}$.  Here, a morphism
  from $(X, \phi)$ to $(X', \phi')$ is a morphism $\phi : X \to X'$ such that
  $F\psi \circ \phi = \phi'$.
  When $F$ is the identity functor,
  this is called the under-category (objects under $Y$), and is denoted by $Y \setminus
  \mathscr{C}$.
\end{definition}

Given a functor $F : \mathscr{B} \to \mathscr{C}$ of small categories
define a functor $(F/-) : \mathscr{C} \to \textbf{Cat}$ as follows:
The object $Y$ is sent to the category $F / Y$.  If $\nu : Y \to Y'$ is a morphism
in $\mathscr{C}$, the functor $(F / \nu) : F / Y \to F / Y'$ is defined on objects by
$(X, \phi) \mapsto (X, \nu \phi)$.  For a morphism $\psi : (X, \phi) \to (X', \phi')$
in $F / Y$, $\psi$ may also represent a morphism in $F / Y'$, 
since $\phi' F\psi =\phi \implies \nu\phi' F\psi = \nu\phi$ (as morphisms in $\mathscr{C}$).

Again, we may dualize to obtain a \textit{contravariant} functor $(-\setminus F) :
\mathscr{C} \to \textbf{Cat}$, defined on objects by $Y \mapsto Y\setminus F$, and if
$\nu : Y \to Y'$, then $(\nu \setminus F)$ is a functor $(Y' \setminus F) \to
(Y \setminus F)$ which takes $(X, \phi)$ to $(X, \phi\nu)$.

Thus, $(F / -)$ is a $\mathscr{C}$-category, and $(- \setminus F)$ is a $\mathscr{C}^
\mathrm{op}$-category.  In what follows, we shall assume $F$ is the identity functor
of $\mathscr{C}$.
As noted in~\cite{FL}, the nerve of $(\mathscr{C} / -)$
is a simplicial $\mathscr{C}$-set, and the complex $L_*$, given by:
\[
  L_n := k\left[ N(\mathscr{C}/-)_n \right]
\]
is a resolution by projective \mbox{$\mathscr{C}$-modules} of the trivial 
\mbox{$\mathscr{C}$-
module}, $k$.  Here, the boundary map is $\partial := \sum_i (-1)^i d_i$, where the
$d_i$'s come from the simplicial structure of the nerve of 
$(\mathscr{C}/-)$.  

For the definition of $HS_*(A)$, we
shall be more interested in the dual construction, which yields a resolution by projective 
\mbox{$\mathscr{C}^\mathrm{op}$-modules} of the trivial 
\mbox{$\mathscr{C}^\mathrm{op}$-module},
$k$.  Explicitly, define the complex $\overline{L}_*$ by:
\[
  \overline{L}_n := k\left[ N(- \setminus\mathscr{C})_n \right]
\]
\[
  \overline{L}_n(C) := k\left[ \{C \stackrel{g}{\to} A_0 \stackrel{f_1}{\to} A_1 
  \stackrel{f_2}{\to} \ldots \stackrel{f_n}{\to} A_n \} \right]
\]
For completeness, we shall provide a proof of:
\begin{prop}\label{prop.contractibility_under-category}
  $\overline{L}_*$ is a resolution of $k$ by projective 
  \mbox{$\mathscr{C}^\mathrm{op}$-modules}.
\end{prop}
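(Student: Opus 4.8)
The plan is to establish two things: first, that each $\overline{L}_n$ is a projective $\mathscr{C}^{\mathrm{op}}$-module, and second, that the augmented complex $\cdots \to \overline{L}_1 \to \overline{L}_0 \to \underline{k} \to 0$ is exact. For projectivity, I would observe that the nerve $N(-\setminus\mathscr{C})_n$, as a $\mathscr{C}^{\mathrm{op}}$-set, decomposes as a disjoint union of representable $\mathscr{C}^{\mathrm{op}}$-sets: a chain $C \stackrel{g}{\to} A_0 \stackrel{f_1}{\to} \cdots \stackrel{f_n}{\to} A_n$ is determined by the object $A_0$ (the source of the chain of $\mathscr{C}$-morphisms), the tail $A_0 \stackrel{f_1}{\to}\cdots\stackrel{f_n}{\to} A_n$, and the morphism $g: C \to A_0$ which is the "free" variable acted on contravariantly. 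Hence $\overline{L}_n \cong \bigoplus k[\mathrm{Mor}_{\mathscr{C}}(-, A_0)]$, the sum ranging over all composable $n$-chains with source $A_0$, and each summand is projective by the discussion preceding the proposition. So $\overline{L}_n$ is projective.

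For exactness, the standard move is to produce a contracting homotopy after evaluating at a fixed object $C$. Evaluating, $\overline{L}_n(C) = k[N(C\setminus\mathscr{C})_n]$ is the chain complex computing the homology of the simplicial set $N(C\setminus\mathscr{C})$, i.e. of the classifying space $B(C\setminus\mathscr{C})$. The key point is that $C\setminus\mathscr{C}$ has an initial object, namely $(C, \mathrm{id}_C)$, so its nerve is contractible; concretely, the initial object gives an explicit simplicial contraction — an extra degeneracy $s_{-1}$ sending a chain $A_0 \stackrel{f_1}{\to}\cdots$ to $C \stackrel{\mathrm{id}}{\to} A_0 \stackrel{f_1}{\to}\cdots$ — which, together with the augmentation $\overline{L}_0(C) = k[\mathrm{Obj \ under \ } C] \to k$, yields a contracting homotopy $h$ with $\partial h + h \partial = \mathrm{id}$ in positive degrees and $h\partial = \mathrm{id} - (\text{augmentation section})$ in degree $0$. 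I would write out $h$ on generators and verify the simplicial identities $d_0 s_{-1} = \mathrm{id}$, $d_{i+1}s_{-1} = s_{-1}d_i$, which is a routine check. Since exactness is tested objectwise and the homotopy is natural in $C$ (it is built from identities only), this shows $\overline{L}_* \to \underline{k}$ is a resolution.

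The only mild subtlety — and the place I'd be most careful — is bookkeeping the variance: because we are in the \emph{under}-category $C\setminus\mathscr{C}$ and the functor $(-\setminus\mathscr{C})$ is contravariant in $C$, the "free" morphism $g$ sits on the left of the chain and is acted on by precomposition, so it is the source object $A_0$ (not the target) that indexes the representable summands $k[\mathrm{Mor}_{\mathscr{C}}(-, A_0)]$. Getting this right is what makes both the projectivity decomposition and the direction of the extra degeneracy come out consistently; everything else is the familiar fact that a category with an initial object has contractible nerve, dualized appropriately. No new ideas beyond Quillen's Section 1 and the observations already recorded in the excerpt are needed.
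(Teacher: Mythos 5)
Your proposal is correct and follows essentially the same route as the paper. The paper shows exactness the way you describe — fixing $C$ and prepending $C \stackrel{\mathrm{id}}{\to} C$ to each chain, which is precisely the extra degeneracy coming from the initial object $(C,\mathrm{id}_C)$ of $C\setminus\mathscr{C}$ — and proves projectivity by the same decomposition $\overline{L}_n \cong \bigoplus_{A_0 \to \cdots \to A_n} k[\mathrm{Mor}_{\mathscr{C}}(-,A_0)]$, with the representable indexed by the source $A_0$ of the tail exactly as you observe.

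One small notational slip worth fixing if you write this out: the morphism prepended by $s_{-1}$ from the initial object $(C,\mathrm{id}_C)$ to $(A_0,g)$ is $g$ itself, not the identity; in the paper's chain notation the output is $C \stackrel{\mathrm{id}}{\to} C \stackrel{g}{\to} A_0 \to \cdots$, so the identity is the \emph{first} morphism $C \to C$, while $g$ is still visible as the second. Your formula $C \stackrel{\mathrm{id}}{\to} A_0$ collapses these; it doesn't affect the argument, but the explicit verification of $\partial h + h\partial = \mathrm{id}$ (which the paper carries out) requires keeping the two separate.
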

\begin{proof}
  Fix $C \in \mathrm{Obj}\mathscr{C}$.  Let $\epsilon : \overline{L}_0(C) \to k$ be
  the map defined on generators by 
  \[
    \epsilon( C \to A_0 ) := 1_k.
  \]
  We shall show the complex
  \[
    k \stackrel{\epsilon}{\leftarrow} \overline{L}_0(C) 
    \stackrel{\partial}{\leftarrow} \overline{L}_1(C) \stackrel{\partial}{\leftarrow}
    \ldots
  \]
  is chain homotopic to the $0$ complex.  Explicitly, define
  \[
    h_{-1} : 1 \mapsto (C \stackrel{id}{\to} C)
  \]
  \[
    h_n : (C \to A_0 \to \ldots \to A_n) \mapsto (C \stackrel{id}{\to} C \to A_0 \to
      \ldots A_n), \qquad \textrm{for $i \geq 0$}
  \]
  We have $\epsilon h_{-1} (1) = \epsilon(C \to C) = 1$, so $\epsilon h_{-1} = \mathrm{id}$.
  Next, in degree $0$,
  \[
    (\partial h_0 + h_{-1} \epsilon)( C \to A_0 ) = \partial ( C \to C \to A_0) +
    h_{-1} ( 1 )
  \]
  \[
    = (C \to A_0) - (C \to C) + (C \to C)
  \]
  \[
    = (C \to A_0).
  \]
  Finally, let $n \geq 1$.
  \[
    (\partial h_n + h_{n-1} \partial)(C \to A_0 \to \ldots \to A_n)
  \]
  \[
    = \partial(C \to C \to A_0 \to \ldots \to A_n) +
    h_{n-1} \sum_{i = 0}^{n} (-1)^i(C \to A_0 \to \ldots \widehat{A_i} \ldots \to A_n),
  \]
  where $\widehat{A_i}$ means to omit the object $A_i$ by composing the map with target $A_i$
  with the map with source $A_i$.
  \[
    = (C \to A_0 \to \ldots \to A_n) + \sum_{i=0}^n (-1)^{i+1} (C \to C \to A_0 \to \ldots
    \widehat{A_i} \ldots \to A_n) \,+
  \]
  \[
    \qquad\sum_{i = 0}^{n} (-1)^i(C \to C \to A_0 \to \ldots \widehat{A_i} \ldots \to A_n)
  \]
  \[
    = (C \to A_0 \to \ldots \to A_n).
  \]
  Hence, $\partial h_n + h_{n-1} \partial = \mathrm{id}$, and so $h$ determines a
  chain homotopy $0 \simeq \mathrm{id}$, proving that the complex is contractible.
  
  Next, we show that the \mbox{$\mathscr{C}^\mathrm{op}$-module} $\overline{L}_n$ is projective.
  Indeed,
  \[
    \overline{L}_n = \bigoplus_{C_n} k\left[\mathrm{Mor}_\mathscr{C}( -, A_0)\right],
  \]
  where the direct sum is indexed over the set $C_n$ of all chains 
  \mbox{$A_0 \to A_1 \to \ldots \to A_n$}.  As
  we have seen above, $k\left[\mathrm{Mor}_\mathscr{C}(-,A_0)\right]$ is projective as 
  \mbox{$\mathscr{C}^\mathrm{op}$-module}, therefore $\overline{L}_n$ is projective. 
\end{proof}

Thus, we may compute $HS_*(A)$ as the homology groups of the following complex:
\begin{equation}\label{symhomcomplex}
  0 \longleftarrow 
  \overline{L}_0 \otimes_{\Delta S} B_*^{sym}A \longleftarrow
  \overline{L}_1 \otimes_{\Delta S} B_*^{sym}A \longleftarrow
  \overline{L}_2 \otimes_{\Delta S} B_*^{sym}A \longleftarrow
  \ldots
\end{equation}

\begin{cor}\label{cor.SymHomComplex}
  For an associative, unital $k$-algebra $A$,
  \[
    HS_*(A) = H_*\left( k[N(- \setminus \Delta S)] \otimes_{\Delta S} B_*^{sym}A;\,k  \right)
  \]
\end{cor}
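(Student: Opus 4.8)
This is essentially a bookkeeping consequence of the definition of $HS_*(A)$ together with Proposition~\ref{prop.contractibility_under-category}. The plan is to identify the complex~(\ref{symhomcomplex}) with $\mathrm{Tor}_*^{\Delta S}(\underline{k}, B_*^{sym}A)$ by exhibiting $k[N(-\setminus\Delta S)]$ as a projective resolution of the trivial $\Delta S^{\mathrm{op}}$-module and then invoking balancedness of $\mathrm{Tor}$.

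First I would specialize Proposition~\ref{prop.contractibility_under-category} to $\mathscr{C} = \Delta S$. It produces the complex $\overline{L}_*$ with $\overline{L}_n = k[N(-\setminus\Delta S)_n]$, augmented over $\underline{k}$ by $\epsilon$, and the proposition asserts precisely that each $\overline{L}_n$ is a projective $\Delta S^{\mathrm{op}}$-module and that $\epsilon : \overline{L}_* \to \underline{k}$ is a resolution. Under the equivalence of section~\ref{sec.symhom} between $\Delta S^{\mathrm{op}}$-modules and right $k[\mathrm{Mor}\Delta S]$-modules, this transports to a projective resolution $\overline{\overline{L}}_* \to \overline{k}$ of right modules over the ring-with-local-units $k[\mathrm{Mor}\Delta S]$.

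Next I would apply the definition $\mathrm{Tor}_n^{\Delta S}(M,N) = \mathrm{Tor}_n^{k[\mathrm{Mor}\Delta S]}(\overline{M},\overline{N})$. Since $\mathrm{Tor}$ over a ring with local units is balanced (the usual double-complex / acyclic-models argument goes through, as the category of modules over such a ring has enough projectives on both sides), it may be computed by resolving the left-hand variable: taking $\overline{M} = \overline{k}$ with the projective resolution $\overline{\overline{L}}_*$ and tensoring with $\overline{N} = \overline{B_*^{sym}A}$ gives
\[
  HS_*(A) = \mathrm{Tor}_*^{\Delta S}\!\left(\underline{k}, B_*^{sym}A\right)
  = H_*\!\left( \overline{L}_* \otimes_{\Delta S} B_*^{sym}A \right).
\]
Here the right side is, degree by degree, exactly the complex~(\ref{symhomcomplex}), with differential $\partial \otimes \mathrm{id}$ where $\partial = \sum_i (-1)^i d_i$ comes from the nerve; and writing the graded object $\overline{L}_* = k[N(-\setminus\Delta S)]$, the right-hand side is $H_*\big(k[N(-\setminus\Delta S)] \otimes_{\Delta S} B_*^{sym}A\big)$. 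The trailing ``$;\,k$'' in the statement merely records that all homology is taken over the ground ring $k$, which is already implicit in the definition of the categorical tensor product.

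I do not anticipate a genuine obstacle here, since everything of substance was proved in Proposition~\ref{prop.contractibility_under-category}; the only point requiring a little care is the balancedness of $\mathrm{Tor}$ over $k[\mathrm{Mor}\Delta S]$, which lacks a global unit. This is handled by the remarks of section~\ref{sec.symhom}: $k[\mathrm{Mor}\Delta S]$ has local units, the module category has enough projectives, and the standard spectral-sequence-of-a-double-complex comparison of $P_* \otimes N$ with $M \otimes Q_*$ (for projective resolutions $P_*, Q_*$) applies verbatim. Alternatively one may appeal directly to the description of $\mathrm{Tor}_*^{\mathscr{C}}$ as the derived functors of $\otimes_{\mathscr{C}}$ in the first variable, as in~\cite{FL}, which makes the computation of $HS_*(A)$ via the projective resolution $\overline{L}_*$ immediate.
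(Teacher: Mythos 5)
Your proof is correct and follows essentially the same route as the paper: specialize Proposition~\ref{prop.contractibility_under-category} to $\mathscr{C} = \Delta S$ to obtain the projective $\Delta S^{\mathrm{op}}$-resolution $\overline{L}_* = k[N(-\setminus\Delta S)]$ of $\underline{k}$, tensor with $B_*^{sym}A$ over $\Delta S$, and take homology. The paper treats this as immediate from the definition of $\mathrm{Tor}$; your additional remarks about balancedness over the non-unital ring $k[\mathrm{Mor}\Delta S]$ are a reasonable extra precaution but are not strictly needed, since one can simply read the definition as the left derived functor of $- \otimes_{\Delta S} B_*^{sym}A$ in the contravariant variable, which is exactly what resolving $\underline{k}$ by $\overline{L}_*$ computes.
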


\begin{rmk}\label{rmk.HC}
  By remark~\ref{rmk.cyclic}, it is clear that the related complex
  $k[N(- \setminus \Delta C)] \otimes_{\Delta C} B_*^{sym}A$ computes
  $HC_*(A)$.
\end{rmk}

\begin{rmk}\label{rmk.uniqueChain}
  Observe that every element of $\overline{L}_n \otimes_{\Delta S} B^{sym}_*A$
  is equivalent to one in which the first morphism of the $\overline{L}_n$
  factor is an identity:
  \[
    [p]\stackrel{\alpha}{\to}
    [q_0]\stackrel{\beta_1}{\to}[q_1]
    \stackrel{\beta_2}{\to}\ldots\stackrel{\beta_n}{\to}[q_n]
    \,\otimes\, (y_0 \otimes \ldots \otimes y_p)
  \]
  \[
    = \alpha^*\big(
    [q_0]\stackrel{\mathrm{id}_{[q_0]}}{\longrightarrow}[q_0]\stackrel{\beta_1}{\to}[q_1]
    \stackrel{\beta_2}{\to}\ldots\stackrel{\beta_n}{\to}[q_n]\big)
    \,\otimes\, (y_0 \otimes \ldots \otimes y_p)
  \]
  \[
    \approx 
    [q_0]\stackrel{\mathrm{id}_{[q_0]}}{\longrightarrow}[q_0]\stackrel{\beta_1}{\to}[q_1]
    \stackrel{\beta_2}{\to}\ldots\stackrel{\beta_n}{\to}[q_n]
    \,\otimes\,\alpha_*(y_0, \ldots, y_p)
  \]
  Thus, we may consider $\overline{L}_n \otimes_{\Delta S} B_*^{sym}A$ to be the
  \mbox{$k$-module} generated by the elements 
  \begin{equation}\label{chain_elem}
    \left\{ 
    [q_0]\stackrel{\beta_1}{\to}[q_1]
    \stackrel{\beta_2}{\to}\ldots\stackrel{\beta_n}{\to}[q_n]
    \,\otimes\,
    (y_0 \otimes \ldots \otimes y_{q_0})  
    \right\},
  \end{equation}
  where the tensor product is now over $k$. 

  The face maps 
  $d_0, d_1, \ldots, d_n$ are defined on generators by:
  \[
    d_0\left( [q_0]\stackrel{\beta_1}{\to}[q_1]
    \stackrel{\beta_2}{\to}\ldots\stackrel{\beta_n}{\to}[q_n]
    \,\otimes\,(y_0 \otimes \ldots \otimes y_{q_0})\right)
    \,=\,
    [q_1]\stackrel{\beta_2}{\to}  
    \ldots\stackrel{\beta_n}{\to}[q_n]\,\otimes\,
    \beta_1(y_0, \ldots, y_{q_0}),
  \]
  \[
    d_j\left([q_0]\stackrel{\beta_1}{\to}\ldots\stackrel{\beta_n}{\to}[q_n]
    \,\otimes\,(y_0 \otimes \ldots \otimes y_{q_0})
    \right) \,=
  \]
  \[
    [q_0]\stackrel{\beta_1}{\to}
    \ldots \to [q_{j-1}] \stackrel{\beta{j+1}\beta{j}}{\longrightarrow} [q_{j+1}] \to \ldots
    \stackrel{\beta_n}{\to}[q_n]
    \,\otimes\,
    (y_0 \otimes \ldots \otimes y_{q_0}) , \;\; (0 < j < n),
  \]
  \[
    d_n\left( [q_0]\stackrel{\beta_1}{\to}\ldots\stackrel{\beta_n}{\to}[q_n]
    \,\otimes\,(y_0 \otimes \ldots \otimes y_{q_0})\right)
    \,=\,
    [q_0]\stackrel{\beta_1}{\to}
    \ldots\stackrel{\beta_{n-1}}{\to}[q_{n-1}]\,\otimes\,
    (y_0 \otimes \ldots \otimes y_{q_0}).
  \]
\end{rmk}

We now have enough tools to compute $HS_*(k)$.  First, we need to show:

\begin{lemma}\label{lem.DeltaScontractible}
  $N(\Delta S)$ is a contractible complex.
\end{lemma}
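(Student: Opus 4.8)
The plan is to exhibit a terminal object (or initial object) in $\Delta S$, since the nerve of a category with a terminal or initial object is contractible. First I would examine whether $[0]$ serves this role. In $\Delta S$, a morphism $[n] \to [0]$ is a pair $(\phi, g)$ where $\phi : [n] \to [0]$ is the unique order-preserving map (everything goes to $0$) and $g \in \Sigma_{n+1}^{\mathrm{op}}$ is an arbitrary permutation. So $\mathrm{Mor}_{\Delta S}([n], [0]) \cong \Sigma_{n+1}^{\mathrm{op}}$, which has more than one element for $n \geq 1$; hence $[0]$ is \emph{not} terminal in $\Delta S$ in the strict sense. However, it is terminal "up to homotopy": the relevant fact is that $N(\Delta S)$ is contractible provided the under-categories (or the category itself) are suitably connected. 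The cleanest route I would actually take is to use the fact established in the excerpt that $\overline{L}_*$, with $\mathscr{C} = \Delta S$, gives a projective resolution of $\underline{k}$, together with the explicit contracting homotopy $h$ already constructed in the proof of Prop.~\ref{prop.contractibility_under-category}; but that proves contractibility of $N(-\setminus \Delta S)(C)$ for fixed $C$, not of $N(\Delta S)$ itself.

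So instead I would argue directly that $\Delta S$ has an initial object, namely $[0]$ viewed the other way, or more robustly, that $\Delta S$ admits a functor to the trivial category together with a natural transformation making $N(\Delta S)$ contractible. Concretely: for each $n$, there is a distinguished morphism $u_n : [0] \to [n]$ in $\Delta S$, namely $(\delta^0 \cdots, 1)$ — the inclusion of $[0]$ as the last (or first) vertex, with trivial permutation. I would check that the assignment $[n] \mapsto u_n$ need not be natural, which is the obstruction. The better-known fact (see Fiedorowicz–Loday) is that $N(\Delta S)$ is contractible because $\Delta \hookrightarrow \Delta S$ is a homotopy equivalence on nerves and $N(\Delta)$ is contractible (it has initial object $[0]$); alternatively, because $B\Sigma_\infty^+$-type arguments show $N(\Delta S) \simeq *$ directly. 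The concrete plan: exhibit $[0]$ as having the property that the functor $\mathrm{const}_{[0]} : \Delta S \to \Delta S$ is connected to the identity functor by a zig-zag of natural transformations — the morphisms $u_n : [0] \to [n]$ give a natural transformation $\mathrm{const}_{[0]} \Rightarrow \mathrm{id}$ \emph{if} we restrict to the subcategory where things behave, and then one uses that the remaining permutations are absorbed.

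Let me state the approach I would commit to. First I would show the inclusion $\Delta \hookrightarrow \Delta S$ induces a homotopy equivalence $N(\Delta) \to N(\Delta S)$; this can be done by a standard cofinality / Quillen's Theorem A argument, checking that for each $[n] \in \Delta S$ the comma category $\Delta / [n]$ (objects of $\Delta$ mapping into $[n]$ in $\Delta S$) is contractible — and it is, because it has $[n] \xrightarrow{\mathrm{id}} [n]$ among a directed system, or more carefully because any $\Delta S$-morphism $[m] \to [n]$ factors through a canonical $\Delta$-morphism after reordering. Then, since $\Delta$ has initial object $[0]$, $N(\Delta)$ is contractible, hence so is $N(\Delta S)$. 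The main obstacle will be verifying the contractibility of the relevant comma categories cleanly — the permutation components of $\Delta S$-morphisms obstruct naive arguments, so one must set up the cofinality statement so that the ordering data (which is all a $\Delta S$-morphism adds to a $\Delta$-morphism) gets contracted away. I expect the author in fact uses the already-built machinery: apply Corollary~\ref{cor.SymHomComplex} philosophy in reverse, or simply note $N(\Delta S) = B(\Delta S)$ and invoke that a crossed simplicial group's associated category has contractible nerve when the underlying $\Delta$ does. Either way, the heart of the matter is the cofinality of $\Delta$ in $\Delta S$.
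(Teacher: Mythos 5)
Your proposal identifies the right obstruction and then walks away from it. You observe that $[0]$ is not initial or terminal in $\Delta S$, and that there is no natural transformation $\mathrm{const}_{[0]} \Rightarrow \mathrm{id}_{\Delta S}$ directly (the permutation components break naturality). That diagnosis is correct. But the standard fix is not to abandon the zigzag-of-natural-transformations strategy — it is to insert a middle functor, and that middle functor is handed to you by the permutative structure defined in Prop.~\ref{prop.deltaSpermutative}. This is exactly what the paper does: set $\mathscr{F}([n]) := [0]\odot[n] = [n+1]$ and $\mathscr{F}(f) := \mathrm{id}_{[0]}\odot f$. Then $\delta_0^{(m+1)} : [m]\to[m+1]$ (the $\Delta$-morphism missing $0$) assembles into a natural transformation $\mathrm{id}_{\Delta S}\Rightarrow\mathscr{F}$, because $\mathrm{id}\odot f$ acts only on the points $1,\dots,m+1$ and hence commutes with $\delta_0$; and $0_0^{(m+1)} : [0]\to[m+1]$ (sending $0\mapsto 0$) assembles into a natural transformation $\mathrm{const}_{[0]}\Rightarrow\mathscr{F}$, because $\mathrm{id}_{[0]}\odot f$ fixes the point $0$. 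The zigzag $\mathrm{id}\Rightarrow\mathscr{F}\Leftarrow\mathrm{const}_{[0]}$ then yields a homotopy $\mathrm{id}_{N(\Delta S)}\simeq \text{const}$ after applying the nerve, and contractibility follows. In short, the whole difficulty you noticed — that permutations block naturality of a one-step map to $[0]$ — is absorbed precisely because $\mathscr{F}$ freezes the new basepoint $0$, to which both $\mathrm{id}$ and $\mathrm{const}_{[0]}$ map naturally.

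By contrast, the route you commit to — cofinality of $\Delta\hookrightarrow\Delta S$ via Quillen's Theorem A — has a genuine gap: you state that the "main obstacle will be verifying the contractibility of the relevant comma categories cleanly," and then do not verify it. That verification is the content of the lemma, and it is not obviously easier than the original statement; the permutations you are trying to discharge simply reappear in the comma categories $[n]/\Delta$ (or $\Delta/[n]$). Nothing in your sketch shows these are contractible, and it is not clear in what sense "the ordering data gets contracted away." Your fallback (invoking crossed-simplicial-group facts from Fiedorowicz–Loday, or "the already-built machinery") imports exactly the conclusion you are asked to prove. As written, the proposal does not establish the lemma.
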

\begin{proof}
  Define a functor $\mathscr{F} : \Delta S \to \Delta S$ by 
  \[
    \mathscr{F} : [n] \mapsto [0] \odot [n],
  \]
  \[
    \mathscr{F} : f \mapsto \mathrm{id}_{[0]} \odot f,
  \]
  using the multiplication $\odot$ defined in section~\ref{sec.deltas}.

  There is a natural transformation $\mathrm{id}_{\Delta S} \to \mathscr{F}$
  given by the following commutative diagram for each $f : [m] \to [n]$:
  \[
    \begin{diagram}
      \node{ [m] }
      \arrow{e,t}{f}
      \arrow{s,l}{\delta_0}
      \node{ [n] }
      \arrow{s,r}{\delta_0}\\
      \node{ [m+1] }
      \arrow{e,t}{\mathrm{id} \odot f}
      \node{ [n+1] }
    \end{diagram}
  \]
  Here, $\delta^{(k)}_j : [k-1] \to [k]$ is the $\Delta$ morphism that misses the point
  $j \in [k]$.
  
  Consider the constant functor $\Delta S \stackrel{[0]}{\to} \Delta S$ that sends 
  all objects to
  $[0]$ and all morphisms to $\mathrm{id}_{[0]}$.  There is a natural transformation
  $[0] \to \mathscr{F}$ given by the following commutative diagram for each
  $f : [m] \to [n]$.
  \[
    \begin{diagram}
      \node{ [0] }
      \arrow{e,t}{\mathrm{id}}
      \arrow{s,l}{0_0}
      \node{ [0] }
      \arrow{s,r}{0_0}\\
      \node{ [m+1] }
      \arrow{e,t}{\mathrm{id} \odot f}
      \node{ [n+1] }
    \end{diagram}
  \]
  Here, $0^{(k)}_j : [0] \to [k]$ is the morphism that sends the point $0$ to
  $j \in [k]$.
  
  Natural transformations induce homotopy equivalences (see~\cite{Se} or
  Prop.~1.2 of~\cite{Q}), so in
  particular, the identity map on $N(\Delta S)$ is homotopic to the map that 
  sends $N(\Delta S)$ to the nerve of a trivial category.  Thus, $N(\Delta S)$
  is contractible.
\end{proof}

\begin{cor}\label{cor.HS_of_k}
  The symmetric homology of the ground ring $k$ is isomorphic to $k$, concentrated
  in degree $0$.
\end{cor}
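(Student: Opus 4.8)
The plan is to reduce the computation of $HS_*(k)$ to the homology of the nerve $N(\Delta S)$, which Lemma~\ref{lem.DeltaScontractible} has just shown to be contractible.

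First I would observe that the symmetric bar construction of the ground ring itself is especially simple. Since $B_n^{sym}k = k^{\otimes n+1} = k$ and any morphism $\alpha : [n] \to [m]$ of $\Delta S$ acts on $B_*^{sym}k$ by substituting elements of $k$ for the variables $x_i$ and multiplying the resulting monomials (inserting $1_k$ wherever a preimage is empty), the induced map $B_*^{sym}k(\alpha) : k \to k$ is always $\mathrm{id}_k$. Hence $B_*^{sym}k$ is canonically the trivial $\Delta S$-module $\underline{k}$, and so $HS_*(k) = \mathrm{Tor}_*^{\Delta S}(\underline{k}, \underline{k})$.

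Next, by Corollary~\ref{cor.SymHomComplex} this $\mathrm{Tor}$ is the homology of the complex $k[N(-\setminus\Delta S)] \otimes_{\Delta S} \underline{k}$. I would identify this complex degreewise with the simplicial chain complex $k[N(\Delta S)]$: writing $\overline{L}_n = \bigoplus_{C_n} k[\mathrm{Mor}_{\Delta S}(-, [q_0])]$, where $C_n$ runs over the $n$-chains $[q_0] \to \cdots \to [q_n]$ of $\Delta S$, and using the identity $k[\mathrm{Mor}_{\mathscr{C}}(-, Y)] \otimes_{\mathscr{C}} N \cong N(Y)$ (established in Section~\ref{sec.stdres}) with $N = \underline{k}$, each summand contributes a copy of $k$, giving $\overline{L}_n \otimes_{\Delta S} \underline{k} \cong k[N(\Delta S)_n]$. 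Chasing the face maps of Remark~\ref{rmk.uniqueChain} with every $y_i$ set to $1_k$ (so that each $\beta_j(\ldots) = 1_k$ as well) shows that the differential becomes exactly the alternating sum of the nerve's face operators, so the isomorphism is one of chain complexes. Therefore $HS_*(k) \cong H_*(N(\Delta S); k)$.

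Finally, Lemma~\ref{lem.DeltaScontractible} gives that $N(\Delta S)$ is contractible, so $H_*(N(\Delta S); k) \cong k$ concentrated in degree $0$, which is the claim. The only mildly delicate point is the bookkeeping in the chain-level identification of the third paragraph; everything else is immediate from results already in hand. Alternatively, one may bypass that bookkeeping by invoking the remark that $\underline{k} \otimes_{\Delta S} M \cong \colim_{\Delta S} M$ and its derived version, so that $HS_i(k) = \colim_{\Delta S}^{(i)} \underline{k} = H_i(N(\Delta S); k)$, reaching the same conclusion directly.
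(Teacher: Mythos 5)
Your proof is correct and follows essentially the same route as the paper's: identify $\mathscr{Y}_*k = k[N(-\setminus\Delta S)]\otimes_{\Delta S} B_*^{sym}k$ with the chain complex $k[N(\Delta S)]$ and then invoke Lemma~\ref{lem.DeltaScontractible}. Where the paper simply cites Remark~\ref{rmk.uniqueChain} to normalize each generator to the form $[q_0]\to\cdots\to[q_n]\otimes(1\otimes\cdots\otimes 1)$ and identifies it with a chain of $N(\Delta S)$, you arrive at the same identification via the evaluation isomorphism $k[\mathrm{Mor}_{\mathscr{C}}(-,Y)]\otimes_{\mathscr{C}}N\cong N(Y)$ applied summand by summand to $\overline{L}_n$; these are two phrasings of the same computation. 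Your preliminary observation that $B_*^{sym}k$ is literally the trivial $\Delta S$-module $\underline{k}$, and your closing remark that one could instead appeal to higher colimits, are both sound and slightly more explicit than the paper's terse proof, but neither changes the substance of the argument.
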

\begin{proof}
  By Cor.~\ref{cor.SymHomComplex} and Remark~\ref{rmk.uniqueChain}, $HS_*(k)$ is
  the homology of the chain complex generated (freely) over $k$ by the chains
  \[
    \left\{ 
    [q_0]\stackrel{\beta_1}{\to}[q_1]
    \stackrel{\beta_2}{\to}\ldots\stackrel{\beta_n}{\to}[q_n]
    \,\otimes\,
    (1 \otimes \ldots \otimes 1)  
    \right\},
  \]
  where $\beta_i \in \mathrm{Mor}_{\Delta S}\left( [q_{i-1}], [q_i] \right)$.  Each
  chain $[q_0] \to [q_1] \to \ldots \to [q_n]\,\otimes\,
    (1 \otimes \ldots \otimes 1)$ may be identified with the chain
  $[q_0] \to [q_1] \to \ldots \to [q_n]$ of $N(\Delta S)$, and clearly this
  defines a chain isomorphism to $N(\Delta S)$. The result now
  follows from Lemma~\ref{lem.DeltaScontractible}.
\end{proof}

\section{Tensor Algebras}\label{sec.tensoralg}                           %

For a general $k$-algebra $A$, the standard resolution is often too difficult
to work with.  In the following chapters, we shall see some methods of reducing
the size of the standard resolution.
In order to prove the results of chapter~\ref{chap.alt}, it is necessary
to prove these results first for the special case of tensor algebras.  Indeed,
tensor algebra arguments are also key in the proof of Fiedorowicz's Theorem
(Thm.~1({\it i}) of~\cite{F}) about the symmetric homology of group algebras.

Let $T : k$-\textbf{Alg} $\to k$-\textbf{Alg} be the functor
sending an algebra $A$ to the tensor algebra generated by $A$.
\[
  TA := k \oplus A \oplus A^{\otimes 2}
  \oplus A^{\otimes 3} \oplus \ldots
\]
The functor $T$ takes an algebra homomorphism 
$A \stackrel{f}{\to} B$ to the induced homomorphism $Tf$ defined on generators by:
\[
  Tf( a_0 \otimes a_1 \otimes \ldots \otimes a_k ) = f(a_0) \otimes f(a_1) \otimes
  \ldots \otimes f(a_k).
\]
There is an algebra homomorphism $\theta : TA \to A$, defined by multiplying tensor factors:
\[
  \theta( a_0 \otimes a_1 \otimes \ldots \otimes a_k ) := a_0a_1 \cdots a_k.
\]
In fact, $\theta$ defines a natural transformation $T \to \mathrm{id}$, as can
be verified by the following commutative diagram (valid for all $A \stackrel{f}{\to}
B$ in $k$-\textbf{Alg}).
\[
  \begin{diagram}
    \node{TA}
    \arrow{e,t}{\theta_A}
    \arrow{s,l}{Tf}
    \node{A}
    \arrow{s,r}{f}\\
    \node{TB}
    \arrow{e,t}{\theta_B}
    \node{B}
  \end{diagram}
\]
\[
  \begin{diagram}    
    \node{a_0 \otimes \ldots \otimes a_k}
    \arrow[2]{e,t,T}{\theta_A}
    \arrow{s,l,T}{Tf}
    \node[2]{a_0\cdots a_k}
    \arrow{s,r,T}{f}\\
    \node{f(a_0) \otimes \ldots \otimes f(a_k)}
    \arrow{e,t,T}{\theta_B}
    \node{f(a_0)\cdots f(a_k)}
    \arrow{e,t,=}{}
    \node{f(a_0\cdots a_k)}
  \end{diagram}
\]  

We shall also make use of a $k$-module homomorphism $h$ sending the
algebra $A$ identically onto the summand $A$ of $TA$.  This map is a natural
transformation from the forgetful functor $U : k$-\textbf{Alg} $\to k$-\textbf{Mod}
to the functor $UT$.
\[
  \begin{diagram}
    \node{UA}
    \arrow{s,l}{Uf}
    \arrow{e,t}{h_A}
    \node{UTA}
    \arrow{s,r}{UTf}\\
    \node{UB}
    \arrow{e,t}{h_B}
    \node{UTB}
  \end{diagram}
\]
Henceforth, context will make it clear whether we are working with algebras or
underlying $k$-modules, and so the functor $U$ shall be omitted.

Denote by $\mathscr{Y}_*A$, the complex $k[ N(- \setminus \Delta S) ]
\otimes_{\Delta S} B_*^{sym}A$ of Cor.~\ref{cor.SymHomComplex}.

\begin{prop}\label{prop.Y-functor}
  The assignment $A \mapsto \mathscr{Y}_*A$ is functorial.
\end{prop}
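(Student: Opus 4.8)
The plan is to exhibit, for each algebra homomorphism $f : A \to A'$, an induced chain map $\mathscr{Y}_*f : \mathscr{Y}_*A \to \mathscr{Y}_*A'$, and then to check functoriality (compatibility with composition and identities) directly on generators. The natural candidate for $\mathscr{Y}_*f$ is the map induced by the naturality of the symmetric bar construction established in Prop.~\ref{prop.symbar-natural}. Concretely, using the description of $\overline{L}_n \otimes_{\Delta S} B_*^{sym}A$ from Remark~\ref{rmk.uniqueChain} as the $k$-module generated by elements of the form~\eqref{chain_elem}, I would define
\[
  \mathscr{Y}_*f : [q_0]\stackrel{\beta_1}{\to}\cdots\stackrel{\beta_n}{\to}[q_n]\otimes(y_0\otimes\cdots\otimes y_{q_0})
  \;\longmapsto\;
  [q_0]\stackrel{\beta_1}{\to}\cdots\stackrel{\beta_n}{\to}[q_n]\otimes(f(y_0)\otimes\cdots\otimes f(y_{q_0})),
\]
i.e.\ leave the $N(-\setminus\Delta S)$ factor alone and apply $f^{\otimes(q_0+1)} = B_{q_0}^{sym}f$ to the bar factor.

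First I would verify this assignment is well-defined on $\overline{L}_n \otimes_{\Delta S} B_*^{sym}A$ before passing to the reduced description — that is, that $\mathrm{id}\otimes B_*^{sym}f$ respects the coequalizer relation $y\otimes\alpha_*(x)\approx\alpha^*(y)\otimes x$ defining $\otimes_{\Delta S}$. This follows because $B_*^{sym}f$ is a natural transformation of $\Delta S$-modules (the commuting square in the proof of Prop.~\ref{prop.symbar-natural}), so it commutes with every $\alpha_*$; hence $\mathrm{id}\otimes B_*^{sym}f$ descends to the tensor product over $\Delta S$. Second, I would check $\mathscr{Y}_*f$ is a chain map by comparing it with the face maps $d_0,\dots,d_n$ written out explicitly in Remark~\ref{rmk.uniqueChain}: for $d_j$ with $0<j\le n$ the bar factor is untouched, so commutativity is immediate, and for $d_0$ one needs $\beta_1(f(y_0),\dots,f(y_{q_0})) = f^{\otimes}\big(\beta_1(y_0,\dots,y_{q_0})\big)$, which is exactly the naturality square $B_*^{sym}A'(\beta_1)\circ f^{\otimes(q_0+1)} = f^{\otimes(q_1+1)}\circ B_*^{sym}A(\beta_1)$ from Prop.~\ref{prop.symbar-natural}. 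Since $\partial = \sum_i(-1)^i d_i$, it follows that $\mathscr{Y}_*f$ commutes with $\partial$.

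Finally, functoriality itself is formal: $\mathscr{Y}_*(\mathrm{id}_A) = \mathrm{id}$ because $B_*^{sym}(\mathrm{id}_A) = \mathrm{id}$, and $\mathscr{Y}_*(g\circ f) = \mathscr{Y}_*g\circ\mathscr{Y}_*f$ because $(g\circ f)^{\otimes(q_0+1)} = g^{\otimes(q_0+1)}\circ f^{\otimes(q_0+1)}$ on each bar factor while the $N(-\setminus\Delta S)$ factor is carried along unchanged in all three maps. I expect no genuine obstacle here; the only point requiring care is the well-definedness on the quotient $\otimes_{\Delta S}$ (as opposed to the naive tensor product), and that is handled cleanly by invoking the naturality of $B_*^{sym}A$ in $A$ that was just proved. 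It is worth remarking that the same argument shows $A\mapsto\mathscr{Y}_*A$ is functorial into the category of chain complexes, so that $A\mapsto HS_*(A)$ is a functor as well — a fact that will be used implicitly in the sequel when applying the tensor-algebra constructions $T$, $\theta$, and $h$.
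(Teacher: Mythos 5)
Your construction — apply $B_*^{sym}f$ to the bar factor, leave the nerve factor untouched, and invoke the naturality established in Prop.~\ref{prop.symbar-natural} — is exactly the paper's argument. You spell out the well-definedness over $\otimes_{\Delta S}$ and the chain-map and functoriality checks more explicitly than the paper does, but the approach is the same and correct.
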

\begin{proof}
  We have to say what happens to morphisms.  If $f : A \to B$ is a morphism of
  $k$-algebras (sending $1_A \mapsto 1_B$), then there is an induced chain map
  \[
    \mathrm{id} \otimes B_*^{sym}f \;:\; k[ N(- \setminus \Delta S) ]\otimes_{\Delta S} 
    B_*^{sym}A \to k[ N(- \setminus \Delta S) ]\otimes_{\Delta S} B_*^{sym}B,
  \]
  defined on $k$-chains by:
  \begin{eqnarray*}
    [n] \to [p_0] \to [p_1] \to \ldots \to [p_k] \otimes \left(
    a_0 \otimes \ldots \otimes a_n\right)
    \\  
    \mapsto \;\;
    [n] \to [p_0] \to [p_1] \to \ldots \to [p_k] \otimes \left(
    f(a_0) \otimes \ldots \otimes f(a_n)\right)
  \end{eqnarray*}
  $B_*^{sym}f$ is a natural transformation by Prop.~\ref{prop.symbar-natural}.
\end{proof}

For a general
$k$-algebra $A$, resolve $A$ by tensor algebras.  The resulting
long exact sequence may be regarded as a $k$-complex, where $TA$ is regarded as
degree $0$.
\begin{equation}\label{eq.res_tensor_alg}
  0 \gets A \stackrel{\theta_A}{\gets} TA \stackrel{\theta_1}{\gets} T^2A 
  \stackrel{\theta_2}{\gets} \ldots
\end{equation}

The maps $\theta_n$ for $n \geq 1$ are defined in terms of face maps:
\begin{equation}\label{eq.theta_n}
  \theta_n := \sum_{i = 0}^{n} (-1)^i T^{n-i}\theta_{T^iA}.
\end{equation}

In Section~\ref{sec.deltas_plus}, we shall need to use an important property of the maps 
$\theta_n$:
\begin{prop}\label{prop.naturality-of-theta_n}
  $\theta_n$ defines a natural transformation $T^{p+1} \to T^p$.
\end{prop}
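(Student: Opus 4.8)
The plan is to verify directly that $\theta_n$, viewed as a map $T^{p+1} \to T^p$ for the appropriate $p$ (here $p = n$, so $\theta_n : T^{n+1}A \to T^n A$ for each algebra $A$), commutes with the maps induced by an arbitrary $k$-algebra homomorphism $f : A \to B$. Since $\theta_n$ is defined by the formula
\[
  \theta_n = \sum_{i=0}^{n} (-1)^i\, T^{n-i}\theta_{T^i A},
\]
it suffices, by additivity of the induced maps, to check that each summand $T^{n-i}\theta_{T^i A}$ is natural in $A$. In turn, $T^{n-i}(-)$ is a functor and naturality is preserved under applying a functor to a natural transformation, so the whole question reduces to the single claim that the family $\{\theta_{T^i A}\}_A$ is a natural transformation $T^{i+1} \to T^i$ of functors $k\text{-}\mathbf{Alg} \to k\text{-}\mathbf{Alg}$.

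First I would recall that $\theta : T \to \mathrm{id}$ is already established to be a natural transformation in Section~\ref{sec.tensoralg} (the commutative square with $\theta_A$, $\theta_B$, $Tf$, $f$ and the explicit diagram on generators $a_0 \otimes \cdots \otimes a_k \mapsto a_0 \cdots a_k$). Whiskering this natural transformation on the left by the functor $T^i$ yields a natural transformation $T^i \theta : T^i T \Rightarrow T^i \mathrm{id}$, i.e. $T^{i+1} \Rightarrow T^i$; concretely, for the morphism $f : A \to B$ the component at $A$ is $T^i(\theta_A)$, but we actually want the component of $\theta$ at the algebra $T^i A$, namely $\theta_{T^i A}$, together with the induced morphism $T^i(f) : T^i A \to T^i B$. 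Naturality of $\theta$ applied at the morphism $T^i(f)$ gives the commuting square
\[
  \begin{diagram}
    \node{T^{i+1}A}
    \arrow{e,t}{\theta_{T^i A}}
    \arrow{s,l}{T^{i+1}f}
    \node{T^i A}
    \arrow{s,r}{T^i f}\\
    \node{T^{i+1}B}
    \arrow{e,t}{\theta_{T^i B}}
    \node{T^i B}
  \end{diagram}
\]
which is exactly the statement that $\{\theta_{T^i A}\}_A$ is natural. Applying the functor $T^{n-i}$ to this square (functors send commuting squares to commuting squares) shows $T^{n-i}\theta_{T^i A}$ is natural, and summing with signs completes the argument.

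The main (and essentially only) point requiring a little care is bookkeeping of which functor is whiskered on which side: $\theta_{T^i A}$ is the component of $\theta$ at the object $T^i A$, not $T^i$ applied to the component $\theta_A$, so one must invoke naturality of $\theta$ \emph{at the morphism} $T^i f$ rather than naively functoriality. Once that is pinned down, everything else is formal: functors preserve commutativity of diagrams, and a finite signed sum of natural transformations (between additive functors, or here just using that $\mathrm{Hom}$-sets of $k$-algebra maps compose compatibly with the relevant $k$-linear structure on the chain level) is again natural. I do not expect any genuine obstacle; the proof is a routine diagram chase assembled from the already-verified naturality of $\theta : T \to \mathrm{id}$.
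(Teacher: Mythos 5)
Your proof is correct and follows essentially the same route as the paper's: reduce to a single summand $T^{n-i}\theta_{T^iA}$, observe that its naturality in $A$ follows from naturality of $\theta$ evaluated at the morphism $T^i f$, apply the functor $T^{n-i}$ to that commuting square, and conclude by summing. The paper does not phrase this in whiskering language, but the diagrams and the logic are identical (and your remark about $\theta_{T^iA}$ versus $T^i\theta_A$ correctly isolates the one bookkeeping point that matters).
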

\begin{proof}
  The map $\theta_n$ depends on the algebra $A$.  When we wish to distinguish
  which algebra is associated with $\theta_n$, we shall use the notation $\theta_n(A)$.
  Now, let $f : A \to B$ be any unital map of algebras.  Consider the diagram below:
  \[
    \begin{diagram}
      \node{T^{n+1}A}
      \arrow{e,t}{\theta_n(A)}
      \arrow{s,l}{T^{n+1}f}
      \node{T^nA}
      \arrow{s,r}{T^nf}\\
      \node{T^{n+1}B}
      \arrow{e,t}{\theta_n(B)}
      \node{T^nB}
    \end{diagram}
  \]
  We must show this diagram commutes.  Now, $\ds{\theta_n(A) = \sum_{i = 0}^{n} (-1)^i 
  T^{n-i}\theta_{T^iA}}$.  Let $0 \leq i \leq n$, and consider the following diagram:
  \[
    \begin{diagram}
      \node{T(T^i A)}
      \arrow{e,t}{\theta_{T^i A}}
      \arrow{s,l}{T(T^i f)}
      \node{T^i A}
      \arrow{s,r}{T^i f}\\
      \node{T(T^i B)}
      \arrow{e,t}{\theta_{T^i B}}
      \node{T^i B}
    \end{diagram}
  \]
  This diagram commutes by naturality of $\theta$.  Now, apply the functor $T^{n-i}$
  to each object and functor to get the corresponding commutative diagram for the
  $i^{th}$ face map of $\theta_n$.
  
  \multiply \dgARROWLENGTH by2
  \[
    \begin{diagram}
      \node{T^{n+1} A}
      \arrow{e,t}{T^{n-i}\theta_{T^i A}}
      \arrow{s,l}{T^{n+1} f}
      \node{T^n A}
      \arrow{s,r}{T^n f}\\
      \node{T^{n+1} B}
      \arrow{e,t}{T^{n-i}\theta_{T^i B}}
      \node{T^n B}
    \end{diagram}
  \]
  \divide \dgARROWLENGTH by2
  
  This proves each face map is natural, so the differential $\theta_n$ is natural.
\end{proof}

\begin{rmk}
  Note that the complex~(\ref{eq.res_tensor_alg}) is nothing more than the complex
  associated to May's 2-sided bar construction 
  $B_*(T, T, A)$ (See chapter 9 of~\cite{M}).  If we denote by $A_0$ the 
  chain complex consisting of $A$ in degree $0$ and $0$ in higher degrees,
  then there is a homotopy $h_n : B_n(T, T, A) \to
  B_{n+1}(T, T, A)$ that establishes
  a strong deformation retract $B_*(T, T, A) \to A_0$.
  In fact, the homotopy maps are given by $h_n := h_{T^{n+1}A}$, where
  $h$ is the natural transformation $U \to UT$ given above.
\end{rmk}

For each $q \geq 0$, if we
apply the functor $\mathscr{Y}_q$ to the complex~(\ref{eq.res_tensor_alg}),
we obtain the sequence below:
\begin{equation}\label{eq.ex-seq-TA}
  \begin{diagram}
    \node{0}
    \node{\mathscr{Y}_qA}
    \arrow{w,t}{ }
    \node{\mathscr{Y}_qTA}
    \arrow{w,t}{\mathscr{Y}_q\theta_{A}}
    \node{\mathscr{Y}_qT^2A}
    \arrow{w,t}{\mathscr{Y}_q\theta_1}
    \node{\mathscr{Y}_qT^3A}
    \arrow{w,t}{\mathscr{Y}_q\theta_2}
    \node{\cdots}
    \arrow{w,t}{ }
  \end{diagram}
\end{equation}
This sequence is exact via the induced homotopy $\mathscr{Y}_q h_*$.  
Denote by $d_i(A)$ the $i^{th}$ differential map of $\mathscr{Y}_*A$.  When the 
context it clear,
the differential will be simply written $d_i$.
Now, the bigraded module $\left\{\mathscr{Y}_q T^{p+1}A\right\}_{p,q \geq 0}$
is not quite a double complex, since the induced maps 
$\mathscr{Y}_* T^{p+1}A \to \mathscr{Y}_*T^p A$ are chain maps (the
corresponding squares commute).  In order
to make the squares of the bigraded module anti-commute, introduce the
sign $(-1)^p$ on each vertical differential.  Call the resulting double complex
$\mathscr{T}_{*,*}$.
\begin{equation}\label{eq.scriptT}
  \begin{diagram}
    \node{\vdots}
    \arrow{s,l}{ }
    \node{\vdots}
    \arrow{s,l}{ }
    \node{\vdots}
    \arrow{s,l}{ }
    \\
    \node{\mathscr{Y}_2TA}
    \arrow{s,l}{d_2}
    \node{\mathscr{Y}_2T^2A}
    \arrow{w,t}{\mathscr{Y}_2\theta_1}
    \arrow{s,l}{-d_2}
    \node{\mathscr{Y}_2T^3A}
    \arrow{w,t}{\mathscr{Y}_2\theta_2}
    \arrow{s,l}{d_2}
    \node{\cdots}
    \arrow{w,t}{ }
    \\
    \node{\mathscr{Y}_1TA}
    \arrow{s,l}{d_1}
    \node{\mathscr{Y}_1T^2A}
    \arrow{w,t}{\mathscr{Y}_1\theta_1}
    \arrow{s,l}{-d_1}
    \node{\mathscr{Y}_1T^3A}
    \arrow{w,t}{\mathscr{Y}_2\theta_2}
    \arrow{s,l}{d_1}
    \node{\cdots}
    \arrow{w,t}{ }
    \\
    \node{\mathscr{Y}_0TA}
    \node{\mathscr{Y}_0T^2A}
    \arrow{w,t}{\mathscr{Y}_0\theta_1}
    \node{\mathscr{Y}_0T^3A}
    \arrow{w,t}{\mathscr{Y}_1\theta_2}
    \node{\cdots}
    \arrow{w,t}{}
  \end{diagram}
\end{equation}

Consider a second double complex, $\mathscr{A}_{*,*}$, consisting of the complex
$\mathscr{Y}_*A$ as the $0^{th}$ column, and $0$ in all positive columns.

\begin{equation}\label{eq.scriptA}
  \begin{diagram}
    \node{\vdots}
    \arrow{s,l}{ }
    \node{\vdots}
    \arrow{s,l}{ }
    \node{\vdots}
    \arrow{s,l}{ }
    \\
    \node{\mathscr{Y}_2A}
    \arrow{s,l}{d_2}
    \node{0}
    \arrow{w}
    \arrow{s}
    \node{0}
    \arrow{w}
    \arrow{s}
    \node{\cdots}
    \arrow{w,t}{ }
    \\
    \node{\mathscr{Y}_1A}
    \arrow{s,l}{d_1}
    \node{0}
    \arrow{w}
    \arrow{s}
    \node{0}
    \arrow{w}
    \arrow{s}
    \node{\cdots}
    \arrow{w,t}{ }
    \\
    \node{\mathscr{Y}_0A}
    \node{0}
    \arrow{w}
    \node{0}
    \arrow{w}
    \node{\cdots}
    \arrow{w,t}{}
  \end{diagram}
\end{equation}

\begin{theorem}\label{thm.doublecomplexiso}
  There is a map of double complexes, $\Theta_{*,*} : \mathscr{T}_{*,*} \to
  \mathscr{A}_{*,*}$ inducing isomorphism in homology
  \[
    H_*\left( \mathrm{Tot}(\mathscr{T});\,k\right) \to
    H_*\left( \mathrm{Tot}(\mathscr{A});\, k\right)
  \]
\end{theorem}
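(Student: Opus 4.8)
The plan is to construct $\Theta_{*,*}$ column-wise: on the $0^{th}$ column of $\mathscr{T}$, namely $\mathscr{Y}_*TA$, use the chain map $\mathscr{Y}_*\theta_A : \mathscr{Y}_*TA \to \mathscr{Y}_*A$ induced by the algebra map $\theta_A : TA \to A$ (functoriality of $\mathscr{Y}$ is Prop.~\ref{prop.Y-functor}), and on every positive column of $\mathscr{T}$, namely $\mathscr{Y}_*T^{p+1}A$ for $p \geq 1$, map to the zero column of $\mathscr{A}$. To check $\Theta$ is a map of double complexes one must verify it commutes with both differentials. Compatibility with the horizontal differentials reduces to the single identity $\theta_A \circ \theta_1 = \theta_A \circ \theta_A \cdot (\text{nothing in column } \geq 1)$; concretely, since $\mathscr{A}$ has zero in column $1$, one only needs $\mathscr{Y}_*\theta_A \circ \mathscr{Y}_*\theta_1 = 0$, which follows from $\theta_A \circ \theta_1 = 0$ in $k$-\textbf{Alg}, itself a consequence of the definition~(\ref{eq.theta_n}) of $\theta_1 = \theta_{TA} - T\theta_A$ together with naturality of $\theta$ (the square relating $\theta_{TA}$ and $T\theta_A$). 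Compatibility with the vertical differentials is immediate because $\mathscr{Y}_*\theta_A$ is a chain map (and the sign $(-1)^p$ is $+1$ in column $0$).

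Next I would pass to total complexes and prove $\mathrm{Tot}(\Theta)$ is a homology isomorphism via a spectral sequence comparison. Filter both double complexes by columns. The $E_0$-page differential is the vertical one, $\pm d_q$; on $E_1$, the $p^{th}$ column of $\mathscr{T}$ becomes $HS_*$-type homology of $T^{p+1}A$ — i.e.\ $H_q(\mathscr{Y}_*T^{p+1}A) = HS_q(T^{p+1}A)$ — and the $E_1$ horizontal differential is induced by the alternating sum $\theta_n$. By the remark following Prop.~\ref{prop.naturality-of-theta_n}, the complex~(\ref{eq.res_tensor_alg}) is the complex of May's two-sided bar construction $B_*(T,T,A)$, which admits an explicit contracting homotopy $h_n = h_{T^{n+1}A}$ exhibiting a strong deformation retract onto $A_0$. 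Applying the additive functor $\mathscr{Y}_q$ preserves this: the induced homotopy $\mathscr{Y}_q h_*$ makes sequence~(\ref{eq.ex-seq-TA}) exact. Hence on $E_1$ the horizontal complex $\cdots \to HS_q(T^3A) \to HS_q(T^2A) \to HS_q(TA)$ augmented to $HS_q(A)$ is exact, so the $E_2$-page of $\mathscr{T}$ is concentrated in column $0$ with value $HS_q(A)$, which is exactly the $E_2$-page (in fact $E_1$-page) of $\mathscr{A}$. Since $\Theta$ on column $0$ is $\mathscr{Y}_*\theta_A$, one checks it induces precisely the augmentation $HS_q(TA) \to HS_q(A)$, so $\Theta$ induces an isomorphism on $E_2$, hence (both filtrations being bounded below and exhaustive, and the complexes first-quadrant) on the abutments.

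The main obstacle is a bookkeeping one rather than a conceptual one: getting the signs consistent so that $\mathrm{Tot}(\mathscr{T})$ genuinely is a complex and $\mathrm{Tot}(\Theta)$ genuinely is a chain map. The sign twist $(-1)^p$ on the vertical differentials is introduced precisely so that the squares anti-commute, and one must confirm that with the total differential $D = d_{\mathrm{horiz}} + (-1)^p d_{\mathrm{vert}}$ one has $D^2 = 0$, using that the original squares commute (the maps $\mathscr{Y}_*\theta_n$ are chain maps). A secondary subtlety is the convergence of the column-filtration spectral sequence: because $\mathscr{T}$ is not bounded in the horizontal direction, one should note that for each fixed total degree only finitely many columns contribute (the bigraded pieces with $p + q = n$, $p,q \geq 0$ form a finite set), so $\mathrm{Tot}$ is the direct sum totalization and the spectral sequence converges. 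Once convergence and signs are settled, the comparison theorem for spectral sequences of filtered complexes does the rest.
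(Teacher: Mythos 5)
Your definition of $\Theta_{*,*}$ and your verification that it is a map of double complexes match the paper's, and you are right to make explicit the identity $\theta_A\circ\theta_1 = 0$ needed for horizontal compatibility (the paper leaves it implicit; it is a consequence of the exactness of~(\ref{eq.ex-seq-TA}), or equivalently of the simplicial identity $d_0 d_0 = d_0 d_1$ in the bar construction). Your sign discussion and the convergence observation (each anti-diagonal of a first-quadrant double complex is finite) are also fine.

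However, your spectral sequence argument uses the wrong filtration and has a gap at the $E_1$ stage. You filter by columns, so the $E_0$-differential is vertical, giving $E^1_{p,q} = H_q(\mathscr{Y}_*T^{p+1}A) = HS_q(T^{p+1}A)$, and then you assert that the $E_1$-rows, augmented to $HS_q(A)$, are exact, citing the contraction $\mathscr{Y}_q h_*$ of~(\ref{eq.ex-seq-TA}). But~(\ref{eq.ex-seq-TA}) is the complex $\cdots \to \mathscr{Y}_q T^2A \to \mathscr{Y}_q TA \to \mathscr{Y}_q A \to 0$ of $k$-modules, \emph{before} taking vertical homology. Exactness there does not imply exactness of the $HS_q$-rows \emph{after} taking vertical homology: the homotopy $h$ is natural only as a transformation of $k$-\textbf{Mod}-valued functors (it is $k$-linear but not an algebra map), so $\mathscr{Y}_q h$ does not commute with the vertical differential $d_q$ and does not descend to a contraction of the complex $HS_q(T^{\bullet+1}A)$. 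In general, level-wise exactness of the rows of a double complex is not inherited by the rows of the $E_1$-page of the column filtration.

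The fix is the other filtration. Filter by rows so the $E_0$-differential is horizontal; then by exactness of~(\ref{eq.ex-seq-TA}), $E^1_{p,q}(\mathscr{T}) = H_p(\mathscr{Y}_q T^{\bullet+1}A)$ equals $\mathscr{Y}_q A$ for $p=0$ and vanishes for $p>0$, which is exactly $E^1(\mathscr{A})$, and $\Theta$ induces the evident isomorphism between them. Hence $\Theta$ is an isomorphism from $E_1$ on, hence on the abutment. This is what the paper does; see the discussion after diagram~(\ref{eq.comm-diag-SpSeqII}), which notes that taking \emph{horizontal} homology of $\mathscr{T}_{*,*}$ kills all positive columns. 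Equivalently, the augmented double complex $\{\mathscr{Y}_q T^p A\}_{p\geq 0}$ has exact rows, so its total complex --- the mapping cone of $\mathrm{Tot}(\Theta)$ up to a shift --- is acyclic.
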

\begin{proof}
  The map $\Theta_{*,*}$ is defined as:
  \[
    \Theta_{p,q} := \left\{\begin{array}{ll}
                             \mathscr{Y}_q\theta_A, & p = 0 \\
                             0, & p > 0
                           \end{array}\right.
  \]
  This map is easily verified to be a map of double complexes, since most components
  of $\mathscr{A}_{*,*}$ are $0$.  On the $0^{th}$ column, we just verify that
  $d_q(A) \circ \mathscr{Y}_q\theta_A = \mathscr{Y}_{q-1}\theta_A 
  \circ d_q(TA)$, but this
  follows since $\mathscr{Y}_*$ is functorial ($\mathscr{Y}_*\theta_A$ is
  a chain map).  The isomorphism follows from the exactness of the
  sequence~(\ref{eq.ex-seq-TA}).  
\end{proof}

\begin{rmk}\label{rmk.HS-A-bicomplex}
  Observe that
  \[
     H_*\left( \mathrm{Tot}(\mathscr{A});\, k\right)
     = H_*\left(\mathscr{Y}_*A;\,k\right) = HS_*(A).
  \]
\end{rmk}

This permits the computation of symmetric homology of any given algebra $A$ in
terms of tensor algebras:

\begin{cor}\label{cor.HS_A_via-tensoralgebras}
  For an associative, unital $k$-algebra $A$,
  \[
    HS_*(A) \cong H_*\left( \mathrm{Tot}(\mathscr{T});\,k  \right),
  \]
  where $\mathscr{T}_{*,*}$ is the double complex $\{ \mathscr{Y}_q T^{p+1}A \}_{p,q \geq 0}$.
\end{cor}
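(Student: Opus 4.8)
The plan is to obtain this as a direct consequence of Theorem~\ref{thm.doublecomplexiso} together with the identifications of $HS_*(A)$ already in hand. First I would recall from Corollary~\ref{cor.SymHomComplex} (together with Remark~\ref{rmk.uniqueChain}) that
\[
  HS_*(A) = H_*\bigl(\mathscr{Y}_*A;\,k\bigr),
\]
where $\mathscr{Y}_*A = k[N(-\setminus\Delta S)]\otimes_{\Delta S}B_*^{sym}A$ is the complex computing symmetric homology. Next I would observe, as recorded in Remark~\ref{rmk.HS-A-bicomplex}, that the double complex $\mathscr{A}_{*,*}$ of~(\ref{eq.scriptA}) has $\mathscr{Y}_*A$ as its $0^{th}$ column and is zero in all positive columns, so its totalization is canonically the complex $\mathscr{Y}_*A$ itself; hence $H_*\bigl(\mathrm{Tot}(\mathscr{A});\,k\bigr) = H_*\bigl(\mathscr{Y}_*A;\,k\bigr) = HS_*(A)$.

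Then I would simply invoke Theorem~\ref{thm.doublecomplexiso}: the map of double complexes $\Theta_{*,*}:\mathscr{T}_{*,*}\to\mathscr{A}_{*,*}$ induces an isomorphism $H_*\bigl(\mathrm{Tot}(\mathscr{T});\,k\bigr)\stackrel{\cong}{\to}H_*\bigl(\mathrm{Tot}(\mathscr{A});\,k\bigr)$. Composing this with the previous identification yields
\[
  HS_*(A)\;\cong\;H_*\bigl(\mathrm{Tot}(\mathscr{T});\,k\bigr),
\]
which is exactly the asserted isomorphism, with $\mathscr{T}_{*,*} = \{\mathscr{Y}_q T^{p+1}A\}_{p,q\geq 0}$. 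It is worth noting in passing that, since $\mathscr{T}_{*,*}$ is a first-quadrant double complex, each total degree receives contributions from only finitely many pairs $(p,q)$, so $\mathrm{Tot}(\mathscr{T})$ and its homology are well-defined with no convergence subtleties.

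Since this is a corollary, there is no genuine obstacle: the proof is essentially the formal assembly of pieces already established. The only points deserving a moment's care — both of which have been handled upstream — are the sign bookkeeping that makes $\mathscr{T}_{*,*}$ an honest anti-commuting double complex (the $(-1)^p$ introduced on the vertical differentials in~(\ref{eq.scriptT})), and the exactness of the rows~(\ref{eq.ex-seq-TA}), supplied by the induced homotopy $\mathscr{Y}_q h_*$, which is precisely what drives the isomorphism in Theorem~\ref{thm.doublecomplexiso}.
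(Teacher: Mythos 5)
Your proof is correct and takes exactly the route the paper intends: Theorem~\ref{thm.doublecomplexiso} supplies the isomorphism $H_*(\mathrm{Tot}(\mathscr{T})) \cong H_*(\mathrm{Tot}(\mathscr{A}))$, and Remark~\ref{rmk.HS-A-bicomplex} identifies the latter with $HS_*(A)$; the paper in fact omits a written proof for precisely this reason. Your added note about first-quadrant convergence is a harmless and reasonable observation.
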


The following lemma shows why it is advantageous to work with tensor algebras.

\begin{lemma}\label{lem.tensor_alg_constr}
  For a unital, associative $k$-algebra $A$, 
  there is an isomorphism of $k$-complexes:
  \begin{equation}\label{eq.YA-decomp}
    \mathscr{Y}_*TA \cong \bigoplus_{n\geq -1} Y_n,
  \end{equation}
  where
  \[
    Y_n = \left\{\begin{array}{ll}
                    k\left[N(\Delta S)\right], &n = -1\\
                    k\left[N([n] \setminus \Delta S)\right]
                      \otimes_{k\Sigma_{n+1}^\mathrm{op}} A^{\otimes(n+1)},
                      &n \geq 0
                    \end{array}\right.
  \]
  Moreover, the differential respects the direct sum decomposition.
\end{lemma}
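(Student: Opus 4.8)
The plan is to compute $\mathscr{Y}_*TA$ using the explicit model from Remark~\ref{rmk.uniqueChain} and the splitting $TA = k\oplus A\oplus A^{\otimes 2}\oplus\cdots$, unpacking each tensor factor into its constituent $A$-factors and then matching indexing sets. First I would record a weight decomposition valid for any unital $k$-algebra $B$: splitting $\overline{L}_q = k[N(-\setminus\Delta S)_q]$ into a direct sum of copies of $k[\mathrm{Mor}_{\Delta S}(-,[q_0])]$ indexed by chains $[q_0]\stackrel{\beta_1}{\to}\cdots\stackrel{\beta_q}{\to}[q_q]$ (as in the proof of Prop.~\ref{prop.contractibility_under-category}) and using $k[\mathrm{Mor}_{\Delta S}(-,[q_0])]\otimes_{\Delta S}B_*^{sym}B\cong B^{\otimes(q_0+1)}$ gives an isomorphism $\mathscr{Y}_q B\cong\bigoplus_{[q_0]\to\cdots\to[q_q]}B^{\otimes(q_0+1)}$ with the differential of Remark~\ref{rmk.uniqueChain}. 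Taking $B=TA$ and expanding $(TA)^{\otimes(q_0+1)}=\bigoplus_{w\ge 0}\bigoplus_{\lambda}A^{\otimes w}$, where $\lambda=(w_0,\dots,w_{q_0})$ runs over ordered compositions of $w$ into $q_0+1$ nonnegative parts and $A^{\otimes w_0}\otimes\cdots\otimes A^{\otimes w_{q_0}}\cong A^{\otimes w}$ is the evident flattening, produces a grading $\mathscr{Y}_*TA=\bigoplus_{w\ge 0}\mathscr{Y}_*(TA)^{(w)}$ by total weight. I would then check that $\partial$ preserves this grading: the $\Delta S$-action on $B_*^{sym}(TA)$ only concatenates tensor factors of $TA$ or inserts the unit $1\in A^{\otimes 0}$, both of which preserve total weight, while the faces $d_j$ with $j\ge 1$ leave the $TA$-coordinate untouched. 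This establishes the ``moreover'' clause and reduces matters to identifying $\mathscr{Y}_*(TA)^{(w)}$ with $Y_{w-1}$.

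For $w=0$ one has $\lambda=(0,\dots,0)$ and $A^{\otimes 0}=k$, so $\mathscr{Y}_*(TA)^{(0)}$ is freely spanned by chains of $N(\Delta S)$ with all tensor entries equal to $1$ and its differential is the nerve differential, exactly as in the proof of Cor.~\ref{cor.HS_of_k}; hence $\mathscr{Y}_*(TA)^{(0)}\cong k[N(\Delta S)]=Y_{-1}$. For $w\ge 1$ the key structural observation is that $\Sigma_w^{\mathrm{op}}=\mathrm{Aut}_{\Delta S}([w-1])$ acts \emph{freely} on the $q$-simplices of $N([w-1]\setminus\Delta S)$: precomposing a structure map $(\phi,h):[w-1]\to[r_0]$ with $\sigma=(\mathrm{id},s)\in\Sigma_w^{\mathrm{op}}$ gives $(\phi,hs)$, so an orbit is obtained by letting the symmetric component of the structure map run over all of $\Sigma_w^{\mathrm{op}}$, and each orbit has a unique representative whose structure map lies in $\Delta$. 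Since a $\Delta$-morphism $[w-1]\to[r_0]$ is exactly an ordered composition $\lambda$ of $w$ into $r_0+1$ nonnegative parts, this identifies $k[N([w-1]\setminus\Delta S)_q]\otimes_{k\Sigma_w^{\mathrm{op}}}A^{\otimes w}$ with $\bigoplus_{([r_0]\to\cdots\to[r_q],\,\lambda)}A^{\otimes w}$, the same indexing set as $\mathscr{Y}_q(TA)^{(w)}$. I would then define the comparison isomorphism on generators by sending $\bigl([q_0]\stackrel{\beta_1}{\to}\cdots\to[q_q]\bigr)\otimes(y_0\otimes\cdots\otimes y_{q_0})$, with $y_i\in A^{\otimes w_i}$ and $\lambda=(w_0,\dots,w_{q_0})$, to $\bigl([w-1]\stackrel{(\phi_\lambda,\mathrm{id})}{\longrightarrow}[q_0]\stackrel{\beta_1}{\to}\cdots\to[q_q]\bigr)\otimes\bar a$, where $\phi_\lambda:[w-1]\to[q_0]$ sends the first $w_0$ points to $0$, the next $w_1$ to $1$, and so on, and $\bar a\in A^{\otimes w}$ is the in-order concatenation of all the $A$-factors of $y_0,\dots,y_{q_0}$; the orbit analysis shows this is a $k$-linear isomorphism.

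It remains to check the comparison map is a chain map, and here is where I expect the main obstacle. The faces $d_1,\dots,d_q$ only compose or delete morphisms in the chain and ignore the $A$-entries, so they match trivially on both sides. The content is $d_0$: on $\mathscr{Y}_*(TA)$ it applies $\beta_1=(\psi,g)$ to $(y_0\otimes\cdots\otimes y_{q_0})$, concatenating the $y_i$ as dictated by the tensor representation of $\beta_1$; on the other side $d_0$ composes $\beta_1$ into the structure map, and because $\beta_1\circ(\phi_\lambda,\mathrm{id})=\bigl(\psi\cdot g^*(\phi_\lambda),\;\phi_\lambda^*(g)\bigr)$ carries a nontrivial symmetric component $\phi_\lambda^*(g)$, that component has to be moved across $\otimes_{k\Sigma_w^{\mathrm{op}}}$ to act on $\bar a$ via $B_*^{sym}A$. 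So one must verify, using the composition law of $\Delta S$ and the ``exponent'' identities of Prop.~\ref{prop.comp} together with the tensor-notation description of $B_*^{sym}$, that (i) $\psi\cdot g^*(\phi_\lambda)=\phi_{\lambda'}$, where $\lambda'$ is the weight-composition of $\beta_1(y_0,\dots,y_{q_0})$, and (ii) $B_*^{sym}A(\phi_\lambda^*(g))(\bar a)$ is the in-order concatenation of the $A$-factors of $\beta_1(y_0,\dots,y_{q_0})$. Both are incarnations of the single coherence statement that applying a $\Delta S$-morphism and then flattening nested tensors agrees with flattening first and then applying the induced morphism on the refined indexing set; the verification is bookkeeping about the order in which $A$-factors are listed, which is exactly what the $g^*(\psi)$/$\psi^*(g)$ machinery was designed to handle.
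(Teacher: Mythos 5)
Your argument is correct and is essentially the paper's proof repackaged: your flattening $(\phi_\lambda,\bar a)$ is exactly the paper's $(\zeta_u, u')$, and your free‑action observation is the paper's remark that ``the only equivalence that persists after rewriting the generators is invariance under the symmetric group action.'' The one difference worth noting is that the paper performs the rewriting \emph{inside} $\mathscr{Y}_*TA$ as an instance of the $\Delta S$-equivalence relation (so it is literally an equality of elements, and compatibility with the differential is automatic once weight-preservation is observed), whereas you set up an explicit comparison map between two different models and must then verify $d_0$-compatibility as a separate step — the coherence bookkeeping you correctly flag at the end is real but is exactly what the in-place rewriting sidesteps.
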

\begin{proof}
  Any generator of $\mathscr{Y}_*TA$
  has the form
  \[
    [p] \stackrel{\alpha}{\to} [q_0] \to \ldots \to [q_n] \otimes u,
  \]
  where
  \[
    u = \left(\bigotimes_{a\in A_0}a\right) \otimes
              \left(\bigotimes_{a\in A_1}a\right) \otimes \ldots \otimes
              \left(\bigotimes_{a\in A_p}a\right),
  \]
  and $A_0, A_1, \ldots, A_p$ are finite ordered lists of elements of $A$.  Indeed,
  each $A_j$ may be thought of as an element of $A^{m_j}$ (set product).  If $A_j = \emptyset$,
  then $m_j = 0$, and we use the convention that an empty tensor product is
  equal to $1 \in k$. We say that the corresponding tensor factor is
  {\it trivial}.  (Caution, $1_A \in A$ is not considered trivial, since it
  has degree $1$ in the tensor algebra.)  Now,
  let $m = \left(\sum m_j\right) - 1$.  We shall use the convention that
  $A^0 = \emptyset$.  Let
  \[
    \pi \;:\; A^{m_0} \times A^{m_1} \times \ldots \times
    A^{m_p} \longrightarrow A^{m+1}
  \]
  be the evident isomorphism.  Let $A_m = \pi( A_0, A_1, \ldots, A_p )$.  
  
  {\bf Case 1.} If $u$ is
  non-trivial (\textit{i.e.}, $A_m \neq \emptyset$), then construct the element
  \[
    u' = \bigotimes_{a \in A_m}a
  \]
  Next, construct a $\Delta$-morphism $\zeta_u : [m] \to [p]$ as follows:  $\zeta_u$
  sends the point $0, 1, \ldots, m_0-1$ identically onto $0$, then
  sends the the points $m_0, m_0 + 1, \ldots, m_0+m_1 -1 $ onto $1$, etc.  It should
  be clear that $(\zeta_u)_*(u') = u$.  An example will clarify.  Suppose
  \[
    u = (a_0 \otimes a_1) \otimes 1 \otimes (a_2 \otimes a_3 \otimes a_4) \in
    (A \otimes A) \oplus k \oplus (A \otimes A \otimes A).
  \]  
  Then 
  \[
    u' = a_0 \otimes a_1 \otimes a_2 \otimes a_3 \otimes a_4,
  \]
  and $\zeta_u : [4] \to [2]$ has
  preimages: $\zeta_u^{-1}(0) = \{0, 1\}, \zeta_u^{-1}(1) = \emptyset, \zeta^{-1}(2)
  = \{2, 3, 4\}$ (or, in tensor notation, $\zeta_u = x_0x_1 \otimes 1 \otimes x_2x_3x_4$).
  Note, the elements $a_i$ need not be distinct.
  
  Then under the $\Delta S$-equivalence,
  \[
    [p] \stackrel{\alpha}{\to} [q_0] \to \ldots \to [q_n] \otimes u =
    [p] \stackrel{\alpha}{\to} [q_0] \to \ldots \to [q_n] \otimes (\zeta_u)_*(u')
  \]
  \[
    \approx
    [m] \stackrel{\alpha\zeta_u}{\to} [q_0] \to \ldots \to [q_n] \otimes u'
  \]
  
  The assignment is well-defined with respect to
  the $\Delta S$-equivalence since the total number of non-trivial tensor
  factors in $u$ is the same as the total number of non-trivial tensor factors
  in $\phi_*(u)$ for any $\phi \in \mathrm{Mor}\Delta S$.  It is this property
  of tensor algebras that is essential in making the proof work.

  Note that the only equivalence that persists after rewriting the generators
  is invariance under the symmetric group action:
  \[
    [m] \stackrel{\alpha\sigma}{\to}
    [q_0] \to \ldots \to [q_n] \otimes u' \approx
    [m] \stackrel{\alpha}{\to} 
    [q_0] \to \ldots \to [q_n] \otimes \sigma_*(u'), \;\; \textrm{for $\sigma \in
    \Sigma_{m+1}^\mathrm{op}$}
  \]

  This shows that any such non-trivial element in $\mathscr{Y}_*TA$ may be written 
  uniquely as an element of 
  \[
    k\left[N([m] \setminus \Delta S)\right] \otimes_{k\Sigma_{m+1}^\mathrm{op}} 
    A^{\otimes(m+1)}.
  \]
 
  {\bf Case 2.} If $u$ is
  trivial (\textit{i.e.}, $A_m = \emptyset$), then
  \[
    [p] \stackrel{\alpha}{\to} [q_0] \to \ldots \to [q_n] \otimes u
    = [p] \stackrel{\alpha}{\to} [q_0] \to \ldots \to [q_n] \otimes 1_k^{\otimes(p + 1)}.
  \]
  This element is equivalent to:
  \[
    [q_0] \stackrel{\mathrm{id}}{\to} [q_0] \to \ldots \to [q_n] \otimes 
    1_k^{\otimes(q_0 + 1)},
  \]
  and so this element can be identified with $[q_0] \to \ldots \to [q_n] \in 
  k\left[N(\Delta S)\right]$.
  
  Thus, the isomorphism~(\ref{eq.YA-decomp}) is proven.  Note, the fact that total number
  of non-trivial tensor factors is preserved under $\Delta S$ morphisms also proves that
  the differential respects the direct sum decomposition.
\end{proof}

\section{Symmetric Homology with Coefficients}\label{sec.symhom_coeff}%

Following the conventions for Hochschild and cyclic homology in 
Loday~\cite{L}, when we need to indicate explicitly the ground ring $k$ 
over which we compute symmetric homology of $A$, we shall use the notation:
\[
  HS_*(A\;|\;k)
\]
Furthermore, since the notion ``$\Delta S$-module'' does not explicitly state the
ground ring we shall use the bulkier ``$\Delta S$-module over $k$'' when the
context is ambiguous.  

If $\mathscr{Y}_*$ is a complex that computes symmetric homology of
the algebra $A$ over $k$, we may
make the following definition:

\begin{definition}\label{def.HS-with-coeff}
  The symmetric homology of $A$ over $k$, with coefficients in a left 
  $k$-module $M$ is
  \[
    HS_*(A;M) := H_*( \mathscr{Y}_* \otimes_k M)
  \]
\end{definition}

Note, this definition is independent of the particular choice of complex $\mathscr{Y}_*$,
so we shall generally use the complex $\mathscr{Y}_*A =
k[ N(- \setminus \Delta S) ]
\otimes_{\Delta S} B_*^{sym}A$ of Cor.~\ref{cor.SymHomComplex} in this section.

\begin{prop}\label{prop.M-flat}
  If $M$ is flat over $k$, then
  \[
    HS_*(A ; M) \cong HS_*(A) \otimes_k M
  \]
\end{prop}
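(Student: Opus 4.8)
The plan is to use the fact that $\mathscr{Y}_*A$ is, in each degree, a free $k$-module (or at least flat over $k$), so that tensoring with a flat module $M$ commutes with passing to homology. More precisely, I would first observe that each term $\mathscr{Y}_nA = \overline{L}_n \otimes_{\Delta S} B_*^{sym}A$ is a direct sum of tensor products of $k$-modules of the form $A^{\otimes(q_0+1)}$ (as spelled out in Remark~\ref{rmk.uniqueChain}, the chains~(\ref{chain_elem}) give a $k$-module of this shape), hence each $\mathscr{Y}_nA$ is a free $k$-module provided $A$ is free over $k$; in the general case one still knows that $\mathscr{Y}_nA$ is a quotient of a free module and the argument below only needs that the complex consists of flat $k$-modules, which follows since $A^{\otimes(q_0+1)}$ is flat when $A$ is, and in any event one can appeal to Corollary~\ref{cor.HS_A_via-tensoralgebras} to reduce to tensor algebras, where the terms $Y_n$ of Lemma~\ref{lem.tensor_alg_constr} are manifestly free. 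So the first step is to record that $\mathscr{Y}_*A$ is a complex of flat $k$-modules.

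Second, I would invoke the algebraic universal coefficient / Künneth-type principle: if $(C_*, \partial)$ is a chain complex of flat $k$-modules and $M$ is a flat $k$-module, then the natural map
\[
  H_*(C_*) \otimes_k M \longrightarrow H_*(C_* \otimes_k M)
\]
is an isomorphism. This is standard (it is the degenerate case of the Künneth spectral sequence, or can be seen directly from the short exact sequences $0 \to Z_n \to C_n \to B_{n-1} \to 0$ and $0 \to B_n \to Z_n \to H_n \to 0$: since $M$ is flat, tensoring preserves exactness of both, and one chases the resulting diagram). Applying this with $C_* = \mathscr{Y}_*A$ gives
\[
  HS_*(A;M) = H_*(\mathscr{Y}_*A \otimes_k M) \cong H_*(\mathscr{Y}_*A) \otimes_k M = HS_*(A) \otimes_k M,
\]
which is the desired conclusion. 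By Definition~\ref{def.HS-with-coeff} this is independent of the chosen complex, so there is no loss in working with $\mathscr{Y}_*A$.

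The main subtlety — more a bookkeeping point than a genuine obstacle — is flatness of the terms $\mathscr{Y}_nA$ over $k$ when $A$ itself is not assumed flat; one should either silently assume the standing hypothesis that $A$ is projective (or at least flat) over $k$, which is implicit elsewhere in the computational sections, or route the argument through the tensor-algebra double complex $\mathscr{T}_{*,*}$ of Corollary~\ref{cor.HS_A_via-tensoralgebras}, whose columns are the complexes $\mathscr{Y}_*T^{p+1}A$ with free $k$-module terms by Lemma~\ref{lem.tensor_alg_constr}; tensoring the whole double complex with the flat module $M$ then commutes with taking homology of $\mathrm{Tot}(\mathscr{T})$ by the same flat-Künneth argument applied spectral-sequence-wise (or simply because $\mathrm{Tot}(\mathscr{T})$ is again a complex of flat $k$-modules in each total degree). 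Everything else is routine homological algebra.
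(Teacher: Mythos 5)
Your argument is correct, but it takes a detour that is not needed and that actually weakens the conclusion. The paper's proof is a one-liner: since $M$ is flat over $k$, the functor $-\otimes_k M$ is \emph{exact}, and an exact functor between abelian categories commutes with taking homology of any chain complex (it preserves kernels, images, and cokernels, hence $\ker\partial_n / \mathrm{im}\,\partial_{n+1}$). No hypothesis whatsoever is needed on the terms $\mathscr{Y}_nA$.

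Your first step --- establishing that each $\mathscr{Y}_nA$ is flat over $k$, and the ensuing worry about whether $A$ is flat, together with the proposed reroute through the tensor-algebra double complex $\mathscr{T}_{*,*}$ --- is superfluous and, as written, restricts the result to flat $k$-algebras $A$ when the proposition is stated (and true) with no such assumption. Your ``K\"unneth-type principle'' as you phrase it (``if $C_*$ is a complex of flat $k$-modules and $M$ is flat\ldots'') is imposing the wrong side's flatness: when $M$ is flat you do not need $C_n$ flat at all. Indeed, your own two short exact sequences $0 \to Z_n \to C_n \to B_{n-1} \to 0$ and $0 \to B_n \to Z_n \to H_n \to 0$ remain exact after $-\otimes_k M$ simply because $M$ is flat; nothing about $C_n$, $Z_n$, $B_n$ is used. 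You have in effect conflated this situation with the dual one in the universal coefficient theorem (Thm.~\ref{thm.univ.coeff.}, Thm.~\ref{thm.univ.coeff.ses}), where $M$ is \emph{not} flat and flatness of the chain groups is what makes the $\mathrm{Tor}$-terms appear. So: drop your first paragraph entirely, state that $-\otimes_k M$ is exact, and you recover the paper's proof in full generality.
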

\begin{proof}
  Since $M$ is $k$-flat, the functor $ - \otimes_k M$ is exact, and so
  commutes with homology functors.   In particular,
  \[
    H_n(\mathscr{Y}_*A \otimes_k M) \cong H_n(\mathscr{Y}_*A) \otimes_k M
  \]
\end{proof}

\begin{cor}\label{cor.HS-Q-Z_p}
  For any $\Z$-agebra $A$, $HS_n(A ; \mathbb{Q}) \cong HS_n(A) \otimes_{\Z} 
  \mathbb{Q}$.
\end{cor}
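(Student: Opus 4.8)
The plan is to deduce this directly from Proposition~\ref{prop.M-flat}. First I would recall that $\mathbb{Q}$ is flat as a module over $\mathbb{Z}$: indeed $\mathbb{Q} = S^{-1}\mathbb{Z}$ is the localization of $\mathbb{Z}$ at $S = \mathbb{Z}\setminus\{0\}$, and localizations are flat; alternatively, $\mathbb{Q}$ is torsion-free and $\mathbb{Z}$ is a PID, so $\mathbb{Q}$ is flat. Taking $k = \mathbb{Z}$ and $M = \mathbb{Q}$ in Proposition~\ref{prop.M-flat} then yields $HS_n(A;\mathbb{Q}) \cong HS_n(A)\otimes_{\mathbb{Z}}\mathbb{Q}$ for every $n$.

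There is essentially no obstacle here: all the content lives in Proposition~\ref{prop.M-flat}, whose proof rests on the exactness of $-\otimes_{\mathbb{Z}}\mathbb{Q}$ and its consequent commutation with the homology of the complex $\mathscr{Y}_*A$. The only point worth checking is that the complex $\mathscr{Y}_*A = k[N(-\setminus\Delta S)]\otimes_{\Delta S}B_*^{sym}A$ of Cor.~\ref{cor.SymHomComplex} is a legitimate choice for the complex named $\mathscr{Y}_*$ in Definition~\ref{def.HS-with-coeff} — which is exactly the remark recorded just before Proposition~\ref{prop.M-flat}, namely that $HS_*(A;M)$ is independent of the chosen complex computing $HS_*(A)$.

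If one wished to be fully explicit, one would simply unwind the definitions: $HS_n(A;\mathbb{Q}) = H_n(\mathscr{Y}_*A\otimes_{\mathbb{Z}}\mathbb{Q})$ by Definition~\ref{def.HS-with-coeff}, and since $\mathbb{Q}$ is $\mathbb{Z}$-flat the canonical map $H_n(\mathscr{Y}_*A)\otimes_{\mathbb{Z}}\mathbb{Q}\to H_n(\mathscr{Y}_*A\otimes_{\mathbb{Z}}\mathbb{Q})$ is an isomorphism, whose left-hand side is $HS_n(A)\otimes_{\mathbb{Z}}\mathbb{Q}$ since $HS_n(A) = HS_n(A\mid\mathbb{Z})$. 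This is just Proposition~\ref{prop.M-flat} specialized, so I would state the corollary as an immediate consequence rather than repeat the argument.
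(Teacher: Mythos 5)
Your proof is correct and is exactly the paper's intended argument: the corollary is stated immediately after Proposition~\ref{prop.M-flat} with no separate proof, precisely because it is the specialization $k=\Z$, $M=\mathbb{Q}$. Your justification that $\mathbb{Q}$ is $\Z$-flat (as a localization, or as a torsion-free module over a PID) is the standard fact being invoked.
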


\begin{lemma}\label{lem.YA-flat}

  {\it i.} If $A$ is a flat $k$-algebra, then $\mathscr{Y}_nA$ is flat for each $n$.
  
  {\it ii.} If $A$ is a projective $k$-algebra, then $\mathscr{Y}_nA$ is projective for
  each $n$.
\end{lemma}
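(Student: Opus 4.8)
The plan is to use the explicit description of $\mathscr{Y}_nA$ as a $k$-module. Recall from Remark~\ref{rmk.uniqueChain} that
\[
  \mathscr{Y}_nA = k\bigl[N(-\setminus\Delta S)_n\bigr]\otimes_{\Delta S}B_*^{sym}A
\]
is, after passing to the preferred generators, the $k$-module with $k$-basis the chains $[q_0]\to[q_1]\to\cdots\to[q_n]$ of $N(\Delta S)$ tensored (over $k$) with $A^{\otimes(q_0+1)}$. More precisely, it decomposes as a direct sum, indexed over the set $\mathcal{C}_n$ of all such $n$-chains, of copies of $A^{\otimes(q_0+1)}$:
\[
  \mathscr{Y}_nA \;\cong\; \bigoplus_{([q_0]\to\cdots\to[q_n])\in\mathcal{C}_n} A^{\otimes(q_0+1)}.
\]
So the lemma reduces to two standard homological facts: first, that a tensor power $A^{\otimes r}$ (over $k$) of a flat (resp. projective) $k$-module is again flat (resp. projective); second, that an arbitrary direct sum of flat (resp. projective) modules is flat (resp. projective).

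First I would establish the decomposition displayed above. The cleanest route is to invoke Remark~\ref{rmk.uniqueChain}: every element of $\overline{L}_n\otimes_{\Delta S}B_*^{sym}A$ is uniquely represented by a generator whose first $\overline{L}_n$-morphism is an identity, i.e.\ by an element of the form $\bigl([q_0]\xrightarrow{\beta_1}\cdots\xrightarrow{\beta_n}[q_n]\bigr)\otimes(y_0\otimes\cdots\otimes y_{q_0})$, and that the tensor product is then simply over $k$. Grouping these by the underlying chain of $\Delta S$ gives the direct-sum decomposition with summands $A^{\otimes(q_0+1)}$; I would remark that the $\Delta S$-action has been completely used up in normalizing the first morphism, so no further identifications remain on the summands, which is exactly why the sum is a genuine coproduct of copies of $A^{\otimes\bullet}$ rather than a quotient thereof.

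For part (\textit{i}): if $A$ is $k$-flat then each $A^{\otimes r}$ is $k$-flat, since a tensor product of flat modules is flat (flatness is preserved under $\otimes_k$), and then $\mathscr{Y}_nA$, being a direct sum of flat modules, is flat because $\bigoplus_\alpha(-\otimes_k N_\alpha)\cong(-)\otimes_k(\bigoplus_\alpha N_\alpha)$ and a direct sum of exact functors is exact. For part (\textit{ii}): if $A$ is $k$-projective then $A^{\otimes r}$ is $k$-projective (a tensor product of projectives is projective, e.g.\ as a direct summand of a tensor product of frees, which is free), and a direct sum of projectives is projective since $\mathrm{Hom}_k(\bigoplus_\alpha P_\alpha,-)\cong\prod_\alpha\mathrm{Hom}_k(P_\alpha,-)$ and a product of exact functors is exact. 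The only point requiring any care — and the step I would flag as the main obstacle — is verifying cleanly that $\mathscr{Y}_nA$ really is a \emph{direct sum} of copies of $A^{\otimes(q_0+1)}$ with no residual relations; once the normal form from Remark~\ref{rmk.uniqueChain} is in hand this is immediate, but it is worth stating explicitly because it is the whole content of the lemma. Everything after that is the two routine module-theoretic facts above.
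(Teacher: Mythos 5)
Your proposal is correct and follows essentially the same route as the paper: both invoke Remark~\ref{rmk.uniqueChain} to identify $\mathscr{Y}_nA$ with a direct sum (over chains in $N(\Delta S)$, equivalently with the paper's grouping $\bigoplus_{m\geq 0} k[N([m]\setminus\Delta S)_{n-1}]\otimes_k A^{\otimes(m+1)}$) of tensor powers of $A$, and then apply the standard closure of flat (resp.\ projective) $k$-modules under $\otimes_k$ and arbitrary direct sums. The only cosmetic difference is that you expand the free factor $k[N([m]\setminus\Delta S)_{n-1}]$ explicitly into a coproduct indexed by chains, whereas the paper leaves it as ``free tensored with flat/projective''; the content is identical.
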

\begin{proof}
  By Remark~\ref{rmk.uniqueChain} we may identify:
  \[
    \mathscr{Y}_nA \cong \bigoplus_{m \geq 0}\left(
    k[ N([m] \setminus\Delta S)_{n-1} ] \otimes_k  A^{\otimes(m+1)}\right).
  \]
  Note, $k[ N([m] \setminus\Delta S)_{n-1} ]$ is free, so if $A$ is
  flat, then $\mathscr{Y}_nA$ is a direct sum of modules that are 
  tensor products of free with flat modules, hence $\mathscr{Y}_nA$ is flat.
  Similarly, if $A$ is projective, $\mathscr{Y}_nA$ is also, since tensor products
  and direct sums of projectives are projective.
\end{proof}

\begin{prop}\label{prop.HS-A-B}
  If $B$ is a commutative $k$-algebra, then there is an 
  isomorphism
  \[
    HS_*(A \otimes_k B \;|\; B) \cong HS_*(A ; B)
  \]
\end{prop}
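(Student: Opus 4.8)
The plan is to compare the complex $\mathscr{Y}_*(A\otimes_k B)$, computed relative to the ground ring $B$, with the complex $\mathscr{Y}_*A$ computed over $k$ and then tensored over $k$ with $B$. By Corollary~\ref{cor.SymHomComplex} together with Remark~\ref{rmk.uniqueChain}, the complex $\mathscr{Y}_*(A\otimes_k B\mid B)$ is, in each degree $n$, the direct sum over $m\geq 0$ of
\[
  k\big[N([m]\setminus\Delta S)_{n-1}\big]\otimes_{k\Sigma_{m+1}^{\mathrm{op}}}
  (A\otimes_k B)^{\otimes_B (m+1)},
\]
where now the tensor powers are taken over $B$ and the coefficient module is $B$ rather than $k$. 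The key algebraic observation is the canonical isomorphism of $B$-modules (compatible with the $\Sigma_{m+1}^{\mathrm{op}}$-action)
\[
  (A\otimes_k B)^{\otimes_B(m+1)} \;\cong\; A^{\otimes_k(m+1)}\otimes_k B,
\]
obtained by collecting the $m+1$ copies of $B$ and multiplying them in the commutative algebra $B$. First I would establish this isomorphism carefully, checking that it is $\Sigma_{m+1}$-equivariant (the permutation acts on the $A$-factors on the left, and on the $A$-factors of $A^{\otimes_k(m+1)}$ on the right, while the single $B$-factor is fixed because $B$ is commutative), and then verify that it is natural with respect to all the face maps $d_j$ of Remark~\ref{rmk.uniqueChain} and with respect to the $\Delta S$-morphisms inducing the identifications.

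Next I would assemble these degreewise isomorphisms into an isomorphism of chain complexes
\[
  \mathscr{Y}_*(A\otimes_k B\mid B)\;\cong\;\big(k[N(-\setminus\Delta S)]\otimes_{\Delta S}B_*^{sym}A\big)\otimes_k B
  \;=\;\mathscr{Y}_*A\otimes_k B.
\]
The point is that the base-change from $\otimes_B$ to $\otimes_k$ pulls the extra tensor factor of $B$ outside the entire colimit/coequalizer construction, since $B$ is flat over itself and the relevant tensor product over $\Delta S$ is a coequalizer of direct sums, which commutes with $-\otimes_k B$. Once this chain isomorphism is in place, applying $H_*$ and invoking Definition~\ref{def.HS-with-coeff} (with the complex $\mathscr{Y}_*A$ as the chosen model) gives
\[
  HS_*(A\otimes_k B\mid B)\;\cong\;H_*(\mathscr{Y}_*A\otimes_k B)\;=\;HS_*(A;B),
\]
as desired.

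The main obstacle I anticipate is the bookkeeping of the $\Sigma_{m+1}^{\mathrm{op}}$-actions and the $\Delta S$-morphism actions under the identification $(A\otimes_k B)^{\otimes_B(m+1)}\cong A^{\otimes_k(m+1)}\otimes_k B$: one must confirm that multiplying tensor factors in $A\otimes_k B$ (as dictated by a $\Delta S$-morphism in the symmetric bar construction, Definition~\ref{def.symbar}) corresponds on the right-hand side to multiplying the $A$-factors \emph{and} the accumulated $B$-factor separately, which is exactly where commutativity of $B$ is used. I would handle this by working with the tensor-monomial notation of Section~\ref{sec.deltas} and checking the claim on a generating morphism, or more cleanly by noting that $A\otimes_k B$ with its $B$-algebra structure is the image of $A$ under the base-change functor $k\textbf{-Alg}\to B\textbf{-Alg}$, and that the symmetric bar construction $B_*^{sym}$ is compatible with base change in the sense that $B_*^{sym}(A\otimes_k B\mid B)\cong B_*^{sym}(A\mid k)\otimes_k B$ as functors $\Delta S\to B\textbf{-Mod}$; granting that, the rest is formal manipulation of the coend defining $\otimes_{\Delta S}$ and its commutation with the exact functor $-\otimes_k B$.
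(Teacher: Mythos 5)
Your proposal is correct and follows essentially the same route as the paper: both proofs hinge on the equivariant identification $(A\otimes_k B)^{\otimes_B(m+1)}\cong A^{\otimes_k(m+1)}\otimes_k B$ (using commutativity of $B$), then deduce the chain isomorphism $\mathscr{Y}_*(A\otimes_k B\mid B)\cong\mathscr{Y}_*A\otimes_k B$ and apply $H_*$. One small caution: where you invoke ``the exact functor $-\otimes_k B$'' to commute it past the coend, exactness is neither needed nor generally available (the functor $-\otimes_k B$ need not be exact unless $B$ is flat over $k$); what you actually use is that $-\otimes_k B$ is a left adjoint and hence preserves the coequalizer and direct sums defining $\otimes_{\Delta S}$, which is automatic.
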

\begin{proof}
  Here, we are viewing $A \otimes_k B$ as a $B$-algebra via the inclusion
  $B \cong 1_A \otimes_k B \hookrightarrow A \otimes_k B$.  Observe, there
  is an isomorphism 
  \[
    (A \otimes_k B) \otimes_B (A \otimes_k B) \stackrel{\cong}{\longrightarrow}
    A \otimes_k A \otimes_k (B \otimes_B B)\stackrel{\cong}{\longrightarrow}
    (A \otimes_k A) \otimes_k B.
  \]
  Iterating this for $n$-fold tensors of $A \otimes_k B$,
  \[
    \underbrace{(A \otimes_k B) \otimes_B \ldots \otimes_B (A \otimes_k B)}_{n}
     \cong \underbrace{A \otimes_k \ldots \otimes_k A}_n \otimes_k B
  \]
  This shows that the $\Delta S$-module over $B$, $B_*^{sym}(A \otimes_k B)$
  is isomorphic as $k$-module to \mbox{$(B_*^{sym}A) \otimes_k B$} over $k$.
  The proposition now follows essentially by definition.  Let $\overline{L}_*$
  be the resolution of $\underline{k}$ by projective $\Delta S^\mathrm{op}$-modules
  (over $k$) given by $\overline{L}_* = k[N(- \setminus \Delta S)]$.  Then, if we take
  tensor products (over $k$) with the algebra $B$, we obtain
  \[
    \overline{L}_* \otimes_k B \cong B[N(- \setminus \Delta S)],
  \]
  which is a projective resolution of the trivial $\Delta S^\mathrm{op}$-module over $B$,
  $\underline{B}$.  Thus,
  \begin{equation}\label{eq.H_AotimesB_B}
    HS_*(A \otimes_k B \;|\; B) = H_*\left( (\overline{L}_* \otimes_k B) 
    \otimes_{B[\mathrm{Mor}\Delta S]}
    B_*^{sym}(A \otimes_k B);\;B \right)
  \end{equation}
  On the chain level, there are isomorphisms:
  \[
    (\overline{L}_* \otimes_k B) 
    \otimes_{B[\mathrm{Mor}\Delta S]}
    B_*^{sym}(A \otimes_k B) \cong (\overline{L}_* \otimes_k B) 
    \otimes_{B[\mathrm{Mor}\Delta S]}
    (B_*^{sym}A \otimes_k B)
  \]
  \begin{equation}\label{eq.H_AotimesB_B-red}
    \cong (\overline{L}_* \otimes_{k[\mathrm{Mor}\Delta S]}
     B_*^{sym}A) \otimes_k B
  \end{equation}
  The complex~(\ref{eq.H_AotimesB_B-red}) computes $HS_*( A ; B)$
  by definition.
\end{proof}

\begin{rmk}
  Since $HS_*(A \;|\; k) = HS_*(A \otimes_k k\;|\; k)$, Prop.~\ref{prop.HS-A-B} allows
  us to identify $HS_*(A \;|\; k)$ with $HS_*(A ; k)$.
\end{rmk}

The construction $HS_*(A ; -)$ is a covariant functor, as
is immediately seen on the chain level.  Moreover, Prop.~\ref{prop.Y-functor}
implies $HS_*( - ; X)$ is a covariant functor for any left $k$-module, $X$.

\begin{prop}\label{prop.les-for-HS}
  Suppose $0 \to X \to Y \to Z \to 0$ is a short exact sequence of left $k$-modules,
  and suppose $A$ is a flat $k$-algebra.
  Then there is an induced long exact sequence in symmetric homology:
  \begin{equation}\label{eq.les_HS}
    \ldots \to HS_n(A ; X) \to HS_n(A ; Y) \to HS_n(A ; Z) \to HS_{n-1}(A ; X) \to \ldots
  \end{equation}
  Moreover, a map of short exact sequences, $(\alpha, \beta, \gamma)$, as in the
  diagram below, induces a map of the corresponding long exact sequences 
  (commutative ladder)
  \begin{equation}\label{eq.ses-morphism}
    \begin{diagram}
      \node{0}
      \arrow{e}
      \node{X}
      \arrow{e}
      \arrow{s,l}{\alpha}
      \node{Y}
      \arrow{e}
      \arrow{s,l}{\beta}
      \node{Z}
      \arrow{e}
      \arrow{s,l}{\gamma}
      \node{0}
      \\
      \node{0}
      \arrow{e}
      \node{X'}
      \arrow{e}
      \node{Y'}
      \arrow{e}
      \node{Z'}
      \arrow{e}
      \node{0}
    \end{diagram}    
  \end{equation}
\end{prop}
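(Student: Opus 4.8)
The plan is to reduce everything to the chain level and apply the standard homological algebra of short exact sequences of complexes. Fix the complex $\mathscr{Y}_*A = k[N(-\setminus\Delta S)]\otimes_{\Delta S} B_*^{sym}A$ computing $HS_*(A)$, and recall from Lemma~\ref{lem.YA-flat}(\textit{i}) that since $A$ is flat over $k$, each $\mathscr{Y}_nA$ is flat over $k$. Tensoring the given short exact sequence $0\to X\to Y\to Z\to 0$ with $\mathscr{Y}_nA$ over $k$ then yields, for each $n$, a short exact sequence of $k$-modules
\[
  0 \to \mathscr{Y}_nA\otimes_k X \to \mathscr{Y}_nA\otimes_k Y \to \mathscr{Y}_nA\otimes_k Z \to 0,
\]
where left-exactness on the $X$-end is exactly what flatness of $\mathscr{Y}_nA$ buys us (the right-hand portion is exact for any module). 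These assemble into a short exact sequence of chain complexes
\[
  0 \to \mathscr{Y}_*A\otimes_k X \to \mathscr{Y}_*A\otimes_k Y \to \mathscr{Y}_*A\otimes_k Z \to 0,
\]
since the maps are induced by $X\to Y\to Z$ in the second tensor factor and hence commute with the differentials $d_i(A)\otimes\mathrm{id}$.

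By Definition~\ref{def.HS-with-coeff}, the homology of these three complexes is $HS_*(A;X)$, $HS_*(A;Y)$, $HS_*(A;Z)$ respectively. The long exact sequence~(\ref{eq.les_HS}) is then precisely the long exact homology sequence associated to a short exact sequence of chain complexes, with the connecting homomorphism $HS_n(A;Z)\to HS_{n-1}(A;X)$ the usual snake-lemma boundary map. This is a standard fact (see, e.g., \cite{ML} or any homological algebra reference), so no further argument is needed here.

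For the naturality statement, a map of short exact sequences $(\alpha,\beta,\gamma)$ induces, after applying $\mathscr{Y}_nA\otimes_k(-)$, a commutative diagram of short exact sequences of complexes; functoriality of the long exact homology sequence in the short exact sequence (again standard) gives the commutative ladder~(\ref{eq.ses-morphism}) in symmetric homology. Finally, one should remark that the result is independent of the chosen complex $\mathscr{Y}_*$: any two such complexes are quasi-isomorphic, and since all their terms are flat over $k$, tensoring with $X$, $Y$, or $Z$ preserves the quasi-isomorphism, so the induced long exact sequences agree.

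The only genuine point to verify — and thus the main (mild) obstacle — is the exactness of the tensored sequence of complexes, i.e. that flatness of $A$ really does propagate to flatness of each $\mathscr{Y}_nA$; but this is already established in Lemma~\ref{lem.YA-flat}, so the proof is essentially bookkeeping once that lemma is in hand.
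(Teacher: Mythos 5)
Your proof is correct and matches the paper's argument: both reduce to the chain level, invoke Lemma~\ref{lem.YA-flat} to get a short exact sequence of complexes from flatness of $A$, and then appeal to the standard long exact sequence and its naturality (via the snake lemma) for the ladder.
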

\begin{proof}
  By Lemma~\ref{lem.YA-flat}, the hypothesis $A$ is flat implies that the following 
  is an exact sequence of chain complexes:
  \[
    0 \to \mathscr{Y}_*A \otimes_k X \to \mathscr{Y}_*A \otimes_k Y \to
    \mathscr{Y}_*A \otimes_k Z \to 0.
  \]
  This induces a long exact sequence in homology
  \[
    \ldots \to H_n(\mathscr{Y}_*A \otimes_k X) \to H_n(\mathscr{Y}_*A \otimes_k Y) \to
    H_n(\mathscr{Y}_*A \otimes_k Z) \to H_{n-1}(\mathscr{Y}_*A \otimes_k X)
    \to \ldots
  \]
  as required.
  
  Now let $(\alpha, \beta, \gamma)$ be a morphism of short exact sequences, as
  in diagram~(\ref{eq.ses-morphism}).  
  Consider the diagram,
  \begin{equation}\label{eq.les-morphism}
    \begin{diagram}
      \node{ \vdots }
      \arrow{s}
      \node{ \vdots }
      \arrow{s}
      \\
      \node{ HS_n(A ; X) }
      \arrow{s}
      \arrow{e,t}{ \alpha_* }
      \node{ HS_n(A ; X') }
      \arrow{s}
      \\
      \node{ HS_n(A ; Y) }
      \arrow{s}
      \arrow{e,t}{ \beta_* }
      \node{ HS_n(A ; Y') }
      \arrow{s}
      \\
      \node{ HS_n(A ; Z) }
      \arrow{s,l}{\partial}
      \arrow{e,t}{ \gamma_* }
      \node{ HS_n(A ; Z') }
      \arrow{s,l}{\partial'}
      \\
      \node{ HS_{n-1}(A ; X) }
      \arrow{s}
      \arrow{e,t}{ \alpha_* }
      \node{ HS_{n-1}(A ; X') }
      \arrow{s}
      \\      
      \node{ \vdots }
      \node{ \vdots }
    \end{diagram}
  \end{equation}      
  Since $HS_n(A ; -)$ is functorial, the upper two squares of the diagram commute.
  Commutativity of the lower square follows from the naturality of the connecting 
  homomorphism in the snake lemma.
\end{proof}

\begin{rmk}\label{rmk.les_functors}
  Any family of additive covariant functors $\{T_n\}$ between two abelian categories
  is said to be a {\it long exact sequence of functors} if it takes
  short exact sequences to long exact sequences such as~(\ref{eq.les_HS}) 
  and morphisms of short exact
  sequences to commutative ladders of long exact sequences such as~(\ref{eq.les-morphism}).  
  See~\cite{D}, 
  Definition~1.1 and also~\cite{Mc}, section~12.1.  The content of
  Prop.~\ref{prop.les-for-HS} is that for $A$ flat, $\{HS_n(A ; - )\}_{n \in \Z}$ is a 
  long exact sequence of functors.
\end{rmk}

We now state the {\it Universal Coefficient Theorem for symmetric homology}.
\begin{theorem}\label{thm.univ.coeff.}
  If $A$ is a flat $k$-algebra, and $B$ is a commutative $k$-algebra, then there is 
  a spectral sequence with
  \[
    E_2^{p,q} := \mathrm{Tor}^k_p\left( HS_q( A \;|\; k) , B \right) \Rightarrow
    HS_*( A ; B).
  \]
\end{theorem}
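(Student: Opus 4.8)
The plan is to produce the spectral sequence as the second spectral sequence of a suitable double complex, exactly as one does for the classical universal coefficient theorem. Recall from Cor.~\ref{cor.SymHomComplex} that $HS_*(A \mid k)$ is computed by the complex $\mathscr{Y}_*A = k[N(-\setminus\Delta S)] \otimes_{\Delta S} B_*^{sym}A$, and by Lemma~\ref{lem.YA-flat}, since $A$ is $k$-flat, each $\mathscr{Y}_nA$ is a flat $k$-module. I would first take a projective (or flat) resolution $P_* \to B$ of the commutative $k$-algebra $B$ as a $k$-module, and form the first-quadrant double complex $C_{p,q} := \mathscr{Y}_qA \otimes_k P_p$, with horizontal differential induced by $\partial^{P}$ and vertical differential induced by the differential of $\mathscr{Y}_*A$ (introducing the usual sign $(-1)^p$ on one family of differentials to make the squares anticommute).

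The two filtrations of $\mathrm{Tot}(C_{*,*})$ give two spectral sequences converging to $H_*(\mathrm{Tot}(C))$. For the filtration by $q$ (columns of $\mathscr{Y}$): taking horizontal homology first, since each $\mathscr{Y}_qA$ is $k$-flat, $\mathscr{Y}_qA \otimes_k P_*$ is a resolution-tensored complex whose homology is $\mathscr{Y}_qA \otimes_k B$ in degree $0$ and vanishes in higher degrees; hence this spectral sequence collapses at $E_2$ and shows $H_n(\mathrm{Tot}(C)) \cong H_n(\mathscr{Y}_*A \otimes_k B) = HS_n(A ; B)$ by Def.~\ref{def.HS-with-coeff}. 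For the filtration by $p$ (columns of $P$): taking vertical homology first, because $P_p$ is $k$-flat the functor $-\otimes_k P_p$ is exact and commutes with homology, so $H_q(\mathscr{Y}_*A \otimes_k P_p) \cong H_q(\mathscr{Y}_*A) \otimes_k P_p = HS_q(A\mid k) \otimes_k P_p$; then taking horizontal homology gives $E_2^{p,q} = H_p\big(HS_q(A\mid k) \otimes_k P_*\big) = \mathrm{Tor}^k_p\big(HS_q(A\mid k), B\big)$, which abuts to the same total homology. This is the claimed spectral sequence.

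The main technical point to get right is the collapse of the first spectral sequence, i.e.\ the interchange argument: one must know that $\mathscr{Y}_qA$ being flat forces $H_p(\mathscr{Y}_qA \otimes_k P_*) = 0$ for $p > 0$ and $= \mathscr{Y}_qA \otimes_k B$ for $p = 0$. This is where the flatness hypothesis on $A$ is genuinely used (via Lemma~\ref{lem.YA-flat}), and it is the crux: without it, the first spectral sequence would not collapse and the total homology would not be identifiable with $HS_*(A;B)$. Everything else — the sign conventions making $\mathrm{Tot}(C)$ a genuine complex, the first-quadrant convergence of both spectral sequences, the identification of $E_2$ in the second — is routine homological algebra (see, e.g., \cite{Mc}, Ch.~2 or \cite{D}). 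One should also remark that, as is standard for such universal coefficient spectral sequences, it need not degenerate to a short exact sequence unless $k$ has global dimension $\leq 1$; in that special case (e.g.\ $k = \Z$ or a field) one recovers a split short exact sequence of the familiar form, consistent with Cor.~\ref{cor.HS-Q-Z_p}.
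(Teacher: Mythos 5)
Your proof is correct, but it takes a genuinely different route from the paper's. You construct the spectral sequence by hand from the first-quadrant double complex $C_{p,q} = \mathscr{Y}_qA \otimes_k P_p$, where $P_* \to B$ is a $k$-projective resolution, and run the two standard spectral sequences: one collapses by flatness of $\mathscr{Y}_qA$ to identify $H_*(\mathrm{Tot}\,C)$ with $HS_*(A;B)$, while the other produces the claimed $E_2$-page $\mathrm{Tor}^k_p(HS_q(A\,|\,k),B)$. The paper instead invokes Dold's abstract Universal Coefficient Theorem (Thm.~2.12 of \cite{D}, McCleary Thm.~12.11) as a black box: it verifies that $\{T_q := HS_q(A;-)\}$ is a long exact sequence of additive covariant functors (via Rmk.~\ref{rmk.les_functors} and Prop.~\ref{prop.les-for-HS}), bounded below, and commuting with direct sums, and then cites the general theorem to obtain the spectral sequence $E_2^{p,q} = \mathrm{Tor}^k_p(T_q(k),B) \Rightarrow T_*(B)$. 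Both routes ultimately rest on the same structural fact — Lemma~\ref{lem.YA-flat}, that $A$ flat forces each $\mathscr{Y}_nA$ to be flat — but they exploit it differently: you use it to collapse the first spectral sequence, the paper uses it (inside Prop.~\ref{prop.les-for-HS}) to verify the long-exact-sequence axiom. Your argument is more self-contained and makes the mechanism transparent; the paper's is shorter once the functorial framework and the reference to Dold are in place, and has the small advantage of being phrased intrinsically in terms of the derived functors rather than a chosen chain-level model. Your closing remark about degeneration under the weak-global-dimension-$\leq 1$ hypothesis matches Thm.~\ref{thm.univ.coeff.ses}.
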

\begin{proof}
  Let $T_q : k\textrm{-$\mathbf{Mod}$} \to k\textrm{-$\mathbf{Mod}$}$ be the functor
  $HS_q( A ; - )$.  Observe, since $A$ is flat, $\{T_q\}$ is a long exact sequence 
  of additive covariant functors 
  (Rmk.~\ref{rmk.les_functors} and Prop.~\ref{prop.les-for-HS}); $T_q = 0$ for 
  sufficiently small $q$ (indeed, for
  $q < 0$); and $T_q$ commutes with arbitrary direct sums, since tensoring and
  taking homology always commutes with direct sums.
  Hence, by the Universal Coefficient Theorem of Dold (2.12 of~\cite{D}.  See also
  McCleary~\cite{Mc}, Thm.~12.11), there is a spectral sequence
  with
  \[
    E_2^{p,q} := \mathrm{Tor}^k_p\left( T_q(k) , B \right) \Rightarrow
    T_*(B).
  \]
\end{proof}

As an immediate consequence, we have the following result.
\begin{cor}\label{cor.iso_in_HS_with_coeff.}
  If $f : A \to A'$ is a $k$-algebra map between flat algebras which induces
  an isomorphism in symmetric homology, $HS_*(A) \stackrel{\cong}{\to} HS_*(A')$,
  then for a commutative $k$-algebra $B$, the map $f \otimes \mathrm{id}_B$ induces
  an isomorphism $HS_*(A;B) \stackrel{\cong}{\to} HS_*(A' ; B)$.
\end{cor}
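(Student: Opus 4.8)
The plan is to obtain the corollary by comparing the two Universal Coefficient spectral sequences of Theorem~\ref{thm.univ.coeff.}. Since both $A$ and $A'$ are flat $k$-algebras, that theorem furnishes first-quadrant spectral sequences
\[
  E_2^{p,q}(A)=\mathrm{Tor}^k_p\!\left(HS_q(A\mid k),B\right)\Rightarrow HS_*(A;B),\qquad
  E_2^{p,q}(A')=\mathrm{Tor}^k_p\!\left(HS_q(A'\mid k),B\right)\Rightarrow HS_*(A';B).
\]
The first step is to check that $f$ induces a morphism between them. By Prop.~\ref{prop.Y-functor} the assignment $A\mapsto\mathscr{Y}_*A$ is functorial, hence $HS_*(-;X)$ is a covariant functor for every $k$-module $X$, and $f$ determines a natural transformation of the long exact sequences of functors $\{HS_q(A;-)\}\to\{HS_q(A';-)\}$ in the sense of Rmk.~\ref{rmk.les_functors} (each is a long exact sequence of functors by Prop.~\ref{prop.les-for-HS}, using flatness of $A$ and $A'$). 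Dold's construction (2.12 of~\cite{D}) is functorial in this data, so it carries this natural transformation to a morphism of spectral sequences which is compatible, on abutments, with the map $HS_*(A;B)\to HS_*(A';B)$ induced by $f\otimes\mathrm{id}_B$ (after the identification of Prop.~\ref{prop.HS-A-B}, or directly on the complex $\mathscr{Y}_*A\otimes_k B$).

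Next I would identify the induced map on $E_2$-pages as $\mathrm{Tor}^k_p(f_*,\mathrm{id}_B)$, where $f_*\colon HS_q(A\mid k)\to HS_q(A'\mid k)$ is the map in symmetric homology induced by $f$. By hypothesis $f_*$ is an isomorphism for every $q$, and $\mathrm{Tor}^k_p(-,B)$ is a functor, so $\mathrm{Tor}^k_p(f_*,\mathrm{id}_B)$ is an isomorphism for all $p,q\geq 0$; that is, the morphism of spectral sequences is an isomorphism on $E_2$. Because both spectral sequences lie in the first quadrant, they converge and the comparison (Zeeman) theorem applies: an isomorphism on $E_2$ forces an isomorphism on every later page, hence an isomorphism of the associated graded groups of the abutments, and since the filtrations are finite in each total degree this yields an isomorphism $HS_*(A;B)\stackrel{\cong}{\to}HS_*(A';B)$, which is exactly the map induced by $f\otimes\mathrm{id}_B$.

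The step that requires the most care is the naturality claim in the first paragraph: one must be sure that a $k$-algebra map genuinely produces a morphism of the Dold spectral sequences, commuting with all the differentials and compatible with the maps on abutments, rather than merely an abstract collection of isomorphisms page by page. Everything else is a routine invocation of functoriality of $\mathrm{Tor}$ and of the comparison theorem.

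If one prefers to sidestep the spectral-sequence bookkeeping entirely, there is a purely chain-level alternative. Since $A$ and $A'$ are flat, $\mathscr{Y}_*A$ and $\mathscr{Y}_*A'$ are complexes of flat $k$-modules by Lemma~\ref{lem.YA-flat}(\emph{i}); the chain map $\mathscr{Y}_*A\to\mathscr{Y}_*A'$ induced by $f$ is a quasi-isomorphism because it realizes the isomorphism $HS_*(A)\cong HS_*(A')$; and its mapping cone is a bounded-below acyclic complex of flat modules. Such a complex stays acyclic after applying $-\otimes_k B$ (split it into short exact sequences of cycle modules, which are flat by the two-out-of-three property, and tensor each one, using vanishing of $\mathrm{Tor}^k_1$ to preserve exactness, then splice). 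Hence $\mathscr{Y}_*A\otimes_k B\to\mathscr{Y}_*A'\otimes_k B$ is again a quasi-isomorphism, and passing to homology gives $HS_*(A;B)\cong HS_*(A';B)$ directly.
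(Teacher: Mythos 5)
Your primary argument---comparing the two Universal Coefficient spectral sequences of Theorem~\ref{thm.univ.coeff.} via the induced morphism, checking that it is an isomorphism on $E_2$, and invoking the first-quadrant comparison theorem---is exactly what the paper has in mind: the corollary is stated without further proof as an ``immediate consequence'' of the UCT, and this is the intended reading. Your concern about naturality of Dold's construction is a reasonable thing to flag, but it does hold: a natural transformation of long exact sequences of functors induces a morphism of the associated universal coefficient spectral sequences compatible with abutments, so the argument closes.

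Your chain-level alternative is a genuinely different and in some ways cleaner route, and it is also correct. By Lemma~\ref{lem.YA-flat}(\emph{i}) both $\mathscr{Y}_*A$ and $\mathscr{Y}_*A'$ consist of flat $k$-modules; the induced chain map is a quasi-isomorphism by hypothesis; its mapping cone $C_*$ is a non-negatively graded acyclic complex of flat modules; the cycle modules $Z_n$ are flat by induction ($Z_0=C_0$ is flat, and in $0\to Z_{n+1}\to C_{n+1}\to Z_n\to 0$ with $C_{n+1}$ and $Z_n$ flat the kernel is flat, using vanishing of $\mathrm{Tor}^k_2(Z_n,-)$); tensoring each such short exact sequence with $B$ preserves exactness since $\mathrm{Tor}^k_1(Z_n,B)=0$, and splicing shows $C_*\otimes_k B$ is acyclic. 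Hence $\mathscr{Y}_*A\otimes_k B\to\mathscr{Y}_*A'\otimes_k B$ is a quasi-isomorphism, which is the conclusion. This route buys you a proof that sidesteps the spectral-sequence naturality bookkeeping entirely and needs only Lemma~\ref{lem.YA-flat} rather than the full strength of Theorem~\ref{thm.univ.coeff.}; the paper's route buys brevity once the UCT is already established.
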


Under stronger hypotheses, the universal coefficient spectral sequence
reduces to short exact sequences.  Recall some notions of ring theory
(c.f. the article Homological Algebra: Categories of Modules (200:K), Vol. 1,
pp. 755-757 of~\cite{I}).  A commutative ring $k$ is said to have 
{\it global dimension $\leq n$} if for all \mbox{$k$-modules} $X$ and $Y$, 
$\mathrm{Ext}_k^m(X,Y) = 0$ 
for $m > n$.  $k$ is said to have {\it weak global dimension $\leq n$} if for
all \mbox{$k$-modules} $X$ and $Y$, $\mathrm{Tor}_m^k(X, Y) = 0$ for $m>n$. 
Note, the weak global dimension of a ring is less than or equal to its
global dimension, with equality holding for Noetherian rings but not in
general.  A ring is said to be {\it hereditary} if all submodules of projective
modules are projective, and this is equivalent to the global dimension of
the ring being no greater than $1$.
\begin{theorem}\label{thm.univ.coeff.ses}
  If $k$ has weak global dimension $\leq 1$,
  then the spectral sequence 
  of Thm.~\ref{thm.univ.coeff.} reduces to 
  short exact sequences,
  \begin{equation}\label{eq.UCTses}
    0 \longrightarrow HS_n(A\;|\;k) \otimes_k B \longrightarrow
    HS_n(A ; B) \longrightarrow \mathrm{Tor}^k_1( HS_{n-1}(A \;|\;k), B)
    \longrightarrow 0.
  \end{equation}
  Moreover, if $k$ is hereditary and and $A$ is projective over $k$, then
  these sequences split (unnaturally).
\end{theorem}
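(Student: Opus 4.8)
The plan is to deduce Theorem~\ref{thm.univ.coeff.ses} directly from the universal coefficient spectral sequence of Theorem~\ref{thm.univ.coeff.} by a dimension-counting argument. First I would observe that under the hypothesis $\mathrm{wgldim}(k) \leq 1$, we have $\mathrm{Tor}^k_p(M,N) = 0$ for all $k$-modules $M, N$ and all $p \geq 2$. Applying this to $M = HS_q(A\;|\;k)$ and $N = B$, the $E_2$-page of the spectral sequence is concentrated in the two columns $p = 0$ and $p = 1$, i.e. $E_2^{p,q} = 0$ unless $p \in \{0,1\}$. A spectral sequence concentrated in two adjacent columns has no room for any nonzero differentials (the differential $d_r$ on $E_r$ has bidegree $(-r, r-1)$, which for $r \geq 2$ maps out of the allowable columns), so $E_2 = E_\infty$. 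The abutment $HS_n(A;B)$ therefore carries a two-step filtration whose associated graded pieces are $E_\infty^{0,n} = E_2^{0,n} = HS_n(A\;|\;k) \otimes_k B$ and $E_\infty^{1,n-1} = E_2^{1,n-1} = \mathrm{Tor}^k_1(HS_{n-1}(A\;|\;k), B)$. Unwinding the filtration gives exactly the short exact sequence~(\ref{eq.UCTses}). I should be slightly careful about the direction of the filtration and which graded piece is the sub and which is the quotient; the convention is forced by the fact that $E^{0,n}_\infty$ is the image of the natural map from $HS_n(A\;|\;k)\otimes_k B$ (the "edge" map induced by $B \to B$, i.e. by the chain-level tensor), so it sits as the submodule, and $\mathrm{Tor}^k_1$ appears as the quotient.

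For the splitting statement, assume now that $k$ is hereditary (global dimension $\leq 1$) and $A$ is projective over $k$. By Lemma~\ref{lem.YA-flat}(ii), each $\mathscr{Y}_n A$ is a projective $k$-module, and since $k$ is hereditary, the cycles $Z_n \subseteq \mathscr{Y}_n A$ and boundaries $B_n \subseteq \mathscr{Y}_n A$ — being submodules of projectives — are themselves projective. This means the short exact sequences $0 \to Z_n \to \mathscr{Y}_n A \to B_{n-1} \to 0$ split, so $\mathscr{Y}_* A$ is chain-homotopy equivalent to a complex with zero differential, namely $\bigoplus_n H_n(\mathscr{Y}_*A)[n] = \bigoplus_n HS_n(A\;|\;k)[n]$, through projective modules. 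Tensoring this homotopy equivalence with $B$ and taking homology, together with the standard algebraic Künneth/universal-coefficient computation for a complex with projective terms split into its homology, yields the direct sum decomposition $HS_n(A;B) \cong \big(HS_n(A\;|\;k)\otimes_k B\big) \oplus \mathrm{Tor}^k_1(HS_{n-1}(A\;|\;k), B)$, which splits~(\ref{eq.UCTses}). Naturality fails because the splitting of $0 \to Z_n \to \mathscr{Y}_n A \to B_{n-1} \to 0$ is a non-canonical choice.

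I expect the only real subtlety — not so much an obstacle as a point requiring care — to be bookkeeping the filtration convention so that the three-term sequence comes out with $\otimes_k B$ as the submodule and $\mathrm{Tor}^k_1$ as the quotient, rather than the other way around; this is pinned down by identifying the edge homomorphism of the spectral sequence with the natural chain-level map $H_n(\mathscr{Y}_*A)\otimes_k B \to H_n(\mathscr{Y}_*A \otimes_k B)$. Everything else is routine homological algebra: the vanishing of $\mathrm{Tor}^k_p$ for $p \geq 2$ collapsing a two-column spectral sequence, and the fact that a bounded-below complex of projectives over a hereditary ring is formal (quasi-isomorphic to its homology) via splitting the cycle/boundary sequences.
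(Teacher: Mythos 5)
Your overall approach matches the paper's: the paper invokes Dold's Corollary~2.13 for the reduction to short exact sequences (which is exactly the two-column spectral-sequence collapse you spell out, and you correctly identify the filtration direction via the edge map), and it invokes Lemma~\ref{lem.YA-flat} together with a citation to Rotman's Theorem~8.22 for the splitting. Your first paragraph is correct and supplies the argument that the paper leaves to the citation.

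Your splitting argument, however, contains a genuine error. You assert that because the short exact sequences $0 \to Z_n \to \mathscr{Y}_nA \to B_{n-1} \to 0$ split, the complex $\mathscr{Y}_*A$ is chain-homotopy equivalent to the complex with zero differential $\bigoplus_n HS_n(A\;|\;k)[n]$. That is false in general: a bounded-below complex of projectives over a hereditary ring is not chain-homotopy equivalent to its homology unless each $H_n$ is itself projective, since a chain-homotopy equivalence would force the surjection $Z_n \twoheadrightarrow H_n$ to split. (A minimal counterexample over $\mathbb{Z}$: the complex $\mathbb{Z} \xrightarrow{\,2\,} \mathbb{Z}$ is not chain-homotopy equivalent to $\mathbb{Z}/2$.) Taken at face value, your claim would give $HS_n(A;B) \cong HS_n(A\;|\;k)\otimes_k B$ with \emph{no} $\mathrm{Tor}$ term, contradicting the theorem you are proving. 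What the splitting of $0 \to Z_n \to \mathscr{Y}_nA \to B_{n-1} \to 0$ actually gives is a chain \emph{isomorphism} of $\mathscr{Y}_*A$ with a direct sum, over $n$, of two-term complexes $(\cdots \to 0 \to B_{n-1} \hookrightarrow Z_{n-1} \to 0 \to \cdots)$ of projective modules. Since $0 \to B_{n-1} \to Z_{n-1} \to HS_{n-1}(A\;|\;k) \to 0$ is a length-one projective resolution, tensoring one of these two-term complexes with $B$ and taking homology yields $HS_{n-1}(A\;|\;k)\otimes_k B$ in degree $n-1$ and $\mathrm{Tor}_1^k\!\left(HS_{n-1}(A\;|\;k), B\right)$ in degree $n$; summing over $n$ produces the split decomposition. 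This corrected computation is precisely the content of the Rotman theorem the paper cites, so with that repair your proof agrees with the paper's.
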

\begin{proof}
  Assume first that $k$ has weak global dimension $\leq 1$.  
  So $\mathrm{Tor}_p^k(T_q(k), B) = 0$ for all $p > 1$.  Following Dold's
  argument (Corollary~2.13 of~\cite{D}), we obtain the required exact sequences,
  \[
    0 \longrightarrow T_n(k)\otimes_k B \longrightarrow
    T_n(B) \longrightarrow \mathrm{Tor}^k_1( T_{n-1}(k), B )
    \longrightarrow 0.
  \]
  Assume further that $k$ is hereditary and $A$ is projective.  
  Then by Lemma~\ref{lem.YA-flat}, $\mathscr{Y}_nA$ is projective for each n. 
  Theorem~8.22 of Rotman~\cite{R3} then gives us the desired splitting.
\end{proof}

\begin{rmk}
  The proof given above also proves UCT for cyclic homology.  A partial result
  along these lines exists in Loday (\cite{L}, 2.1.16).  
  There, he shows 
  $HC_*(A\;|\; k) \otimes_k K \cong HC_*(A \;|\; K)$
  and \mbox{$HH_*(A\;|\; k) \otimes_k K \cong$} \mbox{$HH_*(A \;|\; K)$} in the case 
  that $K$
  is a localization of $k$, and $A$ is a $K$-module, flat over $k$.  I am
  not aware of a statement of UCT for cyclic or Hochschild homology in its full
  generality in the literature.
\end{rmk}

For the remainder of this section, we shall obtain a converse to 
Cor.~\ref{cor.iso_in_HS_with_coeff.} in the case $k = \Z$.
\begin{theorem}\label{thm.conv.HS_iso}
  Let $f : A \to A'$ be an algebra map between torsion-free $\Z$-algebras.
  Suppose for $B = \mathbb{Q}$ and $B = \Z/p\Z$ for
  any prime $p$, the map  
  $f \otimes \mathrm{id}_B$ induces an isomorphism
  $HS_*(A ; B) \to HS_*(A' ; B)$.  Then $f$ also induces an isomorphism 
  $HS_*(A) \stackrel{\cong}{\to} HS_*(A')$.
\end{theorem}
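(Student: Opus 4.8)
The plan is to use the universal coefficient machinery established earlier together with a standard comparison argument for homology with $\mathbb{Q}$ and $\mathbb{Z}/p\mathbb{Z}$ coefficients. First I would invoke Corollary~\ref{cor.HS-Q-Z_p} and (more generally) the behavior of $HS_*$ under change of coefficients: since $A$ and $A'$ are torsion-free over $\mathbb{Z}$, they are flat over $\mathbb{Z}$, so by Proposition~\ref{prop.M-flat} we have $HS_*(A;\mathbb{Q}) \cong HS_*(A)\otimes_{\mathbb{Z}}\mathbb{Q}$, and the hypothesis for $B = \mathbb{Q}$ tells us $f$ induces an isomorphism $HS_*(A)\otimes\mathbb{Q} \to HS_*(A')\otimes\mathbb{Q}$. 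Thus the induced map $f_*$ on the integral groups $HS_*(A) \to HS_*(A')$ becomes an isomorphism after inverting all primes; equivalently, $\ker f_*$ and $\operatorname{coker} f_*$ are torsion abelian groups.

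Next I would bring in the mod-$p$ hypothesis. Since $\mathbb{Z}$ has weak global dimension $1$ (it is a PID, hence hereditary), Theorem~\ref{thm.univ.coeff.ses} gives, for each prime $p$ and $B = \mathbb{Z}/p\mathbb{Z}$, natural short exact sequences
\[
  0 \to HS_n(A)\otimes_{\mathbb{Z}}\mathbb{Z}/p \to HS_n(A;\mathbb{Z}/p) \to \operatorname{Tor}_1^{\mathbb{Z}}(HS_{n-1}(A),\mathbb{Z}/p) \to 0,
\]
and similarly for $A'$, fitting into a commutative ladder under $f$. The hypothesis says the middle vertical map is an isomorphism for all $n$. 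I would then run a short inductive/five-lemma-style argument on $n$ (starting from the fact that $HS_n = 0$ for $n < 0$) to conclude that the maps $HS_n(A)\otimes\mathbb{Z}/p \to HS_n(A')\otimes\mathbb{Z}/p$ and $\operatorname{Tor}_1^{\mathbb{Z}}(HS_{n-1}(A),\mathbb{Z}/p)\to\operatorname{Tor}_1^{\mathbb{Z}}(HS_{n-1}(A'),\mathbb{Z}/p)$ are each isomorphisms: the only subtlety is separating the two pieces of the short exact sequence, which one handles by first treating the $\operatorname{Tor}$ term in degree $n-1$ (known by induction) and then deducing the $\otimes$ term in degree $n$.

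Finally I would combine the two conclusions. We know $\ker f_*$ and $\operatorname{coker} f_*$ are torsion; if either were nonzero it would contain a nonzero element killed by some prime $p$, which would force $f_*\otimes\mathbb{Z}/p$ or the associated $\operatorname{Tor}$-map to fail to be injective or surjective — contradicting the mod-$p$ conclusion of the previous paragraph. More precisely, applying $-\otimes_{\mathbb{Z}}\mathbb{Z}/p$ and $\operatorname{Tor}_1^{\mathbb{Z}}(-,\mathbb{Z}/p)$ to the long exact sequence $0 \to \ker f_* \to HS_n(A) \to HS_n(A') \to \operatorname{coker} f_* \to 0$ and using that $f_*\otimes\mathbb{Z}/p$ and $\operatorname{Tor}_1(f_*,\mathbb{Z}/p)$ are isomorphisms forces $\ker f_*$ and $\operatorname{coker} f_*$ to have no $p$-torsion for every $p$; being torsion groups with no $p$-torsion for any $p$, they vanish. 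Hence $f_*$ is an isomorphism. The main obstacle I anticipate is the bookkeeping in the inductive step of the second paragraph — cleanly extracting that both the tensor and the $\operatorname{Tor}$ halves of the naturally split (but only naturally short exact) UCT sequences map isomorphically — but this is a routine diagram chase once the induction is set up, and there are no conceptual gaps.
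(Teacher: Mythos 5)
Your strategy is a genuinely different route from the paper's. The paper climbs through $\mathbb{Z}/p^m$-coefficients by induction on $m$ using the Bockstein lemma, passes to $\mathbb{Z}/p^\infty$ by a direct limit, and then uses the sequence $0 \to \mathbb{Z} \to \mathbb{Q} \to \bigoplus_p \mathbb{Z}/p^\infty \to 0$; you instead propose to combine the natural UCT short exact sequences with the rational hypothesis. The first and last paragraphs of your plan are sound. But the middle step contains a genuine gap, not just bookkeeping: the induction you describe does not close.

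Write $\alpha_n = f_*\otimes\mathbb{Z}/p$ and $\gamma_n = \operatorname{Tor}_1^{\mathbb{Z}}(f_*^{(n)},\mathbb{Z}/p)$. The commutative ladder of UCT short exact sequences and the snake lemma give, for each $n$: $\alpha_n$ injective, $\gamma_{n-1}$ surjective, and $\ker\gamma_{n-1}\cong\operatorname{coker}\alpha_n$. Your inductive step assumes the "Tor term in degree $n-1$" is known, i.e.\ that $\gamma_{n-1}$ is an isomorphism, from which the five lemma gives $\alpha_n$ an isomorphism. But what the previous stage can supply is only that $\alpha_{n-1}$ is an isomorphism, and $\alpha_{n-1}$ being an isomorphism does \emph{not} imply $\gamma_{n-1}$ is: a map of abelian groups can induce an isomorphism on $-\otimes\mathbb{Z}/p$ while failing to induce one on $\operatorname{Tor}_1(-,\mathbb{Z}/p)$ (e.g.\ the quotient $\mathbb{Z}/p^2 \to \mathbb{Z}/p$). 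So the hypothesis you need at stage $n$ is never delivered, and the claim that "both the tensor and the Tor halves map isomorphically" is exactly the unproved step.

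The gap is fixable, and the fix eliminates the induction entirely. From the snake lemma alone you have, for every $n$ and $p$, that $\alpha_n$ is injective and $\gamma_n$ is surjective. Let $I_n = \operatorname{im} f_*$ and $C_n = \operatorname{coker} f_*$ in degree $n$. Since $HS_n(A)/p \twoheadrightarrow I_n/p$ and $\alpha_n$ factors through it injectively, the map $I_n/p \to HS_n(A')/p$ is injective; chasing the long exact $\otimes\mathbb{Z}/p$-sequence of $0\to I_n\to HS_n(A')\to C_n\to 0$, this forces $HS_n(A')[p]\to C_n[p]$ to be surjective. On the other hand, $\gamma_n$ surjective forces $HS_n(A')[p]\subseteq I_n$, so the same map $HS_n(A')[p]\to C_n[p]$ is zero. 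Hence $C_n[p]=0$ for every $p$; since $C_n$ is torsion (your $\mathbb{Q}$-step), $C_n=0$. Then $\operatorname{coker}\alpha_n = C_n/p = 0$, so each $\alpha_n$ is an isomorphism and $\ker\gamma_{n-1}\cong\operatorname{coker}\alpha_n=0$, whence each $\gamma_n$ is also an isomorphism and $K_n[p]=\ker\gamma_n=0$, giving $K_n=0$. With this correction your UCT route works and is arguably shorter than the paper's Bockstein argument; the idea you were missing is that one does not need both halves of the UCT map to be isomorphisms a priori — injectivity of $\alpha$ plus surjectivity of $\gamma$, combined with the torsion cokernel, already force everything to collapse.
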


First, note that Prop.~\ref{prop.les-for-HS} allows one to construct the Bockstein
homomorphisms 
\[
  \beta_n : HS_n(A ; Z) \to HS_{n-1}(A ; X)
\]
associated to a short exact sequence of $k$-modules, $0 \to X \to Y \to Z \to 0$, 
as long as $A$ is flat over $k$.  These Bocksteins are natural in the following
sense:
\begin{lemma}\label{lem.bockstein}
  Suppose $f : A \to A'$ is a map of flat $k$-algebras.  and $0 \to X \to Y \to Z \to 0$
  is a short exact sequence of left $k$-modules.  Then the following diagram
  is commutative for each $n$:
  \[
    \begin{diagram}
      \node{HS_n(A; Z)}
      \arrow{e,t}{\beta}
      \arrow{s,l}{f_*}
      \node{HS_{n-1}(A; X)}
      \arrow{s,l}{f_*}
      \\
      \node{HS_n(A'; Z)}
      \arrow{e,t}{\beta'}
      \node{HS_{n-1}(A'; X)}
    \end{diagram}
  \]
  Moreover if the induced map $f_*: HS_*(A;W) \to HS_*(A';W)$
  is an isomorphism for any two of $W=X$, $W=Y$, $W=Z$, then it is an isomorphism
  for the third.
\end{lemma}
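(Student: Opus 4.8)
The plan is to exhibit the two-row commuting square as arising from a single map of short exact sequences of chain complexes, and then invoke the naturality of the connecting homomorphism in the snake lemma, together with the long-exact-sequence five-lemma argument for the last clause. First I would recall from Lemma~\ref{lem.YA-flat} that since $A$ and $A'$ are flat over $k$, tensoring the complexes $\mathscr{Y}_*A$ and $\mathscr{Y}_*A'$ with the short exact sequence $0\to X\to Y\to Z\to 0$ keeps exactness, so we obtain two short exact sequences of chain complexes
\[
  0 \to \mathscr{Y}_*A\otimes_k X \to \mathscr{Y}_*A\otimes_k Y \to \mathscr{Y}_*A\otimes_k Z \to 0
\]
and the analogous one for $A'$. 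The algebra map $f:A\to A'$ induces, via Prop.~\ref{prop.Y-functor} (functoriality of $\mathscr{Y}_*$), a chain map $\mathscr{Y}_*f:\mathscr{Y}_*A\to\mathscr{Y}_*A'$, and tensoring with $\mathrm{id}_X$, $\mathrm{id}_Y$, $\mathrm{id}_Z$ gives a morphism from the first short exact sequence of complexes to the second. Since $-\otimes_k -$ is a bifunctor, these three tensored maps fit into a commuting $3\times 2$ diagram of chain complexes.

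The Bockstein $\beta_n:HS_n(A;Z)\to HS_{n-1}(A;X)$ is by definition (Prop.~\ref{prop.les-for-HS}) the connecting homomorphism $\partial$ of the long exact homology sequence attached to the first short exact sequence of complexes above, and $\beta'_n$ is the connecting homomorphism for the $A'$-sequence. The desired commuting square is then precisely the naturality of the snake-lemma connecting homomorphism applied to the morphism of short exact sequences of complexes constructed above: on homology, $f_*\circ\beta = \beta'\circ f_*$. This is the same principle already used at the end of the proof of Prop.~\ref{prop.les-for-HS} for the lower square of diagram~(\ref{eq.les-morphism}); here it is applied with the vertical maps being induced by $f$ rather than by a morphism of the coefficient sequence.

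For the final clause, I would lay the two long exact sequences~(\ref{eq.les_HS}) for $A$ and $A'$ side by side, joined by the vertical maps $f_*:HS_n(A;W)\to HS_n(A';W)$ for $W=X,Y,Z$. By functoriality of $HS_n(A;-)$ in the algebra variable and the commuting-square result just proved, this is a morphism of long exact sequences, i.e.\ a commutative ladder. If $f_*$ is an isomorphism for two of $W=X$, $W=Y$, $W=Z$, then in each six-term segment of the ladder four of the five relevant vertical maps are isomorphisms, so by the five lemma the fifth is too; running this along the whole ladder yields that $f_*$ is an isomorphism for the third coefficient module in every degree. I do not anticipate a genuine obstacle here; the only point requiring a little care is bookkeeping which term of the five-lemma pattern plays which role when the ``missing'' module is $X$ versus $Y$ versus $Z$, but in all three cases the standard five lemma (or its two halves, mono and epi) applies verbatim.
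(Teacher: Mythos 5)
Your proof is correct and follows essentially the same route as the paper: flatness of $A$ and $A'$ gives two short exact sequences of complexes, $\mathscr{Y}_*f$ induces a morphism between them, naturality of the snake-lemma connecting map yields the Bockstein square, and the five lemma applied to the resulting ladder of long exact sequences gives the final clause.
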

\begin{proof}
  $A$ and $A'$ flat imply both sequences of complexes are exact:
  \[
    0 \to \mathscr{Y}_*A \otimes_k X \to \mathscr{Y}_*A \otimes_k Y \to
    \mathscr{Y}_*A \otimes_k Z \to 0.
  \]
  \[
    0 \to \mathscr{Y}_*A' \otimes_k X \to \mathscr{Y}_*A' \otimes_k Y \to
    \mathscr{Y}_*A' \otimes_k Z \to 0.
  \]
  The map $\mathscr{Y}_*A \to \mathscr{Y}_*A'$ induces a map of short
  exact sequences, hence induces a commutative ladder of long exact
  sequences of homology groups.  In particular, the squares involving the
  boundary maps (Bocksteins) must commute.
  
  Now, assuming further that $f_*$ induces isomorphisms $HS_*(A;W) \to HS_*(A';W)$
  for any two of $W = X$, $W = Y$, $W = Z$, let $V$ be the third module.  The
  $5$-lemma implies isomorphisms $HS_n(A; V) \stackrel{\cong}{\longrightarrow}
  HS_n(A'; V)$ for each $n$.
\end{proof}

We shall now proceed with the proof of Thm.~\ref{thm.conv.HS_iso}.
All tensor products will be over $\Z$ for the rest of this section.

\begin{proof}
  Let $A$ and $A'$ be torsion-free $\Z$-modules.  Over $\Z$, torsion-free implies
  flat.  Let $f : A \to A'$ be an algebra map inducing isomorphism in
  symmetric homology with coefficients in $\mathbb{Q}$ and also in $\Z/p\Z$
  for any prime $p$.
  For $m \geq 2$, there is a short exact sequence,
  \[
    0 \longrightarrow \Z/p^{m-1}\Z \stackrel{p}{\longrightarrow} \Z/p^m\Z
    \longrightarrow \Z/p\Z \longrightarrow 0.
  \]
  Consider first the case $m = 2$.  Since $HS_*(A ; \Z/p\Z) \to HS_*(A' ; \Z/p\Z)$ 
  is an isomorphism, Lemma~\ref{lem.bockstein} implies the induced map
  is an isomorphism for the middle term:
  \begin{equation}\label{eq.HS_A_Zp2}
    f* : HS_*(A; \Z/p^2\Z) \stackrel{\cong}{\longrightarrow} HS_*(A'; \Z/p^2\Z)
  \end{equation}
  (Note, all maps induced by $f$ on symmetric homology will be denoted by
  $f_*$.)
  
  For the inductive step, fix $m > 2$ and suppose $f$ induces an isomorphism in
  symmetric homology,
  $f_* : HS_*(A; \Z/p^{m-1}\Z) \stackrel{\cong}{\longrightarrow} HS_*(A'; \Z/p^{m-1}\Z)$.
  Again, Lemma~\ref{lem.bockstein} implies the induced map is an isomorphism
  on the middle term.
  \begin{equation}\label{eq.HS_A_Zpm}
    f_* : HS_*(A; \Z/p^{m}\Z) \stackrel{\cong}{\longrightarrow} HS_*(A'; \Z/p^{m}\Z)
  \end{equation}
  
  Denote $\ds{\Z / p^\infty \Z := \lim_{\longrightarrow} \Z/ p^m\Z}$.  Note, this 
  is a {\it direct limit} in the sense that it is a colimit over a directed
  system.  The direct limit functor is exact (Prop.~5.3 of~\cite{S2}),
  so the maps $HS_n(A ; \Z / p^\infty \Z) \to HS_n(A' ; \Z / p^\infty \Z)$ induced
  by $f$ are isomorphisms, given by the chain of isomorphisms below:
  \[
    HS_n(A; \Z / p^\infty \Z)
    \cong
    H_n(\lim_{\longrightarrow} \mathscr{Y}_*A \otimes \Z/p^m\Z)
    \cong
    \lim_{\longrightarrow}H_*(\mathscr{Y}_*A \otimes \Z/p^m\Z)
    \stackrel{f_*}{\longrightarrow}
  \]
  \[
    \qquad\qquad\qquad
    \lim_{\longrightarrow}H_*(\mathscr{Y}_*A' \otimes \Z/p^m\Z)
    \cong
    H_*(\lim_{\longrightarrow} \mathscr{Y}_*A' \otimes \Z/p^m\Z)
    \cong
    HS_*(A'; \Z / p^\infty \Z)
  \]
  (Note, $f_*$ here stands for $\ds{\lim_{\longrightarrow}H_n(\mathscr{Y}_*f \otimes 
  \mathrm{id})}$.)
    
  Finally, consider the short exact sequence of abelian groups,
  \[
    0 \longrightarrow \Z \longrightarrow \mathbb{Q} \longrightarrow
    \bigoplus_{\textrm{$p$ prime}} \Z / p^\infty \Z \longrightarrow 0
  \]
    
  The isomorphism $f_* : HS_*(A; \Z / p^\infty \Z) \to HS_*(A'; \Z / p^\infty \Z)$
  passes to direct sums, giving isomorphisms for each $n$,
  \[
    f_* : HS_{n}\left(A; \bigoplus_p \Z/p^\infty \Z\right) \stackrel{\cong}
    {\longrightarrow}
    HS_{n}\left(A'; \bigoplus_p \Z/p^\infty \Z\right).
  \]
  Together with the assumption that $HS_*(A; \mathbb{Q}) \to HS_*(A';\mathbb{Q})$
  is an isomorphism, another appeal to Lemma~\ref{lem.bockstein} gives the
  required isomorphism in symmetric homology induced by $f$:
  \[
    f_* : HS_n(A) \stackrel{\cong}{\longrightarrow} HS_n(A')
  \]
\end{proof}

\begin{rmk}
  Theorem~\ref{thm.conv.HS_iso} may be useful for determining integral
  symmetric homology, since rational computations are generally simpler (see
  Section~\ref{sec.char0}),
  and computations mod $p$ may be made easier due to the presence of
  additional structure, namely homology operations (see Chapter~\ref{chap.prod}).
\end{rmk}

Finally, we state a result along the lines of McCleary~\cite{Mc}, Thm.~10.3.  Denote
 the torsion submodule of the graded module
$HS_*(A; X)$ by $\tau\left(HS_*(A ; X) \right)$.

\begin{theorem}\label{thm.bockstein_spec_seq}
  Suppose $A$ is free of finite rank over $\Z$.  Then there is a singly-graded
  spectral sequence with
  \[
    E_*^1 := HS_*( A ; \Z/p\Z) \Rightarrow HS_*(A)/\tau\left(HS_*(A) \right)
    \otimes \Z/p\Z,
  \]
  with differential map $d^1 = \beta$, the standard Bockstein map
  associated to $0 \to \Z / p\Z \to \Z / p^2\Z \to \Z / p\Z \to 0$.  Moreover,
  the convergence is strong.
\end{theorem}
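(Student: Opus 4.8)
The plan is to run the classical Bockstein spectral sequence machinery on the chain complex $\mathscr{Y}_*A = k[N(-\setminus\Delta S)]\otimes_{\Delta S}B_*^{sym}A$ of Cor.~\ref{cor.SymHomComplex}, taken with $k=\Z$, which computes $HS_*(A)$. The key structural input is that, when $A$ is free of finite rank over $\Z$, each term $\mathscr{Y}_nA$ is a \emph{free} abelian group: by the decomposition recorded in the proof of Lemma~\ref{lem.YA-flat} one has $\mathscr{Y}_nA \cong \bigoplus_{m\geq 0}\big(k[N([m]\setminus\Delta S)_{n-1}]\otimes_\Z A^{\otimes(m+1)}\big)$, and each summand is a tensor product over $\Z$ of a free module (on the set of $(n-1)$-simplices) with the free finite-rank module $A^{\otimes(m+1)}$, hence free; a direct sum of free abelian groups is free. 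In particular $\mathscr{Y}_*A$ is a complex of flat $\Z$-modules and $HS_*(A;\Z/p\Z) = H_*(\mathscr{Y}_*A\otimes\Z/p\Z)$ by Def.~\ref{def.HS-with-coeff}.

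Next I would set up the exact couple. Tensoring $0\to\Z\stackrel{p}{\to}\Z\to\Z/p\Z\to 0$ with the free complex $\mathscr{Y}_*A$ preserves exactness, so (as in Prop.~\ref{prop.les-for-HS}) one obtains the long exact sequence
\[
  \cdots\longrightarrow HS_n(A)\stackrel{p}{\longrightarrow}HS_n(A)\longrightarrow HS_n(A;\Z/p\Z)\stackrel{\beta}{\longrightarrow}HS_{n-1}(A)\longrightarrow\cdots,
\]
which is precisely the exact couple with $D_*=HS_*(A)$, $E_*=HS_*(A;\Z/p\Z)$, the map $i$ given by multiplication by $p$ on $D$, the projection $j\colon D\to E$, and the integral Bockstein $k=\beta\colon E\to D$ lowering degree by one. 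Passing to derived couples produces the asserted singly-graded spectral sequence: $E^1_*=HS_*(A;\Z/p\Z)$, and $d^1=jk$ is the composite $HS_n(A;\Z/p\Z)\stackrel{\beta}{\to}HS_{n-1}(A)\to HS_{n-1}(A;\Z/p\Z)$, i.e.\ the mod-$p$ Bockstein associated to $0\to\Z/p\Z\to\Z/p^2\Z\to\Z/p\Z\to 0$. So far this is pure exact-couple bookkeeping applied to a free complex.

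The remaining work is the identification $E^\infty_*\cong \big(HS_*(A)/\tau(HS_*(A))\big)\otimes\Z/p\Z$ together with strong convergence, and this is where the real content lies. These are the standard conclusions of the Bockstein spectral sequence for a free chain complex, but they require that $HS_*(A)$ (equivalently $HS_*(A;\Z/p\Z)$) be finitely generated in each degree, so that each $E^r_n$ is a finite-dimensional $\Z/p\Z$-vector space and the differentials vanish from a finite page onward in each degree; for this step I would cite McCleary~\cite{Mc}, Thm.~10.3. To supply the finiteness hypothesis I would invoke the connectivity estimates for the cycle-free chessboard complexes $\Omega_n$ referred to in the introduction: since $A\otimes\Z/p\Z$ is a finite-dimensional $\Z/p\Z$-algebra, those estimates force $HS_n(A;\Z/p\Z)$ to be finite-dimensional for every $n$. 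Thus the only substantive ingredient beyond formal homological algebra is the appeal to finite-dimensionality of mod-$p$ symmetric homology; the construction of the spectral sequence and the computation of $d^1$ are formal, and the description of $E^\infty$ and the strong convergence are then the textbook Bockstein conclusions.
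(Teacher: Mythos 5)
Your overall strategy matches the paper's: both proofs reduce to McCleary's treatment of the Bockstein spectral sequence for a free complex (McCleary, p.~459 / Thm.~10.3) and then supply the finiteness hypothesis that theorem requires. The exact-couple setup, the identification of $E^1$ and $d^1$, and the observation that $\mathscr{Y}_*A$ is a free complex when $A$ is free of finite rank are all correct and carry the same content as the paper's terse argument.

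However, the way you verify the finiteness hypothesis has a genuine gap. You assert that finite generation of $HS_n(A)$ over $\Z$ is ``equivalent'' to finite-dimensionality of $HS_n(A;\Z/p\Z)$ over $\Z/p\Z$, and then verify only the latter (via the chessboard-complex connectivity applied to $A\otimes\Z/p\Z$). These conditions are not equivalent: if some $HS_n(A)$ contained a summand isomorphic to $\Z/p^\infty$, then $HS_n(A;\Z/p\Z)$ and $HS_{n+1}(A;\Z/p\Z)$ would still be finite-dimensional (each picks up only $\Z/p$ from the $\mathrm{Tor}$ and tensor terms), yet the $p$-torsion in $HS_n(A)$ would be unbounded, the Bockstein differentials out of $E^r_{n+1}$ would be nonzero for all $r$, and the identification $E^\infty_* \cong (HS_*(A)/\tau)\otimes\Z/p\Z$ and strong convergence would both fail. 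McCleary's argument genuinely uses integral finite generation (via the classification of finitely generated abelian groups, which gives bounded $p$-torsion in each degree); mod-$p$ finite-dimensionality does not imply this, and finite-dimensionality of each $E^r_n$ alone does not force the $d^r$ to vanish from a finite page onward in a singly-graded spectral sequence.

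The fix is the route the paper actually takes: apply Cor.~\ref{cor.fin-gen} with ground ring $k=\Z$ (Noetherian) and $A$ free of finite rank (hence finitely generated) over $\Z$, which directly gives that each $HS_n(A)$ is a finitely generated abelian group. That is precisely the hypothesis McCleary's Bockstein theorem needs, and it rests on the same $Sym_*^{(p)}$/chessboard-complex connectivity estimates you intended to invoke, only applied over $\Z$ rather than over $\Z/p\Z$.
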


The proof McCleary gives on p.~459 carries over to our case intact.
All that is required for this proof is that each $H_n(\mathscr{Y}_*A)$ be a 
finitely-generated abelian group.  The hypothesis that $A$ is finitely-generated,
coupled with a result of Chapter~\ref{chap.spec_seq2}, namely Cor.~\ref{cor.fin-gen},
guarantees this.  Note, over $\Z$, {\it free of finite rank} is equivalent
to {\it flat and finitely-generated}.

Theorem~\ref{thm.bockstein_spec_seq} is a version of the Bockstein spectral
sequence for symmetric homology.

\section{Symmetric Homology of Monoid Algebras}\label{sec.symhommonoid}  %

The symmetric homology for the case of a monoid algebra 
$A = k[M]$ has been studied by Fiedorowicz in~\cite{F}.  In the most general
formulation (Prop. 1.3 of~\cite{F}), we have:
\begin{theorem}\label{thm.HS_monoidalgebra}
  \[
    HS_*(k[M]) \cong H_*\left(B(C_{\infty}, C_1, M); k\right),
  \]
  where $C_1$ is the little $1$-cubes monad, 
  $C_{\infty}$ is the little
  $\infty$-cubes monad, and $B(C_{\infty}, C_1, M)$ is May's functorial
  2-sided bar construction (see~\cite{M}).  
\end{theorem}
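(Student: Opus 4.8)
The plan is to identify the chain complex $\mathscr{Y}_*k[M]$ (which by Cor.~\ref{cor.SymHomComplex} computes $HS_*(k[M])$) with the chain complex of May's bar construction $B(C_\infty, C_1, M)$, and then pass to homology. The crucial input is the identification of $\Delta S_+$ with the \emph{category of operators} associated to the $A_\infty$ operad $\mathcal{A}ss$ (the discrete associative operad), as recorded in the remark following the definition of $\Delta S_+$: a morphism $[n] \to [m]$ of $\Delta S_+$ is exactly a point of $\prod_{i=0}^{m} \mathcal{A}ss(\#\phi^{-1}(i))$ lying over a set map $\phi$. Since $\mathcal{A}ss(j)$ is the discrete set $\Sigma_j$, and the little $1$-cubes operad $\mathcal{C}_1$ has $\mathcal{C}_1(j)$ homotopy equivalent (indeed $\Sigma_j$-equivariantly) to $\Sigma_j$, the category $\Delta S_+$ is the category of operators of an operad equivalent to $\mathcal{C}_1$.

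First I would make precise the dictionary between the two-sided bar construction interpretation of $HS_*$ and May's $B(C_\infty, C_1, M)$. Recall that computing $HS_*(k[M]) = \mathrm{Tor}_*^{\Delta S}(\underline{k}, B_*^{sym}k[M])$ amounts, via the standard resolution $\overline{L}_* = k[N(-\setminus\Delta S)]$, to forming $k[N(-\setminus\Delta S)]\otimes_{\Delta S} B_*^{sym}k[M]$. The functor $B_*^{sym}k[M]$ sends $[n]$ to $k[M]^{\otimes(n+1)} = k[M^{n+1}]$ and a $\Delta S$-morphism to the multiplication-and-permutation map dictated by its tensor representation; so on the level of the underlying sets this is precisely the $\Delta S_+$-set (restricted to $\Delta S$) $[n] \mapsto M^{n+1}$, with the operad $\mathcal{A}ss = \pi_0\mathcal{C}_1$ acting through the monoid multiplication of $M$. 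By the May–Thomason machinery, the homotopy colimit of this $\mathcal{F}(as)$-space, equivalently the bar construction $N_\bullet$ of the translation category $(-\setminus\Delta S)$ tensored over $\Delta S$ with $[n]\mapsto M^{n+1}$, is exactly (a model for) $B(C_\infty, C_1, M)$: the outer $C_\infty$ records the free $E_\infty$ structure being resolved (the symmetric-group bookkeeping of $\Delta S$), the middle $C_1$ is the operad of operators governing the category, and $M$ is the monoid we feed in. Taking $k$-chains of this simplicial set and then homology yields the stated isomorphism.

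The key steps, in order: (i) recall/state the equivalence $\Delta S_+ \simeq$ category of operators for $\mathcal{A}ss$, and that $\mathcal{A}ss(j) = \Sigma_j = \pi_0\mathcal{C}_1(j)$, so $\Delta S$ is a ``discrete model'' for the category of operators of $\mathcal{C}_1$; (ii) observe that $B_*^{sym}k[M]$, as a $\Delta S$-module, is the linearization of the $\Delta S$-set $[n]\mapsto M^{n+1}$, which is the canonical $\mathcal{A}ss$-algebra-over-the-category-of-operators associated to the monoid $M$; (iii) recognize the bar-type resolution $k[N(-\setminus\Delta S)]$ as computing the homotopy colimit over $\Delta S$, and quote May's identification of $\mathrm{hocolim}$ over the category of operators of an operad $\mathcal{C}$, applied to the free $\mathcal{C}_\infty$-bar resolution, with $B(C_\infty, \mathcal{C}, -)$; (iv) combine (i)--(iii): $HS_*(k[M]) = H_*(k[N(-\setminus\Delta S)]\otimes_{\Delta S} M^{\bullet+1}) = H_*(B(C_\infty, C_1, M); k)$; (v) finally cite Fiedorowicz~\cite{F}, Prop.~1.3, for the precise form of this identification.

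The main obstacle is step (iii)--(iv): carefully matching the \emph{discrete} combinatorial object $k[N(-\setminus\Delta S)]\otimes_{\Delta S} B_*^{sym}k[M]$ with May's \emph{topological} bar construction $B(C_\infty, C_1, M)$ up to the homology we want. One must check that replacing the topological operad $\mathcal{C}_1$ by its $\pi_0$ (i.e.\ $\mathcal{A}ss$), and $\mathcal{C}_\infty$ by the categorical/simplicial resolution implicit in $N(-\setminus\Delta S)$, does not change the result — that is, that the evident map is a homology isomorphism. This is where Fiedorowicz's argument in~\cite{F} does the real work (via a comparison of bar constructions and the equivalence $\mathcal{C}_1(j)\simeq \Sigma_j$), and I would simply invoke it rather than reprove it; the theorem as stated is a restatement of his Prop.~1.3.
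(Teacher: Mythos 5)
Your proposal takes essentially the same route as the paper: the paper does not actually prove this theorem but simply cites it as Prop.~1.3 of Fiedorowicz~\cite{F}, then develops the machinery (the $\Delta S$-set $B_*^{sym}M$ and the simplicial set $\mathscr{X}_* = N(-\setminus\Delta S)\times_{\Delta S}B_*^{sym}M$ of Prop.~\ref{prop.SymHomComplexMonoid}) and proves only the free-monoid special case in Lemma~\ref{lem.HS_tensoralg}. You likewise defer the real comparison (discrete $\mathcal{A}ss$/$\Delta S$ data versus the topological operads $\mathcal{C}_1$, $\mathcal{C}_\infty$) to Fiedorowicz's argument, which is exactly where the paper places it, and your category-of-operators dictionary is consistent with the paper's May--Thomason remark in Section~\ref{sec.deltas}. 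One small caution: the gloss that ``the outer $C_\infty$ records the symmetric-group bookkeeping of $\Delta S$'' is loose --- the $C_\infty$ arises from May's monadic bar resolution, not directly from $\Delta S$ --- but since you explicitly invoke~\cite{F} for that step rather than asserting it, this does not create a gap.
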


The proof makes use of a variant of the
symmetric bar construction:

\begin{definition}
  Let $M$ be a monoid.  Define a functor $B_*^{sym}M : \Delta S \to \textbf{Sets}$ by:
  \[
    B_n^{sym}M := B_*^{sym}M[n] := M^{n+1}, \; \textrm{(set product)}
  \]
  \[  
    B_*^{sym}M(\alpha) : (m_0, \ldots, m_n) \mapsto
       \alpha(m_0, \ldots, m_n), \qquad \textrm{for $\alpha \in \mathrm{Mor}
       \Delta S$}.
  \]
  where $\alpha : [n] \to [k]$ is represented in tensor notation, and evaluation
  at $(m_0, \ldots, m_n)$ is as in definition~\ref{def.symbar}.  (This makes sense, as $M$ 
  is closed under multiplication).
\end{definition}

\begin{definition}
  For a $\mathscr{C}$-set $X$ and $\mathscr{C}^\mathrm{op}$-set $Y$, define the
  $\mathscr{C}$-equivariant set product:
  \[
    Y \times_\mathscr{C} X := \left(\coprod_{C \in \mathrm{Obj}\mathscr{C}}
    Y(C) \times X(C)\right) / \approx, 
  \]
  where the equivalence $\approx$ is generated by the following:  For every morphism
  $f \in \mathrm{Mor}_{\mathscr{C}}(C, D)$, and every $x \in X(C)$ and $y \in Y(D)$, 
  we have $\big(y, f_*(x)\big) \approx \big(f^*(y), x\big)$.
\end{definition}

Note that $B_*^{sym}M$ is a $\Delta S$-set, and also a simplicial set, via the
chain of functors in section~\ref{sec.symbar}.
Let $\mathscr{X}_* := N(- \setminus \Delta S) \times_{\Delta S} B^{sym}_*M$.  
\begin{prop}\label{prop.SymHomComplexMonoid}
  $\mathscr{X}_*$ is a simplicial set whose homology computes $HS_*(k[M])$.
\end{prop}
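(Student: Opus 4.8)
The plan is to reduce the proposition to Corollary~\ref{cor.SymHomComplex} by showing that the levelwise free $k$-module on the simplicial set $\mathscr{X}_*$ is isomorphic to the complex $\mathscr{Y}_*(k[M]) = k[N(-\setminus\Delta S)] \otimes_{\Delta S} B_*^{sym}(k[M])$, which computes $HS_*(k[M])$.

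First I would linearize the monoid symmetric bar construction. For each $n$ the standard isomorphism $k[M^{n+1}] \cong (k[M])^{\otimes(n+1)}$, sending a basis element $(m_0,\ldots,m_n)$ to $m_0 \otimes \cdots \otimes m_n$, should assemble into an isomorphism of $\Delta S$-modules $k[B_*^{sym}M] \stackrel{\cong}{\longrightarrow} B_*^{sym}(k[M])$, where $k[-]$ is applied levelwise to the $\Delta S$-set $B_*^{sym}M$. One checks this is natural for morphisms of $\Delta S$: writing a morphism $\alpha$ in tensor notation, both functors send a tuple to the result of substituting its entries into the monomials of $\alpha$ and multiplying, and multiplying basis elements of $k[M]$ produces another basis element, with empty preimages handled by the common unit $1_M = 1_{k[M]}$; so the two agree on basis elements and hence after $k$-linear extension.

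Next I would record the general fact that linearization commutes with the equivariant product: for a small category $\mathscr{C}$, a $\mathscr{C}^{\mathrm{op}}$-set $Y$ and a $\mathscr{C}$-set $X$, there is a natural isomorphism $k[\,Y \times_{\mathscr{C}} X\,] \cong k[Y] \otimes_{\mathscr{C}} k[X]$. The reason is that $k[Y(C)] \otimes_k k[X(C)] \cong k[Y(C) \times X(C)]$, so the direct sum over $\mathrm{Obj}\,\mathscr{C}$ is $k$ applied to the coproduct $\coprod_C Y(C)\times X(C)$; the defining relations $y \otimes f_*(x) \approx f^*(y) \otimes x$ on the tensor side correspond exactly to the relations $(y,f_*(x)) \approx (f^*(y),x)$ on the set side, and quotienting a free module $k[S]$ by the span of differences of identified basis elements yields $k[S/{\approx}]$. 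Applying this with $\mathscr{C}=\Delta S$, $X = B_*^{sym}M$, and $Y = N(-\setminus\Delta S)_q$ (a $\Delta S^{\mathrm{op}}$-set in each simplicial degree $q$), together with the previous step, gives natural-in-$q$ isomorphisms
\[
  k\big[\, N(-\setminus\Delta S)_q \times_{\Delta S} B_*^{sym}M \,\big] \;\cong\;
  k\big[N(-\setminus\Delta S)_q\big] \otimes_{\Delta S} B_q^{sym}(k[M]).
\]
The left-hand sides are precisely the sets $\mathscr{X}_q$, and since $N(-\setminus\Delta S)$ is a simplicial object in $\Delta S^{\mathrm{op}}$-sets these assemble into the simplicial set $\mathscr{X}_*$; the displayed isomorphisms then identify $k[\mathscr{X}_*]$ with the simplicial $k$-module $\mathscr{Y}_*(k[M])$, face maps matching by Remark~\ref{rmk.uniqueChain}. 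Passing to associated chain complexes and invoking Cor.~\ref{cor.SymHomComplex} yields $H_*(\mathscr{X}_*; k) \cong H_*\big(\mathscr{Y}_*(k[M]); k\big) = HS_*(k[M])$.

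The hard part will be the naturality verification in the first step, confirming that the linearization isomorphism is compatible with \emph{every} class of $\Delta S$-morphism, in particular the permutations that permute tensor factors and the $\Delta$-maps that collapse factors or insert units; the remaining steps are formal, since the free $k$-module functor is a left adjoint and hence preserves colimits, and both $\times_{\mathscr{C}}$ and $\otimes_{\mathscr{C}}$ are coends.
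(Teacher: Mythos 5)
Your proposal is correct and follows essentially the same route as the paper's own proof: identify $B_*^{sym}(k[M])$ with the levelwise $k$-linearization of $B_*^{sym}M$ using that $M$ is a $k$-basis of $k[M]$, observe that linearization turns the equivariant set product $\times_{\Delta S}$ into the equivariant tensor product $\otimes_{\Delta S}$, and then invoke Cor.~\ref{cor.SymHomComplex}. The paper states these identifications more tersely; you have simply spelled out the naturality check and the coend/colimit argument that make them precise.
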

\begin{proof}
  It is clear that $\mathscr{X}_*$ is a simplicial set.  The standard construction
  for finding the homology of a simplicial set is to create the complex $k[\mathscr{X}_*]$,
  with face maps induced by the face maps of $\mathscr{X}_*$.  Since $M$ is a $k$-basis for
  $k[M]$, $B^{sym}_*M$ acts as a $k$-basis for $B^{sym}_*k[M]$.  Then, observe that
  $k[ N( - \setminus \Delta S) \times_{\Delta S} B_*^{sym}M ] =
   k[N(-\setminus \Delta S)] \otimes_{\Delta S} B_*^{sym}k[M]$.
\end{proof}

If $M = JX_+$ is a free monoid on a generating set $X$, then $k[M] = T(X)$, the
(free) tensor algebra over $k$ on the set $X$.  In this case, we have the following:
\begin{lemma}\label{lem.HS_tensoralg}
  \[
    HS_*\left(T(X)\right) \cong H_*\left( \coprod_{n\geq -1} \widetilde{X}_n; k\right),
  \]
  where
  \[
    \widetilde{X}_n = \left\{\begin{array}{ll}
                               N(\Delta S), &n = -1\\
                               N([n] \setminus \Delta S) 
                               \times_{\Sigma_{n+1}^\mathrm{op}} X^{n+1},
                               &n \geq 0
                             \end{array}\right.
  \]
\end{lemma}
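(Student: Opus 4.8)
The plan is to mimic the proof of Lemma~\ref{lem.tensor_alg_constr}, working with simplicial sets in place of chain complexes, and to pass to $k$-linear homology only at the end. By Prop.~\ref{prop.SymHomComplexMonoid} together with the identity $T(X) = k[JX_+]$, the graded group $HS_*(T(X))$ is the homology of the simplicial set $\mathscr{X}_* = N(-\setminus\Delta S)\times_{\Delta S} B^{sym}_*(JX_+)$, where $JX_+$ is the free monoid on $X$. Unwinding the equivariant set product (cf.\ Rmk.~\ref{rmk.uniqueChain}), a simplex of $\mathscr{X}_*$ is represented by a chain $[p]\stackrel{\alpha}{\to}[q_0]\to\cdots\to[q_n]$ together with a tuple $(w_0,\ldots,w_p)$ of words in $X$.

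First I would stratify $\mathscr{X}_*$ by total word length. Given such a representative, set $m_j = |w_j|$ and $m = \bigl(\sum_j m_j\bigr) - 1$. If $m\ge 0$, concatenating $w_0,\ldots,w_p$ yields a single word of length $m+1$, hence an $(m+1)$-tuple $x'\in X^{m+1}$ of letters, and collapsing each block gives a canonical $\Delta$-morphism $\zeta\colon[m]\to[p]$ with $\zeta_*(x') = (w_0,\ldots,w_p)$; this is the set-level version of the map $\zeta_u$ of Lemma~\ref{lem.tensor_alg_constr}. Under the $\Delta S$-equivalence the representative becomes the chain $[m]\stackrel{\alpha\zeta}{\to}[q_0]\to\cdots\to[q_n]$ paired with $x'$, and the only remaining identification is precomposition of $\alpha\zeta$ by $\sigma\in\Sigma_{m+1}^{\mathrm{op}}$, which is absorbed into $\sigma_*(x')$. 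So the simplices of total word length $m+1\ge 1$ are in bijection with those of $N([m]\setminus\Delta S)\times_{\Sigma_{m+1}^{\mathrm{op}}} X^{m+1}$. When every $w_j$ is empty (so $m=-1$) the representative normalizes to $[q_0]\stackrel{\mathrm{id}}{\to}[q_0]\to\cdots\to[q_n]$ with the empty tuple, i.e.\ a simplex of $N(\Delta S)$. Collecting the strata produces a degreewise bijection $\mathscr{X}_*\;\cong\;\coprod_{m\ge -1}\widetilde{X}_m$.

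Next I would check that this bijection commutes with the simplicial structure. The face maps of $\mathscr{X}_*$ either compose two consecutive arrows of the chain $[q_0]\to\cdots\to[q_n]$ or apply $\beta_1$ to the word tuple (the face-map formulas are transcribed in Rmk.~\ref{rmk.uniqueChain}), and neither operation alters $\sum_j|w_j|$: a $\Delta S$-morphism acting on a tuple of words of the free monoid only permutes and concatenates letters, and never creates or deletes one. This is exactly the analogue of the fact, used in Lemma~\ref{lem.tensor_alg_constr}, that a $\Delta S$-morphism preserves the number of non-trivial tensor factors of an element of a tensor algebra, and it is the single point where freeness of $T(X)$ is used. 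Hence the stratification is respected by all faces, so $\mathscr{X}_*\cong\coprod_{m\ge -1}\widetilde{X}_m$ as simplicial sets; applying $k[-]$ and $H_*(\,\cdot\,;k)$ (which turns the coproduct into a direct sum) then gives $HS_*(T(X)) = H_*(\mathscr{X}_*;k)\cong H_*\bigl(\coprod_{m\ge -1}\widetilde{X}_m;k\bigr)$.

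The only real work, and the step I expect to be the main obstacle, is the bookkeeping in the second paragraph: verifying that the unpacking assignment $(w_0,\ldots,w_p)\mapsto(x',\zeta)$ is well defined on $\Delta S$-equivalence classes, i.e.\ independent of the chosen $\alpha$ and compatible with the $\Sigma_{m+1}^{\mathrm{op}}$-action. As in Lemma~\ref{lem.tensor_alg_constr} this reduces to the length-preservation property noted above, so no new idea is required. Alternatively, one could sidestep the explicit simplicial argument: write $T(X) = Tk[X]$ and apply Lemma~\ref{lem.tensor_alg_constr} (whose proof uses only the underlying $k$-module of the algebra) to $V = k[X]$, obtaining $\mathscr{Y}_*T(X)\cong\bigoplus_{m\ge -1}Y_m$; the identifications $(k[X])^{\otimes(m+1)}\cong k[X^{m+1}]$ and $k[S]\otimes_{kG}k[T]\cong k[S\times_G T]$ then turn $Y_m$ into $k[\widetilde{X}_m]$, after which one passes to homology.
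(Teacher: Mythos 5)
Your main proof is correct, and it in fact establishes something slightly stronger than the lemma asks for: an isomorphism of simplicial sets rather than merely of homology groups. You re-run the decomposition argument of Lemma~\ref{lem.tensor_alg_constr} at the level of simplicial sets, working directly with $B^{sym}_*(JX_+)$ via Prop.~\ref{prop.SymHomComplexMonoid} and stratifying the simplices of $\mathscr{X}_* = N(-\setminus\Delta S)\times_{\Delta S}B^{sym}_*(JX_+)$ by total word length $\sum_j\lvert w_j\rvert$; the compatibility of the resulting bijection with the $\Delta S$-equivalence and with all faces and degeneracies reduces, as you note, to the invariance of word length under morphisms of $\Delta S$ acting on tuples of words of a free monoid, which is precisely the set-level shadow of the tensor-degree invariance that drives Lemma~\ref{lem.tensor_alg_constr}. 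The paper takes the shorter route you mention as your closing alternative: it simply invokes Lemma~\ref{lem.tensor_alg_constr} for $T(X)=Tk[X]$ (the proof of that lemma uses only the underlying $k$-module of the generating object, so the absence of a ring structure on $k[X]$ is harmless) and then rewrites $k[N([n]\setminus\Delta S)]\otimes_{k\Sigma_{n+1}^{\mathrm{op}}}k[X^{n+1}]$ as the linearization of the set product $N([n]\setminus\Delta S)\times_{\Sigma_{n+1}^{\mathrm{op}}}X^{n+1}$. Your primary route is longer but buys a genuine simplicial-set identification $\mathscr{X}_*\cong\coprod_{m\ge -1}\widetilde{X}_m$, which is morally closer to the remark that follows the lemma (the comparison with Fiedorowicz's bar construction for free monoids); the paper's route is a two-line corollary of an already-proved statement. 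Both are sound.
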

\begin{proof}
  This is a consequence of Lemma~\ref{lem.tensor_alg_constr} when the tensor algebra
  is free, generated by $X = \{ x_i \;|\; i \in \mathscr{I} \}$. By the lemma,
  we obtain a decomposition
  \[
    \mathscr{Y}_*T(X) \cong k\left[N(\Delta S)\right] \oplus 
      \left(\bigoplus_{n \geq 0} k\left[N([n] \setminus \Delta S)\right]
       \otimes_{k\Sigma_{n+1}^\mathrm{op}} k[X^{n+1}]\right),
  \]
  \[
    \cong k\left[ N(\Delta S) \amalg \coprod_{n \geq 0}
    N([n] \setminus \Delta S) \times_{\Sigma_{n+1}^\mathrm{op}} X^{n+1} \right],
  \]
  computing $HS_*\left( T(X) \right)$.
\end{proof}

\begin{rmk}
  This proves Thm.~\ref{thm.HS_monoidalgebra} in the special case that $M$ is a
  free monoid.
\end{rmk}

If $M$ is a group, $\Gamma$, then Fiedorowicz~\cite{F} found:
\begin{theorem}\label{thm.HS_group}
  \[
    HS_*(k[\Gamma]) \cong H_*\left(\Omega\Omega^{\infty}S^{\infty}(B\Gamma); k\right)
  \]
\end{theorem}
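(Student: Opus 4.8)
The plan is to reduce the statement to Theorem~\ref{thm.HS_monoidalgebra} and then carry out a homotopy-theoretic identification of the space $B(C_\infty,C_1,\Gamma)$. Setting $M=\Gamma$ in Theorem~\ref{thm.HS_monoidalgebra} already gives
\[
  HS_*(k[\Gamma])\;\cong\;H_*\!\left(B(C_\infty,C_1,\Gamma);k\right),
\]
so it suffices to exhibit a weak equivalence $B(C_\infty,C_1,\Gamma)\simeq\Omega\,\Omega^\infty S^\infty(B\Gamma)$ and then apply $H_*(-;k)$; throughout I write $QX=\Omega^\infty S^\infty X$. The place where we use that $\Gamma$ is a \emph{group} rather than merely a monoid is that $\Gamma$ is then a group-like topological monoid: the canonical map $\Gamma\to\Omega B\Gamma$ is a weak equivalence of $A_\infty$-spaces, $B\Gamma=B(*,C_1,\Gamma)$ is connected, and $B(C_\infty,C_1,\Gamma)$ is a group-like $E_\infty$-space. (The last point is a short $\pi_0$ computation: $\pi_0$ of $B(C_\infty,C_1,\Gamma)$ is the free commutative monoid on $\pi_0\Gamma$ modulo the relations imposed by the $C_1$-action, which collapses to the abelian group $\Gamma^{\mathrm{ab}}$, a group.)

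Granting this, I would prove the general fact that for every connected based space $Y$ there is a natural weak equivalence $B(C_\infty,C_1,\Omega Y)\simeq\Omega\,QY$, and then specialize to $Y=B\Gamma$, using $\Omega B\Gamma\simeq\Gamma$ together with homotopy-invariance of the monadic bar construction in its $C_1$-algebra argument to replace $B(C_\infty,C_1,\Gamma)$ by $B(C_\infty,C_1,\Omega B\Gamma)$. To prove the general fact, invoke May's recognition principle: since $\Omega Y$ is group-like, $B(C_\infty,C_1,\Omega Y)$ is a group-like $E_\infty$-space, hence the zeroth space of a connective $\Omega$-spectrum $\mathbf{E}$, and the task reduces to identifying $\mathbf{E}\simeq\Sigma^{-1}\Sigma^\infty Y$. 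The loop of the stabilization map, $\Omega Y\to\Omega QY$, is a map of $C_1$-algebras into the underlying $A_\infty$-space of the $E_\infty$-space $\Omega QY$, so by the universal property of the monadic bar construction it extends canonically to an $E_\infty$-map $B(C_\infty,C_1,\Omega Y)\to\Omega QY$. One shows this is a weak equivalence by passing to associated spectra: an interchange of two-sided bar constructions identifies the first delooping of $B(C_\infty,C_1,\Omega Y)$ with the group completion of the free $E_\infty$-space $C_\infty Y$ on the connected space $Y\simeq B(*,C_1,\Omega Y)$, and by the group completion theorem this carries the homology of $QY$; since infinite loop spaces are simple and both spaces are therefore nilpotent, the comparison map (a homology isomorphism) is a weak equivalence, hence so is the original map.

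The main obstacle is precisely this identification $B(C_\infty,C_1,\Omega Y)\simeq\Omega\,QY$: it is where the full apparatus of infinite loop space theory is needed — May's recognition principle, the approximation and group completion theorems, and careful bookkeeping of $\pi_0$ and of which maps are genuinely $E_\infty$-maps so that the machinery applies. By contrast, the reduction to $H_*(B(C_\infty,C_1,\Gamma);k)$ via Theorem~\ref{thm.HS_monoidalgebra} and the passage from the monoid $\Gamma$ to the $A_\infty$-space $\Omega B\Gamma$ are formal.
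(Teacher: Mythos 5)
The paper does not actually prove this theorem: it is stated as a result of Fiedorowicz (Theorem~1(i) of \cite{F}), introduced with the phrase ``If $M$ is a group, $\Gamma$, then Fiedorowicz found:'' and no argument is given. So there is no proof in the paper to compare against. The paper does remark, in Section~\ref{sec.tensoralg}, that ``tensor algebra arguments are also key in the proof of Fiedorowicz's Theorem (Thm.~1({\it i}) of~\cite{F})'' --- which suggests that Fiedorowicz's actual derivation passes through the tensor-algebra reduction used in Theorem~\ref{thm.HS_monoidalgebra} rather than treating that theorem as a black box and then doing pure homotopy theory, as you do.

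Your route --- specialize Theorem~\ref{thm.HS_monoidalgebra} to $M=\Gamma$ and then establish the space-level equivalence $B(C_\infty,C_1,\Gamma)\simeq\Omega\,\Omega^\infty S^\infty(B\Gamma)$ --- is a standard and sensible approach, and most of the framing is sound: $\Gamma$ is group-like, $B(C_\infty,C_1,\Gamma)$ is a group-like $E_\infty$-space, $\pi_0$ is $\Gamma^{\mathrm{ab}}$, and the universal property gives an $E_\infty$-map to $\Omega Q(B\Gamma)$. The one step I would flag is the claim that ``an interchange of two-sided bar constructions identifies the first delooping of $B(C_\infty,C_1,\Omega Y)$ with the group completion of $C_\infty Y$.'' As written this is compressed to the point of being an assertion rather than an argument: you have not said what two bar constructions are being interchanged, why the $E_\infty$-delooping of the $C_\infty$-structure commutes with the inner $C_1$-variable of the two-sided bar, or why what you get is $C_\infty Y$ (as opposed to, say, $C_\infty B(*,C_1,\Omega Y)$, which only equals $C_\infty Y$ after a further identification). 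That is exactly the technical core of the equivalence, and it is where the approximation theorem, the group completion theorem, and the recognition principle all have to be invoked in a precise order. Since the paper offers no proof to check your version against, I cannot say your sketch is wrong --- the endpoints certainly match and the strategy is the expected one --- but I would not call it complete until that delooping step is spelled out.
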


This final formulation shows in particular that $HS_*$ is a non-trivial theory.  While
it is true that $H_*(\Omega^{\infty}S^{\infty}(X)) = H_*(QX)$ is well understood, 
the same cannot be said of the homology of $\Omega\Omega^{\infty}S^{\infty}X$.
Indeed, May states that $H_*(QX)$ may be regarded as the
free allowable Hopf algebra with conjugation over the Dyer-Lashof algebra and
dual of the Steenrod algebra (See~\cite{CLM}, preface to chapter 1, and also
Lemma~4.10).  Cohen and Peterson~\cite{CP} found the homology of 
$\Omega\Omega^{\infty}S^{\infty}X$,
where $X = S^0$, the zero-sphere, but there is little hope of extending
this result to arbitrary $X$ using the same methods.

We shall have more to say about $HS_1(k[\Gamma])$ in section~\ref{sec.2-torsion}.

\chapter{ALTERNATIVE RESOLUTIONS}\label{chap.alt}

\section{Symmetric Homology Using $\Delta S_+$}\label{sec.deltas_plus} %

In this section, we shall show that replacing $\Delta S$ by $\Delta S_+$ in an
appropriate way does not affect the computation of $HS_*$.

\begin{definition}\label{def.symbar_plus}
  For an associative, unital algebra, $A$, over a commutative ground ring $k$,  
  define a functor $B_*^{sym_+}A : \Delta S_+ \to 
  k$-\textbf{Mod} by:
  \[
    \left\{
    \begin{array}{lll}
      B_n^{sym_+}A &:=& B_*^{sym}A[n] := A^{\otimes n+1} \\
      B_{-1}^{sym_+}A &:=& k,
    \end{array}
    \right.
  \]
  \[  
    \left\{
    \begin{array}{ll}
      B_*^{sym_+}A(\alpha) : (a_0 \otimes a_1 \otimes \ldots \otimes a_n) \mapsto
       \alpha(a_0, \ldots, a_n), \;&\textrm{for $\alpha \in \mathrm{Mor}\Delta S$},\\
      B_*^{sym_+}A(\iota_n) : \lambda \mapsto \lambda(1_A \otimes \ldots \otimes 1_A),
      \;&(\lambda \in k).
    \end{array}
    \right.
  \]
\end{definition}

Consider the functor $\mathscr{Y}_*^+ : k$-\textbf{Alg} $ \to k$-\textbf{complexes}
given by:
\begin{equation}\label{eq.deltasplus}
   \mathscr{Y}_*^+A \;:=\; k[ N(- \setminus \Delta S_+) ]\otimes_{\Delta S_+} B_*^{sym_+}A.
\end{equation}
\[
  \mathscr{Y}_*^+f = \mathrm{id} \otimes B_*^{sym_+}f
\]
The functoriality of $\mathscr{Y}_*^+$ depends on the naturality of $B_*^{sym_+}f$, 
which follows from the naturality of $B_*^{sym}f$ in most cases.  The only case
to check is on a morphism $[-1] \stackrel{\iota_m}{\to} [m]$.  For the object $[-1]$,
$B_{-1}^{sym_+}f$ will be the identity map of $k$.
\[
  \begin{diagram}
    \node{k}
    \arrow{s,l}{B_*^{sym}A(\iota_m)}
    \arrow{e,t}{\mathrm{id}}
    \node{k}
    \arrow{s,r}{B_*^{sym}B(\iota_m)}
    \\
    \node{A^{\otimes(m+1)}}
    \arrow{e,t}{f^{\otimes(m+1)}}
    \node{B^{\otimes(m+1)}}
  \end{diagram}
\]    
The commutativity of this diagram is clear, since $f(1) = 1$, and $B_*^{sym}A(\iota_m)$,
$B_*^{sym}B(\iota_m)$ are simply the unit maps.

Note, the differential of $\mathscr{Y}_*^+A$ will be denoted $d_*(A)$.  As before,
when the context is clear, the differential will simply be denoted $d_*$.

Our goal is to prove the following:
\begin{theorem}\label{thm.SymHom_plusComplex}
  For an associative, unital $k$-algebra $A$,
  \[
    HS_*(A) = H_*\left( \mathscr{Y}_*^+A;\,k \right)
  \]
\end{theorem}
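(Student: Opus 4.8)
The plan is to realise $\mathscr{Y}_*A$ as a subcomplex of $\mathscr{Y}_*^+A$ and to prove the inclusion is a quasi-isomorphism. Note that $\Delta S$ is a full subcategory of $\Delta S_+$ whose only extra object $[-1]$ is initial and receives no nonidentity morphism; hence for any object $C$ of $\Delta S$ the under-category $C\setminus\Delta S_+$ coincides with $C\setminus\Delta S$, so the subcomplex of $\mathscr{Y}_*^+A$ spanned by the chains whose initial object lies in $\Delta S$ is precisely $\mathscr{Y}_*A = k[N(-\setminus\Delta S)]\otimes_{\Delta S}B_*^{sym}A$, whose homology is $HS_*(A)$ by Cor.~\ref{cor.SymHomComplex}. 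Concretely, this inclusion comes from the short exact sequence of covariant $\Delta S_+$-modules $0 \to j_!B_*^{sym}A \to B_*^{sym_+}A \to K \to 0$, where $j_!$ is extension by zero on $[-1]$ (the left adjoint of restriction $j^*$ along $\Delta S\hookrightarrow\Delta S_+$, which exists because nothing maps nontrivially into $[-1]$), the left map is the identity on the objects $[q]$ with $q\geq 0$ and the inclusion $0\hookrightarrow k$ on $[-1]$, and $K$ is the skyscraper module with $K([-1])=k$ and $K([q])=0$ for $q\geq 0$. Since $k[N(-\setminus\Delta S_+)]$ is a projective, in particular flat, resolution of the trivial $\Delta S_+^{\mathrm{op}}$-module $\underline{k}$ --- the proof of Prop.~\ref{prop.contractibility_under-category} uses nothing special about the category --- tensoring this sequence over $\Delta S_+$ with it yields a short exact sequence of complexes $0 \to \mathscr{Y}_*A \to \mathscr{Y}_*^+A \to C_* \to 0$, with $C_* = k[N(-\setminus\Delta S_+)]\otimes_{\Delta S_+}K$ computing $\mathrm{Tor}_*^{\Delta S_+}(\underline{k},K)$.

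It then suffices to prove $\mathrm{Tor}_*^{\Delta S_+}(\underline{k},K)=0$. For this I would use that the trivial \emph{covariant} $\Delta S_+$-module is $k[\mathrm{Mor}_{\Delta S_+}([-1],-)]$ --- because $[-1]$ is initial --- and is therefore projective, and I would fit $K$ into the short exact sequence of covariant $\Delta S_+$-modules $0 \to j_!\underline{k} \to \underline{k} \to K \to 0$ (the first $\underline{k}$ over $\Delta S$, the second over $\Delta S_+$). Applying $\mathrm{Tor}_*^{\Delta S_+}(\underline{k},-)$: projectivity makes $\mathrm{Tor}_n^{\Delta S_+}(\underline{k},\underline{k})$ vanish for $n\geq 1$ and equal $\underline{k}([-1])=k$ for $n=0$; and $\mathrm{Tor}_n^{\Delta S_+}(\underline{k},j_!\underline{k}) = HS_n(k)$ by the first paragraph applied with $A=k$ (note $B_*^{sym}k\cong\underline{k}$), so by Cor.~\ref{cor.HS_of_k} it is $k$ for $n=0$ and $0$ for $n\geq 1$. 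The long exact sequence then kills $\mathrm{Tor}_n^{\Delta S_+}(\underline{k},K)$ for $n\geq 2$ and leaves only $0 \to \mathrm{Tor}_1^{\Delta S_+}(\underline{k},K) \to k \xrightarrow{\phi} k \to \mathrm{Tor}_0^{\Delta S_+}(\underline{k},K) \to 0$.

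Handling this four-term sequence is the main obstacle. One computes $\mathrm{Tor}_0^{\Delta S_+}(\underline{k},K) = \underline{k}\otimes_{\Delta S_+}K = \colim_{\Delta S_+}K$ by hand: the relation attached to the unique morphism $\iota_0:[-1]\to[0]$ identifies every $x\in K([-1])$ with $K(\iota_0)(x)=0\in K([0])$, so this colimit vanishes. Hence $\phi$ is surjective, and a surjective $k$-module endomorphism of $k$ is automatically an isomorphism, so $\mathrm{Tor}_1^{\Delta S_+}(\underline{k},K)=\ker\phi=0$ as well. Thus $C_*$ is acyclic, the inclusion $\mathscr{Y}_*A\hookrightarrow\mathscr{Y}_*^+A$ induces an isomorphism $HS_*(A)=H_*(\mathscr{Y}_*A)\cong H_*(\mathscr{Y}_*^+A)$, and the theorem follows. (Alternatively one can avoid $j_!$ and skyscraper modules and instead write $C_*$ out explicitly as a complex assembled from $N(\Delta S_+)$ and produce a contracting homotopy directly; that route is elementary but the bookkeeping is heavier, since the naive ``prepend $[-1]$'' homotopy only computes $\partial s+s\partial$ up to the degenerate face and must be corrected.)
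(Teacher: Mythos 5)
Your proof is correct, and it takes a genuinely different route from the paper's. The paper first establishes the theorem for tensor algebras (Lemma~\ref{lem.SymHom_plusComplex-tensalg}), using the decomposition of $\mathscr{Y}_*TA$ and $\mathscr{Y}_*^+TA$ into summands indexed by $n \geq -1$ together with contractibility of both $N(\Delta S)$ and $N(\Delta S_+)$; it then bootstraps to general $A$ by resolving $A$ by tensor algebras and comparing the associated double complexes $\mathscr{T}_{*,*}$, $\mathscr{A}_{*,*}$, $\mathscr{T}^+_{*,*}$, $\mathscr{A}^+_{*,*}$ via a diagram of spectral sequences (Theorems~\ref{thm.doublecomplexiso}, \ref{thm.doublecomplexiso-plus}, Lemma~\ref{lem.comm-diag-functors}). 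Your argument replaces all of this with a single short exact sequence of covariant $\Delta S_+$-modules whose cokernel $K$ is a skyscraper at $[-1]$ and hence independent of $A$, reducing the comparison between $\mathscr{Y}_*A$ and $\mathscr{Y}_*^+A$ entirely to the case $A = k$. Your key structural inputs --- that the covariant trivial $\Delta S_+$-module $\underline{k} = k[\mathrm{Mor}_{\Delta S_+}([-1],-)]$ is projective because $[-1]$ is initial, and that $\mathrm{Tor}_*^{\Delta S_+}(\underline{k}, j_!\underline{k}) = HS_*(k)$ --- repackage the same two contractibility facts the paper uses ($N(\Delta S_+)$ and $N(\Delta S)$ contractible), but they do so in a way that sidesteps the tensor algebra resolution entirely. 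This is substantially shorter, and it also makes the mechanism transparent: the discrepancy between $\mathscr{Y}_*$ and $\mathscr{Y}_*^+$ is concentrated in the object $[-1]$ and contributes nothing in homology.

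Two small nits, neither affecting correctness. First, the left Kan extension $j_!$ exists for any functor into a cocomplete category; what the absence of morphisms into $[-1]$ buys you is the explicit formula as extension by zero (via the empty comma category over $[-1]$), not existence. Second, it is worth stating explicitly that the inclusion $\mathscr{Y}_*A \hookrightarrow \mathscr{Y}_*^+A$ you construct is the same map $J_A$ that the paper treats in Lemma~\ref{lem.Y-to-Y-plus} and Theorem~\ref{thm.J-iso}; tracing a generator through both constructions shows they agree, so your argument is in fact a shorter proof of Theorem~\ref{thm.J-iso} as well.
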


As the preliminary step, we shall prove the theorem in the special case of
tensor algebras.  We shall need an analogue of Lemma~\ref{lem.tensor_alg_constr}
for $\Delta S_+$.
\begin{lemma}\label{lem.tensor_alg_constr-plus}
  For a unital, associative $k$-algebra $A$, 
  there is an isomorphism of $k$-complexes:
  \begin{equation}\label{eq.YplusA-decomp}
    \mathscr{Y}^+_*TA \cong \bigoplus_{n\geq -1} Y^+_n,
  \end{equation}
  where
  \[
    Y^+_n = \left\{\begin{array}{ll}
                    k\left[N(\Delta S_+)\right], &n = -1\\
                    k\left[N([n] \setminus \Delta S_+)\right]
                      \otimes_{k\Sigma_{n+1}} A^{\otimes(n+1)},
                      &n \geq 0
                    \end{array}\right.
  \]
  Moreover, the differential respects the direct sum decomposition.
\end{lemma}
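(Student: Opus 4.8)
The plan is to transcribe the proof of Lemma~\ref{lem.tensor_alg_constr}; the only new feature of $\Delta S_+$ is the extra object $[-1]$ together with the morphisms $\iota_n:[-1]\to[n]$, and these enter the argument in a controlled way. First I would check that the reduction of Remark~\ref{rmk.uniqueChain} goes through verbatim: for \emph{any} $\alpha\in\mathrm{Mor}\,\Delta S_+$ — in particular for $\alpha=\iota_{q_0}$, where $B_*^{sym_+}TA(\alpha)$ sends $\lambda$ to $\lambda\cdot 1_k^{\otimes(q_0+1)}$ — the identity $([p]\xrightarrow{\alpha}[q_0]\to\cdots)=\alpha^*\big([q_0]\xrightarrow{\mathrm{id}}[q_0]\to\cdots\big)$ gives $([p]\xrightarrow{\alpha}[q_0]\to\cdots)\otimes u\approx\big([q_0]\xrightarrow{\mathrm{id}}[q_0]\to\cdots\big)\otimes\alpha_*(u)$. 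Hence every element of $\mathscr{Y}^+_nTA$ is represented by a chain $[q_0]\xrightarrow{\beta_1}[q_1]\to\cdots\xrightarrow{\beta_n}[q_n]\otimes u$ with $u\in B_{q_0}^{sym_+}TA$ and $q_0\geq-1$, the only surviving relation being the $\Sigma_{q_0+1}$-equivariance, exactly as in Lemma~\ref{lem.tensor_alg_constr}.

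Next I would reuse the degree invariant: writing $u$ as a sum of tensors of homogeneous elements of $TA$, assign to each such tensor its \emph{total tensor-degree}, i.e.\ the total number of elements of $A$ appearing in it. Multiplication in $TA$ is additive on degrees and permutations merely rearrange factors, so every morphism of $\Delta S$ preserves total degree; and the image of $B_*^{sym_+}TA(\iota_n)$ consists of tensors of total degree $0$, so \emph{every} morphism of $\Delta S_+$ preserves it. Thus $B_*^{sym_+}TA$ splits as a $\Delta S_+$-module by total degree, and since each face operator $d_i$ of Remark~\ref{rmk.uniqueChain} either composes consecutive $\beta$'s or applies $\beta_1$ to $u$, it too preserves total degree; so $\mathscr{Y}^+_*TA$ splits as a complex into summands indexed by the total degree $m+1\geq0$. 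For $m\geq0$ one has $[m]\setminus\Delta S_+=[m]\setminus\Delta S$ (there is no morphism from a nonempty set to $[-1]$), so on the summand of total degree $m+1\geq1$ the rewriting of Lemma~\ref{lem.tensor_alg_constr} — flatten $u$ to $u'\in A^{\otimes(m+1)}$, form the $\Delta$-morphism $\zeta_u:[m]\to[q_0]$ with $(\zeta_u)_*(u')=u$, and pass to $[m]\xrightarrow{\zeta_u}[q_0]\xrightarrow{\beta_1}[q_1]\to\cdots\xrightarrow{\beta_n}[q_n]\otimes u'$, modulo $\Sigma_{m+1}$ — applies unchanged and identifies this summand with $Y^+_m=Y_m=k[N([m]\setminus\Delta S_+)]\otimes_{k\Sigma_{m+1}}A^{\otimes(m+1)}$.

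It remains to identify the total-degree-$0$ summand with $Y^+_{-1}=k[N(\Delta S_+)]$. Here $u$ is \emph{trivial}: either $q_0\geq0$ and $u=1_k^{\otimes(q_0+1)}$, or $q_0=-1$ and $u=\lambda\in k=B_{-1}^{sym_+}TA$; in either case the generator is equivalent to $[q_0]\xrightarrow{\mathrm{id}}[q_0]\xrightarrow{\beta_1}\cdots\xrightarrow{\beta_n}[q_n]\otimes u$ with $u$ still trivial, which I identify with the $n$-simplex $[q_0]\xrightarrow{\beta_1}\cdots\xrightarrow{\beta_n}[q_n]$ of $N(\Delta S_+)$ (if $q_0=-1$ this forces $\beta_1=\iota_{q_1}$). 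Because $\Delta S_+$ has no morphism into $[-1]$ other than $\mathrm{id}_{[-1]}$, each $n$-simplex of $N(\Delta S_+)$ — whether it lies entirely in $\Delta S$ or begins with an initial run of copies of $[-1]$ — arises in exactly one way, with no overlap between the two cases, and under the identification the $d_i$ become the simplicial faces of $N(\Delta S_+)$; the case needing attention is $d_0$ when $q_0=-1$, where $\beta_1=\iota_{q_1}$ sends $\lambda$ to $\lambda\cdot 1_k^{\otimes(q_1+1)}$, reproducing $d_0$ of the nerve. Assembling the summands gives the asserted isomorphism of complexes $\mathscr{Y}^+_*TA\cong\bigoplus_{n\geq-1}Y^+_n$ respecting the decomposition and the differential. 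I expect this last bookkeeping for the degree-$0$ summand to be the only real obstacle — verifying that the chains passing through $[-1]$ contribute exactly the new simplices of $N(\Delta S_+)$ with the correct face maps — since everything of positive total degree is a line-by-line repetition of Lemma~\ref{lem.tensor_alg_constr}.
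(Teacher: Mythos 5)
Your proof is correct and takes essentially the same approach as the paper: the paper's own proof simply asserts that the argument of Lemma~\ref{lem.tensor_alg_constr} carries over verbatim with $\Delta S$ replaced by $\Delta S_+$. What you have done is spell out why that replacement is harmless — the total-degree splitting persists because the new morphisms $\iota_n$ land in total degree zero, the positive-degree summands are untouched since $[m]\setminus\Delta S_+ = [m]\setminus\Delta S$ for $m\geq 0$, and the degree-zero summand now gathers up the extra simplices of $N(\Delta S_+)$ passing through $[-1]$ — which is exactly the bookkeeping the paper leaves implicit.
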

\begin{proof}
  The proof follows verbatim as the proof of Lemma~\ref{lem.tensor_alg_constr}, only   
  with $\Delta S$ replaced with $\Delta S_+$ throughout.
\end{proof}

\begin{lemma}\label{lem.Y-to-Y-plus}
  There is a chain map $J_A : \mathscr{Y}_*A \to \mathscr{Y}_*^+A$, which is
  natural in $A$.
\end{lemma}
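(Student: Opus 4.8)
The plan is to construct $J_A$ directly on the level of chains, using the inclusion of categories $\Delta S \hookrightarrow \Delta S_+$ together with the natural transformation of functors on $\Delta S$ relating $B_*^{sym}A$ to the restriction of $B_*^{sym_+}A$. First I would observe that the inclusion $\iota : \Delta S \hookrightarrow \Delta S_+$ induces a map of simplicial sets (in fact of $\Delta S$-sets, via restriction along $\iota$) $N(-\setminus\Delta S) \to \iota^*N(-\setminus\Delta S_+)$, simply by sending a chain $C \to A_0 \to \cdots \to A_n$ in $\Delta S$ to the same chain viewed in $\Delta S_+$. On the coefficient side, the functor $B_*^{sym_+}A$ restricted along $\iota$ is literally $B_*^{sym}A$ on objects $[n]$ with $n \geq 0$ and agrees with it on all morphisms of $\Delta S$, so there is a canonical natural transformation (indeed equality on $\Delta S$) $B_*^{sym}A \to \iota^*B_*^{sym_+}A$.

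Next I would feed these two pieces through the categorical tensor product. Given a $\Delta S$-module $M$, a $\Delta S^{\mathrm{op}}$-module $P$, a $\Delta S_+$-module $M'$ with $\iota^*M' \cong M$, and a $\Delta S_+^{\mathrm{op}}$-module $P'$ with a map $P \to \iota^*P'$, there is a natural map $P \otimes_{\Delta S} M \to P' \otimes_{\Delta S_+} M'$: an elementary tensor $y \otimes x$ with $y \in P(X)$, $x \in M(X)$ for $X \in \mathrm{Obj}\,\Delta S$ is sent to the image of $y$ in $P'(X)$ tensored with the image of $x$ in $M'(X)$, and one checks this respects the defining equivalence $\approx$ because every relation in $P \otimes_{\Delta S} M$ comes from a morphism of $\Delta S$, which is in particular a morphism of $\Delta S_+$. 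Applying this with $P = k[N(-\setminus\Delta S)]$, $P' = k[N(-\setminus\Delta S_+)]$, $M = B_*^{sym}A$, $M' = B_*^{sym_+}A$ gives, degreewise in the simplicial direction, a $k$-linear map
\[
  J_A \;:\; \mathscr{Y}_*A = k[N(-\setminus\Delta S)]\otimes_{\Delta S}B_*^{sym}A
  \;\longrightarrow\;
  k[N(-\setminus\Delta S_+)]\otimes_{\Delta S_+}B_*^{sym_+}A = \mathscr{Y}_*^+A.
\]
Using the explicit chain-level description from Remark~\ref{rmk.uniqueChain}, $J_A$ simply sends a generator $[q_0]\xrightarrow{\beta_1}\cdots\xrightarrow{\beta_n}[q_n] \otimes (y_0\otimes\cdots\otimes y_{q_0})$ to the same symbol interpreted in $\Delta S_+$; in particular it is a monomorphism onto the summand of $\mathscr{Y}_*^+A$ consisting of chains whose objects all lie in $\Delta S$.

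It then remains to verify that $J_A$ is a chain map and is natural in $A$. Both are routine: the face maps $d_0,\dots,d_n$ of $\mathscr{Y}_*^+A$ are defined by exactly the same formulas as those of $\mathscr{Y}_*A$ (Remark~\ref{rmk.uniqueChain}) on generators supported on $\Delta S$-objects — composition of the $\beta_i$'s and application of $\beta_1$ to the tensor factor — so $J_A d_i = d_i J_A$; and naturality in $A$ follows because $J_A$ is the identity on the $N(-\setminus\Delta S)$ factor while on the $B_*^{sym}A$ factor it is compatible with $B_*^{sym_+}f$, whose naturality was established just after Definition~\ref{def.symbar_plus}. The only mild subtlety — and the one place I would be careful — is checking that the map on tensor products is well-defined with respect to $\approx$; this is where one genuinely uses that $\Delta S$ is a \emph{full} subcategory of $\Delta S_+$ on the objects $[n]$, $n\geq 0$, so that no new relations among $\Delta S$-supported chains are introduced and no relation is lost. (The morphisms $\iota_n : [-1]\to[n]$ of $\Delta S_+$ only produce relations involving the new object $[-1]$, which does not occur in the image of $J_A$.) Once well-definedness is in hand, everything else is formal.
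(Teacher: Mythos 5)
Your proof is correct and follows essentially the same route as the paper: both constructions amount to the canonical map induced by the inclusion of categories $\Delta S \hookrightarrow \Delta S_+$, applied first to the nerve factor and then to the symmetric-bar factor, followed by passing from $\otimes_{\Delta S}$ to $\otimes_{\Delta S_+}$; the paper just spells these out as three successive chain maps whereas you package them into one step on elementary tensors with a well-definedness check. One small caution on the parenthetical remark: the image of $J_A$ is a direct summand of $\mathscr{Y}_*^+A$ only as a graded $k$-module (the complementary piece, spanned by standardized chains beginning at $[-1]$, is not closed under $d_0$, so the splitting is not one of chain complexes) — but this claim is not used in your argument, so the proof stands.
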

\begin{proof}
  First observe that the the inclusion $\Delta S 
  \hookrightarrow \Delta S_+$ induces an inclusion of nerves:
  \[
    N(-\setminus \Delta S) 
    \hookrightarrow
    N(-\setminus \Delta S_+),
  \]
  which in turn induces the chain map
  \[
    k\left[ N(-\setminus \Delta S) \right] \otimes_{\Delta S} B_*^{sym}A
    \longrightarrow
    k\left[ N(-\setminus \Delta S_+) \right] \otimes_{\Delta S} B_*^{sym}A
  \]
  $k\left[ N(-\setminus \Delta S_+) \right]$ is a right $\Delta S$-module
  as well as a right $\Delta S_+$-module.  Similarly, $B_*^{sym_+}A$ is
  both a left $\Delta S$-module and a left $\Delta S_+$-module.  There is
  a natural transformation \mbox{$B_*^{sym}A \to B_*^{sym_+}A$}, again induced by
  inclusion of categories $\Delta S \hookrightarrow \Delta S_+$, hence there is
  a chain map
  \[
    k\left[ N(-\setminus \Delta S_+) \right] \otimes_{\Delta S} B_*^{sym}A
    \longrightarrow
    k\left[ N(-\setminus \Delta S_+) \right] \otimes_{\Delta S} B_*^{sym_+}A.
  \]
  Finally, pass to tensors over $\Delta S_+$:
  \[
    k\left[ N(-\setminus \Delta S_+) \right] \otimes_{\Delta S} B_*^{sym_+}A
    \longrightarrow
    k\left[ N(-\setminus \Delta S_+) \right] \otimes_{\Delta S_+} B_*^{sym_+}A.    
  \]
  The composition gives a chain map $J_A : \mathscr{Y}_*A \to \mathscr{Y}_*^+A$.  We
  must verify that $J$ is a natural transformation $\mathscr{Y}_* \to \mathscr{Y}_*^+$.
  Suppose $f : A \to B$ is a unital algebra map.
  \[
    \begin{diagram}
      \node{k\left[ N(-\setminus \Delta S) \right] \otimes_{\Delta S} B_*^{sym}A}
      \arrow{s,l}{\mathscr{Y}_*f}
      \arrow{e,t}{J_A}
      \node{k\left[ N(-\setminus \Delta S_+) \right] \otimes_{\Delta S_+} B_*^{sym_+}A}
      \arrow{s,r}{\mathscr{Y}_*^+f}
      \\
      \node{k\left[ N(-\setminus \Delta S) \right] \otimes_{\Delta S} B_*^{sym}B}
      \arrow{e,t}{J_B}
      \node{k\left[ N(-\setminus \Delta S_+) \right] \otimes_{\Delta S_+} B_*^{sym_+}B}
    \end{diagram}
  \]    
  Let $y = [p]\to[q_0] \to \ldots \to [q_n] \otimes (a_0 \otimes \ldots \otimes a_p)$
  be an $n$-chain of $k\left[ N(-\setminus \Delta S) \right] \otimes_{\Delta S} B_*^{sym}A$.
  Then $J_A(y)$ has the same form in
  $k\left[ N(-\setminus \Delta S_+) \right] \otimes_{\Delta S_+} B_*^{sym_+}A$.
  So,
  \[
    \mathscr{Y}_*^+f\left( J_A(y) \right) = 
    [p]\to[q_0] \to \ldots \to [q_n] \otimes \left(f(a_0) \otimes \ldots \otimes f(a_p)\right)
    = J_B\left( \mathscr{Y}_*f(y) \right)
  \]
\end{proof}

Our goal now will be to show the following:
\begin{theorem}\label{thm.J-iso}
  For a unital, associative $k$-algebra $A$, the chain map
  \[
    J_A : \mathscr{Y}_*A \to \mathscr{Y}^+_*A
  \]
  induces an isomorphism on homology
  \[
    H_*\left(\mathscr{Y}_*A;\,k\right) \stackrel{\cong}{\longrightarrow}
    H_*\left(\mathscr{Y}^+_*A;\,k\right)
  \]
\end{theorem}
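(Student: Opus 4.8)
The plan is to reduce Theorem~\ref{thm.J-iso} to the case of tensor algebras, where both sides decompose explicitly by Lemmas~\ref{lem.tensor_alg_constr} and~\ref{lem.tensor_alg_constr-plus}, and then to bootstrap to general $A$ via the resolution~(\ref{eq.res_tensor_alg}) by tensor algebras exactly as in Corollary~\ref{cor.HS_A_via-tensoralgebras}. So I would first establish the key special case: \emph{for a tensor algebra $TA$, the map $J_{TA}$ is a homology isomorphism.} Under the decompositions $\mathscr{Y}_*TA \cong \bigoplus_{n\geq -1}Y_n$ and $\mathscr{Y}^+_*TA \cong \bigoplus_{n\geq -1}Y^+_n$, the chain map $J_{TA}$ respects the direct sum (since it is induced by inclusion of categories, which preserves the ``number of non-trivial tensor factors'' bookkeeping used in the proof of Lemma~\ref{lem.tensor_alg_constr}), so it suffices to show $J$ induces an isomorphism $H_*(Y_n)\to H_*(Y^+_n)$ for each $n\geq -1$. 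For $n=-1$ this is the statement that $N(\Delta S)\hookrightarrow N(\Delta S_+)$ is a homology (indeed homotopy) equivalence; for $n\geq 0$ it is the statement that the inclusion
\[
  k[N([n]\setminus\Delta S)]\otimes_{k\Sigma_{n+1}^{\mathrm{op}}}A^{\otimes(n+1)}
  \longrightarrow
  k[N([n]\setminus\Delta S_+)]\otimes_{k\Sigma_{n+1}}A^{\otimes(n+1)}
\]
is a homology isomorphism. Since $\Sigma_{n+1}$ acts freely on both nerves and $A^{\otimes(n+1)}$ is fixed, this follows once I show $N([n]\setminus\Delta S)\hookrightarrow N([n]\setminus\Delta S_+)$ is a $\Sigma_{n+1}$-equivariant homotopy equivalence — for instance by exhibiting a deformation of $[n]\setminus\Delta S_+$ onto $[n]\setminus\Delta S$, using that the only new object $[-1]$ receives a unique morphism $\iota_n$ and maps uniquely to every $[m]$, so one can contract the $[-1]$-direction by a natural transformation much as in Lemma~\ref{lem.DeltaScontractible}.

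Next I would carry out the bootstrap. Apply $\mathscr{Y}_q$ and $\mathscr{Y}^+_q$ to the resolution~(\ref{eq.res_tensor_alg}); both yield exact sequences (by the induced homotopies $\mathscr{Y}_q h_*$ and $\mathscr{Y}^+_q h_*$, the latter constructed identically since $h$ is the same natural transformation $U\to UT$ and $\mathscr{Y}^+$ is functorial by the discussion following~(\ref{eq.deltasplus})). Assemble these into double complexes $\mathscr{T}_{*,*}$ and $\mathscr{T}^+_{*,*}$ with $\mathscr{T}^{(+)}_{p,q}=\mathscr{Y}^{(+)}_q T^{p+1}A$, introducing the sign $(-1)^p$ on vertical differentials as in~(\ref{eq.scriptT}). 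The family $\{J_{T^{p+1}A}\}_p$ is a map of double complexes $\mathscr{T}_{*,*}\to\mathscr{T}^+_{*,*}$ (horizontal compatibility is naturality of $J$ from Lemma~\ref{lem.Y-to-Y-plus}; vertical compatibility is that $J$ is a chain map). By the special case just proved, $J$ induces an isomorphism on each row's homology, hence on the $E_1$-page of the spectral sequence of the double complex filtered by columns, hence an isomorphism $H_*(\mathrm{Tot}(\mathscr{T}))\xrightarrow{\cong}H_*(\mathrm{Tot}(\mathscr{T}^+))$. Finally, by Corollary~\ref{cor.HS_A_via-tensoralgebras} the left side is $HS_*(A)=H_*(\mathscr{Y}_*A)$, and the analogous argument applied to $\mathscr{A}^+_{*,*}$ (the double complex with $\mathscr{Y}^+_*A$ in column zero) gives $H_*(\mathrm{Tot}(\mathscr{T}^+))\cong H_*(\mathscr{Y}^+_*A)$; chasing the identifications shows the composite isomorphism is exactly the one induced by $J_A$.

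The main obstacle I anticipate is the equivariant homotopy equivalence $N([n]\setminus\Delta S)\simeq N([n]\setminus\Delta S_+)$ in the tensor-algebra case — one must check that the extra object $[-1]$ and the unique morphisms $\iota_m$ really do allow a $\Sigma_{n+1}$-equivariant contraction of the ``plus'' part of the under-category onto the old one, and that this is compatible with the $\Sigma_{n+1}$-action used to form the quotient $\otimes_{k\Sigma_{n+1}}A^{\otimes(n+1)}$. The cleanest route is probably to produce an explicit natural transformation (in the spirit of the two natural transformations in Lemma~\ref{lem.DeltaScontractible}) whose effect on nerves is $\Sigma_{n+1}$-equivariant because it only manipulates the $[-1]$-coordinate, which $\Sigma_{n+1}$ does not touch. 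Everything else — functoriality of $\mathscr{Y}^+$, exactness of the $\mathscr{Y}^+_q$-rows, the double-complex comparison — is a routine transcription of arguments already in the excerpt.
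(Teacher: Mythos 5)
Your overall strategy matches the paper's: prove the tensor-algebra case using the direct-sum decompositions, then bootstrap to general $A$ via the double complexes built from the resolution~(\ref{eq.res_tensor_alg}). The bootstrap portion of your argument is a correct transcription of what the paper does.

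The one place you go astray is in identifying a ``main obstacle'' that does not exist. For $n\geq 0$ you propose to construct a $\Sigma_{n+1}$-equivariant deformation of $[n]\setminus\Delta S_+$ onto $[n]\setminus\Delta S$ by ``contracting the $[-1]$-direction.'' But there is no $[-1]$-direction to contract: the objects of $[n]\setminus\Delta S_+$ are pairs $([m],\phi)$ with $\phi\colon [n]\to[m]$ a morphism in $\Delta S_+$, and since the only morphisms with target $[-1]$ in $\Delta S_+$ are $\iota_{-1}=\mathrm{id}_{[-1]}$ (the new morphisms $\iota_m$ all have \emph{source} $[-1]$), no morphism $[n]\to[-1]$ exists for $n\geq 0$. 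Hence $[-1]$ simply does not appear as an object of $[n]\setminus\Delta S_+$, and the under-categories satisfy the strict equality $[n]\setminus\Delta S_+ = [n]\setminus\Delta S$. The map $j_*$ in these components is therefore literally the identity, and the equivariance concern evaporates — there is nothing to check. Your phrase ``the only new object $[-1]$ receives a unique morphism $\iota_n$'' suggests a confusion between morphisms into and out of $[-1]$; $[-1]$ sends morphisms everywhere but receives nothing except its own identity. Once this is straightened out, the only nontrivial verification in the tensor-algebra case is $n=-1$, where the inclusion $N(\Delta S)\hookrightarrow N(\Delta S_+)$ is a homotopy equivalence between two contractible nerves ($N(\Delta S)$ by Lemma~\ref{lem.DeltaScontractible}, $N(\Delta S_+)$ because $[-1]$ is initial), which you correctly identify. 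With that correction, your proof proposal reduces to the paper's argument.
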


\begin{lemma}\label{lem.SymHom_plusComplex-tensalg}
  For a unital, associative $k$-algebra $A$, the chain map
  \[
    J_{TA} : \mathscr{Y}_*TA \longrightarrow \mathscr{Y}_*^+TA
  \]
  induces an isomorphism in homology, hence
  \[
    HS_*(TA) = H_*\left( \mathscr{Y}_*^+TA;\,k \right)
  \]
\end{lemma}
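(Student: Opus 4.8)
The plan is to exploit the direct-sum decompositions furnished by Lemma~\ref{lem.tensor_alg_constr} and Lemma~\ref{lem.tensor_alg_constr-plus} and to check that $J_{TA}$ is compatible with them, thereby reducing the assertion to a summand-by-summand comparison. Recall that $\mathscr{Y}_*TA\cong\bigoplus_{n\geq-1}Y_n$ and $\mathscr{Y}_*^+TA\cong\bigoplus_{n\geq-1}Y_n^+$, with differentials respecting the splittings. Since $J_{TA}$ is induced by the inclusion of categories $\Delta S\hookrightarrow\Delta S_+$ together with the natural transformation $B_*^{sym}A\to B_*^{sym_+}A$, and since this inclusion preserves the number of non-trivial tensor factors of a generator (which is exactly the index of the summands), $J_{TA}$ carries $Y_n$ into $Y_n^+$ for every $n\geq-1$. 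So it suffices to prove that $J_{TA}$ induces an isomorphism $H_*(Y_n)\to H_*(Y_n^+)$ for each $n$.

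First I would dispose of the summands with $n\geq0$. Here $Y_n=k[N([n]\setminus\Delta S)]\otimes_{k\Sigma_{n+1}^{\mathrm{op}}}A^{\otimes(n+1)}$ and $Y_n^+=k[N([n]\setminus\Delta S_+)]\otimes_{k\Sigma_{n+1}}A^{\otimes(n+1)}$ (the discrepancy between $\Sigma_{n+1}$ and $\Sigma_{n+1}^{\mathrm{op}}$ being immaterial, as they are isomorphic groups). The key observation is that for $n\geq 0$ the under-categories $[n]\setminus\Delta S$ and $[n]\setminus\Delta S_+$ coincide: there is no morphism $[n]\to[-1]$ in $\Delta S_+$ when $n\geq0$, so every object $([m],\phi)$ of $[n]\setminus\Delta S_+$ has $m\geq0$; and for $m,m'\geq0$ one has $\mathrm{Mor}_{\Delta S_+}([m],[m'])=\mathrm{Mor}_{\Delta S}([m],[m'])$, so there are no new objects or morphisms, and the $\Sigma_{n+1}$-equivariance matches. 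Consequently $Y_n=Y_n^+$ as complexes and $J_{TA}$ restricts to the identity on this summand, hence is an isomorphism on homology there.

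The interesting summand is $n=-1$, where $Y_{-1}=k[N(\Delta S)]$, $Y_{-1}^+=k[N(\Delta S_+)]$, and $J_{TA}$ restricts to the chain map induced by $\Delta S\hookrightarrow\Delta S_+$ (concretely, the inclusion of $N(\Delta S)$ as the subcomplex of simplices of $N(\Delta S_+)$ not meeting the object $[-1]$). By Lemma~\ref{lem.DeltaScontractible}, $N(\Delta S)$ is contractible; and $N(\Delta S_+)$ is contractible as well, since $[-1]$ is an initial object of $\Delta S_+$ (the morphisms $\iota_n$ being unique) and the nerve of a category with an initial object is contractible. Hence both $H_*(Y_{-1})$ and $H_*(Y_{-1}^+)$ are $k$ concentrated in degree $0$, and $J_{TA}$ induces an isomorphism there because it is compatible with the augmentations $k[N(-)_0]\to k$ (it sends a vertex to a vertex). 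Assembling the summands, $J_{TA}$ is a homology isomorphism; and since $HS_*(TA)=H_*(\mathscr{Y}_*TA;k)$ by Cor.~\ref{cor.SymHomComplex}, we conclude $HS_*(TA)=H_*(\mathscr{Y}_*^+TA;k)$.

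The main obstacle I anticipate is the bookkeeping required to verify rigorously that $J_{TA}$ respects the two direct-sum decompositions: this means unwinding the isomorphisms in the proofs of Lemma~\ref{lem.tensor_alg_constr} and Lemma~\ref{lem.tensor_alg_constr-plus} and tracking how the normalization of generators (pulling back along the maps $\zeta_u$, collapsing trivial tensor factors) interacts with the inclusion $\Delta S\hookrightarrow\Delta S_+$. Once that compatibility is in hand, the rest is essentially formal.
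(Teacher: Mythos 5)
Your proposal is correct and follows essentially the same route as the paper's own proof: decompose via Lemmas~\ref{lem.tensor_alg_constr} and~\ref{lem.tensor_alg_constr-plus}, observe that $J_{TA}$ restricts to the summand map $j_*$, note the equality $[n]\setminus\Delta S = [n]\setminus\Delta S_+$ for $n\geq 0$, and use contractibility of $N(\Delta S)$ and $N(\Delta S_+)$ for the $n=-1$ summand. The only minor difference is that you verify the $n=-1$ case at the level of $H_0$ via augmentations, while the paper invokes the (slightly stronger, and correct) fact that any chain map between contractible complexes is a homotopy equivalence.
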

\begin{proof}
  \[
    HS_*(TA) = H_*\left( \mathscr{Y}_*TA;\,k \right), \quad\textrm{by definition.}
  \]
  There is a commutative square of complexes:
  \[
    \begin{diagram}
      \node{\mathscr{Y}_*TA}
      \arrow{e,t}{J_{TA}}
      \node{\mathscr{Y}_*^+TA}\\
      \node{\bigoplus_{n\geq -1} Y_n}
      \arrow{n,l}{\cong}
      \arrow{e,t}{j_*}
      \node{\bigoplus_{n \geq -1} Y_n^+}
      \arrow{n,l}{\cong}
    \end{diagram}
  \]
  The isomorphisms on the left and right follow from Lemmas~\ref{lem.tensor_alg_constr}
  and~\ref{lem.tensor_alg_constr-plus}.  The map $j_*$ defined as follows.  For $n = -1$,
  \begin{equation}\label{eq.NDeltaS-to-NDeltaS-plus}
    j_* : k\left[N(\Delta S)\right] \to k\left[N(\Delta S_+)\right]
  \end{equation}
  is induced by inclusion of categories $\Delta S \hookrightarrow \Delta S_+$.
  
  For $n \geq 0$,
  \begin{equation}\label{eq.Y_n-to-Y_n-plus}
    j_* : 
    k\left[N([n] \setminus \Delta S)\right]
                      \otimes_{k\Sigma_{n+1}} A^{\otimes(n+1)} \longrightarrow
    k\left[N([n] \setminus \Delta S_+)\right]
                      \otimes_{k\Sigma_{n+1}} A^{\otimes(n+1)}
  \end{equation}
  is again induced by inclusion of categories.
  
  Observe that $N(\Delta S_+)$ is 
  contractible, since $[-1] \in \mathrm{Obj}(\Delta S_+)$
  is initial.  Thus by Lemma~\ref{lem.DeltaScontractible},
  the map $j_*$ of~(\ref{eq.NDeltaS-to-NDeltaS-plus}) 
  is a homotopy equivalence.  Now, for $n \geq 0$,
  there is equality $N([n] \setminus \Delta S_+) = N([n] \setminus \Delta S)$, since
  there are no morphisms $[n] \to [-1]$ for $n \geq 0$, so~(\ref{eq.Y_n-to-Y_n-plus}),
  and therefore $j_*$, is a homotopy equivalence.  This implies
  that $J_{TA}$ is must also be a homotopy equivalence.
\end{proof}
\begin{rmk}\label{rmk.cyclic-departure}
  Observe, this lemma provides our first major departure from the theory of
  cyclic homology.  The proof above
  would not work over the categories $\Delta C$ and $\Delta C_+$, as $N(\Delta C)$
  is not contractible.
\end{rmk}

Consider a double complex $\mathscr{T}^+_{*,*}$, the analogue of 
complex~(\ref{eq.scriptT}) for $\Delta S_+$.
\begin{equation}\label{eq.scriptT-plus}
  \begin{diagram}
    \node{\vdots}
    \arrow{s,l}{ }
    \node{\vdots}
    \arrow{s,l}{ }
    \node{\vdots}
    \arrow{s,l}{ }
    \\
    \node{\mathscr{Y}^+_2TA}
    \arrow{s,l}{d_2}
    \node{\mathscr{Y}^+_2T^2A}
    \arrow{w,t}{\mathscr{Y}^+_2\theta_1}
    \arrow{s,l}{-d_2}
    \node{\mathscr{Y}^+_2T^3A}
    \arrow{w,t}{\mathscr{Y}^+_2\theta_2}
    \arrow{s,l}{d_2}
    \node{\cdots}
    \arrow{w,t}{ }
    \\
    \node{\mathscr{Y}^+_1TA}
    \arrow{s,l}{d_1}
    \node{\mathscr{Y}^+_1T^2A}
    \arrow{w,t}{\mathscr{Y}^+_1\theta_1}
    \arrow{s,l}{-d_1}
    \node{\mathscr{Y}^+_1T^3A}
    \arrow{w,t}{\mathscr{Y}^+_2\theta_2}
    \arrow{s,l}{d_1}
    \node{\cdots}
    \arrow{w,t}{ }
    \\
    \node{\mathscr{Y}^+_0TA}
    \node{\mathscr{Y}^+_0T^2A}
    \arrow{w,t}{\mathscr{Y}^+_0\theta_1}
    \node{\mathscr{Y}^+_0T^3A}
    \arrow{w,t}{\mathscr{Y}^+_1\theta_2}
    \node{\cdots}
    \arrow{w,t}{}
  \end{diagram}
\end{equation}

The maps $\theta_1, \theta_2, \ldots$ are defined by formula~(\ref{eq.theta_n}).

Consider a second double complex, $\mathscr{A}^+_{*,*}$, the analogue of 
complex~(\ref{eq.scriptA}) for $\Delta S_+$.  It consists
of the complex
$\mathscr{Y}^+_*A$ as the $0^{th}$ column, and $0$ in all positive columns.
\begin{equation}\label{eq.scriptA-plus}
  \begin{diagram}
    \node{\vdots}
    \arrow{s,l}{ }
    \node{\vdots}
    \arrow{s,l}{ }
    \node{\vdots}
    \arrow{s,l}{ }
    \\
    \node{\mathscr{Y}^+_2A}
    \arrow{s,l}{d_2}
    \node{0}
    \arrow{w}
    \arrow{s}
    \node{0}
    \arrow{w}
    \arrow{s}
    \node{\cdots}
    \arrow{w,t}{ }
    \\
    \node{\mathscr{Y}^+_1A}
    \arrow{s,l}{d_1}
    \node{0}
    \arrow{w}
    \arrow{s}
    \node{0}
    \arrow{w}
    \arrow{s}
    \node{\cdots}
    \arrow{w,t}{ }
    \\
    \node{\mathscr{Y}^+_0A}
    \node{0}
    \arrow{w}
    \node{0}
    \arrow{w}
    \node{\cdots}
    \arrow{w,t}{}
  \end{diagram}
\end{equation}

We may think of each double complex construction as a functor:
\[
  \begin{array}{l}
    A \mapsto \mathscr{A}_{*,*}(A)\\
    A \mapsto \mathscr{T}_{*,*}(A)\\
    A \mapsto \mathscr{A}^+_{*,*}(A)\\
    A \mapsto \mathscr{T}^+_{*,*}(A)
  \end{array}
\]
Each functor takes unital morphisms of algebras to maps of double complexes in the
obvious way -- for example if $f : A \to B$, then the induced map 
$\mathscr{T}_{*,*}(A) \to \mathscr{T}_{*,*}(B)$ is defined on the $(p,q)$-component
by the map $\mathscr{Y}_qT^{p+1}f$.  The induced map commutes with vertical
differentials of $\mathscr{A}_{*,*}$ and $\mathscr{T}_{*,*}$ (resp.,
$\mathscr{A}^+_{*,*}$ and $\mathscr{T}^+_{*,*}$)
by naturality of $\mathscr{Y}_*$ (resp. $\mathscr{Y}_*^+$), and it commutes
with the horizontal differentials of $\mathscr{T}_{*,*}$ and $\mathscr{T}^+_{*,*}$
by naturality of $\theta_n$ (see Prop.~\ref{prop.naturality-of-theta_n}).

The map $J$ induces a natural transformation 
\mbox{$J_{*,*} : \mathscr{A}_{*,*} \to \mathscr{A}^+_{*,*}$}, defined by
\[
  J_{p,*}(A) = \left\{\begin{array}{ll}
                  J_A : \mathscr{Y}_*A \to \mathscr{Y}^+_*A, & p = 0\\
                  0, & p > 0 \end{array}\right.
\]
Define a map of bigraded modules, \mbox{$K_{*,*}(A) : \mathscr{T}_{*,*}(A)
\to \mathscr{T}^+_{*,*}(A)$} by:
\[
  K_{p,*}(A) = J_{T^{p+1}A} : \mathscr{Y}_*T^{p+1}A \to \mathscr{Y}^+_*T^{p+1}A
\]
Now, $K_{*,*}(A)$ commutes with the vertical differentials because each 
$J_{T^{p+1}A}$ is a chain map.  $K_{*,*}(A)$ commutes with the horizontal
differentials because of naturality of $J$.  Finally, $K_{*,*}$ defines
a natural transformation $\mathscr{T}_{*,*} \to \mathscr{T}^+_{*,*}$,
again by naturality of $J$.
\multiply \dgARROWLENGTH by2
\[
  \begin{diagram}
    \node{A}
    \arrow{s,l}{f}
    \node{\mathscr{Y}_qT^{p+1}A}
    \arrow{s,l}{\mathscr{Y}_qT^{p+1}f}
    \arrow{e,tb}{K_{p,q}(A)}{= J_{T^{p+1}A}}
    \node{\mathscr{Y}^+_qT^{p+1}A}
    \arrow{s,r}{\mathscr{Y}^+_qT^{p+1}f}\\
    \node{B}
    \node{\mathscr{Y}_qT^{p+1}A}
    \arrow{e,tb}{K_{p,q}(B)}{= J_{T^{p+1}B}}
    \node{\mathscr{Y}^+_qT^{p+1}A}
  \end{diagram}
\]
\divide \dgARROWLENGTH by2

Recall by Thm~\ref{thm.doublecomplexiso}, there is a map of double complexes, 
\[
  \Theta_{*,*}(A) : \mathscr{T}_{*,*}(A) \longrightarrow \mathscr{A}_{*,*}(A)
\]
inducing an isomorphism in homology of the total complexes.  Observe that
$\Theta_{*,*}$ provides a natural transformation $\mathscr{T}_{*,*}
\to \mathscr{A}_{*,*}$, since $\Theta_{*,*}(A)$ is defined in terms of
$\theta_A$, which is natural in $A$.  We shall need the analogous statement for
the double complexes $\mathscr{T}^+_{*,*}$ and $\mathscr{A}^+_{*,*}$.

\begin{theorem}\label{thm.doublecomplexiso-plus}
  For any unital associative algebra, $A$,
  there is a map of double complexes, $\Theta^+_{*,*}(A) : \mathscr{T}^+_{*,*}(A) \to
  \mathscr{A}^+_{*,*}(A)$ inducing isomorphism in homology
  \[
    H_*\left( \mathrm{Tot}(\mathscr{T}^+(A));\,k\right) \to
    H_*\left( \mathrm{Tot}(\mathscr{A}^+(A));\, k\right)
  \]
  Moreover, $\Theta^+_{*,*}$ provides a natural transformation $\mathscr{T}^+_{*,*}
  \to \mathscr{A}^+_{*,*}$.
\end{theorem}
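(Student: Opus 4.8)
The plan is to follow the blueprint already established for $\mathscr{T}_{*,*} \to \mathscr{A}_{*,*}$ in Theorem~\ref{thm.doublecomplexiso}, since the double complexes $\mathscr{T}^+_{*,*}$ and $\mathscr{A}^+_{*,*}$ are built in exactly the same way but with $\mathscr{Y}_*^+$ in place of $\mathscr{Y}_*$. First I would define the map $\Theta^+_{*,*}(A) : \mathscr{T}^+_{*,*}(A) \to \mathscr{A}^+_{*,*}(A)$ by the same formula:
\[
  \Theta^+_{p,q}(A) := \left\{\begin{array}{ll}
                             \mathscr{Y}^+_q\theta_A, & p = 0 \\
                             0, & p > 0
                           \end{array}\right.
\]
Since $\mathscr{A}^+_{*,*}$ vanishes in all positive columns, checking that $\Theta^+_{*,*}$ is a map of double complexes reduces to the single square on the $0^{th}$ column, namely $d_q(A) \circ \mathscr{Y}^+_q\theta_A = \mathscr{Y}^+_{q-1}\theta_A \circ d_q(TA)$; this holds because $\mathscr{Y}^+_*$ is functorial (Prop.~\ref{prop.Y-functor} has its evident analogue for $\mathscr{Y}^+_*$, so $\mathscr{Y}^+_*\theta_A$ is a chain map), and the square on the $0^{th}$ row commutes trivially because the targets are zero.

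Next I would establish that $\Theta^+_{*,*}(A)$ induces an isomorphism on the homology of the total complexes. The key input is the exactness of the rows of $\mathscr{T}^+_{*,*}$: applying $\mathscr{Y}^+_q$ to the resolution~(\ref{eq.res_tensor_alg}) of $A$ by tensor algebras yields the sequence
\[
  \begin{diagram}
    \node{0}
    \node{\mathscr{Y}^+_qA}
    \arrow{w}
    \node{\mathscr{Y}^+_qTA}
    \arrow{w,t}{\mathscr{Y}^+_q\theta_{A}}
    \node{\mathscr{Y}^+_qT^2A}
    \arrow{w,t}{\mathscr{Y}^+_q\theta_1}
    \node{\cdots}
    \arrow{w}
  \end{diagram}
\]
which is exact via the induced contracting homotopy $\mathscr{Y}^+_q h_*$ (the homotopy $h_n = h_{T^{n+1}A}$ coming from the natural transformation $U \to UT$ survives intact under the functor $\mathscr{Y}^+_q$, exactly as in Section~\ref{sec.tensoralg}). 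A standard spectral sequence / acyclic assembly comparison then shows that the augmentation $\Theta^+_{*,*}$ quasi-isomorphically collapses each row onto its cokernel $\mathscr{Y}^+_qA$, so $H_*(\mathrm{Tot}(\mathscr{T}^+(A))) \to H_*(\mathrm{Tot}(\mathscr{A}^+(A)))$ is an isomorphism. This is the same argument as in the proof of Theorem~\ref{thm.doublecomplexiso}, so I would simply refer to it.

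Finally, naturality: $\Theta^+_{*,*}(A)$ is defined entirely in terms of $\theta_A : TA \to A$ and the functors $\mathscr{Y}^+_q$, and $\theta$ is a natural transformation $T \to \mathrm{id}$, while $\mathscr{Y}^+_*$ is functorial, so for any unital algebra map $f : A \to B$ the requisite square commutes componentwise. I do not anticipate a serious obstacle here; the one point requiring a moment's care is confirming that the contracting homotopy for~(\ref{eq.res_tensor_alg}) is genuinely preserved by $\mathscr{Y}^+_*$ — that is, that the extra object $[-1]$ and the morphisms $\iota_n$ in $\Delta S_+$ do not disturb the argument — but since $B_*^{sym_+}A$ restricted to $\Delta S$ agrees with $B_*^{sym}A$ and the homotopy $h$ acts only on the algebra variable, this causes no trouble. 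Thus the main (and only mildly nontrivial) step is the homology-isomorphism claim, which is handled verbatim as before.
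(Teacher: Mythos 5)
Your argument is correct and is essentially identical to the paper's own (very terse) proof: define $\Theta^+_{p,q}$ by the same formula as $\Theta_{p,q}$, note it is a map of double complexes by functoriality of $\mathscr{Y}^+_*$, deduce the homology isomorphism from exactness of the rows, and get naturality from naturality of $\theta$. You are actually slightly more careful than the paper, since the paper simply cites the exactness of the $\mathscr{Y}_*$-sequence~(\ref{eq.ex-seq-TA}) without explicitly restating its $\mathscr{Y}^+_*$ analogue, whereas you spell out that $\mathscr{Y}^+_q h_*$ furnishes the contracting homotopy and that the extra object $[-1]$ causes no trouble.
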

\begin{proof}
  The map $\Theta^+_{*,*}(A)$ is defined as:
  \[
    \Theta^+_{p,q}(A) := \left\{\begin{array}{ll}
                             \mathscr{Y}^+_q\theta_A, & p = 0 \\
                             0, & p > 0
                           \end{array}\right.
  \]
  This map is a map of double complexes by functoriality of $\mathscr{Y}^+_*$,
  and the isomorphism follows from the exactness of the
  sequence~(\ref{eq.ex-seq-TA}).  Naturality of $\Theta_{*,*}$ follows from naturality
  of $\theta$.
\end{proof}

\begin{lemma}\label{lem.comm-diag-functors}
  The following diagram of functors and transformations is commutative.
  \begin{equation}\label{eq.comm-diag-functors}
    \begin{diagram}
      \node{\mathscr{T}_{*,*}}
      \arrow{s,l}{K_{*,*}}
      \arrow{e,t}{\Theta_{*,*}}
      \node{\mathscr{A}_{*,*}}
      \arrow{s,r}{J_{*,*}}\\
      \node{\mathscr{T}^+_{*,*}}
      \arrow{e,t}{\Theta^+_{*,*}}
      \node{\mathscr{A}^+_{*,*}}
    \end{diagram}
  \end{equation}
\end{lemma}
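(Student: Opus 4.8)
The plan is to reduce the whole statement to a single instance of the naturality of $J$. First I would record that both composites in~(\ref{eq.comm-diag-functors}), namely $J_{*,*}\circ\Theta_{*,*}$ and $\Theta^+_{*,*}\circ K_{*,*}$, are maps of double complexes $\mathscr{T}_{*,*}\to\mathscr{A}^+_{*,*}$, and that in the $(p,q)$-component both $\Theta_{p,q}$ and $\Theta^+_{p,q}$ vanish whenever $p>0$. Hence each of the two composites is zero in every column with $p>0$, so they automatically agree there. It remains to compare them on the column $p=0$.

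On that column one simply unwinds the definitions. For each unital associative algebra $A$ and each $q\geq 0$,
\[
  (J_{*,*}\circ\Theta_{*,*})_{0,q}(A)=(J_A)_q\circ\mathscr{Y}_q\theta_A,
  \qquad
  (\Theta^+_{*,*}\circ K_{*,*})_{0,q}(A)=\mathscr{Y}^+_q\theta_A\circ(J_{TA})_q,
\]
both viewed as maps $\mathscr{Y}_qTA\to\mathscr{Y}^+_qA$. So the equality to be proved is exactly the commutativity of the square
\[
  \begin{diagram}
    \node{\mathscr{Y}_*TA}
    \arrow{s,l}{\mathscr{Y}_*\theta_A}
    \arrow{e,t}{J_{TA}}
    \node{\mathscr{Y}^+_*TA}
    \arrow{s,r}{\mathscr{Y}^+_*\theta_A}
    \\
    \node{\mathscr{Y}_*A}
    \arrow{e,t}{J_A}
    \node{\mathscr{Y}^+_*A}
  \end{diagram}
\]
obtained by applying the natural transformation $J:\mathscr{Y}_*\to\mathscr{Y}^+_*$ of Lemma~\ref{lem.Y-to-Y-plus} to the algebra homomorphism $\theta_A:TA\to A$. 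This square commutes by the naturality clause of that lemma.

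Finally I would observe that $K_{*,*}$, $\Theta_{*,*}$, $J_{*,*}$ and $\Theta^+_{*,*}$ are each natural in $A$ (the first three as recorded in Section~\ref{sec.deltas_plus}, the last by Thm.~\ref{thm.doublecomplexiso-plus}), so the pointwise identity just verified is an identity of natural transformations $\mathscr{T}_{*,*}\to\mathscr{A}^+_{*,*}$, which is precisely the assertion of the lemma. There is no genuine obstacle here: the entire content of the lemma is the single naturality square for $\theta_A$, and the only thing that needs a moment's care is the bookkeeping that collapses both composites onto the $0$-th column.
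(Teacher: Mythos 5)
Your argument is exactly the one in the paper: both composites vanish on the columns $p>0$, and on the column $p=0$ the equality reduces to the naturality square for $J$ applied to $\theta_A:TA\to A$. The only cosmetic difference is that you add an explicit closing remark about naturality in $A$, which the paper handles by simply fixing $A$ at the outset.
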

\begin{proof}
  It suffices to fix an algebra $A$ and examine only the $(p,q)$-components.
  \begin{equation}\label{eq.comm-diag-Apq}
    \begin{diagram}
      \node{\mathscr{T}_{p,q}(A)}
      \arrow{s,l}{K_{p,q}(A)}
      \arrow{e,t}{\Theta_{p,q}(A)}
      \node{\mathscr{A}_{p,q}(A)}
      \arrow{s,r}{J_{p,q}(A)}\\
      \node{\mathscr{T}^+_{p,q}(A)}
      \arrow{e,t}{\Theta^+_{p,q}(A)}
      \node{\mathscr{A}^+_{p,q}(A)}
    \end{diagram}
  \end{equation}
  If $p > 0$, then the right hand side of~(\ref{eq.comm-diag-Apq}) is trivial, so
  we may assume $p = 0$.  In this case, diagram~(\ref{eq.comm-diag-Apq}) becomes:
  \begin{equation}\label{eq.comm-diag-A0q}
    \begin{diagram}
      \node{\mathscr{Y}_qTA}
      \arrow{s,l}{(J_{TA})_q}
      \arrow{e,t}{\mathscr{Y}_q\theta_A}
      \node{\mathscr{Y}_qA}
      \arrow{s,r}{(J_A)_q}\\
      \node{\mathscr{Y}^+_qTA}
      \arrow{e,t}{\mathscr{Y}^+_q\theta_A}
      \node{\mathscr{Y}^+_qA}
    \end{diagram}
  \end{equation}
  This diagram commutes because of naturality of $J$.  
\end{proof}

To any double complex $\mathscr{B}_{*,*}$ over $k$, we may associate two spectral sequences:
$(E_{I}\mathscr{B})_{*,*}$, obtained by first taking vertical homology, then
horizontal; and $(E_{II}\mathscr{B})_{*,*}$, obtained by first taking horizontal homology, 
then vertical.  In the case that $\mathscr{B}_{*,*}$ lies entirely within the first quadrant,
both spectral sequences converge to $H_*\left( \mathrm{Tot}(\mathscr{B});\,k
\right)$ (See~\cite{Mc}, Section~2.4).
Maps of double complexes induce maps of spectral sequences, $E_{I}$ and $E_{II}$,
respectively.

Fix the algebra $A$, and consider the commutative diagram of spectral sequences
induced by diagram~(\ref{eq.comm-diag-functors}).
The induced maps will be indicated by an overline, and explicit mention of
the algebra $A$ is suppressed for brevity of notation.
\begin{equation}\label{eq.comm-diag-SpSeqII}
  \begin{diagram}
    \node{E_{II} \mathscr{T}}
    \arrow{s,l}{\overline{K}}
    \arrow{e,t}{\overline{\Theta}}
    \node{E_{II} \mathscr{A}}
    \arrow{s,r}{\overline{J}}\\
    \node{E_{II} \mathscr{T}^+}
    \arrow{e,t}{\overline{\Theta^+}}
    \node{E_{II} \mathscr{A}^+}
  \end{diagram}
\end{equation}

Now, by Thm.~\ref{thm.doublecomplexiso} and Thm.~\ref{thm.doublecomplexiso-plus}, we
know that $\Theta_{*,*}$ and $\Theta^+_{*,*}$ induce isomorphisms on total homology, so
$\overline{\Theta}$ and $\overline{\Theta^+}$ also induce isomorphisms on the limit
term of the spectral sequences.  In fact, both $\overline{\Theta}^r$ and 
$\overline{\Theta^+}^r$ are isomorphisms $(E_{II} \mathscr{T})^r \to
(E_{II} \mathscr{A})^r$ for $r \geq 1$.  This is because taking horizontal homology
of $\mathscr{T}_{*,*}$ (resp. $\mathscr{T}^+_{*,*}$) kills all components in positive 
columns, leaving only
the $0^{th}$ column, which is chain-isomorphic to the $0^{th}$ column of
$\mathscr{A}_{*,*}$ (resp. $\mathscr{A}^+_{*,*}$).  On the other hand,
taking horizontal homology of $\mathscr{A}_{*,*}$ (resp. $\mathscr{A}^+_{*,*}$) does not
change that double complex.

Consider a second diagram of spectral sequences, with induced maps indicated by
a hat.
\begin{equation}\label{eq.comm-diag-SpSeqI}
  \begin{diagram}
    \node{E_{I} \mathscr{T}}
    \arrow{s,l}{\widehat{K}}
    \arrow{e,t}{\widehat{\Theta}}
    \node{E_{I} \mathscr{A}}
    \arrow{s,r}{\widehat{J}}\\
    \node{E_{I} \mathscr{T}^+}
    \arrow{e,t}{\widehat{\Theta^+}}
    \node{E_{I} \mathscr{A}^+}
  \end{diagram}
\end{equation}
Now the map $\widehat{K}$ induces an isomorphism on the limit terms of the sequences
$E_{I} \mathscr{T}$ and $E_{I} \mathscr{T}^+$ as a result of
Lemma~\ref{lem.SymHom_plusComplex-tensalg}.  As before, $\widehat{K}^r$ is
an isomorphism for $r \geq 1$.

Now, since $H_*\left(\mathrm{Tot}(\mathscr{A});\,k\right) = H_*\left(\mathscr{Y}_*A;\,k
\right)$ 
and $H_*\left(\mathrm{Tot}(\mathscr{A}^+);\,k\right) = H_*\left( \mathscr{Y}_*^+A;\,k \right)$,
we can put together a chain of isomorphisms
\[
  \begin{diagram}
    \node{H_*\left(\mathscr{Y}_*A;\,k\right) \cong \left(E_{II} \mathscr{A}\right)^{\infty}_*}
    \node{\left(E_{II} \mathscr{T}\right)^{\infty}_*
      \cong \left(E_{I} \mathscr{T}\right)^{\infty}_*}
    \arrow{w,tb}{\overline{\Theta}^{\infty}}{\cong}
    \arrow{e,tb}{\widehat{K}^{\infty}_*}{\cong}
    \node{\left(E_{I} \mathscr{T}^+\right)^{\infty}_*}
  \end{diagram}
\]
\begin{equation}\label{eq.long-iso-YtoYplus}
  \begin{diagram}
    \node{\cong \left(E_{II} \mathscr{T}^+\right)^{\infty}_*}
    \arrow{e,tb}{(\overline{\Theta^+})^{\infty}_*}{\cong}
    \node{\left(E_{II} \mathscr{A}^+\right)^{\infty}_*
      \cong H_*\left( \mathscr{Y}_*^+A;\,k \right)}
  \end{diagram}
\end{equation}
Commutativity of Diagram~(\ref{eq.comm-diag-functors}) ensures that
the composition of maps in Diagram~\ref{eq.long-iso-YtoYplus} is the map
induced by $J_A$, hence proving Thm.~\ref{thm.J-iso}.

As a direct consequence, $HS_*(A) \cong H_*\left( \mathscr{Y}_*^+A;\,k \right)$,
proving Thm.~\ref{thm.SymHom_plusComplex}.

\section{The Category $\mathrm{Epi}\Delta S$ and a Smaller Resolution}\label{sec.epideltas} %

The complex~(\ref{symhomcomplex}) is an extremely large and unwieldy for computation.
Fortunately, when the algebra $A$ is equipped with an augmentation,
$\epsilon : A \to k$,~(\ref{symhomcomplex}) is 
homotopic to a much smaller subcomplex.  Let 
$I$ be the augmentation ideal, and let $\eta \,:\, k \to A$ be the unit map.
Since $\epsilon \eta = \mathrm{id}_k$, the following  short exact
sequence splits (as $k$-modules):
\[
  0 \to I \to A \stackrel{\epsilon}{\to} k \to 0,
\]
and every $x \in A$ can be written uniquely as $x = a + \eta(\lambda)$ for some $a \in I$,
$\lambda \in k$.  This property will allow $B_n^{sym_+}A$ to be decomposed in a
useful way.

\begin{definition}\label{def.B_JA}
  Suppose $J \subset [n]$.  Define
  \[
    B_{n,J}A := B_0 \otimes B_1 \otimes \ldots \otimes B_n, \quad \textrm{where}\;\;
    B_j = \left\{\begin{array}{ll}
                  I & \textrm{if $j \in J$}\\
                  \eta(k) & \textrm{if $j \notin J$}
                 \end{array}\right.
  \]
  Define $B_{-1,\emptyset}A = k$.
\end{definition}

\begin{lemma}\label{lem.I-decomp}
  For each $n \geq -1$, there is a direct sum decomposition of $k$-modules
  \[
    B_n^{sym_+}A \cong \bigoplus_{J \subset [n]} B_{n,J}A.
  \]
\end{lemma}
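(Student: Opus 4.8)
The plan is to reduce the statement to the single split short exact sequence recorded just before it. Since $\epsilon\eta = \mathrm{id}_k$, the sequence $0 \to I \to A \stackrel{\epsilon}{\to} k \to 0$ splits in \mbox{$k$-\textbf{Mod}}, so there is a direct sum decomposition $A \cong I \oplus \eta(k)$; concretely, the idempotent $e := \eta\epsilon \colon A \to A$ projects onto the summand $\eta(k)$ while $1 - e$ projects onto $I$, so each $x \in A$ is written uniquely as $x = (x - e(x)) + e(x) \in I \oplus \eta(k)$.

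First I would dispose of the degenerate case $n = -1$: here $[n] = \emptyset$, whose only subset is $\emptyset$, and by Definition~\ref{def.B_JA} we have $B_{-1,\emptyset}A = k = B_{-1}^{sym_+}A$, so the asserted isomorphism is an equality.

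For $n \geq 0$ I would substitute the decomposition $A \cong I \oplus \eta(k)$ into each of the $n+1$ tensor factors of $B_n^{sym_+}A = A^{\otimes(n+1)}$ and expand, using that $- \otimes_k -$ commutes with finite direct sums in each variable; iterating the natural isomorphism $(M \oplus M') \otimes_k N \cong (M \otimes_k N) \oplus (M' \otimes_k N)$ over all $n+1$ tensor slots produces $2^{n+1}$ summands, one for each way of selecting $I$ or $\eta(k)$ in slot $j$ for every $j \in [n]$. Recording by $J \subset [n]$ the set of slots in which the summand $I$ is selected --- so that $B_j = I$ for $j \in J$ and $B_j = \eta(k)$ for $j \notin J$ --- this is exactly the decomposition
\[
  A^{\otimes(n+1)} \;\cong\; \bigoplus_{J \subset [n]} \bigl( B_0 \otimes B_1 \otimes \cdots \otimes B_n \bigr) \;=\; \bigoplus_{J \subset [n]} B_{n,J}A
\]
asserted in the statement.

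There is no genuine obstacle; the only point requiring care is the bookkeeping, namely that the $2^{n+1}$ summands produced by iterated distributivity are indexed precisely by the subsets of the $(n+1)$-element set $[n] = \{0, 1, \ldots, n\}$, matching Definition~\ref{def.B_JA}. I would also remark that this decomposition depends on the chosen augmentation $\epsilon$ but is natural with respect to maps of augmented algebras --- a unital augmented map $f \colon A \to A'$ satisfies $f\eta = \eta'$ and $f(I) \subseteq I'$, hence respects the summands --- which is the property needed when the decomposition is used to cut down the complex~(\ref{symhomcomplex}) in what follows.
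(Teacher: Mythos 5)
Your proof is correct and follows the same route as the paper's: split the unit to obtain $A \cong \eta(k) \oplus I$, distribute over the $(n+1)$-fold tensor power, index the resulting summands by subsets $J \subset [n]$, and treat $n=-1$ separately. The extra details you supply (the explicit idempotent, the counting of $2^{n+1}$ summands, the naturality remark) are fine but not needed; the paper's proof is a terser version of exactly the same argument.
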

\begin{proof}
  The splitting of the unit map $\eta$ implies that $A \cong \eta(k) \oplus I$
  as $k$-modules.  So, for $n \geq 0$,
  \[
    B_n^{sym_+}A = (\eta(k) \oplus I)^{\otimes(n+1)}
    \cong \bigoplus_{J \subset [n]} B_{n,J}A.
  \]
  For $n = -1$, $B_{-1}^{sym_+}A = k = B_{n,\emptyset}A$.  (Recall, $[-1] = \emptyset$).
\end{proof}

\begin{definition}\label{def.basictensors}
  A {\it basic tensor} is any tensor $w_0 \otimes w_1 \otimes \ldots \otimes w_n$,
  where each $w_j$ is in $I$ or is equal to the unit of $A$.  Call a tensor factor
  $w_j$ \textit{trivial} if it is the unit of $A$; otherwise,
  the factor is called \textit{non-trivial}.  If all factors of a basic tensor
  are trivial, then the tensor is called trivial, otherwise non-trivial.
\end{definition}

It will become convenient to include the object $[-1]$ in $\Delta$.  Let $\Delta_+$
be the category with objects $[-1], [0], [1], [2], \ldots$, and morphisms are all
those of $\Delta$ together with $\iota_n : [-1] \to [n]$ for $n \geq -1$.  $\Delta_+$
may be thought of as the subcategory of $\Delta S_+$ consisting of all non-decreasing
set maps.

For a basic tensor $Y \in B_n^{sym_+}A$, we shall define a map 
$\delta_Y \in \mathrm{Mor}\Delta_+$ as follows:
If $Y$ is trivial, let $\delta_Y = \iota_n$.  Otherwise, 
$Y$ has $\overline{n} + 1$ non-trivial factors for some 
$\overline{n} \geq 0$.  Define $\delta_Y : [\overline{n}] \to [n]$ to be the 
unique injective map 
that sends each point $0, 1, \ldots, \overline{n}$ to a point $p \in [n]$ such that $Y$ is
non-trivial at \mbox{factor $p$}.  
Let $\overline{Y}$ be the tensor obtained from
$Y$ by omitting all trivial factors if such exist, or $\overline{Y} := 1$ if $Y$ is
trivial.  Note, $\overline{Y}$ is the unique basic tensor
such that $(\delta_Y)_*(\overline{Y}) = Y$.

\begin{prop}\label{prop.BsymI}
  Any chain $[q]\to [q_0] \to \ldots \to [q_n] \otimes Y \;\in\;
  k[N(-\setminus \Delta S_+)] \otimes_{\Delta S_+} B_*^{sym_+}A$, where $Y$ is a basic 
  tensor, is equivalent to a chain 
  $[\overline{q}] \to [q_0] \to \ldots \to [q_n] \otimes \overline{Y}$,
  where either $\overline{Y}$ has no trivial factors or 
  $\overline{Y} = 1$ and $\overline{q} = -1$.
\end{prop}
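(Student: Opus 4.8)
The plan is to read the claimed equivalence straight off the defining relation of the categorical tensor product $-\otimes_{\Delta S_+}-$ (Definition~\ref{def.categorical_tensor}), applied to the $\Delta_+$-morphism $\delta_Y$ constructed just before the statement. The two facts I would lean on are that $\Delta_+$ is a subcategory of $\Delta S_+$, so that $\delta_Y$ is a genuine morphism of $\Delta S_+$ to which the relation applies, and that $\overline Y$ was defined precisely so that $(\delta_Y)_*(\overline Y)=Y$ in the $\Delta S_+$-module $B_*^{sym_+}A$.

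Concretely, write $g\colon[q]\to[q_0]$ for the first arrow of the given chain $c=\big([q]\xrightarrow{g}[q_0]\xrightarrow{\beta_1}\cdots\xrightarrow{\beta_n}[q_n]\big)$, an $n$-simplex of $N([q]\setminus\Delta S_+)$. Applying $y\otimes f_*(x)\approx f^*(y)\otimes x$ with $f=\delta_Y$, $x=\overline Y$, $y=c$ gives
\[
  c\otimes Y \;=\; c\otimes (\delta_Y)_*(\overline Y)\;\approx\;(\delta_Y)^*(c)\otimes\overline Y .
\]
Since the contravariant action $(\delta_Y)^*$ on $N(-\setminus\Delta S_+)$ is precomposition with $\delta_Y$, the chain $(\delta_Y)^*(c)$ is exactly $[\overline q]\xrightarrow{g\circ\delta_Y}[q_0]\xrightarrow{\beta_1}\cdots\xrightarrow{\beta_n}[q_n]$, which is of the required shape $[\overline q]\to[q_0]\to\cdots\to[q_n]$ and is paired with the basic tensor $\overline Y$. (If one prefers, Remark~\ref{rmk.uniqueChain} lets us first replace $c$ by an equivalent chain with $g=\mathrm{id}_{[q]}$, so that the new first arrow is literally $\delta_Y$; this is purely cosmetic.)

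It then remains to unwind the two cases built into the definitions of $\overline Y$ and $\delta_Y$, and in particular to check the one substantive ingredient, $(\delta_Y)_*(\overline Y)=Y$. If $Y$ is trivial, then $\delta_Y=\iota_q\colon[-1]\to[q]$, so $\overline q=-1$ and $\overline Y=1\in B_{-1}^{sym_+}A=k$; here $(\iota_q)_*(1)=1_A\otimes\cdots\otimes 1_A=Y$ directly from the formula $B_*^{sym_+}A(\iota_n)\colon\lambda\mapsto\lambda(1_A\otimes\cdots\otimes 1_A)$ of Definition~\ref{def.symbar_plus}. If $Y$ is non-trivial with $\overline q+1$ non-trivial factors (Definition~\ref{def.basictensors}), then $\delta_Y\colon[\overline q]\to[q]$ is the order-preserving injection onto the non-trivial positions; in the tensor notation of Section~\ref{sec.deltas} it carries a $1$ in each trivial position and the $j$-th variable in the $j$-th non-trivial position, so evaluating $B_*^{sym_+}A(\delta_Y)$ on $\overline Y$ reinserts the unit $1_A$ at every empty preimage and returns $Y$, while $\overline Y$ has no trivial factors by construction. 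Both alternatives in the statement are thereby covered.

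I do not expect a real obstacle: the proposition is essentially a repackaging of the tensor relation, and the only place demanding care is the bookkeeping with the tensor-notation conventions and the two degenerate conventions (insertion of $1_A$ at empty preimages, and the value of $B_*^{sym_+}A$ on the morphisms $\iota_n$).
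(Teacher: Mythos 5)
Your proof is correct and takes essentially the same route as the paper's: both apply the defining relation of $\otimes_{\Delta S_+}$ with $f=\delta_Y$, using $(\delta_Y)_*(\overline Y)=Y$, to move $\delta_Y$ across the tensor and precompose the chain. The only difference is that you spell out the verification that $(\delta_Y)_*(\overline Y)=Y$ in both cases, whereas the paper takes this as already established in the text preceding the proposition.
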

\begin{proof}
  Let $\delta_Y$ and $\overline{Y}$ be defined as above, and let 
  $[\overline{q}]$ be the source of $\delta_Y$.
  \[  
    [q] \stackrel{\phi}{\to} [q_0] \to \ldots \to [q_n] \otimes Y \;=\;
    [q] \stackrel{\phi}{\to} [q_0] \to \ldots \to [q_n] \otimes (\delta_Y)_*(\overline{Y})
  \]
  \[
    \approx \; [\overline{q}] \stackrel{\phi\delta_Y}{\to} [q_0] \to \ldots \to [q_n] 
    \otimes \overline{Y}
  \]
\end{proof}
  
Next, we turn our attention to the morphisms in the chain.  Our goal is to reduce to
those chains that involve only epimorphisms.

\begin{definition}
  Let $\mathscr{C}$ be a category.
  The category $\mathrm{Epi}\mathscr{C}$ (resp. $\mathrm{Mono}\mathscr{C}$) is the 
  subcategory 
  of $\mathscr{C}$ consisting of
  the same objects as $\mathscr{C}$ and only those morphisms $f \in \mathrm{Mor}
  \mathscr{C}$
  that are epimorphisms (resp. monomorphisms).
\end{definition}

Note, a morphism $\alpha = (\phi, g) \in \mathrm{Mor}\Delta S_+$ is epic, resp. monic, if
and only if $\phi$ is epic, resp. monic, as morphism in $\Delta_+$.

\begin{prop}\label{prop.decomp}
  Any morphism $\alpha \in \mathrm{Mor}\Delta S_+$ decomposes 
  uniquely as $(\eta, \mathrm{id}) \circ \gamma$, where
  $\gamma \in \mathrm{Mor}(\mathrm{Epi}\Delta S_+)$ and $\eta \in \mathrm{Mor}
  (\mathrm{Mono}\Delta_+)$.
\end{prop}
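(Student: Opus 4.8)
The plan is to deduce this from the classical uniqueness of the epi--mono (image) factorization in the simplicial category, simply carrying the permutation along with the surjective part. So the proposition is really a lift of a standard fact about $\Delta$ to $\Delta S_+$, and almost all the content is bookkeeping with the composition rule of $\Delta S$.

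First I would recall that every morphism $\phi$ of $\Delta_+$ factors uniquely as $\phi = \eta\circ\psi$ with $\psi$ epic and $\eta$ monic in $\Delta_+$: take $\psi\colon[n]\to[k]$ to be $\phi$ with codomain restricted to its set-theoretic image, and $\eta\colon[k]\to[m]$ the order-preserving inclusion of that image (with $[k]=[-1]$ in the degenerate case $\phi=\iota_m$, since the image of $\emptyset$ is $\emptyset$). Uniqueness here is the standard observation that an order-preserving injection is determined by its image, combined with left-cancellability of monomorphisms: if $\eta'\psi'=\eta\psi$ with $\psi'$ epic and $\eta'$ monic, then $\eta'$ and $\eta$ have the same image, hence coincide, whence $\psi'=\psi$.

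Then, given $\alpha=(\phi,g)\in\mathrm{Mor}\Delta S_+$ with $g\in\Sigma_{n+1}^{\mathrm{op}}$, I would set $\gamma:=(\psi,g)$ and take the monic part to be $(\eta,\mathrm{id})$, viewed as a morphism of $\Delta S_+$ via the inclusion $\Delta_+\hookrightarrow\Delta S_+$. Since $\psi$ is epic in $\Delta_+$, $\gamma\in\mathrm{Mor}(\mathrm{Epi}\Delta S_+)$ by the remark preceding the proposition, and clearly $(\eta,\mathrm{id})\in\mathrm{Mor}(\mathrm{Mono}\Delta_+)$. The composition rule of $\Delta S$ then gives
\[
  (\eta,\mathrm{id})\circ(\psi,g)=\bigl(\eta\cdot\mathrm{id}^{*}(\psi),\;\psi^{*}(\mathrm{id})\cdot g\bigr)=(\eta\psi,\,g)=(\phi,g)=\alpha,
\]
where I use $\mathrm{id}^{*}(\psi)=\psi^{\mathrm{id}}=\psi$ and $\psi^{*}(\mathrm{id})=\mathrm{id}^{\psi}=\mathrm{id}$, i.e. rules $(3.v)'$ and $(3.h)'$ of Proposition~\ref{prop.comp}. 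For uniqueness, if $\alpha=(\eta',\mathrm{id})\circ(\psi',g')$ is a second such factorization, the same computation yields $\eta'\psi'=\phi$ in $\Delta_+$ and $g'=g$; uniqueness of the image factorization of $\phi$ then forces $\eta'=\eta$ and $\psi'=\psi$, hence $\gamma'=\gamma$.

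The only point that will need genuine care is the interaction of the composition formula of $\Delta S$ with a trivial permutation on one factor --- that is, verifying $(\eta,\mathrm{id})\circ(\psi,g)=(\eta\psi,g)$ --- together with confirming that the object $[-1]$ (whose automorphism group is trivial) is not an exceptional case; it is not, since $\iota_m$ already equals its own monic part composed with $\mathrm{id}_{[-1]}$.
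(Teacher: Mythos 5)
Your proof is correct and follows essentially the same route as the paper's: reduce to the classical epi--mono factorization of the underlying $\Delta_+$ morphism, pair the epic part with the given permutation, and use the composition formula (with the identity permutation on the mono) to verify both the decomposition and its uniqueness. The only cosmetic difference is that you deduce $g'=g$ directly from the computed composition $(\eta'\psi',g')=(\eta\psi,g)$, whereas the paper first precomposes by $(\mathrm{id},g^{-1})$ to isolate the group element before invoking the uniqueness of image factorizations in $\Delta$ (Theorem~B.2 of~\cite{L}); your version is a touch cleaner but the underlying idea is the same.
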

\begin{proof}
  Suppose $\alpha$ has source $[-1]$ and target $[n]$.  Then $\alpha = \iota_n$ is
  the only possibility, and this decomposes as $\iota_n \circ \mathrm{id}_{[-1]}$.
  Now suppose the source of $\alpha$ is $[p]$ for some $p \geq 0$.
  Write $\alpha = (\phi, g)$, with $\phi \in \mathrm{Mor}\Delta$ and 
  $g \in \Sigma_{p+1}^{\mathrm{op}}$.  We shall decompose
  $\phi$ as follows:
  For $\phi : [p] {\to} [r]$, suppose $\phi$ hits $q+1$ distinct points in $[r]$.
  Then $\pi : [p] \to [q]$ is induced by $\phi$ by maintaining the order of the points hit.
  $\eta$ is the obvious order-preserving monomorphism $[q] \to [r]$ so that 
  $\eta \pi = \phi$
  as morphisms in $\Delta$.  To get the required decomposition in $\Delta S$, use:
  $\alpha = (\eta, \mathrm{id}) \circ (\pi, g)$.
  
  Now, if $(\xi, \mathrm{id})\circ (\psi, h)$ is also a decomposition, with $\xi$
  monic and $\psi$ epic, then
  \[
    (\xi, \mathrm{id})\circ (\psi, h) = (\eta, \mathrm{id}) \circ (\pi, g)
  \]
  \[
    (\xi, \mathrm{id}) \circ (\psi, g^{-1}h) = (\eta, \mathrm{id}) \circ(\phi, \mathrm{id})
  \]
  \[
    (\xi \psi, g^{-1}h) = (\eta\phi, \mathrm{id}),
  \]
  proving $g = h$.  Uniqueness will then follow from uniqueness of such decompositions 
  entirely within the category $\Delta$.  The latter follows from Theorem B.2 
  of~\cite{L}, since any monomorphism (resp. epimorphism) of $\Delta$ can be built
  up (uniquely) as compositions of $\delta_{i}$ (resp. $\sigma_{i}$).
\end{proof}

Explicitly, if $\alpha = X_0 \otimes X_1 \otimes \ldots \otimes X_m : [n] \to [m]$, with
$X_i \neq 1$ for $i = j_0, j_1, \ldots j_k$, then $\mathrm{im}(\alpha)$ is 
isomorphic to the object $[k]$.  The surjection onto $[k]$ is 
$X_{j_0} \otimes \ldots \otimes X_{j_k}$.
The $\Delta$ injection $[k] \hookrightarrow [m]$ is $Z_0 \otimes \ldots \otimes Z_m$, 
where $Z_i = 1$ if
$i \neq j_0, j_1, \ldots j_k$ and for $i = j_0, \ldots, j_k$, the monomials
$Z_i$ are the symbols $x_0, x_1, \ldots, x_k$, in that order.  For example,
\[
  x_2 x_3 \otimes 1 \otimes x_1 \otimes 1 \otimes x_0 =
  x_0 \otimes 1 \otimes x_1 \otimes 1 \otimes x_2 \;\cdot\; x_2 x_3 \otimes x_1 \otimes x_0
\]
When morphisms are not labelled, 
we shall write:
\[
  [p] \to [r] \;=\; [p] \twoheadrightarrow \mathrm{im}([p] \to [r]) \hookrightarrow [r].
\]

For any $p \geq -1$, if $[p] \stackrel{\beta}{\to} [r_1] \stackrel{\alpha}{\to} [r_2]$, 
then there is an induced 
map 
\[
  \mathrm{im}([p] \to [r_1]) \stackrel{\overline{\alpha}}{\to}
  \mathrm{im}([p] \to [r_2])
\]
making the diagram commute:
\begin{equation}\label{eq.epidiagram}
  \begin{diagram}
  \node{ [r_1] }
  \arrow[2]{e,t}{ \alpha }
  \node[2]{ [r_2] }
  \\
  \node[2]{ [p] }
  \arrow{nw,b}{ \beta }
  \arrow{ne,b}{ \alpha\beta }
  \arrow{sw,t,A}{ \pi_1 }
  \arrow{se,t,A}{ \pi_2 }
  \\
  \node{ \mathrm{im}([p] \to [r_1]) }
  \arrow[2]{n,l,J}{ \eta_1 }
  \arrow[2]{e,b,A}{ \overline{\alpha} }  
  \node[2]{ \mathrm{im}([p] \to [r_2]) }
  \arrow[2]{n,r,J}{ \eta_2 }
  \end{diagram}
\end{equation}
$\overline{\alpha}$ is the epimorphism induced from the map $\alpha \eta_1$.  Furthermore,
for morphisms 
\[
  [p] \to [r_1] \stackrel{\alpha_1}{\to} [r_2] \stackrel{\alpha_2}{\to} [r_3],
\]
we have:
\[
  \overline{\alpha_2 \alpha_1} = \overline{\alpha_2} \circ \overline{\alpha_1},
\]
\textit{i.e.}, the epimorphism construction is a functor $( [p] \setminus \Delta S_+ ) \to
( [p] \setminus \mathrm{Epi}\Delta S_+ )$.

Define a variant of the symmetric bar construction:
\begin{definition}
  $B_*^{sym_+}I : \mathrm{Epi}\Delta S_+  \to k$-\textbf{Mod} is the functor defined
  by:
  \[
    \left\{
    \begin{array}{lll}
      B_n^{sym_+}I &:=& I^{\otimes n+1}, \quad n \geq 0, \\
      B_{-1}^{sym_+}I &:=& k,
    \end{array}
    \right.
  \]
  \[  
    B_*^{sym_+}I(\alpha) : (a_0 \otimes a_1 \otimes \ldots \otimes a_n) \mapsto
      \alpha(a_0, \ldots, a_n), \;\textrm{for $\alpha \in 
      \mathrm{Mor}(\mathrm{Epi}\Delta S_+$)}
  \]
\end{definition}
This definition makes sense, since the only epimorphism with source $[-1]$ is
$\iota_{-1} = \mathrm{id}_{[-1]}$, sending $B_{-1}^{sym_+}I = k$ identically 
to itself.  Since $\alpha \in \mathrm{Mor}_\mathrm{Epi}\Delta S_+([p],[q])$ 
for $p \geq 0$, there is no need for a unit element in $I$.
Furthermore, any product of elements of the ideal $I$ must also be in $I$.

Consider the simplicial $k$-module:
\begin{equation}\label{epiDeltaS_complex}
  \mathscr{Y}^{epi}_*A := 
  k[ N(- \setminus \mathrm{Epi}\Delta S_+) ] \otimes_{\mathrm{Epi}\Delta S_+} B_*^{sym_+}I
\end{equation}

There is an obvious inclusion,
\[
  f : \mathscr{Y}^{epi}_*A \longrightarrow \mathscr{Y}^+_*A
\]

Define a chain map, $g$, in the opposite direction as follows.  
First, by prop.~\ref{prop.BsymI} and observations above, we only need to
define $g$ on the chains $[q] \to [q_0] \to \ldots \to [q_n] \otimes Y$
where $Y \in B_*^{sym_+}I$ already.  In this case, define:
\[
  g( [q] \to [q_0] \to \ldots \to [q_n] \otimes Y )
\]
\[
  =
  \left\{\begin{array}{ll}
     {[-1] \twoheadrightarrow [-1] \twoheadrightarrow \ldots 
     \twoheadrightarrow [-1] \otimes 1,} & \quad \textrm{$Y$ trivial},\\
     {[q] \twoheadrightarrow \mathrm{im}([q] \to [q_0]) 
     \twoheadrightarrow \mathrm{im}([q] \to [q_{1}]) \twoheadrightarrow \ldots  
     \twoheadrightarrow \mathrm{im}([q] \to [q_n]) \otimes Y,} &
     \quad \textrm{$Y$ non-trivial}
  \end{array}\right.
\]

I claim $g$ is well-defined.  Indeed, if $Y\in B_q^{sym_+}I$ is trivial, 
then $q$ must be $-1$.  If $\psi : [-1] \to [q']$ is any
morphism of $\Delta S_+$, then we know $\psi = \iota_{q'}$, and $(\iota_{q'})_*(Y)$
is still a trivial tensor.  We have equivalent chains:
\[
  [-1] \to [q_0] \to \ldots \to [q_n] \otimes 1 \approx
  [q'] \to [q_0] \to \ldots \to [q_n] \otimes 1^{\otimes(q' + 1)}
\]
Applying $g$ to the chain on the left results in a chain of identity
maps,
\[
  [-1] \twoheadrightarrow [-1] \twoheadrightarrow \ldots \twoheadrightarrow [-1]
  \otimes Y.
\]
In order to apply $g$ to the chain
on the right, it we must put it into the correct form.  Since $1^{\otimes(q' + 1)}$
is trivial, we must use $\delta = \iota_{q'}$ to rewrite the chain.  But what results
is exactly the chain on the left, so $g$ is well-defined in this case.

Suppose now that $q \geq 0$ and $Y \in B_q^{sym_+}I$. Let $\psi : [q] \to [q']$
be any morphism of $\Delta S_+$.  Since $q \geq 0$, $\psi \in \mathrm{Mor}\Delta S$.
We have equivalent chains:
\[
  [q] \stackrel{\phi \psi}{\to} [q_0] \stackrel{\alpha_1}{\to} \ldots 
  \stackrel{\alpha_n}{\to} [q_n]   
   \otimes Y \;\approx\;
  [q'] \stackrel{\phi}{\to} [q_0] \stackrel{\alpha_1}{\to} \ldots 
  \stackrel{\alpha_n}{\to} [q_n] \otimes \psi_*(Y).
\]
Applying $g$ on the left hand side yields
\begin{equation}\label{eq.lhs}
  [q] \stackrel{\overline{\phi\psi}}{\twoheadrightarrow} \mathrm{im}(\phi\psi) 
  \twoheadrightarrow \ldots \twoheadrightarrow
  \mathrm{im}([q] \to [q_n]) 
  \otimes Y,
\end{equation}

Consider the chain on the right hand.  If $\psi$ happens to be an epimorphism,
then $\psi_*(Y) \in B_{q'}^{sym_+}I$, and we may apply $g$ directly to this chain.
In this case, we get:
\begin{equation}\label{eq.rhs}
  [q'] \stackrel{\overline{\phi}}{\twoheadrightarrow} \mathrm{im}(\phi)
  \twoheadrightarrow
  \ldots \twoheadrightarrow \mathrm{im}([q'] \to [q_n]) 
  \otimes \psi_*(Y)  
\end{equation}
Now, since $\psi$ is epic, $\mathrm{im}(\phi\psi) = \mathrm{im}(\phi)$.  Moreover,
$\mathrm{im}(\alpha_k\ldots\alpha_1\phi\psi) =\mathrm{im}(\alpha_k\ldots\alpha_1\phi)$
for each $k = 1, 2, \ldots, n$, and the induced morphisms are equal:
\[
  \left(\mathrm{im}([q'] \to [q_k]) \twoheadrightarrow 
  \mathrm{im}([q'] \to [q_{k+1}])\right) =
  \left(\mathrm{im}([q] \to [q_k]) \twoheadrightarrow \mathrm{im}([q] \to [q_{k+1}])\right)
\]
Hence, the chain~(\ref{eq.rhs}) is equal to:
\[
  [q'] \stackrel{\overline{\phi}}{\twoheadrightarrow} \mathrm{im}([q] \to [q_0]) 
  \twoheadrightarrow 
  \ldots \twoheadrightarrow \mathrm{im}([q] \to [q_n]) 
  \otimes \psi_*(Y)
\]
\[
  \approx [q] \stackrel{\overline{\phi}\circ \psi}{\twoheadrightarrow}
  \mathrm{im}([q] \to [q_0]) \twoheadrightarrow 
  \ldots \to \mathrm{im}([q] \twoheadrightarrow [q_n]) 
  \otimes Y
\]
Thus, since $\psi = \overline{\psi}$ and $\overline{\phi}\overline{\psi} =
\overline{\phi} \circ \overline{\psi}$, the chains~(\ref{eq.lhs}) and~(\ref{eq.rhs}) 
are equivalent.

Suppose now that $\psi$ is not epimorphic.  Use Prop.~\ref{prop.decomp} to write
$\psi = \pi \eta$ for $\pi \in \mathrm{Epi}\Delta S_+$ and $\eta \in \mathrm{Mono}\Delta_+$.
By the previous, it is clear that we may assume $\pi = \mathrm{id}$, so that
$\psi$ is a monomorphism of $\Delta_+$.  In this case, $\psi_*(Y)$ may have trivial
tensor factors.  Now, $g$ is only defined for chains in which the factor 
$Y \in B_n^{sym_+}A$ is a basic
tensor having no trivial factors, so we must use Prop.~\ref{prop.BsymI}
to rewrite the chain as:
\[
  [\overline{q}] \to [q_0] \to \ldots \to [q_n] \otimes \overline{\psi_*(Y)}.
\]
Since $Y$ is in $B_q^{sym_+}I$ and $\psi$ is a monomorphism of $\Delta$, we have 
$\overline{\psi_*(Y)} = Y$, and $\delta_{\psi_*(Y)} = \psi$, by uniqueness of the 
decomposition.  Thus, when we apply $g$ to this chain,
we must apply it to the equivalent chain:
\[
  [q] \stackrel{\phi\psi}{\to} [q_0] \to \ldots \to [q_n] \otimes Y.
\]

This shows that $g$ is well-defined.

Now, $gf = \mathrm{id}$, since if $[p] \to [r]$ is in $\mathrm{Mor}(\mathrm{Epi}\Delta S$),
then the epimorphism construction $[p] \twoheadrightarrow \mathrm{im}([p] \to [r])$ is
just the original morphism.
\begin{prop}
  $fg \simeq \mathrm{id}.$
\end{prop}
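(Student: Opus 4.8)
Since $gf=\mathrm{id}$ has already been checked, it suffices to produce a chain homotopy $h:\mathscr{Y}^+_*A\to\mathscr{Y}^+_{*+1}A$ with $\partial h+h\partial=\mathrm{id}-fg$; this exhibits $f$ and $g$ as mutually inverse homotopy equivalences. The plan is to extract $h$ from the standard fact that a natural transformation of endofunctors induces a simplicial homotopy on nerves (Prop.~1.2 of~\cite{Q}), applied to the image construction.

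First I would record the categorical input. For a fixed object $[q]$ of $\Delta S_+$, the assignment $([r],\rho)\mapsto(\mathrm{im}(\rho),\pi_\rho)$, with $\pi_\rho:[q]\twoheadrightarrow\mathrm{im}(\rho)$, is an endofunctor $P$ of the under-category $[q]\setminus\Delta S_+$ --- functoriality is exactly the identity $\overline{\alpha_2\alpha_1}=\overline{\alpha_2}\circ\overline{\alpha_1}$ noted before the definition of $B_*^{sym_+}I$ --- and the monomorphisms $\eta_\rho:\mathrm{im}(\rho)\hookrightarrow[r]$ define a natural transformation $\eta:P\Rightarrow\mathrm{Id}$, naturality being precisely the commutativity of diagram~(\ref{eq.epidiagram}). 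As in the proof of Lemma~\ref{lem.DeltaScontractible}, this produces on $k[N([q]\setminus\Delta S_+)]$ an explicit prism operator: for a chain $\sigma=([q]\xrightarrow{\rho_0}[q_0]\xrightarrow{\alpha_1}\cdots\xrightarrow{\alpha_n}[q_n])$, writing $\rho_i=\alpha_i\cdots\alpha_1\rho_0$, $\mathrm{im}_i=\mathrm{im}(\rho_i)$, and $\pi_i,\eta_i,\overline{\alpha}_i$ for the associated epimorphism $[q]\twoheadrightarrow\mathrm{im}_i$, monomorphism $\mathrm{im}_i\hookrightarrow[q_i]$ and induced epimorphism $\mathrm{im}_{i-1}\twoheadrightarrow\mathrm{im}_i$,
\[
  h^{[q]}(\sigma)=\sum_{j=0}^n(-1)^j\big([q]\xrightarrow{\pi_0}\mathrm{im}_0\xrightarrow{\overline{\alpha}_1}\cdots\xrightarrow{\overline{\alpha}_j}\mathrm{im}_j\xrightarrow{\eta_j}[q_j]\xrightarrow{\alpha_{j+1}}\cdots\xrightarrow{\alpha_n}[q_n]\big),
\]
and $h^{[q]}$ is a chain homotopy between $\mathrm{id}$ and $NP$, where $NP(\sigma)=([q]\xrightarrow{\pi_0}\mathrm{im}_0\xrightarrow{\overline{\alpha}_1}\cdots\xrightarrow{\overline{\alpha}_n}\mathrm{im}_n)$. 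Note $NP(\sigma)$ is precisely (the $f$-image of) $g$ applied to $\sigma\otimes\overline Y$.

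Next I would transfer this to $\mathscr{Y}^+_*A$. By Lemma~\ref{lem.I-decomp} it suffices to treat basic tensors, and by Prop.~\ref{prop.BsymI} every such chain is equivalent to one of the form $\xi=\sigma\otimes\overline Y$ with $\sigma=([\overline q]\to[q_0]\to\cdots\to[q_n])$ and $\overline Y\in B_{\overline q}^{sym_+}I$ having no trivial factors (or the trivial case $\overline q=-1$, $\overline Y=1$). Define $h(\xi):=h^{[\overline q]}(\sigma)\otimes\overline Y$. The key observation making the homotopy identity fall out is that in this normal form the face maps of $\mathscr{Y}^+_*A$ act only on the chain part and leave the base object $[\overline q]$ and the tensor factor $\overline Y$ untouched --- e.g.\ $d_0$ composes the first two morphisms of $\sigma$, it does not delete $[\overline q]$ or push $\overline Y$ forward --- so the differential of $\mathscr{Y}^+_*A$ restricted to chains of this form is the nerve differential of $N([\overline q]\setminus\Delta S_+)$ tensored with $\mathrm{id}_{\overline Y}$, and $fg$ restricts to $NP\otimes\mathrm{id}_{\overline Y}$. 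Hence $\partial h+h\partial=\mathrm{id}-fg$ follows at once from the prism identity of the previous step.

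The main obstacle, and the only genuinely laborious point, is well-definedness of $h$ on $\mathscr{Y}^+_*A$: one must check that the recipe above respects the identifications defining $k[N(-\setminus\Delta S_+)]\otimes_{\Delta S_+}B_*^{sym_+}A$. This is carried out exactly as the (long) verification already given for $g$, and by the same case analysis, because $h(\xi)$ is assembled entirely from the image construction ($\mathrm{im}$, $\pi$, $\eta$, $\overline{\alpha}$): precomposition by an epimorphism $\psi$ leaves every $\mathrm{im}_i$ and every induced epimorphism unchanged (since $\mathrm{im}(\phi\psi)=\mathrm{im}(\phi)$), so each summand of $h(\xi)$ is unchanged; precomposition by a monomorphism is reduced, via Prop.~\ref{prop.decomp} and Prop.~\ref{prop.BsymI}, to renormalizing the tensor, after which the decomposition returns one to the original chain; a general morphism factors into the two cases; and the residual $\Sigma_{\overline q+1}$-action is handled by the manifest equivariance of the image construction, the trivial-tensor case going exactly as in Lemma~\ref{lem.DeltaScontractible}. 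Granting well-definedness, the displayed identity gives $fg\simeq\mathrm{id}$, which together with $gf=\mathrm{id}$ shows $f$ and $g$ are inverse homotopy equivalences; in particular $\mathscr{Y}^{epi}_*A$ computes $HS_*(A)$ by Thm.~\ref{thm.SymHom_plusComplex}.
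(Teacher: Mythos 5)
Your proof is correct and gives essentially the same homotopy as the paper: the chain map $h=\sum_j(-1)^jh_j$ you write down is literally the paper's $h_j$, and the boundary identity you want is exactly what the paper verifies. The one genuine difference is in how the homotopy identity is established. The paper checks the presimplicial homotopy relations ($d_ih_j=h_{j-1}d_i$, $d_ih_j=h_jd_{i-1}$, $d_jh_j=d_jh_{j-1}$, $d_0h_0=\mathrm{id}$, $d_{n+1}h_n=g$) by hand; you instead recognize the image construction as an endofunctor $P$ of $[q]\setminus\Delta S_+$ with a natural transformation $\eta:P\Rightarrow\mathrm{Id}$, and invoke the standard prism operator, reducing the verification to the observation that on normal-form chains $\sigma\otimes\overline Y$ the differential of $\mathscr{Y}^+_*A$ acts only on $\sigma$ (so everything is $N([\overline q]\setminus\Delta S_+)$ tensored with $\mathrm{id}_{\overline Y}$). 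That is a cleaner conceptual route, and it is consistent with how the paper itself proves Lemma~\ref{lem.DeltaScontractible}; what it buys is avoiding the by-hand simplicial bookkeeping, at the cost of making explicit the claim (correct, but worth saying as you do) that $\partial$, $fg$, and $h$ all fix $[\overline q]$ and $\overline Y$ on normal-form representatives so that the prism identity transports. Both proofs handle well-definedness of $h$ the same way, by appeal to functoriality and equivariance of the $\mathrm{im}$/$\pi$/$\eta$/$\overline\alpha$ construction together with the normalization from Prop.~\ref{prop.BsymI}; your outline of that check matches the one the paper would carry out.
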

\begin{proof}
  In what follows, we assume $Y$ is a basic tensor in $B_q^{sym}I$.
  Define a presimplicial homotopy $h$ from $fg$ to $\mathrm{id}$ as follows:
  \[
    h_j^{(n)}([q] \to [q_0] \to \ldots \to [q_n] \otimes Y) \;:=
  \]
  \[
    [q] \twoheadrightarrow \mathrm{im}([q] \to [q_0]) \twoheadrightarrow \ldots 
    \twoheadrightarrow \mathrm{im}([q] \to [q_j]) \hookrightarrow [q_j] \to 
    \ldots \to [q_n] 
    \otimes Y.
  \]

  $h_j$ is well-defined when by the functorial properties of the
  epimorphism construction.

  Suppose $0 \leq i < j \leq n$.  We have $d_i h_j = h_{j-1} d_i$, since $d_i$ on the
  right hand side reduces the number of nodes to the left of $[q_j]$ by one.  We also
  use the functoriality of the epimorphism construction here.

  Suppose $1 \leq j+1 < i \leq n+1$.  $d_i h_j = h_j d_{i-1}$, since $h_j$ on the left
  hand side shifts all nodes to the right of (and including) $[q_j]$ to the right by one.

  Suppose $0 < i \leq n$.  First apply $d_ih_i$ to an arbitrary chain.
  \[
    \begin{diagram}
      \node{ [q] \to [q_0] \to \ldots \to [q_n] \otimes Y }
      \arrow{s,l,T}{ h_i }
      \\
      \node{ [q] \twoheadrightarrow \mathrm{im}([q] \to [q_0]) \twoheadrightarrow 
        \ldots \twoheadrightarrow \mathrm{im}([q] \to [q_{i-1}]) \twoheadrightarrow 
        \mathrm{im}([q] \to [q_i]) \hookrightarrow [q_i] \to \ldots \to [q_n] 
        \otimes Y }
      \arrow{s,l,T}{ d_i }
      \\
      \node{ [q] \twoheadrightarrow \mathrm{im}([q] \to [q_0]) \twoheadrightarrow \ldots 
        \twoheadrightarrow \mathrm{im}([q] \to [q_{i-1}]) \to [q_i] \to \ldots \to [q_n] 
        \otimes Y }
    \end{diagram}
  \]
  Apply $d_ih_{i-1}$ to the same chain.
  \[
    \begin{diagram}
      \node{ [q] \to [q_0] \to \ldots \to [q_n] \otimes Y }
      \arrow{s,l,T}{ h_{i-1} }
      \\
      \node{ [q] \twoheadrightarrow \mathrm{im}([q] \to [q_0]) \twoheadrightarrow 
        \ldots \twoheadrightarrow \mathrm{im}([q] \to [q_{i-1}]) \hookrightarrow [q_{i-1}] 
        \to [q_i] \to \ldots \to [q_n] \otimes Y }
      \arrow{s,l,T}{ d_i }
      \\
      \node{ [q] \twoheadrightarrow \mathrm{im}([q] \to [q_0]) \twoheadrightarrow \ldots 
        \twoheadrightarrow \mathrm{im}([q] \to [q_{i-1}]) \to [q_i] \to \ldots \to [q_n] 
        \otimes Y }
    \end{diagram}
  \]
  The fact that the composition of 
  $\mathrm{im}([q] \to [q_{i-1}]) \to [q_{i-1}] \to [q_i]$ is equal to the composition 
  of $\mathrm{im}([q] \to [q_{i-1}]) \to \mathrm{im}([q] \to [q_i]) \to [q_i] $ follows 
  from the commutativity of the
  outside square of diagram~\ref{eq.epidiagram}.
  Thus, $d_i h_i = d_i h_{i-1}$.
  
  Finally, 
  \[
    d_0 h_0 ([q] \to [q_0] \to \ldots \to [q_n] \otimes Y) =
    d_0\big([q]\twoheadrightarrow \mathrm{im}([q] \to [q_0]) \hookrightarrow [q_0] \to 
    \ldots \to [q_n] \otimes Y \big)
  \]
  \[
    = [q] \to [q_0] \to \ldots \to [q_n] \otimes Y,
  \]
  and
  \[
    d_{n+1} h_n ([q] \to [q_0] \to \ldots \to [q_n] \otimes Y)
  \]
  \[
    = d_{n+1}\big([q] \twoheadrightarrow \mathrm{im}([q] \to [q_0]) \twoheadrightarrow 
    \ldots \twoheadrightarrow \mathrm{im}([q] \to [q_n]) \hookrightarrow [q_n] 
    \otimes Y \big)
  \]
  \[
    = [q] \twoheadrightarrow \mathrm{im}([q] \to [q_0]) \twoheadrightarrow 
    \ldots \twoheadrightarrow \mathrm{im}([q] \to [q_n]) \otimes Y
  \]
  \[
    = g([q] \to [q_0] \to \ldots \to [q_n] \otimes Y)
  \]
  Hence, $\mathrm{id} \simeq fg$, as required.
\end{proof}

\begin{prop}\label{prop.epi}
  If $A$ has augmentation ideal $I$, then
  \[
    HS_*(A) = H_*\left(\mathscr{Y}^{epi}_*A;\,k\right) =
    H_*\left(k[ N(- \setminus \mathrm{Epi}\Delta S_+) ] \otimes_{\mathrm{Epi}\Delta S_+} 
    B_*^{sym_+}I;\,k \right).
  \]
\end{prop}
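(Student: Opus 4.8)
The plan is to assemble the pieces already in place and reduce the statement to Theorem~\ref{thm.SymHom_plusComplex}. Since that theorem gives $HS_*(A) = H_*(\mathscr{Y}^+_*A;\,k)$, and since the right-hand side of the asserted equality is by definition~(\ref{epiDeltaS_complex}) the homology of $\mathscr{Y}^{epi}_*A$, it suffices to show that the inclusion chain map $f : \mathscr{Y}^{epi}_*A \to \mathscr{Y}^+_*A$ induces an isomorphism on homology.

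First I would note that $f$ is genuinely a map of simplicial $k$-modules: the inclusion $\mathrm{Epi}\Delta S_+ \hookrightarrow \Delta S_+$ gives a face-map-compatible inclusion of nerves $N(-\setminus\mathrm{Epi}\Delta S_+)\hookrightarrow N(-\setminus\Delta S_+)$, and the factorwise inclusion $I\hookrightarrow A$ (with the identity in degree $-1$) gives an equivariant map $B_*^{sym_+}I\to B_*^{sym_+}A$, so the two combine to a map on the categorical tensor products. This is routine. Next I would invoke the chain map $g$ in the reverse direction constructed above, together with the two identities already verified: $gf=\mathrm{id}$, and $fg\simeq\mathrm{id}$ via the presimplicial homotopy $h$ (the preceding proposition). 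Hence $f$ is a chain homotopy equivalence, so it induces an isomorphism $H_*(\mathscr{Y}^{epi}_*A;\,k)\cong H_*(\mathscr{Y}^+_*A;\,k)$; chaining with Theorem~\ref{thm.SymHom_plusComplex} yields $HS_*(A)\cong H_*(\mathscr{Y}^{epi}_*A;\,k)$, and substituting definition~(\ref{epiDeltaS_complex}) gives the displayed equality.

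Almost all of the genuine work has already been done before the statement, so the present proposition is essentially bookkeeping. The one point that deserves explicit comment is why $g$ is defined on all of $\mathscr{Y}^+_*A$ rather than merely on chains of a special form: here one uses the augmentation-induced decomposition $B_n^{sym_+}A\cong\bigoplus_{J\subset[n]}B_{n,J}A$ of Lemma~\ref{lem.I-decomp} to reduce to basic tensors, then Prop.~\ref{prop.BsymI} to strip off trivial tensor factors, then Prop.~\ref{prop.decomp} (unique epi--mono factorization) together with the functoriality of the image construction to check that the value of $g$ is independent of the chosen representative of a $\Delta S_+$-equivalence class --- exactly the case analysis (on whether the sliding morphism $\psi$ is epic, monic, or general) carried out above. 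The hardest part is thus this well-definedness of $g$, and it has been handled; what remains for this proof is only to state the conclusion.
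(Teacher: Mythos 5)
Your proposal matches the paper's argument: the proof is indeed simply to invoke the chain homotopy equivalence $f$, $g$ (with $gf=\mathrm{id}$ and $fg\simeq\mathrm{id}$ established in the preceding pages) and combine it with Theorem~\ref{thm.SymHom_plusComplex}. You correctly identify that all of the real work --- especially the well-definedness of $g$ --- lives before this statement, and the proposition is just the final assembly.
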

\begin{proof}
  The complex~(\ref{epiDeltaS_complex}) has been shown to be chain homotopy equivalent to
  the complex $\mathscr{Y}^+_*A$, which by Thm.~\ref{thm.SymHom_plusComplex},
  computes $HS_*(A)$.
\end{proof}

\begin{rmk}
  The condition that $A$ have an augmentation ideal may be lifted (as Richter conjectures),
  if it can be shown that $N(\mathrm{Epi}\Delta S)$ is contractible.  As partial
  progress along these lines, it can be shown that $N(\mathrm{Epi}\Delta S)$ is
  simply-connected.
\end{rmk}

\chapter{A SPECTRAL SEQUENCE FOR $HS_*(A)$}\label{chap.specseq}

\section{Filtering by Number of Strict Epimorphisms}\label{sec.specseq}

In this chapter, fix a unital associative algebra $A$ over commutative
ground ring $k$.  We also assume $A$ comes equipped with an augmentation,
and denote the augmentation ideal by $I$.
Let $\mathscr{Y}^{epi}_*A$ be the complex~(\ref{epiDeltaS_complex}).
Since $A$ is fixed, it will suffice to use 
the notation $\mathscr{Y}^{epi}_*$ in place of $\mathscr{Y}^{epi}_*A$.
As we have seen above in Section~\ref{sec.epideltas}, $H_*(\mathscr{Y}^{epi}) = HS_*(A)$.

Consider a 
filtration of $\mathscr{Y}^{epi}_*$
by number of strict epimorphisms, or \textit{jumps}:
\[
  \mathscr{F}_p\mathscr{Y}^{epi}_q \qquad \textrm{is generated by}
\]
\[
  \left\{[m_0] \to [m_{1}] \to \ldots \to [m_q] \otimes Y,\, \textrm{where 
  $m_{i-1} > m_{i}$ for no more than $p$ distinct values of $i$}\right\}.
\]
The face maps of $\mathscr{Y}^{epi}_*$ only delete morphisms or compose morphisms,
so this filtration is compatible with the differential of $\mathscr{Y}^{epi}_*$.
The filtration quotients are easily described:
\[
  E^0_{p,q} := \mathscr{F}_p\mathscr{Y}^{epi}_q / 
  \mathscr{F}_{p-1}\mathscr{Y}^{epi}_q \qquad
  \textrm{is generated by}
\]
\[
  \left\{[m_0]\to [m_1] \to \ldots \to [m_q] \otimes Y,\, \textrm{where 
  $m_{i-1} > m_{i}$ for exactly $p$ distinct values of $i$}\right\}.
\]
The induced differential on $E^0_{p,q}$ is of bidegree $(0,-1)$, so we may form
a spectral sequence with $E^1_{p,q} = H_{p+q}( E^0_{p,*} )$ (cf. \cite{Mc},
\cite{S}).

\begin{lemma}\label{lem.E1_term}
  There are chain maps (one for each $p$):
  \[
    E^0_{p,*} \to \bigoplus_{m_0 > \ldots > m_p} \bigg(
    I^{\otimes(m_0+1)} \otimes
    k\Big[ \prod_{i=1}^p \mathrm{Epi}_{\Delta_+}\big([m_{i-1}], [m_i]\big)\Big]
    \otimes_{k\Sigma_{m_p + 1}}
    E_*\Sigma_{m_p + 1}
    \bigg),
  \]
  inducing isomorphisms in homology:
  \[
    E^1_{p,q} \cong \bigoplus_{m_0 > \ldots > m_p} H_q\bigg( \Sigma_{m_p+1}^{\mathrm{op}}
      \; ; \; I^{\otimes(m_0+1)} \otimes 
      k\Big[ \prod_{i=1}^p \mathrm{Epi}_{\Delta_+}\big([m_{i-1}], [m_i]\big)\Big]
     \bigg).
  \]    
  Here, we use the convention that $I^{\otimes 0} = k$, and $\Sigma_0 \cong 1$, the
  trivial group.
\end{lemma}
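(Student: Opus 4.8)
The plan is, for each fixed $p$, to make the filtration quotient $E^0_{p,*}$ completely explicit, to recognize it as an iterated bar construction of the symmetric groups $\Sigma_{m_0+1},\dots,\Sigma_{m_p+1}$ glued along order-preserving surjections, and then to collapse every bar factor except the last.

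\emph{Making $E^0_{p,*}$ explicit.} Using the normalized form of chains for $\mathscr Y^{epi}_*$ already adopted in the text (the $\mathrm{Epi}\Delta S_+$-analogue of Remark~\ref{rmk.uniqueChain}) together with the canonical factorization of every $\mathrm{Epi}\Delta S_+$-morphism as $(\phi,\mathrm{id})\circ(\mathrm{id},g)$ with $\phi$ an order-preserving surjection of $\Delta_+$ and $g$ a permutation of the source (a formal consequence of the composition law of Section~\ref{sec.deltas}; cf.\ Prop.~\ref{prop.comp} and Prop.~\ref{prop.decomp}), a generator of $E^0_{p,q}$ records: a tuple $m_0>m_1>\cdots>m_p$; the strict surjections $\phi_i\in\mathrm{Epi}_{\Delta_+}([m_{i-1}],[m_i])$; a run of permutations at each level $m_0,\dots,m_p$ together with the permutation parts $g_1,\dots,g_p$ of the $p$ drops; and $Y\in I^{\otimes(m_0+1)}$; subject only to the residual relation $(\beta_1\sigma,\beta_2,\dots)\otimes Y\sim(\beta_1,\beta_2,\dots)\otimes\sigma_*Y$ for $\sigma\in\Sigma_{m_0+1}^{\mathrm{op}}$. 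Since the quotient differential only composes consecutive arrows, and is identically zero precisely when it would fuse two drops (that chain falls into $\mathscr F_{p-1}$), $E^0_{p,*}$ splits as a direct sum over the tuples $(m_0>\cdots>m_p)$; call the summand $C(m_\bullet)_*$. Inside $C(m_\bullet)_*$ the differential is the bar differential along each run, with boundary terms interacting with the adjacent $\phi_i$ on the right (via the crossed structure, re-sorting $\phi_i$ and feeding a permutation of $[m_{i-1}]$ back to the run at $m_{i-1}$) and, at the top level, with $Y$ (by permuting tensor factors). Tracking these interactions, $C(m_\bullet)_*$ is identified on the nose with the iterated tensor product
\[
  I^{\otimes(m_0+1)}\otimes_{k\Sigma_{m_0+1}^{\mathrm{op}}}B_*\!\otimes_{k\Sigma_{m_0+1}^{\mathrm{op}}}k[\mathrm{Mor}^{epi}_{\Delta S_+}([m_0],[m_1])]\otimes_{k\Sigma_{m_1+1}^{\mathrm{op}}}B_*\otimes\cdots\otimes_{k\Sigma_{m_{p-1}+1}^{\mathrm{op}}}k[\mathrm{Mor}^{epi}_{\Delta S_+}([m_{p-1}],[m_p])]\otimes_{k\Sigma_{m_p+1}^{\mathrm{op}}}B_*,
\]
where each intermediate $B_*$ is a two-sided bar construction $B_*(M_i,\Sigma_{m_i+1}^{\mathrm{op}},k\Sigma_{m_i+1}^{\mathrm{op}})$ of $\Sigma_{m_i+1}^{\mathrm{op}}$ (the free right coefficient coming from the free variable $g_{i+1}$) and the last $B_*$ is the ordinary bar complex of $\Sigma_{m_p+1}^{\mathrm{op}}$.

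\emph{Collapsing the bars.} By unique factorization, $k[\mathrm{Mor}^{epi}_{\Delta S_+}([m_i],[m_{i+1}])]\cong k[\mathrm{Epi}_{\Delta_+}([m_i],[m_{i+1}])]\otimes k\Sigma_{m_i+1}^{\mathrm{op}}$ is free over $k\Sigma_{m_i+1}^{\mathrm{op}}$, and a two-sided bar construction with a free coefficient on one side is an acyclic resolution of that side. Hence, for $i=0,1,\dots,p-1$ in turn, replacing $B_*(M_i,\Sigma_{m_i+1}^{\mathrm{op}},k\Sigma_{m_i+1}^{\mathrm{op}})$ by $M_i$ is a homology isomorphism (legitimate because every factor tensored in is free, hence flat, over the relevant group ring), and the leftover $k\Sigma_{m_i+1}^{\mathrm{op}}$ amalgamates with the adjacent $\mathrm{Epi}_{\Delta_+}$-factor. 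Iterating collapses $C(m_\bullet)_*$ onto
\[
  I^{\otimes(m_0+1)}\otimes k\Bigl[\textstyle\prod_{i=1}^p\mathrm{Epi}_{\Delta_+}([m_{i-1}],[m_i])\Bigr]\otimes_{k\Sigma_{m_p+1}^{\mathrm{op}}}E_*\Sigma_{m_p+1},
\]
where $\Sigma_{m_p+1}^{\mathrm{op}}$ acts on the coefficient module through the cascading crossed action from $[m_p]$ down to $[m_0]$. Summing over $(m_\bullet)$ gives the chain maps of the statement, and since $E_*\Sigma_{m_p+1}\otimes_{k\Sigma_{m_p+1}^{\mathrm{op}}}(-)$ computes $H_*(\Sigma_{m_p+1}^{\mathrm{op}};-)$, one reads off the claimed formula for $E^1_{p,q}$; the conventions $I^{\otimes0}=k$, $\Sigma_0=1$ dispose of the degenerate levels, and when $p=0$ there are no bars to collapse and the formula is immediate.

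\emph{Main obstacle.} The genuine work is the on-the-nose identification in the first paragraph: one must run the exponent identities of Prop.~\ref{prop.comp} through every face map to see that the boundary terms of $C(m_\bullet)_*$ — in particular the ``compose a permutation with an order-preserving surjection'' terms — become exactly the outer faces of the bar constructions after sliding the surjection to the correct side, and that the residual $\Sigma_{m_0+1}^{\mathrm{op}}$-relation is precisely the left-coefficient identification of the first bar factor. The conceptual content, by contrast, is merely that a bar construction with a free module on one side is an acyclic resolution and so may be divided away.
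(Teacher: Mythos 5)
Your proposal is correct and is in essence the paper's own argument: the paper likewise splits $E^0_{p,*}$ by the tuple $(m_0>\cdots>m_p)$, identifies each summand $\mathscr{B}_*^{(m_0,\ldots,m_p)}\otimes_{kG}E_*G$ with a twisted iterated bar construction of the intermediate symmetric groups glued along $\mathrm{Epi}_{\Delta S_+}$ sets, and collapses the intermediate bars (Prop.~\ref{prop.gamma_iso}) by precisely the ``free coefficient on one side'' observation you invoke, leaving the final bar $E_*\Sigma_{m_p+1}$ to furnish the group homology. The paper packages the last step as a double-complex spectral sequence for $\mathscr{B}_*\otimes_{kG}E_*G$ rather than as one more bar-collapse, but the mechanism is the same, and your remark on where the genuine work lies---running the crossed-simplicial exponent identities of Prop.~\ref{prop.comp} through the face maps---is exactly what the paper's explicit chain isomorphism $\theta_*$ and homotopy equivalence $F_*$, $G_*$, $h_*$ carry out in detail.
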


We will begin by defining two related chain complexes:

Denote by $\mathscr{B}_*^{(m_0, \ldots, m_p)}$, the chain complex:
\[
  \bigoplus \Big(  
  [m_0] \stackrel{\cong}{\to} \ldots \stackrel{\cong}{\to} [m_{0}] 
  \searrow [m_{1}] \stackrel{\cong}{\to} \ldots 
  \stackrel{\cong}{\to} [m_{p-1}] \searrow [m_p] \otimes
  I^{\otimes(m_0+1)}
  \Big),
\]
where the sum extends over all such chains that begin with $0$ or more isomorphisms
of $[m_0]$, followed by a strict epimorphism
\mbox{$[m_{0}] \twoheadrightarrow [m_{1}]$}, followed by $0$ or more isomorphisms of
$[m_{1}]$, followed by a strict epimorphism \mbox{$[m_{1}] \twoheadrightarrow [m_{2}]$},
etc., and the last morphism must be a strict epimorphism \mbox{$[m_{p-1}] 
\twoheadrightarrow [m_{p}]$}.
$\mathscr{B}_*^{(m_0, \ldots, m_p)}$ is a subcomplex of $E^0_{p,*}$ with the
same induced differential, and there is a $\Sigma^{\mathrm{op}}_{m_p+1}$-action given by
postcomposition of $g$, regarded as an automorphism
of $[m_p]$.

Denote by $\mathscr{M}_*^{(m_0,\ldots, m_p)}$, the
chain complex consisting of $0$ in degrees different from $p$, and
\[
  \mathscr{M}_p^{(m_0, \ldots, m_p)} \;:=\; I^{\otimes(m_0+1)} \otimes
  k\Big[ \prod_{i=1}^p \mathrm{Epi}_{\Delta_+}\big([m_{i-1}], [m_i]\big)\Big],
\]
the coefficient group that shows up in Lemma~\ref{lem.E1_term}.  This complex
has trivial differential.

Now, $\mathscr{B}_p^{(m_0, \ldots, m_p)}$ is generated by elements of the form
\[
  [m_0] \searrow [m_{1}] \searrow \ldots \searrow [m_p] \otimes Y,
\]
where the chain consists entirely of strict epimorphisms of $\Delta S$.  Observe that
\[
  \mathscr{B}_p^{(m_0, \ldots, m_p)} =  
  k\big[\mathrm{Epi}_{\Delta S_+}( [m_{0}], [m_1] ) \big] \otimes \ldots \otimes 
  k\big[\mathrm{Epi}_{\Delta S_+}( [m_{p-1}], [m_p] ) \big]
  \otimes
  I^{\otimes(m_0+1)}
\]
\begin{equation}\label{eq.B_p}
  \cong
  I^{\otimes(m_0+1)} \otimes
  k\big[\mathrm{Epi}_{\Delta S_+}( [m_{0}], [m_1] ) \big] \otimes \ldots \otimes 
  k\big[\mathrm{Epi}_{\Delta S_+}( [m_{p-1}], [m_p] ) \big]
\end{equation}
as \mbox{$k$-module}.  Now, each $k\big[\mathrm{Epi}_{\Delta S_+}
( [m], [n] ) \big]$ is a 
\mbox{$(k\Sigma^\mathrm{op}_{n+1})$-$(k\Sigma^\mathrm{op}_{m+1})$-bimodule}.
View $\sigma \in \Sigma_{n+1}^\mathrm{op} = \mathrm{Aut}_{\Delta S_+}([n])$  and
$\tau \in \Sigma_{m+1}^\mathrm{op} = \mathrm{Aut}_{\Delta S_+}([m])$ as automorphisms.  Then
the action of $\sigma$, resp., $\tau$, is by postcomposition, resp., precomposition.  The 
bimodule structure, 
$(\sigma . \phi) . \tau = \sigma . (\phi . \tau)$, 
follows easily from associativity of composition in $\Delta S_+$, 
\mbox{$(\sigma \circ \phi)\circ \tau = \sigma \circ (\phi \circ \tau)$}.
We shall use the equivalent interpretation of $k\big[\mathrm{Epi}_{\Delta S_+}
( [m], [n] ) \big]$ as \mbox{$(k\Sigma_{m+1})$-$(k\Sigma_{n+1})$-bimodule}.
Explicitly, an element of $\mathrm{Epi}_{\Delta S_+}([m],[n])$ is a pair $(\psi, g)$, with
$\psi \in \mathrm{Epi}_{\Delta_+}([m],[n])$ and $g \in \Sigma_{m+1}^\mathrm{op}$, so for
$\tau \in \Sigma_{m+1}$ and $\sigma \in \Sigma_{n+1}$,
\[
  (\psi, g).\sigma \;=\; (\mathrm{id}, \sigma) \circ (\psi, g) \;=\;
  (\psi^\sigma, \sigma^\psi \circ g) \;=\; (\psi^\sigma, g\sigma^\psi),
\]
\[
  \tau.(\psi, g) \;=\; (\psi, g) \circ (\mathrm{id}, \tau) \;=\;
  (\psi, g \circ \tau) \;=\; (\psi, \tau g).
\]
Also, since $B_*^{sym_+}I$ is a \mbox{$\Delta S_+$-module}, we may view it as a right 
\mbox{$\Delta S_+^\mathrm{op}$-module}, hence $B_{m_0}^{sym_+}I = I^{\otimes(m_0+1)}$ is a 
right \mbox{$k\Sigma_{m_0+1}$-module}.

With this in mind,~(\ref{eq.B_p}) becomes a \mbox{$k\Sigma^\mathrm{op}_{m_p + 1}$-module},
where the action is the right action of $k\Sigma_{m_p+1}$ on the last tensor
factor by postcomposition, and the isomorphism given above respects this action.

Consider the $k$-module:
\begin{equation}\label{eq.M_alt}
  M \;:=\; I^{\otimes(m_0+1)}  \otimes_{kG_0}
  k\big[\mathrm{Epi}_{\Delta S_+}( [m_{0}], [m_1] ) \big]
  \otimes_{kG_{1}} \ldots \otimes_{kG_{p-1}}
  k\big[\mathrm{Epi}_{\Delta S_+}( [m_{p-1}], [m_p] ) \big],
\end{equation}
where $G_i$ is the group $\Sigma_{m_i + 1}$.  I claim that $M$ is isomorphic
to $\mathscr{M}_p^{(m_0, \ldots, m_p)}$ as $k$-module.  Indeed, any element
\[
  Y \otimes (\psi_p, g_p)\otimes \ldots \otimes (\psi_1, g_1)
\]
in $M$ is equivalent to one in which all $g_i$ are identities by writing $(\psi_p, g_p) =
g_p.(\psi_p, \mathrm{id})$ then commuting $g_p$ over the tensor to the left and iterating
this process to the leftmost tensor factor.  Thus, we may write the element uniquely as
\[
  Z \otimes \phi_1 \otimes \ldots \otimes \phi_p,
\]
where all tensors are now over $k$, and all morphisms are in $\mathrm{Epi}\Delta S_+$.

This isomorphism also allows us to view $\mathscr{M}_p^{(m_0, \ldots, m_p)}$ as a 
\mbox{$\Sigma^\mathrm{op}_{m_p+1}$-module}.
The action is defined as the right action of $\Sigma_{m_p+1}$ on the 
tensor factor $k\big[\mathrm{Epi}_{\Delta S_+}( [m_{p-1}], [m_p] )\big]$.  We 
then use the isomorphism to express this action in terms of 
$\mathscr{M}_p^{(m_0, \ldots, m_p)}$.

Let $\gamma_*$ be a chain map $\mathscr{B}_*^{(m_0, \ldots, m_p)} \to 
\mathscr{M}_*^{(m_0, \ldots, m_p)}$ defined as the zero map 
in degrees different from $p$,
and the canonical map
\[
  I^{\otimes(m_0+1)} \otimes k\big[\mathrm{Epi}_{\Delta S_+}( [m_0], [m_1] ) \big] 
  \otimes \ldots
  \otimes k\big[\mathrm{Epi}_{\Delta S_+}( [m_{p-1}], [m_p] ) \big]
  \longrightarrow
\]
\[
  I^{\otimes(m_0+1)} \otimes_{kG_0}
  k\big[\mathrm{Epi}_{\Delta S_+}( [m_0], [m_1] ) \big] \otimes_{kG_1} \ldots
  \otimes_{kG_{p-1}} k\big[\mathrm{Epi}_{\Delta S_+}( [m_{p-1}], [m_p] ) \big],
\]
in degree $p$.  $\gamma_*$ is $\Sigma_{m_p + 1}^\mathrm{op}$-equivariant due to
an elementary property of bimodules:
\begin{prop}
  Suppose $R$ and $S$ are $k$-algebras, $A$ is a right \mbox{$S$-module}, and $B$ is an
  \mbox{$S$-$R$-bimodule}, then the canonical map $A \otimes_k B \to A \otimes_S B$ is a map
  of right $R$-modules.
\end{prop}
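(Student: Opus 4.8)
The plan is to observe that both $A\otimes_k B$ and $A\otimes_S B$ carry a natural right $R$-module structure coming from the right $R$-action on $B$, and that the canonical map is manifestly compatible with these structures. First I would make the $R$-module structures explicit. On $A\otimes_k B$, define $(a\otimes b)\cdot r := a\otimes(br)$; for each fixed $r\in R$ the assignment $(a,b)\mapsto a\otimes(br)$ is $k$-bilinear (using that $B$ is a right $R$-module and the $R$-action on $B$ is $k$-linear), hence factors through $A\otimes_k B$, and the associativity and unitality of the $R$-action on $B$ make this a right $R$-module structure. On $A\otimes_S B$ one defines the action by the same formula, but here well-definedness requires in addition that the $S$-balancing relation $as\otimes b \sim a\otimes sb$ be preserved: indeed $(as\otimes b)\cdot r = as\otimes br$ and $(a\otimes sb)\cdot r = a\otimes(sb)r = a\otimes s(br)$ agree in $A\otimes_S B$, precisely because $B$ is an $S$-$R$-bimodule, so $(sb)r = s(br)$.

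Next I would recall that the canonical map $q:A\otimes_k B\to A\otimes_S B$ is the quotient by the $k$-submodule generated by the elements $as\otimes b - a\otimes sb$, sending $a\otimes b\mapsto a\otimes b$. To conclude, I would check $R$-linearity on generators: $q\big((a\otimes b)\cdot r\big) = q(a\otimes br) = a\otimes br = (a\otimes b)\cdot r = q(a\otimes b)\cdot r$, and then extend by additivity. No further compatibilities remain to be verified, so the statement follows.

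There is essentially no obstacle here; the content is entirely bookkeeping, and the one place where the full bimodule hypothesis (rather than a merely one-sided module) is genuinely used is in checking that the right $R$-action descends to the $S$-balanced tensor product $A\otimes_S B$. In the application at hand, $R = k\Sigma_{m_p+1}$ acts on the right, $S = k\Sigma_{m_{p-1}+1}$ acts on the left, and the proposition guarantees that $\gamma_*$ --- which in degree $p$ is exactly such a canonical map --- is $\Sigma_{m_p+1}^{\mathrm{op}}$-equivariant.
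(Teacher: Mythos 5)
Your proof is correct and complete. Note that the paper states this proposition without proof, presenting it as ``an elementary property of bimodules,'' so there is no argument in the paper to compare against; your write-up simply supplies the routine verification the author elected to omit. You correctly identify the one nontrivial point: that the bimodule compatibility $(sb)r = s(br)$ is exactly what makes the right $R$-action descend to the $S$-balanced tensor product, and the $R$-linearity of the canonical quotient map is then immediate on generators.
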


Our aim now is to show that $\gamma_*$ induces an isomorphism in homology.
\begin{prop}\label{prop.gamma_iso}
  $\gamma_*$ induces an isomorphism
  \[
    H_*\big( \mathscr{B}_*^{(m_0,\ldots, m_p)} \big) \longrightarrow
    H_*\big( \mathscr{M}_*^{(m_0,\ldots, m_p)} \big)
  \]
\end{prop}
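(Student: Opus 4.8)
The plan is to compute $H_*\big(\mathscr{B}_*^{(m_0,\ldots,m_p)}\big)$ directly and to recognize $\gamma_*$ as the canonical map onto its top homology; assume throughout that $p\geq1$. A generator of $\mathscr{B}_q^{(m_0,\ldots,m_p)}$ is a tensor $Y\otimes(\text{chain})$ with $Y\in I^{\otimes(m_0+1)}$ and the chain made of $a_0$ automorphisms of $[m_0]$, a strict epimorphism $\epsilon_1\colon[m_0]\twoheadrightarrow[m_1]$, then $a_1$ automorphisms of $[m_1]$, $\epsilon_2$, and so on, ending with $\epsilon_p\colon[m_{p-1}]\twoheadrightarrow[m_p]$, where $q=p+\sum_{i=0}^{p-1}a_i$. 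Writing $G_i:=\Sigma_{m_i+1}$ and viewing $\mathcal{E}_i:=k[\mathrm{Epi}_{\Delta S_+}([m_{i-1}],[m_i])]$ as the $(kG_{i-1},kG_i)$-bimodule discussed before~(\ref{eq.M_alt}), and $I^{\otimes(m_0+1)}$ as a right $kG_0$-module via $B_*^{sym_+}I$, I would first identify $\mathscr{B}_*^{(m_0,\ldots,m_p)}$, shifted down by $p$ in degree, with the total complex of the iterated two-sided bar construction
\[
  W_*:=B_*\!\Big(\cdots B_*\big(B_*(I^{\otimes(m_0+1)},kG_0,\mathcal{E}_1),kG_1,\mathcal{E}_2\big)\cdots,kG_{p-1},\mathcal{E}_p\Big),
\]
under which the block of automorphisms of $[m_i]$ becomes the factor $(kG_i)^{\otimes a_i}$ and $\epsilon_i$ becomes the factor $\mathcal{E}_i$. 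The faces of $\mathscr{Y}^{epi}_*A$ that would compose two consecutive strict epimorphisms — possible precisely when some $a_i=0$ — destroy a jump and therefore act as zero in the associated graded $E^0_{p,*}$, matching the absence of any face linking the $\mathcal{E}_i$- and $\mathcal{E}_{i+1}$-coordinates in $W_*$; the remaining faces match the bar differentials. Carrying out this identification, with the faces and Koszul signs worked out from the composition formula for $\Delta S_+$ (Prop.~\ref{prop.comp}), is the step I expect to be the main obstacle; the rest is formal.

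Granting the identification, I would compute $H_*(W_*)$ by induction on $p$. The key input is that each $\mathcal{E}_i$ is \emph{free} as a left $kG_{i-1}$-module: for fixed $\psi\in\mathrm{Epi}_{\Delta_+}([m_{i-1}],[m_i])$ the set $\{(\psi,g)\mid g\in\Sigma_{m_{i-1}+1}^{\mathrm{op}}\}$ is a single free $G_{i-1}$-orbit — this is exactly the observation behind the isomorphism $M\cong\mathscr{M}_p^{(m_0,\ldots,m_p)}$ recorded around~(\ref{eq.M_alt}). Writing $W_*=\mathrm{Tot}\,B_*(V_*,kG_{p-1},\mathcal{E}_p)$ with $V_*$ the $(p-1)$-fold construction, the inductive hypothesis (the statement for the truncated tuple $(m_0,\ldots,m_{p-1})$) gives $H_r(V_*)=0$ for $r>0$ and $H_0(V_*)=I^{\otimes(m_0+1)}\otimes_{kG_0}\mathcal{E}_1\otimes_{kG_1}\cdots\otimes_{kG_{p-2}}\mathcal{E}_{p-1}=:M'$. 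As $(kG_{p-1})^{\otimes n}\otimes_k\mathcal{E}_p$ is $k$-free, the spectral sequence of this double complex (take $V_*$-homology first) collapses onto $B_*(M',kG_{p-1},\mathcal{E}_p)$ in internal degree $0$, whose homology is $\mathrm{Tor}^{kG_{p-1}}_*(M',\mathcal{E}_p)$; freeness of $\mathcal{E}_p$ over $kG_{p-1}$ kills the positive Tor, so $H_r(W_*)=0$ for $r>0$ and $H_0(W_*)=M'\otimes_{kG_{p-1}}\mathcal{E}_p=I^{\otimes(m_0+1)}\otimes_{kG_0}\mathcal{E}_1\otimes_{kG_1}\cdots\otimes_{kG_{p-1}}\mathcal{E}_p\cong\mathscr{M}_p^{(m_0,\ldots,m_p)}$. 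Undoing the shift, $H_q\big(\mathscr{B}_*^{(m_0,\ldots,m_p)}\big)=0$ for $q\neq p$ and $H_p\big(\mathscr{B}_*^{(m_0,\ldots,m_p)}\big)\cong\mathscr{M}_p^{(m_0,\ldots,m_p)}$.

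It remains to see this top isomorphism is induced by $\gamma_*$. In degree $p$ all $a_i=0$, so $\mathscr{B}_p=I^{\otimes(m_0+1)}\otimes_k\mathcal{E}_1\otimes_k\cdots\otimes_k\mathcal{E}_p$ and every element is a cycle, while the image of the differential from $\mathscr{B}_{p+1}$ (chains with exactly one automorphism, say $\sigma\in G_i$) is spanned by the elements $\cdots\otimes(\phi\cdot\sigma)\otimes\psi\otimes\cdots-\cdots\otimes\phi\otimes(\sigma\cdot\psi)\otimes\cdots$ that coequalize the $G_i$-actions; hence $H_p(\mathscr{B}_*)=I^{\otimes(m_0+1)}\otimes_{kG_0}\mathcal{E}_1\otimes_{kG_1}\cdots\otimes_{kG_{p-1}}\mathcal{E}_p$ and the quotient map realizing it is exactly the definition of $\gamma_p$. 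Therefore $\gamma_*$ is a quasi-isomorphism. As an alternative to the explicit bar-construction model, one can instead build a contracting homotopy on the augmented complex $\mathscr{M}_p^{(m_0,\ldots,m_p)}\leftarrow\mathscr{B}_*^{(m_0,\ldots,m_p)}$ by prepending an identity automorphism to the first block of isomorphisms and iterating block by block; this carries out the same computation by hand.
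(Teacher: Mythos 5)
Your argument is correct and, at bottom, the same as the paper's: both induct on $p$, both treat the passage from $(m_0,\ldots,m_{p-1})$ to $(m_0,\ldots,m_p)$ as attaching a two-sided bar layer over $kG_{p-1}=k\Sigma_{m_{p-1}+1}$, and both turn on the observation that $\mathcal{E}_p=k\big[\mathrm{Epi}_{\Delta S_+}([m_{p-1}],[m_p])\big]$ is free as a left $kG_{p-1}$-module. The difference is in packaging. The paper carries out the single-layer identification explicitly, constructing a $\Sigma_{m_p+1}^{\mathrm{op}}$-equivariant chain isomorphism $\theta_*$ onto $\mathscr{B}_*^{(m_0,\ldots,m_{p-1})}\otimes_{kG}E_*G\otimes k\big[G\times\mathrm{Epi}_{\Delta_+}([m_{p-1}],[m_p])\big]$ and then contracting the bar factor by hand via maps $F_*$, $G_*$ and an explicit extra degeneracy $h_*$ (this is exactly where the freeness enters), after which the inductive hypothesis is applied as a chain map tensored with $k\big[\mathrm{Epi}_{\Delta_+}\big]$. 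You instead identify the whole complex with the iterated two-sided bar construction in one step and let a first-quadrant double-complex spectral sequence, plus vanishing of the higher $\mathrm{Tor}^{kG_{p-1}}$, do the collapsing. Your route is conceptually cleaner, but it relocates all the sign and face-map bookkeeping into the single global identification you flag as the main obstacle; that bookkeeping is precisely what the paper verifies inside $\theta_*$ and $h_*$, so the two proofs ultimately do the same work, with yours front-loading the combinatorics and then invoking standard homological algebra. Two small points for completeness: state the trivial base case $p=0$ explicitly (the blanket assumption $p\geq 1$ elides it, yet your induction consumes it when $p=1$), and in the last paragraph note that the coequalizing relations coming from $\mathscr{B}_{p+1}$ include the case $\sigma\in G_0$, matching the right $kG_0$-action on $I^{\otimes(m_0+1)}$ against the left action on $\mathcal{E}_1$ --- your displayed element reads as if $\sigma$ always sits between two $\mathcal{E}$-factors.
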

\begin{proof}
  We shall prove this by induction on $p$.  
  
  Suppose $p=0$.  Observe that
  \[
    \mathscr{B}_n^{(m_0)} = \left\{
    \begin{array}{ll}
      I^{\otimes(m_0+1)}, &\qquad n = 0 \\
      0, &\qquad n > 0.
    \end{array}\right.
  \]
  Moreover, $\gamma_*$ is the identity $\mathscr{B}_*^{(m_0)} \to \mathscr{M}_*^{(m_0)}$.

  Next, for the induction step, we assume $\gamma_* : \mathscr{B}_*^{(m_0, \ldots, m_{p-1})}
  \to \mathscr{M}_*^{(m_0, \ldots, m_{p-1})}$ induces an isomorphism in homology for any
  string of $p$ numbers $m_0 > m_1 > \ldots > m_{p-1}$.  Now assume $m_p < m_{p-1}$.  
  
  Let $G = \Sigma_{m_{p-1}+1}$.  As graded $k$-module, there is a
  degree-preserving isomorphism:
  \begin{equation}\label{eq.B_tensor_iso}
    \theta_* \;:\; \mathscr{B}_*^{(m_0, \ldots, m_{p-1})} \otimes_{kG} E_*G \otimes
    k\big[G \times \mathrm{Epi}_{\Delta_+}([m_{p-1}], [m_p])\big]
    \longrightarrow
    \mathscr{B}_*^{(m_0, \ldots, m_{p-1}, m_p)} 
  \end{equation}
  where the degree of an element $u \otimes (g_0, \ldots, g_n) \otimes (g, \phi)$
  is defined recursively (Note, all elements of $\mathscr{B}_n^{(m_0)}$ 
  are of degree $0$):
  \[
    deg\left(u \otimes (g_0, \ldots, g_n) \otimes (g, \phi)\right) := deg(u) + n + 1.
  \]
  Here, we are using the resolution $E_*G$ of $k$ as $kG$-module 
  defined by $E_nG = k\big[ \prod^{n+1}G \big]$, 
  with $G$-action $g . (g_0, g_1, \ldots, g_n) = (gg_0, g_1, \ldots, g_n)$, and face
  maps 
  \[
    \partial_i(g_0, g_1, \ldots, g_n) = \left\{\begin{array}{ll}
                  (g_0, \ldots, g_ig_{i+1}, \ldots, g_n), & 0 \leq i < n\\
                  (g_0, g_1, \ldots, g_{n-1}), & i = n
                 \end{array}\right.
  \]  
  $\theta_*$ is defined on generators by:
  \[
    \theta_* \;:\; u \otimes (g_0, g_1, \ldots, g_n) \otimes (g, \phi) \mapsto
  \]
  \[
    (u.g_0) \,\ast\, [m_{p-1}] \stackrel{ g_1 }{\longrightarrow} [m_{p-1}]
    \stackrel{ g_2 }{\longrightarrow} \ldots \stackrel{ g_n }
    {\longrightarrow} [m_{p-1}] \stackrel { (\phi, g) }{\longrightarrow} [m_p],
  \]
  where $u . g_0$ is the right action defined above for 
  $\mathscr{B}_*^{(m_0, \ldots, m_p)}$, and   
  $v \ast [n] \to \ldots \to [m]$ is the concatenation of chains ($v$ must have final
  target $[n]$).  $\theta_*$ is well-defined since for $h \in G$,
  \[
    u . h \otimes (g_0, \ldots, g_n) \otimes (g, \phi) \stackrel{\theta_*}{\mapsto}
  \]
  \[
    \big((u.h).g_0\big) \,\ast\, [m_{p-1}] \stackrel{ g_1 }{\longrightarrow} [m_{p-1}]
    \stackrel{ g_2 }{\longrightarrow} \ldots \stackrel{ g_n }
    {\longrightarrow} [m_{p-1}] \stackrel { (\phi, g) }{\longrightarrow} [m_p],
  \]
  while on the other hand,
  \[
    u \otimes h .(g_0, g_1, \ldots, g_n) \otimes (g, \phi) \;=\;
    u \otimes (hg_0, g_1, \ldots, g_n) \otimes (g, \phi) \stackrel{\theta_*}{\mapsto}
  \]
  \[
    \left(u.(hg_0)\right) \,\ast\, [m_{p-1}] \stackrel{ g_1 }{\longrightarrow} [m_{p-1}]
    \stackrel{ g_2 }{\longrightarrow} \ldots \stackrel{ g_n }
    {\longrightarrow} [m_{p-1}] \stackrel { (\phi, g) }{\longrightarrow} [m_p],
  \]
  If we define a right action of $\Sigma_{m_p+1}$ on 
  $k\big[G \times \mathrm{Epi}_{\Delta_+}([m_{p-1}], [m_p])\big]$ via
  \[
    (g, \phi) . h \;\mapsto \; \big( gh^{\phi}, \phi^h \big),
  \]
  then $\theta_*$ is a map of right $k\Sigma_{m_p+1}$-modules, since the action defined above
  simply amounts to post-composition of the morphism $(\phi, g)$ with $h$.
  
  $\theta_*$ has a two-sided $\Sigma_{m_p+1}^{\mathrm{op}}$-equivariant inverse, defined by:
  \[
    u \ast [m_{p-1}] \stackrel{g_1}{\to} [m_{p-1}] \stackrel{g_2}{\to} \ldots
    \stackrel{g_n}{\to} [m_{p-1}] \stackrel{ (\phi, g) }{\to} [m_p]
  \]
  \[
    \mapsto \; u \otimes (\mathrm{id}, g_1, g_2, \ldots, g_n) 
    \otimes (g, \phi).
  \]
  
  Observe that $\mathscr{B}_*^{(m_0, \ldots, m_{p-1})} \otimes_{kG} E_*G \otimes
  k\big[G \times \mathrm{Epi}_{\Delta_+}([m_{p-1}], [m_p])\big]$ is a tensor product of
  two chain complexes, and thus a chain complex in its own right.  The differential
  is given by:
  \[
    \partial\big( u \otimes (g_0, \ldots, g_n) \otimes (g, \phi) \big)
    \;=\; \partial(u) \otimes (g_0, \ldots, g_n) \otimes (g, \phi) +
    (-1)^{deg(u)} u \otimes \partial\big((g_0, \ldots, g_n) \otimes (g, \phi)\big).
  \]
  Note, the $n^{th}$ face map of $E_nG \otimes
  k\big[G \times \mathrm{Epi}_{\Delta_+}([m_{p-1}], [m_p])\big]$ is defined by:
  \[
    \partial_n\big( (g_0, \ldots, g_n) \otimes (g, \phi) \big)
    = (g_0, \ldots, g_{n-1}) \otimes ( g_ng, \phi).
  \]
  We shall verify that $\theta_*$ is a chain map with respect to this differential.
  
  Let $u \otimes (g_0, \ldots, g_n) \otimes (g, \phi)$ be a chain
  with $deg(u) = p$.  Denote by $\partial_i$, $(0\leq i \leq p+n+1)$, the $i^{th}$ 
  face map in either chain complex.  If $i < p$, then clearly $\partial_i \theta_* =
  \theta_* \partial_i$, since this face map acts only on $u$.
  
  Suppose now that $i = p$, and let $(\psi, h)$ be the final morphism in the chain $u$.
  \[
    \big(\ldots \to [m_{p-2}] \stackrel{(\psi, h)}{\longrightarrow} [m_{p-1}]\big) \otimes
    (g_0, g_1, \ldots, g_n) \otimes (g, \phi)
  \]
  \[
    \stackrel{\partial_p}{\mapsto}\quad \big(\ldots \to [m_{p-2}] \stackrel{(\psi, h)}
    {\longrightarrow} [m_{p-1}]\big) \otimes ( g_0g_1, g_2, \ldots, g_n) \otimes (g, \phi)
  \]
  \multiply \dgARROWLENGTH by3
  \divide \dgARROWLENGTH by2
  \[
    \stackrel{\theta_*}{\mapsto}\quad  \left(\ldots \to 
    \begin{diagram}
      \node{ [m_{p-2}] }
      \arrow{e,t}{ (\psi, h).(g_0g_1) }
      \node{ [m_{p-1}] }
    \end{diagram}
    \stackrel{ g_2 }{\longrightarrow} \ldots \stackrel{ g_n }
    {\longrightarrow} [m_{p-1}] \stackrel{ (\phi, g) }{\longrightarrow} [m_p]\right).
  \]
  On the other hand,
  \[
    \big(\ldots \to [m_{p-2}] \stackrel{(\psi, h)}{\longrightarrow} [m_{p-1}]\big) \otimes
    (g_0, g_1, \ldots, g_n) \otimes (g, \phi)
  \]
  \[
    \stackrel{\theta_*}{\mapsto}\quad \left(\ldots \to 
    \begin{diagram}
      \node{ [m_{p-2}] }
      \arrow{e,t}{ (\psi, h).g_0 }
      \node{ [m_{p-1}] }
    \end{diagram}
    \stackrel{ g_1 }{\longrightarrow} \ldots
    \stackrel{ g_n }{\longrightarrow} [m_{p-1}] \stackrel{(\phi, g)}
    {\longrightarrow} [m_p]\right)
  \]
  \[
    \stackrel{\partial_p}{\mapsto}\quad \left( \ldots \to 
    \begin{diagram}
      \node{ [m_{p-2}] }
      \arrow{e,t}{ \big((\psi, h).g_0\big).g_1 }
      \node{ [m_{p-1}] }
    \end{diagram}
    \stackrel{ g_2 }{\longrightarrow} 
    \ldots
    \stackrel{ g_n }{\longrightarrow} [m_{p-1}] \stackrel{(\phi, g)}
    {\longrightarrow} [m_p]\right).
  \]
  \multiply \dgARROWLENGTH by2
  \divide \dgARROWLENGTH by3
  
  Next, suppose $i = p + j$ for some $1 \leq j < n$.  In this case, $\partial_i$ has
  the effect of combining $g_i$ and $g_{i+1}$ into $g_ig_{i+1}$, for either chain, so
  clearly $\theta_*\partial_i = \partial_i\theta_*$.

  Finally, for $i = p+n$,
  \[
    \theta_*\partial_{p+n}\big( u \otimes (g_0, \ldots, g_n) \otimes (g, \phi) \big)
  \]
  \[
    = \theta_*\big( u \otimes (g_0, \ldots, g_{n-1}) \otimes (g_ng,\phi) \big)
  \]
  \[
    = (u.g_0) \ast [m_{p-1}] \stackrel{g_1}{\to}
      \ldots \stackrel{ g_{n-1} }{\longrightarrow} [m_{p-1}] 
      \stackrel{ (\phi,\, g_ng) }{\longrightarrow} [m_p],
  \]
  while
  \[
    \partial_{p+n}\theta_*\big( u \otimes (g_0, \ldots, g_n) \otimes (g, \phi) \big)
  \]
  \[
    = \partial_{p+n}\big( (u.g_0) \ast [m_{p-1}] \stackrel{g_1}{\to} 
    \ldots \stackrel{ g_n }{\to} [m_{p-1}] \stackrel{ (\phi,\, g) }{\longrightarrow} [m_p] \big)
  \]
  \[
    = (u.g_0) \ast [m_{p-1}] \stackrel{g_1}{\to} 
    \ldots \stackrel{ g_{n-1} }{\to} [m_{p-1}] 
    \stackrel{ g_n.(\phi,\, g) }{\longrightarrow} [m_p]
  \]
  \[
    = (u.g_0) \ast [m_{p-1}] \stackrel{g_1}{\to} 
    \ldots \stackrel{ g_{n-1} }{\to} [m_{p-1}] 
    \stackrel{ (\phi,\, g_ng) }{\longrightarrow} [m_p]
  \]
  Hence, the map $\theta_*$ is a chain isomorphism.
  
  The next step in this proof is to prove a chain homotopy equivalence,
  \[
    \mathscr{B}_*^{(m_0, \ldots, m_{p-1})} \otimes_{kG} E_*G \otimes
    k\big[G \times \mathrm{Epi}_{\Delta_+}([m_{p-1}], [m_p])\big]
  \]
  \[  
    \stackrel{\simeq}{\longrightarrow} \;
    \mathscr{B}_*^{(m_0, \ldots, m_{p-1})} \otimes 
    k\big[\mathrm{Epi}_{\Delta_+}([m_{p-1}], [m_p])\big]
  \]
  To that end, we shall define chain maps $F_*$ and $G_*$ between the two complexes.  Let
  \[
    \mathscr{U}_* \;:=\; \mathscr{B}_*^{(m_0, \ldots, m_{p-1})}, \;\textrm{and} \qquad
    S \;:=\; \mathrm{Epi}_{\Delta_+}([m_{p-1}], [m_p]).
  \]
  Define
  \[
    F_* \;:\; \mathscr{U}_* \otimes_{kG} E_*G \otimes k[ G \times S ]
    \longrightarrow \mathscr{U}_* \otimes k[S],
  \]
  \[
    F_*\big( u \otimes (g_0) \otimes (g, \phi) \big) \;:=\;
    u.(g_0g) \otimes \phi,
  \]
  \[
    F_*\big( u \otimes (g_0, \ldots, g_n) \otimes (g, \phi) \big) \;:=\; 0,
    \qquad \textrm{if $n > 0$}.
  \]
  The fact that $F_*$ is well-defined is trivial to verify (we only need to check
  for $n=0$, since otherwise $F_* = 0$):
  \[
    u.h \otimes (g_0) \otimes (g, \phi) \mapsto (u.h).(g_0g) \otimes \phi =
    u.(hg_0g) \otimes \phi,
  \]
  while
  \[
    u \otimes h.(g_0) \otimes (g, \phi) = u \otimes (hg_0) \otimes (g, \phi)
    \mapsto u.(hg_0g) \otimes \phi.
  \]
  Next, let
  \[
    G_* \;:\; \mathscr{U}_* \otimes k[S] \to \mathscr{U}_* \otimes_{kG} E_*G \otimes
    k[ G \times S ]
  \]
  be the composite
  \[
    \mathscr{U}_* \otimes k[S] \,\stackrel{\cong}{\to}\, \mathscr{U}_* \otimes_{kG} G
    \otimes k[S] \,=\, \mathscr{U}_* \otimes_{kG} E_0G \otimes k[S]
  \]
  \[
    \stackrel{j}{\rightarrow}\,\mathscr{U}_* \otimes_{kG} E_0G \otimes k[ G \times S ]
    \,\stackrel{inc}{\longrightarrow}\,
    \mathscr{U}_* \otimes_{kG} E_*G \otimes k[ G \times S ],
  \]
  where $j$ is induced by the map sending a generator $\phi \in S$ to 
  $(\mathrm{id}, \phi) \in G \times S$, and
  $inc$ is induced by the inclusion $E_0G \hookrightarrow E_*G$.
  Observe,
  \[
    F_*G_*( u \otimes \phi) \;=\; F_*\big( u \otimes (\mathrm{id}) \otimes 
    (\mathrm{id}, \phi)\big) \;=\; (u.\mathrm{id}) \otimes \phi.
  \]
  Thus, $F_*G_*$ is the identity.  I claim $G_*F_* \simeq \mathrm{id}$.  
  
  The desired homotopy will be given by:
  \[
    h_* \;:\; u \otimes (g_0, \ldots, g_n) \otimes (g, \phi)
    \mapsto (-1)^{deg(u) + n}u \otimes (g_0, \ldots, g_n, g)\otimes 
    (\mathrm{id}, \phi).
  \]
  
  First, observe:
  \[
    G_*F_*\big( u \otimes (g_0) \otimes (g, \phi) \big) \,=\,
      G_*( u.(g_0g) \otimes \phi ) \,=\, u.(g_0g) \otimes (\mathrm{id}) \otimes
      (\mathrm{id}, \phi),
  \]
  \[
    G_*F_*\big( u \otimes (g_0, \ldots, g_n) \otimes (g, \phi) \big) \,=\, G_*(0)
    \,=\, 0, \qquad \textrm{for $n>0$}.
  \]
  Hence, there are two cases we must explore.  For $n=0$, 
  \[
    h\partial\big( u \otimes (g_0) \otimes (g, \phi) \big)
    \;=\; h\big( \partial u \otimes (g_0) \otimes (g, \phi) \big)
  \]
  \[
    =\; (-1)^{deg(u)-1} \partial u \otimes (g_0, g) \otimes (\mathrm{id}, \phi)
  \]
  On the other hand,
  \[
    \partial h\big( u \otimes (g_0) \otimes (g, \phi) \big)
    \;=\; \partial \big( (-1)^{deg(u)}u \otimes (g_0, g) \otimes (\mathrm{id}, \phi)\big)
  \]
  \[
    =\; (-1)^{deg(u)} \partial u \otimes (g_0, g) \otimes (\mathrm{id}, \phi)
    + (-1)^{2 deg(u)} u \otimes (g_0g) \otimes (\mathrm{id}, \phi)
    - (-1)^{2 deg(u)} u \otimes (g_0) \otimes (g, \phi)
  \]
  \[
    =\; (-1)^{deg(u)} \partial u \otimes (g_0, g) \otimes (\mathrm{id}, \phi)
    + u \otimes (g_0g) \otimes (\mathrm{id}, \phi)
    - u \otimes (g_0) \otimes (g, \phi)
  \]
  So, 
  \[
    \big(h\partial + \partial h\big)\big( u \otimes (g_0) \otimes (g, \phi) \big)
    = u.(g_0g) \otimes (\mathrm{id}) \otimes (\mathrm{id}, \phi) -
    u \otimes (g_0) \otimes (g, \phi)
  \]
  \[
    = \big(G_*F_* - \mathrm{id}\big)\big( u \otimes (g_0) \otimes (g, \phi) \big).
  \]
  The case $n>0$ is handled similarly:
  \[
    u \otimes (g_0, \ldots, g_n) \otimes (g, \phi) \quad\stackrel{\partial}{\mapsto}
  \]
  
  \[
    \partial u \otimes (g_0, \ldots, g_n) \otimes (g, \phi) \,+
  \]
  \[
    (-1)^{deg(u)}\Big[\sum_{j=0}^{n - 1} \Big((-1)^j u \otimes (g_0, \ldots,
    g_jg_{j+1}, \ldots, g_n) \otimes (g, \phi)\Big) \,+
  \]
  \[
    (-1)^n u \otimes (g_0, \ldots, g_{n-1}) \otimes (g_n g, \phi)\Big]
  \]
  
  \[
    \stackrel{h}{\mapsto}\quad (-1)^{deg(u) + n - 1} 
    \partial u \otimes (g_0, \ldots, g_n, g) \otimes (\mathrm{id}, \phi) \,+
  \]
  \[
    (-1)^{2deg(u) + n - 1}\sum_{j=0}^{n-1} \Big((-1)^j u \otimes (g_0, \ldots,
    g_jg_{j+1}, \ldots, g_n, g) \otimes (\mathrm{id}, \phi)\Big) \,+
  \]
  \[
    (-1)^{2deg(u) + 2n - 1} u \otimes (g_0, \ldots, g_{n-1}, 
    g_n g) \otimes (\mathrm{id},
    \phi)
  \]
  
  \[
    = -(-1)^{deg(u) + n}
    \partial u \otimes (g_0, \ldots, g_n, g) \otimes (\mathrm{id}, \phi) \,+
  \]
  \[
    \sum_{j=0}^{n-1} \Big((-1)^{j+n-1} u \otimes (g_0, \ldots,
    g_jg_{j+1}, \ldots, g_n, g) \otimes (\mathrm{id}, \phi)\Big) \,+
  \]
  \begin{equation}\label{eq.lhs_hom_tensor}
    (-1)u \otimes (g_0, \ldots, g_{n-1}, g_n g) \otimes (\mathrm{id}, \phi).
  \end{equation}
  
  On the other hand,
  \[
    u \otimes (g_0, \ldots, g_n) \otimes (g, \phi) \;\stackrel{h}{\mapsto}\;
    (-1)^{deg(u) + n}u \otimes (g_0, \ldots, g_n , g) \otimes (\mathrm{id}, \phi)
  \]
  
  \[
    \stackrel{\partial}{\mapsto} \quad(-1)^{deg(u) + n}\Big[\partial u \otimes
    (g_0, \ldots, g_n, g) \otimes (\mathrm{id}, \phi) \,+
  \]
  \[
    \sum_{j=0}^{n-1} \Big( (-1)^{deg(u) + j} u \otimes 
    (g_0, \ldots, g_jg_{j+1}, \ldots, g_n,
    g) \otimes (\mathrm{id}, \phi)\Big)\,+  
  \]
  \[
    (-1)^{deg(u)+n}u \otimes (g_0, \ldots, g_{n-1}, g_ng) \otimes (\mathrm{id}, \phi) 
    \,+\, (-1)^{deg(u)+n+1} u \otimes (g_0, \ldots, g_n) \otimes (g, \phi)\Big]
  \]
  
  \[
    =\;(-1)^{deg(u) + n}\partial u \otimes
    (g_0, \ldots, g_n, g) \otimes (\mathrm{id}, \phi) \,+
  \]
  \[
    \sum_{j=0}^{n-1} \Big( (-1)^{j+n} u \otimes 
    (g_0, \ldots, g_jg_{j+1}, \ldots, g_n, g) \otimes (\mathrm{id}, \phi)\Big)\,+ 
  \]
  \begin{equation}\label{eq.rhs_hom_tensor}
    u \otimes (g_0, \ldots, g_{n-1}, g_ng) \otimes (\mathrm{id}, \phi) 
    \,-\, u \otimes (g_0, \ldots, g_n) \otimes (g, \phi).
  \end{equation}
  
  Now, adding eq.~(\ref{eq.rhs_hom_tensor}) to eq.~(\ref{eq.lhs_hom_tensor}) yields
  $(-1)u \otimes (g_0, \ldots, g_n) \otimes (g, \phi)$, proving the relation
  \[
    h\partial + \partial h = G_*F_* - \mathrm{id}, \qquad \textrm{for $n>0$.}
  \]
  
  To complete the proof, simply observe that every map in the following is either a
  chain isomorphism or a homotopy equivalence (each of which is also $\Sigma_{m_p+1}
  ^\mathrm{op}$-equivariant):
  
  \begin{equation}\label{eq.chain_of_equiv}
    \begin{diagram}
      \node{ \mathscr{B}_*^{(m_0, \ldots, m_p)} }
      \arrow{s,lr}{\cong}{\theta_*^{-1}}\\
      \node{ \mathscr{B}_*^{(m_0, \ldots, m_{p-1})} \otimes_{kG} E_*G \otimes
             k\big[G \times \mathrm{Epi}_{\Delta_+}([m_{p-1}], [m_p])\big] }
      \arrow{s,lr}{\simeq}{F_*}\\
      \node{ \mathscr{B}_*^{(m_0, \ldots, m_{p-1})} \otimes 
             k\big[\mathrm{Epi}_{\Delta_+}([m_{p-1}], [m_p])\big] }
      \arrow{s,lr}{\simeq}{\gamma_*, \;\textrm{by inductive hypothesis}}\\
      \node{ \mathscr{M}_*^{(m_0, \ldots, m_{p-1})} \otimes
             k\big[\mathrm{Epi}_{\Delta_+}([m_{p-1}], [m_p])\big] }
      \arrow{s,l}{=}\\
      \node{ I^{\otimes (m_0+1)} \otimes k\Big[ \prod_{i=1}^{p-1} 
             \mathrm{Epi}_{\Delta_+}\big([m_{i-1}], [m_i]\big)\Big]
             \otimes
             k\big[\mathrm{Epi}_{\Delta_+}([m_{p-1}], [m_p])\big] }
      \arrow{s,l}{\cong}\\
      \node{ I^{\otimes (m_0+1)} \otimes k\Big[ \prod_{i=1}^{p} 
             \mathrm{Epi}_{\Delta_+}\big([m_{i-1}], [m_i]\big)\Big] }
      \arrow{s,l}{=}\\
      \node{ \mathscr{M}_*^{(m_0, \ldots, m_p)} }
    \end{diagram}
  \end{equation}
  
  We must verify that this composition is indeed the map $\gamma_*$.  Denote by
  $\gamma'_*$, the composition defined by (\ref{eq.chain_of_equiv}).  
  If $u \in
  \mathscr{B}_*^{(m_0, \ldots, m_p)}$ has degree greater than $p$, then there is some
  isomorphism $g$ showing up in the chain.  If $g$ is an isomorphism of any $[m_i]$ for
  $i < p-1$, then $\gamma'_*(u) = 0$ since then the tensor factor of $u$ in
  $\mathscr{B}_*^{(m_0, \ldots, m_{p-1})}$ would have degree greater than $p-1$, hence
  $\gamma_*$ would send this element to $0$ in $\mathscr{M}_*^{(m_0, \ldots, m_{p-1})}$.
  If, on the other hand, $g$ is an isomorphism of $[m_{p-1}]$, then $F_*\theta_*^{-1}(u)
  = 0$, since there would be a factor in $E_*G$ of degree greater than $0$.  Thus,
  $\gamma'_*(u) = 0$ for any $u$ of degree different from $p$.
  
  Now, if $u$ is of degree $p$,
  \[
    u \;=\; Y \otimes (\psi_1, g_1) \otimes (\psi_2, g_2) \otimes \ldots \otimes
    (\psi_p, g_p)
  \]
  \[
    \stackrel{\theta_*^{-1}}{\mapsto} \quad
    \big[Y \otimes (\psi_1, g_1) \otimes \ldots \otimes (\psi_{p-1}, g_{p-1}) \big] \otimes
    (\mathrm{id}) \otimes (g_p, \psi_p)
  \]
  \[
    \stackrel{F_*}{\mapsto} \quad
    \big[Y \otimes (\psi_1, g_1) \otimes \ldots \otimes (\psi_{p-1}, g_{p-1}).g_p \big]
    \otimes \psi_p
  \]
  It should be clear that applying $\gamma_*$ to the $\mathscr{B}_*^{(m_0, \ldots,
  m_{p-1})}$-factor of the tensor product would have the same effect as $\gamma_*$
  on the original chain, $u$.
\end{proof}

Now, we may prove Lemma~\ref{lem.E1_term}.  Let $G = \Sigma_{m_p + 1}$.  Observe,
\[
  E^0_{p,q} \cong \bigoplus_{m_0 > \ldots > m_p} 
  \bigoplus_{s+t = q} \mathscr{B}_s^{(m_0, \ldots, m_p)}
  \otimes_{kG} E_tG,
\]
with differential corresponding exactly to the vertical differential defined for $E^0$.
Note, the outer direct sum respects the differential $d^0$, so the $E^1$ term given by:
\begin{equation}\label{eq.E1_expression}
  E^1_{p,q} = H_{p+q}(E^0_{p,*}) \cong
  \bigoplus_{m_0 > \ldots > m_p} H_{p+q}\big(
  \mathscr{B}_*^{(m_0, \ldots, m_p)}
  \otimes_{kG} E_*G\big),
\end{equation}
where we view $\mathscr{B}_*^{(m_0, \ldots, m_p)} \otimes_{kG} E_*G$ as a double complex.
In what follows, let $(m_0, \ldots, m_p)$ be fixed.
In order to take the homology of the double complex, we set up another spectral
sequence.  From the discussion above, the total differential is given by
\[
  \partial_{total} = d^{v} + d^{h}, \quad \textrm{where}
\]
\[
  d^{v}\big(u \otimes (g_0, \ldots, g_t)\big)
  \;:=\; \partial_{B}(u) \otimes (g_0, \ldots, g_t), \quad \textrm{and}
\]
\[
  d^{h}\big(u \otimes (g_0, \ldots, g_t)\big)
  \;:=\; (-1)^{deg(u)} u \otimes \partial_{E}(g_0, \ldots, g_t),
\]
where $\partial_B$ and $\partial_E$ are the differentials previously mentioned for
$\mathscr{B}_*^{(m_0, \ldots, m_p)}$ and $E_*G$, respectively.
Thus, there is a spectral sequence $\{\overline{E}^r_{*,*}, d^r\}$ with
\[
  \overline{E}^2 \cong H_{*,*}\Big( H\big( \mathscr{B}_*^{(m_0, \ldots, m_p)}
   \otimes_{kG} E_*G,\, d^h \big),\,
  d^v \Big),
\]
in the notation of McCleary (see~\cite{Mc}, Thm 3.10).  Since this is a
first quadrant spectral sequence,
it must converge to $H_*\big(\mathscr{B}_*^
{(m_0, \ldots, m_p)}\otimes_{kG} E_*G\big)$.  Let us examine what happens after taking
the horizontal differential.  Let $t$ be fixed:
\[
  \overline{E}^1_{*,t} = H_*\big( \mathscr{B}_*^{(m_0, \ldots, m_p)}
   \otimes_{kG} E_tG,\, d^h \big)
\]
\[
  \cong H_*( \mathscr{B}_*^{(m_0, \ldots, m_p)})\otimes_{kG} E_tG,
\]
since $E_tG$ is flat as left $kG$-module (in fact, $E_tG$ is free).  Then, 
by Prop.~\ref{prop.gamma_iso},
\[
  \overline{E}^1_{*,t} \cong H_*( \mathscr{M}_*^{(m_0, \ldots, m_p)})\otimes_{kG} E_tG,
\]
\[
  = \left\{\begin{array}{ll}
      I^{\otimes (m_0+1)} \otimes
      k\Big[ \prod_{i=1}^p \mathrm{Epi}_{\Delta_+}\big([m_{i-1}], [m_i]\big)\Big] 
      \otimes_{kG} E_tG, &
      \textrm{in degree $p$}\\
      0, & \textrm{in degrees different from $p$}
    \end{array}\right.
\]
So, the only groups that survive are concentrated in column $p$.  Taking the vertical
differential now amounts to obtaining the $G^\mathrm{op}$-equivariant homology of
\[
  I^{\otimes (m_0+1)} \otimes k\Big[ \prod_{i=1}^p \mathrm{Epi}_{\Delta_+}\big([m_{i-1}], 
  [m_i]\big)\Big],
\]
so
\[
  \overline{E}^2_{p,t} \cong H_t\bigg( G^{\mathrm{op}}
  \; ; \; I^{\otimes (m_0+1)} \otimes
  k\Big[ \prod_{i=1}^p \mathrm{Epi}_{\Delta_+}\big([m_{i-1}], [m_i]\big)\Big] \bigg).
\]
Since $\overline{E}^2_{s,t} = 0$ for $s \neq p$, the sequence collapses here.  Thus,
\[
  H_{p+q}\big( \mathscr{B}_*^{(m_0, \ldots, m_p)} \otimes_{kG} E_*G\big) \cong
  H_q\bigg( G^{\mathrm{op}} \; ; \; I^{\otimes (m_0+1)} \otimes
  k\Big[ \prod_{i=1}^p \mathrm{Epi}_{\Delta_+}\big([m_{i-1}], [m_i]\big)\Big] \bigg)
\]
Putting this information back into eq.~(\ref{eq.E1_expression}), we obtain the desired
isomorphism:
\[
  E^1_{p,q} \cong 
  \bigoplus_{m_0 > \ldots > m_p} H_q\bigg( G^{\mathrm{op}} \; ; \; I^{\otimes (m_0+1)} 
  \otimes
  k\Big[ \prod_{i=1}^p \mathrm{Epi}_{\Delta_+}\big([m_{i-1}], [m_i]\big)\Big] \bigg).
\]

A final piece of information needed in order to use Lemma~\ref{lem.E1_term} for
computation is a
description of the horizontal differential $d^1_{p,q}$ on $E^1_{p,q}$.  This map
is induced from the differential $d$ on $\mathscr{Y}_*$, and reduces the filtration
degree by $1$.  Thus, it is the sum of face maps that combine strict epimorphisms.

Let
\[
  [u] \in \bigoplus H_q\bigg( \Sigma_{m_p+1}^{\mathrm{op}}
      \; ; \; I^{\otimes (m_0+1)} \otimes
    k\Big[ \prod_{i=1}^p \mathrm{Epi}_{\Delta_+}\big([m_{i-1}], [m_i]\big)\Big] \bigg)
\]
be represented by a chain:
\[
  u = Y \otimes (\phi_1, \phi_2, \ldots, \phi_p) \otimes (g_0, \ldots, g_q).
\]
Then, the face maps are defined by:
\[
  \partial_0(u) = (\phi_1)_*(Y) \otimes (\phi_2, \ldots, \phi_p) \otimes (g_0, \ldots, g_q),
\]
\[
  \partial_i(u) = Y \otimes (\phi_1, \ldots, \phi_{i+1}\phi_i, \ldots, \phi_p)
  \otimes (g_0, \ldots, g_q), \quad \textrm{for $0 < i < p$},
\]
The last face map has the effect of removing the morphism $\phi_p$ by iteratively commuting 
it past any group elements to
the right of it.
\[
  \partial_p(u) =  Y \otimes (\phi_1, \ldots, \phi_{p-1}) \otimes (g'_0, \ldots, g'_q),
\]
where
\[
  g'_i = g_i^{\phi_p^{g_0g_1\ldots g_{i-1}}}.
\]

Note that $\partial_p$ involves a change of group from $\Sigma_{m_p}$ to $\Sigma_{m_{p-1}}$.

\begin{prop}
  The spectral sequence $E^r_{p,q}$ above collapses at $r = 2$.
\end{prop}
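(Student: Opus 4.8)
The plan is to recognize this spectral sequence as the column spectral sequence of a first-quadrant double complex and then to kill its higher differentials by an acyclicity argument in the horizontal direction. I would bigrade $\mathscr{Y}^{epi}_*A$ directly: for a generator $[m_0]\to[m_1]\to\cdots\to[m_n]\otimes Y$ let $p$ be the number of strict epimorphisms (jumps) among the arrows and $q=n-p$ the number of isomorphisms. Re-examining each face map as in the discussion preceding Lemma~\ref{lem.E1_term}, composing across, or deleting, an isomorphism lowers $q$ by one and fixes $p$, whereas composing two strict epimorphisms, or deleting the first or last arrow when it is a jump, lowers $p$ by one and fixes $q$ (a composite of strict epimorphisms of $\Delta_+$ is again strict). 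Hence $\partial=\partial^{v}+\partial^{h}$ with $\partial^{v}$ of bidegree $(0,-1)$ and $\partial^{h}$ of bidegree $(-1,0)$, so $\mathscr{Y}^{epi}_*A=\mathrm{Tot}(C_{*,*})$ for a first-quadrant double complex $C_{p,q}$; the filtration by number of jumps is exactly the column filtration, computing vertical ($\partial^{v}$) homology first produces the group homology $H_q(\Sigma^{\mathrm{op}}_{m_p+1};-)$ of Lemma~\ref{lem.E1_term}, and the induced horizontal differential is the $d^1$ written out above.

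It then suffices to prove that the rows $(C_{*,q},\partial^{h})$ are acyclic in positive horizontal degree, that is, that with the positions of the isomorphisms fixed the complex assembled from chains of strict epimorphisms under the faces $\partial_0,\dots,\partial_p$ is a resolution of its degree-$0$ homology; a standard comparison argument for double complexes with exact rows then forces $E^2=E^\infty$. Using the unique factorization of a morphism of $\Delta S_+$ as (automorphism)$\circ$(strict epimorphism)$\circ$(monomorphism) from Prop.~\ref{prop.decomp}, one identifies such a row --- up to the fixed coefficient $I^{\otimes(m_0+1)}$ on the left and the fixed $\Sigma^{\mathrm{op}}_{m_p+1}$-action on the right --- with a two-sided bar construction over the category $\mathrm{Epi}\Delta_+$, and the acyclicity follows from contractibility of the relevant slice categories of $\mathrm{Epi}\Delta_+$, the geometric point being that every object of $\mathrm{Epi}\Delta_+$ has an initial strict quotient.

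The main obstacle is exactly this acyclicity: one must control iterated compositions of strict epimorphisms of $\Delta_+$ together with the change of symmetric group produced by the last face $\partial_p$ (from $\Sigma_{m_p+1}$ to $\Sigma_{m_{p-1}+1}$). If a clean contractibility argument is not available, the fallback is a direct verification that $d^r=0$ for $r\geq 2$: lift a $d^1$-cycle into $\mathscr{F}_p$ and compute its boundary from the explicit formulas for $\partial_0,\dots,\partial_p$, using that $\partial_0$ strictly decreases the number $m_0+1$ of tensor factors of the coefficient --- this extra grading, refining the double complex, is what ultimately confines the nonzero differentials to $d^0$ and $d^1$.
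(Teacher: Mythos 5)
Your observation that the jump filtration realizes $\mathscr{Y}^{epi}_*$ as the total complex of a first-quadrant double complex, with $\partial^v$ and $\partial^h$ being the parts of the differential that respectively preserve and strictly decrease the number of strict epimorphisms, is correct and is exactly the fact the paper's proof turns on. But your main strategy aims at the wrong spectral sequence. Acyclicity of the rows $(C_{*,q},\partial^h)$ in positive degree is, by definition, the degeneration condition for the \emph{transposed} (row-filtered) spectral sequence of the double complex. If it held, the total homology would be computed entirely by the $\partial^h$-augmentations, so every class would have a representative in filtration degree zero, forcing $E^\infty_{p,*}=0$ for $p>0$ in the column-filtered sequence. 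But Lemma~\ref{lem.E1_term} exhibits plenty of nonzero $E^1_{p,q}$ in large $p$, so row-exactness would instead require a cascade of nonzero higher differentials to clear the positive columns --- the opposite of collapse at $E^2$. There is no comparison principle that converts row-exactness of a double complex into degeneration of its column spectral sequence.

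The fallback you sketch --- a direct verification that $d^r=0$ for $r\ge 2$ --- is the right idea and is what the paper actually does, but the extra grading you single out (the tensor degree $m_0$ of the coefficient) is not the relevant invariant: the horizontal faces $\partial_i$ for $0<i\le p$ decrease the jump count while leaving $m_0$ fixed, so the $m_0$-grading does not refine $\partial^h$ and cannot by itself suppress the higher differentials. The paper's argument uses only the jump count, working directly with the filtration objects $Z_p^r=\{x\in\mathscr{F}_p \,:\, d(x)\in\mathscr{F}_{p-r}\}$: a representative of an $E^r$-class is chosen so that each term has exactly $p$ strict epimorphisms, its boundary therefore has terms with exactly $p$ or $p-1$ strict epimorphisms (each face combines or deletes one arrow at a time), and since for $r\ge 2$ that boundary must lie in $\mathscr{F}_{p-r}\subseteq\mathscr{F}_{p-2}$, which is spanned by a disjoint set of generators, the boundary vanishes and $d^r=0$. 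The only input beyond the double-complex decomposition you correctly identified is that the computation is carried out at the level of $Z_p^r$, not via any acyclicity of the rows of $C$ nor via an auxiliary $m_0$-grading.
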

\begin{proof}
  This proof relies on the fact that the differential $d$ on $\mathscr{Y}_*$ cannot
  reduce the filtration degree by more than $1$.  Explicitly, we shall show that
  $d^r \,:\, E^r_{p,q} \to E^r_{p-r,q+r-1}$ is trivial for $r \geq 2$.
  
  $d^r$ is induced by $d$ in the following way.  Let $Z_p^r = \{ x \in \mathscr{F}_p
  \mathscr{Y}_* \,|\, d(x) \in \mathscr{F}_{p-r}\mathscr{Y}_* \}$.  Then
  $E^r_{p,*} = Z_p^r/(Z_{p-1}^{r-1} + dZ_{p+r-1}^{r-1})$.  Now, $d$ maps
  \[
    Z_p^r \to Z_{p-r}^r
  \]
  and 
  \[
    Z_{p-1}^{r-1} + dZ_{p+r-1}^{r-1} \to dZ_{p-1}^{r-1}.
  \]
  Hence, there is an induced map $\overline{d}$ making the square below commute.  $d^r$
  is obtained as the composition of $\overline{d}$ with a projection onto
  $E^r_{p-r,*}$.
  \[
    \begin{diagram}
      \node{Z_p^r}
      \arrow{r,t}{d}
      \arrow{s,l,A}{\pi_1}
      \node{Z_{p-r}^r}
      \arrow{s,r,A}{\pi_2}\\
      \node{Z_p^r/(Z_{p-1}^{r-1} + dZ_{p+r-1}^{r-1})}
      \arrow{s,=}
      \arrow{e,t}{\overline{d}}
      \node{Z_{p-r}^r/dZ_{p-1}^{r-1}}
      \arrow{s,r,A}{\pi'}\\
      \node{E_{p,*}^r}
      \arrow{se,t}{d^r}
      \node{Z_{p-r}^r/(Z_{p-r-1}^{r-1} + dZ_{p-1}^{r-1})}
      \arrow{s,=}\\
      \node[2]{E_{p-r,*}^r}
    \end{diagram}
  \]
  In our case, $x \in Z_p^r$ is a sum of the form
  \[
    x = \sum_{q \geq 0} a_i\left( Y \otimes (f_1, f_2, \ldots, f_q) \right),
  \]
  where $a_i \neq 0$ for only finitely many $i$, and the sum extends over all symbols
  $Y \otimes (f_1, f_2, \ldots, f_q)$ with $Y \in B_*^{sym}I$, $f_j \in
  \mathrm{Epi\Delta S_+}$ composable maps, and at most $p$ of the $f_j$ maps are
  strict epimorphisms.  The image of $x$ under $\pi_1$ looks like
  \[
    \pi_1(x) = \sum_{q \geq 0} a_i\left[ Y \otimes (f_1, f_2, \ldots, f_q) \right],
  \]
  where exactly $p$ of the $f_j$ maps are strictly epic.  There are, of course,
  other relations present as well -- those arising from modding out by $dZ_{p+r-1}^{r-1}$.
  Consider, $\overline{d}\pi_1(x)$.  This should be the result of lifting $\pi_1(x)$
  to a representative in $Z_p^r$, then applying $\pi_2 \circ d$.  One such representative
  is:
  \[
    y = \sum_{q \geq 0} a_i\left( Y \otimes (f_1, f_2, \ldots, f_q) \right),
  \]
  in which each symbol $Y \otimes (f_1, f_2, \ldots, f_q)$ has exactly $p$
  strict epimorphisms.  Now, $d(y)$ is a sum
  \[
    d(y) = \sum_{q \geq 0} b_i\left( Z \otimes ( g_1, g_2, \ldots, g_{q-1}) \right),
  \]
  where each symbol $Z \otimes ( g_1, g_2, \ldots, g_{q-1})$ has either $p$ or $p-1$
  strict epimorphisms, since $d$ only combines two morphisms at a time.  Thus, if
  $r \geq 2$, then $d(y) \in Z_{p-r}^r \Rightarrow d(y) = 0$.  But then,
  $\overline{d}\pi_1(x) = \pi_2d(y) = 0$, and $d^r = \pi'\overline{d}$ is the
  zero map.
\end{proof}

\section{Implications in Characteristic 0}\label{sec.char0}

In this section, we shall assume that $k$ is a field of characteristic 0.
Then for any finite group $G$ and $kG$-module $M$, $H_q( G, M ) = 0$
for all $q > 0$ (see~\cite{B}, for example).  Thus, by Lemma~\ref{lem.E1_term},
the $E^1$ term of the spectral sequence is concentrated in row $0$, and
\[
  E^1_{p,0} \cong \bigoplus_{m_0 > \ldots > m_p} \bigg( I^{\otimes (m_0+1)} \otimes
    k\Big[ \prod_{i=1}^p \mathrm{Epi}_{\Delta_+}\big([m_{i-1}], [m_i]\big)\Big] \bigg)/
   \Sigma_{m_p+1}^{\mathrm{op}},
\]
that is, the group of co-invariants of the coefficient group, under the right-action
of $\Sigma_{m_p+1}$.

Since $E^1$ is concentrated on a row, the spectral sequence collapses at this term.
Hence for the $k$-algebra $A$, with augmentation ideal $I$,
\begin{equation}\label{eq.symhom-char0}
  HS_*(A) =
  H_*\bigg( \bigoplus_{p\geq 0}
    \bigoplus_{m_0 > \ldots > m_p}\Big(I^{\otimes (m_0+1)} \otimes
    k\Big[ \prod_{i=1}^p \mathrm{Epi}_{\Delta_+}\big([m_{i-1}], [m_i]\big)\Big]\Big) /
    \Sigma_{m_p+1}^{\mathrm{op}},\; d^1\bigg).
\end{equation}

This complex is still rather unwieldy as the $E^1$ term is infinitely generated in each
degree. In the next chapter, we shall see another spectral sequence that is more
computationally useful.

\chapter{A SECOND SPECTRAL SEQUENCE}\label{chap.spec_seq2}

\section{Filtering by Degree}\label{sec.filtdeg} %

Again, we shall assume $A$ is a $k$-algebra equipped with an augmentation,
and whose augmentation ideal is $I$.  Assume further that $I$ is free as
$k$-module, with countable basis $X$.  Let $\mathscr{Y}_*^{epi}$ be the 
complex~\ref{epiDeltaS_complex}, with differential $d = \sum (-1)^i \partial_i$.
It will become convenient to use a \textit{reduced} version of the $B_*^{sym_+}I$ 
functor, induced by the inclusion $\Delta S \hookrightarrow \Delta S_+$.
\[
  B_n^{sym}I \;:= \; I^{\otimes_{n+1}}, \quad \textrm{for $n \geq 0$.}
\]
Let
\begin{equation}\label{eq.reduced_Y}
  \widetilde{\mathscr{Y}}_* = \bigoplus_{q \geq 0} \;\bigoplus_{m_0 \geq
  \ldots \geq m_q}
  k\big[ [m_0] \twoheadrightarrow [m_1] \twoheadrightarrow \ldots 
  \twoheadrightarrow [m_q] \big] \otimes B_{m_0}^{sym}I.
\end{equation}
Observe that there is a splitting of $\mathscr{Y}^{epi}_*$ as:
\[
  \mathscr{Y}^{epi}_* \cong \widetilde{\mathscr{Y}}_* \oplus k[ N(\ast) ],
\]
where $\ast$ is the trivial subcategory of $\mathrm{Epi}_{\Delta S_+}$ consisting
of the object $[-1]$ and morphism $\mathrm{id}_{[-1]}$.  The fact that $I$ is an 
ideal ensures that this splitting passes to homology.  Hence, we have:
\[
  HS_*(A) \;\cong\; H_*(\widetilde{\mathscr{Y}}_*) \oplus k_0,
\]
where $k_0$ is the graded $k$-module consisting of $k$ concentrated in degree $0$.

Now, since $I = k[X]$ as \mbox{$k$-module}, $B_n^{sym}I = 
k[X]^{\otimes(n+1)}$.
Thus, as \mbox{$k$-module}, $\widetilde{\mathscr{Y}}_*$ is generated by elements of
the form:
\[
  \left( [m_0] \twoheadrightarrow [m_1] \twoheadrightarrow \ldots 
  \twoheadrightarrow [m_p] \right) \otimes (x_0 \otimes x_1 \otimes \ldots
  \otimes x_{m_0}), \quad x_i \in X.
\]

Using an isomorphism analogous
to that of eq.~\ref{eq.B_p}, we may write:
\[
  \widetilde{\mathscr{Y}}_q = \bigoplus_{m_0 \geq 0} \;\bigoplus_{m_0 \geq
  \ldots \geq m_q}  B_{m_0}^{sym}I \otimes
  k\Big[\prod_{i=1}^q \mathrm{Epi}_{\Delta S}\big([m_{i-1}], [m_i]\big)\Big].
\]
\[
  \cong \; \bigoplus_{m_0 \geq 0} \;\bigoplus_{m_0 \geq
  \ldots \geq m_q} k[X]^{\otimes(m_0+1)} \otimes
  k\Big[\prod_{i=1}^q \mathrm{Epi}_{\Delta S}\big([m_{i-1}], [m_i]\big)\Big].
\]

The face maps are given explicitly below:
\[
  \partial_0( Y \otimes f_1 \otimes f_2 \otimes \ldots \otimes f_q )
  = f_1(Y) \otimes f_2 \otimes \ldots \otimes f_q,
\]
\[
  \partial_i( Y \otimes f_1 \otimes \ldots \otimes f_q )
  = Y \otimes f_1 \otimes \ldots \otimes (f_{i+1}f_i) \otimes \ldots \otimes f_q,
\]
\[
  \partial_{q}(Y \otimes f_1 \otimes \ldots \otimes f_{q-1} \otimes f_q )
  = Y \otimes f_1 \otimes \ldots \otimes f_{q-1}.
\]

Consider a filtration 
$\mathscr{G}_*$ of
$\widetilde{\mathscr{Y}}_*$ by degree of $Y \in B^{sym}_*I$:
\[
  \mathscr{G}_p\widetilde{\mathscr{Y}}_q = \bigoplus_{p \geq m_0 \geq 0} 
  \;\bigoplus_{m_0 \geq
  \ldots \geq m_q} k[X]^{\otimes(m_0+1)} \otimes
  k\Big[\prod_{i=1}^q \mathrm{Epi}_{\Delta S}\big([m_{i-1}], [m_i]\big)\Big].
\]

The face maps $\partial_i$ for $i > 0$ do not affect the degree of $Y \in
k[X]^{\otimes(m_0+1)}$. Only $\partial_0$ needs to be checked.  Since all morphisms are epic,
$\partial_0$ can only reduce the degree of $Y$.  Thus, $\mathscr{G}_*$ is compatible 
with the differential $d$.  The filtration quotients are:
\[
  E^0_{p,q} = 
  \;\bigoplus_{p = m_0 \geq
  \ldots \geq m_q} k[X]^{\otimes(p+1)} \otimes
  k\Big[\prod_{i=1}^q \mathrm{Epi}_{\Delta S}\big([m_{i-1}], [m_i]\big)\Big].
\]
The induced differential, $d^0$ on $E^0$ differs from $d$ only when $m_0 > m_1$. Indeed,
\[
  d^0 = \left\{\begin{array}{ll}
                 d, \quad & m_0 = m_1,\\
                 d - \partial_0, \quad & m_0 > m_1.
               \end{array}\right.
\]

$E^0$ splits into a direct sum based on the product of $x_i$'s in
$(x_0, \ldots, x_p) \in X^{p+1}$.  For $u \in X^{p+1}$, let
$C_u$ be the set of all distinct permutations of $u$.  Then,
\[
  E^0_{p,q} = 
  \;
  \bigoplus_{u \in X^{p+1}/\Sigma_{p+1}} \bigg(
  \bigoplus_{p = m_0 \geq \ldots \geq m_q} 
  \bigoplus_{w \in C_u}
  w \otimes k\Big[
  \prod_{i=1}^q \mathrm{Epi}_{\Delta S}\big([m_{i-1}], [m_i]\big)\Big] \bigg).
\]

Before proceeding with the main theorem of this chapter, we must define four related
categories
$\widetilde{\mathcal{S}}_p$, $\widetilde{\mathcal{S}}'_p$, $\mathcal{S}_p$, and 
$\mathcal{S}'_p$.  In the definitions
that follow, let $\{z_0, z_1, z_2, \ldots x_p\}$ be a set of independent
indeterminates over $k$.

\begin{definition}
  $\widetilde{\mathcal{S}}_p$ is the category with objects formal tensor products
  $Z_0 \otimes \ldots \otimes Z_s$, where each $Z_i$ is a non-empty product of $z_i$'s,
  and every one of $z_0, z_1, \ldots, z_p$ occurs exactly once in the tensor product.
  There is a unique morphism
  \[
    Z_0 \otimes \ldots \otimes Z_s \to Z'_0 \otimes \ldots \otimes Z'_t,
  \]
  if and only if the tensor factors of the latter are products of the factors of
  the former in some order.  In such a case,  
  there is a unique $\beta \in \mathrm{Epi}\Delta S$ so that
  $\beta_*(Z_0 \otimes \ldots \otimes Z_s) = Z'_0 \otimes \ldots \otimes Z'_t$.
\end{definition}

$\widetilde{\mathcal{S}}_p$ has initial objects $\sigma(z_0 \otimes z_1
\otimes \ldots \otimes z_p)$, for $\sigma \in \Sigma^{\mathrm{op}}_{p+1}$,
so $N\widetilde{\mathcal{S}}_p$ is a contractible complex.  Let $\widetilde{\mathcal{S}}'_p$
be the full subcategory of $\widetilde{\mathcal{S}}_p$ with all objects
$\sigma(z_0 \otimes \ldots \otimes z_p)$ deleted.

Let $\mathcal{S}_p$ be a skeletal category equivalent to $\widetilde{\mathcal{S}}_p$.  
In fact, we make make $\mathcal{S}_p$ the quotient category, identifying each
object \mbox{$Z_0 \otimes \ldots \otimes Z_s$} with any permutation of its tensor
factors, and identifying morphisms $\phi$ and $\psi$ if their source and target
are equivalent.  This
category has nerve $N\mathcal{S}_p$ homotopy-equivalent to $N\widetilde{\mathcal{S}}_p$
(see Prop.2.1 in~\cite{Se}, for example).  Now, $\mathcal{S}_p$ is a poset with
unique initial object, $z_0 \otimes \ldots \otimes z_p$.  Let $\mathcal{S}'_p$
be the full subcategory (subposet) of $\mathcal{S}_p$ obtained by deleting
the object $z_0 \otimes \ldots \otimes z_p$.  Clearly, $\mathcal{S}'_p$ is a
skeletal category equivalent to $\widetilde{\mathcal{S}}'_p$.

\begin{theorem}\label{thm.E1_NS}
There is spectral sequence converging weakly to $\widetilde{H}S_*(A)$ with
\[
  E^1_{p,q} \cong \bigoplus_{u\in X^{p+1}/\Sigma_{p+1}}
  H_{p+q}(EG_{u}\ltimes_{G_u}
  |N\mathcal{S}_p/N\mathcal{S}'_p|;k),
\]
where $G_{u}$ is the isotropy subgroup of the orbit 
$u \in X^{p+1}/\Sigma_{p+1}$.
\end{theorem}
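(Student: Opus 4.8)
The plan is to compute the $E^1$-term by passing to the homology of each vertical complex $(E^0_{p,\bullet},d^0)$, using the orbit decomposition of $E^0_{p,\bullet}$ already displayed above. So it is enough to show, for each fixed $u\in X^{p+1}/\Sigma_{p+1}$, that the homology of the summand indexed by $u$ — call it $E^0_{p,\bullet}(u)$ — is $H_{p+q}\big(EG_u\ltimes_{G_u}|N\mathcal S_p/N\mathcal S'_p|\big)$; summing over $u$ then gives the asserted $E^1$-term, and the weak convergence to $\widetilde{HS}_*(A)$ is just the convergence of the homology spectral sequence of the exhaustive, bounded-below filtration $\mathscr G_\bullet$ of $\widetilde{\mathscr Y}_*$ (the filtration is unbounded above, which is why only weak convergence is claimed).

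The crux is to recognize $E^0_{p,\bullet}(u)$ as (a degree-shift of) the chain complex of a Borel construction. Because $I=k[X]$ is free, a generator $Y\otimes(f_1\otimes\cdots\otimes f_q)$ sitting in the $u$-summand has $Y$ running over $k[C_u]$, with $C_u\cong\Sigma_{p+1}/G_u$ a free $G_u$-set. After normalizing each generator — using the residual symmetric-group identification already present in $\mathscr Y^{epi}_*$, cf. Remark~\ref{rmk.uniqueChain} — so that the leading morphism $f_1$ carries no permutation part, the remaining data $(Y;f_1,\ldots,f_q)$ is exactly: an element of $C_u$ together with a simplex of $N\widetilde{\mathcal S}_p$ whose initial vertex is a fully split object, the $f_i$'s recording the successive coarsenings. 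The decisive point is that the induced differential $d^0$ agrees with the ordinary simplicial differential \emph{except} that its $\partial_0$-term is dropped precisely when it would leave filtration $p$, that is, precisely when $f_1$ is a strict epimorphism; since a fully split object is the only object of $\widetilde{\mathcal S}_p$ not lying in $\widetilde{\mathcal S}'_p$, this is exactly the differential of the relative chain complex $C_*(N\widetilde{\mathcal S}_p,N\widetilde{\mathcal S}'_p)$, twisted by the $\Sigma_{p+1}$-action on the labels. Assembling the permutation bookkeeping, $E^0_{p,\bullet}(u)$ is identified with a total complex computing $H^{\Sigma_{p+1}}_*\big(C_u\times|N\widetilde{\mathcal S}_p/N\widetilde{\mathcal S}'_p|\big)$; since $C_u=\Sigma_{p+1}/G_u$, Shapiro's lemma collapses this to $H^{G_u}_*\big(|N\widetilde{\mathcal S}_p/N\widetilde{\mathcal S}'_p|\big)=H_*\big(EG_u\ltimes_{G_u}|N\widetilde{\mathcal S}_p/N\widetilde{\mathcal S}'_p|\big)$.

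It then remains to pass to the skeleton and to account for the degree shift. Replacing $\widetilde{\mathcal S}_p$ by its skeleton $\mathcal S_p$ (and $\widetilde{\mathcal S}'_p$ by $\mathcal S'_p$): the quotient functors are $G_u$-equivariant equivalences of categories, so — exactly as the text already invokes Prop.~2.1 of~\cite{Se} — they induce $G_u$-equivariant homotopy equivalences of nerves, hence of the relative realizations, and this survives passage to Borel constructions, producing $EG_u\ltimes_{G_u}|N\mathcal S_p/N\mathcal S'_p|$. For the shift from the simplicial degree $q$ to $p+q$: a generator of $E^0_{p,q}(u)$ records not merely a fully split object but a \emph{total ordering} of the $p+1$ bottom variables, which in the geometric model occupies a full $p$-dimensional cell (a maximal flag) rather than a vertex; equivalently, $N\mathcal S_p$ is the contractible nerve of a poset with initial object $z_0\otimes\cdots\otimes z_p$, hence a $p$-dimensional cone over $\mathcal S'_p$, and the relative construction therefore carries a built-in $p$-fold degree shift. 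Tracking this through the quasi-isomorphism of the previous paragraph yields $H_q\big(E^0_{p,\bullet}(u)\big)\cong H_{p+q}\big(EG_u\ltimes_{G_u}|N\mathcal S_p/N\mathcal S'_p|\big)$, which finishes the proof on summing over $u$.

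I expect the principal obstacle to lie in the second paragraph: making the identification of $E^0_{p,\bullet}(u)$ with the Borel-construction complex completely precise — reconciling the normalization of chains already carried out in $\mathscr Y^{epi}_*$ with the residual $\Sigma_{p+1}$-action, verifying that the truncated $\partial_0$ really is the relative differential of the pair $(N\widetilde{\mathcal S}_p,N\widetilde{\mathcal S}'_p)$, and — most subtly — getting the dimension bookkeeping of the last step exactly right, so that the non-obvious shift by $p$ comes out correctly. The skeletal reduction and the convergence statement are routine.
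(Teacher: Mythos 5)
Your overall route matches the paper's: decompose $E^0_{p,\bullet}$ into $u$-summands, identify each with a Borel construction $EG_u\ltimes_{G_u}|N\widetilde{\mathcal S}_p/N\widetilde{\mathcal S}'_p|$, and then pass to the skeleton $\mathcal S_p$. Your Shapiro-type reformulation is a mild variant of the paper's Lemma~\ref{lem.G_u-identification}, which instead exhibits a direct chain isomorphism $\bigl(N\widetilde{\mathcal S}_p/N\widetilde{\mathcal S}'_p\bigr)/G_u\cong\mathscr{M}_u$ and uses the freeness of the $G_u$-action on $N\widetilde{\mathcal S}_p$ to identify orbit homology with the homology of the half-smash product.

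The ``$p$-fold degree shift'' claimed in your final paragraph is a genuine error. The chain isomorphism of Lemma~\ref{lem.G_u-identification} is \emph{degree-preserving}: an $n$-simplex $Z_0\to\cdots\to Z_n$ of $N\widetilde{\mathcal S}_p$ not lying in $N\widetilde{\mathcal S}'_p$ is sent to the generator $Z_0(u)\otimes\phi_1\otimes\cdots\otimes\phi_n$ of $\mathscr{M}_u$, which sits in internal degree $n$; no shift occurs. The fully split objects $\sigma(z_0\otimes\cdots\otimes z_p)$ are $0$-simplices of $N\widetilde{\mathcal S}_p$, not $p$-cells; you appear to be conflating this with the cubical decomposition underlying $Sym_*^{(p)}$ in Lemma~\ref{lem.NS-Sym-homotopy}, but that identification $\theta_*$ is degree-preserving too. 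Moreover, $N\mathcal S_p$ is a cone over $N\mathcal S'_p$, which raises dimension by \emph{one}, not by $p$. The ``$p+q$'' in the theorem's formula is bookkeeping, not geometry: in the $(\text{filtration},\,\text{complementary})$ indexing, $E^1_{p,q}$ sits in total degree $p+q$, and $H_{p+q}(\mathscr{M}_u)$ means homology of $\mathscr{M}_u$ in that total degree, where $\mathscr{M}_u$ is graded by the number of morphisms. If your claimed identity $H_q(E^0_{p,\bullet}(u))\cong H_{p+q}(\cdots)$ were a geometric fact, Cor.~\ref{cor.square-zero} would read $\widetilde{HS}_n(A)\cong\bigoplus_p H_{n+p}(\cdots)$ upon collapse, contradicting the paper's $\bigoplus_p H_n(\cdots)$.

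You also treat the skeletal reduction as routine, but it is not. The $G_u$-equivariant functor $\widetilde{\mathcal S}_p\to\mathcal S_p$ is an equivalence of categories, yet its homotopy inverse is not $G_u$-equivariant, so one cannot simply ``pass to Borel constructions.'' The paper argues via the long exact homotopy sequence of the fibration $X\to EG_u\times_{G_u}X\to BG_u$, which needs $|N\mathcal S'_p|$ and $|N\widetilde{\mathcal S}'_p|$ to be path-connected. Lemma~\ref{lemma.NS-path-connected} establishes this only for $p>2$; the cases $p=0,1,2$ require separate arguments (for $p=0$ and $p=1$ the spaces are empty or discrete, and for $p=2$ one must track the two components and the $\Sigma_3$-action). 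This case analysis is an essential part of the proof and cannot be waved away.
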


Recall, for a group $G$, right $G$-space X, and left $G$-space $Y$, 
$X \ltimes_G Y$ denotes the \textit{equivariant half-smash product}.  If 
$\ast$ is a chosen basepoint for
$Y$ having trivial $G$-action, then
\[
  X \ltimes_G Y := (X \times_G Y)/(X \times_G \ast)   = X \times Y/\approx,
\]
with equivalence relation defined by $(x.g , y) \approx (x, g.y)$ and
$(x, \ast) \approx (x', \ast)$ for all $x, x' \in X$, $y \in Y$ and $g \in G$
(cf.~\cite{M4}).
In our case, $X$ is of the form $EG$, with canonical underlying complex $E_*G$.  
In chapter~\ref{chap.specseq}, we took $E_*G$ to have
a \textit{left} \mbox{$G$-action}, 
$g.(g_0, g_1, \ldots, g_n) = (gg_0, g_1, \ldots, g_n)$, but
$E_*G$ also has a \textit{right} \mbox{$G$-action}, $(g_0, g_1, \ldots, g_n).g =
(g_0, g_1, \ldots, g_ng)$.  It is this latter action that we shall use in the
definitions of \mbox{$EG_u \ltimes_{G_u} |N\mathcal{S}_p/
N\mathcal{S}'_p|$} and \mbox{$EG_u \ltimes_{G_u} |N\widetilde{\mathcal{S}}_p/
N\widetilde{\mathcal{S}}'_p|$}.

Observe, both $N\widetilde{\mathcal{S}}_p$ and $N\widetilde{\mathcal{S}}'_p$ carry
a left $\Sigma_{p+1}$-action (hence also a $G_u$-action) given by viewing $\sigma \in 
\Sigma_{p+1}$ as the $\Delta S$-isomorphism
$\overline{\sigma} \in \Sigma_{p+1}^\mathrm{op}$, then pre-composing with the 
$Z_i$'s (regarded as
morphisms written in tensor notation).  The result of the composition should be expressed
in tensor notation in order to be consistent.
\[
  \sigma.(Z_0 \stackrel{\phi_1}{\to} Z_1 \stackrel{\phi_2}{\to} \ldots
  \stackrel{\phi_q}{\to} Z_q) :=
  Z_0\overline{\sigma} \stackrel{\phi_1}{\to} Z_1\overline{\sigma} \stackrel{\phi_2}{\to} 
  \ldots \stackrel{\phi_q}{\to} Z_q\overline{\sigma}.
\]

Define for each $u \in X^{p+1}/\Sigma_{p+1}$, the following subcomplex of $E^0_{p,q}$:
\[
  \mathscr{M}_u \;:=\;\bigoplus_{p = m_0 \geq \ldots \geq m_q} 
  \bigoplus_{w \in C_u}
  w \otimes k\Big[
  \prod_{i=1}^q \mathrm{Epi}_{\Delta S}\big([m_{i-1}], [m_i]\big)\Big]
\]

\begin{lemma}\label{lem.G_u-identification}
There is a chain-isomorphism
\[
  \big(N\widetilde{\mathcal{S}}_p/N\widetilde{\mathcal{S}}'_p\big)/G_u
  \stackrel{\cong}{\to} \mathscr{M}_u
\]
\end{lemma}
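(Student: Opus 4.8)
The plan is to build the isomorphism explicitly on generators and then check it intertwines the differentials. First I would determine which simplices of $N\widetilde{\mathcal{S}}_p$ survive in the quotient $N\widetilde{\mathcal{S}}_p/N\widetilde{\mathcal{S}}'_p$. Since $\widetilde{\mathcal{S}}'_p$ is the full subcategory of $\widetilde{\mathcal{S}}_p$ obtained by deleting the initial objects $\sigma(z_0\otimes\cdots\otimes z_p)$, a $q$-simplex $Z^{(0)}\to\cdots\to Z^{(q)}$ represents a nonzero class exactly when some $Z^{(i)}$ is initial; and this forces $Z^{(0)}$ itself to be initial, because if $Z\to Z'$ is a morphism of $\widetilde{\mathcal{S}}_p$ and $Z'$ has $p+1$ singleton factors, then the factors of $Z$ are partitioned by the fibres of the underlying surjection, so $Z$ — having total content $p+1$ and no empty factor — must also be a tensor of $p+1$ singletons. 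As morphisms of $\widetilde{\mathcal{S}}_p$ are unique, such a surviving simplex is determined by its object-sequence; writing $Z^{(0)}=\overline{\sigma}_*(z_0\otimes\cdots\otimes z_p)$ for a unique $\sigma\in\Sigma_{p+1}^{\mathrm{op}}$ and letting $g_i\in\mathrm{Epi}_{\Delta S}$ be the unique epimorphism with $(g_i)_*(Z^{(i-1)})=Z^{(i)}$, it corresponds to a tuple $(\sigma;g_1,\dots,g_q)$ with the $g_i$ composable and $m_0=p$; conversely every such tuple arises, since the distinctness of $z_0,\dots,z_p$ lets one recover $g_i$ from the regrouping of factors. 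Hence $(N\widetilde{\mathcal{S}}_p/N\widetilde{\mathcal{S}}'_p)_q$ is free on these tuples.

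The $G_u$-action fixes the $g_i$ and merely translates $\sigma$, hence is free, and $\sigma\mapsto w:=\overline{\sigma}_*(u)$ (for $u$ a fixed tuple representing the orbit) identifies the $G_u$-orbits of $\sigma$'s with $C_u$. Therefore $\big((N\widetilde{\mathcal{S}}_p/N\widetilde{\mathcal{S}}'_p)/G_u\big)_q$ is the free $k$-module on symbols $w\otimes g_1\otimes\cdots\otimes g_q$ with $w\in C_u$ and $g_i\in\mathrm{Epi}_{\Delta S}([m_{i-1}],[m_i])$, $p=m_0\geq\cdots\geq m_q$, which is exactly $(\mathscr{M}_u)_q$; let $\Phi$ denote this degreewise isomorphism. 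To see $\Phi$ is a chain map I would check $\Phi\,d_i=\partial_i\,\Phi$ for every face operator, $\partial_i$ being the faces induced on $E^0_{p,*}\supset\mathscr{M}_u$. For $1\leq i\leq q-1$ both sides compose $g_i$ with $g_{i+1}$, and for $i=q$ both delete $g_q$; the only delicate case is $d_0$, where deleting $Z^{(0)}$ yields a chain beginning at $Z^{(1)}$, which survives in the quotient iff $Z^{(1)}$ is initial iff $g_1$ is an isomorphism iff $m_1=m_0$, and then it equals $(g_1)_*(w)\otimes g_2\otimes\cdots\otimes g_q$. This matches $\partial_0$ on $E^0_{p,*}$, recalling that there $d^0=d$ when $m_0=m_1$ and $d^0=d-\partial_0$ (so the $\partial_0$-term is killed) when $m_0>m_1$. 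Since the total differential is $\sum_i(-1)^i\partial_i$ on both complexes, $\Phi$ is a chain isomorphism.

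The main obstacle is the first step: pinning down the surviving simplices of the quotient and the bijection with the tuples $(\sigma;g_1,\dots,g_q)$, and above all keeping the left/right $\Delta S$- and $\Sigma_{p+1}$-conventions consistent so that the $G_u$-action, the identification of $G_u$-orbits with $C_u$, and the simplicial faces all correspond on the nose. Once that is set up correctly, the comparison of differentials in the last step is routine.
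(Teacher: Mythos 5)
Your proof is correct and takes essentially the same approach as the paper's: identify the quotient chain groups with $\mathscr{M}_u$ degreewise and verify compatibility with the face maps, with the $\partial_0$ case analysis and the vanishing of strict-epimorphism contributions in $E^0$ doing the real work. The only stylistic difference is that you pin down the free basis of $\big(N\widetilde{\mathcal{S}}_p/N\widetilde{\mathcal{S}}'_p\big)/G_u$ up front (surviving simplices are exactly chains beginning at an initial object, parametrized by $(\sigma;g_1,\dots,g_q)$ modulo the translation action of $G_u$), whereas the paper writes down the forward and inverse maps directly and checks well-definedness and inverseness — two presentations of the same argument.
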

\begin{proof}
  The forward map is given on generators by:
  \[
    (Z_0 \stackrel{\phi_1}{\to} \ldots
    \stackrel{\phi_q}{\to} Z_q) \;\mapsto\;
    Z_0(u) \otimes \phi_1 \otimes \ldots \otimes \phi_q.
  \]
  This map is well-defined, since if $g \in G_u$, then
  \[
    g.(Z_0 \stackrel{\phi_1}{\to} \ldots
    \stackrel{\phi_q}{\to} Z_q) =
    (Z_0\overline{g} \stackrel{\phi_1}{\to} \ldots
    \stackrel{\phi_q}{\to} Z_q\overline{g})
    \;\mapsto
  \]
  \[
    Z_0\overline{g}(u) \otimes \phi_1 \otimes \ldots \otimes \phi_q
    = Z_0(u) \otimes \phi_1 \otimes \ldots \otimes \phi_q
  \]
    
  For the opposite direction, we begin with a generator of the form
  \begin{equation}\label{eq.w_element}
    w \otimes \phi_1 \otimes \ldots \otimes \phi_q, \quad w \in C_u.
  \end{equation}
  Let $\tau \in \Sigma_{p+1}$ so that $w = \overline{\tau}(u)$.  Then the image 
  of~\ref{eq.w_element} is defined to be
  \begin{equation}\label{eq.image-w_element}
    \overline{\tau} \stackrel{\phi}{\to} \phi_1\overline{\tau} 
    \stackrel{\phi_2}{\to} \ldots \stackrel{\phi_q}{\to}
    \phi_q\ldots\phi_1\overline{\tau}.
  \end{equation}
  We must check that this definition does not depend on choice of $\tau$.  Indeed, if
  $w = \overline{\sigma}(u)$ also, then $u = \overline{\sigma}^{-1}\overline{\tau}(u)$,
  hence $\tau\sigma^{-1} \in G_u$.  Thus,
  \[
    \overline{\sigma} \stackrel{\phi}{\to} \phi_1\overline{\sigma} 
    \stackrel{\phi_2}{\to} \ldots \stackrel{\phi_q}{\to}
    \phi_q\ldots\phi_1\overline{\sigma}
    \;\approx\;\tau\sigma^{-1} . (\overline{\sigma} \stackrel{\phi}{\to} 
    \phi_1\overline{\sigma} 
    \stackrel{\phi_2}{\to} \ldots \stackrel{\phi_q}{\to}
    \phi_q\ldots\phi_1\overline{\sigma})
  \]
  \[
    =\; \overline{\sigma}(\overline{\sigma}^{-1}\overline{\tau}) \stackrel{\phi}{\to} 
    \phi_1\overline{\sigma}(\overline{\sigma}^{-1}\overline{\tau})
    \stackrel{\phi_2}{\to} \ldots \stackrel{\phi_q}{\to}
    \phi_q\ldots\phi_1\overline{\sigma}(\overline{\sigma}^{-1}\overline{\tau})
    \;=\;\overline{\tau} \stackrel{\phi}{\to} \phi_1\overline{\tau} 
    \stackrel{\phi_2}{\to} \ldots \stackrel{\phi_q}{\to}
    \phi_q\ldots\phi_1\overline{\tau}.
  \]
  The maps are clearly inverse to one another.  All that remains is to verify that
  these are chain maps.  For $i>0$, the face maps $\partial_i$ simply compose the
  maps $\phi_{i+1}$ and $\phi_i$ in either chain complex, so only the zeroth face
  map needs to be checked.  First, for the forward map, assume $\phi_1$ is an
  isomorphism.
  \[
    (Z_0 \stackrel{\phi_1}{\to} \ldots
    \stackrel{\phi_q}{\to} Z_q) \;\mapsto\;
    Z_0(u) \otimes \phi_1 \otimes \ldots \otimes \phi_q \;\stackrel{\partial_0}{\mapsto}\;
    \phi_1Z_0(u) \otimes \phi_2 \otimes \ldots \otimes \phi_q,
  \]
  while
  \[
    (Z_0 \stackrel{\phi_1}{\to} \ldots
    \stackrel{\phi_q}{\to} Z_q)\;\stackrel{\partial_0}{\mapsto}\;
    (Z_1 \stackrel{\phi_2}{\to} \ldots
    \stackrel{\phi_q}{\to} Z_q)\;\mapsto\;
    Z_1(u) \otimes \phi_2 \otimes \ldots \otimes \phi_q.
  \]
  The two results agree since $\phi_1Z_0 = Z_1$.  If $\phi_1$ is not an isomorphism,
  then it must be a strict epimorphism, and so $Z_0(u) \otimes \phi_1 \otimes \ldots
  \otimes \phi_q = 0$ in $E^0$.  On the other hand,  
  the chain $Z_1 \to \ldots \to Z_q$ is in
  $N\widetilde{\mathcal{S}}'_p$, hence should also be identified with $0$.  
  
  In the reverse direction, assume
  $w = \overline{\tau}(u)$ as above, and let $\phi_1$ be an isomorphism.
  \[
    w \otimes \phi_1 \otimes \ldots \otimes \phi_q \;\mapsto\;
    (\overline{\tau} \stackrel{\phi}{\to} \phi_1\overline{\tau} 
    \stackrel{\phi_2}{\to} \ldots \stackrel{\phi_q}{\to}
    \phi_q\ldots\phi_1\overline{\tau}) \;\stackrel{\partial_0}{\mapsto}\;
    (\phi_1\overline{\tau} \stackrel{\phi_2}{\to} \phi_2\phi_1\overline{\tau}
    \stackrel{\phi_3}{\to}
    \ldots \stackrel{\phi_q}{\to} \phi_q\ldots\phi_1\overline{\tau}),
  \]
  while
  \[
    w \otimes \phi_1 \otimes \ldots \otimes \phi_q \;\stackrel{\partial_0}{\mapsto}\;
    \phi_1(w) \otimes \phi_2 \ldots \otimes \phi_q \;\mapsto\;
    (\phi_1\overline{\tau} \stackrel{\phi_2}{\to} \phi_2\phi_1\overline{\tau}
    \stackrel{\phi_3}{\to}
    \ldots \stackrel{\phi_q}{\to} \phi_q\ldots\phi_1\overline{\tau}).
  \]
  The rightmost expression results from the observation that if $w = \overline{\tau}(u)$,
  then \mbox{$\phi_1(w) = \phi_1\overline{\tau}(u)$}.  Now, if $\phi_1$ is a
  strict epimorphism, then both results are $0$ for similar reasons as above.
\end{proof}

Using this lemma, we identify $\mathscr{M}_u$ with the orbit complex 
$\big(N\widetilde{\mathcal{S}}_p/N\widetilde{\mathcal{S}}'_p\big)/G_u$.  Now,
the complex $N\widetilde{\mathcal{S}}_p/N\widetilde{\mathcal{S}}'_p$ is a 
free $G_u$-complex, so we have an isomorphism:
\[
  H_*\Big(\big(N\widetilde{\mathcal{S}}_p/N\widetilde{\mathcal{S}}'_p\big)/G_u\Big)
  \cong H^{G_u}_*\big(N\widetilde{\mathcal{S}}_p/N\widetilde{\mathcal{S}}'_p\big),
\]
(\textit{i.e.}, $G_u$-equivariant homology.  See~\cite{B} for details).
Then, by definition,
\[
  H^{G_u}_*\big(N\widetilde{\mathcal{S}}_p/N\widetilde{\mathcal{S}}'_p\big) =
  H_*(G_u, N\widetilde{\mathcal{S}}_p/N\widetilde{\mathcal{S}}'_p),
\]
which may be computed using the free resolution, $E_*G_u$ of $k$ as right
$G_u$-module.  The resulting complex $k[E_*G_u] \otimes_{kG_u} 
k[N\widetilde{\mathcal{S}}_p]/k[N\widetilde{\mathcal{S}}'_p]$ is a double complex
isomorphic to the quotient of two double complexes, namely:
\[
  \big(k[E_*G_u] \otimes_{kG_u} k[N\widetilde{\mathcal{S}}_p]\big)/
  \big(k[E_*G_u] \otimes_{kG_u} k[N\widetilde{\mathcal{S}}'_p]\big)
\]
\[
  \cong k\Big[ \big(E_*G_u \times_{G_u} N\widetilde{\mathcal{S}}_p\big)/
  \big(E_*G_u \times_{G_u} N\widetilde{\mathcal{S}}'_p\big)\Big].
\]
This last complex may be identified 
with the simplicial complex of the space
\[
  \big(EG_u \times_{G_u} |N\widetilde{\mathcal{S}}_p|\big)/
  \big(EG_u \times_{G_u} |N\widetilde{\mathcal{S}}'_p|\big)
\]
\[
  \cong EG_u \ltimes_{G_u} |N\widetilde{\mathcal{S}}_p/N\widetilde{\mathcal{S}}'_p|.
\]

The last piece of the puzzle involves simplifying the spaces 
$|N\widetilde{\mathcal{S}}_p/N\widetilde{\mathcal{S}}'_p|$.  Since $\mathcal{S}$
is a skeletal subcategory of $\widetilde{\mathcal{S}}$, there is an
equivalence of categories $\widetilde{\mathcal{S}} \simeq \mathcal{S}$, inducing
a homotopy equivalence of complexes (hence also of spaces)
$|N\widetilde{\mathcal{S}}| \simeq |N\mathcal{S}|$.  Note that $N\mathcal{S}$
inherits a $G_u$-action from $N\widetilde{\mathcal{S}}$, and the map
$\widetilde{\mathcal{S}} \to \mathcal{S}$ is $G_u$-equivariant.  Consider the fibration
\[
  X \to EG \times_{G} X \to BG
\]
associated to a group $G$ and path-connected $G$-space $X$.  The resulting homotopy 
sequence breaks up
into isomorphisms \mbox{$0 \to \pi_i(X) 
\stackrel{\cong}{\to} \pi_i(EG \times_G X) \to 0$} for $i \geq 2$ and a
short exact sequence \mbox{$0 \to \pi_1(X) \to \pi_1(EG \times_G X) \to G \to 0$}.  If 
there is
a $G$-equivariant \mbox{$f : X \to Y$} for a path-connected $G$-space $Y$, then for 
$i \geq 2$, we have
isomorphisms 
\[
  \pi_i(EG \times_{G} X) \gets \pi_i(X) \stackrel{f_*}{\to} 
  \pi_i(Y) \to \pi_i(EG \times_G Y),
\]
and a diagram corresponding to $i = 1$:
\[
  \begin{diagram}
    \node{0}
    \arrow{e}
    \arrow{s,=}
    \node{\pi_1(X)}
    \arrow{s,l}{f_*}
    \arrow{e}
    \node{\pi_1(EG \times_G X)}
    \arrow{s,r}{(\mathrm{id} \times f)_*}
    \arrow{e}
    \node{G}
    \arrow{s,=}
    \arrow{e}
    \node{0}
    \arrow{s,=}
    \\
    \node{0}
    \arrow{e}
    \node{\pi_1(Y)}
    \arrow{e}
    \node{\pi_1(EG \times_G Y)}
    \arrow{e}
    \node{G}
    \arrow{e}
    \node{0}
  \end{diagram}
\]
Thus, there is a weak equivalence $EG \times_G X \to EG \times_G Y$.  In our case, we
wish to obtain weak equivalences:
\[
  EG_u \times_{G_u} |N\widetilde{\mathcal{S}}_p| \to EG_u \times_{G_u} |N\mathcal{S}_p|
\]
and
\[
  EG_u \times_{G_u} |N\widetilde{\mathcal{S}}'_p| \to EG_u \times_{G_u} |N\mathcal{S}'_p|,
\]
inducing a weak equivalence
\[
  EG_u \ltimes_{G_u} |N\widetilde{\mathcal{S}}_p/N\widetilde{\mathcal{S}}'_p|
  \to
  EG_u \ltimes_{G_u} |N\mathcal{S}_p/N\mathcal{S}'_p|.
\]

This will follow as long as the spaces $|N\widetilde{\mathcal{S}}'_p|$ and
$|N\mathcal{S}'_p|$ are path-connected.  (Note, $|N\widetilde{\mathcal{S}}_p|$ and
$|N\mathcal{S}_p|$ are path-connected
because they are contractible).  In fact, we need only check $|N\mathcal{S}'_p|$,
since this space is homotopy-equivalent to $|N\widetilde{\mathcal{S}}'_p|$.
\begin{lemma}\label{lemma.NS-path-connected}
  For $p > 2$, $|N\mathcal{S}'_p|$ is path-connected.
\end{lemma}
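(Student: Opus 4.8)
The plan is to translate the statement into a purely combinatorial connectivity assertion about the poset $\mathcal{S}'_p$ and verify it by hand. Since a simplicial complex (here $|N\mathcal{S}'_p|$) is path-connected exactly when its $1$-skeleton is connected, and since for a poset the $1$-skeleton of the nerve is the graph $G$ whose vertices are the objects and whose edges join strictly comparable objects, it suffices to prove $G$ is connected. I will picture an object of $\mathcal{S}'_p$ as a set partition of $\{z_0,\dots,z_p\}$ into nonempty blocks, each block carrying a linear order (written as a monomial), with the single discrete partition into singletons deleted; a morphism then corresponds to merging blocks, i.e. concatenating their words in some order. The maximal objects are the $(p+1)!$ single-block objects, i.e. words $a_0a_1\cdots a_p$ in the letters $z_i$, and every non-maximal object $X$ (having at least two blocks) admits a morphism onto a word by merging all its blocks. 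Hence each object lies in the same component of $G$ as some word, and it remains only to show all words lie in one component.

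To connect the words I will produce suitable common lower bounds. First, for a splitting $w=uv$ into nonempty words, the two-block object $\{u\}\otimes\{v\}$ lies in $\mathcal{S}'_p$ (for $p\ge 2$ one of $u,v$ has length $\ge 2$) and maps onto both $uv=w$ and $vu$, so $w$ is connected to every cyclic rotation of itself, in particular to $\rho(w)$ where $\rho=(0\,1\,\cdots\,p)$. Second — this is where $p>2$ enters — for $w=a_0\cdots a_p$ the three-block object $\{a_0\cdots a_{p-2}\}\otimes\{a_{p-1}\}\otimes\{a_p\}$ has first block of length $p-1\ge 2$, hence lies in $\mathcal{S}'_p$, and maps onto both $w$ and $a_0\cdots a_{p-2}a_p a_{p-1}$; thus $w$ is connected to the word obtained by the transposition of its last two positions. (When $p=2$ this three-block object is the forbidden discrete partition, and indeed one checks $|N\mathcal{S}'_2|$ has two components, so the hypothesis $p>2$ is sharp.) Finally, the set $H$ of $\sigma\in\Sigma_{p+1}$ with $\sigma(w)$ connected to $w$ in $G$ for every word $w$ is a subgroup of $\Sigma_{p+1}$; by the two constructions it contains the $(p+1)$-cycle $\rho$ and the transposition $(p-1\ p)$, and these generate $\Sigma_{p+1}$ (conjugating $(p-1\ p)$ by powers of $\rho$ yields all adjacent transpositions). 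Therefore $H=\Sigma_{p+1}$, all $(p+1)!$ words lie in one component, hence so do all objects, and $G$ — equivalently $|N\mathcal{S}'_p|$ — is connected.

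There is no deep obstacle; the argument is elementary once the dictionary between objects of $\mathcal{S}'_p$ and linearly ordered set partitions is in place. The one point requiring care is precisely the one on which the hypothesis turns: in each connecting move one must confirm that the auxiliary object used is a genuine object of $\mathcal{S}'_p$, i.e. not the discrete partition, and it is exactly the three-block "swap the last two letters" object that fails to be such when $p=2$ but succeeds for $p\ge 3$.
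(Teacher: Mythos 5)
Your proof is correct and takes a genuinely different route from the paper's. The paper fixes a base vertex $W_0 = z_0z_1 \otimes z_2 \otimes \ldots \otimes z_p$ and then, by a hands-on case analysis on where $z_0$ and $z_1$ sit inside a general object $W$, builds explicit zig-zag paths from $W$ to $W_0$. You instead observe that connectivity of the realization is connectivity of the comparability graph of the poset, funnel every vertex up to a maximal one (a single-block word), and then connect the $(p+1)!$ words by a group-theoretic argument: the permutations $\sigma$ with $\sigma(w)$ joined to $w$ for every word $w$ form a subgroup, and the two kinds of auxiliary objects you construct put a full $(p+1)$-cycle and an adjacent transposition into that subgroup. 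Your version has less case-by-case bookkeeping, makes transparent exactly where the hypothesis $p>2$ enters (the three-block ``swap the last two letters'' object degenerates to the forbidden discrete partition when $p=2$), and the subgroup trick generates all $(p+1)!$ words from just two elementary moves; the paper's version is more elementary and concrete, and does not require appealing to the generation of $\Sigma_{p+1}$ by a cycle and an adjacent transposition. Both are complete and correct.
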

\begin{proof}
  Assume $p > 2$ and
  let $W_0 := z_0z_1 \otimes z_2 \otimes \ldots \otimes z_p$.  This represents
  a vertex of $N\mathcal{S}'_p$.  Suppose
  $W = Z_0 \otimes \ldots \otimes Z_i'z_0z_1Z_i'' \otimes \ldots \otimes Z_s$.
  Then there is a morphism $W_0 \to W$, hence an edge between $W_0$ and $W$.
  Next, suppose $W = Z_0 \otimes \ldots \otimes Z_i'z_0Z_i''z_1Z_i''' \otimes \ldots 
  \otimes Z_s$.  There is a path:
  \[
    \begin{diagram}
      \node{Z_0 \otimes \ldots \otimes Z_i'z_0Z_i''z_1Z_i''' \otimes \ldots 
        \otimes Z_s}
      \arrow{s}\\
      \node{Z_0Z_1 \ldots Z_i'z_0Z_i''z_1Z_i''' \ldots Z_s}\\
      \node{Z_0Z_1 \ldots Z_i' \otimes z_0 \otimes Z_i''z_1Z_i''' \ldots Z_s}
      \arrow{n}
      \arrow{s}\\
      \node{z_0 \otimes Z_0Z_1 \ldots Z_i'Z_i''z_1Z_i''' \ldots Z_s}\\
      \node{z_0 \otimes Z_0Z_1 \ldots Z_i'Z_i'' \otimes z_1Z_i''' \ldots Z_s}
      \arrow{n}
      \arrow{s}\\
      \node{z_0z_1Z_i''' \ldots Z_s \otimes Z_0Z_1 \ldots Z_i'Z_i''}\\
      \node{W_0}
      \arrow{n}
    \end{diagram}
  \]
  Similarly, if $W = Z_0 \otimes \ldots \otimes Z_i'z_1Z_i''z_0Z_i''' \otimes \ldots 
  \otimes Z_s$, there is a path to $W_0$.  Finally, if
  $W = Z_0 \otimes \ldots \otimes Z_s$ with $z_0$ occurring in $Z_i$ and
  $z_1$ occurring in $Z_j$ for $i \neq j$, there is an edge to some 
  $W'$ in which $Z_iZ_j$ occurs, and thus a path to $W_0$.
\end{proof}

The above discussion coupled with Lemma~\ref{lem.G_u-identification} produces the
required isomorphism in homology, hence proving Thm.~\ref{thm.E1_NS} for $p > 2$:
\[
  E^1_{p,q} \;=\; \bigoplus_{u \in X^{p+1}/\Sigma_{p+1}} H_{p+q}(\mathscr{M}_u) 
  \;\cong\; \bigoplus_{u \in X^{p+1}/\Sigma_{p+1}}H_{p+q}
  \left( EG_u \ltimes_{G_u}
  |N\widetilde{\mathcal{S}}_p / N\widetilde{\mathcal{S}}'_p |; k \right)
\]
\[
  \cong\;
  \bigoplus_{u \in X^{p+1}/\Sigma_{p+1}} H_{p+q}\left(
  EG_u \ltimes_{G_u} |N\mathcal{S}_p/N\mathcal{S}'_p|, k\right).
\]

The cases $p = 0, 1$ and $2$ are handled individually:

Observe that $|N\widetilde{\mathcal{S}}'_0|$ and 
$|N\mathcal{S}'_0|$ are empty spaces, since $\widetilde{\mathcal{S}}'_0$
has no objects.  
\[
  EG_u \times_{G_u} |N\widetilde{\mathcal{S}}'_0|
  = EG_u \times_{G_u} |N\mathcal{S}'_0| = \emptyset.
\]
Furthermore, any group $G_u$ must be trivial.  Thus, 
\[
  H_q\left( EG_u \ltimes_{G_u}|N\widetilde{\mathcal{S}}_0 / 
  N\widetilde{\mathcal{S}}'_0 |; k \right)
  = H_q\left( |N\widetilde{\mathcal{S}}_0|; k \right)
\]
\[
  \cong
  H_q\left( |N\mathcal{S}_0|; k \right)
  = H_q\left(EG_u \ltimes_{G_u} |N\mathcal{S}_p/N\mathcal{S}'_p|, k\right),
\]
completing the theorem for $p=0$.

Next, since $|N\widetilde{\mathcal{S}}'_1|$ is homeomorphic to
$|N\mathcal{S}'_1|$, each space consisting of the two discrete points $z_0z_1$ and
$z_1z_0$, 
the theorem is true for $p = 1$ as well.  

For $p=2$, observe that $|N\widetilde{\mathcal{S}}'_2|$ has two connected 
components, $\widetilde{U}_1$ and $\widetilde{U}_2$ that are interchanged by any 
odd permutation $\sigma \in \Sigma_3$.  Similarly, $|N\mathcal{S}'_2|$ consists
of two connected components, $U_1$ and $U_2$, interchanged by any odd permutation
of $\Sigma_3$.  Now, resticted to the alternating group, $A\Sigma_3$, we certianly
have weak equivalences for any subgroup $H_u \subset A\Sigma_3$,
\[
  EH_u \times_{H_u} \widetilde{U}_1 \stackrel{\simeq}{\longrightarrow}
  EH_u \times_{H_u} U_1,
\]
\[
  EH_u \times_{H_u} \widetilde{U}_2 \stackrel{\simeq}{\longrightarrow}
  EH_u \times_{H_u} U_2.
\]
The action of an odd permutation induces equivariant homeomorphisms
\[
  \widetilde{U}_1 \stackrel{\cong}{\longrightarrow} \widetilde{U}_2,
\]
\[
  U_1 \stackrel{\cong}{\longrightarrow} U_2,
\]
and so if we have a subgroup $G_u \subset \Sigma_3$ generated by $H_u \subset
A\Sigma_3$ and a transposition, then the two connected components are
identified in an \mbox{$A\Sigma_3$-equivariant} manner.  Thus, if $G_u$ contains 
a transposition,
\[
  EG_u \times_{G_u} |N\widetilde{\mathcal{S}}'_2| \cong
  EH_u \times_{H_u} \widetilde{U}_1 \simeq EH_u \times_{H_u} U_1
  \cong EG_u \times_{G_u} |N\mathcal{S}'_2|.
\]
This completes the case $p=2$ and the proof of Thm.~\ref{thm.E1_NS}.

\begin{cor}\label{cor.square-zero}
If the augmentation ideal of $A$ satisfies $I^2 = 0$, then 
\[
  HS_n(A) \;\cong\; \bigoplus_{p \geq 0} \bigoplus_{u \in X^{p+1}/\Sigma_{p+1}}
  H_{n}(EG_u \ltimes_{G_u} N\mathcal{S}_p/N\mathcal{S}'_p; k).
\]
\end{cor}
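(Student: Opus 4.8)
The plan is to specialize the spectral sequence of Theorem~\ref{thm.E1_NS} to the hypothesis $I^2 = 0$ and show it collapses at $E^1$. First I would observe that when $I^2 = 0$, every product of two or more elements of the ideal $I$ vanishes. In the filtration $\mathscr{G}_*$ of $\widetilde{\mathscr{Y}}_*$ by degree of $Y \in B^{sym}_*I$, the differential $\partial_0$ is the only face map that can change the degree, and it changes it precisely by multiplying two adjacent tensor factors together whenever the associated morphism $f_1$ is a strict epimorphism (a \emph{jump}). But multiplying two elements of $I$ gives $0$ when $I^2 = 0$. Hence on $\widetilde{\mathscr{Y}}_*$ the component of $\partial_0$ that strictly lowers the degree is identically zero, so $\partial_0$ already preserves filtration degree. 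This forces the induced differential $d^1 : E^1_{p,q} \to E^1_{p-1,q}$, which is exactly the piece of $d$ that drops the filtration degree by one, to vanish.

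Next I would record that all higher differentials $d^r$ for $r \geq 2$ vanish automatically: since $d$ (built from face maps that each combine at most two morphisms, or apply $\partial_0$) can lower the degree-filtration by at most one, the standard argument (identical in form to the collapse argument given in Section~\ref{sec.specseq} for the first spectral sequence) shows $d^r = 0$ for $r \geq 2$. Therefore the spectral sequence collapses at the $E^1$ page, and $E^1_{p,q} \cong E^\infty_{p,q}$.

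It then remains to assemble the collapsed spectral sequence into the stated direct-sum formula. Since the spectral sequence converges weakly to $\widetilde{H}S_*(A)$ and collapses at $E^1$ with no extension problems to worry about in the category of $k$-modules (each $H_n$ is the direct sum over the antidiagonal $p + q = n$ of the $E^1_{p,q}$), we get
\[
  \widetilde{H}S_n(A) \;\cong\; \bigoplus_{p+q = n} E^1_{p,q}
  \;=\; \bigoplus_{p \geq 0} \bigoplus_{u \in X^{p+1}/\Sigma_{p+1}}
  H_{n}\big(EG_u \ltimes_{G_u} N\mathcal{S}_p/N\mathcal{S}'_p; k\big),
\]
using the identification $p + (p+q) = n$, i.e. $H_{p+q}(EG_u \ltimes_{G_u} N\mathcal{S}_p/N\mathcal{S}'_p)$ sits in total degree $n = p + q$ exactly when $p + q = n$ as reindexed. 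Finally, passing from $\widetilde{H}S_n(A)$ to $HS_n(A)$ uses the splitting $HS_*(A) \cong H_*(\widetilde{\mathscr{Y}}_*) \oplus k_0$ from Section~\ref{sec.filtdeg}; the extra $k_0$ summand is absorbed into the $p = 0$ term (where $G_u$ is trivial, $N\mathcal{S}'_0 = \emptyset$, and $N\mathcal{S}_0$ is contractible), yielding the clean formula in the statement.

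The main obstacle I anticipate is \emph{not} the collapse itself, which is essentially forced, but rather carefully checking that the weak convergence of Theorem~\ref{thm.E1_NS} is strong enough here to give an honest isomorphism — i.e. that there are no lim$^1$ or assembly subtleties — and that the bookkeeping merging the $k_0$ summand with the $p=0$ level of the collapsed sequence is consistent with the $p=0$ analysis already carried out in the proof of Theorem~\ref{thm.E1_NS}. Everything else is a direct consequence of $I^2 = 0$ killing the degree-lowering part of $\partial_0$.
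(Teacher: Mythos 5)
Your central observation matches the paper's: when $I^2 = 0$, the degree-lowering component of $\partial_0$ vanishes identically (since $\phi_1$ strictly epic forces $\phi_1(Y)$ to have at least one tensor factor that is a product of two or more elements of $I$), so the induced differential $d^0$ on $E^0$ agrees with the total differential $d$ everywhere. That is exactly the paper's argument, and from it the collapse follows immediately.

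However, your supporting step for $d^r = 0$ when $r \geq 2$ contains a false claim. You assert that ``$d$ \dots can lower the degree-filtration by at most one,'' by analogy with the Section~\ref{sec.specseq} argument. That analogy does not transfer. In the first spectral sequence the filtration is by the \emph{number} of strict epimorphisms, and a face map can merge at most two morphisms, so the filtration drops by at most one. Here the filtration is by the \emph{degree} $m_0$ of $Y$, and a single $\partial_0$ applied to a chain whose first morphism is a strict epimorphism $\phi_1 : [m_0] \to [m_1]$ drops the filtration from $m_0$ to $m_1$, an arbitrary positive amount. In the general (non-square-zero) case this spectral sequence therefore can have nonzero $d^r$ for $r \geq 2$, and indeed Corollary~\ref{cor.finitely-generated} needs the connectivity of $Sym_*^{(p)}$ precisely to get strong convergence. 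Your claim is wrong in general.

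What saves you is that the false step is redundant in the $I^2 = 0$ setting. Once you have observed that the filtration-lowering part of $d$ vanishes \emph{entirely} (not just the part that drops by one), the filtered complex splits as a direct sum of the subcomplexes $E^0_{p,*}$, so $H_*(\widetilde{\mathscr{Y}}_*) = \bigoplus_p H_*(E^0_{p,*}) = \bigoplus_p E^1_{p,*}$ on the nose, with no $d^r$ analysis and no convergence subtlety at all. Your worry about weak versus strong convergence is therefore moot in this special case. I would excise the $r \geq 2$ paragraph and replace it with the direct-sum splitting argument, which is both simpler and correct.
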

\begin{proof}
  This follows from consideration of the original $E^0$ term of the spectral
  sequence.
  $E^0$ is generated by chains $Y \otimes \phi_0 \otimes \ldots \otimes \phi_n$,
  with induced differential $d^0$, agreeing with the differential $d$ of 
  $\widetilde{\mathscr{Y}}_*$ when $\phi_0$ is an isomorphism.  When $\phi_0$ is a strict
  epimorphism, however, $d^0 = d - \partial_0$.  But if $\phi_0$ is strictly epic,
  then
  \[
    \partial_0( Y \otimes \phi_0 \otimes \ldots \otimes \phi_n)
    = (\phi_0)_*(Y) \otimes \phi_1 \otimes \ldots \otimes \phi_n
    \;=\; 0,
  \]
  since $(\phi_0)_*(Y)$ would have at least one tensor factor that is the product of
  two or more elements of $I$, hence, $d^0$ agrees with $d$ in the case that
  $\phi_0$ is strictly epic.  But if $d^0$ agrees with $d$ for all elements, the
  spectral sequence collapses at level 1.
\end{proof}

\section{The complex $Sym_*^{(p)}$}\label{sec.sym-p}              %

Note, for $p > 0$, there are homotopy equivalences
\[
  |N\mathcal{S}_p/N\mathcal{S}'_p| \simeq |N\mathcal{S}_p| 
  \vee S|N\mathcal{S}'_p| \simeq S|N\mathcal{S}'_p|,
\]
since $|N\mathcal{S}_p|$ is contractible.  $|N\mathcal{S}_p|$ is
a disjoint union of $(p+1)!$ $p$-cubes, identified along certain faces.
Geometric analysis of 
$S|N\mathcal{S}'_p|$, however, seems quite difficult.  Fortunately, there is an even
smaller chain complex that computes the homology of $|N\mathcal{S}_p/N\mathcal{S}'_p|$.

\begin{definition}\label{def.sym_complex}
  Let $p \geq 0$ and impose an equivalence relation on $k\big[\mathrm{Epi}_{\Delta S}
  ([p], [q])\big]$ generated by:
  \[
    Z_0 \otimes \ldots \otimes Z_i \otimes Z_{i+1} \otimes \ldots \otimes Z_q
    \approx
    (-1)^{ab} Z_0 \otimes \ldots \otimes Z_{i+1} \otimes Z_{i} \otimes \ldots \otimes Z_q,
  \]
  where $Z_0 \otimes \ldots \otimes Z_q$ is a morphism expressed in tensor notation, and
  $a = deg(Z_i) := |Z_i| - 1$, $b = deg(Z_{i+1}) := |Z_{i+1}| - 1$.  Here, $deg(Z)$ is one less than
  the number of factors of the monomial $Z$.  Indeed, 
  if $Z = z_{i_0}z_{i_1}   \ldots   z_{i_s}$, then $deg(Z) = s$.
    
  The complex $Sym_*^{(p)}$ is then defined by:
  \begin{equation}\label{eq.def_Sym}
    Sym_i^{(p)} \;:=\; k\big[\mathrm{Epi}_{\Delta S}([p], [p-i])\big]/\approx,
  \end{equation}
  The face maps will be defined recursively.  On monomials,
  \begin{equation}\label{eq.Sym-differential}
    \partial_i(z_{j_0} \ldots z_{j_s}) = \left\{\begin{array}{ll}
       0, & i < 0,\\
       z_{j_0}   \ldots   z_{j_i} \otimes z_{j_{i+1}}   \ldots   z_{j_s},
       \quad,& 0 \leq i < s,\\
       0, & i \geq s.
       \end{array}\right.
  \end{equation}
  Then, extend $\partial_i$ to tensor products via:
  \begin{equation}\label{eq.face_i-tensor}
    \partial_i(W \otimes V) = \partial_i(W) \otimes V + W \otimes \partial_{i-deg(W)}(V).
  \end{equation}
  In the above formula, $W$ and $V$ are formal tensors in 
  $k\big[\mathrm{Epi}_{\Delta S}([p], [q])\big]$, and
  \[
    deg(W) = deg(W_0 \otimes \ldots \otimes W_t) \;:=\; \sum_{k=0}^t deg(W_k).
  \]
  The boundary map $Sym_n^{(p)} \to Sym_{n-1}^{(p)}$ 
  is then
  \[
    d_n = \sum_{i=0}^n (-1)^i \partial_i = \sum_{i=0}^{n-1} (-1)^i \partial_i
  \]
\end{definition}
\begin{rmk}
  The result of applying $\partial_i$ on any
  formal tensor will result in only a single formal tensor, since in eq.~\ref{eq.face_i-tensor},
  at most one of the two terms will be non-zero).
\end{rmk}
\begin{rmk}\label{rmk.action}
  There is an action $\Sigma_{p+1} \times Sym_i^{(p)} \to Sym_i^{(p)}$, given by permuting the
  formal indeterminates $z_i$.  Furthermore, this action is compatible with the differential.
\end{rmk}
\begin{lemma}\label{lem.NS-Sym-homotopy}
   $Sym_*^{(p)}$ is chain-homotopy equivalent to $k[N\mathcal{S}_p]/k[N\mathcal{S}'_p]$.
\end{lemma}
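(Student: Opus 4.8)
The plan is to exhibit an explicit chain map $q_*\colon k[N\mathcal{S}_p] \to Sym_*^{(p)}$ that kills $k[N\mathcal{S}'_p]$, and a chain map in the other direction $s_*\colon Sym_*^{(p)} \to k[N\mathcal{S}_p]/k[N\mathcal{S}'_p]$, and then show the two composites are chain-homotopic to the respective identities. The natural candidate for $q_*$ sends an $n$-chain $(W^{(0)} \to W^{(1)} \to \cdots \to W^{(n)})$ of $N\mathcal{S}_p$ — where each $W^{(j)}$ is a tensor monomial in $z_0,\dots,z_p$ and each arrow is the unique epimorphism — to the element of $Sym_n^{(p)}$ recording only the "refinement data'' of the first arrow $W^{(0)} \to W^{(1)}$ relative to $z_0 \otimes \cdots \otimes z_p$, i.e. essentially the tensor-notation expression of $W^{(0)}$ viewed as a morphism $[p] \twoheadrightarrow [p - \deg W^{(0)}]$, but this needs care because a chain of length $n$ should land in degree $n$. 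The cleaner description, matching the recursive face formula~(\ref{eq.face_i-tensor}), is: $q_*$ should send a chain to the formal tensor whose $j$-th tensor factor records how the $j$-th arrow subdivides (the image of) $z_0 \otimes \cdots \otimes z_p$, with the sign conventions of Definition~\ref{def.sym_complex} built in. A chain entirely within $N\mathcal{S}'_p$ is one where $W^{(0)}$ is already a strict epimorphic image (i.e. has fewer than $p+1$ factors) — wait, more precisely where $W^{(0)} \neq z_0 \otimes \cdots \otimes z_p$ up to permutation — and on such chains $q_*$ vanishes by construction, so $q_*$ descends to the quotient.

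First I would carefully set up the dictionary between chains in $N\mathcal{S}_p$ and iterated subdivisions of the standard monomial, using the tensor-notation calculus for $\mathrm{Epi}\Delta S$-morphisms developed in Section~\ref{sec.deltas}; the composition rule for tensor-notation morphisms is exactly what makes the face map $\partial_i$ on $Sym_*^{(p)}$ correspond to composing consecutive arrows in the nerve, and $d_0$ corresponds to dropping the initial object (which is why the boundary formula truncates the $i=n$ term). Next I would verify that $q_*$ is a chain map: the only nontrivial faces are $\partial_0$ (drop the first object, which either composes $W^{(0)} \to W^{(1)} \to W^{(2)}$ into $W^{(0)} \to W^{(2)}$, matching the merging of the first two tensor factors in the recursive formula, or — if $W^{(0)} \to W^{(1)}$ is a strict epi — lands in $N\mathcal{S}'_p$ hence in the kernel, matching the vanishing of $\partial_i$ on a monomial when $i \geq s$) and the internal faces $\partial_i$, which compose $W^{(i)} \to W^{(i+1)} \to W^{(i+2)}$ and correspond to the $\otimes$-additivity of $\partial_i$. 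Then I would produce $s_*$ by choosing, for each generator of $Sym_n^{(p)}$ — a class of epimorphisms $[p] \twoheadrightarrow [p-n]$ modulo the graded-commutativity relation $\approx$ — a canonical representing chain in $N\mathcal{S}_p$ obtained by successively splitting off one tensor factor at a time from the standard monomial; I must check this respects $\approx$ (adjacent transpositions of tensor factors of degrees $a,b$ give chains differing by $(-1)^{ab}$, which holds by the standard shuffle/permutation sign in the nerve of the poset), and that it is a chain map.

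The main obstacle — and the bulk of the work — will be the homotopy $s_* q_* \simeq \mathrm{id}$ on $k[N\mathcal{S}_p]/k[N\mathcal{S}'_p]$. The map $q_*$ forgets everything after the first arrow, so $s_* q_*$ replaces an arbitrary chain by its canonically-resolved "first step'', and one must contract the space of continuations. I expect this to go through a filtration argument or an explicit prism operator: given a chain $W^{(0)} \to \cdots \to W^{(n)}$, interpolate between it and $s_* q_*$ of it by successively replacing $W^{(n)}, W^{(n-1)}, \dots$ with the canonical fully-subdivided continuation, inserting degenerate-type simplices and tracking signs. The other composite $q_* s_* = \mathrm{id}$ on $Sym_*^{(p)}$ should be essentially immediate from the way $s_*$ is defined (it produces a chain whose first arrow is exactly the given epimorphism). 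Alternatively — and this may be the slicker route I would try first — since $|N\mathcal{S}_p|$ is contractible, $k[N\mathcal{S}_p]/k[N\mathcal{S}'_p]$ computes $\widetilde{H}_*$ of the mapping cone, i.e. $\widetilde{H}_{*-1}(k[N\mathcal{S}'_p])$ up to a shift, so one could instead identify $Sym_*^{(p)}$ with the (shifted, augmented) chains on $\mathcal{S}'_p$ directly; but the quotient-by-$\approx$ description of $Sym_*^{(p)}$ strongly suggests the direct chain-homotopy-equivalence approach is what is intended, so I would carry that out in detail.
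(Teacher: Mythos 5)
Your overall strategy — build a map, build a candidate inverse, show both composites are chain-homotopic to the identity — is sound in outline, but as written there is a genuine gap in two places, and you also miss the structural observation that makes the paper's proof short.

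First, your map $q_*$ is never actually pinned down. You start with ``record the refinement data of the first arrow,'' notice this is in the wrong degree, and revise to ``the formal tensor whose $j$-th tensor factor records how the $j$-th arrow subdivides.'' But an element of $Sym_n^{(p)}$ is a tensor $Z_0 \otimes \cdots \otimes Z_{p-n}$ whose $p-n+1$ factors are monomials in the $z_i$'s, not a record of arrows; there is no $j$-th factor ``for each arrow.'' The natural candidate you seem to be circling around is: send a nondegenerate chain $z_0\otimes\cdots\otimes z_p = W^{(0)} \to W^{(1)} \to \cdots \to W^{(n)}$ to $\pm W^{(n)}$ when every step merges exactly one adjacent pair of factors, and to $0$ otherwise (an Alexander--Whitney-type projection). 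You never say this, and without it the rest of the argument does not get off the ground. Second, and more seriously, you explicitly do not construct the homotopy $s_*q_* \simeq \mathrm{id}$: ``I expect this to go through a filtration argument or an explicit prism operator'' is a plan, not a proof, and constructing that prism by hand is exactly the hard part you have left undone.

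The paper takes the opposite (and much more economical) route. It builds the map $\theta_* \colon Sym_*^{(p)} \to k[N\mathcal{S}_p]/k[N\mathcal{S}'_p]$, not $q_*$, by sending a $k$-chain $Z_0 \otimes \cdots \otimes Z_{p-k}$ to the signed sum over all length-$k$ paths in $N\mathcal{S}_p$ from $z_0\otimes\cdots\otimes z_p$ to $Z_0\otimes\cdots\otimes Z_{p-k}$ (a shuffle-type sum, with the sign of a path read off from its permutation relative to a fixed ``natural order'' of erasing tensor symbols). The point that makes this work with almost no further computation is the geometric identification: $|N\mathcal{S}_p|$ carries a cubical CW structure with $(p+1)!$ $p$-cubes, one for each total order $z_{i_0}\cdots z_{i_p}$, glued along faces; the cells of $|N\mathcal{S}_p/N\mathcal{S}'_p|$ are indexed by the classes $Z_0 \otimes \cdots \otimes Z_s$ modulo $\approx$, so $Sym_*^{(p)}$ \emph{is} the cubical cellular chain complex of $|N\mathcal{S}_p/N\mathcal{S}'_p|$, and $\theta_*$ is precisely the standard simplicial-subdivision-of-cubes chain map. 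That makes the chain homotopy equivalence a consequence of classical CW/simplicial comparison rather than something you must build by hand. Your concluding alternative (run a mapping-cone argument off contractibility of $|N\mathcal{S}_p|$) would identify the homology but would not by itself produce a chain homotopy equivalence, which is what the lemma asserts and what later constructions (e.g. equivariant half-smash with $E_*G_u$) genuinely use. So to repair the proposal: either adopt the cubical-cell identification and the subdivision map, or make $q_*$ precise as the Alexander--Whitney-type collapse, pair it with the paper's $\theta_*$ as $s_*$, and actually write down the prism homotopy.
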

\begin{proof}
  Let $v_0$ represent the common initial vertex of the $p$-cubes making up $N\mathcal{S}_p$.
  Then, as cell-complex, $N\mathcal{S}_p$ consists of $v_0$ together with all corners of
  the various $p$-cubes, together with $i$-cells for each $i$-face of the cubes.  Thus,
  $N\mathcal{S}_p$ consists of $(p+1)!$ $p$-cells with attaching maps 
  \mbox{$\partial I^p \to (N\mathcal{S}_p)^{p-1}$} defined according to the face maps for
  $N\mathcal{S}_p$ given above.  Presently, I shall provide an explicit construction.
  
  Label each top-dimensional cell with the permutation
  induced on $\{0, 1, \ldots, p\}$ by the final vertex, $z_{i_0}z_{i_1}\ldots z_{i_p}$.
  On a given $p$-cell, for each vertex $Z_0 \otimes \ldots \otimes Z_s$, there is an 
  ordering of the tensor
  factors so that \mbox{$Z_0 \otimes \ldots \otimes Z_s \to z_{i_0}z_{i_1}\ldots z_{i_p}$}
  preserves the order of formal indeterminates $z_i$.  Rewrite each vertex of this $p$-cell
  in this order.  Now, any chain
  \[
    (z_{i_0} \otimes z_{i_1} \otimes \ldots \otimes z_{i_p}) \to \ldots \to
    z_{i_0}z_{i_1}\ldots z_{i_p}
  \]
  is obtained by choosing the order in which to combine the factors.  In fact, 
  the $p$-chains
  are in bijection with the elements of $S_{p}$.  A given permutation
  \mbox{$\{1,2, \ldots, p\}\mapsto \{ j_1, j_2, \ldots, j_p \}$} will represent the
  chain obtained by first combining $z_{j_0} \otimes z_{j_1}$ into 
  $z_{j_0}z_{j_1}$, then combining $z_{j_1} \otimes z_{j_2}$ into 
  $z_{j_1}z_{j_2}$.  In effect, we ``erase'' the tensor product symbol between
  $z_{j_{r-1}}$ and $z_{j_r}$ for each $j_r$ in the list above.
  
  We shall declare that
  the {\it natural} order of combining the factors will be 
  the one that always combines the last two:
  \[
    (z_{i_0} \otimes \ldots \otimes z_{i_{p-1}} \otimes z_{i_p}) \to
    (z_{i_0} \otimes \ldots \otimes z_{i_{p-1}}z_{i_p}) \to
    (z_{i_0} \otimes z_{i_{p-2}}z_{i_{p-1}}z_{i_p}) \to 
    \ldots.
  \]
  This corresponds to a permutation $\rho := \{1,\ldots, p\} \mapsto
  \{p, p-1, \ldots, 2, 1\}$, and this chain will be regarded as {\it positive}.
  A chain $C_\sigma$, corresponding to another permutation, $\sigma$, will be
  regarded as positive or negative depending on the sign of the permutation
  $\sigma\rho^{-1}$.  Finally, the entire $p$-cell should be identified with the sum
  \[
    \sum_{\sigma \in S_p} sign(\sigma\rho^{-1}) C_\sigma
  \]
  It is this sign convention that
  permits the inner faces of the cube to cancel appropriately in the boundary maps.
  Thus we have a map on the top-dimensional chains:
  \[
    \theta_p : Sym_p^{(p)} \to \big(k[N\mathcal{S}_p]/k[N\mathcal{S}'_p]\big)_p.
  \]
  Define $\theta_*$ for arbitrary $k$-cells by sending $Z_0 \otimes \ldots
  \otimes Z_{p-k}$ to the sum of $k$-length chains with source $z_0 \otimes
  \ldots \otimes z_p$ and target $Z_0 \otimes \ldots \otimes Z_{p-k}$ with signs determined
  by the natural order of erasing tensor product symbols of $z_0 \otimes \ldots \otimes z_p$,
  excluding those tensor product symbols that never get erased.  
  
  The following example
  should clarify the point.  $W = z_3z_0 \otimes z_1 \otimes z_2z_4$ is a $2$-cell of
  $Sym_*^{(4)}$.  $W$ is obtained from $z_0 \otimes z_1 \otimes z_2 \otimes z_3
  \otimes z_4 = z_3 \otimes z_0 \otimes z_1 \otimes z_2 \otimes z_4$ by combining
  factors in some order.  There are only $2$ erasable tensor product symbols in
  this example.  The natural order (last to first) corresponds to the chain:
  \[
    z_3 \otimes z_0 \otimes z_1 \otimes z_2 \otimes z_4
    \to z_3 \otimes z_0 \otimes z_1 \otimes z_2z_4
    \to z_3z_0 \otimes z_1 \otimes z_2z_4.
  \]
  So, this chain shows up in $\theta_*(W)$ with positive sign, whereas the chain
  \[
    z_3 \otimes z_0 \otimes z_1 \otimes z_2 \otimes z_4
    \to z_3z_0 \otimes z_1 \otimes z_2 \otimes z_4
    \to z_3z_0 \otimes z_1 \otimes z_2z_4
  \]
  shows up with a negative sign.
  
  Now, $\theta_*$ is easily seen to be a chain map $Sym_*^{(p)} \to
  k[N\mathcal{S}_p]/k[N\mathcal{S}'_p]$.  Geometrically, $\theta_*$ has
  the effect of subdividing a cell-complex (defined with cubical cells) into
  a simplicial space.
\end{proof}  
    
As an example, consider $|N\mathcal{S}_2|$. There are $6$ \mbox{$2$-cells},
each represented by a copy of $I^2$.  The \mbox{$2$-cell} labelled by the permutation 
\mbox{$\{0,1,2\} \mapsto \{1,0,2\}$}
consists of the chains 
\[
  z_1 \otimes z_0 \otimes z_2 \to z_1 \otimes z_0z_2 \to z_1z_0z_2
\]
and
\[
  -(z_1 \otimes z_0 \otimes z_2 \to z_1z_0 \otimes z_2 \to z_1z_0z_2).
\]
Hence, the boundary is the sum of \mbox{$1$-chains}:
\[
  (z_1 \otimes z_0z_2 \to z_1z_0z_2) - 
  (z_1 \otimes z_0 \otimes z_2 \to z_1z_0z_2)
  + (z_1 \otimes z_0 \otimes z_2 \to z_1 \otimes z_0z_2)\qquad\qquad\qquad 
\]
\[
  \qquad\qquad\qquad-(z_1z_0 \otimes z_2 \to z_1z_0z_2) +  
  (z_1 \otimes z_0 \otimes z_2 \to z_1z_0z_2)
  - (z_1 \otimes z_0 \otimes z_2 \to z_1z_0 \otimes z_2)
\]
\[
  = (z_1 \otimes z_0z_2 \to z_1z_0z_2) + (z_1 \otimes z_0 \otimes z_2 \to z_1 \otimes z_0z_2)
  \qquad\qquad\qquad\qquad\qquad\qquad
\]
\[
  \qquad\qquad\qquad
  - (z_1z_0 \otimes z_2 \to z_1z_0z_2) - (z_1 \otimes z_0 \otimes z_2 \to z_1z_0 \otimes z_2).
\]
These \mbox{$1$-chains} correspond to the $4$ edges of the square.
  
Thus, in our example this $2$-cell
of $|N\mathcal{S}_p|$ will correspond to \mbox{$z_1z_0z_2 \in Sym_2^{(2)}$}, and
its boundary in $|N\mathcal{S}_p/N\mathcal{S}'_p|$ will consist of the two edges adjacent
to $z_0 \otimes z_1 \otimes z_2$ with appropriate signs:
\[
  (z_0 \otimes z_1 \otimes z_2 \to z_1 \otimes z_0z_2)
  - (z_0 \otimes z_1 \otimes z_2 \to z_1z_0 \otimes z_2).
\]
The corresponding boundary in $Sym_1^{(2)}$ will be: 
\mbox{$(z_1 \otimes z_0z_2) - (z_1z_0 \otimes z_2)$}, matching with the differential 
already defined on $Sym_*^{(p)}$.  
See Figs.~4.1 and~4.2.

\begin{figure}[ht]
  \psset{unit=.75in}
  \begin{pspicture}(6,3.5)
  
  \psdots[linecolor=black, dotsize=4pt]
  (2, 2)(2, 3.5)(3.3, 2.75)(3.3, 1.25)(2, .5)
  (.7, 1.25)(.7, 2.75)
  
  \psline[linewidth=1pt, linecolor=black](2, 2)(2, 3.5)
  \psline[linewidth=1pt, linecolor=black](2, 2)(3.3, 1.25)
  \psline[linewidth=1pt, linecolor=black](2, 2)(.7, 1.25)
  \psline[linewidth=1pt, linecolor=black](2, 3.5)(3.3, 2.75)
  \psline[linewidth=1pt, linecolor=black](3.3, 2.75)(3.3, 1.25)
  \psline[linewidth=1pt, linecolor=black](3.3, 1.25)(2, .5)
  \psline[linewidth=1pt, linecolor=black](2, .5)(.7, 1.25)
  \psline[linewidth=1pt, linecolor=black](.7, 1.25)(.7, 2.75)
  \psline[linewidth=1pt, linecolor=black](.7, 2.75)(2, 3.5)
  
  \rput(1.9, 2.12){$z_0 \otimes z_1 \otimes z_2$}
  \rput(2, 3.62){$z_0z_1 \otimes z_2$}
  \rput(3.28, 1.16){$z_0 \otimes z_1z_2$}
  \rput(.9, 1.16){$z_1 \otimes z_2z_0$}
  \rput(3.5, 2.87){$z_0z_1z_2$}
  \rput(2, .38){$z_1z_2z_0$}
  \rput(.6, 2.9){$z_2z_0z_1$}
  
  \psdots[linecolor=black, dotsize=4pt]
  (6, 2)(6, 3.5)(7.3, 2.75)(7.3, 1.25)(6, .5)
  (4.7, 1.25)(4.7, 2.75)
  
  \psline[linewidth=1pt, linecolor=black](6, 2)(6, 3.5)
  \psline[linewidth=1pt, linecolor=black](6, 2)(7.3, 1.25)
  \psline[linewidth=1pt, linecolor=black](6, 2)(4.7, 1.25)
  \psline[linewidth=1pt, linecolor=black](6, 3.5)(7.3, 2.75)
  \psline[linewidth=1pt, linecolor=black](7.3, 2.75)(7.3, 1.25)
  \psline[linewidth=1pt, linecolor=black](7.3, 1.25)(6, .5)
  \psline[linewidth=1pt, linecolor=black](6, .5)(4.7, 1.25)
  \psline[linewidth=1pt, linecolor=black](4.7, 1.25)(4.7, 2.75)
  \psline[linewidth=1pt, linecolor=black](4.7, 2.75)(6, 3.5)

  \rput(5.9, 2.12){$z_0 \otimes z_1 \otimes z_2$}
  \rput(6, 3.62){$z_1z_0 \otimes z_2$}
  \rput(7.28, 1.16){$z_1 \otimes z_0z_2$}
  \rput(4.9, 1.16){$z_0 \otimes z_2z_1$}
  \rput(7.5, 2.87){$z_1z_0z_2$}
  \rput(6, .38){$z_0z_2z_1$}
  \rput(4.6, 2.9){$z_2z_1z_0$}

  \end{pspicture}

  \caption[$|N\mathcal{S}_2|$]{$|N\mathcal{S}_2|$ consists of six squares, grouped
  into two hexagons that share a common center vertex}
  \label{fig.NS_2}
\end{figure}
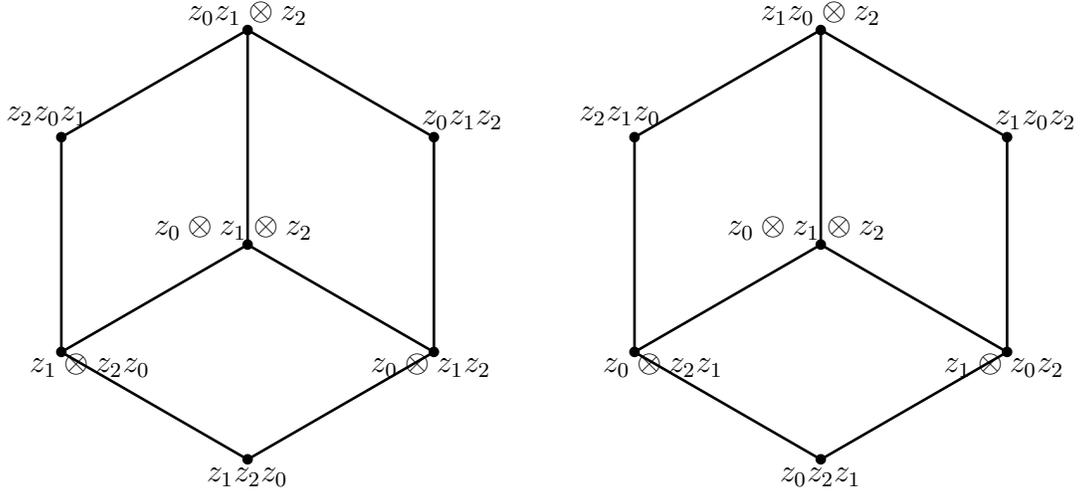

\begin{figure}[ht]
  \psset{unit=.75in}
  \begin{pspicture}(6,3.5)
  
  \psdots[linecolor=gray, dotsize=4pt]
  (2, 3.5)(3.3, 2.75)(3.3, 1.25)(2, .5)
  (.7, 1.25)(.7, 2.75)
  
  \psline[linewidth=1pt, linecolor=gray](2, 2)(2, 3.5)
  \psline[linewidth=1pt, linecolor=gray](2, 2)(3.3, 1.25)
  \psline[linewidth=1pt, linecolor=gray](2, 2)(.7, 1.25)
  \psline[linewidth=1pt, linecolor=gray](2, 3.5)(3.3, 2.75)
  \psline[linewidth=1pt, linecolor=gray](3.3, 2.75)(3.3, 1.25)
  \psline[linewidth=1pt, linecolor=gray](3.3, 1.25)(2, .5)
  \psline[linewidth=1pt, linecolor=gray](2, .5)(.7, 1.25)
  \psline[linewidth=1pt, linecolor=gray](.7, 1.25)(.7, 2.75)
  \psline[linewidth=1pt, linecolor=gray](.7, 2.75)(2, 3.5)
  
  \psdots[linecolor=black, dotsize=4pt]
  (2, 2)(2, 2.75)(2.65, 1.625)(1.35, 1.625)
  (2, 1.25)(1.35, 2.375)(2.65, 2.375)
  
  \psline[linewidth=1pt, linecolor=black]{->}(2, 1.25)(2.65, 1.625)
  \psline[linewidth=1pt, linecolor=black]{->}(2, 1.25)(1.35, 1.625)
  \psline[linewidth=1pt, linecolor=black]{->}(1.35, 2.375)(1.35, 1.625)
  \psline[linewidth=1pt, linecolor=black]{->}(1.35, 2.375)(2, 2.75)
  \psline[linewidth=1pt, linecolor=black]{->}(2.65, 2.375)(2, 2.75)
  \psline[linewidth=1pt, linecolor=black]{->}(2.65, 2.375)(2.65, 1.625)
  \psline[linewidth=1pt, linecolor=black]{->}(2.65, 2.375)(2, 2)
  \psline[linewidth=1pt, linecolor=black]{->}(2, 1.25)(2, 2)
  \psline[linewidth=1pt, linecolor=black]{->}(1.35, 2.375)(2, 2)
  \psline[linewidth=1pt, linecolor=black]{->}(2, 2.75)(2, 2)
  \psline[linewidth=1pt, linecolor=black]{->}(2.65, 1.625)(2, 2)
  \psline[linewidth=1pt, linecolor=black]{->}(1.35, 1.625)(2, 2)
    
  \rput(2, 1.1){$z_1z_2z_0$}
  \rput(1.1, 2.5){$z_2z_0z_1$}
  \rput(2.8, 2.5){$z_0z_1z_2$}
  \rput(3, 1.5){$z_0 \otimes z_1z_2$}
  \rput(1.1, 1.5){$z_1 \otimes z_2z_0$}
  \rput(2, 2.9){$z_0z_1 \otimes z_2$}
  
  \psdots[linecolor=gray, dotsize=4pt]
  (6, 3.5)(7.3, 2.75)(7.3, 1.25)(6, .5)
  (4.7, 1.25)(4.7, 2.75)
  
  \psline[linewidth=1pt, linecolor=gray](6, 2)(6, 3.5)
  \psline[linewidth=1pt, linecolor=gray](6, 2)(7.3, 1.25)
  \psline[linewidth=1pt, linecolor=gray](6, 2)(4.7, 1.25)
  \psline[linewidth=1pt, linecolor=gray](6, 3.5)(7.3, 2.75)
  \psline[linewidth=1pt, linecolor=gray](7.3, 2.75)(7.3, 1.25)
  \psline[linewidth=1pt, linecolor=gray](7.3, 1.25)(6, .5)
  \psline[linewidth=1pt, linecolor=gray](6, .5)(4.7, 1.25)
  \psline[linewidth=1pt, linecolor=gray](4.7, 1.25)(4.7, 2.75)
  \psline[linewidth=1pt, linecolor=gray](4.7, 2.75)(6, 3.5)

  \psdots[linecolor=black, dotsize=4pt]
  (6, 2)(6, 2.75)(6.65, 1.625)(5.35, 1.625)
  (6, 1.25)(5.35, 2.375)(6.65, 2.375)
  
  \psline[linewidth=1pt, linecolor=black]{->}(6, 1.25)(6.65, 1.625)
  \psline[linewidth=1pt, linecolor=black]{->}(6, 1.25)(5.35, 1.625)
  \psline[linewidth=1pt, linecolor=black]{->}(5.35, 2.375)(5.35, 1.625)
  \psline[linewidth=1pt, linecolor=black]{->}(5.35, 2.375)(6, 2.75)
  \psline[linewidth=1pt, linecolor=black]{->}(6.65, 2.375)(6, 2.75)
  \psline[linewidth=1pt, linecolor=black]{->}(6.65, 2.375)(6.65, 1.625)
  \psline[linewidth=1pt, linecolor=black]{->}(6.65, 2.375)(6, 2)
  \psline[linewidth=1pt, linecolor=black]{->}(6, 1.25)(6, 2)
  \psline[linewidth=1pt, linecolor=black]{->}(5.35, 2.375)(6, 2)
  \psline[linewidth=1pt, linecolor=black]{->}(6, 2.75)(6, 2)
  \psline[linewidth=1pt, linecolor=black]{->}(6.65, 1.625)(6, 2)
  \psline[linewidth=1pt, linecolor=black]{->}(5.35, 1.625)(6, 2)

  \rput(6, 1.1){$z_0z_2z_1$}
  \rput(5.1, 2.5){$z_2z_1z_0$}
  \rput(6.8, 2.5){$z_1z_0z_2$}
  \rput(7, 1.5){$z_1 \otimes z_0z_2$}
  \rput(5.1, 1.5){$z_0 \otimes z_2z_1$}
  \rput(6, 2.9){$z_1z_0 \otimes z_2$}

  \end{pspicture}

  \caption[$Sym^{(2)} \simeq N\mathcal{S}_2/N\mathcal{S}'_2$]
  {$Sym^{(2)} \simeq N\mathcal{S}_2/N\mathcal{S}'_2$.  The center of each hexagon
  is $z_0\otimes z_1 \otimes z_2$.}
  \label{fig.sym2}
\end{figure}
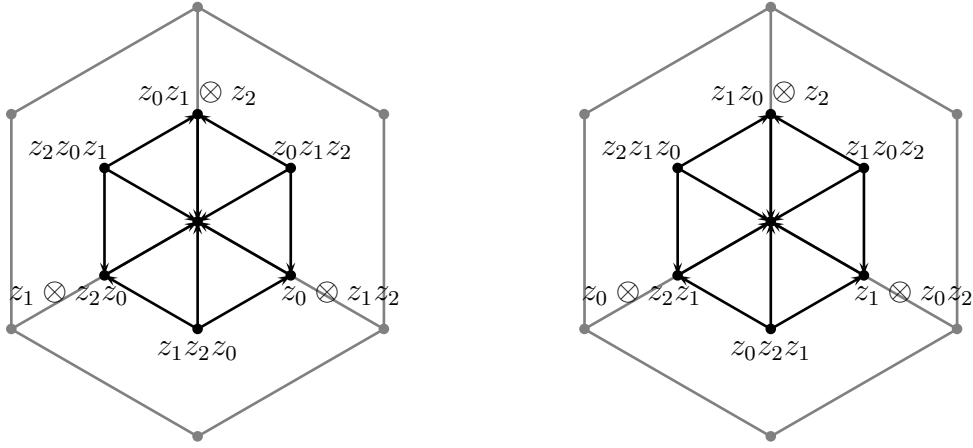

Now, with one piece of new notation, we may re-interpret Thm.~\ref{thm.E1_NS}.
\begin{definition}\label{def.ltimescirc}
  Let $G$ be a group.  Let $k_0$ be the chain complex consisting of $k$ concentrated 
  in degree $0$, with trivial $G$-action.
  If $X_*$ is a right $G$-complex, $Y_*$ is a left $G$-complex with $k_0 \hookrightarrow
  Y_*$ as a $G$-subcomplex, then define the \textit{equivariant half-smash tensor product}
  of the two complexes:
  \[
    X_* \ltimescirc_G Y_* := \left(X_* \otimes_{G} 
    Y_*\right)/\left(X_* \otimes_{G} k_0\right)
  \]
\end{definition}

\begin{cor}\label{cor.E1_Sym}
There is spectral sequence converging weakly to $\widetilde{H}S_*(A)$ with
\[
  E^1_{p,q} \cong 
  \bigoplus_{u \in X^{p+1}/\Sigma_{p+1}}
  H_{p+q}\left(E_*G_u \ltimescirc_{G_u} Sym_*^{(p)}; k\right),
\]
where $G_{u}$ is the isotropy subgroup of the orbit 
$u \in X^{p+1}/\Sigma_{p+1}$.
\end{cor}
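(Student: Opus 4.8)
The plan is to deduce this corollary directly from Theorem~\ref{thm.E1_NS} together with Lemma~\ref{lem.NS-Sym-homotopy}, the only real content being to transport the chain homotopy equivalence $Sym_*^{(p)} \simeq k[N\mathcal{S}_p]/k[N\mathcal{S}'_p]$ through the equivariant half-smash construction. First I would rewrite the summand appearing in Theorem~\ref{thm.E1_NS} at the chain level: the cellular chains of $EG_u \times_{G_u} |N\mathcal{S}_p|$ are canonically $E_*G_u \otimes_{kG_u} k[N\mathcal{S}_p]$, the subcomplex $|N\mathcal{S}'_p|$ maps to the $G_u$-trivial degree-$0$ subcomplex $k_0$, and forming the half-smash $EG_u \ltimes_{G_u}(\,\cdot\,)$ further collapses $EG_u \times_{G_u}\ast$. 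Hence $H_{p+q}\big(EG_u\ltimes_{G_u}|N\mathcal{S}_p/N\mathcal{S}'_p|;k\big)\cong H_{p+q}\big(E_*G_u\ltimescirc_{G_u}(k[N\mathcal{S}_p]/k[N\mathcal{S}'_p]);k\big)$ in the notation of Definition~\ref{def.ltimescirc}.

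Next I would check that the chain homotopy equivalence $\theta_*\colon Sym_*^{(p)} \to k[N\mathcal{S}_p]/k[N\mathcal{S}'_p]$ produced in the proof of Lemma~\ref{lem.NS-Sym-homotopy} is $\Sigma_{p+1}$-equivariant, where $\Sigma_{p+1}$ acts on $Sym_*^{(p)}$ as in Remark~\ref{rmk.action} and on $k[N\mathcal{S}_p]$ via the action on $N\widetilde{\mathcal{S}}_p$ described before Lemma~\ref{lem.G_u-identification}. This holds because permuting the indeterminates $z_i$ carries the distinguished cubical subdivision of $|N\mathcal{S}_p|$ to itself, commutes with the ``erase the last tensor symbol'' rule defining the natural order, and therefore leaves the signs in the definition of $\theta_*$ unchanged; it also preserves the basepoint subcomplexes $k_0$ on both sides. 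In particular $\theta_*$ is $G_u$-equivariant for every orbit $u$. Since $E_*G_u$ is a complex of free (hence flat) right $kG_u$-modules, the functor $E_*G_u\otimes_{kG_u}(\,\cdot\,)$ sends the equivariant quasi-isomorphism $\theta_*$ to a quasi-isomorphism, and this descends to the quotients by $E_*G_u\otimes_{kG_u}k_0$ to give a quasi-isomorphism $E_*G_u\ltimescirc_{G_u}Sym_*^{(p)} \to E_*G_u\ltimescirc_{G_u}(k[N\mathcal{S}_p]/k[N\mathcal{S}'_p])$. Taking homology and substituting into Theorem~\ref{thm.E1_NS} yields the stated formula.

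The step I expect to be the main obstacle is the equivariance bookkeeping: one must be confident that the homotopy equivalence of Lemma~\ref{lem.NS-Sym-homotopy} is genuinely compatible with the $\Sigma_{p+1}$-actions (sign conventions included) and with the basepoint subcomplexes, and that flatness of $E_*G_u$ over $kG_u$ really does let the equivalence survive passage to equivariant homology and to the half-smash quotient. The degenerate cases $p=0,1,2$, which were treated separately in Theorem~\ref{thm.E1_NS}, will also need a line of verification, but there $Sym_*^{(p)}$ and $k[N\mathcal{S}_p]/k[N\mathcal{S}'_p]$ are both small and explicit, so the check is immediate.
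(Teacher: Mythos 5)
Your proof is correct; the paper offers no separate argument for this corollary, treating it as immediate from Theorem~\ref{thm.E1_NS}, Lemma~\ref{lem.NS-Sym-homotopy}, and Definition~\ref{def.ltimescirc}, and the details you supply are exactly what makes that deduction rigorous. In particular, you rightly isolate the $\Sigma_{p+1}$-equivariance of the subdivision map $\theta_*$ as the one nontrivial check -- it does hold, since the signs in $\theta_*$ depend only on the pattern of cut points and the position-based ``combine last two first'' convention, not on which indeterminates sit in which slots -- and freeness of $E_*G_u$ over $kG_u$ then carries the quasi-isomorphism through the half-smash tensor product as you describe.
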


\section{Algebra Structure of $Sym_*$}\label{sec.sym_alg}         %

We may consider $Sym_* := \bigoplus_{p \geq 0} Sym_*^{(p)}$ as a bigraded 
differential algebra, where
$bideg(W) = (p+1, i)$ for $W \in Sym_i^{(p)}$.  The product
\[
  \boxtimes : Sym_i^{(p)} \otimes Sym_j^{(q)} \to Sym_{i+j}^{(p+q+1)}
\]
is defined by:
\[
  W \boxtimes V := W \otimes V',
\]
where $V'$ is obtained from $V$ by replacing each formal indeterminate $z_r$ by $z_{r+p+1}$ for
\mbox{$0 \leq r \leq q$}.  Eq.~\ref{eq.face_i-tensor} then implies:
\begin{equation}\label{eq.partialboxtimes}
  d( W \boxtimes V ) = d(W) \boxtimes V + (-1)^{bideg(W)_2}W \boxtimes d(V),
\end{equation}
where $bideg(W)_2$ is the second component of $bideg(W)$.

\begin{prop}\label{prop.boxtimes}
  The product $\boxtimes$ is defined on the level of homology.  Furthermore, this product (on both
  the chain level and homology level) is skew commutative in a twisted sense:
  \[
    W \boxtimes V = (-1)^{ij}\tau(V \boxtimes W),
  \]  
  where $bideg(W) = (p+1, i)$, $bideg(V) = (q+1, j)$, and $\tau$ is the permutation sending
  \[
    \{0, 1, \ldots, q, q+1, q+2, \ldots, p+q, p+q+1\} \mapsto \{p+1, p+2, \ldots, p+q+1, 0,
    1, \ldots, p-1, p \}
  \]
  In fact, $\tau$ is nothing more than the block transformation $\beta_{q,p}$ defined in 
  section~\ref{sec.deltas}.
\end{prop}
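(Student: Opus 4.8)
The plan is to derive both assertions from the Leibniz rule~(\ref{eq.partialboxtimes}) together with the sign that the defining relation $\approx$ on $Sym_*^{(p)}$ attaches to a transposition of adjacent tensor factors.

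For the first claim, observe that $(\ref{eq.partialboxtimes})$ says precisely that $\boxtimes$ is a chain map from the tensor-product complex $Sym_*^{(p)}\otimes Sym_*^{(q)}$ to $Sym_*^{(p+q+1)}$, so it carries cycles to cycles and boundaries to boundaries, and hence induces a pairing $H_i(Sym_*^{(p)})\otimes H_j(Sym_*^{(q)})\to H_{i+j}(Sym_*^{(p+q+1)})$. Explicitly: if $W,V$ are cycles then $d(W\boxtimes V)=d(W)\boxtimes V+(-1)^i W\boxtimes d(V)=0$; if in addition $W=d(W')$ then $W\boxtimes V=d(W'\boxtimes V)$, and if $V=d(V')$ then $W\boxtimes V=(-1)^i d(W\boxtimes V')$, which together give independence of the chosen representatives. (That $\boxtimes$ already respects $\approx$ is immediate, since in $W\boxtimes V=W\otimes V'$ a swap of factors internal to the $W$-block, or internal to the $V'$-block, is untouched by the other block.)

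For the twisted skew-commutativity I would compute both sides as explicit formal tensors. Write $W=W_0\otimes\ldots\otimes W_s$ and $V=V_0\otimes\ldots\otimes V_t$; since a morphism in $\mathrm{Epi}_{\Delta S}([p],[p-i])$ has $p-i+1$ factors whose sizes total $p+1$, one has $\sum_a deg(W_a)=i$ and $\sum_b deg(V_b)=j$. By definition $W\boxtimes V=W_0\otimes\ldots\otimes W_s\otimes V_0'\otimes\ldots\otimes V_t'$, where $V_b'$ is $V_b$ with each $z_\ell$ replaced by $z_{\ell+p+1}$, while $V\boxtimes W=V_0\otimes\ldots\otimes V_t\otimes W_0'\otimes\ldots\otimes W_s'$ with $z_\ell\mapsto z_{\ell+q+1}$ in each $W_a'$. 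The permutation $\tau=\beta_{q,p}$ carries $z_0,\ldots,z_q$ (the letters of $V$) to $z_{p+1},\ldots,z_{p+q+1}$ and $z_{q+1},\ldots,z_{p+q+1}$ (the letters of the $W_a'$) back to $z_0,\ldots,z_p$, so $\tau(V\boxtimes W)=V_0'\otimes\ldots\otimes V_t'\otimes W_0\otimes\ldots\otimes W_s$. It then remains to slide the block $W_0,\ldots,W_s$ rightward past the block $V_0',\ldots,V_t'$ inside $Sym_{i+j}^{(p+q+1)}$ by repeated use of $\approx$: each adjacent interchange of $W_a$ past $V_b'$ contributes $(-1)^{deg(W_a)\,deg(V_b')}$, and the product over all pairs $(a,b)$ equals $(-1)^{(\sum_a deg(W_a))(\sum_b deg(V_b'))}=(-1)^{ij}$, using $deg(V_b')=deg(V_b)$. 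Thus $W\boxtimes V=(-1)^{ij}\,\tau(V\boxtimes W)$ on the chain level; and since the $\Sigma_{p+q+2}$-action permuting the $z$'s commutes with the differential (Remark~\ref{rmk.action}), $\tau$ is a chain automorphism of $Sym_*^{(p+q+1)}$, so the identity also holds in homology.

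The one genuinely delicate point is the bookkeeping in the last step: checking that the indeterminate-relabellings built into the two $\boxtimes$-products are matched exactly by the block transposition $\beta_{q,p}$, and that the signs collected from $\approx$ multiply to $(-1)^{ij}$ on the nose rather than being shifted by the convention $deg(Z)=|Z|-1$. Once the degree counts $\sum_a deg(W_a)=i$ and $\sum_b deg(V_b)=j$ are established, the rest is formal.
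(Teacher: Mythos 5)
Your proof is correct and follows essentially the same route as the paper's: you verify that~(\ref{eq.partialboxtimes}) makes $\boxtimes$ a chain map, compute $\sum_a \deg(W_a)=i$ and $\sum_b \deg(V_b)=j$ from the definition of $\mathrm{Epi}_{\Delta S}([p],[p-i])$, and observe that the block transposition $\beta_{q,p}$ exchanges the relabelled $V'$-block and $W$-block so that the $\approx$-relations contribute exactly $(-1)^{ij}$. (The only wrinkle is that you apply $\tau$ before commuting the two blocks whereas the paper commutes first and then applies $\tau$; and where you say ``slide $W$ rightward past $V'$'' you mean leftward, since after applying $\tau$ the $W$-block sits to the right — but the sign count is unaffected.)
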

\begin{proof}
  By Eq.~\ref{eq.partialboxtimes}, the product of two cycles is again a cycle, and the 
  product of a
  cycle with a boundary is a boundary, hence there is an induced product on homology.
  
  Now, suppose $W \in Sym_i^{(p)}$ and $V \in Sym_j^{(p)}$.  So,
  \[
    W = Y_0 \otimes Y_1 \otimes \ldots \otimes Y_{p-i},
  \]
  \[
    V = Z_0 \otimes Z_1 \otimes \ldots \otimes Z_{q-j}.
  \]
  \begin{equation}\label{eq.boxtimesformula}
    V \boxtimes W = V \otimes W' = (-1)^\alpha W' \otimes V,
  \end{equation}
  where $W'$ is related to $W$ by replacing each $z_r$ by $z_{r+q+1}$.  The exponent $\alpha$ is
  determined by the relations in $Sym_{i+j}^{(p+q+1)}$:
  \[
    \alpha = \big[deg(Z_0) + \ldots + deg(Z_{q-j})\big]\big[deg(Y_0) + \ldots + deg(Y_{p-i})\big]
  \]
  \[
    = deg(V)deg(W).
  \]
  Observe that $deg(W)$ is exactly $i$ and $deg(V) = j$.  Indeed, 
  $W \in \mathrm{Epi}_{\Delta S}([p], [p-i])$, so there are exactly $i$ distinct
  positions where one could insert a tensor product symbol.  That is, there are $i$
  {\it cut points} in $W$.  Since $deg(W) = \sum deg(Y_k)$, and $deg(Y_k)$ is one less
  than the number of factors in $Y_k$, it follows that $deg(Y_k)$ is exactly the
  number of cut points in $Y_k$.  Hence, $deg(W) = i$.
  
  Next, apply the block transformation $\tau$ to eq.~\ref{eq.boxtimesformula} to obtain
  \[
    \tau(V \boxtimes W) = (-1)^\alpha \tau(W' \otimes V) = (-1)^\alpha W \otimes V' = (-1)^\alpha W
    \boxtimes V,
  \]
  where $V'$ is obtained by replacing $z_r$ by $z_{r+p+1}$ in $V$.  
\end{proof}

\section{Computer Calculations}\label{sec.comp_cal}               %

In principle, the homology of $Sym_*^{(p)}$ may be found by using a computer.  In fact,
we have the following results up to $p = 7$:
\begin{theorem}\label{thm.poincare_sym_complex}
  For $0 \leq p \leq 7$, the groups $H_*(Sym_*^{(p)})$ are free abelian and have
  Poincar\'e polynomials $P_p(t) := P\left(H_*(Sym_*^{(p)}); t\right)$:
  \[
    P_0(t) = 1,
  \]
  \[
    P_1(t) = t,
  \]
  \[
    P_2(t) = t + 2t^2,
  \]
  \[
    P_3(t) = 7t^2 + 6t^3,
  \]
  \[
    P_4(t) = 43t^3 + 24t^4,
  \]
  \[
    P_5(t) = t^3 + 272t^4 + 120t^5,
  \]
  \[
    P_6(t) = 36t^4 + 1847t^5 + 720t^6,
  \]
  \[
    P_7(t) = 829t^5 + 13710t^6 + 5040t^7.
  \]
\end{theorem}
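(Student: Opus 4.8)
The plan is to reduce the computation of $H_*(Sym_*^{(p)})$ to a finite linear-algebra problem over $\mathbb{Z}$, then carry that problem out on a computer for each $p$ in the stated range. First I would make the chain complex $Sym_*^{(p)}$ fully explicit: by Definition~\ref{def.sym_complex}, $Sym_i^{(p)} = k[\mathrm{Epi}_{\Delta S}([p],[p-i])]/\!\approx$, so a basis for $Sym_i^{(p)}$ is indexed by ordered tensor products $Z_0\otimes\cdots\otimes Z_{p-i}$ of monomials in $z_0,\ldots,z_p$ (each $z_j$ occurring exactly once, each $Z_\ell$ nonempty), taken modulo the sign-permutation relation on adjacent tensor factors. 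Choosing a canonical representative in each $\approx$-class (say, order the blocks by their smallest index) gives an honest finite $\mathbb{Z}$-basis, whose cardinality is a signless combinatorial count (a sum over ordered set partitions of $\{0,\ldots,p\}$ into $p-i+1$ blocks of the product of block-size factorials, up to the chosen normalization). The differential $d_n=\sum_{i=0}^{n-1}(-1)^i\partial_i$ is given by equations~\eqref{eq.Sym-differential} and~\eqref{eq.face_i-tensor}: each $\partial_i$ splits exactly one monomial of the tensor at its $i$-th internal cut point, so on a basis element it produces a single basis element (after re-normalizing signs), making the boundary matrices sparse and entirely mechanical to assemble.

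Next I would, for each $p=0,1,\ldots,7$, build the integer boundary matrices $d_n\colon Sym_n^{(p)}\to Sym_{n-1}^{(p)}$ (which are nonzero only for $1\le n\le p$, since $Sym_i^{(p)}=0$ for $i<0$ or $i>p$), verify $d_{n-1}d_n=0$ as a sanity check, and compute the Smith normal forms. From the Smith normal forms one reads off $\mathrm{rank}(d_n)$ and the torsion in $\mathrm{coker}(d_{n+1})$; then $H_i(Sym_*^{(p)})$ has free rank $\dim Sym_i^{(p)} - \mathrm{rank}(d_i) - \mathrm{rank}(d_{i+1})$ and torsion dictated by the elementary divisors of $d_{i+1}$. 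The claim in the theorem is two-fold: that all the homology is free abelian (i.e. every elementary divisor of every $d_n$ is $\pm1$), and that the free ranks are exactly the coefficients of the listed polynomials $P_p(t)$. Both assertions are outputs of the Smith-normal-form computation, so once the matrices are correctly generated the verification is a single deterministic run; I would use an exact-arithmetic package (the author mentions \texttt{Fermat}) and, as an independent cross-check, re-run the computation over $\mathbb{Q}$ and over $\mathbb{F}_p$ for small primes to confirm no torsion is hiding. A further consistency check: the Euler characteristic $\sum_i(-1)^i\dim Sym_i^{(p)}$ must equal $\sum_i(-1)^i[\text{coeff of }t^i\text{ in }P_p(t)]$, which is quickly verified by hand for each $p$, and one can also check against Corollary~\ref{cor.E1_Sym} / known low-dimensional values such as $HS_0(k)=k$.

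The main obstacle I expect is not conceptual but bookkeeping: getting the sign conventions in the $\approx$-relation and in the Leibniz-type formula~\eqref{eq.face_i-tensor} exactly right, and choosing a canonical-form algorithm for tensor monomials that is compatible with those signs, so that $d^2=0$ holds on the nose rather than only up to an indexing error. A secondary practical obstacle is size: $\dim Sym_i^{(p)}$ grows roughly like ordered-set-partition counts weighted by factorials, so for $p=7$ the boundary matrices, while still modest, are large enough that naive Smith-normal-form over $\mathbb{Z}$ can suffer coefficient blow-up; using a reduction that tracks only the invariant that matters (pivoting on $\pm1$ entries first, which the sparse structure makes abundant) keeps this under control. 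Once the matrices are trustworthy, the rest is entirely routine, and the theorem follows by direct inspection of the computed invariants, with the stated Poincaré polynomials being exactly the tabulated Betti numbers and the freeness being the observation that every computed elementary divisor is a unit.
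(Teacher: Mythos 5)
Your proposal is correct and follows essentially the same approach as the paper, which simply states that the calculations were carried out by computer scripts (GAP, Octave, and Fermat) and refers to the appendix for details. Your write-up is more explicit about the basis enumeration, Smith-normal-form bookkeeping, and sanity checks, but it is the same computational strategy, not a genuinely different route.
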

\begin{proof}
  These computations were performed using scripts written for the computer algebra
  systems \verb|GAP|~\cite{GAP4} and \verb|Octave|~\cite{E}.  See Appendix~\ref{app.comp}
  for more detail about the scripts used to calculate these polynomials.
\end{proof}

\section{Representation Theory of $H_*(Sym_*^{(p)})$}\label{sec.rep_sym}%

By remark~\ref{rmk.action}, the groups $H_i(Sym_*^{(p)}; k)$ are 
\mbox{$k\Sigma_{p+1}$-modules},
so it seems natural to investigate the irreducible representations comprising these
modules.
\begin{prop}
  Let $C_{p+1} \hookrightarrow \Sigma_{p+1}$ be the cyclic group of order $p+1$, embedded
  into the symmetric group as the subgroup generated by the permutation 
  $\tau_p := (0, p, p-1, \ldots, 1)$.
  Then there is a $\Sigma_{p+1}$-isomorphism:
  \[
    H_p(Sym_*^{(p)}) \cong AC_{p+1} \uparrow \Sigma_{p+1},
  \]
  {\it i.e.}, the alternating representation of the cyclic group, induced up to
  the symmetric group.  Note, for $p$ even, $AC_{p+1}$ coincides with the trivial
  representation $IC_{p+1}$.
  
  Moreover, $H_p(Sym_*^{(p)})$ is
  generated by the elements $\sigma(b_p)$, for the distinct cosets $\sigma C_{p+1}$,
  where $b_p$ is the element:
  \[
    b_p := \sum_{j = 0}^p (-1)^{jp} \tau_p^j(z_0z_1 \ldots z_p).
  \]
\end{prop}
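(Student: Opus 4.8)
The plan is to identify the top homology group $H_p(Sym_*^{(p)})$ explicitly, both as an abelian group and as a $\Sigma_{p+1}$-module, by directly analyzing the top two pieces of the complex $Sym_*^{(p)}$. Recall that $Sym_p^{(p)} = k[\mathrm{Epi}_{\Delta S}([p],[0])]/\approx$, which is spanned by the single formal monomials $z_{i_0}z_{i_1}\cdots z_{i_p}$ (all of degree $p$, in a single tensor factor), subject to no relations since there is only one tensor factor; so $Sym_p^{(p)}$ is the free $k$-module on the $(p+1)!$ orderings of $z_0,\ldots,z_p$, i.e.\ $k[\Sigma_{p+1}]$ as a left $\Sigma_{p+1}$-module (permuting indeterminates). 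Since $Sym_{p+1}^{(p)} = 0$, we have $H_p(Sym_*^{(p)}) = \ker\big(d_p : Sym_p^{(p)} \to Sym_{p-1}^{(p)}\big)$, so everything reduces to computing this kernel and its module structure.

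First I would write down $d_p$ on a monomial $z_{i_0}\cdots z_{i_p}$: by Definition~\ref{def.sym_complex}, $d_p = \sum_{i=0}^{p-1}(-1)^i\partial_i$, and $\partial_i(z_{i_0}\cdots z_{i_p}) = z_{i_0}\cdots z_{i_i}\otimes z_{i_{i+1}}\cdots z_{i_p}$, i.e.\ it splits the word after the $(i{+}1)$st letter. Thus $d_p$ sends a word $w$ to the alternating sum over all nontrivial ``cuts'' of $w$ into a left block $\otimes$ a right block. Next I would note that $Sym_{p-1}^{(p)}$ has a basis indexed by pairs of nonempty words (an ordered partition of $\{z_0,\ldots,z_p\}$ into two blocks, each with an internal ordering), modulo the sign relation $U\otimes V \approx (-1)^{\deg U\cdot\deg V}V\otimes U$. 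The key combinatorial step is to pick one representative per $\approx$-class in $Sym_{p-1}^{(p)}$ — say, require the block containing $z_0$ to come first — and then read off, for each such basis element $U\otimes V$, which words $w$ cut to give $\pm(U\otimes V)$: these are exactly the shuffles of $U$ and $V$ (with the cut placed between the last letter of $U$ and the first letter of $V$ in $w$), with a sign. Setting the total coefficient of each $U\otimes V$ to zero then gives the defining equations for $\ker d_p$.

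The cleanest route to the answer is to exhibit the claimed generating cycle and count dimensions. I would verify directly that $b_p = \sum_{j=0}^p (-1)^{jp}\tau_p^j(z_0z_1\cdots z_p)$ is a cycle: applying $d_p$, the cut of $\tau_p^j(z_0\cdots z_p)$ after position $i$ produces a term that is cancelled by the corresponding cut of an adjacent cyclic rotate, the signs $(-1)^{jp}$ and $(-1)^i$ conspiring so that consecutive rotates contribute oppositely — this is the standard ``cyclic bar differential kills the norm element'' computation, and the $(-1)^{jp}$ is precisely the sign making $\tau_p$ act by $+1$ or $-1$ according to its sign in $\Sigma_{p+1}$ (a $(p{+}1)$-cycle, sign $(-1)^p$). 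Having shown $b_p$ is a cycle, I would observe $b_p$ is the image of a generator under the transfer/averaging map for $C_{p+1}\hookrightarrow\Sigma_{p+1}$, so the $\Sigma_{p+1}$-submodule it generates is a quotient of $AC_{p+1}\uparrow\Sigma_{p+1}$, which has dimension $(p+1)!/(p+1) = p!$. To finish I would show $\ker d_p$ has dimension exactly $p!$ and that the map $AC_{p+1}\uparrow\Sigma_{p+1}\to \ker d_p$ is injective. For the dimension count, either (a) compute $\dim\,\mathrm{im}\,d_p = (p+1)! - \dim\ker d_p$ and match it against $\dim Sym_{p-1}^{(p)}$ minus $\dim H_{p-1}$, bootstrapping off the known Poincaré polynomials in Theorem~\ref{thm.poincare_sym_complex} for small $p$ and an inductive/rank argument in general, or better (b) show directly that the $p!$ translates $\sigma(b_p)$ over coset representatives $\sigma C_{p+1}$ are linearly independent — e.g.\ by exhibiting for each coset a monomial appearing in $\sigma(b_p)$ but in no other $\sigma'(b_p)$ (a leading-term argument with respect to a suitable ordering of words), which simultaneously gives injectivity of $AC_{p+1}\uparrow\Sigma_{p+1}\to\ker d_p$ and a lower bound of $p!$ on $\dim\ker d_p$, while an upper bound $\dim\ker d_p \le p!$ follows from showing $d_p$ has rank $\ge (p+1)!-p!$ (e.g.\ the cuts $z_0\otimes(\text{rest})$ already span a space of that dimension as the words vary).

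\textbf{Main obstacle.} The routine parts are checking $b_p$ is a cycle and that it lies in the image of the transfer; the real work is the dimension/independence step — proving $\ker d_p$ is \emph{no bigger} than $AC_{p+1}\uparrow\Sigma_{p+1}$. I expect the cleanest argument is the leading-term one: fix the lexicographic order on words $z_{i_0}\cdots z_{i_p}$, show that a nonzero cycle's leading word $w$ forces (via vanishing of the coefficient of the cut $w_{\le 0}\otimes w_{\ge 1}$, i.e.\ the first-letter cut) a rigid relation tying $w$ to its cyclic rotates with the sign pattern of $b_p$, so that subtracting the appropriate multiple of $\sigma(b_p)$ strictly lowers the leading word; descending induction then shows every cycle is a combination of the $\sigma(b_p)$. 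Making the sign bookkeeping in that descent airtight — especially reconciling the $\approx$-relation signs in $Sym_{p-1}^{(p)}$ with the $(-1)^i$ in $d_p$ and the $(-1)^{jp}$ in $b_p$ — is where care is needed, but there is no deep difficulty, only diligence.
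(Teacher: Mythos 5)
Your framing is the same as the paper's: $Sym_{p+1}^{(p)}=0$, so $H_p(Sym_*^{(p)})=\ker d_p$ inside $Sym_p^{(p)}\cong k\Sigma_{p+1}$, and the job is to show that kernel is exactly $\bigoplus_{\sigma C_{p+1}}k\cdot\sigma(b_p)$. Where you diverge is in how you propose to pin down the kernel. The paper's proof does not separately check that $b_p$ is a cycle and then argue independence and spanning; it reads off the whole kernel at once by directly solving $d(w)=0$ for $w=\sum_\sigma c_\sigma\,\sigma(z_0\cdots z_p)$. The one observation that makes this work is the pairing you gesture at in your descent sketch, made precise: each basis element $\sigma(z_0\cdots z_i\otimes z_{i+1}\cdots z_p)$ of $Sym_{p-1}^{(p)}$ receives a contribution from exactly two words — $\sigma(z_0\cdots z_p)$ cut at $i$, and $\rho(z_0\cdots z_p)$ cut at $p-i-1$ where $\rho=\sigma\tau_p^{p-i}$ (because $U\otimes V\approx (-1)^{|U||V|}V\otimes U$) — so $d(w)=0$ is equivalent to the collection of two-term constraints $c_{\sigma\tau_p^{\,j}}=(-1)^{jp}c_\sigma$. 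That characterization \emph{is} the statement: the solution space is freely parametrized by $c_\sigma$ along coset representatives, with the remaining coefficients forced to follow the sign pattern of $b_p$. Cycle-hood of $b_p$, linear independence of the $\sigma(b_p)$, and spanning are all immediate consequences, with no dimension count and no consistency check for the descent.

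Two of your specific suggestions for closing the dimension gap have problems. Option (a) is circular: Theorem~\ref{thm.poincare_sym_complex} is a machine computation stated only for $p\le 7$, so it cannot be used as an input to a proof valid for all $p$ (and if it could, the $t^p$ coefficient already \emph{is} the assertion you are trying to prove). Option (b) is in the right spirit, but the parenthetical that ``the cuts $z_0\otimes(\text{rest})$ already span a space of dimension $(p+1)!-p!$'' does not work as literally written: there are only $p!$ words beginning with $z_0$, hence only $p!$ such cuts, which is short of $(p+1)!-p!=p\cdot p!$ for $p\ge 2$. If you instead take \emph{all} first-letter cuts (singleton first block, any letter) as the words range over all of $\Sigma_{p+1}$, the relevant submatrix of $d_p$ has two $\pm 1$ entries per row, linking $w$ to its one-step cyclic rotate; its row space then has rank $(p+1)!-p!$ provided the product of the signs around each $\tau_p$-cycle is $+1$. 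That sign check is exactly the $(-1)^{jp}$ bookkeeping the paper carries out, so once you do it you have essentially reproduced the direct solve. In short: the descent idea is workable, but the paper's route — solve the system, read off the orbit constraints — gets everything in one pass and is the cleaner way to discharge the ``real work'' you correctly identified.
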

\begin{proof}
  Let $w$ be a general element of $Sym_p^{(p)}$.
  \[
    w = \sum_{\sigma \in \Sigma_{p+1}} c_\sigma \sigma(z_0z_1 \ldots z_p),
  \]
  where $c_\sigma$ are constants in $k$.  $H_p(Sym_*^{(p)})$ consists of
  those $w$ such that $d(w) = 0$.  That is,
  \[
    0 = \sum_{\sigma \in \Sigma_{p+1}} c_\sigma \sigma\sum_{i=0}^{p-1}
    (-1)^i (z_0 \ldots z_i \otimes z_{i+1} \ldots z_p)
  \]
  \begin{equation}\label{eq.sum_sigma_zero}
    = \sum_{\sigma \in \Sigma_{p+1}} \sum_{i=0}^{p-1}
    (-1)^i c_\sigma \sigma(z_0 \ldots z_i \otimes z_{i+1} \ldots z_p).
  \end{equation}
  Now for fixed $\sigma$, the terms corresponding to 
  $\sigma(z_0 \ldots z_i \otimes z_{i+1} \ldots z_p)$
  occur in pairs in the above formula.  The obvious term of the pair is
  \[
    (-1)^i c_{\sigma}  \sigma(z_0 \ldots z_i \otimes z_{i+1} \ldots z_p).
  \]
  Not so obviously, the second term of the pair is
  \[
    (-1)^{(p-i-1)i}(-1)^{p-i-1} c_{\rho} \rho
    (z_0 \ldots z_{p-i-1} \otimes z_{p-i} \ldots z_p),
  \]
  where $\rho = \sigma \tau_p^{p-i}$.  Thus, if $d(w) = 0$, then
  \[
    (-1)^i c_\sigma + (-1)^{(p-i-1)(i+1)}c_\rho = 0,
  \]
  \[
    c_\rho = (-1)^{(p-i-1)(i+1) + (i+1)}c_\sigma = (-1)^{(p-i)(i+1)}c_\sigma.
  \]
  Set $j = p-i$, so that
  \[
    c_\rho = (-1)^{j(p-j+1)}c_\sigma = (-1)^{jp -j^2 +j}c_\sigma = (-1)^{jp}c_\sigma.
  \]
  This proves that the only restrictions on the coefficients $c_\sigma$ are that
  the absolute values of coefficients corresponding to $\sigma, \sigma \tau_p,
  \sigma \tau_p^2, \ldots$ must be the same, and their corresponding signs in $w$
  alternate if and only if $p$ is odd; otherwise, they have the same signs.  Clearly,
  the elements $\sigma(b_p)$ for distinct cosets $\sigma C_{p+1}$ represents an
  independent set of generators over $k$ for $H_p(Sym_*^{(p)})$.

  Observe that $b_p$ is invariant under the action of $sign(\tau_p)\tau_p$, and so
  $b_p$ generates an alternating representation $A C_{p+1}$ over $k$.  
  Induced up to
  $\Sigma_{p+1}$, we obtain the representation $AC_{p+1} \uparrow \Sigma_{p+1}$ 
  of dimension $(p+1)!/(p+1) = p!$,
  generated by the elements $\sigma(b_p)$ as in the proposition.
\end{proof}

\begin{definition}
  For a given proper partition $\lambda = [\lambda_0, \lambda_1, \lambda_2, \ldots, \lambda_s]$ 
  of the $p+1$ integers $\{0, 1, \ldots, p\}$,
  an element $W$ of $Sym_*^{(p)}$ will designated as {\it type $\lambda$} if it equivalent
  to $\pm(Y_0 \otimes Y_1 \otimes Y_2 \otimes \ldots \otimes Y_s)$ with
  $deg(Y_i) = \lambda_i - 1$.  That is, each $Y_i$ has $\lambda_i$ factors.
  
  The notation $Sym_\lambda^{(p)}$ or $Sym_\lambda$ will denote the $k$-submodule of 
  $Sym_{p-s}^{(p)}$
  generated by all elements of type $\lambda$.
\end{definition}

In what follows, $|\lambda|$ will refer to the number of components of $\lambda$.
The action of $\Sigma_{p+1}$ leaves $Sym_\lambda$ invariant for any given $\lambda$, so the
there is a decomposition
\[
  Sym_{p-s}^{(p)} = \bigoplus_{\lambda \vdash (p+1), |\lambda| = s+1} Sym_{\lambda}
\]
as $k\Sigma_{p+1}$-module.

\begin{prop}
  For a given proper partition $\lambda \vdash (p+1)$, 
  
  {\it (a)} $Sym_\lambda$ contains exactly one
  alternating representation $A\Sigma_{p+1}$ iff $\lambda$ contains no repeated components.
  
  {\it (b)} $Sym_\lambda$ contains exactly one trivial representation
  $I\Sigma_{p+1}$ iff $\lambda$ contains no repeated even components.
\end{prop}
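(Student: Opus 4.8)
The plan is to identify $Sym_\lambda$ explicitly as an induced representation and then invoke Frobenius reciprocity. First I would observe that, since the relation $\approx$ of Definition~\ref{def.sym_complex} only permutes tensor factors (with a Koszul-type sign) and, away from characteristic $2$, can never identify a tensor with its own negative, the $k$-module $Sym_\lambda$ is free with a basis naturally indexed by the set $T_\lambda$ of unordered collections $\{B_0,\ldots,B_s\}$ of nonempty, pairwise disjoint, linearly ordered blocks partitioning $\{0,1,\ldots,p\}$ whose multiset of sizes equals $\lambda$ (fixing one representative for each such line by choosing an order on its tensor factors). The action of $\Sigma_{p+1}$ from Remark~\ref{rmk.action} permutes the indeterminates $z_i$, hence permutes $T_\lambda$, and it does so transitively. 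Fix the standard element $W_0=(z_0\cdots z_{\lambda_0-1})\otimes(z_{\lambda_0}\cdots)\otimes\cdots$; its stabilizer $K_\lambda\subseteq\Sigma_{p+1}$ consists exactly of the permutations carrying each block of $W_0$ order-preservingly onto a block of the same size, so $K_\lambda\cong\prod_{\ell\geq 1}\Sigma_{m_\ell}$, where $m_\ell$ is the number of parts of $\lambda$ equal to $\ell$ and the factor $\Sigma_{m_\ell}$ permutes the $m_\ell$ blocks of size $\ell$. Each such permutation fixes the line $k\,W_0$ and acts on it by a sign character $\psi_\lambda\colon K_\lambda\to\{\pm 1\}$, whence a $\Sigma_{p+1}$-isomorphism $Sym_\lambda\cong\psi_\lambda\uparrow\Sigma_{p+1}$.

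Working over a field $k$ in which $|\Sigma_{p+1}|$ is invertible (e.g. characteristic $0$, as in this chapter), Frobenius reciprocity shows that for a one-dimensional character $\chi$ of $\Sigma_{p+1}$ the multiplicity of $\chi$ in $\psi_\lambda\uparrow\Sigma_{p+1}$ is $1$ if $\mathrm{Res}_{K_\lambda}\chi=\psi_\lambda$ and $0$ otherwise; in particular in both (a) and (b) the multiplicity is automatically at most one, and only an equality of one-dimensional characters of $K_\lambda$ must be checked. I would then compute the three relevant characters, writing an element of $K_\lambda$ as a tuple $(\pi_\ell)_\ell$ with $\pi_\ell\in\Sigma_{m_\ell}$. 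Transposing two adjacent blocks of common size $\ell$ incurs the reordering sign $(-1)^{(\ell-1)(\ell-1)}=(-1)^{\ell-1}$ of Definition~\ref{def.sym_complex}, so $\psi_\lambda\big((\pi_\ell)_\ell\big)=\prod_{\ell\ \mathrm{even}}\mathrm{sgn}(\pi_\ell)$. An order-preserving swap of two size-$\ell$ blocks is a product of $\ell$ disjoint transpositions in $\Sigma_{p+1}$, hence has sign $(-1)^\ell$, so $\mathrm{Res}_{K_\lambda}(A\Sigma_{p+1})\big((\pi_\ell)_\ell\big)=\prod_{\ell\ \mathrm{odd}}\mathrm{sgn}(\pi_\ell)$, while $\mathrm{Res}_{K_\lambda}(I\Sigma_{p+1})$ is trivial.

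Comparing these, and using that the factors $\mathrm{sgn}(\pi_\ell)$ vary independently over $K_\lambda\cong\prod_\ell\Sigma_{m_\ell}$: $\mathrm{Res}_{K_\lambda}(I\Sigma_{p+1})=\psi_\lambda$ holds exactly when no even $\ell$ has $m_\ell\geq 2$, i.e. $\lambda$ has no repeated even component, which is (b); and $\mathrm{Res}_{K_\lambda}(A\Sigma_{p+1})=\psi_\lambda$ holds exactly when no $\ell$ at all has $m_\ell\geq 2$, i.e. $\lambda$ has no repeated component, which is (a). (As a consistency check: when $\lambda$ has distinct parts, $K_\lambda$ is trivial and $Sym_\lambda$ is the regular representation, which indeed contains each of $I\Sigma_{p+1}$ and $A\Sigma_{p+1}$ exactly once.) The main obstacle is the first paragraph: carefully verifying that $Sym_\lambda$ really is the signed permutation module on $T_\lambda$ with stabilizer $K_\lambda$ and sign character $\psi_\lambda$ as described, i.e. keeping the graded-sign bookkeeping for block reorderings coherent. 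Once that identification is secure, the remaining steps are the short computations of $\psi_\lambda$, $\mathrm{Res}_{K_\lambda}(A\Sigma_{p+1})$, $\mathrm{Res}_{K_\lambda}(I\Sigma_{p+1})$ and the final comparison above.
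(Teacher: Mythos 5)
Your proposal is correct and takes a genuinely different route from the paper. The paper first observes that $Sym_\lambda$ is a cyclic $k\Sigma_{p+1}$-module (a quotient of the regular representation, so each one-dimensional character appears with multiplicity at most one), and then tests whether a copy of $A\Sigma_{p+1}$ or $I\Sigma_{p+1}$ actually survives by writing down the candidate generator $V=\sum_\sigma \mathrm{sgn}(\sigma)\sigma(W)$ (resp.\ $U=\sum_\sigma\sigma(W)$) and checking, via the Koszul sign incurred by swapping two equal-size blocks, whether a repeated part (resp.\ a repeated even part) forces $\sigma(W)$ to cancel against $W$. You instead identify $Sym_\lambda$ intrinsically as the induced character $\psi_\lambda\uparrow\Sigma_{p+1}$ from the Young-type stabilizer $K_\lambda\cong\prod_\ell\Sigma_{m_\ell}$, where $\psi_\lambda$ is the product of sign characters on the factors indexed by \emph{even} block sizes, and then read off the multiplicity of any one-dimensional $\chi$ via Frobenius reciprocity; the comparison of $\psi_\lambda$ with $\mathrm{Res}_{K_\lambda}(A\Sigma_{p+1})=\prod_{\ell\ \mathrm{odd}}\mathrm{sgn}(\pi_\ell)$ and with the trivial character then gives (a) and (b) in one stroke. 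Your computations of $\psi_\lambda$ (Koszul sign $(-1)^{(\ell-1)^2}=(-1)^{\ell-1}$) and of the restricted sign character ($(-1)^\ell$ for an order-preserving block swap, being a product of $\ell$ disjoint transpositions) are both right, and the ``at most one'' part falls out automatically from the one-dimensionality of $\psi_\lambda$ rather than being argued separately. The trade-off: the paper's argument is elementary and makes no appeal to character theory or semisimplicity, whereas yours needs $|\Sigma_{p+1}|$ invertible in $k$ (a hypothesis the paper leaves implicit but which is in force in this chapter) but in return exhibits the full induced character of $Sym_\lambda$, from which multiplicities of arbitrary irreducibles -- not just the one-dimensional ones -- could in principle be extracted.
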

\begin{proof}
  $Sym_\lambda$ is a quotient of the regular representation, since it is the image of the
  $\Sigma_{p+1}$-map
  \[
    \pi_\lambda \;:\; k\Sigma_{p+1} \to Sym_\lambda
  \]
  \[
    \sigma \mapsto \psi_\lambda \overline{\sigma},
  \]
  where $\overline{\sigma} \in \Sigma_{p+1}^{\mathrm{op}}$ is the $\Delta S$
  automorphism of $[p]$ corresponding to $\sigma$ and $\psi_\lambda$ is a $\Delta$
  morphism $[p] \to [ |\lambda| ]$ that sends the points $0, \ldots, \lambda_0-1$
  to $0$, the points $\lambda_0, \ldots, \lambda_0 + \lambda_1 -1$ to $1$, and
  so on.  Hence, there can be at most $1$ copy of $A\Sigma_{p+1}$ and at most
  $1$ copy of $I\Sigma_{p+1}$ in $Sym_\lambda$.
  
  Let $W$ be the ``standard'' element of $Sym_\lambda$.  That is, the indeterminates
  $z_i$ occur in $W$ in numerical order.
  $A\Sigma_{p+1}$ exists in $Sym_\lambda$ iff the element
  \[
    V = \sum_{\sigma \in \Sigma_{p+1}} sign(\sigma)\sigma(W)
  \]
  is non-zero.
  
  Suppose that some component of $\lambda$ is repeated, say $\lambda_i = \lambda_{i+1} = \ell$.
  If $W = Y_0 \otimes Y_1 \otimes \ldots \otimes Y_s$, then $deg(Y_i) = deg(Y_{i+1}) = \ell-1$.
  Now, we know that
  \[
    W = (-1)^{deg(Y_i)deg(Y_{i+1})} Y_0 \otimes \ldots \otimes Y_{i+1} \otimes Y_i \otimes
    \ldots Y_s
  \]
  \[
    = (-1)^{\ell^2 - 2\ell + 1}\alpha(W)
  \]
  \[
    = -(-1)^{\ell} \alpha(W),
  \]
  for the permutation $\alpha \in \Sigma_{p+1}$ that exchanges the indices of indeterminates in
  $Y_i$ with those in $Y_{i+1}$ in an order-preserving way.  In $V$, the term $\alpha(W)$ shows 
  up with sign $sign(\alpha) = (-1)^{\ell^2} = (-1)^\ell$, thus cancelling with $W$.  Hence,
  $V = 0$, and no alternating representation exists.
  
  If, on the other hand, no component of $\lambda$ is repeated, then no term $W$ can be
  equivalent to $\pm \alpha(W)$ for $\alpha \neq \mathrm{id}$, so $V$ survives as the generator
  of $A\Sigma_{p+1}$ in $Sym_\lambda$.
  
  A similar analysis applies for trivial representations.  This time, we examine
  \[
    U = \sum_{\sigma \in \Sigma_{p+1}} \sigma(W),
  \]
  which would be a generator for $I\Sigma_{p+1}$ if it were non-zero.
  
  As before, if there is a repeated component, $\lambda_i = \lambda_{i+1} = \ell$, then
  \[
    W = (-1)^{\ell - 1} \alpha(W).
  \]
  However, this time, $W$ cancels with $\alpha(W)$ only if $\ell - 1$ is odd.  That is,
  $|\lambda_i| = |\lambda_{i+1}|$ is even.  If $\ell - 1$ is even, or if all $\lambda_i$
  are distinct, then the element $U$ must be non-zero.
\end{proof}
  
\begin{prop}\label{prop.alternating_reps}
  $H_i(Sym_*^{(p)})$ contains an alternating representation for each partition 
  $\lambda \vdash (p+1)$ with \mbox{$|\lambda| = p-i$} such that no component of
  $\lambda$ is repeated.
\end{prop}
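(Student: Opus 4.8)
The plan is to realize the alternating representations in the statement as the homology of a very small subcomplex of $Sym_*^{(p)}$: the subcomplex assembled from the sign-isotypic piece of each chain group.

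First I would record, using the preceding propositions, that for a partition $\lambda\vdash(p+1)$ with no repeated component the module $Sym_\lambda$ contains exactly one copy of $A\Sigma_{p+1}$, spanned by
\[
  V_\lambda \;:=\; \sum_{\sigma\in\Sigma_{p+1}} sign(\sigma)\,\sigma(W_\lambda),
\]
where $W_\lambda$ is the standard type-$\lambda$ element (indeterminates in increasing order), whereas the same alternating sum vanishes when $\lambda$ has a repeated component. Combined with the $k\Sigma_{p+1}$-module decomposition $Sym_n^{(p)}=\bigoplus_{|\mu|=p+1-n}Sym_\mu$ established above, this shows that the sign-isotypic summand of $Sym_n^{(p)}$ is free with basis $\{V_\lambda\}$, one $V_\lambda$ for each partition of $p+1$ with no repeated component and the appropriate number of parts, i.e.\ one for each $\lambda$ occurring in the statement. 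Since $d$ is $\Sigma_{p+1}$-equivariant (Remark~\ref{rmk.action}), these elements span a subcomplex $(Sym_*^{(p)})^{\mathrm{alt}}\subseteq Sym_*^{(p)}$, and the proposition reduces to the claim that $d$ vanishes on $(Sym_*^{(p)})^{\mathrm{alt}}$: granting that, $(Sym_*^{(p)})^{\mathrm{alt}}$ equals its own homology and maps into $H_*(Sym_*^{(p)})$ as a direct sum of copies of $A\Sigma_{p+1}$, one in the appropriate degree for each $\lambda$ in the statement.

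Thus the real content is the identity $d(V_\lambda)=0$ for every $\lambda$ with no repeated component. Using equivariance, $d(V_\lambda)=\sum_{\sigma}sign(\sigma)\,\sigma\big(d(W_\lambda)\big)$, and $d(W_\lambda)$ is a signed sum of type-$\mu$ elements with $\mu$ a refinement of $\lambda$ obtained by cutting a single part into two nonempty pieces. Any $\mu$ with a repeated component contributes $0$ after antisymmetrization, since $(Sym_\mu)^{\mathrm{alt}}=0$; for each surviving $\mu$ the antisymmetrized contribution is a scalar multiple of $V_\mu$, and one must show this scalar is $0$. I would prove this by the sign-reversing pairing of terms already used to identify $H_p(Sym_*^{(p)})$ with $AC_{p+1}\uparrow\Sigma_{p+1}$ (which is the case $\lambda=[p+1]$): the cut of a part $\lambda_j$ of $W_\lambda$ at position $\ell$ is matched --- after composing with the order-preserving block transposition of the two resulting chunks and re-indexing the summation variable $\sigma$ accordingly --- with the cut of $\lambda_j$ at the complementary position $\lambda_j-\ell$. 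The hypothesis that $\lambda$ has distinct parts makes this pairing well defined and fixed-point free (a fixed point would force $\ell=\lambda_j-\ell$, but then $\mu$ has a repeated part and has already dropped out), and the sign from the $\approx$-relation together with $sign$ of the re-indexing permutation makes the two paired terms cancel. Keeping track of these signs across the interior tensor slots is the main obstacle; once it is done, the theorem follows immediately.
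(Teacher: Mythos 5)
Your proposal is correct and takes essentially the same route as the paper's proof: reduce via $\Sigma_{p+1}$-equivariance of $d$ (Schur) to showing $d(V_\lambda)=0$, and establish this by a sign-reversing pairing of cut terms exactly as the paper does. One small remark on your parenthetical: the distinct-parts hypothesis on $\lambda$ is needed only to guarantee $V_\lambda\neq 0$ (so that there is an alternating copy to exhibit), not for fixed-point freeness of the pairing --- the paper's pairing on $(\sigma,\text{cut})$ pairs is unconditionally fixed-point free because the block transposition $\alpha$ is never the identity, even when the two chunks have equal length, so the ``equal-size cut'' case you single out is already absorbed rather than needing to be dropped separately.
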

\begin{proof}
  This proposition will follow from the fact that $d(V) = 0$ for any generator
  $V$ of an alternating representation in $Sym_\lambda$.  Then, by Schur's Lemma,
  the alternating representation must survive at the homology level.
  
  Let $V$ be the generator mentioned above,
  \[
    V = \sum_{\sigma \in \Sigma_{p+1}} sign(\sigma)\sigma(W).
  \]
  $d(V)$ consists of terms $\partial_j(\sigma(W)) = \sigma(\partial_j(W))$ along with 
  appropriate signs.
  
  For a given, $j$, write
  \begin{equation}\label{eq.d_jW}
    \partial_j(W) = (-1)^{a + \ell} Y_0 \otimes \ldots \otimes Y_i\{0, \ldots, \ell\}
    \otimes Y_i\{\ell+1, \ldots, m\} \otimes \ldots \otimes Y_s,
  \end{equation}
  where if $Y = z_{i_0}z_{i_1} \ldots z_{i_r}$, then the notation $Y\{s, \ldots, t\}$ refers
  to the monomial $z_{i_t}z_{i_{t+1}} \ldots z_{i_s}$, assuming $0 \leq s \leq t \leq r$.
  In the above expression,
  $a = deg(Y_0) + \ldots + deg(Y_{i-1})$.
  
  Now, we may use the relations in $Sym_*$ to rewrite eq.~\ref{eq.d_jW} as
  \begin{equation}
    (-1)^{(a + \ell) + \ell(m - \ell - 1)}Y_0 \otimes \ldots \otimes Y_i\{\ell+1, \ldots, m\}
    \otimes Y_i\{0, \ldots, \ell\} \otimes \ldots \otimes Y_s.
  \end{equation}
  Let $\alpha$ be the block permutation that relabels indices thus:
  \begin{equation}\label{eq.d_jW-alpha}
    (-1)^{a + m\ell - \ell^2}\alpha\big(Y_0 \otimes \ldots \otimes 
    Y_i\{0, \ldots, m-\ell-1\}
    \otimes Y_i\{m-\ell, \ldots, m\} \otimes \ldots \otimes Y_s\big)
  \end{equation}
  
  Now, The above tensor product also occurs in $\partial_{j'}\big(sign(\alpha)\alpha(W)\big)$
  for some $j'$.
  This term looks like:
  \begin{equation}
    sign(\alpha)(-1)^{a + m-\ell-1}
    \alpha\big(Y_0 \otimes \ldots \otimes 
    Y_i\{0, \ldots, m-\ell-1\}
    \otimes Y_i\{m-\ell, \ldots, m\} \otimes \ldots \otimes Y_s\big)
  \end{equation}
  \begin{equation}
    = (-1)^{(m-\ell)(\ell + 1)+a + m-\ell-1}
    \alpha\big(Y_0 \otimes \ldots \otimes 
    Y_i\{0, \ldots, m-\ell-1\}
    \otimes Y_i\{m-\ell, \ldots, m\} \otimes \ldots \otimes Y_s\big)
  \end{equation}
  \begin{equation}\label{eq.alpha-d_jprime}
    = (-1)^{m\ell - \ell^2 + a - 1}
    \alpha\big(Y_0 \otimes \ldots \otimes 
    Y_i\{0, \ldots, m-\ell-1\}
    \otimes Y_i\{m-\ell, \ldots, m\} \otimes \ldots \otimes Y_s\big)
  \end{equation}
  
  Compare the sign of eq.~\ref{eq.alpha-d_jprime} with that of eq.~\ref{eq.d_jW-alpha}.
  I claim the signs are opposite, which would mean the two terms cancel each other
  in $d(V)$.  Indeed, all that we must show is that
  \[
    (a + m\ell - \ell^2) + (m\ell - \ell^2 + a - 1) \equiv 1\; \mathrm{mod}\; 2,
  \]
  which is obviously true.
\end{proof}

By Proposition~\ref{prop.alternating_reps}, it is clear that if $p+1$ is a triangular
number -- {\it i.e.}, $p+1$ is of the form $r(r+1)/2$ for some positive integer $r$,
then the lowest dimension in which an alternating representation may occur 
is $p + 1 - r$, corresponding
to the partition $\lambda = [r, r-1, \ldots, 2, 1]$.  A
little algebra yields the following statement for any $p$:

\begin{cor}\label{cor.lowest_alternating_reps}
  $H_i(Sym_*^{(p)})$ contains an alternating representation in degree $p+1-r$, where
  \[
    r = \lfloor \sqrt{2p + 9/4} - 1/2 \rfloor.
  \]
  Moreover, there are no alternating representations present for $i \leq p-r$.
\end{cor}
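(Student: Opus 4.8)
The plan is to translate the statement about triangular numbers into a statement about arbitrary $p+1$ by finding, among all proper partitions $\lambda\vdash(p+1)$ with distinct parts, the one with the largest number of components, and then invoking Proposition~\ref{prop.alternating_reps}. First I would recall that by Proposition~\ref{prop.alternating_reps}, $H_i(Sym_*^{(p)})$ contains an alternating representation whenever there is a partition $\lambda\vdash(p+1)$ with $|\lambda|=p-i$ and no repeated component; equivalently, writing $s=|\lambda|$, $H_{p-s}(Sym_*^{(p)})$ contains an alternating representation precisely when a partition of $p+1$ into $s$ distinct positive parts exists. So the lowest degree in which such a representation is \emph{guaranteed} by the proposition is $p-s_{\max}$, where $s_{\max}$ is the maximal number of distinct positive integers that can sum to $p+1$, and correspondingly the \emph{highest} such number of components $s_{\max}$ should equal the $r$ appearing in the corollary, giving degree $p+1-r$ with $r=s_{\max}$.

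The key computation is then purely elementary number theory: the maximal $s$ for which $p+1$ admits a partition into $s$ distinct positive parts is the largest $s$ with $1+2+\cdots+s = s(s+1)/2 \le p+1$ (for any such $s$, the partition $[s,s-1,\ldots,2,1]$ can be fattened — e.g. increase the largest part — to absorb the surplus $p+1-s(s+1)/2$ while keeping parts distinct; and no partition into $s$ distinct positive parts can sum to less than $s(s+1)/2$). Solving $s(s+1)/2 \le p+1$ for the largest integer $s$ gives $s \le \tfrac{-1 + \sqrt{1+8(p+1)}}{2} = \sqrt{2p+9/4} - 1/2$, hence $r = s_{\max} = \lfloor \sqrt{2p+9/4} - 1/2\rfloor$, which matches the stated formula. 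This establishes that $H_{p+1-r}(Sym_*^{(p)})$ contains an alternating representation.

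For the second assertion — no alternating representations for $i \le p-r$ — I would argue via the direct sum decomposition $Sym_{p-s}^{(p)} = \bigoplus_{\lambda\vdash(p+1),\,|\lambda|=s+1} Sym_\lambda$ as $k\Sigma_{p+1}$-module (from the earlier proposition), together with the fact that $Sym_\lambda$ contains an alternating representation \emph{only if} $\lambda$ has no repeated parts. An element of $H_i(Sym_*^{(p)})$ living in degree $i$ comes from $Sym_{i}^{(p)} = \bigoplus_{|\lambda| = p-i} Sym_\lambda$; if $i \le p - r - 1$, i.e. $p - i \ge r+1$, then every partition $\lambda$ of $p+1$ into $p-i \ge r+1$ parts must, by the extremality of $r$, have a repeated part (since $r+1$ distinct positive parts would already sum to at least $(r+1)(r+2)/2 > p+1$). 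Hence no summand $Sym_\lambda$ in that degree carries an alternating representation, so neither the chains nor their homology in that degree do. One subtlety to nail down carefully is whether the homology in degree $i$ could receive an alternating representation that is not visible in the chain group $Sym_i^{(p)}$ itself — but since $H_i$ is a subquotient of $Sym_i^{(p)}$ as $\Sigma_{p+1}$-module, and $Sym_i^{(p)}$ contains no alternating summand when $p-i\ge r+1$, neither can the subquotient; this is the step I would be most careful to phrase precisely, though it is not technically hard.

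The main obstacle, such as it is, is entirely bookkeeping: verifying the floor-function identity $r=\lfloor\sqrt{2p+9/4}-1/2\rfloor$ against the characterization ``largest $s$ with $s(s+1)/2\le p+1$'', and confirming the edge cases (small $p$, and the precise boundary between ``$\le$'' and ``$<$'' in the triangular-number case) so that the claimed degree $p+1-r$ and the cutoff $i\le p-r$ are exactly consistent with Proposition~\ref{prop.alternating_reps}. No genuinely new idea beyond Proposition~\ref{prop.alternating_reps} and the partition-count decomposition is needed.
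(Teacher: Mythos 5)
Your approach is the same as the paper's: combine Proposition~\ref{prop.alternating_reps} with the elementary observation that the largest number of distinct positive parts summing to $p+1$ is the largest $s$ with $s(s+1)/2 \le p+1$, then solve the quadratic to obtain the floor expression. The number-theoretic core (existence of a partition of $p+1$ into $s$ distinct parts iff $s(s+1)/2 \le p+1$, via ``fattening'' the staircase $[s,s-1,\dots,1]$) is correct, and your remark that $H_i$, being a subquotient of $Sym_i^{(p)}$ as a $\Sigma_{p+1}$-module, cannot carry a sign-isotypic component when $Sym_i^{(p)}$ has none, is a necessary step that the paper's one-line proof leaves implicit (it needs the semisimplicity of $k\Sigma_{p+1}$, so char $0$ or char $> p+1$, so that multiplicities are well-defined on subquotients).

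There is, however, an off-by-one error in your degree accounting that leaves a genuine gap. You take the degree of $Sym_\lambda$ to be $p-|\lambda|$, following the literal wording of Prop.~\ref{prop.alternating_reps} (``$|\lambda|=p-i$''); but the definition places $Sym_\lambda$ inside $Sym_{p-s}^{(p)}$ with $\lambda=[\lambda_0,\dots,\lambda_s]$, so $|\lambda|=s+1$ and the degree is $p-s=p+1-|\lambda|$. (The paragraph preceding the corollary and the corollary itself both use the correct $p+1-r$ with $|\lambda|=r$; the ``$|\lambda|=p-i$'' in Prop.~\ref{prop.alternating_reps} is a typo.) This propagates into your write-up in two places. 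First, you assert simultaneously that the lowest alternating degree is $p-s_{\max}$ and that it equals $p+1-r$ with $r=s_{\max}$, which are inconsistent. Second, and more consequentially, your vanishing argument treats only $i \le p-r-1$ (where $p-i \ge r+1$), one short of the claimed $i\le p-r$. With the corrected relation $|\lambda|=p+1-i$, the argument closes: if $i\le p-r$ then every $\lambda$ contributing to $Sym_i^{(p)}$ has $|\lambda|=p+1-i\ge r+1$ parts, and $r+1$ distinct positive integers sum to at least $(r+1)(r+2)/2 > p+1$ by maximality of $r$, forcing a repeated part. Once you replace $|\lambda|=p-i$ by $|\lambda|=p+1-i$ throughout, the proof is complete and in fact more detailed than the paper's.
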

\begin{proof}
  Simply solve $p+1 = r(r+1)/2$ for $r$, and note that the increase in $r$ occurs
  exactly when $p$ hits the next triangular number.
\end{proof}

There is not much known about the other irreducible representations occurring in 
the homology groups of $Sym_*^{(p)}$, however computational evidence shows that 
$H_i(Sym_*^{(p)})$ contains no trivial representation,
$I\Sigma_{p+1}$, for \mbox{$i \leq p-r$} ($r$ as in the conjecture above) up to
$p = 50$.  

\section{Connectivity of $Sym_*^{(p)}$}\label{sec.connectivity_sym}%

Quite recently, Vre\'cica and \v{Z}ivaljevi\'c~\cite{VZ} observed that the
complex $Sym_*^{(p)}$ is isomorphic to 
the suspension of the cycle-free chessboard complex $\Omega_{p+1}$ (in fact,
the isomorphism takes the form $k\left[S\Omega_{p+1}^+\right] \to Sym_*^{(p)}$, where 
$\Omega_{p+1}^+$ is the augmented complex).

The $m$-chains of the complex $\Omega_n$ are generated by lists
\[
  L = \{ (i_0, j_0), (i_1, j_1), \ldots, (i_m, j_m) \},
\]
where $1 \leq i_0 < i_1 < \ldots < i_m \leq n$, all $1 \leq j_s \leq n$ are distinct
integers, and the list $L$ is {\it cycle-free}.  It may be easier to say what it means for
$L$ not to be cycle free: $L$ is not
cycle-free if there exists a subset $L_c \subset L$ and re-ordering of $L_c$ so that
\[
  L_c = \{ (\ell_0, \ell_1), (\ell_1, \ell_2), \ldots, (\ell_{t-1}, \ell_t),
  (\ell_t, \ell_0) \}.
\]
The differential of $\Omega_n$ is defined on generators by:
\[
  d\big( \{ (i_0, j_0), \ldots, (i_m, j_m) \} \big)
  := \sum_{s = 0}^{m} (-1)^s \{ (i_0, j_0), \ldots, (i_{s-1}, j_{s-1}),
  (i_{s+1}, j_{s+1}), \ldots, (i_m, j_m) \}.
\]
For completeness, an explicit isomorphism shall be provided:

\begin{prop}\label{prop.iso_omega_sym}
  Let $\Omega^+_n$ denote the augmented cycle-free $(n \times n)$-chessboard complex,
  where the unique $(-1)$-chain is represented by the empty $n \times n$
  chessboard, and the boundary map on $0$-chains takes a vertex to the
  unique $(-1)$-chain.
  For each $p \geq 0$, there is a chain isomorphism
  \[
    \omega_* : k\left[S\Omega^+_{p+1}\right] \to Sym_*^{(p)}
  \]
\end{prop}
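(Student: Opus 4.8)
The plan is to write down an explicit degree-wise bijection of generators, choose the signs so that compatibility with the differentials is essentially forced, and then dispatch that compatibility with one sign lemma. First I would set up the dictionary between the two sides. A morphism $Z_0 \otimes \cdots \otimes Z_{p-i} \in \mathrm{Epi}_{\Delta S}([p],[p-i])$ written in tensor notation is precisely a set partition of $\{z_0,\dots,z_p\}$ into $p+1-i$ blocks, each block linearly ordered by the order of the factors of the monomial $Z_r$, together with a choice of order for the blocks themselves; passing to $Sym_i^{(p)}$ (dividing by $\approx$) forgets exactly that order of blocks, introducing the sign $(-1)^{ab}$ for a transposition of blocks of degrees $a$ and $b$. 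On the other side, an $i$-chain of $S\Omega^+_{p+1}$ is, after the suspension shift, an $(i-1)$-chain of $\Omega^+_{p+1}$, i.e. a cycle-free list of $i$ pairs $(a_s,b_s)$ with distinct (increasing) first coordinates and distinct second coordinates. Regarding $(a,b)$ as a directed edge $a\to b$ on $\{1,\dots,p+1\}$: distinctness of sources and of targets forces out- and in-degree $\le 1$, and cycle-freeness then forces the edge set to be a disjoint union of $p+1-i$ directed paths (isolated vertices counted as trivial paths). A directed path is, under $a \mapsto z_{a-1}$, an internally ordered block, so a cycle-free $i$-edge list corresponds to an internally ordered set partition of $\{z_0,\dots,z_p\}$ into $p+1-i$ blocks. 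This matches generators of $(S\Omega^+_{p+1})_i$ with generators of $Sym_i^{(p)}$ up to block order; in degree $0$ the empty list (the augmented $(-1)$-chain of $\Omega^+_{p+1}$) corresponds to the all-singletons partition, consistent with $Sym_0^{(p)} \cong k$.

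Next I would fix signs. Given a generator $L = ((a_0,b_0),\dots,(a_{i-1},b_{i-1}))$ of $\Omega^+_{p+1}$, written in its canonical order $a_0 < \cdots < a_{i-1}$, pick any ordering $Z_0 \otimes \cdots \otimes Z_{p-i}$ of the associated blocks. Reading the $i$ cut positions of this tensor from left to right labels them by $\{0,\dots,i-1\}$, while each edge $(a_s,b_s)$ — which records the adjacency ``$z_{b_s-1}$ immediately after $z_{a_s-1}$'' — names one of those cut positions; the resulting permutation of $\{0,\dots,i-1\}$ I call $\pi_L$. Set $\omega_i(L) := \mathrm{sign}(\pi_L)\cdot (Z_0 \otimes \cdots \otimes Z_{p-i})$. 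Transposing two adjacent blocks of degrees $a$ and $b$ transposes a run of $a$ cut positions past a run of $b$ cut positions, a permutation of sign $(-1)^{ab}$, so $\mathrm{sign}(\pi_L)$ picks up exactly the factor absorbed by $\approx$; hence $\omega_i(L)$ is well defined in $Sym_i^{(p)}$ and independent of the block order, and by the bijection above $\omega_i$ is a $k$-module isomorphism in each degree.

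Finally, the chain-map verification. On the $\Omega$ side $dL = \sum_{s}(-1)^s\,(L \setminus (a_s,b_s))$, and deleting the edge $(a_s,b_s)$ breaks one directed path into two (possibly trivial) sub-paths; on the $Sym$ side the face map $\partial_{c_s}$ at the cut position $c_s$ named by $(a_s,b_s)$ performs precisely this split, so $\partial_{c_s}(Z_0 \otimes \cdots \otimes Z_{p-i})$ agrees with $\omega_{i-1}(L \setminus (a_s,b_s))$ up to sign and block reordering. What is left is the identity $\mathrm{sign}(\pi_L)\,(-1)^{c_s} = (-1)^s\,\mathrm{sign}(\pi_{L\setminus(a_s,b_s)})$, which is the standard fact about how a permutation's sign changes when one deletes the entry $s \mapsto c_s$ and closes the two gaps; with it, $d^{Sym}\!\circ \omega_i = \omega_{i-1}\!\circ d^{\Omega}$, and the $i = 1$ case matches the augmentation boundary $(S\Omega^+_{p+1})_1 \to (S\Omega^+_{p+1})_0$ with $\partial_0 : Sym_1^{(p)} \to Sym_0^{(p)} \cong k$. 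Thus $\omega_*$ is a chain isomorphism. The only real work — and the only place anything can go wrong — is the coherent sign choice in the middle step together with the permutation-sign identity just quoted; the rest is unwinding definitions. (One also sees $\omega_*$ is $\Sigma_{p+1}$-equivariant for the $\Sigma_{p+1}$-action permuting $\{1,\dots,p+1\}$, matching Remark~\ref{rmk.action}.)
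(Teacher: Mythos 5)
Your proof is correct and follows essentially the same route as the paper: both establish the generator bijection by decomposing the cycle-free edge list into disjoint directed paths (the paper's ``maximal queues'') and reading each path as an internally ordered block of the tensor. You supply more detail than the paper on the sign bookkeeping (your permutation $\pi_L$ plays the role of the paper's reordering permutation $\sigma$) and on verifying that $\omega_*$ intertwines the differentials, where the paper merely asserts well-definedness by appeal to the equivalence relations.
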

\begin{proof}
  Note that we may define $m$-chains of $\Omega_{p+1}$ as cycle-free lists
  \[
    L = \{ (i_0, j_0), (i_1, j_1), \ldots, (i_m, j_m) \},
  \]
  with no requirement on the order of $\{ i_0, i_1, \ldots, i_m \}$, under the
  equivalence relation:
  \[
    \{ (i_{\sigma^{-1}(0)}, j_{\sigma^{-1}(0)}), \ldots,
       (i_{\sigma^{-1}(m)}, j_{\sigma^{-1}(m)}) \} \approx
    sign(\sigma)\{ (i_0, j_0), \ldots, (i_m, j_m) \},
  \]
  for $\sigma \in \Sigma_{m+1}$.  
  
  Suppose $L$ is an $(m+1)$-chain of $S\Omega^+_{p+1}$ ({\it i.e.} an $m$-chain
  of $\Omega^+_{p+1}$).
  Call a subset $L' \subset L$ a {\it queue} if
  there is a reordering of $L'$ such that
  \[
    L' = \{(\ell_0, \ell_1), (\ell_1, \ell_2), \ldots, (\ell_{t-1}, \ell_t)\},
  \]
  and $L'$ is called a {\it maximal queue} if it is not properly contained in
  any other queue.  
  Since $L$ is supposed to be cycle-free, we can partition
  $L$ into some number of maximal queues, $L_1', L_2', \ldots, L_q'$.  Let
  $\sigma$ be a permutation representing the re-ordering of $L$ into 
  maximal ordered queues.
  \[
    L \approx sign(\sigma)\{ (\ell^{(1)}_0, \ell^{(1)}_1), 
                               (\ell^{(1)}_1, \ell^{(1)}_2), \ldots,  
                               (\ell^{(1)}_{t_1-1}, \ell^{(1)}_{t_1}),
                             \ldots,
                             (\ell^{(q)}_0, \ell^{(q)}_1),
                               (\ell^{(q)}_1, \ell^{(q)}_2), \ldots,  
                               (\ell^{(q)}_{t_q-1}, \ell^{(q)}_{t_q}) \}
  \]
  Each maximal ordered queue will correspond to a monomial of formal
  indeterminates $z_i$.  The correspondence is as follows:
  \begin{equation}\label{eq.monomial_correspondence}
    \{ (\ell_0, \ell_1), (\ell_1, \ell_2), \ldots,  
    (\ell_{t-1}, \ell_{t}) \} \mapsto  z_{\ell_0-1}z_{\ell_1-1}\cdots
    z_{\ell_{t}-1}.
  \end{equation}
  For each maximal ordered queue, $L'_s$,
  denote the monomial obtained by formula~(\ref{eq.monomial_correspondence}) by $Z_s$.
  
  Let $k_1, k_2, \ldots, k_u$ be the numbers in $\{0, 1, 2, \ldots, p\}$ 
  such that $k_{r} + 1$ does not appear in any pair $(i_s, j_s) \in L$.
  
  Now we may define $\omega_*$ on $L = L'_1 \cup L'_2 \cup \ldots \cup L'_q$.
  \[
    \omega_{m+1}(L) := Z_1 \otimes Z_2 \otimes \ldots \otimes Z_q \otimes
    z_{k_1} \otimes z_{k_2} \otimes \ldots \otimes z_{k_u}.
  \]
  Observe, if $L = \emptyset$ is the $(-1)$-chain of $\Omega^+_{p+1}$, then there are no 
  maximal queues in $L$, and so
  \[
    \omega_0(\emptyset) = z_0 \otimes z_1 \otimes \ldots \otimes z_p.
  \]
  $\omega_*$ is a (well-defined) chain map due to the equivalence relations present
  in $Sym_*^{(p)}$ 
  (See formulas~(\ref{eq.def_Sym}),~(\ref{eq.Sym-differential}),
  and~(\ref{eq.face_i-tensor})).  To see that $\omega_*$ is an isomorphism,
  it suffices to exhibit an
  inverse.  To each monomial 
  $Z = z_{i_0}z_{i_1}\cdots z_{i_t}$ with $t > 0$, there is an associated
  ordered queue $L' = \{ (i_0 + 1, i_1 + 1), (i_1 + 1, i_2 + 1), \ldots
  (i_{t-1} +1, i_t + 1) \}$.  If the monomial is a singleton, $Z = z_{i_0}$, the
  associated ordered queue will be the empty set.
  Now, given a generator $Z_1 \otimes Z_2 \otimes
  \ldots \otimes Z_q \in Sym_*^{(p)}$, map it to the list 
  $L := L'_1 \cup L'_2 \cup \ldots \cup L'_q$, preserving the original order of indices.
\end{proof}

\begin{theorem}\label{thm.connectivity}
  $Sym_*^{(p)}$ is $\lfloor\frac{2}{3}(p-1)\rfloor$-connected.
\end{theorem}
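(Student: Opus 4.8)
The plan is to leverage the isomorphism $\omega_* : k[S\Omega^+_{p+1}] \to Sym_*^{(p)}$ established in Proposition~\ref{prop.iso_omega_sym}, which reduces the statement to a connectivity result about the cycle-free chessboard complex $\Omega_{p+1}$. Since $Sym_*^{(p)} \cong k[S\Omega^+_{p+1}]$ and suspension shifts connectivity up by one, it suffices to show that $\Omega_{p+1}^+$ (equivalently $\Omega_{p+1}$, after accounting for the augmentation) is $\left(\lfloor\frac{2}{3}(p-1)\rfloor - 1\right)$-connected. First I would cite the recent work of Vre\'cica and \v{Z}ivaljevi\'c~\cite{VZ}, which is precisely where the connectivity bound for $\Omega_n$ is proved; their estimate states that $\Omega_n$ is $\left(\nu_n - 1\right)$-connected for an explicit function $\nu_n$, and I would verify that $\nu_{p+1}$ matches $\lfloor\frac{2}{3}(p-1)\rfloor$ after the degree shift coming from suspension.

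The key steps, in order, are: (1) recall from Proposition~\ref{prop.iso_omega_sym} that $\omega_*$ is a chain isomorphism, so $H_i(Sym_*^{(p)}) \cong \widetilde{H}_{i-1}(\Omega_{p+1}^+)$ for all $i$ (the augmentation and suspension together account for the index shift); (2) invoke the connectivity theorem for cycle-free chessboard complexes from~\cite{VZ}, which gives the connectivity of $\Omega_{p+1}$; (3) translate that bound through the suspension: if $\Omega_{p+1}$ is $c$-connected then $S\Omega_{p+1}^+$ is $(c+1)$-connected; (4) check the arithmetic that $c + 1 = \lfloor\frac{2}{3}(p-1)\rfloor$ with the value of $c$ supplied by~\cite{VZ}. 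Since the complexes $Sym_*^{(p)}$ are simply-connected for $p$ large (one can see low-degree vanishing directly from Theorem~\ref{thm.poincare_sym_complex} in small cases), homological connectivity suffices to conclude topological connectivity in the relevant range.

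The main obstacle is step (2): the connectivity bound for $\Omega_n$ is genuinely the hard input, and it is not proved in this excerpt — it is exactly the external result of Vre\'cica and \v{Z}ivaljevi\'c that the author is importing. If I were to prove it from scratch rather than cite it, I would expect to need a Morse-theoretic or nerve-type argument on the poset of cycle-free partial matchings, filtering by the number of ``queues'' (the connected path-components of the matching viewed as a functional digraph) and analyzing the links; this is substantially more involved than the rest of the chapter and is the reason the author defers to~\cite{VZ}. A secondary (but routine) point is making the index bookkeeping between $\Omega_{p+1}$, its augmentation $\Omega_{p+1}^+$, and the suspension $S\Omega_{p+1}^+$ completely precise, so that the floor function $\lfloor\frac{2}{3}(p-1)\rfloor$ emerges with the correct constant; I would double-check this against the explicit Poincar\'e polynomials $P_p(t)$ of Theorem~\ref{thm.poincare_sym_complex}, whose lowest nonzero degrees should agree with $\lfloor\frac{2}{3}(p-1)\rfloor + 1$ for $0 \le p \le 7$.
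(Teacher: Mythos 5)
Your proposal is correct and follows the same route the paper (implicitly) takes: the paper states Theorem~\ref{thm.connectivity} without a written proof, relying on the chain isomorphism of Prop.~\ref{prop.iso_omega_sym} and citing the connectivity bound for cycle-free chessboard complexes proved by Vre\'cica and \v{Z}ivaljevi\'c~\cite{VZ}, with the index shift from the suspension handled as you describe. Your arithmetic cross-check against the Poincar\'e polynomials $P_p(t)$ of Theorem~\ref{thm.poincare_sym_complex} is a good sanity check and does come out right (lowest nonzero degree equals $\lfloor\frac{2}{3}(p-1)\rfloor + 1$ for $0 \le p \le 7$).
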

This remarkable fact yields the following useful corollaries:

\begin{cor}\label{cor.finitely-generated}
  The spectral sequences of Thm.~\ref{thm.E1_NS} and Cor.~\ref{cor.E1_Sym} converge 
  strongly to $\widetilde{H}S_*(A)$.
\end{cor}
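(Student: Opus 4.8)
The plan is to upgrade the weak convergence already established to strong convergence by proving that, for each fixed total degree $n$, the $E^1$-page of either spectral sequence is supported on only finitely many filtration columns $p$. Since the filtration $\mathscr{G}_*$ of $\widetilde{\mathscr{Y}}_*$ is increasing, exhaustive, and bounded below ($\mathscr{G}_{-1}=0$), this finiteness is exactly what is needed: the differentials entering and leaving any bidegree vanish for large $r$, so $E^\infty_{p,q}$ is a finite associated graded piece of $\widetilde{H}S_n(A)$, and the convergence is strong (cf.~\cite{Mc}).

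First I would invoke Corollary~\ref{cor.E1_Sym} to write
\[
  E^1_{p,q} \;\cong\; \bigoplus_{u\in X^{p+1}/\Sigma_{p+1}} H_{p+q}\left(E_*G_u \ltimescirc_{G_u} Sym_*^{(p)};\,k\right),
\]
and note that by Lemma~\ref{lem.NS-Sym-homotopy} the $E^1$-term of the spectral sequence of Theorem~\ref{thm.E1_NS} is built from the same groups, so it suffices to treat this one. By Theorem~\ref{thm.connectivity}, $Sym_*^{(p)}$ is $\lfloor\tfrac{2}{3}(p-1)\rfloor$-connected; in view of the $G_u$-trivial subcomplex $k_0\hookrightarrow Sym_*^{(p)}$ used in Definition~\ref{def.ltimescirc}, this says precisely that the complex $Sym_*^{(p)}/k_0$ has vanishing homology in every degree $i\le c_p:=\lfloor\tfrac{2}{3}(p-1)\rfloor$.

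Next I would propagate this vanishing through the equivariant half-smash tensor product. Because $E_*G_u$ is a complex of free (hence flat) $kG_u$-modules concentrated in nonnegative degrees, applying $E_*G_u\otimes_{kG_u}(-)$ to the short exact sequence of complexes $0\to k_0\to Sym_*^{(p)}\to Sym_*^{(p)}/k_0\to 0$ stays exact, so $E_*G_u \ltimescirc_{G_u} Sym_*^{(p)}\cong E_*G_u\otimes_{kG_u}(Sym_*^{(p)}/k_0)$ computes the hyperhomology $\mathbb{H}_*(G_u;\,Sym_*^{(p)}/k_0)$. The first hyperhomology spectral sequence, $E^2_{s,t}=H_s\left(G_u;\,H_t(Sym_*^{(p)}/k_0)\right)\Rightarrow \mathbb{H}_{s+t}$, has $E^2_{s,t}=0$ whenever $t\le c_p$, whence $H_n\left(E_*G_u \ltimescirc_{G_u} Sym_*^{(p)}\right)=0$ for $n\le c_p$. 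Consequently $E^1_{p,q}=0$ whenever $p+q\le\lfloor\tfrac{2}{3}(p-1)\rfloor$.

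Finally, fix $n$. Since $c_p=\lfloor\tfrac{2}{3}(p-1)\rfloor\to\infty$ as $p\to\infty$, there is an integer $P(n)$ with $c_p\ge n$ for all $p>P(n)$, and then $E^1_{p,\,n-p}=0$; so at most finitely many columns are nonzero in total degree $n$. The same bound governs every later page, hence the filtration induced on $\widetilde{H}S_n(A)$ is finite, giving strong convergence of both spectral sequences. I expect the delicate point to be the propagation step — verifying that the half-smash construction creates no spurious low-dimensional homology — which rests on $E_*G_u$ being a nonnegatively graded free resolution and on reading the connectivity statement of Theorem~\ref{thm.connectivity} as acyclicity of $Sym_*^{(p)}/k_0$ through degree $\lfloor\tfrac{2}{3}(p-1)\rfloor$.
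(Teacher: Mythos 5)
Your proof is correct and follows essentially the same strategy as the paper: the connectivity bound $\widetilde{H}_t(Sym_*^{(p)})=0$ for $t\le\lfloor\tfrac{2}{3}(p-1)\rfloor$, together with the fact that this bound is unbounded in $p$, forces the $E^1$-term at each fixed total degree $n$ to vanish in all but finitely many columns, after which the spectral sequence stabilizes degreewise and the bounded-below, exhaustive filtration delivers strong convergence. The only real difference is in how you transfer the connectivity of $Sym_*^{(p)}$ to the equivariant half-smash: you run the Cartan--Leray (hyperhomology) spectral sequence $E^2_{s,t}=H_s\bigl(G_u;\,H_t(Sym_*^{(p)}/k_0)\bigr)\Rightarrow H_{s+t}\bigl(E_*G_u\ltimescirc_{G_u}Sym_*^{(p)}\bigr)$, whereas the paper invokes the fibration $X\to EG_u\times_{G_u}X\to BG_u$ and the Hurewicz theorem (that is, the Serre spectral sequence of the pair). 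These are two accounts of the same computation; your algebraic version has the small virtue of making explicit the role of the flatness of $E_*G_u$ over $kG_u$ and the resulting identification $E_*G_u\ltimescirc_{G_u}Sym_*^{(p)}\cong E_*G_u\otimes_{kG_u}\bigl(Sym_*^{(p)}/k_0\bigr)$, which the paper's topological phrasing leaves implicit.
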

\begin{proof}
  This relies on the fact that the connectivity of the complexes $Sym_*^{(p)}$ is
  a non-decreasing function of $p$.
  Fix $n \geq 0$, and consider the component of $E^1$ residing at position $p, q$ for
  $p + q = n$,
  \[
    \bigoplus_{u}H_n(E_*G_u \ltimescirc_{G_u} Sym_*^{(p)}).
  \]
  A priori, the induced differentials whose sources are $E^1_{p,q}, E^2_{p,q}, 
  E^3_{p,q}, \ldots$ 
  will have as targets
  certain subquotients of 
  \[
    \bigoplus_{u}H_{n-1}(E_*G_u \ltimescirc_{G_u} Sym_*^{(p+1)}),\;
    \bigoplus_{u}H_{n-1}(E_*G_u \ltimescirc_{G_u} Sym_*^{(p+2)}),
  \]
  \[
    \bigoplus_{u}H_{n-1}(E_*G_u \ltimescirc_{G_u} Sym_*^{(p+3)}),\ldots
  \]
  Now, if $n-1 < \lfloor (2/3)(p+k-1)\rfloor$ for some $k \geq 0$, then for $K > k$,
  we have
  \[
    H_{n-1}(Sym_*^{(p + K)}) = 0,
  \]
  hence also,
  \[
    H_{n-1}(E_*G_{u} \ltimescirc_{G_u} Sym_*^{(p+K)}) = 0,
  \]
  using the fibration mentioned in the proof of Thm.~\ref{thm.E1_NS} and
  the Hurewicz Theorem.
  Thus, the induced differential $d^k$ is zero for all $k \geq K$.
  
  On the other hand, the induced differentials whose targets are $E^1_{p,q},
  E^2_{p,q}, E^3_{p,q}, \ldots$ must be zero after stage $p$, since there are
  no non-zero components with $p < 0$.
\end{proof}

\begin{cor}\label{cor.trunc-isomorphism}
  For each $i \geq 0$, there is a positive integer $N_i$ so that if 
  $p \geq N_i$, there is an isomorphism
  \[
    H_i(\mathscr{G}_p\widetilde{\mathscr{Y}}_*) \cong \widetilde{H}S_i(A).
  \]
\end{cor}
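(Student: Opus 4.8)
The plan is to compare the subcomplex $\mathscr{G}_p\widetilde{\mathscr{Y}}_*$ with $\widetilde{\mathscr{Y}}_*$ itself. Recall from Section~\ref{sec.filtdeg} that $\mathscr{G}$ is an increasing, exhaustive, bounded-below filtration of $\widetilde{\mathscr{Y}}_*$ compatible with the differential, that $H_*(\widetilde{\mathscr{Y}}_*) = \widetilde{H}S_*(A)$, and that the resulting spectral sequence converges strongly to $\widetilde{H}S_*(A)$ (Corollary~\ref{cor.finitely-generated}). Consider the short exact sequence of chain complexes
\[
  0 \longrightarrow \mathscr{G}_p\widetilde{\mathscr{Y}}_* \longrightarrow
  \widetilde{\mathscr{Y}}_* \longrightarrow Q_p \longrightarrow 0,
\qquad
  Q_p := \widetilde{\mathscr{Y}}_*/\mathscr{G}_p\widetilde{\mathscr{Y}}_*.
\]
Its long exact homology sequence
\[
  \cdots \to H_{i+1}(Q_p) \to H_i(\mathscr{G}_p\widetilde{\mathscr{Y}}_*)
  \to \widetilde{H}S_i(A) \to H_i(Q_p) \to H_{i-1}(\mathscr{G}_p\widetilde{\mathscr{Y}}_*) \to \cdots
\]
shows that $H_i(\mathscr{G}_p\widetilde{\mathscr{Y}}_*) \to \widetilde{H}S_i(A)$ is an isomorphism as soon as $H_i(Q_p) = 0$ and $H_{i+1}(Q_p) = 0$, so it suffices to produce $N_i$ with these two vanishings for all $p \ge N_i$.

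To that end I would filter $Q_p$ by the degree filtration induced by $\mathscr{G}$. Since $\mathscr{G}_{p'}\widetilde{\mathscr{Y}}_* \subseteq \mathscr{G}_p\widetilde{\mathscr{Y}}_*$ for $p' \le p$, the filtration quotients of $Q_p$ are exactly the quotients $E^0_{p',q}$ of $\widetilde{\mathscr{Y}}_*$ for $p' > p$ and are zero for $p' \le p$; that is, the spectral sequence of $Q_p$ is the spectral sequence of Section~\ref{sec.filtdeg} with all columns $p' \le p$ deleted, so by Theorem~\ref{thm.E1_NS} and Corollary~\ref{cor.E1_Sym} its $E^1$-term is
\[
  {}^{Q_p}\!E^1_{p',q} \;\cong\; \bigoplus_{u \in X^{p'+1}/\Sigma_{p'+1}}
  H_{p'+q}\!\left( E_*G_u \ltimescirc_{G_u} Sym_*^{(p')};\,k\right)
  \quad (p' > p),
\]
and zero for $p' \le p$. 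Now Theorem~\ref{thm.connectivity} gives that $Sym_*^{(p')}$ is $\lfloor\tfrac23(p'-1)\rfloor$-connected, and (by the fibration plus Hurewicz argument already used in the proof of Corollary~\ref{cor.finitely-generated}) the same connectivity bound holds for $E_*G_u \ltimescirc_{G_u} Sym_*^{(p')}$; hence ${}^{Q_p}\!E^1_{p',q} = 0$ whenever the total degree satisfies $p'+q \le \lfloor\tfrac23(p'-1)\rfloor$. As $\lfloor\tfrac23(p'-1)\rfloor$ is non-decreasing in $p'$, choosing $p$ large enough that $\lfloor\tfrac23 p\rfloor \ge i+1$ — for instance $p \ge N_i := \lceil\tfrac32(i+1)\rceil$ — forces ${}^{Q_p}\!E^1_{p',q} = 0$ for every column $p' > p$ in total degrees $i$ and $i+1$. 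Thus the whole $E^1$-page of the spectral sequence of $Q_p$ vanishes in those two total degrees; since that spectral sequence has bounded-below exhaustive filtration and (again by the connectivity estimate) only finitely many columns and finitely many nontrivial differentials contribute in each fixed total degree, it converges, and we conclude $H_i(Q_p) = H_{i+1}(Q_p) = 0$. Substituting into the long exact sequence yields $H_i(\mathscr{G}_p\widetilde{\mathscr{Y}}_*) \cong \widetilde{H}S_i(A)$ for all $p \ge N_i$.

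The only subtle point is the spectral-sequence bookkeeping: checking that deleting the columns $p' \le p$ genuinely realizes the spectral sequence of the quotient complex $Q_p$, and that this truncated spectral sequence converges to $H_*(Q_p)$. Both follow from the general theory of filtered complexes once one knows the filtration is increasing, exhaustive, bounded below, and compatible with $d$ (Section~\ref{sec.filtdeg}), together with the finiteness forced by Theorem~\ref{thm.connectivity}; no idea is needed beyond those already present in the proof of Corollary~\ref{cor.finitely-generated}, so I expect this to be routine rather than a real obstacle.
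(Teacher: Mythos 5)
Your proof is correct and is essentially the argument the paper leaves implicit: reduce to the vanishing of $H_i(Q_p)$ and $H_{i+1}(Q_p)$ for the cofiber $Q_p = \widetilde{\mathscr{Y}}_*/\mathscr{G}_p\widetilde{\mathscr{Y}}_*$ via the long exact sequence, then kill those groups using the filtration spectral sequence of $Q_p$ together with the connectivity bound of Theorem~\ref{thm.connectivity} (carried over to $E_*G_u \ltimescirc_{G_u} Sym_*^{(p')}$ exactly as in the proof of Cor.~\ref{cor.finitely-generated}). The explicit bound $N_i = \lceil \tfrac{3}{2}(i+1) \rceil$ is correct, and the convergence bookkeeping for $Q_p$ is handled properly; no gaps.
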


\begin{cor}\label{cor.fin-gen}
  If $A$ is finitely-generated over a Noetherian ground ring $k$, then
  $HS_*(A)$ is finitely-generated over $k$ in each degree.
\end{cor}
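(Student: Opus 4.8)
The plan is to read the statement off the second spectral sequence of this chapter (Cor.~\ref{cor.E1_Sym}), using the connectivity bound of Thm.~\ref{thm.connectivity} as the one nontrivial input. I work within the standing hypotheses of the chapter — $A$ augmented with augmentation ideal $I$, free over $k$ with countable basis $X$ — and read ``finitely generated over $k$'' as finitely generated \emph{as a $k$-module}. Since $k$ is Noetherian and $I$ is a $k$-submodule of the finitely generated module $A$, the module $I$ is finitely generated, hence, being free, free of \emph{finite} rank; that is, $X$ is a finite set. By the splitting $\mathscr{Y}^{epi}_* \cong \widetilde{\mathscr{Y}}_* \oplus k[N(\ast)]$ of Section~\ref{sec.filtdeg} we have $HS_*(A) \cong \widetilde{H}S_*(A) \oplus k_0$, and $k_0$ is certainly finitely generated in each degree, so it suffices to show that $\widetilde{H}S_n(A)$ is a finitely generated $k$-module for each $n$.

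Fix $n$. By Cor.~\ref{cor.finitely-generated} the spectral sequence of Cor.~\ref{cor.E1_Sym} converges strongly to $\widetilde{H}S_*(A)$, so $\widetilde{H}S_n(A)$ is an iterated extension of the groups $E^\infty_{p,q}$ with $p+q=n$, each a subquotient of
\[
  E^1_{p,q} \cong \bigoplus_{u \in X^{p+1}/\Sigma_{p+1}} H_{p+q}\!\left(E_*G_u \ltimescirc_{G_u} Sym_*^{(p)};\,k\right).
\]
The crucial point is that only finitely many of these $E^1_{p,q}$ are nonzero: by Thm.~\ref{thm.connectivity}, $Sym_*^{(p)}$ is $\lfloor\tfrac23(p-1)\rfloor$-connected, hence so is $|N\mathcal{S}_p/N\mathcal{S}'_p|$ by Lemma~\ref{lem.NS-Sym-homotopy}, and then the fibration $|N\mathcal{S}_p/N\mathcal{S}'_p| \to EG_u \ltimes_{G_u}|N\mathcal{S}_p/N\mathcal{S}'_p| \to BG_u$ together with the Hurewicz theorem — exactly as in the proof of Thm.~\ref{thm.E1_NS} — gives $H_m\!\left(E_*G_u \ltimescirc_{G_u} Sym_*^{(p)}\right)=0$ for $m \leq \lfloor\tfrac23(p-1)\rfloor$. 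Taking $m=n$, we get $E^1_{p,\,n-p}=0$ as soon as $p$ is large enough that $n \leq \lfloor\tfrac23(p-1)\rfloor$; hence $E^1_{p,q}$ can be nonzero only for $p$ in a finite range, with $q=n-p$ then determined. Alternatively one may simply invoke Cor.~\ref{cor.trunc-isomorphism} to the same effect.

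Since finitely generated modules over the Noetherian ring $k$ are closed under subquotients, extensions and finite direct sums, the proof is complete once we check that each of the finitely many relevant $E^1_{p,q}$ is finitely generated over $k$. The index set $X^{p+1}/\Sigma_{p+1}$ is finite because $X$ is finite, so the displayed direct sum is finite. For each $u$, the group $G_u$ is finite, so $E_*G_u$ is free of finite rank over $kG_u$ in every degree; and $Sym_i^{(p)}$ is a quotient of the free $k$-module on the finite set $\mathrm{Epi}_{\Delta S}([p],[p-i])$, hence finitely generated. Therefore the chain complex $E_*G_u \ltimescirc_{G_u} Sym_*^{(p)}$ is finitely generated over $k$ in every degree, and since $k$ is Noetherian its homology is finitely generated over $k$ in every degree. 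Thus each $E^1_{p,q}$, hence each $E^\infty_{p,q}$, hence $\widetilde{H}S_n(A)$, and finally $HS_n(A)$, is a finitely generated $k$-module.

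The only substantial ingredient is Thm.~\ref{thm.connectivity}: it is precisely the connectivity of the cycle-free chessboard complexes that collapses the a priori infinite range of the spectral sequence to a finite one in each total degree, after which the argument is pure bookkeeping with the Noetherian hypothesis. If one wants to remove the standing augmentation assumption (so as to cover, e.g., matrix algebras over $\Z$), the natural strategy is to resolve $A$ by the canonically augmented tensor algebras $T^{p+1}A$ (Cor.~\ref{cor.HS_A_via-tensoralgebras}) and feed the double complex $\{\mathscr{Y}_qT^{p+1}A\}$ into the above machinery; making that reduction precise is the main remaining obstacle, and I would expect it to be the delicate part of a fully general proof.
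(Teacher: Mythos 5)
Your proof follows the paper's proof essentially verbatim: both read the result off the $E^1$ page of the second spectral sequence (Cor.~\ref{cor.E1_Sym}), using finiteness of $X$ for the inner direct sum and the connectivity theorem (Thm.~\ref{thm.connectivity}) for the outer one, together with closure of finitely generated modules over a Noetherian ring under subquotients, extensions, and finite direct sums. Your extra care — deducing finiteness of $X$ from the Noetherian hypothesis, separating off the $k_0$ summand, and flagging that the corollary is stated more generally than the chapter's standing hypotheses (augmentation, $I$ free over $k$) actually support — is a genuine improvement in precision; the paper's own terse proof silently shares that last gap, so your closing caveat is well taken.
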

\begin{proof}
  Examination
  of the $E^1$ term shows that the $n^{th}$ reduced symmetric homology group of $A$ is a
  subquotient of:
  \[
    \bigoplus_{p \geq 0} \bigoplus_{u\in X^{p+1}/\Sigma_{p+1}}
    H_n(E_*G_{u}\ltimescirc_{G_u} Sym_*^{(p)} ; k)
  \]
  Each $H_n(E_*G_{u}\ltimescirc_{G_u} Sym_*^{(p)} ; k)$ is a finite-dimensional
  $k$-module.  The inner sum is finite as long as $X$ is finite.  
  Thm.~\ref{thm.connectivity} shows the
  outer sum is finite as well.
\end{proof}

The bounds on connectivity are conjectured to be tight.  This is certainly true for
$p \equiv 1$ (mod $3$), based on Thm.~16 of~\cite{VZ}.  
Corollary~12 of the same paper establishes the following result:
\[
  \mathrm{Either}\qquad H_{2k}(Sym_*^{(3k-1)}) \neq 0 \quad \mathrm{or}
  \quad H_{2k}(Sym_*^{(3k)}) \neq 0.
\]
For $k \leq 2$, both statements are true.
When the latter condition is true, this gives a tight bound on connectivity for
$p \equiv 0$ (mod $3$).  When the former is true, there is not enough
information for a tight bound, since we are more interested in proving that
$H_{2k-1}(Sym_*^{(3k-1)})$ is non-zero, since for $k = 1, 2$, we have computed
the integral homology:
\[
  H_1(Sym_*^{(2)}) = \mathbb{Z} \quad \mathrm{and} \quad H_3(Sym_*^{(5)}) = \mathbb{Z}.
\]

\section{Filtering $Sym_*^{(p)}$ by partition types}\label{sec.filterSym}%

In section~\ref{sec.rep_sym}, we saw that $Sym_*^{(n)}$ decomposes over $k\Sigma_{n+1}$
as a direct sum of the submodules $Sym_\lambda$ for partitions $\lambda \vdash (n+1)$.
Filter $Sym_*^{(n)}$ by the size of the largest component of the partition.
\[
  \mathscr{F}_pSym_q^{(n)} := \bigoplus_{\lambda \vdash (n+1), 
  |\lambda| = n+1-(p+q), \lambda_0 \leq p+1} 
  Sym_{\lambda},
\]
where $\lambda = [\lambda_0, \lambda_1, \ldots, \lambda_{n-q}]$, is written so that 
$\lambda_0 \geq
\lambda_1 \geq \ldots \geq \lambda_{n-q}$.  The differential of $Sym_*^{(n)}$ respects 
this filtering, since it can only
reduce the size of partition components.  With respect to this filtering, we have
an $E^0$ term for a spectral sequence:
\[
  E_{p,q}^0 \cong \bigoplus_{\lambda \vdash (n+1), |\lambda| = n+1-(p+q),\lambda_0 = p+1} 
  Sym_{\lambda}.
\]
The vertical differential $d^0$ is induced from $d$ by keeping only those
terms of $d(W)$ that share largest component with $W$.

\chapter{$E_{\infty}$-STRUCTURE AND HOMOLOGY OPERATIONS}\label{chap.prod}

\section{Definitions}\label{sec.homopdefs}                        %

In this chapter, homology operations are defined for $HS_*(A)$,
following May~\cite{M2}.  Let $\mathscr{Y}_*^+A$ be
the simplicial $k$-module of section~\ref{sec.deltas_plus}. 
The key is to show that $\mathscr{Y}_*^+A$ admits the structure of
$E_\infty$-algebra.
This will be accomplished using various guises
of the Barratt-Eccles operad.  While our final goal is to produce an 
action on the level of $k$-complexes, we must induce the structure from the 
level of categories and through simplicial $k$-modules.  Finally, we may 
use Definitions~2.1 and~2.2 of~\cite{M2} to define homology operations at
the level $k$-complexes.

\begin{rmk}
  Throughout this chapter, we shall fix $S_n$ to be the symmetric group
  on the letters $\{1, 2, \ldots, n\}$, given by permutations $\sigma$
  that act on the {\it left} of lists of size $n$.  {\it i.e.,} 
  \[
    \sigma.(i_1, i_2, \ldots i_n) = \left(i_{\sigma^{-1}(1)}, i_{\sigma^{-1}(2)},
    \ldots, i_{\sigma^{-1}(n)}\right),
  \]
  so that $(\sigma \tau).L = \sigma.(\tau.L)$ for all $\sigma, \tau \in S_n$,
  and an $n$-element list, $L$.
\end{rmk}

In this chapter, we make use of operad structures in various categories, 
so for completeness, formal definitions of {\it symmetric monoidal category}
as well as {\it operad}, {\it operad-algebra}, and {\it operad-module}
will be given below.  

\begin{definition}\label{def.symmoncat}
  A category $\mathscr{C}$ is symmetric monoidal if there is a bifunctor
  $\odot : \mathscr{C} \times \mathscr{C} \to \mathscr{C}$ together with:

  1. A natural isomorphism,
  \[
    a : \odot ( \mathrm{id}_\mathscr{C} \times \odot ) \to \odot( \odot \times 
    \mathrm{id}_\mathscr{C} )
  \]
  satisfying the MacLane pentagon condition (commutativity of the following
  diagram for all objects $A$, $B$, $C$, $D$).
  \[
    \begin{diagram}
      \node[2]{(A \odot B) \odot (C \odot D)}
      \arrow{se,t}{a_{A \odot B, C, D} }\\
      \node{A\odot\left(B\odot(C\odot D)\right)}
      \arrow{ne,t}{a_{A, B, C\odot D} }
      \arrow{s,l}{\mathrm{id} \odot a_{B, C, D} }
      \node[2]{\left( (A \odot B) \odot C \right) \odot D}\\
      \node{A \odot \left( (B \odot C) \odot D\right)}
      \arrow[2]{e,t}{a_{A, B \odot C, D} }
      \node[2]{\left( A \odot (B \odot C)\right)\odot D}
      \arrow{n,r}{a_{A, B, C} \odot\mathrm{id}  }
    \end{diagram}
  \]
    
  2. A \textit{unit} object $e \in \mathrm{Obj}_\mathscr{C}$, together with
  natural isomorphisms
  \[
    \ell : e \odot \mathrm{id}_{\mathscr{C}} \to \mathrm{id}_\mathscr{C}
  \]
  \[
    r : \mathrm{id}_{\mathscr{C}} \odot e \to \mathrm{id}_\mathscr{C}
  \]
  making the following diagram commute for all objects $A$ and $B$:
  \[
    \begin{diagram}
      \node{A \odot (e \odot B)}
      \arrow[2]{e,t}{a_{A,e,B}}
      \arrow{se,b}{\mathrm{id} \odot \ell}
      \node[2]{(A \odot e) \odot B}
      \arrow{sw,b}{r \odot \mathrm{id}}\\
      \node[2]{A \odot B}
    \end{diagram}
  \]

  3. A natural transformation \mbox{$s : \odot \to \odot T$}, where 
  \mbox{$T : \mathscr{C} \times \mathscr{C} \to \mathscr{C}$} is the transposition functor 
  \mbox{$(A,B) \mapsto (B,A)$}, such that $s^2 = \mathrm{id}$ and  
  $s$ satisfies the {\it hexagon identity} (commutativity of the following
  diagram for all objects $A$, $B$, $C$).
  \[
    \begin{diagram}
      \node[2]{A \odot (B \odot C)}
      \arrow{sw,t}{\mathrm{id} \odot s_{B,C} }
      \arrow{se,t}{a_{A,B,C}}\\
      \node{A \odot (C \odot B)}
      \arrow{s,l}{a_{A, C, B}}
      \node[2]{(A \odot B) \odot C}
      \arrow{s,r}{s_{A\odot B, C} }\\
      \node{(A \odot C) \odot B}
      \arrow{se,b}{s_{A,C} \odot \mathrm{id}}
      \node[2]{C \odot (A \odot B)}
      \arrow{sw,b}{a_{C, A, B} }\\
      \node[2]{(C \odot A) \odot B}
    \end{diagram}
  \]
  
  Observe that if $a_{A,B,C}$, $\ell_A$ and $r_A$ are identity morphisms, then
  this definition reduces to that of a permutative category.  
\end{definition}

Let $\mathbf{S}$ denote the
{\it symmetric groupoid} as category.  For our purposes, we may label the objects
by $\underline{0}, \underline{1}, \underline{2}, \ldots$, and the only morphisms
of $\mathbf{S}$ are automorphisms, $\mathrm{Aut}_{\mathbf{S}}(\underline{n}) :=
S_n$, the symmetric group on $n$ letters.  

\begin{definition}\label{def.operad}
  Suppose $\mathscr{C}$ is a symmetric monoidal category, with unit $e$.
  An {\it operad} $\mathscr{P}$ in the category $\mathscr{C}$ is a functor
  \[
    \mathscr{P} : \mathbf{S}^\mathrm{op} \to \mathscr{C},
  \]
  with $\mathscr{P}(0) = e$, together with the following data:
  
  1. Morphisms $\gamma_{k, j_1, \ldots, j_k} : \mathscr{P}(k) \odot \mathscr{P}(j_1)
  \odot \ldots \odot \mathscr{P}(j_k) \to \mathscr{P}(j)$, where
  $j = \sum j_s$.  For brevity, we denote these morphisms simply by $\gamma$.  
  The morphisms $\gamma$ should satisfy the following associativity condition.
  The diagram below is commutative for
  all $k \geq 0$, $j_s \geq 0$, $i_r \geq 0$.  
  Here, $T$ is a map that permutes the components of the product in the specified way, 
  using the symmetric transformation $s$ of $\mathscr{C}$.  Coherence of $s$
  guarantees that this is a well-defined map.

  {\bf Associativity:}
  \[
    \begin{diagram}
      \node{\mathscr{P}(k) \odot \bigodot_{s = 1}^{k} \mathscr{P}(j_s)
        \odot \bigodot_{r = 1}^{j} \mathscr{P}(i_r)}
      \arrow{e,t}{T}
      \arrow{s,l}{\gamma \odot \mathrm{id}^{\odot j}}
      \node{\mathscr{P}(k) \odot \bigodot_{s=1}^{k}
        \left( \mathscr{P}(j_s) \odot \bigodot_{r = j_1 + \ldots + j_{s-1} + 1}^{j_1 + \ldots
        + j_s} \mathscr{P}(i_r) \right)}
      \arrow{s,r}{\mathrm{id} \odot \gamma^{\odot k}}\\
      \node{\mathscr{P}(j) \odot \bigodot_{r = 1}^{j} \mathscr{P}(i_r) }
      \arrow{s,l}{\gamma}
      \node{\mathscr{P}(k) \odot \bigodot_{s=1}^{k} \mathscr{P}\left( \sum_{r=j_1 + \ldots
        + j_{s-1} + 1}^{j_1 + \ldots + j_s} i_r \right) }
      \arrow{s,r}{\gamma}\\
      \node{\mathscr{P}\left(\sum_{r=1}^{j} i_r \right) }
      \arrow{e,=}
      \node{\mathscr{P}\left(\sum_{r=1}^{j_1 + \ldots + j_k} i_r \right) }
    \end{diagram}
  \]
  
  2. A {\it Unit} morphism \mbox{$\eta : e \to \mathscr{P}(1)$} making the following 
  diagrams commute:
  
  {\bf Left Unit Condition:}
  \[
    \begin{diagram}
      \node{e \odot \mathscr{P}(j)}
      \arrow{e,t}{\eta \odot \mathrm{id}}
      \arrow{se,b}{\ell}
      \node{\mathscr{P}(1) \odot \mathscr{P}(j)}
      \arrow{s,r}{\gamma}\\
      \node[2]{\mathscr{P}(j)}      
    \end{diagram}
  \]

  {\bf Right Unit Condition:}
  \[
    \begin{diagram}
      \node{\mathscr{P}(j) \odot e^{\odot j}}
      \arrow{e,t}{\mathrm{id} \odot \eta^{\odot j}}
      \arrow{se,b}{r^j}
      \node{\mathscr{P}(j) \odot \mathscr{P}(1)^{\odot j}}
      \arrow{s,r}{\gamma}
      \\
      \node[2]{\mathscr{P}(j)}
    \end{diagram}
  \]
  
  Here, $r^j$ is the {\it iterated} right unit map defined recursively (for an 
  object $A$ of $\mathscr{C}$):
  \[
    r_A^j := \left\{\begin{array}{ll}
                      r_A, & j=1\\
                      r_A^{j-1}\left(r_A \odot \mathrm{id}^{\odot(j-1)}\right),
                      & j > 1
                    \end{array}\right.
  \]
      
  3.  The right action of $S_n$ on $\mathscr{P}(n)$ for each $n$
  must satisfy the following {\it equivariance
  conditions}.  Both diagrams below are commutative for
  all \mbox{$k \geq 0$}, \mbox{$j_s \geq 0$}, \mbox{$(j = \sum j_s)$}, 
  \mbox{$\sigma \in S_k^\mathrm{op}$},
  and \mbox{$\tau_s \in S_{j_s}^\mathrm{op}$}. 
  Here, $T_{\sigma}$ is
  a morphism that permutes the components of the product in the specified way,
  using the symmetric transformation $s$.
  $\sigma\{j_1, \ldots, j_k\}$  denotes the permutation of $j$ letters which permutes
  the $k$ blocks of letters (of sizes $j_1$, $j_2$, \ldots $j_k$) according to
  $\sigma$, and \mbox{$\tau_1 \oplus \ldots \oplus \tau_k$} denotes the image of
  \mbox{$(\tau_1, \ldots, \tau_k)$} under the evident inclusion
  \mbox{$S_{j_1}^\mathrm{op} \times \ldots \times S_{j_k}^\mathrm{op} \hookrightarrow
  S_j^\mathrm{op}$}.
  
  {\bf Equivariance Condition A:}
  \[
    \begin{diagram}
      \node{\mathscr{P}(k) \odot \bigodot_{s=1}^k \mathscr{P}(j_s)}
      \arrow{e,t}{\mathrm{id} \odot T_{\sigma}}
      \arrow{s,l}{\sigma \odot \mathrm{id}^{\odot k}}
      \node{\mathscr{P}(k) \odot \bigodot_{s=1}^k \mathscr{P}\left(j_{\sigma^{-1}(s)}\right)}
      \arrow{s,r}{\gamma}\\
      \node{\mathscr{P}(k) \odot \bigodot_{s=1}^k \mathscr{P}(j_s)}
      \arrow{se,l}{\gamma}
      \node{\mathscr{P}(j)}
      \arrow{s,b}{\sigma\{j_1, \ldots, j_k\}}\\
      \node[2]{\mathscr{P}(j)}
    \end{diagram}
  \]

  {\bf Equivariance Condition B:}
  \[
    \begin{diagram}
      \node{\mathscr{P}(k) \odot \bigodot_{s=1}^k \mathscr{P}(j_s)}
      \arrow{e,t}{\gamma}
      \arrow{s,l}{\mathrm{id} \odot \tau_1 \odot \ldots \odot \tau_k}
      \node{\mathscr{P}(j)}
      \arrow{s,r}{\tau_1 \oplus \ldots \oplus \tau_k}\\
      \node{\mathscr{P}(k) \odot \bigodot_{s=1}^k \mathscr{P}(j_s)}
      \arrow{e,t}{\gamma}
      \node{\mathscr{P}(j)}
    \end{diagram}
  \]  
\end{definition}

\begin{definition}\label{def.operad-algebra}
  For a symmetric monoidal category $\mathscr{C}$ with product $\odot$
  and an operad $\mathscr{P}$ over $\mathscr{C}$, a 
  {\it $\mathscr{P}$-algebra}
  structure on an object $X$ in $\mathscr{C}$ is defined by a 
  family of maps
  \[
    \chi : \mathscr{P}(n) \odot_{S_n} X^{\odot n} \to X,
  \]
  which are compatible with the multiplication, unit maps, and equivariance
  conditions of $\mathscr{P}$.  Note, the symbol $\odot_{S_n}$ denotes
  an internal equivariance condition:
  \[
    \chi(\pi.\sigma \odot x_1 \odot \ldots \odot x_n)
    = \chi(\pi \odot x_{\sigma^{-1}(1)} \odot \ldots \odot x_{\sigma^{-1}(n)})
  \]
  If $X$ is a $\mathscr{P}$-algebra, we will say that $\mathscr{P}$ acts on $X$.
\end{definition}  

\begin{definition}\label{def.operad-module}
  Let $\mathscr{P}$ be an operad over the symmetric monoidal category $\mathscr{C}$, 
  and let $\mathscr{M}$ be a functor \mbox{$\mathbf{S}^\mathrm{op} \to \mathscr{C}$}.
  A {\it (left) \mbox{$\mathscr{P}$-module}} structure on
  $\mathscr{M}$ is a collection of structure maps, 
  \[
    \mu : \mathscr{P}(n) \odot \mathscr{M}(j_1) \odot \ldots \odot \mathscr{M}(j_n)
    \longrightarrow \mathscr{M}(j_1 + \ldots + j_n),
  \]
  satisfying the evident compatibility relations with the operad multiplication of
  $\mathscr{P}$.  For the precise definition, see~\cite{KM}.
\end{definition}

In the course of this chapter, it shall become necessary to induce structures
up from small categories to simplicial sets, then to simplicial $k$-modules, and
finally to $k$-complexes.  Each of these categories is symmetric monoidal.
For notational convenience, all operads, operad-algebras,
and operad-modules will carry a subscript denoting the ambient category over which 
the structure is defined:
\begin{center}
\begin{tabular}{|ll|l|l|}
\hline
Category &  & Sym. Mon. Product & Notation\\
\hline
Small categories & $\mathbf{Cat}$ & $\times$ (product of categories) &
  $\mathscr{P}_\mathrm{cat}$ \\
Simplicial sets & $\mathbf{SimpSet}$ & $\times$ 
  (degree-wise set product) & $\mathscr{P}_\mathrm{ss}$ \\
Simplicial $k$-modules & $k$-$\mathbf{SimpMod}$ & $\widehat{\otimes}$ (degree-wise
  tensor product) & $\mathscr{P}_\mathrm{sm}$  \\
$k$-complexes & $k$-$\mathbf{Complexes}$ & $\otimes$ (tensor prod. of chain complexes)
  & $\mathscr{P}_\mathrm{ch}$ \\
\hline
\end{tabular}
\end{center}

\begin{rmk}
  The notation $\widehat{\otimes}$, appearing in Richter~\cite{R}, is useful for 
  indicating degree-wise tensoring of graded modules:
  \[
    \left(A_* \,\widehat{\otimes}\, B_*\right)_n :=
    A_n \otimes_k B_n,
  \]
  as opposed to the standard tensor product (over $k$) of complexes:
  \[
    \left(\mathscr{A}_* \otimes \mathscr{B}_*\right)_n := \bigoplus_{p+q=n} 
    \mathscr{A}_p \otimes_k \mathscr{B}_q
  \]
\end{rmk}

Furthermore, we are interested in certain functors from one category to the next in
the list.  These functors preserve the symmetric monoidal structure in a sense we
will make precise in Section~\ref{sec.op-mod-structure} -- hence, it will follow
that these functors send operads to operads, operad-modules to operad-modules,
and operad-algebras to operad-algebras.
\[
  \begin{diagram}
    \node{ (\mathbf{Cat}, \times) }
    \arrow{s,r}{N \quad \textrm{(Nerve of categories)}}
    \\
    \node{ (\mathbf{SimpSet}, \times) }
    \arrow{s,r}{k[ - ] \quad \textrm{($k$-linearization)}}
    \\
    \node{ (\textrm{$k$-$\mathbf{SimpMod}$}, \widehat{\otimes} ) }
    \arrow{s,r}{\mathcal{N} \quad \textrm{(Normalization functor)}}
    \\
    \node{ (\textrm{$k$-$\mathbf{Complexes}$}, \otimes )}
  \end{diagram}
\]

\begin{rmk}
  Note, the normalization functor $\mathcal{N}$ is one direction of the
  Dold-Kan correspondence between simplicial modules and complexes.  
\end{rmk}

\begin{rmk}
  The ultimate goal of this chapter is to construct an $E_\infty$ structure on 
  the chain complex associated with $\mathscr{Y}_*^+A$, {\it i.e.} an
  action by an $E_\infty$-operad.
  While we could define the notion of $E_{\infty}$-operad over general
  categories, it would require extra structure on the ambient symmetric
  monoidal category $(\mathscr{C}, \odot)$ -- which the examples above possess.
  To avoid needless technicalities, we shall instead define versions of the
  Barratt-Eccles operad over each of our ambient categories, and take for
  granted that they are all $E_{\infty}$-operads.
  For a more general discussion of $E_\infty$-operads and algebras in
  the category of chain complexes, see~\cite{MSS}.
\end{rmk}
 
\begin{definition}\label{def.operadD}
  $\mathscr{D}_{\mathrm{cat}}$ is the operad $\{\mathscr{D}_{\mathrm{cat}}(m)\}$ in 
  the category \textbf{Cat},
  where $\mathscr{D}_{\mathrm{cat}}(0) = \ast$, 
  $\mathscr{D}_{\mathrm{cat}}(m)$ is the category whose objects are the elements of 
  $S_m$, and for each pair of
  objects, $\sigma, \tau$, we have $\mathrm{Mor}(\sigma, \tau) = \{ \tau\sigma^{-1} \}$.
  The structure map (multiplication) $\delta$ in $\mathscr{D}_{\mathrm{cat}}$ is a 
  functor defined on objects by:
  \[
    \delta \;:\; \mathscr{D}_{\mathrm{cat}}(m) \times \mathscr{D}_{\mathrm{cat}}(k_1) 
    \times \ldots \times \mathscr{D}_{\mathrm{cat}}(k_m) \longrightarrow 
    \mathscr{D}_{\mathrm{cat}}(k), \;\;\textit{where $k = \sum k_i$}
  \]
  \[
    (\sigma, \tau_1, \ldots, \tau_m) \mapsto \sigma\{k_1, \ldots, k_m\} (\tau_1
    \oplus \ldots \oplus \tau_m)
  \]  
  Here, $\tau_1 \oplus \ldots \oplus \tau_m \in S_k$ and
  $\sigma\{k_1, \ldots, k_m\} \in S_k$ are defined as in Definition~\ref{def.operad}.
  The functor takes the unique morphism 
  \[
    (\sigma, \tau_1, \ldots, \tau_m) \to (\rho, \psi_1, \ldots, \psi_m)
  \]
  to the unique morphism
  \[
    \sigma\{k_1, \ldots, k_m\}(\tau_1 \oplus \ldots \oplus \tau_m) \to 
    \rho\{k_1, \ldots, k_m\}(\psi_1 \oplus\ldots \oplus \psi_m).
  \]
  The action of $S_m^{\mathrm{op}}$ on objects of 
  $\mathscr{D}_{\mathrm{cat}}(m)$ is given by right
  multiplication.
\end{definition}

\begin{rmk}\label{rmk.Barratt-Eccles}
  We are following the notation of May for our operad
  $\mathscr{D}_{\mathrm{cat}}$ (See~\cite{M3}, Lemmas~4.3, 4.8).  May's
  notation for $\mathscr{D}_{\mathrm{cat}}(m)$ is $\widetilde{\Sigma}_m$,
  and he defines the related operad $\mathscr{D}$ over the category of spaces, as
  the geometric realization of the nerve of $\widetilde{\Sigma}$.
  The nerve of $\mathscr{D}_{\mathrm{cat}}$ is generally known 
  in the literature as the {\it Barratt-Eccles operad} (See~\cite{BE}, where
  the notation for $N\mathscr{D}_{\mathrm{cat}}$ is $\Gamma$).
\end{rmk}

\section{Operad-Module Structure}\label{sec.op-mod-structure}     %

In order to best define the $E_\infty$ structure of $\mathscr{Y}_*^+A$,
we will begin with an {\it operad-module} structure over the category 
of small categories,
then induce this structure up to the category of \mbox{$k$-complexes}.

\begin{definition}
  Define for each $m \geq 0$, a category,
  \[
    \mathscr{K}_{\mathrm{cat}}(m) := [m-1] \setminus \Delta S_+ = \underline{m} \setminus
    \mathcal{F}(as),
  \]
  (See Section~\ref{sec.deltas} for a definition of $\mathcal{F}(as)$.)
\end{definition}

Identifying $\Delta S_+$ with $\mathcal{F}(as)$, we see
that the morphism $(\phi, g)$ of $\Delta S_+$ consists of the
set map $\phi$, precomposed with $g^{-1}$ in order to indicate the total
ordering on all preimage sets.  Thus, precomposition with symmetric
group elements defines a {\it right} $S_m$-action on objects
of $\underline{m} \setminus \mathcal{F}(as)$.  When writing morphisms
of $\mathcal{F}(as)$, we may avoid confusion by writing the automorphisms
as elements of the symmetric group as opposed to its opposite group, with
the understanding that $g \in \mathrm{Mor}\mathcal{F}(as)$ corresponds
to $g^{-1} \in \mathrm{Mor}\Delta S_+$.
\[
  \mathscr{K}_{\mathrm{cat}}(m) \times S_m \to \mathscr{K}_{\mathrm{cat}}(m)
\]
\[
  (\phi, g).h := (\phi, gh)
\]
Let $m, j_1, j_2, \ldots, j_m \geq 0$, and let $j = \sum j_s$.  
We shall define a family of functors,
\begin{equation}\label{eq.mu}
  \mu = \mu_{m,j_1,\ldots,j_m} : \mathscr{D}_{\mathrm{cat}}(m) \times \prod_{s=1}^m 
  \mathscr{K}_{\mathrm{cat}}(j_s)  \longrightarrow   \mathscr{K}_{\mathrm{cat}}(j).
\end{equation}
Assume that the pairs of morphisms $f_i$, $g_i$ ($1 \leq i \leq m$) have specified
sources and targets:
\[
  \underline{j_i} \stackrel{f_i}{\to} \underline{p_i} \stackrel{g_i}{\to} 
  \underline{q_i}.
\]
$\mu$ is defined on objects by:
\begin{equation}\label{eq.mu-objects}
  \mu( \sigma, f_1, f_2, \ldots, f_m ) := 
   {\sigma\{p_1, p_2, \ldots, p_m\}}
  (f_1 \odot f_2 \odot \ldots \odot f_m).
\end{equation}
For compactness of notation, denote 
\[
  \underline{m} := \{1, \ldots, m\},
\]
as ordered list, along with a left $S_m$ action, 
\[
  \tau \underline{m} := \{\tau^{-1}(1), \ldots, \tau^{-1}(m)\}.
\]
Then for any permutation, $\tau \underline{m}$, of the ordered list 
$\underline{m}$, and any list of $m$ numbers, $\{j_1, \ldots, j_m\}$,
denote
\[
  j_{\tau \underline{m}} := \{j_{\tau^{-1}(1)}, \ldots, j_{\tau^{-1}(m)}\}.
\]
Furthermore, if $f_1, f_2, \ldots, f_m \in \mathrm{Mor}\mathcal{F}(as)$, denote
\[
  f_{\tau \underline{m}}^\odot := f_{\tau^{-1}(1)} \odot \ldots \odot f_{\tau^{-1}(m)}
\]
so in particular, we may write:
\[
  \mu( \sigma, f_1, f_2, \ldots, f_m ) = 
  \sigma\{p_{\underline{m}}\}f^{\odot}_{\underline{m}}
\]  
Using this notation, define $\mu$ on morphisms by:
\begin{equation}\label{eq.mu-morphisms}
  \begin{diagram}
    \node{ (\sigma, f_1, \ldots, f_m) }
     \arrow{s,r}{ \tau\sigma^{-1} \times g_1 \times \ldots \times g_m }
    \arrow{e,t}{ \mu }
    \node{ \sigma\{p_{\underline{m}}\}f_{\underline{m}}^\odot }
    \arrow{s,r}{ (\tau\sigma^{-1})\{q_{\sigma \underline{m}} \}
      g_{\sigma \underline{m}}^\odot } 
    \\
    \node{ (\tau, g_1f_1, \ldots, g_mf_m) }
    \arrow{e,t}{ \mu }
    \node{ \tau\{q_{\underline{m}}\}
      (g_{\underline{m}}f_{\underline{m}})^\odot }
  \end{diagram}
\end{equation}
It is useful to note three properties of the block permutations and symmetric
monoid product in the category $\coprod_{m \geq 0} \mathscr{K}_{\mathrm{cat}}(m)$.
\begin{prop}\label{prop.properties-blockperm-odot}
  1. For $\sigma, \tau \in S_m$, and non-negative $p_1, p_2, \ldots, p_m$,
  \[
    (\sigma\tau)\{ p_{\underline{m}} \} = \sigma\{p_{\tau \underline{m}}\}
    \tau\{p_{\underline{m}}\}.
  \]
  
  2. For $\sigma \in S_m$, and morphisms $g_i \in \mathrm{Mor}_{\mathcal{F}(as)}
  (\underline{p_i}, \underline{q_i})$, $(1 \leq i \leq m)$,
  \[
    \sigma\{q_{\underline{m}}\}g_{\underline{m}}^\odot = 
    g_{\sigma \underline{m}}^\odot \sigma\{p_{\underline{m}}\},
  \]

  3. For morphisms $\underline{j_i} \stackrel{f_i}{\to} \underline{p_i}
  \stackrel{g_i}{\to} \underline{q_i}$,
  \[
    (g_{\underline{m}}f_{\underline{m}})^\odot = g_{\underline{m}}^\odot 
    f_{\underline{m}}^\odot
  \]
\end{prop}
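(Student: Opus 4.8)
The three identities in Proposition~\ref{prop.properties-blockperm-odot} are all ``bookkeeping'' facts about how block permutations interact with composition in $\mathcal{F}(as)$ and with the symmetric-monoidal product $\odot$. The plan is to verify each one by a direct unwinding of the definitions, working at the level of set maps (with their attached total orderings), since $\mathcal{F}(as) \cong \Delta S_+$ and a morphism is just a set map together with orderings on preimages. I would set up once and for all the convention that $\sigma\{p_1,\dots,p_m\}$ is the permutation of $\{1,\dots,\sum p_i\}$ that permutes the $m$ consecutive blocks (of sizes $p_1,\dots,p_m$) according to $\sigma$, preserving order within each block, and that $f_{\underline{m}}^\odot = f_1 \odot \dots \odot f_m$ acts on the disjoint union of domains in the obvious block-wise fashion. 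With these conventions fixed, each part is a short computation.

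For part~1, I would compute the effect of both sides on the block of size $p_k$ sitting in position $k$. The left side $(\sigma\tau)\{p_{\underline m}\}$ sends block $k$ to position $(\sigma\tau)(k)$. On the right side, $\tau\{p_{\underline m}\}$ first sends block $k$ to position $\tau(k)$; but after this move the block sizes in order have become $p_{\tau^{-1}(1)},\dots,p_{\tau^{-1}(m)}$, i.e.\ $p_{\underline m}$ reindexed by $\tau$, which is exactly why the outer factor must be $\sigma\{p_{\tau\underline m}\}$ rather than $\sigma\{p_{\underline m}\}$; it then sends the block now in position $\tau(k)$ to position $\sigma(\tau(k))$. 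Since order within blocks is preserved throughout, the two composites agree. This same reindexing subtlety is the only place where one has to be careful, and it is precisely the reason the statement is phrased with $p_{\tau\underline m}$ on the outer block permutation.

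For part~2, I would again track a single block. The morphism $g_{\underline m}^\odot$ applies $g_i$ to the $i$-th block (domain $\underline{p_i}$, target $\underline{q_i}$); postcomposing with $\sigma\{q_{\underline m}\}$ then shuffles the $m$ target blocks by $\sigma$. On the other side, $\sigma\{p_{\underline m}\}$ first shuffles the $m$ source blocks by $\sigma$, after which the block in position $i$ is the original block $\sigma^{-1}(i)$; applying $g_{\sigma\underline m}^\odot$ then means applying $g_{\sigma^{-1}(i)}$ to it. Both composites therefore send the original $i$-th source block, via $g_i$, to the $\sigma(i)$-th target slot, and they agree on orderings because each $g_i$ carries its ordering data and block shuffles preserve within-block order. (Alternatively, one can observe that this is just naturality of the block-permutation ``distributivity'' transformation $\gamma$ of the permutative category $\Delta S_+$ from Prop.~\ref{prop.deltaSpermutative}, applied to the morphisms $g_i$, and cite that; I would mention both viewpoints.) Part~3 is the functoriality of $\odot$ as a bifunctor on $\Delta S_+ \cong \mathcal{F}(as)$: composing block-wise and then juxtaposing equals juxtaposing and then composing, which is immediate since the blocks do not interact.

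I do not expect any genuine obstacle here; the content is entirely combinatorial and follows once the indexing conventions are pinned down. If there is a ``hard part,'' it is purely notational: keeping the reindexing $j_{\tau\underline m}$, $p_{\tau\underline m}$, $f^\odot_{\tau\underline m}$ straight so that the outer block permutations in parts~1 and~2 are indexed by the correct list of block sizes. I would present the proof as three short paragraphs, one per identity, each doing the single-block tracking argument above, and note at the end that these three properties are exactly what will be needed to check that $\mu$ in~(\ref{eq.mu-morphisms}) is well-defined on morphisms and compatible with the operad multiplication of $\mathscr{D}_{\mathrm{cat}}$.
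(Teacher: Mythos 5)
Your proposal is correct and follows essentially the same approach as the paper, just with the details unpacked: the paper's proof is terse (part~1 is dispatched by a citation to Markl--Shnider--Stasheff for the standard block-permutation composition identity, and parts~2 and~3 each get a one-sentence explanation), whereas you carry out the single-block bookkeeping explicitly. Your tracking of the reindexing $p_{\tau\underline m}$ after applying $\tau\{p_{\underline m}\}$, and the check that both sides of part~2 route the $i$-th source block through $g_i$ into the $\sigma(i)$-th target slot, are precisely the verifications the paper's proof alludes to but does not write out.
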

\begin{proof}
  Property 1 is a standard composition property of block permutations.  
  See~\cite{MSS}, p. 41, for example.  Property 2 expresses the fact that
  it does not matter whether we apply the morphisms $g_i$ to blocks first, then
  permute those blocks, or permute the blocks first, then apply $g_i$ to
  the corresponding permuted block.  Finally, property 3 is a result of
  functoriality of the product $\odot$.
\end{proof}
Using these properties, it is straightforward to verify that $\mu$ is a
functor.  We shall show that $\mu$ defines a 
$\mathscr{D}_{\mathrm{cat}}$-module structure on
$\mathscr{K}_{\mathrm{cat}}$.

First observe that if $(\mathscr{B}, \odot)$ is a permutative category,
then $\mathscr{B}$ admits the structure of $E_\infty$-algebra.
We may express this structure using the $E_\infty$-operad, 
$\mathscr{D}_{\mathrm{cat}}$.  The structure map is
a family of functors 
\[
  \theta : \mathscr{D}_{\mathrm{cat}}(m) \times \mathscr{B}^m \to \mathscr{B},
\]
given on objects $C_i \in \mathrm{Obj}\mathscr{B}$ by:
\[
  \theta(\sigma, C_1, \ldots, C_m) := C_{\sigma^{-1}(1)} \odot \ldots \odot 
  C_{\sigma^{-1}(m)},
\]
and on morphisms $(\sigma, C_1, \ldots, C_m) \to (\tau, D_1, \dots, D_m)$ by:
\[
  \begin{diagram}
  \node{ (\sigma, C_1, \ldots, C_m) }
  \arrow{e,t}{ \theta }
  \arrow[2]{s,l}{ \tau \sigma^{-1} \times f_1 \times \ldots \times f_m }
  \node{ C_{\sigma^{-1}(1)} \odot \ldots \odot C_{\sigma^{-1}(m)} }
  \arrow{s,r}{ f_{\sigma^{-1}(1)} \odot \ldots \odot f_{\sigma^{-1}(m)} }
  \\
  \node[2]{ D_{\sigma^{-1}(1)} \odot \ldots \odot D_{\sigma^{-1}(m)} }
  \arrow{s,lr}{ \cong }{ T_{\tau\sigma^{-1}} }
  \\
  \node{ (\tau, D_1, \dots, D_m) }
  \arrow{e,t}{ \theta }  
  \node{ D_{\tau^{-1}(1)} \odot \ldots \odot D_{\tau^{-1}(m)} }
  \end{diagram}
\]

We just need to verify that $\theta$ satisfies the expected equivariance 
condition.  Recall, the required condition is
\[
  \theta(\sigma \tau, C_1, \ldots, C_m) = \theta\left(\sigma, C_{\tau^{-1}(1)},
  \ldots C_{\tau_{-1}(m)}\right)
\]
This is easily verified on objects, as the left-hand side evaluates as:
\[
  \theta(\sigma \tau, C_1, \ldots, C_m) =
  C_{(\sigma\tau)^{-1}(1)} \odot \ldots \odot C_{(\sigma\tau)^{-1}(m)},
\]
while the right-hand side evaluates as:
\[
  \theta\left(\sigma, C_{\tau^{-1}(1)},
  \ldots C_{\tau_{-1}(m)}\right) = C_{\tau^{-1}\left(\sigma^{-1}(1)\right)} \odot
  \ldots C_{\tau^{-1}\left(\sigma^{-1}(m)\right)}
\]
\[
  = C_{(\sigma\tau)^{-1}(1)} \odot \ldots \odot C_{(\sigma\tau)^{-1}(m)}.
\]

Now, let $M$ be a monoid with unit, $1$.  Let \mbox{$\mathscr{X}^+_*M := 
N(- \setminus \Delta S_+) \times_{\Delta S_+} B^{sym_+}_*M$}.  This is analogous to
the construction $\mathscr{X}_*$ of section~\ref{sec.symhommonoid}.  
\begin{prop}\label{prop.X-perm-cat}
 $\mathscr{X}^+_*M$ is the nerve of a permutative category.
\end{prop}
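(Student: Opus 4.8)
The plan is to identify $\mathscr{X}^+_*M$ with the nerve of the category of elements (Grothendieck construction) $\mathscr{E} := \Delta S_+\int B_*^{sym_+}M$ of the functor $B_*^{sym_+}M$, and then to transport onto $\mathscr{E}$ the permutative structure of $\Delta S_+$ furnished by Prop.~\ref{prop.deltaSpermutative}, using that $B_*^{sym_+}M$ is a strict symmetric monoidal functor.

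First I would make $\mathscr{E}$ explicit: its objects are pairs $([n],y)$ with $n\ge -1$ and $y\in B_n^{sym_+}M$ (so $y\in M^{n+1}$ for $n\ge 0$, and the unique point for $n=-1$), and a morphism $([n],y)\to([m],z)$ is a morphism $\alpha\in\mathrm{Mor}_{\Delta S_+}([n],[m])$ with $B_*^{sym_+}M(\alpha)(y)=z$. Using the reduction already in force in Remark~\ref{rmk.uniqueChain} — every $q$-simplex of $N(-\setminus\Delta S_+)\times_{\Delta S_+}B_*^{sym_+}M$ is uniquely equivalent to one of the form $[q_0]\xrightarrow{\beta_1}[q_1]\to\cdots\xrightarrow{\beta_q}[q_q]\otimes y$ with $y\in B_{q_0}^{sym_+}M$ — I would define a degreewise bijection $\mathscr{X}^+_{q}M \to N\mathscr{E}_q$ sending such a representative to the chain $([q_0],y)\xrightarrow{\beta_1}([q_1],(\beta_1)_*y)\to\cdots\to([q_q],(\beta_q\cdots\beta_1)_*y)$, and check compatibility with the simplicial operators: the face $d_0$ of $\mathscr{X}^+_*M$, which pushes $y$ forward along $\beta_1$, matches deletion of the initial vertex in $N\mathscr{E}$; the remaining $d_i$ compose adjacent morphisms on both sides; and degeneracies insert identities on both sides. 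This yields $\mathscr{X}^+_*M\cong N\mathscr{E}$ as simplicial sets.

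Next I would verify that $B_*^{sym_+}M$ is a strict symmetric monoidal functor $(\Delta S_+,\odot,[-1])\to(\textbf{Sets},\times,\ast)$: the canonical identifications $M^{n+1}\times M^{m+1}\cong M^{(n+m+1)+1}=B_*^{sym_+}M([n]\odot[m])$ are natural in each variable because, by the description of $\odot$ on morphisms in the proof of Prop.~\ref{prop.deltaSpermutative}, $(\phi,g)\odot(\psi,h)$ acts as $(\phi,g)$ on the first block of coordinates and as $(\psi,h)$ on the second; the point of $B_{-1}^{sym_+}M$ maps to $(1_M,\dots,1_M)$; and these structure maps are strictly associative and compatible with the block-transposition symmetry $\gamma$ of $\Delta S_+$. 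Then the category of elements of such a functor over a permutative category is itself permutative: set $([n],y)\odot([m],z):=([n+m+1],\,yz)$ (concatenation of coordinate tuples), unit object $([-1],\ast)$, and symmetry the block transposition $\beta_{n,m}$ of $\Delta S_+$ viewed as a morphism $([n],y)\odot([m],z)\to([m],z)\odot([n],y)$ — which is legitimate precisely because $B_*^{sym_+}M(\beta_{n,m})(yz)=zy$. All the permutative axioms ($\odot$ strictly associative with strict two-sided unit, $\gamma^2=\mathrm{id}$, $\gamma$ trivial against the unit, and the hexagon of Prop.~\ref{prop.deltaSpermutative}) then descend verbatim from the corresponding facts in $\Delta S_+$ and from strict associativity of Cartesian products of sets. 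Hence $\mathscr{E}$ is permutative and $\mathscr{X}^+_*M=N\mathscr{E}$ is the nerve of a permutative category.

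I expect the only real work to be bookkeeping: checking that the degreewise bijection respects every face and degeneracy, and — slightly more delicate — confirming that the reduced representatives are genuinely unique (no identification survives once the leading morphism is an identity), which is exactly the direct-sum decomposition used implicitly in Remark~\ref{rmk.uniqueChain} and Lemma~\ref{lem.YA-flat}, applied over $\Delta S_+$ and in the category of sets rather than $k$-modules. Once the identification $\mathscr{X}^+_*M\cong N\mathscr{E}$ is secured, permutativity of $\mathscr{E}$ is essentially forced by Prop.~\ref{prop.deltaSpermutative}.
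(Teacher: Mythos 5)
Your proposal is correct and follows essentially the same route as the paper. The category of elements $\mathscr{E}$ you describe is precisely the paper's category $\mathcal{T}M$ (the paper takes the tuple $y$ alone as the object, but $[n]$ is recoverable from its length, so the two categories are isomorphic), the identification $\mathscr{X}^+_*M \cong N\mathscr{E}$ via reduction to chains with a leading identity is the same step, and the permutative structure — concatenation of tuples, the empty tuple as strict unit, block transposition as symmetry — matches the paper's verbatim; the paper simply verifies the axioms directly rather than invoking the general fact that the Grothendieck construction of a strict symmetric monoidal functor over a permutative category is permutative.
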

\begin{proof}
  Consider a category $\mathcal{T}M$ whose objects are the elements of 
  $\coprod_{n \geq 0} M^n$,
  where $M^0$ is understood to be the set consisting of the empty tuple, $\{()\}$.
  Morphisms of $\mathcal{T}M$ consist of the morphisms of $\Delta S_+$, 
  reinterpreted as
  follows:  A morphism $f : [p] \to [q]$ in $\Delta S$ will be considered a morphism
  $(m_0, m_1, \ldots , m_p) \to f(m_0, m_1, \ldots, m_p) \in M^{q+1}$.  The unique morphism
  $\iota_n$ will be considered a morphism $() \to (1, 1, \ldots, 1) \in M^{n+1}$.
  The nerve of $\mathcal{T}M$ consists of chains,
  \[
    (m_0, \ldots, m_n)
    \stackrel{ f_1 }{\to}
    f_1(m_0, \ldots, m_n)
    \stackrel{ f_2 }{\to}
    \ldots
    \stackrel{ f_i }{\to}
    f_i \ldots f_1(m_0, \ldots, m_n)
  \]
    
  This chain can be rewritten uniquely as an element of $M^n$ together with a chain 
  in $N\Delta S$.
  \[
    \left( [n] \stackrel{f_1}{\to} [n_1] \stackrel{f_2}{\to} \ldots
    \stackrel{f_i}{\to} [n_i]\,,\,(m_0, \ldots, m_n) \right),
  \]
  which in turn is uniquely identified with an element of $\mathscr{X}^+_iM$:
  \[
    \left( [n] \stackrel{\mathrm{id}}{\to} [n]\stackrel{f_1}{\to} 
    [n_1] \stackrel{f_2}{\to} \ldots
    \stackrel{f_i}{\to} [n_i]\,,\,(m_0, \ldots, m_n) \right)
  \]
  Clearly, since any element of $\mathscr{X}^+_*M$ may be written so that the
  first morphism of the chain component is the identity,  
  $N(\mathcal{T}M)$ can be identified with $\mathscr{X}^+_*M$.
  
  Now, we show that $\mathcal{T}M$ is permutative.  Define
  the product on objects:
  \[
    (m_0, \ldots, m_p) \odot (n_0, \ldots, n_q) := (m_0, \ldots, m_p, n_0, \ldots, n_q),
  \]
  and for morphisms $f, g \in \mathrm{Mor}\mathcal{T}M$,
  simply use $f \odot g$
  as defined for $\Delta S_+$ in section~\ref{sec.deltas}.  Associativity is strict, since
  it is induced by the associativity of $\odot$ in $\Delta S_+$.
  There is also a strict unit, the empty tuple, $()$.  
  
  The natural transposition ({\it i.e.}, $\gamma : \odot \to  \odot T$ of the
  definition given in Prop~\ref{prop.deltaSpermutative}) is defined on objects by:
  \[
    \gamma : (m_0, \ldots, m_p) \odot (n_0, \ldots, n_q) \to (n_0, \ldots, n_q)
     \odot (m_0, \ldots, m_p)
  \]
  \[
    \gamma = \beta_{p,q} \qquad \textrm{(the block transformation morphism of 
      section~\ref{sec.deltas}).}
  \]
  Suppose we have a morphism in the product category,
  \[
    (f,g) : \left((m_0, \ldots, m_p), (n_0, \ldots, n_q)\right)
    \to \left( (m'_0, \ldots, m'_s), (n'_0, \ldots, n'_t)\right)
  \]
  Then it is easy to verify that the map $\gamma$ defined above makes the following
  diagram commutative (showing $\gamma$ is a natural transformation).
  \[
    \begin{diagram}
      \node{ (m_0, \ldots, m_p, n_0, \ldots, n_q) }
      \arrow{e,t}{\gamma = \beta_{p,q}}
      \arrow{s,r}{f \odot g}
      \node{ (n_0, \ldots, n_q, m_0, \ldots, m_p) }
      \arrow{s,r}{g \odot f}
      \\
      \node{ (m'_0, \ldots, m'_s, n'_0, \ldots, n'_t) }
      \arrow{e,t}{\gamma  = \beta_{s,t}}
      \node{ (n'_0, \ldots, n'_t, m'_0, \ldots, m'_s) }
    \end{diagram}  
  \]
  The coherence conditions are satisfied for $\gamma$ in the same way as in 
  $\Delta S_+$.  
\end{proof}

In particular, given any monoid $M$, the category $\mathcal{T}M$
constructed in the proof of Prop.~\ref{prop.X-perm-cat} has the structure
of $E_\infty$-algebra.

\begin{lemma}\label{lem.D-module_str_on_K}
  $\mathscr{K}_{\mathrm{cat}}$ has the structure of a 
  \mbox{$\mathscr{D}_{\mathrm{cat}}$-module}.
\end{lemma}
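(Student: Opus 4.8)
The plan is to take the structure maps $\mu = \mu_{m,j_1,\ldots,j_m}$ of~(\ref{eq.mu}) --- already written down explicitly on objects in~(\ref{eq.mu-objects}) and on morphisms in~(\ref{eq.mu-morphisms}) --- and verify that the family $\{\mu\}$ satisfies the axioms of a left $\mathscr{D}_{\mathrm{cat}}$-module in the sense of Definition~\ref{def.operad-module}, whose precise form is laid out in~\cite{KM}: each $\mu$ is a functor, the $\mu$'s are associative with respect to the operad multiplication $\delta$ of $\mathscr{D}_{\mathrm{cat}}$ (Definition~\ref{def.operadD}), they satisfy the left unit condition, and they satisfy the two equivariance conditions. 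The conceptual reason the lemma is true is that $\Delta S_+ = \mathcal{F}(as)$ is permutative (Prop.~\ref{prop.deltaSpermutative}) and hence a $\mathscr{D}_{\mathrm{cat}}$-algebra via the functors $\theta$ recalled above, while $\mathscr{K}_{\mathrm{cat}}(m) = \underline{m}\setminus\mathcal{F}(as)$ with $\underline{m} = \underline{1}^{\odot m}$; the map $\mu$ is obtained from $\theta$ applied to the targets of the slicing morphisms, pre-composed with the block permutation that reindexes $\underline{j_1}\odot\cdots\odot\underline{j_m}$. So every axiom reduces to bookkeeping with block permutations and the bifunctor $\odot$, and Prop.~\ref{prop.properties-blockperm-odot} supplies exactly the three identities this bookkeeping requires.

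First I would record well-definedness: each $\mu$ is a functor. On objects this is~(\ref{eq.mu-objects}); on morphisms, the one thing to check is that~(\ref{eq.mu-morphisms}) preserves composition, which is immediate from parts~1--3 of Prop.~\ref{prop.properties-blockperm-odot} --- part~1 handles the composite $(\tau\sigma^{-1})$-labels, part~2 lets one slide the symmetric-group part past the $\odot$-products of morphisms, and part~3 is functoriality of $\odot$ in each slot. The left unit condition is the triviality $\mu(\mathrm{id}; f) = f$ for the unique object $\mathrm{id}$ of $\mathscr{D}_{\mathrm{cat}}(1)$ (the identity of $S_1$), which holds because $\mathrm{id}\{p_1\}$ is the identity morphism of $\underline{p_1}$, both on the object $f$ and on morphisms out of it.

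The substance is associativity and equivariance. For associativity one must show, for $\sigma \in S_m$, $\tau_s \in S_{j_s}$, $j = \sum_s j_s$, and objects $f_r$ of $\mathscr{K}_{\mathrm{cat}}(i_r)$ ($1 \le r \le j$),
\[
  \mu\bigl(\delta(\sigma, \tau_1, \ldots, \tau_m);\, f_1, \ldots, f_j\bigr)
  \;=\;
  \mu\bigl(\sigma;\, \mu(\tau_1; f_1, \ldots, f_{j_1}), \ldots, \mu(\tau_m; f_{j-j_m+1}, \ldots, f_j)\bigr),
\]
where the $f_r$ on the right are partitioned into $m$ blocks of sizes $j_1,\ldots,j_m$. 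Unwinding both sides on objects using~(\ref{eq.mu-objects}) and the formula for $\delta$ turns this into an equality between two block permutations composed with $f_1 \odot \cdots \odot f_j$; one proves it by expanding the block permutation attached to $\delta(\sigma,\tau_1,\ldots,\tau_m)$ via part~1 of Prop.~\ref{prop.properties-blockperm-odot} together with the standard nesting rule for block permutations, and then commuting each $\tau_s$-block past its corresponding sub-$\odot$-product using part~2. The statement on morphisms is the same computation carried out on the $g_r$-factors, with part~3 invoked for functoriality of $\odot$. For equivariance, recall the right $S_m$-action on $\mathscr{K}_{\mathrm{cat}}(m)$ is precomposition with $g \in S_m$ (i.e.\ $g^{-1}$ in $\Delta S_+$): Equivariance~A, which compares permuting the $\mathscr{D}$-input by $\sigma$ with permuting the $\mathscr{K}$-slots, follows again from part~1 of Prop.~\ref{prop.properties-blockperm-odot} plus the fact that $\sigma\{p_1,\ldots,p_m\}$ shuffles the blocks exactly as the transposition functor $T_\sigma$ does; Equivariance~B, the action of $\tau_1 \oplus \cdots \oplus \tau_m$ on the output, is built directly into~(\ref{eq.mu-objects})--(\ref{eq.mu-morphisms}) via compatibility of $\oplus$ with $\odot$.

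I expect the only real obstacle to be keeping the opposite-group conventions consistent throughout: the right $S_m$-action on $\mathscr{K}_{\mathrm{cat}}$ arises from the dictionary $g \leftrightarrow g^{-1}$ between $\mathcal{F}(as)$ and $\Delta S_+$, whereas $\mathscr{D}_{\mathrm{cat}}$ and the algebra maps $\theta$ are written with left actions, so the block-permutation identities of Prop.~\ref{prop.properties-blockperm-odot} have to be applied with the correct variances --- both on objects and on the correction isomorphisms of the form $T_{(\cdot)}$ that appear in~(\ref{eq.mu-morphisms}). Once the convention is pinned down once and for all (as in the opening Remark of Chapter~\ref{chap.prod}, where $S_n$ acts on the left of $n$-tuples), each of the four verifications is a routine, if somewhat lengthy, diagram chase, and no new idea beyond Prop.~\ref{prop.properties-blockperm-odot} is needed.
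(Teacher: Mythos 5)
Your proof is essentially correct but takes a genuinely different route from the paper's for the associativity axiom, which is where the substance of the verification lies. The paper does \emph{not} verify associativity by the block-permutation bookkeeping you describe. Instead it embeds $\mathscr{K}_{\mathrm{cat}}$ into the permutative category $\mathcal{T}J(X_+)$ built on the free monoid on a countable generating set $X$, via the identification $\coprod_m \mathscr{K}_{\mathrm{cat}}(m) \times_{S_m} X^m \cong \mathcal{T}J(X_+)$; it then builds a commutative square (diagram~(\ref{eq.operad-module-comm-diag})) relating $\mu$ to the $\mathscr{D}_{\mathrm{cat}}$-\emph{algebra} structure map $\theta$ on $\mathcal{T}J(X_+)$ (which exists for any permutative category), and deduces the associativity of $\mu$ as a formal consequence of the associativity of $\theta$. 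Your direct approach buys self-containedness and avoids introducing an auxiliary free monoid, but at the price of a longer hands-on computation; the paper's approach buys a short conceptual argument at the price of the free-monoid scaffolding and one nontrivial diagram chase showing~(\ref{eq.operad-module-comm-diag}) commutes. For the unit and equivariance conditions, both routes coincide: the paper also checks these by direct diagrams, citing the same identities from Prop.~\ref{prop.properties-blockperm-odot}.

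One point to tighten: you state both that ``Prop.~\ref{prop.properties-blockperm-odot} supplies \emph{exactly} the three identities this bookkeeping requires'' and that ``no new idea beyond Prop.~\ref{prop.properties-blockperm-odot} is needed,'' yet you separately invoke ``the standard nesting rule for block permutations,'' $\bigl(\sigma\{j_1,\ldots,j_k\}\bigr)\{p_1,\ldots,p_j\} = \sigma\{P_1,\ldots,P_k\}$ with $P_s = \sum_{r \in s\text{-block}} p_r$, which is \emph{not} one of the three identities listed in Prop.~\ref{prop.properties-blockperm-odot}. Your associativity computation also needs the compatibility $(\tau_1\oplus\cdots\oplus\tau_k)\{p_1,\ldots,p_j\} = \tau_1\{p_{1\text{-block}}\} \oplus\cdots\oplus \tau_k\{p_{k\text{-block}}\}$, which is likewise an extra (standard) fact. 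Neither is a genuine gap --- both are elementary and you clearly know them --- but the ``exactly three'' claim should be softened. Also, in describing $\mu$ conceptually, the block permutation $\sigma\{p_1,\ldots,p_m\}$ is \emph{post}-composed on the target $\underline{p_1}\odot\cdots\odot\underline{p_m}$ in~(\ref{eq.mu-objects}), not pre-composed on the source $\underline{j_1}\odot\cdots\odot\underline{j_m}$ as your phrasing suggests; this does not affect the verifications but is worth stating correctly.
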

\begin{proof}
  Let $X = \{x_i\}_{i \geq 1}$ be a countable set of formal independent 
  indeterminates, and $J(X_+)$ the free monoid on the set
  $X_+ := X \cup \{1\}$.  Since the category
  $\mathcal{T}J(X_+)$ is permutative, there is an
  $E_\infty$-algebra structure
  \[
    \theta : \mathscr{D}_{\mathrm{cat}}(m) \times 
    \left[\mathcal{T}J(X_+)\right]^m
    \to \mathcal{T}J(X_+)
  \]
  We can identify:
  \[
    \coprod_{m \geq 0} \left(\mathscr{K}_{\mathrm{cat}}(m) \times_{S_m} X^m \right)
    = \mathcal{T}J(X_+),
  \]
  via the map
  \[
    (f, x_{i_1}, x_{i_2}, \ldots, x_{i_m}) \mapsto f(x_{i_1}, x_{i_2}, \ldots,
    x_{i_m}).
  \]
  The fact that $J(X_+)$ is the free monoid on $X_+$ ensures that there is
  a map in the inverse direction, well-defined up to action of the symmetric 
  group $S_m$.  Note, the action of $S_m$ on $X^m$ is by permutation of the 
  components $\{x_{i_1}, x_{i_2}, \ldots, x_{i_m} \}$.

  Next, let $m, j_1, j_2, \ldots, j_m \geq 0$, and let $j = \sum j_s$.
  Furthermore, let
  \[
    X_s = (x_{j_1 + j_2 + \ldots + j_{s-1} + 1} , \ldots, x_{j_1 + j_2 + \ldots + j_s}).
  \]
  Define a functor $\alpha = \alpha_{m,j_1,\ldots,j_m}$:
  \[
    \alpha : \mathscr{D}_{\mathrm{cat}}(m) \times \prod_{s=1}^m 
    \mathscr{K}_{\mathrm{cat}}(j_s) \longrightarrow
    \mathscr{D}_{\mathrm{cat}}(m) \times \prod_{s=1}^m 
    \left(\mathscr{K}_{\mathrm{cat}}(j_s) \times_{S_{j_s}} X^{j_s} \right)
  \]
  \[
    \alpha\left( \sigma, f_1, f_2, \ldots f_m \right)
    = \left(\sigma,\; \prod_{s=1}^m 
    (f_s, X_s) \right).
  \]
  $\alpha$ takes a morphism 
  \[
    \tau\sigma^{-1} \times g_1 \times \ldots \times g_m \in 
    \mathrm{Mor}\left(\mathscr{D}_{\mathrm{cat}}(m) \times \prod_{s=1}^m 
    \mathscr{K}_{\mathrm{cat}}(j_s)\right)
  \]
  to the morphism
  \[
    \tau\sigma^{-1} \times (g_1 \times \mathrm{id}^{j_1}) \times \ldots \times
    (g_m \times \mathrm{id}^{j_m}) \in
    \mathrm{Mor}\left(\mathscr{D}_{\mathrm{cat}}(m) \times \prod_{s=1}^m 
    \left(\mathscr{K}_{\mathrm{cat}}(j_s) \times_{S_{j_s}} X^{j_s} \right)\right).
  \]
  Let $inc$ be the inclusion of categories:
  \[
    inc : \mathscr{D}_{\mathrm{cat}}(m) \times \prod_{s=1}^m 
    \left(\mathscr{K}_{\mathrm{cat}}(j_s) \times_{S_{j_s}} X^{j_s} \right)
    \longrightarrow
    \mathscr{D}_{\mathrm{cat}}(m) \times \left[
    \coprod_{i \geq 0} \left(\mathscr{K}_{\mathrm{cat}}(i) \times_{S_i} X^i \right)
    \right]^m,
  \]
  induced by the evident inclusion of for each $s$:
  \[
    \mathscr{K}_{\mathrm{cat}}(j_s) \times_{S_{j_s}} X^{j_s} \hookrightarrow
    \coprod_{i \geq 0} \left(\mathscr{K}_{\mathrm{cat}}(i) \times_{S_i}
    X^i \right)
  \]
  Let $\alpha_0$ be the functor
  \[
    \alpha_0 : \mathscr{K}_{\mathrm{cat}}(j) \longrightarrow
    \mathscr{K}_{\mathrm{cat}}(j) \times_{S_{j}} X^{j}
  \]
  \[
    \alpha_0(f) = (f, x_1, x_2, \ldots, x_j),
  \]
  and $inc_0$ be the inclusion $\mathscr{K}_{\mathrm{cat}}(j) \times_{S_{j}} X^{j}
  \hookrightarrow \ds{\coprod_{i \geq 0} \left(\mathscr{K}_{\mathrm{cat}}(i) \times_{S_i}
  X^i \right)}$.
  
  Next, consider the following diagram.  The top row is the map $\mu$ of
  (\ref{eq.mu}), and the bottom row is the 
  operad-algebra structure map for $\mathcal{T}J(X_+)$.
  \begin{equation}\label{eq.operad-module-comm-diag}
    \begin{diagram}
      \node{ \mathscr{D}_{\mathrm{cat}}(m) \times \prod_{s=1}^m 
        \mathscr{K}_{\mathrm{cat}}(j_s) }
      \arrow{e,t}{\mu}
      \arrow{s,l}{\alpha}
      \node{ \mathscr{K}_{\mathrm{cat}}(j) }
      \arrow{s,r}{\alpha_0}
      \\
      \node{ \mathscr{D}_{\mathrm{cat}}(m) \times \prod_{s=1}^m 
        \left(\mathscr{K}_{\mathrm{cat}}(j_s) \times_{S_{j_s}} X^{j_s} \right) }
      \arrow{s,l}{inc}
      \node{ \mathscr{K}_{\mathrm{cat}}(j) \times_{S_{j}} X^{j} }
      \arrow{s,r}{inc_0}
      \\
      \node{ \mathscr{D}_{\mathrm{cat}}(m) \times \left[
        \coprod_{i \geq 0} \left(\mathscr{K}_{\mathrm{cat}}(i) \times_{S_i} X^i \right)
        \right]^m }
      \arrow{e,t}{\theta}
      \node{ \coprod_{i \geq 0} \left(\mathscr{K}_{\mathrm{cat}}(i) \times_{S_i} 
        X^i\right) }
    \end{diagram}
  \end{equation}
  I claim that this diagram commutes.  Let $w := (\sigma, f_1, \ldots, f_m) \in 
  \mathscr{D}_{\mathrm{cat}}(m) \times \prod_{s=1}^m 
  \mathscr{K}_{\mathrm{cat}}(j_s)$ be arbitrary.  Following the left-hand column
  of the diagram, we obtain the element
  \begin{equation}\label{eq.alphaw}
    \alpha(w) = \left(\sigma,\; \prod_{s=1}^m 
    (f_s, X_s) \right).
  \end{equation}
  It is important to note that the list of all $x_r$ that occur in
  expression~(\ref{eq.alphaw}) is exactly $\{x_1, x_2, \ldots, x_j\}$
  with no repeats, up to permutations by $S_{j_1} \times \ldots \times S_{j_m}$.
  Thus, after applying $\theta$, the result is an element of the form:
  \begin{equation}\label{eq.theta_alphaw}
    \theta\alpha(w) = (F, x_1, x_2, \ldots, x_j),
  \end{equation}
  up to permutations by $S_j$.  Thus, $\theta\alpha(w)$ is in the image of
  $\alpha_0$, say $\theta\alpha(w) = \alpha_0(v)$.  All that remains is to show that
  $\mu(w) = v$.  Let us examine closely what the morphism $F$ in
  formula~(\ref{eq.theta_alphaw}) must be.  $\alpha(w)$ is
  identified with the element
  \[
    \left(\sigma,\; \prod_{s=1}^m 
    f_s(X_s) \right)
  \]
  of $\ds{\mathscr{D}_{\mathrm{cat}}(m) \times \left[\mathcal{T}J(X_+)\right]^m}$,
  and $\theta$ sends this element to
  \begin{equation}\label{eq.theta_alphaw2}
    \theta\alpha(w) = \bigodot_{s=1}^m f_{\sigma^{-1}(s)}(X_{\sigma^{-1}(s)}).
  \end{equation}
  $\theta\alpha(w)$ is interpreted in $\mathscr{K}_{\mathrm{cat}}(j) \times_{S_{j}} 
  X^{j}$ as follows:  Begin with the tuple
  $(x_1, x_2, \ldots, x_j)$.  This tuple is divided into blocks, $(X_1, \ldots, X_m)$.
  Apply $f_1 \odot \ldots \odot f_m$ to obtain
  \[
    f_1(X_1) \odot \ldots \odot
    f_m(X_m).
  \]
  
  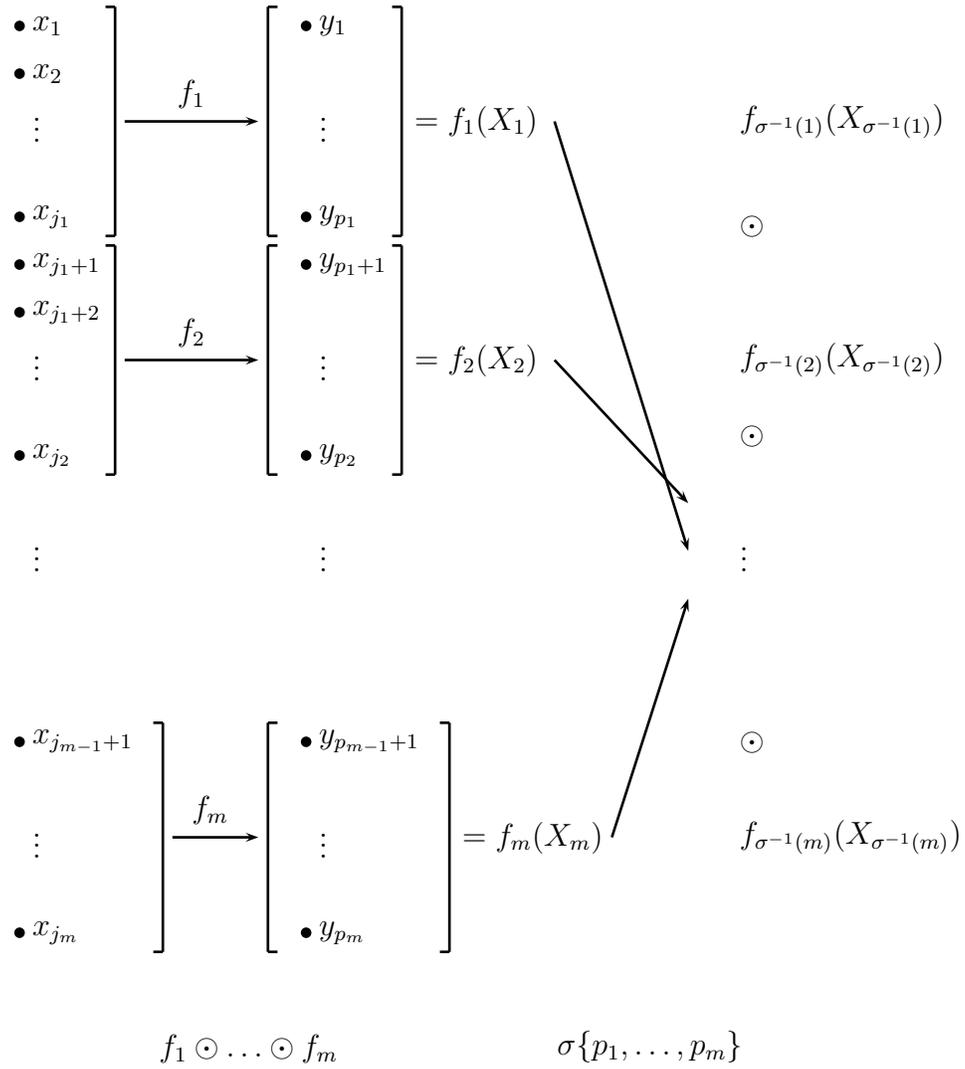
\begin{figure}[htp]
    \psset{unit=1in}
    \begin{pspicture}(5,6)
  
    \psdots[linecolor=black, dotsize=4pt]
    (0.5, 1)(0.5, 2)(0.5, 3.5)(.5, 4.25)(.5, 4.5)(.5, 4.75)(.5, 5.5)(.5, 5.75)
    (2, 1)(2, 2)(2, 3.5)(2, 4.5)(2, 4.75)(2, 5.75)
  
    \psline[linewidth=1pt, linecolor=black](1.25, .9)(1.25, 2.1)
    \psline[linewidth=1pt, linecolor=black](1.25, .9)(1.2, .9)
    \psline[linewidth=1pt, linecolor=black](1.25, 2.1)(1.2, 2.1)
    \psline[linewidth=1pt, linecolor=black](1, 3.4)(1, 4.6)
    \psline[linewidth=1pt, linecolor=black](1, 3.4)(.95, 3.4)
    \psline[linewidth=1pt, linecolor=black](1, 4.6)(.95, 4.6)
    \psline[linewidth=1pt, linecolor=black](1, 4.65)(1, 5.85)
    \psline[linewidth=1pt, linecolor=black](1, 4.65)(.95, 4.65)
    \psline[linewidth=1pt, linecolor=black](1, 5.85)(.95, 5.85)

    \psline[linewidth=1pt, linecolor=black](1.8, .9)(1.8, 2.1)
    \psline[linewidth=1pt, linecolor=black](1.8, .9)(1.85, .9)
    \psline[linewidth=1pt, linecolor=black](1.8, 2.1)(1.85, 2.1)
    \psline[linewidth=1pt, linecolor=black](1.8, 3.4)(1.8, 4.6)
    \psline[linewidth=1pt, linecolor=black](1.8, 3.4)(1.85, 3.4)
    \psline[linewidth=1pt, linecolor=black](1.8, 4.6)(1.85, 4.6)
    \psline[linewidth=1pt, linecolor=black](1.8, 4.65)(1.8, 5.85)
    \psline[linewidth=1pt, linecolor=black](1.8, 4.65)(1.85, 4.65)
    \psline[linewidth=1pt, linecolor=black](1.8, 5.85)(1.85, 5.85)
  
    \psline[linewidth=1pt, linecolor=black](2.75, .9)(2.75, 2.1)
    \psline[linewidth=1pt, linecolor=black](2.75, .9)(2.7, .9)
    \psline[linewidth=1pt, linecolor=black](2.75, 2.1)(2.7, 2.1)
    \psline[linewidth=1pt, linecolor=black](2.5, 3.4)(2.5, 4.6)
    \psline[linewidth=1pt, linecolor=black](2.5, 3.4)(2.45, 3.4)
    \psline[linewidth=1pt, linecolor=black](2.5, 4.6)(2.45, 4.6)
    \psline[linewidth=1pt, linecolor=black](2.5, 4.65)(2.5, 5.85)
    \psline[linewidth=1pt, linecolor=black](2.5, 4.65)(2.45, 4.65)
    \psline[linewidth=1pt, linecolor=black](2.5, 5.85)(2.45, 5.85)
  
    \psline[linewidth=1pt, linecolor=black]{->}(1.05, 5.25)(1.75, 5.25)
    \psline[linewidth=1pt, linecolor=black]{->}(1.05, 4)(1.75, 4)
    \psline[linewidth=1pt, linecolor=black]{->}(1.3, 1.5)(1.75, 1.5)
    \uput[u](1.4, 5.25){$f_1$}
    \uput[u](1.4, 4){$f_2$}
    \uput[u](1.5, 1.5){$f_m$}

    \psline[linewidth=1pt, linecolor=black]{->}(3.3, 5.25)(4, 3)
    \psline[linewidth=1pt, linecolor=black]{->}(3.3, 4)(4, 3.25)
    \psline[linewidth=1pt, linecolor=black]{->}(3.6, 1.5)(4, 2.75)
 
    \uput[r](.5, 1){$x_{j_m}$}
    \uput[r](.5, 1.5){$\vdots$}
    \uput[r](.5, 2){$x_{j_{m-1}+1}$}
    \uput[r](.5, 3){$\vdots$}
    \uput[r](.5, 3.5){$x_{j_2}$}
    \uput[r](.5, 4){$\vdots$}
    \uput[r](.5, 4.25){$x_{j_1 + 2}$}
    \uput[r](.5, 4.5){$x_{j_1+1}$}
    \uput[r](.5, 4.75){$x_{j_1}$}
    \uput[r](.5, 5.25){$\vdots$}
    \uput[r](.5, 5.5){$x_{2}$}
    \uput[r](.5, 5.75){$x_1$}
    \uput[r](2, 1){$y_{p_m}$}
    \uput[r](2, 1.5){$\vdots$}
    \uput[r](2, 2){$y_{p_{m-1}+1}$}
    \uput[r](2, 3){$\vdots$}
    \uput[r](2, 3.5){$y_{p_2}$}
    \uput[r](2, 4){$\vdots$}
    \uput[r](2, 4.5){$y_{p_1+1}$}
    \uput[r](2, 4.75){$y_{p_1}$}
    \uput[r](2, 5.25){$\vdots$}
    \uput[r](2, 5.75){$y_1$}
  
    \uput[r](2.5, 5.25){$= f_1(X_1)$}
    \uput[r](2.5, 4){$= f_2(X_2)$}
    \uput[r](2.75, 1.5){$= f_m(X_m)$}

    \uput[r](4.2, 5.25){$f_{\sigma^{-1}(1)}(X_{\sigma^{-1}(1)})$}
    \uput[r](4.2, 4.7){$\odot$}
    \uput[r](4.2, 4){$f_{\sigma^{-1}(2)}(X_{\sigma^{-1}(2)})$}
    \uput[r](4.2, 3.6){$\odot$}
    \uput[r](4.2, 3){$\vdots$}
    \uput[r](4.2, 2){$\odot$}
    \uput[r](4.2, 1.5){$f_{\sigma^{-1}(m)}(X_{\sigma^{-1}(m)})$}

    \uput[u](1.7, .25){$f_1 \odot \ldots \odot f_m$}
    \uput[u](3.8, .25){$\sigma\{p_1, \ldots, p_m\}$}

    \end{pspicture}

    \caption[Operad-Module Structure]{$\theta\alpha(\sigma, f_1, \ldots, f_m)$, interpreted
      as an object of $\mathscr{K}_{\mathrm{cat}}(j) \times_{S_{j}} 
      X^{j}$}
    \label{diag.theta_alphaw}
  \end{figure}

  Finally, apply the block permutation $\sigma\{p_1, \ldots, p_m\}$ to get the
  correct order in the result (see Fig.~\ref{diag.theta_alphaw}).  This shows that 
  $F =  {\sigma\{p_1, \ldots, p_m\}}(f_1 \odot \ldots \odot f_m)$, as required.
  
  Now that we have the diagram~(\ref{eq.operad-module-comm-diag}), it is
  straightforward to show that $\mu$ satisfies the associativity condition
  for an operad-module structure map.  Essentially, associativity is induced
  by the associativity condition of the algebra structure map $\theta$.  
  All that remains is to verify the unit and equivariance conditions.
  
  It is trivial to verify the unit condition on the level of objects.  The
  unit object of $\mathscr{D}_{\mathrm{cat}}(1)$ is the identity of $S_1$.
  According to formula~(\ref{eq.mu-morphisms}), we obtain the following
  diagram for morphisms $f : \underline{j} \to \underline{p}$
  and $g : \underline{p} \to \underline{q}$.  Clearly, the right-hand
  column is identical to the morphism $f \stackrel{g}{\to} gf$.
  \[
    \begin{diagram}
      \node{ (\mathrm{id}_{S_1}, f) }
      \arrow{s,r}{ \mathrm{id}_{S_1} \times g}
      \arrow{e,t}{\mu}
      \node{  ({\mathrm{id}_{S_p}}) f }
      \arrow{s,r}{ ({\mathrm{id}_{S_q}}) g }
      \\
      \node{ (\mathrm{id}_{S_1}, gf) }
      \arrow{e,t}{\mu}
      \node{  ({\mathrm{id}_{S_q}}) gf }
    \end{diagram}
  \]
  
  (Note, there is no corresponding right unit condition in an operad-module
  structure.)
  
  Now, specify
  the right-action of $\rho \in S_j$ on $\mathscr{K}_{\mathrm{cat}}(j)$ as 
  precomposition by $ {\rho}$.  That is, $f . \rho := 
  f {\rho}$ for
  $f \in \underline{j} \setminus \mathcal{F}(as)$.  A routine check verifies the 
  equivariance on the level of objects.  
  Let $f_i \in \mathscr{K}_{\mathrm{cat}}(j_i)$ have 
  specified source and target:
  \[
    \underline{j_i} \stackrel{f_i}{\to} \underline{p_i}.
  \]
  
  Equivariance A:
  \[
    \begin{diagram}
      \node{ (\sigma, f_1, \ldots, f_m) }
      \arrow[2]{e,t,T}{ \mathrm{id} \times T_{\tau} }
      \arrow{s,l,T}{ \tau \times \mathrm{id}^{m} }
      \node[2]{(\sigma, f_{\tau^{-1}(1)}, \ldots, f_{\tau^{-1}(m)}) }
      \arrow{s,r,T}{ \mu }
      \\
      \node{ (\sigma\tau, f_1, \ldots, f_m) }
      \arrow{s,l,T}{ \mu }
      \node[2]{ \sigma\{p_{\tau \underline{m}} \}
        f_{\tau \underline{m}}^\odot }
      \arrow{s,b,T}{ \tau\{ j_{\underline{m}} \} }
      \\
      \node{ (\sigma\tau)\{p_{\underline{m}}\} f_{\underline{m}}^\odot }
      \arrow{e,=}
      \node{ \sigma\{ p_{\tau \underline{m}} \}\tau\{ p_{\underline{m}} \}
        f_{\underline{m}}^\odot }
      \arrow{e,=}
      \node{ \sigma\{ p_{\tau \underline{m}} \} f_{\tau\underline{m}}^\odot
        \tau\{ j_{\underline{m}} \} }
    \end{diagram}
  \]
  Equivariance B:
  \[
    \begin{diagram}
      \node{(\sigma, f_1, \ldots, f_m)}
      \arrow[2]{e,t,T}{\mu}
      \arrow{s,l,T}{\mathrm{id} \times \tau_1 \times \ldots \times \tau_m}
      \node[2]{\sigma\{p_{\underline{m}}\}f_{\underline{m}}^\odot}
      \arrow{s,r,T}{\tau_1 \oplus \ldots \oplus \tau_m}\\
      \node{(\sigma, f_1\tau_1, \ldots, f_m\tau_m)}
      \arrow{e,t,T}{\mu}
      \node{\sigma\{p_{\underline{m}}\} (f_{\underline{m}}
        \tau_{\underline{m}})^\odot }
      \arrow{e,=}
      \node{ \sigma\{p_{\underline{m}}\} f_{\underline{m}}^\odot
        \tau_{\underline{m}}^\odot }
    \end{diagram}
  \]
\end{proof}

\begin{rmk}\label{rmk.K-pseuo-operad}
  It turns out that $\mathscr{K}_{\mathrm{cat}}$ is in fact a pseudo-operad.
  Recall from~\cite{MSS} that a pseudo-operad is a `non-unitary' operad.  That is,
  there are multiplication maps that satisfy operad associativity, and actions by
  the symmetric groups that satisfy operad equivariance conditions, but there is
  no requirement concerning a left or right unit map.
  The multiplication is defined as the composition:
  \[
    \mathscr{K}_{\mathrm{cat}}(m) \times \prod_{s=1}^m \mathscr{K}_{\mathrm{cat}}(j_s)
    \stackrel{\pi \times \mathrm{id}^j}{\longrightarrow}
    \mathscr{D}_{\mathrm{cat}}(m) \times \prod_{s=1}^m \mathscr{K}_{\mathrm{cat}}(j_s)
    \stackrel{\mu}{\longrightarrow}
    \mathscr{K}_{\mathrm{cat}}(j_1 + \ldots + j_m),
  \]
  where $\pi : \mathscr{K}_{\mathrm{cat}}(m) \to \mathscr{D}_{\mathrm{cat}}(m)$
  is the projection functor defined as isolating the group element (automorphism)
  of an $\mathcal{F}(as)$ morphism:
  \[
    (\phi, g) \stackrel{\pi}{\mapsto} g
  \]
  Indeed, $\pi$ defines a covariant isomorphism
  of the subcategory 
  $\mathrm{Aut}\left(\underline{m} \setminus \mathcal{F}(as)\right)$ onto
  $\mathscr{D}_{\mathrm{cat}}(m)$.  
\end{rmk}

Now that we have a $\mathscr{D}_{\mathrm{cat}}$-module structure on 
$\mathscr{K}_{\mathrm{cat}}$, we
shall proceed in steps to induce this structure to an analogous operad-module
structure on the level of $k$-complexes.
First, we shall require the definition
of {\it lax symmetric monoidal functor}.  The following appears in~\cite{T}, as
well as~\cite{R2}.

\begin{definition}
  Let $\mathscr{C}$, resp. $\mathscr{C}'$, be a symmetric monoidal category with
  multiplication $\odot$, resp. $\boxdot$.  Denote the associativity
  maps in $\mathscr{C}$, resp. $\mathscr{C}'$ by $a$, resp. $a'$, and the commutation
  maps by $s$, resp. $s'$.  A functor $F : \mathscr{C} \to
  \mathscr{C}'$ is a lax symmetric monoidal functor if there are natural maps
  \[
    f : FA \,\boxdot\, FB \to F(A \odot B)
  \]
  such that the following diagrams are commutative:
  \begin{equation}\label{eq.lax-assoc}
    \begin{diagram}
      \node{FA \,\boxdot\, (FB \,\boxdot\, FC)}
      \arrow{e,t}{\mathrm{id} \,\boxdot\, f}
      \arrow{s,l}{a'}
      \node{FA \,\boxdot\, F(B\odot C)}
      \arrow{e,t}{f}
      \node{F\left(A \odot (B \odot C)\right)}
      \arrow{s,r}{Fa}
      \\
      \node{(FA \,\boxdot\, FB) \,\boxdot\, FC}
      \arrow{e,t}{f \,\boxdot\, \mathrm{id}}
      \node{F(A \odot B) \,\boxdot\, FC}
      \arrow{e,t}{f}
      \node{F\left((A \odot B) \odot C\right)}
    \end{diagram}
  \end{equation}
  \begin{equation}\label{eq.lax-comm}
    \begin{diagram}
      \node{FA \,\boxdot\, FB}
      \arrow{e,t}{f}
      \arrow{s,l}{s'}
      \node{F(A \odot B)}
      \arrow{s,r}{Fs}
      \\
      \node{FB \,\boxdot\, FA}
      \arrow{e,t}{f}
      \node{F(B \odot A)}
    \end{diagram}
  \end{equation}
  If the transformation $f$ is a natural isomorphism, then the functor $F$ is
  called {\it strong symmetric monoidal}.
\end{definition}

Observe that the functor \mbox{$N : \mathbf{Cat} \to \mathbf{Simpset}$}
is strong symmetric monoidal with associated natural map, $S_*$:
\[
  S_* : N \mathscr{A}_{\mathrm{cat}} \times N \mathscr{B}_{\mathrm{cat}}
  \longrightarrow N\left(\mathscr{A}_{\mathrm{cat}} \times 
  \mathscr{B}_{\mathrm{cat}}\right),
\]
defined on $n$-chains:
\[
  \left( A_0 \stackrel{f_1}{\to} A_1 \stackrel{f_2}{\to} \ldots
  \stackrel{f_n}{\to} A_n, \;
  B_0 \stackrel{g_1}{\to} B_1 \stackrel{g_2}{\to} \ldots
  \stackrel{g_n}{\to} B_n \right)
  \stackrel{S_n}{\mapsto}
  (A_0, B_0) \stackrel{f_1 \times g_1}{\longrightarrow} \ldots
  \stackrel{f_n \times g_n}{\longrightarrow}(A_n, B_n) 
\]

The $k$-linearization functor, \mbox{$k[ - ] : \mathbf{SimpSet} \to
\textrm{$k$-\textbf{SimpMod}}$} is also strong symmetric monoidal,
with associated natural map,
\[
  k[ \mathscr{A}_{\mathrm{ss}} ] \,\widehat{\otimes}\, k[ \mathscr{B}_{\mathrm{ss}} ]
  \longrightarrow k[ \mathscr{A}_{\mathrm{ss}} \times 
  \mathscr{B}_{\mathrm{ss}} ],
\]
defined degree-wise:
\[
  k[ (\mathscr{A}_{\mathrm{ss}})_n ] \otimes k[ (\mathscr{B}_{\mathrm{ss}})_n ]
  \stackrel{\cong}{\longrightarrow} 
  k[ (\mathscr{A}_{\mathrm{ss}})_n \times (\mathscr{B}_{\mathrm{ss}})_n ]
  =
  k[ (\mathscr{A}_{\mathrm{ss}} \times  \mathscr{B}_{\mathrm{ss}})_n ].
\]

Finally, the normalization functor, \mbox{$\mathcal{N} : \textrm{$k$-\textbf{SimpMod}}
\to \textrm{$k$-$\mathbf{Complexes}$}$} is lax symmetric monoidal,
with associated natural map $f$ being the Eilenberg-Zilber shuffle map (see~\cite{R}).
\[ 
  Sh : \mathcal{N}\mathscr{A}_{\mathrm{sm}} \otimes \mathcal{N}\mathscr{B}_{\mathrm{sm}}
  \longrightarrow \mathcal{N}\left(\mathscr{A}_{\mathrm{sm}} \,\widehat{\otimes}\,
  \mathscr{B}_{\mathrm{sm}}\right)
\]

Now, define the versions of $\mathscr{D}_{\mathrm{cat}}$ and 
$\mathscr{K}_{\mathrm{cat}}$ over the various symmetric monoidal categories we 
are considering:
\[
  \begin{array}{lcl}
  \mathscr{D}_{\mathrm{ss}} &=& N\mathscr{D}_{\mathrm{cat}}\\
  \mathscr{D}_{\mathrm{sm}} &=& k\left[\mathscr{D}_{\mathrm{ss}}\right]\\
  \mathscr{D}_{\mathrm{ch}} &=& \mathcal{N}\mathscr{D}_{\mathrm{sm}}
  \end{array}
  \qquad\qquad\qquad
  \begin{array}{lcllcl}
  \mathscr{K}_{\mathrm{ss}} &=& N\mathscr{K}_{\mathrm{cat}}\\
  \mathscr{K}_{\mathrm{sm}} &=& k\left[\mathscr{K}_{\mathrm{ss}}\right]\\
  \mathscr{K}_{\mathrm{ch}} &=& \mathcal{N}\mathscr{K}_{\mathrm{sm}}
  \end{array}
\]

\begin{lemma}\label{lem.lax-sym-mon-functor}
  Let $(\mathscr{C}, \odot, e)$ and $(\mathscr{C}', \boxdot, e')$ be symmetric monoidal 
  categories, and $F : \mathscr{C} \to \mathscr{C}'$ a
  lax symmetric monoidal functor with associated natural transformation $f$ such
  that $e' = Fe$.
  
  1.  If $\mathscr{P}$ is an operad over $\mathscr{C}$, then
  $F\mathscr{P}$ is an operad over $\mathscr{C}'$.
  
  2.  If $\mathscr{P}$ is an operad and $\mathscr{M}$ is a $\mathscr{P}$-module
  over $\mathscr{C}$, then $F\mathscr{M}$ is an $F\mathscr{P}$-module over
  $\mathscr{C}'$.
  
  3.  If $\mathscr{P}$ is an operad over $\mathscr{C}$ and $Z \in 
  \mathrm{Obj}\mathscr{C}$ is a $\mathscr{P}$-algebra, then
  $FZ$ is an $F\mathscr{P}$-algebra over $\mathscr{C}'$.
\end{lemma}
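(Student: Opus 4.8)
The plan is to exploit the fact that every piece of structure appearing in Definitions~\ref{def.operad}, \ref{def.operad-algebra}, and~\ref{def.operad-module} --- the multiplications $\gamma$, the unit $\eta$, the module maps $\mu$, the algebra maps $\chi$, and the various equivariance conditions --- is built entirely out of the monoidal product of the ambient category together with its coherence isomorphisms $a$, $\ell$, $r$, $s$. Since $F$ is lax symmetric monoidal with comparison transformation $f$ and $Fe = e'$, applying $F$ carries each such configuration in $\mathscr{C}$ to the corresponding configuration in $\mathscr{C}'$, provided we first supply a systematic way of turning a product $FA_1 \boxdot \cdots \boxdot FA_n$ into $F(A_1 \odot \cdots \odot A_n)$.

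So the first step I would carry out is to build, for each $n \geq 0$, an iterated comparison map
\[
  f^{(n)} : FA_1 \boxdot \cdots \boxdot FA_n \longrightarrow F(A_1 \odot \cdots \odot A_n),
\]
by induction on $n$: take $f^{(0)} = \mathrm{id}_{e'}$ (legitimate because $e' = Fe$), $f^{(1)} = \mathrm{id}$, $f^{(2)} = f$, and at each later stage compose $f$ with already-constructed $f^{(m)}$'s via the associativity constraints. Using diagram~\eqref{eq.lax-assoc} together with MacLane coherence for symmetric monoidal categories, one checks that $f^{(n)}$ does not depend on the bracketing chosen, that it is natural in each argument, and --- crucially, from diagram~\eqref{eq.lax-comm} --- that it intertwines permutations of the factors: for $\sigma \in S_n$, the transposition morphism on $FA_1 \boxdot \cdots \boxdot FA_n$ built from $s'$ equals $F$ of the transposition morphism on $A_1 \odot \cdots \odot A_n$ built from $s$, composed appropriately with the $f^{(n)}$'s. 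This symmetry compatibility is the one substantive point; everything afterward is diagram-chasing.

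Granting that, I would define $F\mathscr{P} := F \circ \mathscr{P} : \mathbf{S}^{\mathrm{op}} \to \mathscr{C}'$ (so $F\mathscr{P}(0) = Fe = e'$), give $F\mathscr{P}(n)$ the $S_n$-action obtained by applying $F$ to the action on $\mathscr{P}(n)$, and set $\gamma' := F\gamma \circ f^{(k+1)}$, $\eta' := F\eta$. The operad associativity pentagon, the left and right unit triangles, and Equivariance Conditions A and B for $F\mathscr{P}$ are then each obtained by applying $F$ to the corresponding diagram for $\mathscr{P}$ and pasting in naturality squares for $f$, instances of~\eqref{eq.lax-assoc}, and instances of~\eqref{eq.lax-comm}; the block permutations $\sigma\{j_1,\ldots,j_k\}$ and the transposition functors $T$, $T_\sigma$ occurring there are precisely what the symmetry compatibility of $f^{(n)}$ takes care of. For part (2) I would do the identical thing with $F\mathscr{M} := F \circ \mathscr{M}$ and $\mu' := F\mu \circ f^{(n+1)}$, and for part (3) equip $FZ$ with $\chi' := F\chi \circ f^{(n+1)} : F\mathscr{P}(n) \boxdot (FZ)^{\boxdot n} \to FZ$, observing that $\chi'$ factors through the coequalizer defining $\boxdot_{S_n}$ because $\chi$ already satisfies the internal equivariance relation of Definition~\ref{def.operad-algebra} and $f^{(n+1)}$ is compatible with the symmetry. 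In every case the required compatibility relations collapse, after applying $F$ and pasting the coherence squares, onto the relations already assumed for $\mathscr{P}$, $\mathscr{M}$, and $Z$.

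The main obstacle, then, is purely the first step: setting up the iterated maps $f^{(n)}$ and verifying their naturality, bracket-independence, and --- above all --- their interaction with the symmetry isomorphisms. This is the familiar but notationally heavy statement that a lax symmetric monoidal functor respects iterated tensor products in a symmetry-compatible way; once it is in hand, parts (1)--(3) are mechanical.
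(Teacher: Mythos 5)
Your proposal follows essentially the same route as the paper's proof: both construct the iterated lax comparison map (the paper denotes it $f^m$), define the transported structure maps as $\gamma' := F\gamma \circ f^m$, $\eta' := F\eta$, $\mu' := F\mu \circ f^m$, equip $F\mathscr{P}$ with the $S_n$-action via $F$ post-composed with $\mathscr{P}$, and reduce the operad/module/algebra axioms to diagram chases that invoke the lax associativity and commutativity squares together with MacLane coherence. If anything your sketch is slightly more explicit than the paper, which declares the verification "straightforward" and omits it, whereas you flag the symmetry-compatibility of $f^{(n)}$ as the load-bearing point and note the coequalizer/descent step needed for part (3).
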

\begin{proof}
  Note that properties~(\ref{eq.lax-assoc}) and~(\ref{eq.lax-comm}) imply that
  the associativity map $a'$ and symmetry map $s'$ of $\mathscr{C}'$ may
  be viewed as induced by the associativity map $a$ and symmetry 
  map $s$ of $\mathscr{C}$.  That is, all symmetric monoidal structure is
  carried by $F$ from $\mathscr{C}$ to $\mathscr{C}'$.
  
  Denote by $f^m$ the
  natural transformation induced by $f$ on $m+1$ components:
  \[
    f^m : FA_0 \boxdot FA_1 \boxdot \ldots \boxdot FA_m \to
    F(A_0 \odot A_1 \odot \ldots \odot A_m).
  \]
  Technically, we should write parentheses to
  indicate associativity in the source and target of $f^m$, but
  property~(\ref{eq.lax-assoc}) of the functor and the MacLane Pentagon
  diagram of Def.~\ref{def.symmoncat} makes this unnecessary.
  
  Let $\mathscr{P}$ have structure map $\gamma$.  Define the structure
  map $\gamma'$ for $F\mathscr{P}$:
  \[
    \gamma' := F\gamma \circ f^{m} :
    F\mathscr{P}(m) \boxdot F\mathscr{P}(j_1) \boxdot \ldots
    \boxdot F\mathscr{P}(j_m) \longrightarrow
    F\mathscr{P}(j_1 + \ldots + j_m).
  \]
  Let $e$ (resp. $e'$) be the unit object of $\mathscr{C}$ (resp. $\mathscr{C}'$).
  Then $e' = Fe$.  If $\mathscr{P}$ has unit map $\eta : e \to \mathscr{P}(1)$,
  then define the unit map of $F\mathscr{P}$ by
  \[
    \eta' := F\eta : e' \to F\mathscr{P}(1).
  \]
  The action of $S_n$ on $F\mathscr{P}$ is defined by viewing $F\mathscr{P}$
  as a functor $\mathbf{S}^{\mathrm{op}} \to \mathscr{C}'$:
  \[
    \mathbf{S}^{\mathrm{op}} \stackrel{\mathscr{P}}{\longrightarrow}
    \mathscr{C} \stackrel{F}{\longrightarrow} \mathscr{C}'
  \]
  To verify that the proposed structure on $F\mathscr{P}$ defines an
  operad is straightforward, but the required commutative diagrams do not fit
  on one page!
  
  Assertions {\it 2} and {\it 3} are proved similarly.
\end{proof}
\begin{cor}\label{cor.K_ch}
  $\mu$ induces a multiplication map $\widetilde{\mu}_*$ on the
  level of chain complexes, making $\mathscr{K}_{\mathrm{ch}}$ into a
  $\mathscr{D}_{\mathrm{ch}}$-module.
  \[
    \widetilde{\mu}_* :  \mathscr{D}_{\mathrm{ch}}(m) \otimes 
    \mathscr{K}_{\mathrm{ch}}(j_1)
    \otimes \ldots \otimes \mathscr{K}_{\mathrm{ch}}(j_m) \to 
    \mathscr{K}_{\mathrm{ch}}(j_1 + \ldots + j_m).
  \]  
\end{cor}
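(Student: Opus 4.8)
The plan is to apply Lemma~\ref{lem.lax-sym-mon-functor} three times in succession, following the chain of lax symmetric monoidal functors
\[
  (\mathbf{Cat}, \times) \stackrel{N}{\longrightarrow}
  (\mathbf{SimpSet}, \times) \stackrel{k[-]}{\longrightarrow}
  (\textrm{$k$-$\mathbf{SimpMod}$}, \widehat{\otimes}) \stackrel{\mathcal{N}}{\longrightarrow}
  (\textrm{$k$-$\mathbf{Complexes}$}, \otimes).
\]
By Lemma~\ref{lem.D-module_str_on_K}, $\mathscr{K}_{\mathrm{cat}}$ is a $\mathscr{D}_{\mathrm{cat}}$-module over $(\mathbf{Cat}, \times)$, with structure map $\mu$ built from the functors $\mu_{m, j_1, \ldots, j_m}$ of~(\ref{eq.mu}). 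The strategy is simply to transport this operad-module structure across each functor in turn, checking at each stage that the hypotheses of Lemma~\ref{lem.lax-sym-mon-functor} are met.

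First I would record that each of the three functors is lax symmetric monoidal and preserves the unit object, so that part~{\it 2} of Lemma~\ref{lem.lax-sym-mon-functor} applies. The nerve functor $N$ is strong symmetric monoidal via the natural transformation $S_*$ described in the excerpt, and it sends the terminal category $\ast$ (the unit of $(\mathbf{Cat}, \times)$) to the one-point simplicial set (the unit of $(\mathbf{SimpSet}, \times)$); the $k$-linearization $k[-]$ is strong symmetric monoidal via the evident degreewise isomorphism $k[A_n] \otimes k[B_n] \stackrel{\cong}{\to} k[A_n \times B_n]$ and sends the one-point simplicial set to $k$ concentrated appropriately; and the normalization functor $\mathcal{N}$ is lax symmetric monoidal via the Eilenberg--Zilber shuffle map $Sh$ and sends the constant simplicial $k$-module $k$ to the chain complex $k$ in degree $0$. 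Hence applying part~{\it 2} with $F = N$ shows $\mathscr{K}_{\mathrm{ss}} = N\mathscr{K}_{\mathrm{cat}}$ is a $\mathscr{D}_{\mathrm{ss}} = N\mathscr{D}_{\mathrm{cat}}$-module over $(\mathbf{SimpSet}, \times)$; applying it again with $F = k[-]$ shows $\mathscr{K}_{\mathrm{sm}}$ is a $\mathscr{D}_{\mathrm{sm}}$-module over $(\textrm{$k$-$\mathbf{SimpMod}$}, \widehat{\otimes})$; and applying it a third time with $F = \mathcal{N}$ shows $\mathscr{K}_{\mathrm{ch}} = \mathcal{N}\mathscr{K}_{\mathrm{sm}}$ is a $\mathscr{D}_{\mathrm{ch}} = \mathcal{N}\mathscr{D}_{\mathrm{sm}}$-module over $(\textrm{$k$-$\mathbf{Complexes}$}, \otimes)$.

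The induced structure map is then the composite
\[
  \widetilde{\mu}_* := \mathcal{N}(k[N\mu]) \circ (\text{iterated structure maps of }
  Sh, \ k[-], \ S_*),
\]
which unwinds, via the recipe $\gamma' = F\gamma \circ f^m$ from the proof of Lemma~\ref{lem.lax-sym-mon-functor}, to a well-defined map
\[
  \widetilde{\mu}_* : \mathscr{D}_{\mathrm{ch}}(m) \otimes \mathscr{K}_{\mathrm{ch}}(j_1)
  \otimes \ldots \otimes \mathscr{K}_{\mathrm{ch}}(j_m) \to \mathscr{K}_{\mathrm{ch}}(j_1 + \ldots + j_m)
\]
satisfying the operad-module associativity, unit, and equivariance conditions, since each of these is preserved under transport along a lax symmetric monoidal functor. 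The one genuinely nontrivial point — and the only place where real care is needed — is that $\mathcal{N}$ is merely \emph{lax} (not strong) symmetric monoidal, so the shuffle map $Sh$ is not invertible; one must verify that the coherence diagrams~(\ref{eq.lax-assoc}) and~(\ref{eq.lax-comm}) for $Sh$ suffice to make the associativity and equivariance diagrams for $\widetilde{\mu}_*$ commute. This is exactly the content already packaged into Lemma~\ref{lem.lax-sym-mon-functor}, so once that lemma is invoked the corollary follows formally; I would simply remark that all three applications are legitimate because the unit-preservation hypothesis $e' = Fe$ holds in each case, as noted above.
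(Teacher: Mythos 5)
Your proposal is correct and follows essentially the same route as the paper: the paper's proof likewise observes that $N$, $k[-]$, and $\mathcal{N}$ are (strong, strong, lax) symmetric monoidal and then invokes Lemma~\ref{lem.lax-sym-mon-functor} three times to transport the $\mathscr{D}_{\mathrm{cat}}$-module structure on $\mathscr{K}_{\mathrm{cat}}$ down to the level of chain complexes. You supply somewhat more explicit detail on unit preservation and on why laxness of $\mathcal{N}$ is already handled by the lemma, but the argument is the same.
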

\begin{proof}
  Since $\mathscr{K}_{\mathrm{cat}}$ is a
  $\mathscr{D}_{\mathrm{cat}}$-module, and
  the functors $N$, $k[ - ]$ and $\mathcal{N}$ are symmetric monoidal
  (the first two in the strong sense, the third in the lax sense), this
  result follows immediately from Lemma~\ref{lem.lax-sym-mon-functor}.
\end{proof}

\section{$E_{\infty}$-Algebra Structure}\label{sec.alg-structure} %

In this section we use the operad-module structure defined in the
previous section to induce a related operad-algebra structure.

\begin{definition}\label{def.distributive}
  Suppose $(\mathscr{C}, \odot)$ is a cocomplete symmetric monoidal category.
  We say $\mathscr{C}$ {\it $\odot$ 
  distributes over colimits}, or is {\it distributive}, if the natural map
  \[
    \colim_{i \in \mathscr{I}} (B \odot C_i) \longrightarrow
    B \odot \colim_{i \in \mathscr{I}} C_i
  \]
  is an isomorphism.
\end{definition}

\begin{rmk}
  All of the ambient categories we consider in this chapter, $\mathbf{Cat}$, 
  $\mathbf{SimpSet}$,
  $k$-$\mathbf{SimpMod}$, and
  $k$-$\mathbf{Complexes}$, are cocomplete and distributive.
\end{rmk}

\begin{lemma}\label{lem.operad-algebra}
  Suppose $(\mathscr{C}, \odot)$ is a cocomplete distributive symmetric monoidal 
  category,
  $\mathscr{P}$ is an operad over $\mathscr{C}$, $\mathscr{L}$ is
  a left $\mathscr{P}$-module, and $Z \in \mathrm{Obj}\mathscr{C}$.  Then
  \[
    \mathscr{L} \langle Z \rangle := 
    \coprod_{m \geq 0} \mathscr{L}(m) \odot_{S_m} Z^{\odot m}
  \]
  admits the structure of a $\mathscr{P}$-algebra.
\end{lemma}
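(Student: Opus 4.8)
The plan is to construct the structure map $\chi : \mathscr{P}(n) \odot_{S_n} \mathscr{L}\langle Z\rangle^{\odot n} \to \mathscr{L}\langle Z\rangle$ by using the $\mathscr{P}$-module structure map $\mu$ of $\mathscr{L}$ together with the distributivity hypothesis. First I would unwind the definition of $\mathscr{L}\langle Z\rangle^{\odot n}$: since $\odot$ distributes over colimits (and coproducts are colimits), we have
\[
  \mathscr{L}\langle Z\rangle^{\odot n} \cong \coprod_{j_1, \ldots, j_n \geq 0}
  \big(\mathscr{L}(j_1) \odot_{S_{j_1}} Z^{\odot j_1}\big) \odot \ldots \odot
  \big(\mathscr{L}(j_n) \odot_{S_{j_n}} Z^{\odot j_n}\big).
\]
Using the symmetry $s$ of $\mathscr{C}$ to reshuffle factors, the $(j_1, \ldots, j_n)$-summand is isomorphic to $\big(\mathscr{L}(j_1) \odot \ldots \odot \mathscr{L}(j_n)\big) \odot_{S_{j_1} \times \cdots \times S_{j_n}} Z^{\odot(j_1 + \cdots + j_n)}$. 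Then $\mathscr{P}(n) \odot$ this summand maps via $\mu \odot \mathrm{id}$ to $\mathscr{L}(j_1 + \cdots + j_n) \odot Z^{\odot(j_1 + \cdots + j_n)}$, and composing with the quotient to the $S_{j_1 + \cdots + j_n}$-coinvariants lands in the $(j_1 + \cdots + j_n)$-summand of $\mathscr{L}\langle Z\rangle$. Taking the coproduct over all $(j_1, \ldots, j_n)$ and then passing to $S_n$-coinvariants on the left gives $\chi$.

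Next I would verify the $\mathscr{P}$-algebra axioms of Definition~\ref{def.operad-algebra}: compatibility with the multiplication $\gamma$ of $\mathscr{P}$, compatibility with the unit map $\eta$, and the internal equivariance relation $\chi(\pi.\sigma \odot x_1 \odot \cdots \odot x_n) = \chi(\pi \odot x_{\sigma^{-1}(1)} \odot \cdots \odot x_{\sigma^{-1}(n)})$. The associativity axiom reduces, summand by summand, to the operad-module associativity axiom for $\mu$ (the big commuting pentagon-type diagram in Definition~\ref{def.operad-module}), together with coherence of $s$ and $a$ to keep track of how the various blocks of $Z$-factors get reshuffled; I would phrase this as ``induced by the associativity of $\mu$'' and supply the relevant diagram. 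The unit axiom follows from the left-unit condition on the $\mathscr{P}$-module $\mathscr{L}$, which identifies $\mathscr{P}(1) \odot \mathscr{L}(j) \to \mathscr{L}(j)$ via $\eta$ with the canonical isomorphism. The internal equivariance condition is built into the definition since we quotient by the $S_n$-action on $\mathscr{P}(n)$ and simultaneously on $\mathscr{L}\langle Z\rangle^{\odot n}$; here Equivariance Conditions A and B of the operad-module structure are exactly what is needed to see that the map descends to the $S_n$-coinvariants and to the various $S_{j_s}$-coinvariants coherently.

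The main obstacle I expect is bookkeeping rather than conceptual: keeping the block permutations $\sigma\{j_1, \ldots, j_n\}$ and the symmetry maps $T_\sigma$ straight across the coproduct decomposition, and checking that the isomorphism $\mathscr{L}\langle Z\rangle^{\odot n} \cong \coprod(\ldots)$ is natural and $S_n$-equivariant in the right way so that the quotient $\odot_{S_n}$ is well-behaved. Distributivity is used repeatedly and crucially --- without it the coproduct would not commute past $\odot$ and the whole summand-wise analysis would fail --- so I would state explicitly at each use where Definition~\ref{def.distributive} is invoked. Since the excerpt has already set up (via Corollary~\ref{cor.K_ch}) that $\mathscr{K}_{\mathrm{ch}}$ is a $\mathscr{D}_{\mathrm{ch}}$-module and that all four ambient categories are cocomplete and distributive, the intended application is immediate: $\mathscr{K}_{\mathrm{ch}}\langle Z\rangle$ is a $\mathscr{D}_{\mathrm{ch}}$-algebra, hence an $E_\infty$-algebra, for a suitable chain complex $Z$ built from $A$ --- but the lemma itself should be proved in the stated generality.
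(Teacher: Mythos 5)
Your proposal is correct and matches the paper's argument step for step: distributivity reduces $\chi$ to a fixed $(j_1,\ldots,j_n)$-summand, where a symmetry shuffle groups the $\mathscr{L}$-factors together, $\mu\odot\mathrm{id}$ applies the module multiplication, and projections to the successive coinvariants produce the structure map, whose axioms reduce to those of the $\mathscr{P}$-module $\mathscr{L}$. The only (cosmetic) difference is that the paper first defines the map $\eta$ on the unquotiented objects $\mathscr{L}(m_i)\odot Z^{\odot m_i}$ and then checks it descends to $\overline\eta$ and $\chi=p\circ\overline\eta$, whereas you work with the $S_{j_s}$-coinvariant objects from the outset.
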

\begin{rmk}
  The notation $\mathscr{L} \langle Z \rangle$ appears in Kapranov and 
  Manin~\cite{KM}, and is also present in~\cite{MSS} as the {\it Schur
  functor} of an operad (\cite{MSS}, Def~1.24), 
  $\mathcal{S}_{\mathscr{L}}(Z)$.
\end{rmk}
\begin{proof}
  What we are looking for is an equivariant map
  \[
    \mathscr{P}\left\langle \mathscr{L} \langle Z \rangle \right\rangle
    \to \mathscr{L} \langle Z \rangle.
  \]
  That is, a map
  \[
    \coprod_{n \geq 0} \mathscr{P}(n) \odot_{S_n} \left[
      \coprod_{m \geq 0} \mathscr{L}(m) \odot_{S_m} Z^{\odot m} \right]^{\odot n}
    \longrightarrow \coprod_{m \geq 0} \mathscr{L}(m) \odot_{S_m} Z^{\odot m},
  \]
  satisfying the required associativity conditions for an operad-algebra
  structure.

  Observe that the equivariant product $\odot_{S_m}$ may be constructed
  as the coequalizer of the maps corresponding to the $S_m$-action.  That is,
  \[
    \mathscr{L}(m) \odot_{S_m} Z^{\odot m} = 
    \coeq_{\sigma \in S_m} \left\{ \sigma^{-1} \odot \sigma \right\},
  \]
  where \mbox{$\sigma^{-1} \odot \sigma : \mathscr{L}(m) \odot Z^{\odot m}
  \to \mathscr{L}(m) \odot Z^{\odot m}$} is given by right action of
  $\sigma^{-1}$ on $\mathscr{L}(m)$ and by permutation of the factors of
  $Z^{\odot m}$ by $\sigma$ (See~\cite{MSS}, formula~(1.11)).  Thus,
  $\mathscr{L}\langle Z \rangle$ may be expressed as a (small) colimit.
  Since we presuppose that $\odot$ distributes over all small colimits, it suffices
  to fix an integer $n$ as well as $n$ integers $m_1$, $m_2$, \ldots, $m_n$, and
  examine the following diagram:
  \[
    \begin{diagram}
      \node{ \mathscr{P}(n) \odot \left( \left[\mathscr{L}(m_1) \odot 
        Z^{\odot m_1}\right] \odot \ldots \odot \left[\mathscr{L}(m_n) 
        \odot Z^{\odot m_n}\right]\right) }
      \arrow{s,r}{T}
      \\
      \node{ \mathscr{P}(n) \odot \left(\mathscr{L}(m_1) \odot
        \ldots \odot \mathscr{L}(m_n)\right) \odot \left( 
        Z^{\odot m_1} \odot \ldots \odot Z^{\odot m_n}\right) }
      \arrow{s,r}{ \mu \odot \mathrm{id} }
      \\
      \node{ \mathscr{L}(m_1 + \ldots + m_n) \odot \left( 
        Z^{\odot m_1} \odot \ldots \odot Z^{\odot m_n}\right) }
      \arrow{s,r}{a}
      \\
      \node{ \mathscr{L}(m_1 + \ldots + m_n) \odot Z^{\odot (m_1 + \ldots + m_n)} }
    \end{diagram}
  \]
  In this diagram, $T$ is the evident shuffling of components so that the
  components $\mathscr{L}(m_i)$ are grouped together, $\mu$ is the 
  operad-module structure map
  for $\mathscr{L}$, and $a$ stands for the various associativity maps that are
  required to obtain the final form.  This composition defines a family of maps 
  \[
    \eta : \mathscr{P}(n) \odot \bigodot_{i=1}^{n}
    \left[\mathscr{L}(m_i) \odot Z^{\odot m_i}\right]
    \longrightarrow \mathscr{L}(m_1 + \ldots + m_n) \odot 
    Z^{\odot (m_1 + \ldots + m_n)}.
  \]
  The maps $\eta$ pass to $S_{m_i}$-equivalence classes, producing a family of maps:
  \[
    \overline{\eta} : \mathscr{P}(n) \odot \bigodot_{i=1}^{n}
    \left[\mathscr{L}(m_i) \odot_{S_{m_i}} Z^{\odot m_i}\right]
    \longrightarrow \mathscr{L}(m_1 + \ldots + m_n) \odot_{S_{m_1} \times \ldots 
    \times S_{m_n}}
    Z^{\odot (m_1 + \ldots + m_n)}.
  \]
  Let $p$ be the evident projection map for a right
  \mbox{$S_{m_1 + \ldots + m_n}$-object} $M$ and left 
  \mbox{$S_{m_1 + \ldots + m_n}$-object} $N$,
  \[
    M \odot_{S_{m_1} \times \ldots \times S_{m_n}} N
    \stackrel{p}{\longrightarrow}
    M \odot_{S_{m_1 + \ldots + m_n}} N.
  \]
  Define the family of maps, $\chi := p \circ \overline{\eta}$,
  \[
    \chi : \mathscr{P}(n) \odot \bigodot_{i=1}^{n}
    \left[\mathscr{L}(m_i) \odot_{S_{m_i}} Z^{\odot m_i}\right]
    \longrightarrow \mathscr{L}(m_1 + \ldots + m_n) \odot_{S_{m_1 + \ldots 
    + m_n}}
    Z^{\odot (m_1 + \ldots + m_n)}.
  \]
  That is, we have a structure map, $\chi$, defined for each $n$:
  \[
    \chi : \mathscr{P}(n) \odot \left[\mathscr{L} \langle Z \rangle \right]^{\odot n}
    \to \mathscr{L} \langle Z \rangle.
  \]
  Since $\mathscr{L}$ is a left $\mathscr{P}$-module, $\chi$ is compatible
  with the multiplication maps of $\mathscr{P}$.
  Equivariance follows from external
  equivariance conditions on $\mathscr{L}$ as $\mathscr{P}$-module, together
  with the internal equivariance relations present in
  \[
    \mathscr{L}(m_1 + \ldots + m_n) \odot_{S_{m_1 + \ldots + m_n}}
    Z^{\odot (m_1 + \ldots + m_n)},
  \]
  inducing the required operad-algebra structure map
  \[
    \chi : \mathscr{P}(n) \odot_{S_n} \left[\mathscr{L} \langle Z \rangle \right]^{\odot n}
    \to \mathscr{L} \langle Z \rangle.
  \]
\end{proof}

Let $A$ be an associative, unital algebra over $k$.  
Let \mbox{$\mathscr{Y}^+_*A = k[N(- \setminus \Delta S_+)]\otimes_{\Delta S_+} 
B^{sym_+}_*A $} be the complex from section~\ref{sec.deltas_plus}, regarded
as simplicial $k$-module.  Observe, we may identify:
\[
  k[N(- \setminus \Delta S_+)] \otimes_{\mathrm{Aut}\Delta S_+} B^{sym_+}_*A
  = \bigoplus_{n \geq 0} 
  \mathscr{K}_{\mathrm{sm}}(n) \,\widehat{\otimes}_{S_n} A^{\widehat{\otimes}\, n}
  = \mathscr{K}_{\mathrm{sm}} \langle A \rangle.
\]
(Note, $A$ is regarded as a trivial simplicial object, with all faces and
degeneracies being identities.)

\begin{lemma}\label{lem.hatK-operadalgebra}
  $\mathscr{K}_{\mathrm{sm}} \langle A \rangle$ has the structure of an 
  $E_\infty$-algebra over the category of simplicial $k$-modules, 
  \[
    \chi : \mathscr{D}_{\mathrm{sm}} \left\langle \mathscr{K}_{\mathrm{sm}} 
    \langle A \rangle  \right\rangle \longrightarrow
    \mathscr{K}_{\mathrm{sm}} \langle A \rangle
  \]
\end{lemma}
\begin{proof}
  This follows immediately from Lemma~\ref{lem.operad-algebra} and the fact that
  $k$-$\mathbf{SimpMod}$ is cocomplete and distributive.  
\end{proof}
\begin{rmk}
  The fact that $\mathscr{K}_{\mathrm{cat}}$ is a pseudo-operad 
  (See Remark~\ref{rmk.K-pseuo-operad}) implies that
  $\mathscr{K}_{\mathrm{ss}}$ and $\mathscr{K}_{\mathrm{sm}}$ are also
  pseudo-operads (c.f.~Lemma~\ref{lem.lax-sym-mon-functor}).  Now, the properties of 
  the Schur functor do not depend on existence of a right unit map
  for $\mathscr{K}_{\mathrm{sm}}$, so we could conclude immediately that 
  $\mathcal{S}_{\mathscr{K}_{\mathrm{sm}}}(A) = 
  \mathscr{K}_{\mathrm{sm}} \langle A \rangle$
  is a `pseudo-operad'-algebra over $\mathscr{K}_{\mathrm{sm}}$, however the preceding 
  proof requires a bit less machinery.
\end{rmk}  

\begin{lemma}\label{lem.Y-operadalgebra}
  The $\mathscr{D}_{\mathrm{sm}}$-algebra structure on $\mathscr{K}_{\mathrm{sm}} 
  \langle A \rangle$
  induces a quotient $\mathscr{D}_{\mathrm{sm}}$-algebra structure on $\mathscr{Y}_*^+A$,
  \[
    \overline{\chi} : \mathscr{D}_{\mathrm{sm}} 
    \left\langle \mathscr{Y}_*^+A \right\rangle \longrightarrow
    \mathscr{Y}_*^+A
  \]
  That is, $\mathscr{Y}_*^+A$ is an $E_{\infty}$-algebra over the category of
  simplicial $k$-modules.
\end{lemma}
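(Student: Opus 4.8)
The plan is to realize $\mathscr{Y}_*^+A$ as a quotient of $\mathscr{K}_{\mathrm{sm}}\langle A\rangle$ by a sub-$\mathscr{D}_{\mathrm{sm}}$-object, so that the operad-algebra structure $\chi$ of Lemma~\ref{lem.hatK-operadalgebra} descends. Recall the identification
$\mathscr{K}_{\mathrm{sm}}\langle A\rangle = k[N(-\setminus\Delta S_+)]\otimes_{\mathrm{Aut}\Delta S_+}B^{sym_+}_*A$,
whereas $\mathscr{Y}_*^+A = k[N(-\setminus\Delta S_+)]\otimes_{\Delta S_+}B^{sym_+}_*A$. The difference is that in $\mathscr{Y}_*^+A$ we impose the equivalence relation $y\otimes\phi_*(x)\approx\phi^*(y)\otimes x$ for \emph{all} morphisms $\phi$ of $\Delta S_+$, not merely the invertible ones. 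So first I would define $R_*\subset\mathscr{K}_{\mathrm{sm}}\langle A\rangle$ to be the simplicial sub-$k$-module generated by all differences $\phi^*(y)\otimes x - y\otimes\phi_*(x)$ with $\phi$ a non-invertible morphism of $\Delta S_+$, giving the short exact sequence $0\to R_*\to\mathscr{K}_{\mathrm{sm}}\langle A\rangle\to\mathscr{Y}_*^+A\to 0$ of simplicial $k$-modules.

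The key step is then to check that $\chi$ carries $R_*$ into itself in each slot, i.e.\ that the relations defining $\mathscr{Y}_*^+A$ are compatible with the $\mathscr{D}_{\mathrm{sm}}$-action. Concretely, for a generator $\sigma\otimes(w_1,\dots,w_n)$ of $\mathscr{D}_{\mathrm{sm}}\langle\mathscr{K}_{\mathrm{sm}}\langle A\rangle\rangle$ with one $w_s$ of the form $\phi^*(y)\otimes x - y\otimes\phi_*(x)$, I would unwind the definition of $\chi$ through the composite built in Lemma~\ref{lem.operad-algebra}: the operad-module map $\mu$ of Lemma~\ref{lem.D-module_str_on_K}, followed by the passage to $S_{m_i}$- and then $S_{\sum m_i}$-coequalizers. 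The point is that $\mu$ acts on the $\mathscr{K}_{\mathrm{cat}}$-factors by the block-wise monoidal product $\odot$ of $\Delta S_+$ (Prop.~\ref{prop.properties-blockperm-odot}, part~3, $(g_{\underline m}f_{\underline m})^\odot = g_{\underline m}^\odot f_{\underline m}^\odot$), so precomposing one factor with a $\Delta S_+$-morphism $\phi$ is the same as precomposing the whole product with $\mathrm{id}\odot\dots\odot\phi\odot\dots\odot\mathrm{id}$, which is again a morphism of $\Delta S_+$; correspondingly, applying $\phi_*$ to the matching tensor factor of $A^{\widehat\otimes m_s}$ (and hence of $B^{sym_+}_*A$ via the $B^{sym_+}_*f$-style functoriality) matches up on the nose. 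Thus $\chi(\sigma\otimes(\dots,\phi^*(y)\otimes x,\dots)) - \chi(\sigma\otimes(\dots,y\otimes\phi_*(x),\dots))$ is again of the form $\psi^*(u)\otimes v - u\otimes\psi_*(v)$ for the induced $\Delta S_+$-morphism $\psi$, hence lies in $R_*$. Since $R_*$ is also a sub-$S_n$-module (permuting blocks of a $\Delta S_+$-product relation is again such a relation), $\chi$ descends to $\overline\chi$ on $\mathscr{D}_{\mathrm{sm}}\langle\mathscr{Y}_*^+A\rangle$, and the operad-algebra axioms (associativity, unit, equivariance) are inherited from those for $\chi$ by surjectivity of the quotient map together with exactness of $\odot_{S_n}$ on colimits in $k$-$\mathbf{SimpMod}$.

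The main obstacle I anticipate is purely bookkeeping: verifying that $R_*$ is genuinely preserved requires being careful about \emph{where} in the iterated $\mu$-composite the non-invertible morphism sits, and about the interaction with the block permutations $\sigma\{p_{\underline m}\}$ (Prop.~\ref{prop.properties-blockperm-odot}, part~2), since a $\Delta S_+$-morphism applied to one block gets permuted along with that block. I would handle this by observing that the relevant relation ``propagates'' through $\mu$ exactly because $\mu$ is built from the monoidal structure $\odot$ of $\Delta S_+$ itself, so a $\Delta S_+$-relation in one input becomes a $\Delta S_+$-relation in the output; no genuinely new phenomenon appears, only a longer chain of the same identities. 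A secondary, minor point is to note that the degeneracy in which $A$ sits as a trivial simplicial object forces all the simplicial structure to come from $k[N(-\setminus\Delta S_+)]$, so passing to the quotient is compatible with faces and degeneracies automatically. Once $\overline\chi$ is constructed, the statement that $\mathscr{Y}_*^+A$ is an $E_\infty$-algebra over $k$-$\mathbf{SimpMod}$ follows from the fact that $\mathscr{D}_{\mathrm{sm}} = k[N\mathscr{D}_{\mathrm{cat}}]$ is (the simplicial-module version of) the Barratt--Eccles operad, which is taken for granted to be $E_\infty$.
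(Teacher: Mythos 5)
Your proposal is essentially the paper's own argument, reformulated: the paper verifies directly that $\chi$ applied to the two representatives $\sigma\,\widehat\otimes\,(g_if_i\otimes V_i)$ and $\sigma\,\widehat\otimes\,(g_i\otimes f_i(V_i))$ lands in the same $\Delta S_+$-equivalence class, using precisely the identity $(g_{\underline n}f_{\underline n})^\odot = g_{\underline n}^\odot f_{\underline n}^\odot$ you invoke. Framing this as $\chi(R_*)\subseteq R_*$ for the kernel $R_*$ is an equivalent packaging of the same well-definedness check, so the two proofs coincide in substance.
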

\begin{proof}
  We must verify that the structure map $\chi$ from Lemma~\ref{lem.hatK-operadalgebra}
  is well-defined on equivalence classes in $\mathscr{Y}_*^+A$.  
  Let $\overline{\chi}$ be defined 
  by applying $\chi$ to a representative, so that we obtain a map
  \[
    \overline{\chi} : \mathscr{D}_{\mathrm{sm}}(n) \,\widehat{\otimes}_{S_n}
    \left(\mathscr{Y}_*^+A\right)^{\widehat{\otimes}\, n}
    \to \mathscr{Y}_*^+A.
  \]
  It suffices to check $\overline{\chi}$ is well-defined on $0$-chains.  
  Let $f_i$, $g_i$, $1 \leq i \leq n$,
  be morphisms of $\mathcal{F}(as)$ with specified sources and targets:
  \[
    \underline{m_i} \stackrel{f_i}{\to} \underline{p_i} 
    \stackrel{g_i}{\to} \underline{q_i}.
  \]
  Let $V_i$ be a simple tensor of $A^{\otimes m_i}$, that is, $V_i := a_1 \otimes
  a_2 \otimes \ldots \otimes a_{m_i}$ for some $a_s \in A$.  Let $\sigma \in S_n$.
  Consider the $0$-chain of 
  $\mathscr{D}_{\mathrm{sm}}(n) \,\widehat{\otimes}_{S_n}
    \left(\mathscr{Y}_*^+A\right)^{\widehat{\otimes}\, n}$:
  \begin{equation}\label{eq.D-algebra-invarianceL}
    \sigma \,\widehat{\otimes}\, \left( g_1f_1 \otimes V_1 \right) \,\widehat{\otimes}\, \ldots 
    \,\widehat{\otimes}\,
    \left( g_nf_n \otimes V_n \right).
  \end{equation}
  The map $\overline{\chi}$ sends this to:
  \[
    \sigma\{q_{\underline{n}}\}(g_{\underline{n}}f_{\underline{n}})^\odot 
    \otimes V_{\underline{n}}^\otimes,
  \]
  where $V_{\underline{n}}^\otimes := V_1 \otimes \ldots \otimes V_n \in 
  A^{\otimes(m_1 + \ldots + m_n)}$.
  
  On the other hand, the element~(\ref{eq.D-algebra-invarianceL}) is equivalent under
  $\mathrm{Mor}\left(\mathcal{F}(as)\right)$-equivariance to:
  \begin{equation}\label{eq.D-algebra-invarianceR}
    \sigma \,\widehat{\otimes}\, \left( g_1 \otimes f_1(V_1) \right) \,\widehat{\otimes}\, \ldots 
    \,\widehat{\otimes}\,
    \left( g_n \otimes f_n(V_n) \right),
  \end{equation}
  and $\overline{\chi}$ sends this to:
  \[
    \sigma\{q_{\underline{n}}\}g_{\underline{n}}^\odot \otimes 
    \left[f_{\underline{n}}(V_{\underline{n}})\right]^\otimes 
  \]
  \[
    = \sigma\{q_{\underline{n}}\}g_{\underline{n}}^\odot \otimes 
    \left[(f_{\underline{n}})^\odot\right](V_{\underline{n}}^\otimes)
  \]
  \[
    \approx \sigma\{q_{\underline{n}}\}g_{\underline{n}}^\odot 
    f_{\underline{n}}^\odot \otimes V_{\underline{n}}^\otimes
  \]
  \[
    = \sigma\{q_{\underline{n}}\}(g_{\underline{n}}f_{\underline{n}})^\odot 
    \otimes V_{\underline{n}}^\otimes.
  \]
  This proves $\overline{\chi}$ is well defined, and so $\mathscr{Y}_*^+A$
  admits the structure of an $E_\infty$-algebra over the category of 
  simplicial $k$-modules.  
\end{proof}

\begin{theorem}\label{thm.Y-operadalgebra-complexes}
  The $\mathscr{D}_{\mathrm{sm}}$-algebra structure on $\mathscr{Y}_*^+A$ induces
  a $\mathscr{D}_{\mathrm{ch}}$-algebra structure on $\mathcal{N}\mathscr{Y}_*^+A$ 
  (as $k$-complex).
\end{theorem}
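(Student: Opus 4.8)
The plan is to invoke the machinery assembled in the previous two sections, now applied to the normalization functor $\mathcal{N}$. We have, by Lemma~\ref{lem.Y-operadalgebra}, that $\mathscr{Y}_*^+A$ is a $\mathscr{D}_{\mathrm{sm}}$-algebra over the symmetric monoidal category $(k\text{-}\mathbf{SimpMod}, \widehat{\otimes})$; that is, there is a structure map $\overline{\chi} : \mathscr{D}_{\mathrm{sm}}\langle \mathscr{Y}_*^+A \rangle \to \mathscr{Y}_*^+A$, meaning a compatible family of maps $\mathscr{D}_{\mathrm{sm}}(n) \,\widehat{\otimes}_{S_n}\, (\mathscr{Y}_*^+A)^{\widehat{\otimes}\, n} \to \mathscr{Y}_*^+A$. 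The first step is to recall that $\mathcal{N} : k\text{-}\mathbf{SimpMod} \to k\text{-}\mathbf{Complexes}$ is a lax symmetric monoidal functor, with associated natural transformation given by the Eilenberg--Zilber shuffle map $Sh$, as noted just before Lemma~\ref{lem.lax-sym-mon-functor}. Moreover, $\mathcal{N}$ sends the unit object (the constant simplicial $k$-module $k$) to the unit object $k_0$ of $k\text{-}\mathbf{Complexes}$, so the hypothesis $e' = Fe$ of Lemma~\ref{lem.lax-sym-mon-functor} is satisfied.

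Next I would apply Lemma~\ref{lem.lax-sym-mon-functor}, parts {\it 1} and {\it 3}, with $F = \mathcal{N}$, $\mathscr{C} = k\text{-}\mathbf{SimpMod}$, $\mathscr{C}' = k\text{-}\mathbf{Complexes}$, $\mathscr{P} = \mathscr{D}_{\mathrm{sm}}$, and $Z = \mathscr{Y}_*^+A$. Part {\it 1} gives that $\mathcal{N}\mathscr{D}_{\mathrm{sm}} = \mathscr{D}_{\mathrm{ch}}$ is an operad over $k\text{-}\mathbf{Complexes}$, with structure map $\gamma' = \mathcal{N}\gamma \circ Sh^{m}$ built from the shuffle map and the operad structure of $\mathscr{D}_{\mathrm{sm}}$; this is precisely the $\mathscr{D}_{\mathrm{ch}}$ already named in Section~\ref{sec.op-mod-structure}. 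Part {\it 3} then gives that $\mathcal{N}Z = \mathcal{N}\mathscr{Y}_*^+A$ is a $\mathscr{D}_{\mathrm{ch}}$-algebra. Concretely, the induced structure map is obtained by applying $\mathcal{N}$ to $\overline{\chi}$ and precomposing with iterates of the shuffle map: for each $n$,
\[
  \mathscr{D}_{\mathrm{ch}}(n) \otimes \big(\mathcal{N}\mathscr{Y}_*^+A\big)^{\otimes n}
  \xrightarrow{Sh}
  \mathcal{N}\Big(\mathscr{D}_{\mathrm{sm}}(n) \,\widehat{\otimes}\, \big(\mathscr{Y}_*^+A\big)^{\widehat{\otimes}\, n}\Big)
  \xrightarrow{\mathcal{N}\overline{\chi}}
  \mathcal{N}\mathscr{Y}_*^+A,
\]
and this descends to the $S_n$-coinvariants because $\overline{\chi}$ is $S_n$-equivariant and $Sh$ is natural, hence $S_n$-equivariant, so the composite factors through $\mathscr{D}_{\mathrm{ch}}(n) \otimes_{S_n} (\mathcal{N}\mathscr{Y}_*^+A)^{\otimes n}$. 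The operad-algebra axioms (compatibility with the operad multiplication $\gamma'$, the unit, and the equivariance conditions) follow formally from the corresponding axioms for $\overline{\chi}$ together with the lax symmetric monoidal coherence of $\mathcal{N}$ recorded in diagrams~(\ref{eq.lax-assoc}) and~(\ref{eq.lax-comm}); these are exactly the verifications packaged inside the proof of Lemma~\ref{lem.lax-sym-mon-functor}, so no new computation is required.

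The main subtlety — though it is conceptual rather than technical — is that $\mathcal{N}$ is only \emph{lax} (not strong) symmetric monoidal, so the shuffle map $Sh$ is not invertible; one must be careful that the structure map on $\mathcal{N}\mathscr{Y}_*^+A$ is built by \emph{inserting} $Sh$ in the correct direction, and that associativity of the resulting algebra structure relies on the lax-monoidal hexagon/pentagon compatibility of $Sh$ rather than on any inverse. But this is precisely what Lemma~\ref{lem.lax-sym-mon-functor} was formulated to handle, so the proof reduces to citing it. I would therefore present the argument tersely: note $\mathcal{N}$ is lax symmetric monoidal with $\mathcal{N}(k) = k_0$, then apply Lemma~\ref{lem.lax-sym-mon-functor}{\it (1)} and {\it (3)} to $\mathscr{P} = \mathscr{D}_{\mathrm{sm}}$, $Z = \mathscr{Y}_*^+A$, and conclude that $\mathcal{N}\mathscr{Y}_*^+A$ is a $\mathscr{D}_{\mathrm{ch}}$-algebra, i.e. an $E_\infty$-algebra of $k$-complexes, which is what permits the homology operations on $HS_*(A)$ to be defined via Definitions~2.1 and~2.2 of~\cite{M2}.
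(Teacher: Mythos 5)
Your proposal is correct and follows exactly the paper's own route: both observe that $\mathcal{N}$ is lax symmetric monoidal (via the Eilenberg--Zilber shuffle) and invoke Lemma~\ref{lem.lax-sym-mon-functor} to transport the $\mathscr{D}_{\mathrm{sm}}$-algebra structure across $\mathcal{N}$. You supply more of the bookkeeping (unit preservation, $S_n$-descent, the explicit composite with $Sh$) than the paper does, but the argument is the same.
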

\begin{proof}
  Again since the normalization functor $\mathcal{N}$ is lax symmetric monoidal,
  the operad-algebra structure map of $\mathscr{Y}_*^+A$ induces
  an operad algebra structure map over $k$-$\mathbf{complexes}$ (by 
  Lemma~\ref{lem.lax-sym-mon-functor}):
  \[
    \widetilde{\chi} : \mathscr{D}_{\mathrm{ch}}(n) \otimes_{S_n}
    \left(\mathcal{N}\mathscr{Y}_*^+A\right)^{\otimes n}
    \to \mathcal{N}\mathscr{Y}_*^+A.
  \]
\end{proof}

\begin{cor}\label{cor.pontryagin}
  $HS_*(A)$ admits a Pontryagin product, giving it the structure of graded
  commutative and associative algebra.
\end{cor}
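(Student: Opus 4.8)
The plan is to obtain the product purely formally from the $E_\infty$-structure of Thm.~\ref{thm.Y-operadalgebra-complexes}, using the identification $HS_*(A)=H_*(\mathscr{Y}^+_*A;k)$ of Thm.~\ref{thm.SymHom_plusComplex}. The key observation is that $\mathscr{D}_{\mathrm{ch}}$ is an $E_\infty$-operad: the category $\mathscr{D}_{\mathrm{cat}}(n)$ has a unique morphism between any two objects, so its nerve is a contractible complex carrying a free $S_n$-action (by right multiplication on objects); hence $\mathscr{D}_{\mathrm{ch}}(n)=\mathcal{N}k[N\mathscr{D}_{\mathrm{cat}}(n)]$ is, in each degree, a free $kS_n$-module, and $H_*(\mathscr{D}_{\mathrm{ch}}(n))=k$ concentrated in degree $0$. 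Fix once and for all a $0$-cycle $e_2\in\mathscr{D}_{\mathrm{ch}}(2)_0$ representing the canonical generator of $H_0(\mathscr{D}_{\mathrm{ch}}(2))=k$. Restricting the structure map $\widetilde{\chi}$ along $e_2\otimes(-)\otimes(-)$ gives a chain map
\[
  \mu : \mathcal{N}\mathscr{Y}^+_*A\otimes\mathcal{N}\mathscr{Y}^+_*A\longrightarrow\mathcal{N}\mathscr{Y}^+_*A,\qquad
  \mu(x\otimes y):=\widetilde{\chi}(e_2\otimes x\otimes y),
\]
and the Pontryagin product on $HS_*(A)$ is the composite of the homology cross-product $HS_*(A)\otimes HS_*(A)\to H_*(\mathcal{N}\mathscr{Y}^+_*A\otimes\mathcal{N}\mathscr{Y}^+_*A)$ with $\mu_*$.

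Next I would check associativity and graded commutativity by the standard operadic argument. For associativity, the two triple products $\mu(\mu(x\otimes y)\otimes z)$ and $\mu(x\otimes\mu(y\otimes z))$ are $\widetilde{\chi}$ evaluated on $\gamma(e_2;e_2,\mathrm{id})\otimes x\otimes y\otimes z$ and on $\gamma(e_2;\mathrm{id},e_2)\otimes x\otimes y\otimes z$, where $\gamma$ is the operad multiplication of $\mathscr{D}_{\mathrm{ch}}$ and $\mathrm{id}\in\mathscr{D}_{\mathrm{ch}}(1)_0$ is the unit; both $\gamma(e_2;e_2,\mathrm{id})$ and $\gamma(e_2;\mathrm{id},e_2)$ are $0$-cycles of $\mathscr{D}_{\mathrm{ch}}(3)$ lying over the generator of $H_0(\mathscr{D}_{\mathrm{ch}}(3))=k$, hence differ by a boundary $\partial w$ with $w\in\mathscr{D}_{\mathrm{ch}}(3)_1$; since $\widetilde{\chi}$ is a chain map, $w$ produces a chain homotopy between the two triple products, so they agree on homology. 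For graded commutativity, $e_2$ and $\tau\cdot e_2$, where $\tau$ is the transposition in $S_2$, are both $0$-cycles of the positively-acyclic complex $\mathscr{D}_{\mathrm{ch}}(2)$ over the same generator of $H_0$, hence are homologous; the equivariance of $\widetilde{\chi}$ (Def.~\ref{def.operad-algebra}) together with the Koszul sign attached to the transposition functor on $k$-complexes gives
\[
  \widetilde{\chi}(\tau\cdot e_2\otimes x\otimes y)=(-1)^{|x||y|}\,\widetilde{\chi}(e_2\otimes y\otimes x),
\]
so on homology $\mu_*(a\otimes b)=(-1)^{|a||b|}\mu_*(b\otimes a)$. (If one wishes to record that the algebra is unital, the $n=0$ part of $\widetilde{\chi}$, coming from $\mathscr{D}_{\mathrm{ch}}(0)=k_0$, produces a $0$-cycle acting as a two-sided unit.)

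The only point requiring genuine care is the sign bookkeeping: one must confirm that the transposition in the symmetric monoidal category of $k$-complexes really carries the Koszul sign and that the Eilenberg--Zilber shuffle map $Sh$, through which the $E_\infty$-structure was transported to the chain level, is compatible with this symmetry (it is lax symmetric monoidal, classically). Granting that, the corollary is an instance of the standard fact that the homology of an $E_\infty$-algebra in chain complexes is a graded commutative, associative algebra, for which I would cite May~\cite{M2} (or~\cite{MSS}); the proof then amounts to verifying that $\mathcal{N}\mathscr{Y}^+_*A$ with the $\mathscr{D}_{\mathrm{ch}}$-action of Thm.~\ref{thm.Y-operadalgebra-complexes} satisfies the hypotheses of that fact, and invoking it.
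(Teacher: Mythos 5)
Your proposal is correct and takes essentially the same approach as the paper: both define the product by choosing a $0$-chain (equivalently $0$-cycle, since $\mathscr{D}_{\mathrm{ch}}(2)$ is concentrated in non-negative degrees) of $\mathscr{D}_{\mathrm{ch}}(2)$ representing the generator of $H_0$, push it through the $E_\infty$-structure map $\widetilde{\chi}$ of Thm.~\ref{thm.Y-operadalgebra-complexes}, and invoke contractibility of each $\mathscr{D}_{\mathrm{ch}}(p)$ together with the standard homotopy-everything argument of May. The only difference is that you unwind the associativity and commutativity arguments to the level of explicit chain homotopies in $\mathscr{D}_{\mathrm{ch}}(3)$ and $\mathscr{D}_{\mathrm{ch}}(2)$ respectively, whereas the paper cites the general fact directly; both versions are sound.
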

\begin{proof}
  The product is induced on the chain level by first choosing any $0$-chain, $c$,
  in $\mathscr{D}_{\mathrm{ch}}(2)$ corresponding to the generator
  of $1 \in H_0\left(\mathscr{D}_{\mathrm{ch}}(2)\right) = k$, and then taking
  the composite,
  \[
    k \otimes \mathcal{N}\mathscr{Y}_*^+A \otimes \mathcal{N}\mathscr{Y}_*^+A
    \longrightarrow \mathscr{D}_{\mathrm{ch}}(2) \otimes 
    \mathcal{N}\mathscr{Y}_*^+A \otimes \mathcal{N}\mathscr{Y}_*^+A
  \]
  \[
    \longrightarrow \mathscr{D}_{\mathrm{ch}}(2) \otimes_{k[S_2]}
    \mathcal{N}\mathscr{Y}_*^+A \otimes \mathcal{N}\mathscr{Y}_*^+A
    \stackrel{\widetilde{\chi}}{\longrightarrow} \mathcal{N}\mathscr{Y}_*^+A
  \]
  That is, for homology classes $[x]$ and $[y]$ in $HS_*(A)$, the product
  is defined by
  \begin{equation}\label{eq.pontryagin-product}
    [x] \cdot [y] := \left[\widetilde{\chi}(c \otimes x \otimes y)\right]
  \end{equation}
  Now, the choice of $c$ does not matter, since $\mathscr{D}_{\mathrm{ch}}(2)$
  is contractible.  Indeed, since each $\mathscr{D}_{\mathrm{ch}}(p)$ is
  contractible, Thm.~\ref{thm.Y-operadalgebra-complexes} shows that
  $\mathcal{N}\mathscr{Y}_*^+A$ is a {\it homotopy-everything} complex, analogous
  to the homotopy-everything spaces of Boardman and Vogt~\cite{BV}.
  Thus, the product~(\ref{eq.pontryagin-product}) is associative and
  commutative in the graded sense on the level of homology (see also
  May~\cite{M}, p.~3).
\end{proof}

\begin{rmk}
  The Pontryagin product of Cor.~\ref{cor.pontryagin} is directly
  related to the algebra structure on the complexes $Sym_*^{(p)}$ of
  Chapter~\ref{chap.spec_seq2}.  Indeed,
  if $A$ has augmentation ideal $I$ which is free and has countable rank over
  $k$, and $I^2 = 0$, then by Cor.~\ref{cor.square-zero},
  the spectral sequence would collapse at the $E_1$ stage, giving:
  \[
    HS_n(A) \;\cong\; \bigoplus_{p \geq 0} \bigoplus_{u \in X^{p+1}/\Sigma_{p+1}}
    H_{n}(EG_u \ltimes_{G_u} Sym_*^{(p)}; k),
  \]
  and the product structure of $Sym_*^{(p)}$ may be viewed as a restriction
  of the algebra structure of $HS_*(A)$ to the free orbits.
\end{rmk}

\section{Homology Operations}\label{sec.homology-operations}      %

Recall, for a commutative ring, $k$, and a cyclic group $\pi$ of order $p$, there is a 
periodic resolution of $k$ by free $k\pi$-modules (cf~\cite{M2},~\cite{B}):
\begin{definition}\label{def.W-complex}
  Let $\tau$ be a generator of $\pi$.  Let $N = 1 + \tau + \ldots + \tau^{p-1}$.  Define
  a $k\pi$-complex, $(W, d)$ by:
  
  $W_i$ is a the free $k\pi$-module
  on the generator $e_i$, for each $i \geq 0$, with differential:
  \[
    \left\{
    \begin{array}{lll}
      d(e_{2i+1}) &=& (\tau - 1)e_{2i} \\
      d(e_{2i}) &=& Ne_{2i-1}
    \end{array}
    \right.
  \]
  $W$ also has the structure of a $k\pi$-coalgebra, with augmentation $\epsilon$ and
  coproduct $\psi$:
  \[
    \epsilon(\tau^j e_0) = 1
  \]
  \[
    \left\{
    \begin{array}{lll}
      \psi(e_{2i+1}) &=& \ds{\sum_{j+k = i} e_{2j} \otimes e_{2k+1} \;+\;
                        \sum_{j+k = i} e_{2j+1} \otimes \tau e_{2k}} \\
      \psi(e_{2i}) &=& \ds{\sum_{j+k = i} e_{2j} \otimes e_{2k} \;+\;
                      \sum_{j+k = i-1} \left(\sum_{0 \leq r < s < p} \tau^r e_{2j+1} \otimes
                        \tau^s e_{2k+1}\right)}
    \end{array}
    \right.
  \]
\end{definition}

In what follows, we shall specialize $p$ prime and to $k = \Z/p\Z$ (as a ring).  
Let $\pi = C_p$ (as group), and denote by 
$W$ the standard resolution of $k$ by $k\pi$-modules, as in definition~\ref{def.W-complex}.
Recall, $\mathscr{D}_{\mathrm{ch}}(p)$ is a contractible $k$-complex on which
$S_p$ acts freely.  Embed $\pi \hookrightarrow S_p$ by $\tau \mapsto (1, p, p-1, \ldots, 2)$.
Clearly $\pi$ acts freely on $\mathscr{D}_{\mathrm{ch}}(p)$ as well.  Thus,
there exists a homotopy equivalence $\xi : W \to \mathscr{D}_{\mathrm{ch}}(p)$. 

Observe that the complex $\mathcal{N}\mathscr{Y}_*^+A$ computes $HS_*(A)$, since it
is defined as the quotient of $\mathscr{Y}_*^+A$ by degeneracies.  By
Thm.~\ref{thm.Y-operadalgebra-complexes},
$\mathcal{N}\mathscr{Y}_*^+A$, has the structure of $E_{\infty}$-algebra,
so by results of May, if \mbox{$x \in
H_*(\mathcal{N}\mathscr{Y}_*^+A) = HS_*(A)$}, then $e_i \otimes x^{\otimes p}$ is a 
well-defined
element of $H_*\left(W \otimes_{k\pi} (\mathcal{N}\mathscr{Y}_*^+A)^{\otimes p}\right)$, 
where $e_i$ is the distinguished generator of $W_i$.  We then use the homotopy equivalence
$\xi : W \to \mathscr{D}_{\mathrm{ch}}(p)$ and $\mathscr{D}_{\mathrm{ch}}$-algebra structure 
of $\mathcal{N}\mathscr{Y}_*^+A$ to produce the required map.
Define $\kappa$ as the composition:
\[
  \begin{diagram}
  \node{ H_*\left(W \otimes_{k\pi} (\mathcal{N}\mathscr{Y}_*^+A)^{\otimes p}\right) }
  \arrow{e,tb}{ H(\xi \otimes \mathrm{id}^p) }{ \cong }
  \node{ H_*\left(\mathscr{D}_{\mathrm{ch}}(p) \otimes_{k\pi}
    (\mathcal{N}\mathscr{Y}_*^+A)^{\otimes p}\right) }
  \arrow{e,t}{ H(\widetilde{\chi}) }
  \node{ H_*(\mathcal{N}\mathscr{Y}_*^+A)}
  \end{diagram}
\]

This gives a way of defining homology (Steenrod) operations
on $HS_*(A)$.  Following definition 2.2
of~\cite{M2}, first define the maps $D_i$.  For $x \in HS_q(A)$ and
$i \geq 0$, define
\[
  D_i(x) := \kappa(e_i \otimes x^{\otimes p}) \in HS_{pq + i}(A).
\]

\begin{definition}\label{def.homology-operations}
  If $p=2$, define:
  \[
    P_s : HS_q(A) \to HS_{q+s}(A)
  \]
  \[
    P_s(x) = \left\{
    \begin{array}{ll}
      0 \; &\textrm{if $s < q$}\\
      D_{s-q}(x) \; &\textrm{if $s \geq q$}
    \end{array}
    \right.
  \]
  
  If $p > 2$ ({\it i.e.}, an odd prime), let
  \[
    \nu(q) = (-1)^{s+\frac{q(q-1)(p-1)}{4}}\Big[\Big( \frac{p-1}{2} \Big)!\Big]^q,
  \]
  and define:
  \[
    P_s : HS_q(A) \to HS_{q+2s(p-1)}(A)
  \]
  \[
    P_s(x) = \left\{
    \begin{array}{ll}
      0 \; &\textrm{if $2s < q$}\\
      \nu(q) D_{(2s-q)(p-1)}(x) \; &\textrm{if $2s \geq q$}
    \end{array}
    \right.
  \]
  \[
    \beta P_s : HS_q(A) \to HS_{q+2s(p-1)-1}(A)
  \]
  \[
    \beta P_s(x) = \left\{
    \begin{array}{ll}
      0 \; &\textrm{if $2s \leq q$}\\
      \nu(q) D_{(2s-q)(p-1)-1}(x) \; &\textrm{if $2s > q$}
    \end{array}
    \right.
  \]
\end{definition}

Note, the definition of $\nu(q)$ given here differs from that given in~\cite{M2} by
the sign $(-1)^s$ in order that all constants be collected into the term $\nu(q)$.

\chapter{LOW-DEGREE SYMMETRIC HOMOLOGY}\label{chap.ldsymhom}

\section{Partial Resolution}\label{sec.partres}                     %

As before, $k$ is a commutative ground ring.
In this chapter, we find an explicit partial resolution of $\underline{k}$ by 
projective $\Delta S^\mathrm{op}$-modules, allowing the computation of $HS_0(A)$ and
$HS_1(A)$ for a unital associative $k$-algebra $A$.

\begin{theorem}\label{thm.partial_resolution}
  $HS_i(A)$ for $i=0,1$ is the homology of the following partial chain
  complex
  \[
    0\longleftarrow A \stackrel{\partial_1}{\longleftarrow} A\otimes A\otimes A
    \stackrel{\partial_2}{\longleftarrow}(A\otimes A\otimes A\otimes A)\oplus A,
  \]
  where 
  \[
    \partial_1 : a\otimes b\otimes c \mapsto abc - cba,
  \]
  \[
    \partial_2 : \left\{
                 \begin{array}{lll}
                   a\otimes b\otimes c\otimes d &\mapsto& ab\otimes c\otimes d + 
                   d\otimes ca\otimes b + bca\otimes 1\otimes d + d\otimes bc\otimes a,\\
                   a &\mapsto& 1\otimes a\otimes 1.
                 \end{array}
                 \right.
  \]
\end{theorem}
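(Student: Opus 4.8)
The plan is to exhibit an explicit partial projective resolution of the trivial $\Delta S^{\mathrm{op}}$-module $\underline{k}$,
\[
  P_2 \stackrel{\partial_2^{\mathscr{C}}}{\longrightarrow} P_1
  \stackrel{\partial_1^{\mathscr{C}}}{\longrightarrow} P_0
  \stackrel{\epsilon}{\longrightarrow} \underline{k} \longrightarrow 0,
\]
and then apply $- \otimes_{\Delta S} B_*^{sym}A$. Since $\mathrm{Tor}_i^{\Delta S}(\underline{k}, B_*^{sym}A)$ for $i \leq 1$ depends only on the first three terms of a projective resolution (any partial resolution of the above shape extends to a full one), once the complex is shown exact at $P_0$ and $P_1$ it computes $HS_0(A)$ and $HS_1(A)$. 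I would take $P_0 := k[\mathrm{Mor}_{\Delta S}(-,[0])]$, $P_1 := k[\mathrm{Mor}_{\Delta S}(-,[2])]$ and $P_2 := k[\mathrm{Mor}_{\Delta S}(-,[3])] \oplus k[\mathrm{Mor}_{\Delta S}(-,[0])]$; these are projective by the discussion in Section~\ref{sec.stdres}, and, using the isomorphism $k[\mathrm{Mor}_{\Delta S}(-,Y)] \otimes_{\Delta S} B_*^{sym}A \cong B_*^{sym}A(Y)$ from that section, tensoring with $B_*^{sym}A$ turns them into $A$, $A^{\otimes 3}$, and $A^{\otimes 4} \oplus A$ respectively — the modules appearing in the statement.

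Next I would pin down the differentials. A $\Delta S^{\mathrm{op}}$-module map out of a representable $k[\mathrm{Mor}_{\Delta S}(-,Y)]$ is determined by an element of the target evaluated at $Y$, and composites of such maps correspond to composites of the representing morphisms written in the tensor notation of Section~\ref{sec.deltas}. So let $\epsilon$ correspond to $1 \in \underline{k}([0]) = k$ (it sends every morphism $[p]\to[0]$ to $1$); let $\partial_1^{\mathscr{C}}$ correspond to $(x_0x_1x_2) - (x_2x_1x_0) \in k[\mathrm{Mor}_{\Delta S}([2],[0])]$; and let $\partial_2^{\mathscr{C}}$ correspond to the pair consisting of
\[
  (x_0x_1\otimes x_2\otimes x_3) + (x_3\otimes x_2x_0\otimes x_1)
  + (x_1x_2x_0\otimes 1\otimes x_3) + (x_3\otimes x_1x_2\otimes x_0)
  \ \in\ k[\mathrm{Mor}_{\Delta S}([3],[2])]
\]
together with $(1\otimes x_0\otimes 1) \in k[\mathrm{Mor}_{\Delta S}([0],[2])]$. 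Evaluating these on $B_*^{sym}A$ reproduces exactly the maps $\partial_1,\partial_2$ of the statement. That they assemble into a complex is a direct check: $\epsilon\circ\partial_1^{\mathscr{C}} = 0$ because both representing morphisms go to $1$, and $\partial_1^{\mathscr{C}}\circ\partial_2^{\mathscr{C}} = 0$ by composing the tensor-notation morphisms via the substitution rule of Section~\ref{sec.deltas}; equivalently one notes that the four pairs of terms in $\partial_1(\partial_2(a\otimes b\otimes c\otimes d))$, namely $abcd-dcab$, $dcab-bcad$, $bcad-dbca$, $dbca-abcd$, telescope to $0$, while $\partial_1(1\otimes a\otimes 1)=0$.

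The substantive part — and the step I expect to be the main obstacle — is proving exactness at $P_0$ and $P_1$. Since $P_1$ and $P_2$ are direct sums of representables, it suffices to check exactness object by object: for each $[p]$ the complex of free $k$-modules with terms $k$, $k[\mathrm{Mor}_{\Delta S}([p],[0])]$, $k[\mathrm{Mor}_{\Delta S}([p],[2])]$ and $k[\mathrm{Mor}_{\Delta S}([p],[3])]\oplus k[\mathrm{Mor}_{\Delta S}([p],[0])]$, with the induced maps, should be exact in degrees $-1,0,1$. Identify $k[\mathrm{Mor}_{\Delta S}([p],[0])] \cong k[S_{p+1}]$ (a morphism $[p]\to[0]$ is just an ordering of $x_0,\dots,x_p$); then $\epsilon$ is the augmentation, and on this description $\partial_1^{\mathscr{C}}$ sends a morphism $Z_0\otimes Z_1\otimes Z_2 : [p]\to[2]$ to the concatenation $Z_0Z_1Z_2$ minus the block reversal $Z_2Z_1Z_0$. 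For exactness at $P_0([p])$: the image of $\partial_1^{\mathscr{C}}$ contains all ``block-rotation'' differences (case $Z_1$ empty) and all ``end-swap'' differences (case $Z_0,Z_2$ singletons), and conjugating an end-swap by a rotation produces, up to elements already in the image, every adjacent transposition of positions; since these generate $S_{p+1}$, the image is the whole augmentation ideal $=\ker\epsilon$. For exactness at $P_1([p])$ — the delicate point — one first notes each $1\otimes V\otimes 1$ lies in $\ker\partial_1^{\mathscr{C}}$ and is hit by the $A$-summand of $\partial_2^{\mathscr{C}}$ (this is precisely why the extra summand $k[\mathrm{Mor}_{\Delta S}(-,[0])]$, i.e. the term $a\mapsto 1\otimes a\otimes 1$, is present), and then one must show that modulo these, every element of $\ker\partial_1^{\mathscr{C}}$ is a combination of the four-term expressions
\[
  \partial_2^{\mathscr{C}}(W_0\otimes W_1\otimes W_2\otimes W_3)
  = (W_0W_1)\otimes W_2\otimes W_3 + W_3\otimes(W_2W_0)\otimes W_1
  + (W_1W_2W_0)\otimes 1\otimes W_3 + W_3\otimes(W_1W_2)\otimes W_0 .
\]
I would do this by a normal-form reduction on $k[\mathrm{Mor}_{\Delta S}([p],[2])]$, casing on whether the middle block $Z_1$ is empty and on the position of $x_0$, using the above relations to push a general cycle to a standard representative and verifying nothing survives; arranging which of the four terms to apply, in what order, and tracking the resulting cancellations is the part that requires genuine care (and is the likely place where the argument could be replaced instead by an explicit partial contracting homotopy $s_{-1},s_0,s_1$ with $\epsilon s_{-1}=\mathrm{id}$, $\partial_1^{\mathscr{C}}s_0 + s_{-1}\epsilon = \mathrm{id}$, $\partial_2^{\mathscr{C}}s_1 + s_0\partial_1^{\mathscr{C}} = \mathrm{id}$). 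Once exactness in degrees $0$ and $1$ is in hand, $HS_0(A) = \mathrm{coker}\,\partial_1$ and $HS_1(A) = \ker\partial_1/\mathrm{im}\,\partial_2$, which is the assertion.
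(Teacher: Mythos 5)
Your framing is exactly the paper's: tensoring the representables $k[\mathrm{Mor}_{\Delta S}(-,[0])]$, $k[\mathrm{Mor}_{\Delta S}(-,[2])]$, $k[\mathrm{Mor}_{\Delta S}(-,[3])] \oplus k[\mathrm{Mor}_{\Delta S}(-,[0])]$ with $B_*^{sym}A$ produces the stated complex, projectivity is clear, the differentials are the tensor-notation morphisms you wrote, and the verification that $\partial_1\circ\partial_2=0$ (the telescoping sum, and $\partial_1(1\otimes a\otimes 1)=0$) is correct. The exactness-at-$P_0$ sketch (rotations together with end-swaps generate $S_{p+1}$) is also in line with the paper's Lemma on the $0$-stage, which uses rearrangements of two specific forms (move the last letter to the front; and a pivot rearrangement) to reduce an arbitrary monomial $x_{i_0}\cdots x_{i_n}$ to $x_0\cdots x_n$.

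The genuine gap is exactly the place you flagged: exactness at $P_1$. You propose ``push a general cycle to a standard representative modulo $\mathrm{im}\,\partial_2$ and verify nothing survives,'' but this requires working inside $\ker\partial_1$, which is awkward since you have no usable description of that kernel. The paper sidesteps this via a rank argument that you should adopt: it exhibits an explicit $k$-submodule $k[\mathscr{B}_n]\subset k[\mathrm{Mor}_{\Delta S}([n],[2])]$ of rank $(n+1)!-1$ (spanned by $x_{i_0}\cdots x_{i_{k-1}}\otimes x_{i_k}\otimes x_{k+1}\cdots x_n$ with $k\geq 1$, $i_k\neq k$), shows via a morphism count ($\#\mathrm{Mor}_{\Delta S}([n],[m])=(m+n+1)!/m!$) that $\rho$ restricted to $k[\mathscr{B}_n]$ is an \emph{isomorphism} onto $\ker\epsilon$, and then — this is the genuinely long part, a thirteen-step reduction — proves that the four-term relation together with $1\otimes X\otimes 1\approx 0$ collapses \emph{all} of $k[\mathrm{Mor}_{\Delta S}([n],[2])]$ onto $k[\mathscr{B}_n]$. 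Exactness then follows automatically: if $\rho(u)=0$, write $u\equiv b\ (\mathrm{mod}\ \mathrm{im}\,\partial_2)$ with $b\in k[\mathscr{B}_n]$; then $\rho(b)=0$ forces $b=0$ by injectivity. This decouples the normal-form computation from the cycle condition entirely. The reduction itself is nontrivial: from the four-term relation you must first extract the degeneracy relations ($X\otimes Y\otimes 1\approx X\otimes 1\otimes Y\approx 1\otimes X\otimes Y$), a sign relation ($X\otimes Y\otimes Z\approx -Z\otimes Y\otimes X$), a Hochschild-type relation, and a cyclic relation, and then iterate a two-stage reduction pivoting on where $x_n$, then $x_{n-1}x_n$, etc., sit. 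Without that machinery ``verifying nothing survives'' will not go through by hand. Also note the paper's observation that if $\tfrac{1}{2}\in k$ the one-term relation $1\otimes X\otimes 1\approx 0$ becomes a consequence of the four-term relation, but over a general commutative $k$ the extra summand $k[\mathrm{Mor}_{\Delta S}(-,[0])]$ in $P_2$ is genuinely necessary.
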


The proof will proceed in stages from the lemmas below.
\begin{lemma}\label{lem.0-stage}
  For each $n \geq 0$,
  \[
    0 \gets k \stackrel{\epsilon}{\gets} k\big[\mathrm{Mor}_{\Delta S}([n], [0])\big]
    \stackrel{\rho}{\gets} k\big[\mathrm{Mor}_{\Delta S}([n], [2])\big]
  \]
  is exact, where $\epsilon$ is defined by $\epsilon(\phi) = 1$ for any morphism
  $\phi : [n] \to [0]$, and $\rho$ is defined by $\rho(\psi) = 
  (x_0x_1x_2)\circ\psi - (x_2x_1x_0)\circ\psi$
  for any morphism $\psi : [n] \to [2]$.  Note, $x_0x_1x_2$ and $x_2x_1x_0$ are
  $\Delta S$ morphisms $[2] \to [0]$ written in tensor notation (see 
  section~\ref{sec.deltas})
\end{lemma}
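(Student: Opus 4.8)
\textbf{Proof proposal for Lemma~\ref{lem.0-stage}.}

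The plan is to identify each term of the sequence with something concrete. Since $[0]$ is a one-point object, a $\Delta S$ morphism $[n]\to[0]$ is completely determined by a choice of total ordering on the single preimage $\phi^{-1}(0)=[n]$, i.e. by an element of $\Sigma_{n+1}^{\mathrm{op}}$. Thus $k\big[\mathrm{Mor}_{\Delta S}([n],[0])\big]\cong k\Sigma_{n+1}$ as a $k$-module, with basis the permutations written in tensor notation as $x_{g(0)}x_{g(1)}\cdots x_{g(n)}$. Similarly a morphism $[n]\to[2]$ is a pair: an order-preserving surjection-or-not $\phi:[n]\to[2]$ together with a total ordering of $[n]$ — equivalently an ordered partition of the ordered set underlying some permutation of $\{x_0,\dots,x_n\}$ into three (possibly empty) blocks. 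Under these identifications $\epsilon$ is the augmentation $k\Sigma_{n+1}\to k$ sending every group element to $1$, which is clearly surjective, so exactness at $k$ is immediate. It remains to show $\ker\epsilon=\operatorname{im}\rho$.

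First I would check $\epsilon\rho=0$: for $\psi:[n]\to[2]$, both $(x_0x_1x_2)\circ\psi$ and $(x_2x_1x_0)\circ\psi$ are morphisms $[n]\to[0]$, so $\epsilon$ sends each to $1$ and the difference to $0$. For the reverse inclusion, note that $\ker\epsilon$ is the augmentation ideal of $k\Sigma_{n+1}$, which is generated as a $k$-module by the elements $g-h$ for $g,h\in\Sigma_{n+1}$, and in fact by differences $g-g'$ where $g'$ is obtained from $g$ by an adjacent transposition of the entries of the word $x_{g(0)}\cdots x_{g(n)}$ (since adjacent transpositions generate $\Sigma_{n+1}$ and $g-h=\sum(\text{consecutive differences})$ along a path of adjacent transpositions). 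So it suffices to realize each such elementary difference as $\rho$ of a single generator. Given a word $W=x_{i_0}\cdots x_{i_n}$ and a position $t$ where we wish to swap $x_{i_t}$ and $x_{i_{t+1}}$, I would take the morphism $\psi:[n]\to[2]$ whose three tensor blocks are
\[
\psi = (x_{i_0}\cdots x_{i_{t-1}}) \otimes (x_{i_t}) \otimes (x_{i_{t+1}}\cdots x_{i_n})
\]
(with the entries permuted to match $\psi$'s underlying permutation). Composing with $x_0x_1x_2:[2]\to[0]$ multiplies the blocks in order, recovering $W$; composing with $x_2x_1x_0$ multiplies them in the order third, second, first, giving the word $x_{i_{t+1}}\cdots x_{i_n}\,x_{i_t}\,x_{i_0}\cdots x_{i_{t-1}}$ — a \emph{cyclic-type} rearrangement, not the adjacent swap I wanted. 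This is the main obstacle: $\rho$ of a basic generator produces a difference of two rather distant permutations, so I cannot hit an adjacent transposition difference in one shot.

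To get around this I would instead argue as follows: since both words appearing in $\rho(\psi)$ lie in $\ker\epsilon$ and differ by an explicit element of $\Sigma_{n+1}$, the set $\{\rho(\psi)\}$ spans a subspace $V\subseteq\ker\epsilon$; to show $V=\ker\epsilon$ it is enough to show that the induced map $k\Sigma_{n+1}/V \to k$ is injective, equivalently that $\dim_k(k\Sigma_{n+1}/V)\le 1$, equivalently that modulo $V$ all $(n+1)!$ basis words become identified. Working modulo $V$, the relation $W\equiv (\text{block-reversal-product of any 3-block splitting of }W)$ lets me, by choosing the middle block to be a single letter and the first block empty, identify $x_{i_0}x_{i_1}\cdots x_{i_n}\equiv x_{i_1}\cdots x_{i_n}x_{i_0}$, i.e. cyclic rotation by one is a relation mod $V$; and by choosing the first and last blocks appropriately I can also realize, mod $V$, the move that cyclically rotates any suffix. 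A short combinatorial check (which I would carry out explicitly) shows these suffix-rotations generate all of $\Sigma_{n+1}$ acting on words, hence all $(n+1)!$ words are congruent mod $V$, giving $\dim_k(k\Sigma_{n+1}/V)\le 1$ and therefore $V=\ker\epsilon$. Combining with $\epsilon\rho=0$ and surjectivity of $\epsilon$ yields exactness of the three-term sequence. The bookkeeping of tensor-notation composition (substituting blocks for variables, as in section~\ref{sec.deltas}) is routine but must be done carefully to confirm the precise forms of $(x_0x_1x_2)\circ\psi$ and $(x_2x_1x_0)\circ\psi$.
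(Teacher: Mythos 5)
Your proposal is correct and runs on the same rails as the paper's argument. Both reduce exactness at the middle term to the combinatorial claim that all permutation words are congruent, modulo the relations $Y_0Y_1Y_2 \equiv Y_2Y_1Y_0$ that $\rho$-generators produce, to the standard word $x_0\cdots x_n$; the paper performs the rewriting with explicit ``Type I'' and ``Type II$'$'' moves and an induction on how far the word is from standard, whereas you invoke suffix rotations together with the fact that they generate $\Sigma_{n+1}$, which is a conceptually slightly cleaner packaging but leaves two details to be spelled out. First, a suffix rotation is a composite of \emph{two} $\rho$-relations, not one: $XYZ \equiv ZYX$ (the three-block reversal with $Y$ a single letter), followed by the two-block swap $ZYX = (ZY)(X) \equiv (X)(ZY) = XZY$; the net effect fixes the prefix $X$ and pushes the letter $Y$ past $Z$ to the end, i.e.\ rotates the suffix beginning at $Y$. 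Second, the generation claim is true and should be stated: the full-length rotation is the $(n+1)$-cycle, the length-two suffix rotation is the transposition of the last two positions, and conjugating the latter by powers of the former yields every adjacent transposition. Finally, the phrase ``$\dim_k(k\Sigma_{n+1}/V)\le 1$'' is literal only when $k$ is a field; over a general commutative ring $k$ the correct (and equally elementary) statement your argument actually supports is that $k\Sigma_{n+1}/V$ is generated by $[\phi_0]$ while the composite $k \to k\Sigma_{n+1}/V \xrightarrow{\epsilon} k$ is the identity, or, as the paper does, one can just observe directly that $\phi - \phi_0 \in V$ for every $\phi$, which spans $\ker\epsilon$ over any $k$.
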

\begin{proof}
  Clearly, $\epsilon$ is surjective.  Now, $\epsilon\rho = 0$, since $\rho(\psi)$
  consists of two morphisms with opposite signs.  
  Let $\phi_0 = x_0x_1 \ldots x_n : [n] \to [0]$. 
  The kernel of $\epsilon$ is spanned by elements $\phi - \phi_0$ for $\phi \in
  \mathrm{Mor}_{\Delta S}([n],[0])$.  So, it suffices to show that the 
  sub-module of
  $k\big[\mathrm{Mor}_{\Delta S}([n], [0])\big]$ generated by
  $(x_0x_1x_2)\psi - (x_2x_1x_0)\psi$ (for $\psi : [n] \to [2]$) contains all of the
  elements $\phi - \phi_0$.  In other words, it suffices to find a sequence
  \[
    \phi =: \phi_k,\; \phi_{k-1},\;  \ldots,\; \phi_2,\; \phi_1,\; \phi_0
  \]
  so that each $\phi_i$ is obtained from $\phi_{i+1}$ by reversing the order of
  3 blocks, $XYZ \to ZYX$.  Note, $X$, $Y$, or $Z$ can be empty.
  Let $\phi = x_{i_0}x_{i_1}\ldots x_{i_n}$.  If $\phi = \phi_0$, we may
  stop here.  Otherwise, we produce a sequence ending in $\phi_0$ by way of
  two types of rearrangements.  
  
  Type I:
  \[
     x_{i_0}x_{i_1}\ldots x_{i_n} \leadsto x_{i_n}x_{i_0}x_{i_1} \ldots x_{i_{n-1}}.
  \]
  
  Type II:
  \[
     x_{i_0}x_{i_1} \ldots x_{i_{k-1}}x_{i_k}x_{i_{k+1}}\ldots x_{i_n}
     \leadsto x_{i_{k+1}}\ldots x_{i_n}x_{i_k}x_{i_0}x_{i_1} \ldots x_{i_{k-1}}.
  \]
  
  In fact, it will be sufficient to use a more specialized version of the Type II
  rearrangement.
  
  Type II$'$:
  \[
     x_{i_0}x_{i_1} \ldots x_{i_{k-1}}x_{i_k}x_{k+1}\ldots x_n
     \leadsto x_{k+1}\ldots x_n x_{i_k}x_{i_0}x_{i_1} \ldots x_{i_{k-1}},
  \]  
  where $i_k \neq k$.
  
  Beginning with $\phi$, perform Type I rearrangements until the final variable
  is $x_n$.  For convenience of notation, let this new monomial be
  $x_{j_0}x_{j_1} \ldots x_{j_n}$.  Of course, $j_n = n$.  If $j_k = k$ for
  all $k = 0, 1, \ldots, n$, then we are done.  Otherwise,  
  there will be a number $k$ such that $j_k \neq k$ but $j_{k+1} = k + 1, \ldots,
  j_n = n$.  Perform a Type II$'$ rearrangement with $j_{k}$ as pivot, followed by
  enough Type I rearrangements to make the final variable $x_n$ again.  The net
  result of such a combination is that the ending block $x_{k+1}x_{k+2}\ldots x_n$
  remains fixed while the beginning block $x_{j_0}x_{j_1}\ldots x_{j_{k}}$ 
  becomes $x_{j_k}x_{j_0} \ldots x_{j_{k-1}}$.  It is clear that applying this
  combination repeatedly will finally obtain a monomial
  $x_{\ell_0}x_{\ell_1}\ldots x_{\ell_{k-1}} x_k x_{k+1} \ldots x_n$.  After a finite
  number of steps, we finally obtain $\phi_0$.
\end{proof}
Let $\mathscr{B}_n = \{ x_{i_0}x_{i_1}\ldots x_{i_{k-1}} \otimes x_{i_k} \otimes
x_{k+1}x_{k+2} \ldots x_{n} \;:\; k \geq 1, i_k \neq k \}$.  $k[\mathscr{B}_n]$ is
a free submodule of $k\big[\mathrm{Mor}_{\Delta S}([n], [2])\big]$ of size
$(n+1)! - 1$.  This count is obtained by observing that 
$\{ x_{i_0} \ldots x_{i_{k-1}} \otimes x_{i_k} \otimes
x_{k+1} \ldots x_{n} \;:\; k = c, i_k \neq k \}$ has exactly $c \cdot c! = (c+1)! - c!$
elements, then adding the telescoping sum from $c = 1$ to $n$.
\begin{cor}\label{cor.B_n}
  When restricted to $k[\mathscr{B}_n]$, the map $\rho$ of Lemma~\ref{lem.0-stage} is
  surjective onto the kernel of $\epsilon$.
\end{cor}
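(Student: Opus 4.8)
The plan is to refine the proof of Lemma~\ref{lem.0-stage}: that lemma already shows $\rho$ is surjective onto $\ker\epsilon$ using arbitrary morphisms $\psi : [n]\to[2]$, and what remains is to check that only the morphisms in $\mathscr{B}_n$ are needed. First I would recall the explicit ``block reversal'' procedure used in Lemma~\ref{lem.0-stage}: every $\phi - \phi_0$ in $\ker\epsilon$ is expressed as a telescoping sum $\sum_i (\phi_{i+1} - \phi_i)$, where each consecutive pair differs by a single operation $XYZ \mapsto ZYX$. The key point is that each such elementary step is exactly $\rho(\psi)$ (or $-\rho(\psi)$) for the morphism $\psi : [n]\to[2]$ that sends the block $X$ to $0$, the block $Y$ to $1$, and the block $Z$ to $2$, written in tensor notation as $X \otimes Y \otimes Z$.

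The heart of the argument, then, is to observe that in the proof of Lemma~\ref{lem.0-stage} the block reversals are never fully general: they are always of Type~I or Type~II$'$. Type~I rearrangements $x_{i_0}\ldots x_{i_n} \leadsto x_{i_n} x_{i_0}\ldots x_{i_{n-1}}$ correspond to $\psi = 1 \otimes x_{i_n} \otimes x_{i_0}\ldots x_{i_{n-1}}$ (empty first block), and Type~II$'$ rearrangements, applied with pivot $x_{i_k}$ where $i_k \neq k$ and trailing block $x_{k+1}\ldots x_n$, correspond precisely to $\psi = x_{i_0}\ldots x_{i_{k-1}} \otimes x_{i_k} \otimes x_{k+1}\ldots x_n$, which is an element of $\mathscr{B}_n$ by definition (the condition $k\geq 1$, $i_k \neq k$ is built into $\mathscr{B}_n$). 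A Type~I move is the special case of this with $k$ such that the trailing block is empty after re-indexing; one must double-check that Type~I moves can also be realized by morphisms in $\mathscr{B}_n$ — indeed a pure cyclic shift $XYZ\mapsto ZYX$ with $X,Z$ empty is $Y\mapsto Y$, the identity, contributing $0$, so only the genuinely nontrivial Type~I steps matter, and those can be folded into Type~II$'$ steps or handled by noting $1 \otimes x_{i_n} \otimes (x_{i_0}\ldots x_{i_{n-1}})$ lies in $k[\mathscr{B}_n]$ whenever $i_{n}\ne n$, i.e. whenever the move is nontrivial.

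So the proof is essentially: rerun the reduction algorithm of Lemma~\ref{lem.0-stage} verbatim, and at each step exhibit the relevant $\psi$ as a member of $\mathscr{B}_n$; since $\phi - \phi_0$ is the image under $\rho$ of a $\Z$-linear combination of such $\psi$'s, and these span $\ker\epsilon$ over $k$ as $\phi$ ranges over $\mathrm{Mor}_{\Delta S}([n],[0])$, the restriction $\rho|_{k[\mathscr{B}_n]}$ is surjective onto $\ker\epsilon$. I would also remark that the dimension count already given — $\#\mathscr{B}_n = (n+1)! - 1 = \dim k[\mathrm{Mor}_{\Delta S}([n],[0])] - 1 = \dim \ker\epsilon$ — shows $\rho|_{k[\mathscr{B}_n]}$ is in fact an isomorphism onto $\ker\epsilon$, which is the sharper statement that will be needed for the partial resolution. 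The main obstacle is purely bookkeeping: making sure that every elementary move invoked in Lemma~\ref{lem.0-stage} is matched with a morphism satisfying the exact constraints defining $\mathscr{B}_n$ (trailing block of the form $x_{k+1}\ldots x_n$ and pivot index $\neq k$), and in particular handling the trivial-move cases cleanly so no element outside $\mathscr{B}_n$ sneaks in.
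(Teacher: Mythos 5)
Your high-level approach is the right one and is the same as the paper's: re-run the reduction algorithm from Lemma~\ref{lem.0-stage} and exhibit, for each Type~I and Type~II$'$ move actually performed, a morphism $\psi \in \mathscr{B}_n$ realizing it via $\rho$. The Type~II$'$ case is handled correctly. However, the identification of $\psi$ for a Type~I move is wrong in a way that breaks the argument. You take $\psi = 1 \otimes x_{i_n} \otimes x_{i_0}\ldots x_{i_{n-1}}$ and then assert this lies in $k[\mathscr{B}_n]$ whenever $i_n \neq n$. It does not. By definition an element of $\mathscr{B}_n$ has the shape $x_{i_0}\ldots x_{i_{k-1}} \otimes x_{i_k} \otimes x_{k+1}\ldots x_n$ with $k\ge 1$ and $i_k\ne k$; the first block must be nonempty (so $k\ge 1$), and the third block must be exactly the \emph{ordered terminal segment} $x_{k+1}\ldots x_n$. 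Your proposed $\psi$ has an empty first block (forcing $k=0$) and a scrambled third block $x_{i_0}\ldots x_{i_{n-1}}$, which is generically not $x_1\ldots x_n$. So it fails both constraints.

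The correct representative for the Type~I move $x_{i_0}\ldots x_{i_n}\leadsto x_{i_n}x_{i_0}\ldots x_{i_{n-1}}$ is $\psi = x_{i_0}\ldots x_{i_{n-1}} \otimes x_{i_n} \otimes 1$, which gives $\rho(\psi) = x_{i_0}\ldots x_{i_n} - x_{i_n}x_{i_0}\ldots x_{i_{n-1}}$ directly. This is the $k=n$ case of the $\mathscr{B}_n$ template (empty trailing block), and it lies in $\mathscr{B}_n$ precisely when $i_n\ne n$. Your parenthetical observation that Type~I is ``the special case of Type~II$'$ with empty trailing block'' is essentially this, but you then undercut it by reverting to the incorrect $1\otimes(\cdot)\otimes(\cdot)$ form. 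Finally, the remaining point — which the paper makes explicitly and which you gesture at with ``whenever the move is nontrivial'' — is that the algorithm of Lemma~\ref{lem.0-stage} only ever performs a Type~I rearrangement while the final variable is \emph{not} $x_n$, i.e.\ while $i_n\ne n$; this is what guarantees every Type~I step used is realized by a morphism in $\mathscr{B}_n$, and it is not the same as the move being a non-identity (Type~I moves are non-identities for all $n\ge 1$ regardless of whether $i_n = n$). With the correct $\psi$ and this explicit observation, your outline becomes the paper's proof.
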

\begin{proof}
  In the proof of Lemma~\ref{lem.0-stage}, the Type I rearrangements correspond to
  the image of elements $x_{i_0} \ldots x_{i_{n-1}} \otimes x_{i_n} \otimes 1$.
  Note, we did not need $i_n = n$ in any such rearrangement.
  The Type II$'$ rearrangements correspond to the image of elements
  $x_{i_0} \ldots x_{i_{k-1}} \otimes x_{i_k} \otimes x_{k+1} \ldots x_{n}$, for
  $k \geq 1$ and $i_k \neq k$.
\end{proof}
\begin{lemma}\label{lem.rank}
  $\#\mathrm{Mor}_{\Delta S}([n], [m]) = (m+n+1)!/m!$, so
  $k\big[\mathrm{Mor}_{\Delta S}([n], [m])\big]$ is a free $k$-module of
  rank $(m+n+1)!/m!$.
\end{lemma}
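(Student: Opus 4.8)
The plan is to count the morphisms $[n]\to[m]$ in $\Delta S$ directly from the definition. Recall a morphism $(\phi,g):[n]\to[m]$ consists of a non-decreasing set map $\phi:[n]\to[m]$ together with an element $g\in\Sigma_{n+1}^{\mathrm{op}}$; equivalently, using the tensor notation of Section~\ref{sec.deltas}, it is a tensor product $X_0\otimes X_1\otimes\ldots\otimes X_m$ of $m+1$ monomials in which each of the $n+1$ variables $x_0,\ldots,x_n$ occurs exactly once (the monomial $X_i$ records the total ordering of $\phi^{-1}(i)$, with the empty product $1$ allowed). So I would count such tensor representations.

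First I would observe that specifying a morphism amounts to two independent choices: (i) a linear ordering of the $n+1$ variables into a word $w=x_{g(0)}x_{g(1)}\cdots x_{g(n)}$ of length $n+1$, and (ii) a way of cutting this word into $m+1$ ordered (possibly empty) blocks $X_0,\ldots,X_m$. There are $(n+1)!$ choices for (i). For (ii), cutting a length-$(n+1)$ word into $m+1$ ordered blocks with empty blocks permitted is the same as choosing a weak composition of $n+1$ into $m+1$ parts, i.e.\ choosing where to place $m$ ``dividers'' among the $n+1$ letters (with repetition of positions allowed); a standard stars-and-bars argument gives $\binom{n+m+1}{m}$ such choices. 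Multiplying,
\[
  \#\mathrm{Mor}_{\Delta S}([n],[m]) = (n+1)!\binom{n+m+1}{m}
  = (n+1)!\cdot\frac{(n+m+1)!}{m!\,(n+1)!} = \frac{(m+n+1)!}{m!}.
\]

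Alternatively, and perhaps more cleanly for the writeup, I would count the pairs $(\phi,g)$ as follows: the number of non-decreasing maps $\phi:[n]\to[m]$ is $\binom{n+m+1}{m}$ (again stars and bars: a weak composition of the fiber sizes $\#\phi^{-1}(0),\ldots,\#\phi^{-1}(m)$), and for each fixed $\phi$ there are exactly $(n+1)!$ choices of $g\in\Sigma_{n+1}^{\mathrm{op}}$, independently of $\phi$. This gives the same product. Since $\mathrm{Mor}_{\Delta S}([n],[m])$ is finite, $k[\mathrm{Mor}_{\Delta S}([n],[m])]$ is by definition the free $k$-module on that finite set, hence free of the stated rank. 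I do not anticipate a genuine obstacle here; the only point requiring care is making sure the correspondence between morphisms and (word, cut-point) data is a genuine bijection — in particular that empty blocks are allowed (when a point of $[m]$ is not hit) and that the ordering data $g$ is exactly the linear order in which the variables appear, with no overcounting. The tensor-notation description in Section~\ref{sec.deltas} already establishes this bijection, so I would simply invoke it.
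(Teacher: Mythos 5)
Your proof is correct and takes essentially the same route as the paper's: the paper also counts a morphism as a choice of $\binom{m+n+1}{m}$ assignments of the $n+1$ points into $m+1$ compartments together with one of $(n+1)!$ total orderings, and multiplies. Your extra discussion of the tensor-notation bijection is harmless but not needed beyond what is already established in Section~\ref{sec.deltas}.
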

\begin{proof}
  A morphism $\phi : [n] \to [m]$ of $\Delta S$ is nothing more than an assignment
  of $n+1$ objects into $m+1$ compartments, along with a total ordering of the
  original $n+1$ objects, hence:
  \[
    \#\mathrm{Mor}_{\Delta S}([n], [m]) = \binom{m+n+1}{m}(n+1)! = \frac{(m+n+1)!}{m!}.
  \]
\end{proof}
\begin{lemma}\label{lem.rho-iso}
  $\rho|_{k[\mathscr{B}_n]}$ is an isomorphism $k[\mathscr{B}_n] \cong \mathrm{ker}
  \,\epsilon$.
\end{lemma}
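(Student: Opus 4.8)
The plan is to combine a counting argument with the surjectivity result of Corollary~\ref{cor.B_n}. We already know from that corollary that $\rho$ restricted to $k[\mathscr{B}_n]$ maps \emph{onto} $\mathrm{ker}\,\epsilon$, so the only thing left is to verify that this restriction is injective, and for that a dimension count suffices. First I would compute the rank of $\mathrm{ker}\,\epsilon$. Since $\epsilon : k[\mathrm{Mor}_{\Delta S}([n],[0])] \to k$ is surjective and $k[\mathrm{Mor}_{\Delta S}([n],[0])]$ is free of rank $\#\mathrm{Mor}_{\Delta S}([n],[0]) = (n+1)!$ by Lemma~\ref{lem.rank} (with $m=0$), the kernel is a free $k$-module of rank $(n+1)! - 1$ (the short exact sequence $0 \to \mathrm{ker}\,\epsilon \to k[\mathrm{Mor}_{\Delta S}([n],[0])] \to k \to 0$ splits over $k$ since $k$ is free).

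Next I would recall the count of $\#\mathscr{B}_n$ already established in the discussion preceding Corollary~\ref{cor.B_n}, namely $\#\mathscr{B}_n = (n+1)! - 1$, obtained from the telescoping sum $\sum_{c=1}^{n} c\cdot c! = (n+1)! - 1!$. Hence $k[\mathscr{B}_n]$ is free of rank $(n+1)! - 1$, the same rank as $\mathrm{ker}\,\epsilon$. A surjective homomorphism between free modules of the same finite rank over a commutative ring is an isomorphism — this follows for instance by tensoring with the fraction field if $k$ is a domain, and in general from the fact that a surjective endomorphism of a finitely generated module is injective (after choosing bases, $\rho|_{k[\mathscr{B}_n]}$ is represented by a square matrix that is surjective, hence has a right inverse, hence its determinant is a unit, so it is invertible). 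Therefore $\rho|_{k[\mathscr{B}_n]}$ is an isomorphism onto $\mathrm{ker}\,\epsilon$.

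The main obstacle, such as it is, lies in making the over-a-general-commutative-ring version of ``surjective map of equal-rank free modules is injective'' fully rigorous without appealing to a field; the cleanest route is the determinant argument (a square matrix over a commutative ring whose cokernel vanishes has unit determinant, by Nakayama-type reasoning or by the classical fact that such a matrix is right-invertible and right-invertible square matrices over commutative rings are two-sided invertible). Everything else is bookkeeping already done in the preceding lemmas: the rank computation from Lemma~\ref{lem.rank} and the explicit cardinality of $\mathscr{B}_n$.
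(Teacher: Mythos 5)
Your proposal is correct and takes essentially the same route as the paper: a rank count (using Lemma~\ref{lem.rank} with $m=0$ to get $\mathrm{rank}\,\ker\epsilon = (n+1)!-1 = \#\mathscr{B}_n$) combined with the surjectivity from Corollary~\ref{cor.B_n}. The paper leaves the final ``surjection of free modules of equal finite rank is an isomorphism'' step implicit, whereas you spell it out via the determinant argument — a welcome detail, though one could also note that $\ker\epsilon$ has the explicit free basis $\{\phi - \phi_0\}$, making freeness (not merely projectivity via the split exact sequence) manifest.
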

\begin{proof}
  Since the rank of $k\big[\mathrm{Mor}_{\Delta S}([n], [m])\big]$ is $(n+1)!$, the
  rank of the kernel of $\epsilon$ is $(n+1)! - 1$.  The isomorphism then follows
  from Corollary~\ref{cor.B_n}.
\end{proof}
\begin{lemma}\label{lem.4-term-relation}
  The relations of the form:
  \begin{equation}\label{eq.4-term}
    XY \otimes Z \otimes W + W \otimes ZX \otimes Y + YZX \otimes 1 \otimes W
    + W \otimes YZ \otimes X \approx 0
  \end{equation}
  \begin{equation}\label{eq.1-term}
    \qquad \mathrm{and} \qquad
    1 \otimes X \otimes 1 \approx 0
  \end{equation}
  collapse $k\big[\mathrm{Mor}_{\Delta S}([n], [2])\big]$ onto $k[\mathscr{B}_n]$.
\end{lemma}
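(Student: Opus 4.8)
The plan is to show two things: first, that every generator $\psi \in \mathrm{Mor}_{\Delta S}([n],[2])$ is equivalent, modulo the relations (\ref{eq.4-term}) and (\ref{eq.1-term}), to an element of $k[\mathscr{B}_n]$; and second, that no further collapsing occurs, i.e. that the composite
\[
  k[\mathscr{B}_n] \hookrightarrow k\big[\mathrm{Mor}_{\Delta S}([n],[2])\big]
  \twoheadrightarrow k\big[\mathrm{Mor}_{\Delta S}([n],[2])\big]/\approx
\]
is an isomorphism. The surjectivity half is the heart of the argument and is a normal-form computation; the injectivity half should follow by a rank count combined with the exactness results already established.

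For surjectivity I would argue as follows. A general morphism $[n]\to[2]$ in tensor notation is $U\otimes V\otimes T$, where $U,V,T$ are monomials in $x_0,\ldots,x_n$ partitioning the index set $\{0,\ldots,n\}$ and $U\otimes V\otimes T$ records a total ordering within each block. Relation (\ref{eq.1-term}) lets me discard any term whose first and third blocks are trivial; relation (\ref{eq.4-term}), read as a rewriting rule $XY\otimes Z\otimes W \leadsto -\,W\otimes ZX\otimes Y - YZX\otimes 1\otimes W - W\otimes YZ\otimes X$, lets me manipulate the first block. The goal is to rewrite $U\otimes V\otimes T$ so that the third block becomes the ``tail'' $x_{k+1}x_{k+2}\cdots x_n$ for the appropriate $k$, the middle block becomes a single variable $x_{i_k}$ with $i_k\neq k$, leaving the first block arbitrary — precisely the shape of a $\mathscr{B}_n$ element. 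The key observation is that (\ref{eq.4-term}) already encodes exactly the ``Type I'' and ``Type II$'$'' rearrangements used in Lemma~\ref{lem.0-stage} and Cor.~\ref{cor.B_n}: taking $Z$ to be a single variable and $W$ or $Y$ empty recovers a move that cyclically shifts or pivots entries of the first block while moving controlled material into the second and third slots. So I would set up an induction, ordering generators by (for instance) the length of the maximal suffix of $x_{k+1}\cdots x_n$ already sitting in the third block, and show each relation-move strictly increases this quantity or reduces a secondary complexity measure, terminating at a $\mathscr{B}_n$-normal form. One must also verify that the ``$\otimes 1\otimes$'' middle terms produced along the way are either in $k[\mathscr{B}_n]$ already or killed by (\ref{eq.1-term}) — this is where the two relations interlock.

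For injectivity, I would count. By Lemma~\ref{lem.rank}, $k\big[\mathrm{Mor}_{\Delta S}([n],[2])\big]$ has rank $(n+3)!/2$, and $k[\mathscr{B}_n]$ has rank $(n+1)!-1$ as noted before Cor.~\ref{cor.B_n}. The submodule of relations is spanned by the images of $k\big[\mathrm{Mor}_{\Delta S}([n],[4])\big]$ (giving the four-term relations, via $\partial_2$-type maps) together with the one-term relations indexed by $\mathrm{Mor}_{\Delta S}([n],[0])$; but rather than computing the relation rank directly, it is cleaner to observe that the quotient $k\big[\mathrm{Mor}_{\Delta S}([n],[2])\big]/\approx$ maps onto $\mathrm{ker}\,\epsilon$ via $\rho$ (using Lemma~\ref{lem.0-stage} and Lemma~\ref{lem.rho-iso}, since $\rho$ kills exactly these relations), and that by Lemma~\ref{lem.rho-iso} the restriction $\rho|_{k[\mathscr{B}_n]}$ is already an isomorphism onto $\mathrm{ker}\,\epsilon$. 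Combining: the surjection $k[\mathscr{B}_n]\to k\big[\mathrm{Mor}_{\Delta S}([n],[2])\big]/\approx$ composed with the induced $\rho$ equals the isomorphism of Lemma~\ref{lem.rho-iso}, so the first map is injective as well, hence an isomorphism. This closes the argument.

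The main obstacle I anticipate is the bookkeeping in the surjectivity step: one must choose the termination measure carefully so that each application of (\ref{eq.4-term}) genuinely makes progress, and one must track the auxiliary $YZX\otimes 1\otimes W$ terms — verifying they lie in $k[\mathscr{B}_n]+\ker\rho$ at each stage — without the rewriting looping. I expect this is essentially the same combinatorial skeleton as the Type I / Type II$'$ analysis already carried out in Lemma~\ref{lem.0-stage}, transplanted from $[n]\to[0]$ to $[n]\to[2]$, so the cleanest exposition would explicitly invoke that lemma's rearrangement scheme rather than redoing it from scratch.
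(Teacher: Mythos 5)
Your injectivity argument is correct (composing the surjection $k[\mathscr{B}_n]\to k[\mathrm{Mor}_{\Delta S}([n],[2])]/\!\approx$ with the induced $\rho$ recovers the isomorphism of Lemma~\ref{lem.rho-iso}, forcing injectivity), though as a matter of how the lemma is used downstream, only the surjectivity half is actually needed: exactness of the partial resolution follows from the surjection of the collapse together with the injectivity of $\rho|_{k[\mathscr{B}_n]}$.

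The real problem is with the surjectivity half, which is where the paper's own proof invests all its effort. Your ``key observation'' --- that setting $Z$ to a single variable and $W$ or $Y$ empty in (\ref{eq.4-term}) ``recovers'' the Type~I / Type~II$'$ rearrangements of Lemma~\ref{lem.0-stage} --- does not hold up. Those rearrangements operate on single monomials $x_{i_0}\cdots x_{i_n}\in\mathrm{Mor}_{\Delta S}([n],[0])$; here you have three-block tensors $U\otimes V\otimes T$, and each application of (\ref{eq.4-term}) produces \emph{four} terms with simultaneous changes in all three blocks, including terms with a trivial middle factor (e.g.\ $YZX\otimes 1\otimes W$) or trivial end factors. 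These are not automatically ``in $k[\mathscr{B}_n]$ or killed by (\ref{eq.1-term})'' --- note that (\ref{eq.1-term}) only kills $1\otimes X\otimes 1$, not $X\otimes 1\otimes 1$, $1\otimes 1\otimes X$, or $X\otimes 1\otimes Y$. Controlling these byproducts requires first deriving an auxiliary stock of relations (that all three degenerate forms $X\otimes 1\otimes 1$, $1\otimes X\otimes 1$, $1\otimes 1\otimes X$ vanish; the degeneracy relations $X\otimes Y\otimes 1\approx X\otimes 1\otimes Y\approx 1\otimes X\otimes Y$; a sign relation $X\otimes Y\otimes Z\approx -Z\otimes Y\otimes X$; a Hochschild-type three-term relation; and cyclic and ``modified cyclic'' relations), and none of this is present in your sketch. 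Only with this machinery in hand can a termination measure --- roughly, the length of the maximal suffix of $x_{k+1}\cdots x_n$ already sitting in the third block --- be shown to make strict progress without the byproduct terms falling outside the inductive reach. Your proposal correctly identifies the shape of the termination argument, but the claim that it falls out ``essentially from the Lemma~\ref{lem.0-stage} scheme transplanted to $[n]\to[2]$'' underestimates the combinatorics by an order of magnitude: the paper's proof needs roughly a dozen intermediate steps to get to that point. As written, the proposal has a genuine gap between the stated strategy and anything that would actually close the argument.
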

\begin{proof}
  This proof proceeds in multiple steps.
  
  {\bf Step 1.}
  \begin{equation}\label{eq.step1}
    X \otimes 1 \otimes 1 \approx 1 \otimes X \otimes 1 \approx 1 \otimes 1 \otimes X
    \approx 0.
  \end{equation}
  $1 \otimes X \otimes 1 \approx 0$ proceeds directly from Eq.~\ref{eq.1-term}.  Letting
  $X = Y = W = 1$ in Eq.~\ref{eq.4-term} yields
  \[
    3(1 \otimes Z \otimes 1) + Z \otimes 1 \otimes 1 \approx 0 \; \Rightarrow\;
    Z \otimes 1 \otimes 1 \approx 0.
  \]
  Then, $X = Z = W = 1$ in Eq.~\ref{eq.4-term} produces
  \[
    2(Y \otimes 1 \otimes 1) + 1 \otimes 1 \otimes Y + 1 \otimes Y \otimes 1
    \approx 0 \; \Rightarrow\; 1 \otimes 1 \otimes Y \approx 0.
  \]
  
  {\bf Step 2.}
  \begin{equation}\label{eq.step2}
    1 \otimes X \otimes Y +  1 \otimes Y \otimes X \approx 0.
  \end{equation}
  Let $Z = W = 1$ in Eq.~\ref{eq.4-term}.  Then
  \[
    XY \otimes 1 \otimes 1 + 1 \otimes X \otimes Y + YX \otimes 1 \otimes 1
    + 1 \otimes Y \otimes X \approx 0
  \]
  \[
    \Rightarrow\; 1 \otimes X \otimes Y + 1 \otimes Y \otimes X \approx 0,
    \qquad \textrm{by step 1.}
  \]
  
  {\bf Step 3. [Degeneracy Relations]}
  \begin{equation}\label{eq.step3}
    X \otimes Y \otimes 1 \approx X \otimes 1 \otimes Y \approx 1 \otimes X \otimes Y.
  \end{equation}
  Let $X = W = 1$ in Eq.~\ref{eq.4-term}.
  \[
    Y \otimes Z \otimes 1 + 1 \otimes Z \otimes Y + YZ \otimes 1 \otimes 1
    + 1 \otimes YZ \otimes 1 \approx 0
  \]
  \[
    \Rightarrow\; Y \otimes Z \otimes 1 + 1 \otimes Z \otimes Y \approx 0,
    \qquad \textrm{by step 1.}
  \]
  \begin{equation}\label{eq.step3a}
    \Rightarrow\; Y \otimes Z \otimes 1 - 1 \otimes Y \otimes Z \approx 0,
    \qquad \textrm{by step 2.}
  \end{equation}
  Next, let $X = Y = 1$ in Eq.~\ref{eq.4-term}.
  \[
    1 \otimes Z \otimes W + W \otimes Z \otimes 1 + Z \otimes 1 \otimes W
    + W \otimes Z \otimes 1 \approx 0
  \]
  \[
    \Rightarrow\; 1 \otimes Z \otimes W + 2(1 \otimes W \otimes Z)
    + Z \otimes 1 \otimes W \approx 0,
    \qquad \textrm{by Eq.~\ref{eq.step3a},}
  \]  
  \[
    \Rightarrow\; 1 \otimes Z \otimes W - 2(1 \otimes Z \otimes W)
    + Z \otimes 1 \otimes W \approx 0,
    \qquad \textrm{by step 2,}
  \]
  \[
    \Rightarrow\; Z \otimes 1 \otimes W - 1 \otimes Z \otimes W \approx 0.
  \]
  
  {\bf Step 4. [Sign Relation]}
  \begin{equation}\label{eq.step4}
    X \otimes Y \otimes Z + Z \otimes Y \otimes X \approx 0
  \end{equation}
  Let $Y = 1$ in Eq.~\ref{eq.4-term}.
  \[
    X \otimes Z \otimes W  + W \otimes ZX \otimes 1 + ZX \otimes 1 \otimes W
    + W \otimes Z \otimes X \approx 0,
  \]
  \[
    \Rightarrow\; X \otimes Z \otimes W + 1 \otimes W \otimes ZX + 1 \otimes ZX \otimes W
    + W \otimes Z \otimes X \approx 0,
    \qquad \textrm{by step 3,}
  \]
  \[
    \Rightarrow\; X \otimes Z \otimes W + W \otimes Z \otimes X \approx 0,
    \qquad \textrm{by step 2.}
  \]
  
  {\bf Step 5. [Hochschild Relation]}
  \begin{equation}\label{eq.step5}
    XY \otimes Z \otimes 1 - X \otimes YZ \otimes 1 + ZX \otimes Y \otimes 1
    \approx 0.
  \end{equation}
  Let $W = 1$ in Eq.~\ref{eq.4-term}.
  \[
    XY \otimes Z \otimes 1 + 1 \otimes ZX \otimes Y + YZX \otimes 1 \otimes 1
    + 1 \otimes YX \otimes X \approx 0,
  \]
  \[
    \Rightarrow\; XY \otimes Z \otimes 1 + ZX \otimes Y \otimes 1 + 0 
    - X \otimes YX \otimes 1 \approx 0,
    \qquad \textrm{by steps 1, 3 and 4}.
  \]
  
  {\bf Step 6. [Cyclic Relation]}
  \begin{equation}\label{eq.step6}
    \sum_{j = 0}^n \tau_n^j\big(x_{i_0}x_{i_1} \ldots
    x_{i_{n-1}} \otimes x_{i_n} \otimes 1\big) \approx 0,
  \end{equation}
  where $\tau_n \in \Sigma_{n+1}$ is the $(n+1)$-cycle $(0, n, n-1, \ldots, 2, 1)$,
  which acts by permuting the indices.
  For $n = 0$, there are no such relations (indeed, no relations at all).  For $n=1$,
  the cyclic relation takes the form $x_0 \otimes x_1 \otimes 1 + x_1 \otimes x_0
  \otimes 1 \approx 0$, which follows from degeneracy and sign relations.
  
  Assume now that $n \geq 2$.  For each $k = 1, 2, \ldots, n-1$, define:
  \[
    \left\{\begin{array}{ll}
      A_k := & x_{i_0}x_{i_1}\ldots x_{i_{k-1}},\\
      B_k := & x_{i_k},\\
      C_k := & x_{i_{k+1}} \ldots x_{i_n}.
    \end{array}\right.
  \]
  By the Hochschild relation,
  \[
    0 \approx \sum_{k=1}^{n-1} (A_kB_k \otimes C_k \otimes 1 -
    A_k \otimes B_kC_k \otimes 1 + C_kA_k \otimes B_k \otimes 1).
  \]
  But for $k \leq n-2$,
  \[
    A_kB_k \otimes C_k \otimes 1 = A_{k+1} \otimes B_{k+1}C_{k+1} \otimes 1
    = x_{i_0} \ldots x_{i_k} \otimes x_{i_{k+1}}\ldots x_{i_n} \otimes 1.
  \]
  Thus, after some cancellation:
  \[
    0 \approx - A_1 \otimes B_1 C_1 \otimes 1 + A_{n-1}B_{n-1} \otimes C_{n-1}
    \otimes 1 + \sum_{k=1}^{n-1} C_kA_k \otimes B_k \otimes 1
  \]
  \[
    = - (x_{i_0} \otimes x_{i_1} \ldots x_{i_n} \otimes 1)
    + (x_{i_0} \ldots x_{i_{n-1}} \otimes x_{i_n} \otimes 1)
    + \sum_{k=1}^{n-1} x_{i_{k+1}} \ldots x_{i_n}x_{i_0} \ldots x_{i_{k-1}} 
    \otimes x_{i_k} \otimes 1
  \]
  \[
    = (x_{i_0} \ldots x_{i_{n-1}} \otimes x_{i_n} \otimes 1)
    + (x_{i_1} \ldots x_{i_n} \otimes x_{i_0} \otimes 1)
    + \sum_{k=1}^{n-1} x_{i_{k+1}} \ldots x_{i_n}x_{i_0} \ldots x_{i_{k-1}} 
    \otimes x_{i_k} \otimes 1,
  \]
  by sign and degeneracy relations.  This last expression is precisely the
  relation Eq.~\ref{eq.step6}.
  
  {\bf Step 7.}
  
  Every element of the form $X \otimes Y \otimes 1$ is equivalent
  to a linear combination of elements of $\mathscr{B}_n$.
  
  To prove this, we shall induct on the size of $Y$.  Suppose $Y$ consists of
  a single variable.  That is, $X \otimes Y \otimes 1 =
  x_{i_0} \ldots x_{i_{n-1}} \otimes x_{i_n} \otimes 1$.  Now, if $i_n \neq n$,
  we are done.  Otherwise, we use the cyclic relation to write
  \[
    x_{i_0} \ldots x_{i_{n-1}} \otimes x_{i_n} \otimes 1 \approx
    -\sum_{j = 1}^n \tau_n^j\big(x_{i_0} \ldots
    x_{i_{n-1}} \otimes x_{i_n} \otimes 1\big).
  \]
  Now suppose $k \geq 1$ and any element $Z \otimes W \otimes 1$ with $|W| = k$
  is equivalent to an element of $k[\mathscr{B}_n]$.  Consider $X \otimes Y \otimes 1
  = x_{i_0} \ldots x_{i_{n-k-1}} \otimes x_{i_{n-k}} \ldots x_{i_n} \otimes 1$.
  Let
  \[
    \left\{\begin{array}{ll}
      A_k := & x_{i_0}x_{i_1}\ldots x_{i_{n-k-1}},\\
      B_k := & x_{i_{n-k}} \ldots x_{i_{n-1}},\\
      C_k := & x_{i_n}.
    \end{array}\right.
  \]
  Then, by the Hochschild relation,
  \[
    X \otimes Y \otimes 1 = A_k \otimes B_kC_k \otimes 1
    \approx A_kB_k \otimes C_k \otimes 1 + C_kA_k \otimes B_k \otimes 1.
  \]
  But since $|C_k| = 1$ and $|B_k| = k$, this last expression is equivalent to an element
  of $k[\mathscr{B}_n]$.
  
  {\bf Step 8. [Modified Cyclic Relation]}
  \begin{equation}\label{eq.step8}
    \sum_{j = 0}^k \tau_k^j\big(x_{i_0}x_{i_1} \ldots
    x_{i_{k-1}} \otimes x_{i_k} \otimes x_{i_{k+1}}\ldots x_{i_n}\big) \approx 0
    \pmod{k\big[\{A \otimes B \otimes 1\}\big]}.
  \end{equation}
  Note, the $(k+1)$-cycle $\tau_k$ permutes the indices $i_0, i_1, \ldots, i_k$, and
  fixes the rest.
  
  First, we show that $X \otimes Y \otimes W + Y \otimes X \otimes W \approx 0
  \pmod{k\big[\{A \otimes B \otimes 1\}\big]}$.  Indeed, if we let $Z = 1$ in 
  Eq.~\ref{eq.4-term}, then
  \[
    XY \otimes 1 \otimes W + W \otimes X \otimes Y + YX \otimes 1 \otimes W
    + W \otimes Y \otimes X \approx 0,
  \]
  \[
    \Rightarrow\; -(X \otimes Y \otimes W + Y \otimes X \otimes W)
    + XY \otimes 1 \otimes W + YX \otimes 1 \otimes W \approx 0,
    \qquad \textrm{by step 4},
  \]
  \begin{equation}\label{eq.step8a}
    \Rightarrow\; X \otimes Y \otimes W + Y \otimes X \otimes W
    \approx XY \otimes W \otimes 1 + YX \otimes W \otimes 1,
    \qquad \textrm{by step 3}.
  \end{equation}
  Now, we have $X \otimes Y \otimes W + Y \otimes X \otimes W
  \approx 0 \pmod{k\big[\{A \otimes B \otimes 1\}\big]}$.  Note that
  this last expression can be used to prove the modified cyclic relation for $k=1$.
  
  Next, rewrite
  Eq.~\ref{eq.4-term}:
  \[
    XY \otimes Z \otimes W + W \otimes ZX \otimes Y + YZX \otimes 1 \otimes W
    + W \otimes YZ \otimes X \approx 0,
  \]
  \[
    \Rightarrow\; XY \otimes Z \otimes W - Y \otimes ZX \otimes W
    + YZX \otimes W \otimes 1 - X \otimes YZ \otimes W \approx 0,
    \qquad \textrm{by steps 3 and 4},
  \]
  \[
    \Rightarrow\; XY \otimes Z \otimes W + ZX \otimes Y \otimes W
    + YZX \otimes W \otimes 1 - X \otimes YZ \otimes W  \approx 0\pmod{k\big[
    \{A \otimes B \otimes 1\}\big]},
  \]
  using the relation $X \otimes Y \otimes W + Y \otimes X \otimes W
  \approx 0 \pmod{k\big[\{A \otimes B \otimes 1\}\big]}$ proven above.
  \begin{equation}\label{eq.step8b}
    \Rightarrow\; XY \otimes Z \otimes W - X \otimes YZ \otimes W
    + ZX \otimes Y \otimes W \approx 0 \pmod{k\big[\{A \otimes B \otimes 1\}\big]}.
  \end{equation}
  Eq.~\ref{eq.step8b} is a modified Hochschild relation, and we can use it in the same
  way we used the Hochschild relation in step 6.  Assume $k \geq 2$, and define
  for $j = 1, 2, \ldots k-1$:
  \[
    \left\{\begin{array}{ll}
      A_j := & x_{i_0}x_{i_1}\ldots x_{i_{j-1}},\\
      B_j := & x_{i_j},\\
      C_j := & x_{i_{j+1}} \ldots x_{i_k}.
    \end{array}\right.
  \]
  Using the modified Hochschild relation together with the observation that
  for $j \leq k-2$,
  \[
    A_jB_j \otimes C_j \otimes W = A_{j+1} \otimes B_{j+1}C_{j+1} \otimes W,
  \]
  we finally arrive at the sum:
  \[
    0 \approx - A_1 \otimes B_1 C_1 \otimes W + A_{k-1}B_{k-1} \otimes C_{k-1}
    \otimes W + \sum_{j=1}^{k-1} C_jA_j \otimes B_j \otimes W
    \pmod{k\big[\{A \otimes B \otimes 1\}\big]}
  \]
  \[
    \equiv - (x_{i_0} \otimes x_{i_1} \ldots x_{i_n} \otimes W)
    + (x_{i_0} \ldots x_{i_{n-1}} \otimes x_{i_n} \otimes W)
    + \sum_{k=1}^{n-1} x_{i_{k+1}} \ldots x_{i_n}x_{i_0} \ldots x_{i_{k-1}} 
    \otimes x_{i_k} \otimes W
  \]
  \[
    \equiv (x_{i_1} \ldots x_{i_n} \otimes x_{i_0} \otimes W)
    + (x_{i_0} \ldots x_{i_{n-1}} \otimes x_{i_n} \otimes W)
    + \sum_{k=1}^{n-1} x_{i_{k+1}} \ldots x_{i_n}x_{i_0} \ldots x_{i_{k-1}} 
    \otimes x_{i_k} \otimes W.
  \]
  
  {\bf Step 9.}
  
  Every element of the form $X \otimes Y \otimes x_n$ is equivalent
  to an element of $k[\mathscr{B}_n]$.
  
  We shall use the modified cyclic relation and modified Hochschild relation in 
  a similar way as cyclic and Hochschild relations were used in step 7.  Again we
  induct on the size of $Y$.  If $|Y| = 1$, then
  \[
    X \otimes Y \otimes x_n = x_{i_0} \ldots x_{i_{n-2}} \otimes x_{i_{n-1}} \otimes
    x_{n}.
  \]
  If $i_{n-1} \neq n-1$, then we are done.  Otherwise, use the modified cyclic
  relation to re-express $X \otimes Y \otimes x_n$ as a sum of elements of
  $k[\mathscr{B}_n]$, modulo $k\big[\{A \otimes B \otimes 1\}\big]$.  Of course, by step
  7, all elements $A \otimes B \otimes 1$ are also in $k[\mathscr{B}_n]$.
  
  Next, suppose $k \geq 1$ and any element $Z \otimes W \otimes x_n$ with $|W| = k$
  is equivalent to an element of $k[\mathscr{B}_n]$.  Consider $X \otimes Y \otimes x_n
  = x_{i_0} \ldots x_{i_{n-k-2}} \otimes x_{i_{n-k-1}} \ldots x_{i_{n-1}} \otimes x_n$.
  Using the modified Hochschild relation with
  \[
    \left\{\begin{array}{ll}
      A_k := & x_{i_0}x_{i_1}\ldots x_{i_{n-k-2}},\\
      B_k := & x_{i_{n-k-1}} \ldots x_{i_{n-2}},\\
      C_k := & x_{i_{n-1}}.
    \end{array}\right.
  \]
  we obtain:
  \[
    X \otimes Y \otimes x_n = A_k \otimes B_kC_k \otimes x_n
    \approx A_kB_k \otimes C_k \otimes x_n + C_kA_k \otimes B_k \otimes x_n
    \pmod{k\big[\{A \otimes B \otimes 1\}\big]}.
  \]
  But since $|C_k| = 1$ and $|B_k| = k$, this last expression is equivalent to an element
  of $k[\mathscr{B}_n]$.

  {\bf Step 10.}

  Every element of $k\big[\mathrm{Mor}_{\Delta S}([n], [2])\big]$ is equivalent
  to a linear combination of elements from the set
  \begin{equation}\label{eq.step10}
    \mathscr{C}_n := \{X \otimes x_{i_n} \otimes 1 \;|\; i_n \neq n\} \cup 
    \{X \otimes x_{i_{n-1}} \otimes x_n \;|\; i_{n-1} \neq n-1\} \cup
    \{X \otimes Y \otimes Zx_n \;|\; |Z| \geq 1\}
  \end{equation}
  Note, the $k$-module generated by $\mathscr{C}_n$ contains $k[\mathscr{B}_n]$.
  
  Let $X \otimes Y \otimes Z$ be an arbitrary element of 
  $k\big[\mathrm{Mor}_{\Delta S}([n], [2])\big]$.  If $|X| = 0$, $|Y|=0$, or
  $|Z| = 0$, then the degeneracy relations imply that this element is
  equivalent to an element of the form $X' \otimes Y' \otimes 1$.  Step 7 implies 
  this element is equivalent to one in $k[\mathscr{B}_n]$, hence also
  in $k[\mathscr{C}_n]$.
  
  Suppose now that $|X|, |Y|, |Z| \geq 1$.  
  If $x_n$ occurs in $X$, use the relation $X \otimes Y \otimes W \approx 
  -Y \otimes X \otimes W \pmod{k\big[\{A \otimes B \otimes 1\}\big]}$
  to ensure that $x_n$ occurs in the middle factor.  If $x_n$ occurs
  in the $Z$, use the sign relation and the above relation to put $x_n$ into
  the middle factor.  In any case, it suffices to assume our element has
  the form:
  \[
    X \otimes Ux_nV \otimes Z.
  \]
  By the modified Hochschild relation,
  \[
    X \otimes Ux_nV \otimes Z
    \approx XUx_n \otimes V \otimes Z + VX \otimes Ux_n \otimes Z,
    \pmod{k\big[\{A \otimes B \otimes 1\}\big]},
  \]
  \[
    \approx -Z \otimes V \otimes XUx_n + Z \otimes VX \otimes Ux_n.
  \]
  The first term is certainly in $k[\mathscr{C}_n]$, since $|X| \geq 1$.
  If $|U| > 0$, the second term also lies in $k[\mathscr{C}_n]$.  If, 
  however, $|U| = 0$, then use step 9 to re-express $Z \otimes VX \otimes x_n$
  as an element of $k[\mathscr{B}_n]$.
  
  Observe that Step 10 proves Lemma~\ref{lem.4-term-relation} for $n = 0, 1, 2$,
  since in these cases, any elements that fall within the set
  $\{X \otimes Y \otimes Zx_n \;|\; |Z| \geq 1\}$ must have either $|X| = 0$
  or $|Y| = 0$, hence are equivalent via the degeneracy relation to elements of
  the form $A \otimes B \otimes 1$.
  In what follows, assume $n \geq 3$.
  
  {\bf Step 11.}
  \begin{equation}\label{eq.step11}
    W \otimes Z \otimes Xx_n \approx W \otimes x_nZ \otimes X
    \pmod{k\big[\{A \otimes B \otimes 1\}\cup\{A \otimes B \otimes x_n\}\big]}.
  \end{equation}
  
  Let $Y = x_n$ in Eq.~\ref{eq.4-term}.
  \[
    Xx_n \otimes Z \otimes W + W \otimes ZX \otimes x_n
    + x_nZX \otimes 1 \otimes W + W \otimes x_nZ \otimes X \approx 0,
  \]
  \[
    \Rightarrow\; W \otimes Z \otimes Xx_n \approx
    W \otimes ZX \otimes x_n + x_nZX \otimes W \otimes 1 + W \otimes x_nZ \otimes X,
  \]
  \[
    \approx W \otimes x_nZ \otimes X \pmod{k\big[\{A \otimes B \otimes 1\}\cup\{A \otimes B 
    \otimes x_n\}\big]}.
  \]
  
  {\bf Step 12.}
  
  Every element of $k\big[\mathrm{Mor}_{\Delta S}([n], [2])\big]$ is equivalent
  to a linear combination of elements from the set
  \begin{equation}\label{eq.step12}
    \mathscr{D}_n := \{X \otimes x_{i_{n-2}} \otimes x_{n-1}x_{n} \;|\; i_{n-2} \neq n-2\} 
    \cup 
    \{X \otimes Y \otimes Zx_{n-1}x_n \;|\; |Z| \geq 1\},
  \end{equation}
  modulo $k\big[\{A \otimes B \otimes 1\}\cup\{A \otimes B \otimes x_n\}\big]$.
  
  Let $X \otimes Y \otimes Z$ be an arbitrary element of 
  $k\big[\mathrm{Mor}_{\Delta S}([n], [2])\big]$.  Locate $x_{n-1}$ and use the
  techniques of Step 10 to re-express $X \otimes Y \otimes Z$ as a linear
  combination of terms of the form:
  \[
    W_j \otimes Z_j \otimes X_jx_{n-1},
  \]
  modulo $k\big[\{A \otimes B \otimes 1\}]$.
  Now, for each $j$, we will want to re-express
  $W_j \otimes Z_j \otimes X_j$ as linear combinations of vectors in which $x_n$ occurs
  only in the second factor.  If $x_n$ occurs in $W_j$, then we just use the
  modified cyclic relation:
  \[
    W_j \otimes Z_j \otimes X_jx_{n-1} \approx
    -Z_j \otimes W_j \otimes X_jx_{n-1}
    \pmod{k\big[\{A \otimes B \otimes 1\}]}.
  \]
  If $x_n$ occurs in $X_j$, then first use  
  Eq.~\ref{eq.4-term} with $Y = x_{n-1}$:
  \[
    Xx_{n-1} \otimes Z \otimes W + W \otimes ZX \otimes x_{n-1}
    + x_{n-1}ZX \otimes 1 \otimes W + W \otimes x_{n-1}Z \otimes X \approx 0,
  \]
  \[
    \Rightarrow\; W \otimes Z \otimes Xx_{n-1} \approx W \otimes ZX \otimes x_{n-1}
    + x_{n-1}ZX \otimes W \otimes 1 + W \otimes x_{n-1}Z \otimes X,
  \]
  \[
    \approx W \otimes ZX \otimes x_{n-1}
    + W \otimes x_{n-1}Z \otimes X \pmod{k\big[\{A \otimes B \otimes 1\}]},
  \]
  \[
    \approx W \otimes ZX \otimes x_{n-1}
    + Wx_{n-1} \otimes Z \otimes X + ZW \otimes x_{n-1} \otimes X
    \pmod{k\big[\{A \otimes B \otimes 1\}]},
  \]
  \[
    \approx W \otimes ZX \otimes x_{n-1}
    + Z \otimes X \otimes Wx_{n-1} - ZW \otimes X \otimes x_{n-1},
    \pmod{k\big[\{A \otimes B \otimes 1\}]}.
  \]
  Thus, we can express our original element $X \otimes Y \otimes Z$ as a linear
  combination of elements of the form:
  \[
    X' \otimes U'x_nV' \otimes Z'x_{n-1},
  \]
  modulo $k\big[\{A \otimes B \otimes 1\}]$.  Use the modified Hochschild relation
  to obtain
  \[
    X' \otimes U'x_nV' \otimes Z'x_{n-1} \approx
    X'U' \otimes x_nV' \otimes Z'x_{n-1} + x_nV'X' \otimes U' \otimes Z'x_{n-1}
    \pmod{k\big[\{A \otimes B \otimes 1\}]}.
  \]
  \[
    \approx
    X'U' \otimes x_nV' \otimes Z'x_{n-1} - U' \otimes x_nV'X' \otimes Z'x_{n-1}
    \pmod{k\big[\{A \otimes B \otimes 1\}]}.
  \]
  \[
    \approx
    X'U' \otimes V' \otimes Z'x_{n-1}x_n - U' \otimes V'X' \otimes Z'x_{n-1}x_n
    \pmod{k\big[\{A \otimes B \otimes 1\}\cup\{A \otimes B \otimes x_n\}\big]},
  \] 
  by step 11.  If $|Z'| \geq 1$, then we are done, otherwise, we have elements
  of the form $X'' \otimes Y'' \otimes x_{n-1}x_n$.  Use an induction 
  argument analogous to that in step 9 to re-express this type of element as
  a linear combination of elements of the form:
  \[
    U \otimes x_{i_{n-2}} \otimes x_{n-1}x_n, \quad i_{n-2} \neq n-2,
    \pmod{k\big[\{A \otimes B \otimes 1\}\big]}.
  \]
    
  {\bf Step 13.}
  
  Every element of $k\big[\mathrm{Mor}_{\Delta S}([n], [2])\big]$ is equivalent
  to an element of $k[\mathscr{B}_n]$.
  
  We shall use an iterative re-writing procedure.  First of all, define
  \[
    \mathscr{B}_n^{j} := \{ A \otimes x_{i_{n-j}} \otimes x_{n-j+1} \ldots
    x_n \;|\; i_{n-j} \neq n-j\},
  \]
  \[
    \mathscr{C}_n^{j} := \{ A \otimes B \otimes Cx_{n-j+1} \ldots
    x_n \;|\; |C| \geq 1\}.
  \]
  Now clearly, $\mathscr{B}_n  = \bigcup_{j=0}^{n-1} \mathscr{B}_n^{j}$.
  
  By steps 10 and 12, we can reduce any arbitrary element $X \otimes Y \otimes Z$
  to linear combinations of elements in $\mathscr{B}_n^0 \cup \mathscr{B}_n^1 \cup
  \mathscr{B}_n^2 \cup \mathscr{C}_n^2$.  Suppose now that we have reduced elements
  to linear combinations from $\mathscr{B}_n^0 \cup \mathscr{B}_n^1 \cup \ldots
  \cup \mathscr{B}_n^j \cup \mathscr{C}_n^j$, for some $j \geq 2$.  I claim any
  element of $\mathscr{C}_n^j$ can be re-expressed as a linear combination
  from $\mathscr{B}_n^0 \cup \mathscr{B}_n^1 \cup \ldots
  \cup \mathscr{B}_n^{j+1} \cup \mathscr{C}_n^{j+1}$.
  
  Let $X \otimes Y \otimes Zx_{n-j+1} \ldots x_n$, with $|Z| \geq 1$.  Let
  $w := x_{n-j+1} \ldots x_n$.  We may now think of $X \otimes Y \otimes Zw$
  as consisting of the variables $x_0, x_1, \ldots, x_{n-j}, w$, hence, by
  step 12, we may re-express this element as a linear combination of elements
  from the set
  \[
    \{ X \otimes x_{i_{n-j-1}} \otimes x_{n-j}w \;|\; i_{n-j-1} \neq n-j-1\}
    \cup \{ X \otimes Y \otimes Zx_{n-j}w \;|\; |Z| \geq 1 \},
  \]
  modulo $k\big[\{A \otimes B \otimes 1\}\cup\{A \otimes B \otimes w\}\big]$.
  Of course, this implies the element may written as a linear combination
  of elements from $\mathscr{B}_n^{j+1} \cup \mathscr{C}_n^{j+1}$, modulo
  $k\big[\{A \otimes B \otimes 1\}\cup\{A \otimes B \otimes x_{n-j+1}\ldots
  x_n\}\big]$.  Since $\{A \otimes B \otimes x_{n-j+1}x_{n-j+2}\ldots
  x_n\} \subset \mathscr{C}_n^{j-1}$, the inductive hypothesis ensures that
  $\{A \otimes B \otimes x_{n-j+1}\ldots
  x_n\} \subset \mathscr{B}_n^0 \cup \ldots \cup \mathscr{B}_n^{j}$.  This
  completes the inductive step.
  
  After a finite number of iterations, then, we can re-express any element
  $X \otimes Y \otimes Z$ as a linear combination from the set
  $\mathscr{B}_n^0 \cup \ldots \mathscr{B}_n^{n-1} \cup
  \mathscr{C}_n^{n-1}
  = \mathscr{B}_n \cup \mathscr{C}_n^{n-1}$.  But $\mathscr{C}_n^{n-1} = 
  \{ A \otimes B \otimes Cx_{2} \ldots
  x_n \;|\; |C| \geq 1\}$.  Any element from this set has either $|A| = 0$ or
  $|B| = 0$, therefore is equivalent to an element from $k[\mathscr{B}_n]$
  already.  
\end{proof}
\begin{cor}\label{cor.k-contains-one-half}
  If $\frac{1}{2} \in k$, then the four-term relation
  \begin{equation}\label{eq.4-term_cor}
    XY \otimes Z \otimes W + W \otimes ZX \otimes Y + YZX \otimes 1 \otimes W
    + W \otimes YZ \otimes X \approx 0
  \end{equation}
  is sufficient to collapse $k\big[\mathrm{Mor}_{\Delta S}([n], [2])\big]$ onto 
  $k[\mathscr{B}_n]$.
\end{cor}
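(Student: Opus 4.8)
The plan is to revisit the proof of Lemma~\ref{lem.4-term-relation} and track precisely where the extra one-term relation $1 \otimes X \otimes 1 \approx 0$ (Eq.~\ref{eq.1-term}) was used, then show each such use can be replaced by an argument that only invokes the four-term relation Eq.~\ref{eq.4-term}, provided $\frac{1}{2} \in k$. Since the conclusion of Lemma~\ref{lem.4-term-relation} is exactly that $k\big[\mathrm{Mor}_{\Delta S}([n],[2])\big]$ collapses onto $k[\mathscr{B}_n]$, once we recover Step~1 (Eq.~\ref{eq.step1}) using only Eq.~\ref{eq.4-term}, the remaining Steps~2 through~13 go through verbatim, since an inspection of those steps shows that Eq.~\ref{eq.1-term} is invoked \emph{only} inside Step~1 (every later step cites ``step 1, 2, 3, 4,\dots'' but never Eq.~\ref{eq.1-term} directly).

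So the heart of the matter is Step~1: I need to derive $X \otimes 1 \otimes 1 \approx 0$, $1 \otimes X \otimes 1 \approx 0$, and $1 \otimes 1 \otimes X \approx 0$ from Eq.~\ref{eq.4-term} alone. First I would substitute $X = Y = W = 1$ into Eq.~\ref{eq.4-term} to get $3(1 \otimes Z \otimes 1) + (Z \otimes 1 \otimes 1) \approx 0$; next $X = Z = W = 1$ gives $2(Y \otimes 1 \otimes 1) + (1 \otimes 1 \otimes Y) + (1 \otimes Y \otimes 1) \approx 0$; and $Y = Z = W = 1$ (or a symmetric choice) yields a third independent linear relation among the three quantities $a := Z \otimes 1 \otimes 1$, $b := 1 \otimes Z \otimes 1$, $c := 1 \otimes 1 \otimes Z$. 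Writing out all such specializations of Eq.~\ref{eq.4-term} with three of the four block-variables set to $1$ produces a small linear system in $a$, $b$, $c$ whose coefficient matrix I expect to be invertible over $k$ precisely when $2$ is a unit (the determinant will be, up to sign, a small power of $2$ times an odd number — this is the point where the hypothesis $\frac12 \in k$ enters). Solving that system forces $a = b = c = 0$, which is Step~1.

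The main obstacle — and the one place requiring genuine care rather than routine substitution — is verifying that the relevant $3 \times 3$ (or slightly larger, once one accounts for all degenerate specializations) integer matrix has determinant a unit multiple of a power of $2$, so that the system really does have only the trivial solution after inverting $2$. In the original proof over general $k$, the one-term relation Eq.~\ref{eq.1-term} was the shortcut that avoided needing to invert anything; here I must instead exhibit an explicit $k$-linear combination of instances of Eq.~\ref{eq.4-term} that equals $2 \cdot (1 \otimes X \otimes 1)$ (and similarly for the other two), and then divide by $2$. I would carry this out by hand for the three generators, record the explicit coefficients, and note that this is the only modification needed; everything downstream in Steps~2--13 is then quoted unchanged.

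Thus the proof writes as: cite Lemma~\ref{lem.4-term-relation}'s Steps~2--13, observe they depend on Eq.~\ref{eq.1-term} only through Step~1, and supply the replacement derivation of Step~1 from Eq.~\ref{eq.4-term} alone under the assumption $\frac12 \in k$. The conclusion, $k\big[\mathrm{Mor}_{\Delta S}([n],[2])\big]$ collapses onto $k[\mathscr{B}_n]$, then follows exactly as before, completing the proof of the corollary.
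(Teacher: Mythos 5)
Your strategy matches the paper's exactly: re-derive Step~1 from the four-term relation alone by specializing three of $X,Y,Z,W$ to $1$, then combining the resulting relations, with the hypothesis $\frac{1}{2}\in k$ entering because the combination produces a factor of $2$ to be divided out (the paper explicitly forms $-3(\cdot)-2(\cdot)+6(\cdot)$ to obtain $4(X\otimes 1 \otimes 1) \approx 0$, then inverts $2$). One slip in your proposed specializations: $X{=}Z{=}W{=}1$ and $Y{=}Z{=}W{=}1$ yield the \emph{same} relation $2a+b+c\approx 0$ (with $a=X\otimes1\otimes1$, $b=1\otimes X\otimes1$, $c=1\otimes1\otimes X$), so those together with $X{=}Y{=}W{=}1$ give only two independent equations. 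The third you need is $X{=}Y{=}Z{=}1$, which yields $2a+2c\approx 0$; with that in hand the $3\times 3$ system has determinant $-4$, confirming your determinant prediction, and the argument closes as you intended.
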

\begin{proof}
  We only need to modify step 1 of the previous proof:
  
  {\bf Step 1$'$.}
  \[
    X \otimes 1 \otimes 1 \approx 1 \otimes X \otimes 1 \approx 
    1 \otimes 1 \otimes X \approx 0.
  \]
  Letting three variables at a time equal $1$ in Eq.~\ref{eq.4-term_cor},
  \begin{equation}\label{eq.step1_1}
    2(1 \otimes 1 \otimes W) + 2(W \otimes 1 \otimes 1) \approx 0,
    \quad \textrm{when $X = Y = Z = 1$.}
  \end{equation}
  \begin{equation}\label{eq.step1_2}
    3(1 \otimes Z \otimes 1) + Z \otimes 1 \otimes 1 \approx 0,
    \quad \textrm{when $X = Y = W = 1$.}
  \end{equation}
  \begin{equation}\label{eq.step1_3}
    2(Y \otimes 1 \otimes 1) + 1 \otimes Y \otimes 1 + 1 \otimes 1 \otimes Y
    \approx 0, \quad \textrm{when $X = Z = W = 1$.}
  \end{equation}
  Now, replace $W$ with $X$ in Eq.~\ref{eq.step1_1}, $Z$ with $X$ in
  Eq.~\ref{eq.step1_2}, and $Y$ with $X$ in Eq.~\ref{eq.step1_3}.  Then
  \[
    -3\big[2(1 \otimes 1 \otimes X) + 2(X \otimes 1 \otimes 1)\big]
    -2\big[3(1 \otimes X \otimes 1) + X \otimes 1 \otimes 1\big]
    +6\big[2(X \otimes 1 \otimes 1) + 1 \otimes X \otimes 1 + 1 \otimes 1 \otimes X\big]
  \]
  \[
    = 4(X \otimes 1 \otimes 1).
  \]
  \[
    \Rightarrow\; X \otimes 1 \otimes 1 \approx 0,
  \]
  as long as $2$ is invertible in $k$.  Next, by Eq.~\ref{eq.step1_1},
  $2(1 \otimes 1 \otimes X) \approx 0 \Rightarrow 1 \otimes 1 \otimes X
  \approx 0$.  Finally, Eq.~\ref{eq.step1_3} gives $1 \otimes X \otimes 1 \approx 0$.
\end{proof}
Lemma~\ref{lem.4-term-relation} together with Lemma~\ref{lem.rho-iso} and 
Lemma~\ref{lem.0-stage} show the following sequence is exact:
\[
   0 \gets k \stackrel{\epsilon}{\gets} k\big[\mathrm{Mor}_{\Delta S}([n], [0])\big]
     \stackrel{\rho}{\gets} k\big[\mathrm{Mor}_{\Delta S}([n], [2])\big]\qquad\qquad\qquad
\]
\begin{equation}\label{eq.part_res}
   \hspace{2.5in}  \stackrel{(\alpha, \beta)}{\longleftarrow}
     k\big[\mathrm{Mor}_{\Delta S}([n], [3])\big] \oplus
     k\big[\mathrm{Mor}_{\Delta S}([n], [0])\big],
\end{equation}
where $\alpha : k\big[\mathrm{Mor}_{\Delta S}([n], [3])\big]
\to k\big[\mathrm{Mor}_{\Delta S}([n], [2])\big]$ is induced by
\[
  x_0x_1 \otimes x_2 \otimes x_3 + x_3 \otimes x_2x_0 \otimes x_1
  + x_1x_2x_0 \otimes 1 \otimes x_3 + x_3 \otimes x_1x_2 \otimes x_0,
\]
and $\beta : k\big[\mathrm{Mor}_{\Delta S}([n], [0])\big]
\to k\big[\mathrm{Mor}_{\Delta S}([n], [2])\big]$ is induced by $1 \otimes x_0 \otimes 1$.
This holds for all $n \geq 0$, so the following is a partial resolution of $k$ by
projective \mbox{$\Delta S^\mathrm{op}$-modules}:
\[
  0 \gets k \stackrel{\epsilon}{\gets} k\big[\mathrm{Mor}_{\Delta S}(-, [0])\big]
     \stackrel{\rho^*}{\gets} k\big[\mathrm{Mor}_{\Delta S}(-, [2])\big]
     \stackrel{(\alpha^*, \beta^*)}{\longleftarrow}
     k\big[\mathrm{Mor}_{\Delta S}(-, [3])\big] \oplus
     k\big[\mathrm{Mor}_{\Delta S}(-, [0])\big]
\]
Hence, we may compute
$HS_0(A)$ and $HS_1(A)$ as the homology groups of the following complex:
\[
  0 \gets 
  k\big[\mathrm{Mor}_{\Delta S}(-, [0])\big]
  \otimes_{\Delta S} B_*^{sym}A \stackrel{\rho\otimes\mathrm{id}}{\longleftarrow}
  k\big[\mathrm{Mor}_{\Delta S}(-, [2])\big]
  \otimes_{\Delta S} B_*^{sym}A \stackrel{(\alpha, \beta)\otimes\mathrm{id}}
  {\longleftarrow}
\]
\[
  \Big(k\big[\mathrm{Mor}_{\Delta S}(-, [3])\big] \oplus
  k\big[\mathrm{Mor}_{\Delta S}(-, [0])\big]\Big)
  \otimes_{\Delta S} B_*^{sym}A.
\]
This complex is isomorphic to the one from Thm.~\ref{thm.partial_resolution},
via the evaluation map 
\[
  k\big[\mathrm{Mor}_{\Delta S}(-, [p])\big]
  \otimes_{\Delta S} B_*^{sym}A \stackrel{\cong}{\to} B_p^{sym}A.
\]

\section{Low-degree Computations of $HS_*(A)$}\label{sec.low-dim-comp} %

\begin{theorem}\label{thm.HS_0}
  For a unital associative algebra $A$ over commutative ground ring $k$,
  \[
    HS_0(A) \cong A/([A,A]),
  \]
  where $([A,A])$ is the ideal generated by the commutator submodule $[A,A]$.
\end{theorem}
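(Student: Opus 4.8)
The plan is to read off $HS_0(A)$ from the partial chain complex of Theorem~\ref{thm.partial_resolution}, namely
\[
  0 \longleftarrow A \stackrel{\partial_1}{\longleftarrow} A\otimes A\otimes A
  \stackrel{\partial_2}{\longleftarrow}(A\otimes A\otimes A\otimes A)\oplus A,
\]
with $\partial_1(a\otimes b\otimes c) = abc - cba$. Since $HS_0(A)$ is the cokernel of $\partial_1$, the whole computation reduces to identifying the image of $\partial_1$ with the commutator ideal $([A,A])$ and then invoking Theorem~\ref{thm.partial_resolution}. So first I would set $M := \mathrm{im}(\partial_1) \subseteq A$ and aim to prove $M = ([A,A])$.

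The containment $M \subseteq ([A,A])$ is the easy direction: writing $abc - cba = abc - bca + bca - cba = [a,bc] + [bc,a] $... no, more cleanly $abc - cba = a(bc) - (bc)a + (bc)a - (cb)a = [a,bc] + [b,c]a$, which exhibits $abc-cba$ as a sum of a commutator and a commutator times an algebra element, hence lies in the commutator ideal $([A,A])$. For the reverse containment, I would use the unit $1_A$: taking $c = 1$ gives $ab - ba = \partial_1(a\otimes b\otimes 1) \in M$, so $[A,A] \subseteq M$. To upgrade this to the \emph{ideal} generated by $[A,A]$, I must show $M$ is an ideal of $A$. Closure under left and right multiplication by elements of $A$ can be checked directly: $r\cdot(abc - cba)$ and $(abc-cba)\cdot r$ should each be expressible as $\partial_1$ of suitable elements plus possibly other elements already known to be in $M$; for instance $x[a,b] = xab - xba = (xa)b - b(xa) + bxa - bax + \ldots$ — the point is that $x[a,b] - [xa,b] = b(xa) - b(ax) + \ldots = [b,ax]$ type manipulations, all living in $M$ since each basic term is a commutator. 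Actually the cleanest route is: $[A,A] \subseteq M$ and $A\cdot[A,A] \subseteq M$, $[A,A]\cdot A \subseteq M$ via the identities $x[a,b] = [xa,b] + [b,x]a$ (wait: $[xa,b] + [b,x]a = xab - bxa + bxa - xba = xab - xba = x[a,b]$, yes) and similarly on the right, and then an induction shows all of $([A,A]) = A[A,A]A$ lands in $M$ — but one must be slightly careful since $x[a,b]$ uses only length-two commutators, which are already in $M$ by the $c=1$ case, and the identity expresses $x[a,b]$ as a sum of two things of the same form, so this closes up. Combining both containments gives $M = ([A,A])$, and therefore $HS_0(A) = A/M = A/([A,A])$.

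The main obstacle is the second containment, specifically verifying carefully that $M$ is a two-sided ideal rather than merely a submodule containing $[A,A]$ — the image of $\partial_1$ is a priori only a $k$-submodule, and one needs the algebraic identities above (together with the presence of $1_A$, which is where unitality of $A$ enters) to promote it to the full commutator ideal. Everything else is immediate from Theorem~\ref{thm.partial_resolution}: no spectral sequence argument, no resolution-building, just reading the cokernel of the first differential. I would present the identities as a short lemma and then conclude in one line.
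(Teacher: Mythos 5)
Your strategy — read $HS_0(A)$ off Theorem~\ref{thm.partial_resolution} as the cokernel of $\partial_1$ and identify $M:=\mathrm{im}\,\partial_1$ with $([A,A])$ — is the same as the paper's, and your containment $M\subseteq([A,A])$ (via $abc-cba=[a,bc]+[b,c]a$) and your observation that setting $c=1$ gives $[A,A]\subseteq M$ are both correct. The gap is in the step that $M$ is a two-sided ideal, which is what upgrades $[A,A]\subseteq M$ to $([A,A])\subseteq M$. The identity $x[a,b]=[xa,b]+[b,x]a$ decomposes a \emph{left} multiple of a commutator into a commutator $[xa,b]$ (already in $M$) plus a \emph{right} multiple $[b,x]a$; the mirror identity on the right decomposes a right multiple into a commutator plus a \emph{left} multiple. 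These two reductions call each other, so the "induction" you invoke never bottoms out, and the assertion that the identity produces "two things of the same form, so this closes up" is exactly the point that fails: $[xa,b]$ and $[b,x]a$ are not of the same form, and $[b,x]a$ is precisely what you have not yet placed in $M$.

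What is missing is one direct base case: an explicit expression of a one-sided multiple of a commutator as a $k$-linear combination of generators $uvw-wvu$ of $M$, rather than a reduction to the symmetric case. For instance, $[b,c]\,a = bca-cba = (bca-acb)+(acb-cba)$, where the first summand is $uvw-wvu$ with $(u,v,w)=(b,c,a)$ and the second with $(u,v,w)=(a,cb,1)$; this shows $[A,A]A\subseteq M$ directly. After that your reduction terminates: $x[a,b]=[xa,b]+[b,x]a\in[A,A]+[A,A]A\subseteq M$, and then $x[a,b]y=[xa,b]\,y+[b,x](ay)\in[A,A]A\subseteq M$, so $([A,A])=A[A,A]A\subseteq M$. (The paper supplies the mirror base case, writing $x[a,bc]=(xabc-bcax)+(bcax-xbca)$ with both summands visibly of the form $uvw-wvu$, giving $A[A,A]\subseteq M$.) With one such direct verification inserted, your proof is complete and follows the same route as the paper's.
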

\begin{proof}
  By Thm.~\ref{thm.partial_resolution}, $HS_0(A) \cong A/k\big[\{abc-cba\}\big]$
  as $k$-module.  But $k\big[\{abc-cba\}\big]$ is an ideal of $A$, since if
  $x \in A$, then
  \[
    xabc - xbca = (x)(a)(bc) - (bc)(a)(x) + (bca)(x) - (x)(bca) \in 
    k\big[\{abc-cba\}\big].
  \]
  Clearly $([A,A]) \subset k\big[\{abc-cba\}\big]$, and
  $k\big[\{abc-cba\}\big] \subset ([A,A])$ since
  \[
    abc - cba = a(bc-cb) + a(cb) - (cb)a.
  \]
\end{proof}
\begin{cor}
  If $A$ is commutative, then $HS_0(A) \cong A$.
\end{cor}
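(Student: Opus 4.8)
The plan is to specialize Theorem~\ref{thm.HS_0} to the commutative case, where the commutator submodule $[A,A]$ vanishes. First I would recall that by Theorem~\ref{thm.HS_0} we have $HS_0(A) \cong A/([A,A])$, where $([A,A])$ denotes the two-sided ideal generated by the commutator submodule $[A,A] = k\big[\{ab - ba : a, b \in A\}\big]$.

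Next I would observe that when $A$ is commutative, $ab = ba$ for all $a, b \in A$, so the commutator submodule $[A,A]$ is the zero submodule. Consequently the ideal it generates, $([A,A])$, is also zero. Therefore $HS_0(A) \cong A/([A,A]) = A/(0) \cong A$.

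The proof is essentially immediate once Theorem~\ref{thm.HS_0} is in hand; there is no substantive obstacle here, as it is a direct specialization. One might additionally cross-check the conclusion against Corollary~\ref{cor.HS_of_k}, which gives $HS_*(k) \cong k$ concentrated in degree $0$: this is the special case $A = k$, consistent with $HS_0(k) \cong k$. For completeness I would note that the isomorphism $A/([A,A]) \cong A$ is the canonical quotient map, which is natural in $A$, so the identification respects the functoriality of $HS_0$ established via Proposition~\ref{prop.Y-functor}.

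\begin{proof}
  By Theorem~\ref{thm.HS_0}, $HS_0(A) \cong A/([A,A])$, where $([A,A])$ is the ideal
  generated by the commutator submodule $[A,A]$. If $A$ is commutative, then
  $ab - ba = 0$ for all $a, b \in A$, so $[A,A] = 0$, and hence the ideal it
  generates is also $0$. Therefore $HS_0(A) \cong A/(0) = A$.
\end{proof}
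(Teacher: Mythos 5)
Your proof is correct and is exactly the intended (and essentially forced) specialization of Theorem~\ref{thm.HS_0}; the paper states the corollary without proof precisely because, as you observe, commutativity makes $[A,A]=0$ and hence $([A,A])=0$.
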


Note that Theorem~\ref{thm.HS_0} implies that symmetric homology does not
preserve Morita equivalence, since for $n>1$,
\[
  HS_0\left(M_n(A)\right) = M_n(A)/\left([M_n(A),M_n(A)]\right) = 0.
\]
This implies $HS_*\left(M_n(A)\right) = 0$, since for any
$x \in HS_q\left(M_n(A)\right)$, $x = 1 \cdot x = 0 \cdot x = 0$, via the
Pontryagin product of Cor.~\ref{cor.pontryagin}, while in general
$HS_0(A) = A/([A,A]) \neq 0$.

By working with the complex in Thm.~\ref{thm.partial_resolution}, an explicit
formula for the product $HS_0(A) \otimes HS_1(A) \to HS_1(A)$ can be
determined.
\begin{prop}\label{prop.module-structure}
  For a unital associative algebra $A$ over commutative ground ring $k$,
  $HS_1(A)$ is a left $HS_0(A)$-module, via
  \[
    A/([A,A]) \otimes HS_1(A) \longrightarrow HS_1(A)
  \]
  \[
    [a] \otimes [x\otimes y \otimes z] \mapsto
    [ax \otimes y \otimes z] - [x \otimes ya \otimes z] + [x \otimes y \otimes az]
  \]
  Here, elements of $HS_1(A)$ are represented as equivalence classes of elements 
  in $A \otimes A \otimes A$,
  via the complex from Thm.~\ref{thm.partial_resolution}.)
  
  Moreover, there is a right module structure
  \[
    HS_1(A) \otimes HS_0(A) \longrightarrow HS_1(A)
  \]
  \[
    [x\otimes y \otimes z] \otimes [a] \mapsto
    [xa \otimes y \otimes z] - [x \otimes ay \otimes z] + [x \otimes y \otimes za],
  \]
  and the two actions are equal.  
\end{prop}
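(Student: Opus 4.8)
The plan is to work entirely with the explicit partial chain complex of Theorem~\ref{thm.partial_resolution}, whose $H_0$ and $H_1$ compute $HS_0(A)$ and $HS_1(A)$, and to exhibit the module actions by formulas on chains, then verify they are well-defined and compatible. First I would recall that the Pontryagin product of Corollary~\ref{cor.pontryagin} endows $HS_*(A)$ with a graded-commutative algebra structure, so $HS_1(A)$ is automatically a module over $HS_0(A)$ once we know the algebra structure; the work here is to identify this action with the stated explicit formulas. My first step is therefore to show that the proposed map
\[
  A \otimes (A \otimes A \otimes A) \longrightarrow A \otimes A \otimes A,\qquad
  a \otimes (x\otimes y\otimes z) \mapsto ax\otimes y\otimes z - x\otimes ya\otimes z + x\otimes y\otimes az
\]
descends to a well-defined pairing $HS_0(A)\otimes HS_1(A)\to HS_1(A)$. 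Concretely, I must check two things: (i) if $a \in k[\{bcd-dcb\}]$ (a $\partial_1$-boundary, i.e.\ a relation in $HS_0$), then the image lies in the image of $\partial_2$; and (ii) if $x\otimes y\otimes z$ is a $\partial_2$-boundary, then so is its image under the pairing, for each fixed $a$. Both are direct computations using the formula for $\partial_2$ from Theorem~\ref{thm.partial_resolution}: expand $\partial_2(p\otimes q\otimes r\otimes s)$ and $\partial_2(p)$, apply the pairing, and massage the resulting sum of twelve-or-so terms back into the image of $\partial_2$. Similarly I would check that the pairing sends $1$-cycles (kernel of $\partial_1$) to $1$-cycles; since elements of $HS_1(A)$ are represented by cycles $x\otimes y\otimes z$ with $xyz = zyx$, one verifies $\partial_1$ annihilates $ax\otimes y\otimes z - x\otimes ya\otimes z + x\otimes y\otimes az$, i.e.\ $axyz - zyxa - xyaz + zaxy + xyaz - zayx = axyz - zyxa + zaxy - zayx$, which collapses to $0$ using $xyz=zyx$ and working modulo commutators.

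The second step is the right-module structure. I would define the analogous pairing
\[
  (x\otimes y\otimes z)\otimes a \mapsto xa\otimes y\otimes z - x\otimes ay\otimes z + x\otimes y\otimes za
\]
and run the same well-definedness checks. For the module axioms themselves (associativity of the left action, and of the right action), the cleanest route is to invoke Corollary~\ref{cor.pontryagin}: the Pontryagin product is associative and graded-commutative on homology, so $HS_0(A)$ is a commutative ring and $HS_1(A)$ inherits both a left and a right module structure from multiplication in the graded algebra $HS_*(A)$, and these automatically agree because $HS_0$ is central (graded-commutativity forces $[a]\cdot\xi = \xi\cdot[a]$ for $[a]$ in degree $0$). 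What then remains is to match the chain-level formulas above with the Pontryagin product. This is the step I expect to be the main obstacle: it requires tracing the operad-algebra structure map $\widetilde{\chi}$ of Theorem~\ref{thm.Y-operadalgebra-complexes} through the quasi-isomorphism between $\mathscr{Y}_*^+A$ (or $\mathscr{Y}_*A$) and the small partial resolution of Section~\ref{sec.partres}, to see which chain-level representative of the product class the formula computes. I would do this by picking the explicit $0$-chain $c \in \mathscr{D}_{\mathrm{ch}}(2)$ representing the generator of $H_0(\mathscr{D}_{\mathrm{ch}}(2))$, computing $\widetilde\chi(c\otimes -\otimes -)$ on $0$-chains of $\mathscr{Y}_*^+A$ of the form $[0]\otimes a$ (which represent degree-$0$ classes) tensored with a degree-$1$ class, and then transporting along the comparison chain maps to the complex $0 \leftarrow A \leftarrow A^{\otimes 3} \leftarrow (A^{\otimes 4}\oplus A)$; the resulting formula should be exactly the stated one up to a $\partial_2$-boundary.

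Alternatively — and this may be the safer path to a complete, self-contained argument — I would avoid the operad machinery entirely and prove module-hood directly on the partial complex: define the two pairings by the formulas, verify well-definedness (the calculations above), and then verify the module axioms $[ab]\cdot\xi = [a]\cdot([b]\cdot\xi)$, $1\cdot\xi = \xi$, and agreement of left and right actions as explicit identities in $A^{\otimes 3}$ modulo $\mathrm{im}\,\partial_2$. The unit axiom is immediate ($1\cdot(x\otimes y\otimes z) = x\otimes y\otimes z - x\otimes y\otimes z + x\otimes y\otimes z$, wait — that gives $x\otimes y\otimes z$ only after noting two terms with $a=1$ cancel and one survives: $1\cdot x\otimes y\otimes z = x\otimes y\otimes z$, correct). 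Associativity unwinds to a matching of two length-nine sums, each term of which is a triple tensor with two of the three slots carrying a product; the two sums should be literally equal, with no boundary correction needed, which is a good sanity check that the formulas are the ``right'' ones. The equality of the left and right actions modulo $\mathrm{im}\,\partial_2$ is the one place where the four-term relation encoded in $\partial_2$ genuinely enters: one computes $[ax\otimes y\otimes z - xa\otimes y\otimes z] + \cdots$ and recognizes the difference as $\partial_2$ applied to suitable elements of $A^{\otimes 4}$. I expect this bookkeeping — organizing the cancellations against the explicit four-term relation $ab\otimes c\otimes d + d\otimes ca\otimes b + bca\otimes 1\otimes d + d\otimes bc\otimes a$ — to be the genuinely delicate part, and I would present it as a short lemma isolating the needed boundary identities before assembling the proof.
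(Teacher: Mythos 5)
Your "alternative — and this may be the safer path" is essentially what the paper does, but there are two significant problems with the proposal as written.

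First, your primary plan — to deduce module-hood from the Pontryagin product of Corollary~\ref{cor.pontryagin} and then identify that product with the explicit chain-level formulas — would be going beyond what the paper actually does. The paper proves Proposition~\ref{prop.module-structure} entirely within the partial complex of Theorem~\ref{thm.partial_resolution}, never invoking the operad machinery; and immediately after the proof the paper explicitly remarks that the identification of this chain-level module structure with the Pontryagin product "has not been verified yet due to time constraints." So the route you flag as potentially "the main obstacle" is indeed an obstacle that the paper sidesteps by not taking that route at all. You should lead with the direct argument, not present it as a fallback.

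Second, and more importantly, your claim that associativity "should be literally equal, with no boundary correction needed" is false, and this is precisely where the paper's argument is nontrivial. Writing out $(ab).(x\otimes y\otimes z)$ gives three terms, whereas $a.\bigl(b.(x\otimes y\otimes z)\bigr)$ expands to nine terms, of which only $abx\otimes y\otimes z$ and $x\otimes y\otimes abz$ match; the remaining six terms (e.g.\ $-bx\otimes ya\otimes z$, $+ax\otimes y\otimes bz$, etc.) do not cancel, and the middle terms even disagree as $-x\otimes yab\otimes z$ versus $+x\otimes yba\otimes z$. The associator is a genuine nonzero chain that one must show lies in $\mathrm{im}\,\partial_2$. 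The paper's device for doing this — also used for the descent to $HS_0(A)$ and for the equality of left and right actions — is worth internalizing: rather than exhibiting an explicit $\partial_2$-preimage, one treats $a,b,x,y,z$ as formal variables so that the expression lives in $k[\mathrm{Mor}_{\Delta S}([4],[2])]$, checks that it is killed by the map $\rho$ of Lemma~\ref{lem.0-stage}, and invokes the exactness of the partial resolution~(\ref{eq.part_res}) of the trivial $\Delta S^{\mathrm{op}}$-module (which holds before tensoring with $B_*^{sym}A$) to conclude it is a boundary. Your proposed "massage the resulting sum of twelve-or-so terms back into the image of $\partial_2$" is the hard way. Also, a small algebra slip: $\partial_1(ax\otimes y\otimes z) = axyz - zyax$, not $axyz - zyxa$; the clean observation is that $\partial_1\bigl(a.(x\otimes y\otimes z)\bigr) = a\,\partial_1(x\otimes y\otimes z)$ for simple tensors, which immediately shows cycles go to cycles.
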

\begin{proof}
  Define the products on the chain level.
  For $a \in A$ and $x \otimes y \otimes z \in A \otimes A \otimes A$, put
  \begin{equation}\label{eq.left-HS0-action-on-HS1}
    a . (x \otimes y \otimes z) := ax \otimes y \otimes z
                                   - x \otimes ya \otimes z
                                   + x \otimes y \otimes az
  \end{equation}
  \begin{equation}\label{eq.right-HS0-action-on-HS1}
    (x \otimes y \otimes z).a := xa \otimes y \otimes z
                                   - x \otimes ay \otimes z
                                   + x \otimes y \otimes za
  \end{equation}

  There are a few details to verify:
  
  1.  Formula~(\ref{eq.left-HS0-action-on-HS1}) gives an action
  \[
    A \otimes HS_1(A) \to HS_1(A)
  \]
  The product is unital, since
  \[
    1. [x\otimes y \otimes z] =
    [x \otimes y \otimes z] - [x \otimes y \otimes z] + [x \otimes y \otimes z]
    = [x \otimes y \otimes z].
  \]
  
  Similarly, Formula~(\ref{eq.right-HS0-action-on-HS1}) is a unital product.
  
  Associativity follows by examining the following expression (on the level of chains):
  \[
    (ab).(x\otimes y \otimes z) - a.\left(b.(x\otimes y \otimes z)\right)
  \]
  \[
    = (abx \otimes y \otimes z - x \otimes yab \otimes z + x \otimes y \otimes abz)
    - \left( \begin{array}{c}
         abx \otimes y \otimes z - bx \otimes ya \otimes z + bx \otimes y \otimes az\\
         -ax \otimes yb \otimes z + x \otimes yba \otimes z - x \otimes yb \otimes az\\
         +ax \otimes y \otimes bz - x \otimes ya \otimes bz + x \otimes y \otimes abz
           \end{array}\right)
  \]
  \[
    = - x \otimes yab \otimes z + bx \otimes ya \otimes z - bx \otimes y \otimes az
    +ax \otimes yb \otimes z - x \otimes yba \otimes z 
  \]
  \begin{equation}\label{eq.associator}
    \qquad\qquad+ x \otimes yb \otimes az
    -ax \otimes y \otimes bz + x \otimes ya \otimes bz.
  \end{equation}
  We may view the variables $a$, $b$, $x$, $y$ and $z$ 
  in Eq.~(\ref{eq.associator}) as formal variables,
  and hence the expression itself may be regarded as chain in
  $k\left[ \mathrm{Mor}_{\Delta S}([4],[2]) \right]$ of the complex~(\ref{eq.part_res})
  Now, the differential $\rho$ of complex~(\ref{eq.part_res}) applied to 
  Eq.~(\ref{eq.associator}) results in:
  \begin{equation}\label{eq.d_associator}
    \left(\begin{array}{c}
    -xyabz + bxyaz - bxyaz
    +axybz - xybaz + xybaz
    -axybz + xyabz\\
    +zyabx - zyabx + azybx
    -zybax + zybax - azybx
    +bzyax - bzyax
    \end{array}\right).
  \end{equation}
  All terms cancel, showing that Eq.~(\ref{eq.associator}) is in the kernel of $\rho$.
  Exactness of complex~(\ref{eq.part_res}) implies that Eq.~(\ref{eq.associator})
  is in the image of $(\alpha, \beta)$.  Thus, there is a $2$-chain
  $C \in A^{\otimes 4} \oplus A$
  such that $\partial_2(C) = (ab).(x\otimes y \otimes z) - 
  a.\left(b.(x\otimes y \otimes z)\right)$, and on the level of homology,
  \[
    (ab).[x\otimes y \otimes z] = a.\left(b.[x\otimes y \otimes z]\right)
  \]
  
  The associativity of Formula~(\ref{eq.right-HS0-action-on-HS1}) is proven in
  the same way.
  
  2. Formula~(\ref{eq.left-HS0-action-on-HS1}) induces an action
  \[
    HS_0(A) \otimes HS_1(A) \to HS_1(A)
  \]
  It is sufficient to show that if $u$ is a $1$-cycle, then $(ab-ba).u$
  is a boundary for any $a, b \in A$.  Consider the following element.
  \[
    w := (ab - ba).(x \otimes y \otimes z) - (a \otimes b \otimes 1).
    \partial_1(x \otimes y \otimes z)
    =(ab - ba).(x \otimes y \otimes z) - (a \otimes b \otimes 1).(xyz-zyx)
  \]
  After expanding and canceling a pair of terms,
  \[
    w = \left(\begin{array}{c}
      abx \otimes y \otimes z - x \otimes yab \otimes z + x \otimes y \otimes abz\\
      - bax \otimes y \otimes z + x \otimes yba \otimes z - x \otimes y \otimes baz\\
      -axyz \otimes b \otimes 1 + a \otimes xyzb \otimes 1
      +azyx \otimes b \otimes 1 - a \otimes zyxb \otimes 1
      \end{array}\right).
  \]
  Consider all variables as formal, so $w \in 
  k\left[ \mathrm{Mor}_{\Delta S}([4],[2]) \right]$.  A routine verification shows
  that $\rho(w) = 0$, hence by exactness, $w$ is a boundary.  Now, any $1$-cycle
  $u$ is a $k$-linear combination of elements of the form $x \otimes y \otimes z$,
  so the chain
  \[
    (ab - ba).u - (a \otimes b \otimes 1)\partial_1(u) = (ab- ba).u
  \]
  is a boundary. 
  
  We show that Formula~(\ref{eq.right-HS0-action-on-HS1}) induces an action
  \[
    HS_0(A) \otimes HS_1(A) \to HS_1(A)
  \]
  in the analogous way, using
  \[
    v := (x \otimes y \otimes z).(ab - ba) - 
    \partial_1(x \otimes y \otimes z).(a \otimes b \otimes 1)
  \]
  
  Lastly, we prove that the product structures are equal.
  Consider the following element.
  \[
    t := a.(x \otimes y \otimes z) - (x \otimes y \otimes z).a -
    a \otimes \partial_1(x \otimes y \otimes z) \otimes 1
  \]
  \[
    = a.(x \otimes y \otimes z) - (x \otimes y \otimes z).a -
    a \otimes xyz \otimes 1 + a \otimes zyx \otimes 1
  \]
  It can be verified that $\rho(t) = 0$, proving $t \in
  k\left[ \mathrm{Mor}_{\Delta S}([3],[2]) \right]$ is a boundary.  If $u$ is a
  $1$-cycle, then
  \[
    a.u - u.a - a \otimes \partial_1(u) \otimes 1 = a.u - u.a
  \]
  is a boundary, which shows that $a.[u] = [u].a$ in $HS_1(A)$.
\end{proof}

Note, we expect the product structure given above to agree with the Pontryagin
product, but this has not been verified yet due to time constraints.

Using \verb|GAP|, we have made the following explicit computations of degree 1
integral symmetric homology.  The $HS_0(A)$-module structure is also displayed.

\begin{center}
\begin{tabular}{l|l|l}
$A$ & $HS_1(A \;|\; \Z)$ & $HS_0(A)$-module structure\\
\hline
$\Z[t]/(t^2)$ & $\Z/2\Z \oplus \Z/2\Z$ & Generated by $u$ with $2u=0$\\
$\Z[t]/(t^3)$ & $\Z/2\Z \oplus \Z/2\Z$ & Generated by $u$ with $2u=0$ and $t^2u=0$\\
$\Z[t]/(t^4)$ & $(\Z/2\Z)^4$ & Generated by $u$ with $2u=0$\\
$\Z[t]/(t^5)$ & $(\Z/2\Z)^4$ & \\
$\Z[t]/(t^6)$ & $(\Z/2\Z)^6$ & \\
\hline
$\Z[C_2]$ & $\Z/2\Z \oplus \Z/2\Z$ & Generated by $u$ with $2u=0$\\
$\Z[C_3]$ & $0$ & \\
$\Z[C_4]$ & $(\Z/2\Z)^4$ & Generated by $u$ with $2u=0$\\
$\Z[C_5]$ & $0$ & \\
$\Z[C_6]$ & $(\Z/2\Z)^6$ & \\
\hline
\end{tabular}
\end{center}

Based on these calculations, we conjecture:
\begin{conj}
  \[
    HS_1\big(k[t]/(t^n)\big) = \left\{\begin{array}{ll}
                                 (k/2k)^n, & \textrm{if $n \geq 0$ is even.}\\
                                 (k/2k)^{n-1} & \textrm{if $n \geq 1$ is odd.}
                               \end{array}\right.
  \]
\end{conj}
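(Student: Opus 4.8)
The plan is to compute $HS_1(A)$ for $A = k[t]/(t^n)$ directly from the explicit partial resolution of Theorem~\ref{thm.partial_resolution}, using the internal grading of $A$ to split the computation into homogeneous degrees, and first reducing the general ground ring $k$ to the case $k = \Z$. For the reduction, note that the three-term complex
\[
  0\longleftarrow A\stackrel{\partial_1}{\longleftarrow}A\otimes A\otimes A\stackrel{\partial_2}{\longleftarrow}(A\otimes A\otimes A\otimes A)\oplus A
\]
of Theorem~\ref{thm.partial_resolution} is obtained from the corresponding complex over $\Z$ by applying $-\otimes_\Z k$, since $A_\Z=\Z[t]/(t^n)$ is $\Z$-free and each term is a tensor power of $A_\Z$; all terms of the $\Z$-complex are therefore free, so the universal coefficient sequence for it reads
\[
  0\longrightarrow HS_1(A_\Z)\otimes_\Z k\longrightarrow HS_1(A_k)\longrightarrow \mathrm{Tor}_1^\Z\!\big(HS_0(A_\Z),k\big)\longrightarrow 0,
\]
and $HS_0(A_\Z)\cong A_\Z$ is $\Z$-free by Theorem~\ref{thm.HS_0}, killing the $\mathrm{Tor}$ term. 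Hence $HS_1(A_k)\cong HS_1(A_\Z)\otimes_\Z k$, and it suffices to prove $HS_1(\Z[t]/(t^n))\cong(\Z/2\Z)^n$ for $n$ even and $(\Z/2\Z)^{n-1}$ for $n$ odd; tensoring with $k$ then yields the conjectured formula.

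Now take $k=\Z$, $A=\Z[t]/(t^n)$. Since $A$ is commutative, $\partial_1(a\otimes b\otimes c)=abc-cba=0$, so $HS_1(A)=A^{\otimes 3}/\mathrm{im}\,\partial_2$, with $A^{\otimes 3}$ free on $\{t^i\otimes t^j\otimes t^\ell:0\le i,j,\ell\le n-1\}$. Grade $A$ by $\deg t=1$; the ideal $(t^n)$ being homogeneous, $A^{\otimes 3}$, $A^{\otimes 4}$ and $A$ are graded and $\partial_2$ is degree-preserving, so $HS_1(A)=\bigoplus_{d=0}^{3(n-1)}HS_1(A)_d$. In homogeneous degree $d$ the image of $\partial_2$ is spanned by the one-term relations $1\otimes t^d\otimes 1=0$ (present when $0\le d\le n-1$) together with the four-term relations
\[
  t^{p+q}\otimes t^r\otimes t^s+t^s\otimes t^{r+p}\otimes t^q+t^{p+q+r}\otimes 1\otimes t^s+t^s\otimes t^{q+r}\otimes t^p=0
\]
ranging over $p+q+r+s=d$ with $0\le p,q,r,s\le n-1$, a factor $t^m$ with $m\ge n$ read as $0$.

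The combinatorial heart of the argument is to evaluate each $HS_1(A)_d$, and I expect to do so with a matching upper and lower bound. For the upper bound, the degeneracy, sign, Hochschild and cyclic relations derived as Steps~1--13 in the proof of Lemma~\ref{lem.4-term-relation} are purely formal consequences of the one- and four-term relations, hence valid here with the ``words'' $X,Y,Z,W$ taken to be monomials $t^m$ in $A$; manipulating them reduces $(A^{\otimes 3})_d/(\mathrm{im}\,\partial_2)_d$ to a cyclic group generated by one standard basis tensor, and a suitable specialization of the four-term relation in which two of its terms coincide and the other two vanish (as in the model computations $2(1\otimes t\otimes t)=0$, $2(t\otimes t\otimes t)=0$ for $n=2$) forces $2$ times the generator to vanish, so each $HS_1(A)_d$ is $0$ or $\Z/2\Z$. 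For the lower bound one must see that the surviving classes are genuinely nonzero in the predicted degrees; the cleanest route is to invoke the $HS_0(A)=A$-module structure of Proposition~\ref{prop.module-structure}, show $HS_1(A)$ is cyclic over $A$ on a single class $u$, compute the $A$-annihilator of $u$ from the relations, and match the resulting $A$-module against the upper bound. I anticipate this module to be, up to a degree shift, $A/2A$ when $n$ is even and the proper quotient $A/(2A+t^{n-1}A)$ when $n$ is odd, of $\Z$-ranks $n$ and $n-1$ respectively — so the parity enters exactly through whether $t^{n-1}u=0$ holds, accounting for the drop from $n$ to $n-1$ copies of $\Z/2\Z$.

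An independent cross-check is afforded by the second spectral sequence (Theorem~\ref{thm.E1_NS}, Corollary~\ref{cor.E1_Sym}): here $A$ has augmentation ideal $I=(t)$, free over $k$ with basis $\{t,t^2,\dots,t^{n-1}\}$, so only a few low values of the filtration index contribute to total degree $1$, and the $p=1$ stratum already produces one copy of $H_1(Sym_*^{(1)})_{\Sigma_2}\cong(\Z_{\mathrm{sign}})_{\Sigma_2}\cong\Z/2\Z$ for each diagonal orbit $(x,x)$ with $x\in\{t,\dots,t^{n-1}\}$, that is, $(\Z/2\Z)^{n-1}$; the free summands contributed by the remaining orbits and strata must be annihilated by the $d^1$-differential, whose Smith normal form should leave exactly one extra $\Z/2\Z$ precisely when $n$ is even. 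In either approach the main obstacle is the lower bound: proving that every $\Z/2\Z$ permitted by the relations actually survives, uniformly in $n$ and with the right parity — equivalently, determining the Smith normal form of the graded relation matrices, or exhibiting enough maps out of $HS_1(A)$ to detect the classes. The reduction to $\Z$ and the upper bound are comparatively routine.
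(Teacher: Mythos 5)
The statement is labeled a \emph{conjecture} in the paper, not a theorem; it is not proved there, and its only support is the machine-computed table for $\Z[t]/(t^n)$, $n=2,\dots,6$, immediately preceding it. So there is no proof in the paper against which to compare your argument.

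Your reduction to $k=\Z$ via the universal coefficient sequence is correct: the three terms of the partial complex of Theorem~\ref{thm.partial_resolution} are free $\Z$-modules when $A_\Z=\Z[t]/(t^n)$, the complex over $k$ is $C_\Z\otimes_\Z k$, and the $\mathrm{Tor}$-term vanishes because $HS_0(A_\Z)\cong A_\Z$ is $\Z$-free by Theorem~\ref{thm.HS_0}. The grading, the identification $HS_1(A)=A^{\otimes 3}/\mathrm{im}\,\partial_2$ in the commutative case, and the remark that specializing the four-term relation bounds the torsion at $2$ are also sound. What you have not done --- and you say so yourself --- is the combinatorial core: you do not actually prove that each homogeneous piece $HS_1(A)_d$ is cyclic (the Steps~1--13 reductions in Lemma~\ref{lem.4-term-relation} are stated for the free module $k[\mathrm{Mor}_{\Delta S}([n],[2])]$, and transferring them to graded pieces of $A^{\otimes 3}$ for a specific truncated algebra needs its own argument), you do not establish the lower bound (survival of the predicted $\Z/2\Z$-classes for all $n$), and you do not establish the parity dichotomy (that $t^{n-1}u=0$ holds exactly when $n$ is odd). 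Your anticipated module presentations $A/2A$ for $n$ even and $A/(2A+t^{n-1}A)$ for $n$ odd do match the recorded module-structure data at $n=2,3,4$, which is encouraging, but that is evidence, not a proof. Until a uniform-in-$n$ argument is produced --- detecting maps out of $HS_1$, or a Smith-normal-form analysis of the graded relation matrices --- the result remains the conjecture it is in the paper.
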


\begin{rmk}
  The computations of $HS_1\big(\Z[C_n]\big)$ are consistent with those of
  Brown and Loday~\cite{BL}.  See section~\ref{sec.2-torsion} for a more detailed 
  treatment of $HS_1$ for group rings.
\end{rmk}
  
Additionally, $HS_1$ has been computed for the following examples.  These
computations were done using \verb|GAP| in some cases and in others,
\verb|Fermat|~\cite{LEW} computations on sparse matrices
were used in conjunction with the \verb|GAP| scripts. ({\it e.g.} when the 
algebra has dimension greater than $6$ over $\Z$).

\begin{center}
\begin{tabular}{l|l}
$A$ & $HS_1(A \;|\; \Z)$\\
\hline
$\Z[t,u]/(t^2, u^2)$ & $\Z \oplus (\Z/2\Z)^{11}$\\
$\Z[t,u]/(t^3, u^2)$ & $\Z^2 \oplus (\Z/2\Z)^{11} \oplus \Z/6\Z$\\
$\Z[t,u]/(t^3, u^2, t^2u)$ & $\Z^2 \oplus (\Z/2\Z)^{10}$\\
$\Z[t,u]/(t^3, u^3)$ & $\Z^4 \oplus (\Z/2\Z)^7 \oplus (\Z/6\Z)^5$\\
$\Z[t,u]/(t^2, u^4)$ & $\Z^3 \oplus (\Z/2\Z)^{20} \oplus \Z/4\Z$\\
$\Z[t,u,v]/(t^2, u^2, v^2)$ & $\Z^6 \oplus (\Z/2\Z)^{42}$\\
$\Z[t,u]/(t^4, u^3)$ & $\Z^6 \oplus (\Z/2\Z)^{19} \oplus \Z/6\Z \oplus (\Z/12\Z)^2$\\
$\Z[t,u,v]/(t^2, u^2, v^3)$ & $\Z^{11} \oplus (\Z/2\Z)^{45} \oplus (\Z/6\Z)^4$\\
$\Z[i,j,k], i^2=j^2=k^2=ijk=-1$ & $(\Z/2\Z)^8$\\
$\Z[C_2 \times C_2]$ & $(\Z/2\Z)^{12}$\\
$\Z[C_3 \times C_2]$ & $(\Z/2\Z)^{6}$\\
$\Z[C_3 \times C_3]$ & $(\Z/3\Z)^{9}$\\
$\Z[S_3]$ & $(\Z/2\Z)^2$\\
\hline
\end{tabular}
\end{center}

\section{Splittings of the Partial Resolution}\label{sec.splittings}        %

Under certain circumstances, the partial complex in Thm.\ref{thm.partial_resolution}
splits as a direct sum of smaller complexes.  This observation becomes increasingly
important as the dimension of the algebra increases.  Indeed, some of the computations
of the previous section were done using splittings.

\begin{definition}
  For a commutative $k$-algebra $A$ and $u \in A$, define the $k$-modules:
  \[
    \big(A^{\otimes n}\big)_u := \{ a_1 \otimes a_2 \otimes \ldots \otimes a_n
    \in A^{\otimes n} \;|\; a_1a_2\cdot \ldots \cdot a_n = u \}
  \]
\end{definition}

\begin{prop}
  If $A = k[M]$ for a commutative monoid $M$, then the complex in 
  Thm.\ref{thm.partial_resolution}
  splits as a direct sum of complexes
  \begin{equation}\label{eq.u-homology}
    0\longleftarrow (A)_u \stackrel{\partial_1}{\longleftarrow} 
    (A\otimes A\otimes A)_u
    \stackrel{\partial_2}{\longleftarrow}
    (A\otimes A\otimes A\otimes A)_u\oplus (A)_u,
  \end{equation}
  where $u$ ranges over the elements of $M$.  For each $u$, the
  homology groups of Eq.~(\ref{eq.u-homology}) will be called the $u$-layered
  symmetric homology of $A$, denoted $HS_i(A)_u$.
  Thus, for $i = 0,1$, we have:
  \[
    HS_i(A) \cong \bigoplus_{u \in M} HS_i(A)_u.
  \]  
\end{prop}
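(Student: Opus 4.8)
The plan is to show that the three differentials of the partial complex in Theorem~\ref{thm.partial_resolution} all preserve the ``product of tensor factors'' grading, so that the whole complex decomposes as a direct sum indexed by $u \in M$. First I would recall that $A = k[M]$ has $M$ as a $k$-basis, so $A^{\otimes n}$ has as $k$-basis the elements $m_1 \otimes \cdots \otimes m_n$ with $m_i \in M$, and we may set $(A^{\otimes n})_u$ to be the $k$-span of those basis elements whose tensor factors multiply to $u$. Since $M$ is commutative, this is independent of the order in which the product is taken, so $A^{\otimes n} = \bigoplus_{u \in M} (A^{\otimes n})_u$ as $k$-modules, and the same holds for $A \cong (A^{\otimes 1})$ in the bottom degree and for $(A^{\otimes 4} \oplus A)$ in degree $2$ (using the grading on each summand). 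This gives the claimed decomposition of each chain group; what remains is to check the differentials respect it.

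Next I would verify compatibility of $\partial_1$ and $\partial_2$ with the grading. For $\partial_1 : a \otimes b \otimes c \mapsto abc - cba$, commutativity of $M$ gives $abc = cba$ in $M$ when $a, b, c \in M$, so on basis elements $\partial_1$ sends the $u$-graded piece into the $u$-graded piece of $A$ (indeed it is the zero map in this monoid case, but that does not matter for the splitting). For $\partial_2$, consider the two formulas separately: on $a \otimes b \otimes c \otimes d$ with all four in $M$, each of the four output terms $ab \otimes c \otimes d$, $d \otimes ca \otimes b$, $bca \otimes 1 \otimes d$, $d \otimes bc \otimes a$ has product of tensor factors equal to $abcd$ (again using commutativity, and noting $1 \in M$ is the identity), which is exactly the grading of the input; on the summand $A$, $a \mapsto 1 \otimes a \otimes 1$ clearly preserves the grading. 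Hence $\partial_2$ is a direct sum of maps $\partial_2 : (A^{\otimes 4})_u \oplus (A)_u \to (A^{\otimes 3})_u$, one for each $u$.

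Having established that each differential restricts to the $u$-graded summands, I would conclude that the partial complex is literally the direct sum over $u \in M$ of the subcomplexes~(\ref{eq.u-homology}). Since homology commutes with direct sums, $HS_i(A) = H_i$ of the total complex $\cong \bigoplus_{u \in M} H_i$ of the $u$-subcomplex $= \bigoplus_{u \in M} HS_i(A)_u$ for $i = 0, 1$, which is the assertion. I do not expect any genuine obstacle here: the only mild subtlety is being careful that the degree-$2$ term $(A^{\otimes 4} \oplus A)$ is graded as the direct sum of the gradings on the two summands (so the copy of $A$ is graded by its single tensor factor, consistently with $\partial_2|_A$ landing in $(A^{\otimes 3})_u$ via $a \mapsto 1 \otimes a \otimes 1$), and that commutativity of $M$ — not merely of $A$ — is what makes all the reorderings in $\partial_1$ and $\partial_2$ grading-neutral on basis elements. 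Everything else is bookkeeping.
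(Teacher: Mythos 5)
Your proof is correct and follows the same route as the paper: decompose each chain group as a $k$-module by the product of tensor factors (using that $M$ is a $k$-basis of $A$), then check that $\partial_1$ and $\partial_2$ preserve this grading so the whole complex splits. You simply supply more detail in verifying grading-preservation on each of the individual terms of $\partial_2$ than the paper's one-line justification.
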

\begin{proof}
  Since $M$ is a commutative monoid, there are direct sum decompositions as
  \mbox{$k$-module}:
  \[
    A^{\otimes n} = \bigoplus_{u \in M} \big( A^{\otimes n}\big)_u.
  \]
  The maps $\partial_1$ and $\partial_2$ preserve the products of tensor factors,
  so the inclusions
  $\big(A^{\otimes n}\big)_u \hookrightarrow A^{\otimes n}$ induce maps of complexes,
  hence the complex itself splits as a direct sum.
\end{proof}

We may use layers to investigate the symmetric homology of $k[t]$.  This algebra
is monoidal, generated by the monoid $\{1, t, t^2, t^3, \ldots \}$.  Now, the $t^m$-layer
symmetric homology of $k[t]$ will be the same as the $t^m$-layer symmetric homology
of $k[M^{m+2}_{m+1}]$, where $M^p_q$ denotes the cyclic monoid generated by a variable
$s$ with the property that $s^p = s^q$.  Using this observation and subsequent
computation, we conjecture:
\begin{conj}\label{conj.HS_1freemonoid}
  \[
    HS_1\big(k[t]\big)_{t^m} = \left\{\begin{array}{ll}
                                 0 & m = 0, 1\\
                                 k/2k, & m \geq 2\\
                               \end{array}\right.
  \]
\end{conj}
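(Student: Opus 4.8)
The plan is to reduce the conjecture, via the partial resolution of Theorem~\ref{thm.partial_resolution} and the layer splitting of Eq.~(\ref{eq.u-homology}), to an elementary (if not wholly routine) computation with an explicit integer matrix. Since $A=k[t]$ is commutative, $\partial_1(a\otimes b\otimes c)=abc-cba=0$ identically, so in the layer complex~(\ref{eq.u-homology}) the map $\partial_1$ is zero; this gives $HS_0(k[t])_{t^m}=(A)_{t^m}=k$ (consistent with Theorem~\ref{thm.HS_0}) and $HS_1(k[t])_{t^m}=\mathrm{coker}(\partial_2|_{t^m})$. Writing $[a,b,c]$ for $t^a\otimes t^b\otimes t^c$, the $k$-module $(A^{\otimes n})_{t^m}$ is free on the compositions of $m$ into $n$ nonnegative parts, and in these bases $\mathrm{im}(\partial_2|_{t^m})$ is spanned by the single relation $[0,m,0]\approx 0$ together with the four-term relations $\partial_2(t^p\otimes t^q\otimes t^r\otimes t^s)$ for $p+q+r+s=m$, all with integer coefficients independent of $k$. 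Because $\mathrm{coker}$ is right exact it commutes with $-\otimes_{\Z}k$, so it suffices to compute $HS_1(\Z[t])_{t^m}=\mathrm{coker}(\partial_2|_{t^m})$ over $\Z$; the cases $m=0$ (where $\partial_2(1)=[0,0,0]$ already kills the rank-one group) and $m=1$ (a quick check with three generators) give $0$, so from now on assume $m\ge 2$.

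The first step is the upper bound: $\mathrm{coker}(\partial_2|_{t^m})$ is cyclic and annihilated by $2$. Applying $\partial_2$ to $t^m\otimes 1\otimes 1\otimes 1$ gives $2[m,0,0]+2[0,m,0]\approx 0$, hence $2[m,0,0]\approx 0$; more systematically, the four-term relations in which some of $p,q,r,s$ vanish should collapse every basis vector $[a,b,c]$ onto a common multiple of a single distinguished class $v_m$ (for small $m$ one can take $v_m=[1,m{-}2,1]$, e.g.\ $v_2=[1,0,1]$ and $v_3=[1,1,1]$), and further relations force $2v_m\approx 0$. This is a normal-form argument in the spirit of the thirteen-step reduction used to prove Theorem~\ref{thm.partial_resolution}, but should be substantially shorter since here we work over the full ground ring on a single layer. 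Making this reduction uniform in $m$ is the first real task.

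The main obstacle is the matching lower bound: one must show $v_m\neq 0$, i.e.\ exhibit a surjection $\mathrm{coker}(\partial_2|_{t^m})\twoheadrightarrow\Z/2\Z$, equivalently a $\Z/2\Z$-valued functional on the composition basis that annihilates $[0,m,0]$ and every four-term relation but not $v_m$. The difficulty is that the shape of such a functional varies with $m$: the \`\`exactly two of the three entries are nonzero'' indicator works for $m=2$ but already fails the relation $\partial_2(t\otimes t\otimes t\otimes 1)$ when $m=3$ (there the functional reading off the coefficient of $[1,1,1]$ works instead). I would try to handle this either by organizing an explicit $\mathbb{F}_2$-rank computation for $\partial_2|_{t^m}$ (showing $\dim_{\mathbb{F}_2}\mathrm{coker}=1$ for all $m\ge 2$), or via the homotopy-theoretic reformulation: by Lemma~\ref{lem.HS_tensoralg}, $HS_*(k[t])\cong\bigoplus_{n\ge -1}H_*(\widetilde X_n;k)$ with $\widetilde X_n=N([n]\setminus\Delta S)/\Sigma_{n+1}$, so $HS_1(k[t])_{t^m}\cong H_1\bigl(N([m{-}1]\setminus\Delta S)/\Sigma_m;k\bigr)$ --- the $H_1$ of the (degreewise) orbit complex of a contractible $\Sigma_m$-simplicial set, which might be amenable to a direct combinatorial or covering-space argument --- or via the identification of the $t^m$-layer with that of the finite-dimensional cyclic-monoid algebra $k[M^{m+2}_{m+1}]$, using the spectral sequences of Chapters~\ref{chap.specseq}--\ref{chap.spec_seq2} together with the connectivity estimate of Theorem~\ref{thm.connectivity}.

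Granting both bounds, $HS_1(\Z[t])_{t^m}\cong\Z/2\Z$ for $m\ge 2$ and $HS_1(\Z[t])_{t^m}=0$ for $m=0,1$; tensoring with $k$ (right-exactness of the cokernel, or Proposition~\ref{prop.HS-A-B}) yields $HS_1(k[t])_{t^m}\cong k/2k$ for $m\ge 2$ and $0$ for $m=0,1$, which is the conjecture.
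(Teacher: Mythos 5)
This statement is a \emph{conjecture} in the paper, not a theorem: the paper offers no proof of it, only the remark that it has been ``verified up to $m=18$, in the case $k=\Z$'' by machine, so there is no author's argument to compare your proposal against. With that caveat, your reduction is correct as far as it goes. Commutativity of $k[t]$ does kill $\partial_1$ on the nose, so $HS_1(k[t])_{t^m}=\mathrm{coker}(\partial_2|_{t^m})$; the layer $(A^{\otimes n})_{t^m}$ is indeed free on weak compositions of $m$ into $n$ parts; the $m=0,1$ cases are a finite and easily checked linear-algebra verification (for $m=0$ the relation $\partial_2(1)=[0,0,0]$ kills the rank-one module, for $m=1$ the images of $\partial_2$ on the four compositions of $1$ together with $[0,1,0]$ span everything); and since the matrices of $\partial_2|_{t^m}$ have entries in $\Z$, right-exactness of $\mathrm{coker}$ legitimately reduces the statement to $k=\Z$ and then recovers $k/2k$ by tensoring.

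What the proposal does not supply is precisely the content of the conjecture, and you say so yourself. The upper bound --- that the four-term relations collapse every $[a,b,c]$ with $a+b+c=m$ onto multiples of a single class $v_m$, and that $2v_m\approx 0$ --- is announced but not proved; you flag that ``making this reduction uniform in $m$ is the first real task.'' The lower bound --- a $\Z/2\Z$-valued functional on the composition basis annihilating $[0,m,0]$ and all images $\partial_2(t^p\otimes t^q\otimes t^r\otimes t^s)$ while not annihilating $v_m$, or equivalently $\dim_{\mathbb{F}_2}\mathrm{coker}(\partial_2|_{t^m}\otimes\mathbb{F}_2)=1$ for all $m\ge 2$ --- is likewise only a plan, and you correctly observe that the naive candidate functional already changes shape between $m=2$ and $m=3$, so there is no obvious uniform construction. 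The alternative routes you float (the orbit complex $N([m{-}1]\setminus\Delta S)/\Sigma_m$ via Lemma~\ref{lem.HS_tensoralg}, or the identification with the $t^m$-layer of $k[M^{m+2}_{m+1}]$ and the spectral sequences of Chapters~\ref{chap.specseq}--\ref{chap.spec_seq2}) are the same devices the paper is already using as evidence for the conjecture; none of them is carried through to a proof here. In short: the setup is right, the small cases are right, the general-$k$ reduction is right, but the proposal stops at exactly the point where the conjecture is open, and resolving it would be a genuine contribution rather than a routine fill-in.
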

This conjecture has been verified up to $m = 18$, in the case $k = \Z$.

\section{2-torsion in $HS_1$}\label{sec.2-torsion}         %

The occurrence of 2-torsion in $HS_1(A)$ for the examples considered 
in sections~\ref{sec.low-dim-comp} and \ref{sec.splittings} comes as no surprise,
based on Thm.~\ref{thm.HS_group}.  First consider the following chain
of isomorphisms:
\[
  \pi_{2}^s(B\Gamma) = \pi_2\big(\Omega^{\infty}S^{\infty}(B\Gamma)\big)
  \cong \pi_{1}\big(\Omega\Omega^{\infty}S^{\infty}(B\Gamma)\big) 
\]
\[
  \cong \pi_{1}\big(\Omega_0\Omega^{\infty}S^{\infty}(B\Gamma)\big) 
  \stackrel{h}{\to} H_1\big(\Omega_0\Omega^{\infty}S^{\infty}(B\Gamma)\big).
\]
Here, $\Omega_0\Omega^{\infty}S^{\infty}(B\Gamma)$ denotes the component of the
constant loop, and $h$ is the Hurewicz homomorphism,
which is an isomorphism since
$\Omega_0\Omega^{\infty}S^{\infty}(B\Gamma)$ is path-connected and $\pi_1$ is
abelian.  On the other hand, by Thm.~\ref{thm.HS_group},
\[
  HS_1(k[\Gamma]) \cong H_1\big(\Omega\Omega^{\infty}S^{\infty}(B\Gamma); k\big)
  \cong H_1\big(\Omega\Omega^{\infty}S^{\infty}(B\Gamma)\big) \otimes k.
\]
(All tensor products will be over
$\mathbb{Z}$ in this section.)  Now $\Omega\Omega^{\infty}S^{\infty}(B\Gamma)$
consists of isomorphic copies of $\Omega_0\Omega^{\infty}S^{\infty}(B\Gamma)$, one for
each element of $\Gamma/[\Gamma, \Gamma]$, so we may write
\[
  H_1\big(\Omega\Omega^{\infty}S^{\infty}(B\Gamma)\big) \otimes k
  \cong
  H_1\big(\Omega_0\Omega^{\infty}S^{\infty}(B\Gamma)\big) \otimes
  k\big[ \Gamma/[\Gamma, \Gamma] \big].
\]
Thus, we obtain the result:
\begin{cor}\label{cor.stablegrouphomotopy}
  If $\Gamma$ is a group, then
  \[
    HS_1(k[\Gamma]) \cong \pi_{2}^s(B\Gamma) \otimes 
    k\big[ \Gamma/[\Gamma, \Gamma] \big].
  \]
\end{cor}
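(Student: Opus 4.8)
The plan is to derive the corollary directly from Fiedorowicz's computation $HS_*(k[\Gamma]) \cong H_*\big(\Omega\Omega^{\infty}S^{\infty}(B\Gamma); k\big)$ (Theorem~\ref{thm.HS_group}), combined with three standard facts about the space $\Omega\Omega^{\infty}S^{\infty}(B\Gamma)$: a universal-coefficient reduction in homological degree one, a decomposition into path components, and the Hurewicz theorem applied to the basepoint component. Write $Y = \Omega\Omega^{\infty}S^{\infty}(B\Gamma)$ for brevity throughout.

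First I would pass to integral coefficients. Since the singular complex of $Y$ is a complex of free $\Z$-modules and $H_0(Y;\Z)$ is free abelian, the universal coefficient theorem over $\Z$ gives $H_1(Y;k) \cong H_1(Y;\Z)\otimes_{\Z}k$ with vanishing $\mathrm{Tor}_1^{\Z}\big(H_0(Y;\Z),k\big)$, so $HS_1(k[\Gamma]) \cong H_1(Y;\Z)\otimes_{\Z}k$. Next I would analyze $\pi_0(Y) = \pi_1\big(\Omega^{\infty}S^{\infty}(B\Gamma)\big) = \pi_1^s(B\Gamma)$. Because $B\Gamma$ is connected, $\Sigma B\Gamma$ is simply connected, and the combination of Freudenthal's theorem with the ordinary Hurewicz theorem identifies $\pi_1^s(B\Gamma) = \pi_2^s(\Sigma B\Gamma) \cong \pi_2(\Sigma B\Gamma) \cong H_2(\Sigma B\Gamma) \cong H_1(B\Gamma) = \Gamma/[\Gamma,\Gamma]$. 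Since $\Omega^{\infty}S^{\infty}(B\Gamma)$ is an infinite loop space, $Y$ is a loop space, hence a group-like H-space, so all its path components are homotopy equivalent to the basepoint component $\Omega_0\Omega^{\infty}S^{\infty}(B\Gamma)$; thus $Y$ is homotopy equivalent to a disjoint union of copies of $\Omega_0\Omega^{\infty}S^{\infty}(B\Gamma)$ indexed by the set $\Gamma/[\Gamma,\Gamma]$, and taking $H_1$ of a disjoint union of path-connected spaces yields $H_1(Y;\Z) \cong H_1\big(\Omega_0\Omega^{\infty}S^{\infty}(B\Gamma);\Z\big)\otimes_{\Z}\Z[\Gamma/[\Gamma,\Gamma]]$.

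Finally I would identify $H_1$ of the basepoint component. The space $\Omega_0\Omega^{\infty}S^{\infty}(B\Gamma)$ is path connected and is itself a loop space, hence an H-space, so its fundamental group is abelian by the Eckmann--Hilton argument; the Hurewicz homomorphism $\pi_1 \to H_1$ is therefore an isomorphism. Moreover $\pi_1\big(\Omega_0\Omega^{\infty}S^{\infty}(B\Gamma)\big) = \pi_1\big(\Omega\Omega^{\infty}S^{\infty}(B\Gamma)\big) = \pi_2\big(\Omega^{\infty}S^{\infty}(B\Gamma)\big) = \pi_2^s(B\Gamma)$. Substituting, $H_1(Y;\Z) \cong \pi_2^s(B\Gamma)\otimes_{\Z}\Z[\Gamma/[\Gamma,\Gamma]]$, and tensoring with $k$ over $\Z$ and using $\Z[\Gamma/[\Gamma,\Gamma]]\otimes_{\Z}k \cong k[\Gamma/[\Gamma,\Gamma]]$ gives $HS_1(k[\Gamma]) \cong \pi_2^s(B\Gamma)\otimes_{\Z}k[\Gamma/[\Gamma,\Gamma]]$, as claimed.

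The tensor-product bookkeeping is routine, but the step I expect to require the most care to state cleanly is the identification $\pi_1^s(B\Gamma)\cong \Gamma/[\Gamma,\Gamma]$ that controls the indexing set of path components: one must invoke the correct low-degree stable Hurewicz statement (the first stable stem of a \emph{connected space} is its first integral homology, not the first stable stem of $S^0$) and be precise about the suspension isomorphisms; a close second is keeping the two $\Z$-tensor factors separate so the final $k$-linearization produces $k[\Gamma/[\Gamma,\Gamma]]$ and not a spurious extra copy of $k$.
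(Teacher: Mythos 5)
Your argument follows the same route as the paper's: invoke Fiedorowicz's identification $HS_*(k[\Gamma]) \cong H_*\big(\Omega\Omega^\infty S^\infty(B\Gamma);k\big)$, reduce to integral coefficients via a universal coefficient argument, decompose the loop space into path components indexed by $\Gamma/[\Gamma,\Gamma]$, and apply the Hurewicz theorem to the basepoint component (where $\pi_1$ is abelian by the H-space structure). The paper simply asserts that $\Omega\Omega^\infty S^\infty(B\Gamma)$ is a disjoint union of copies of the basepoint component indexed by $\Gamma/[\Gamma,\Gamma]$ and that $H_1(-;k)\cong H_1(-;\Z)\otimes k$, whereas you supply the Freudenthal--Hurewicz computation of $\pi_1^s(B\Gamma)\cong\Gamma/[\Gamma,\Gamma]$ and the explicit $\mathrm{Tor}$-vanishing justification; this is welcome added detail, not a different proof.
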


Now, by results of Brown and Loday~\cite{BL}, if $\Gamma$ is abelian, then
$\pi_2^s(B\Gamma)$ is the reduced tensor square.  That is,
\[
  \pi_2^s(B\Gamma) = \Gamma\, \widetilde{\wedge} \,\Gamma = 
  \left(\Gamma \otimes \Gamma\right)/\approx,
\]
where $g \otimes h \approx - h \otimes g$ for all $g, h \in \Gamma$.  (This construction
is notated with multiplicative group action in~\cite{BL}, since they deal with the
more general case of non-abelian groups.) So in particular,
if $\Gamma = C_n$, the cyclic group of order $n$, we have
\[
  \pi_2^s(BC_n) = \left\{\begin{array}{ll}
                       \Z/2\Z & \textrm{$n$ even.}\\
                       0 & \textrm{$n$ odd.}
                  \end{array}\right.
\]
\begin{cor}\label{cor.HS_1-C_n}
  \[
    HS_1(k[C_n]) = \left\{\begin{array}{ll}
                       (\Z/2\Z)^n & \textrm{$n$ even.}\\
                       0 & \textrm{$n$ odd.}
                  \end{array}\right.
  \]
\end{cor}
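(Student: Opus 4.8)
The plan is to obtain this corollary as a direct specialization of Corollary~\ref{cor.stablegrouphomotopy}, so that the only actual computation left is that of the stable homotopy group $\pi_2^s(BC_n)$, equivalently the reduced tensor square of a finite cyclic group. First I would note that $C_n$ is abelian, so $[C_n,C_n]$ is trivial and $C_n/[C_n,C_n]=C_n$; consequently the group ring $k[C_n/[C_n,C_n]]=k[C_n]$ is free of rank $n$ as a $k$-module. Feeding this into Corollary~\ref{cor.stablegrouphomotopy} gives an isomorphism
\[
  HS_1(k[C_n]) \;\cong\; \pi_2^s(BC_n) \otimes k[C_n] \;\cong\; \big(\pi_2^s(BC_n)\otimes k\big)^{\oplus n},
\]
and it remains to identify $\pi_2^s(BC_n)$.

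Next I would invoke the Brown--Loday description cited above: since $C_n$ is abelian, $\pi_2^s(BC_n)\cong C_n\,\widetilde{\wedge}\,C_n$, the reduced tensor square $(C_n\otimes C_n)/{\approx}$ with relations $g\otimes h\approx -h\otimes g$. Writing $C_n$ additively as $\Z/n\Z$ with generator $g$, one has $C_n\otimes C_n\cong\Z/n\Z$, generated by $g\otimes g$, and the defining relation applied to the pair $(g,g)$ gives $g\otimes g\approx -(g\otimes g)$, i.e. $2(g\otimes g)=0$ in the reduced square; no further relations are imposed on this single generator. Hence $C_n\,\widetilde{\wedge}\,C_n\cong \Z/\gcd(n,2)\Z$, which is $\Z/2\Z$ when $n$ is even and $0$ when $n$ is odd (as already recorded in the display preceding the corollary). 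Combining with the first step, $HS_1(k[C_n])\cong(\Z/2\Z\otimes k)^{\oplus n}$ for $n$ even and $HS_1(k[C_n])=0$ for $n$ odd.

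I do not anticipate a genuine obstacle here: all the substantive input — Fiedorowicz's identification of $HS_*(k[\Gamma])$ with the homology of $\Omega\Omega^\infty S^\infty(B\Gamma)$, the Hurewicz and loop-space manipulations yielding Corollary~\ref{cor.stablegrouphomotopy}, and the Brown--Loday theorem — has already been established or cited, and what is left is the elementary reduced-tensor-square calculation above. The only point that deserves a word of care is the coefficient ring: writing the answer as $(\Z/2\Z)^n$ presumes $k=\Z$ (equivalently, one reads $\Z/2\Z$ as $\Z/2\Z\otimes_\Z k = k/2k$), consistent with the explicit computations of Section~\ref{sec.low-dim-comp}, and I would flag this convention explicitly rather than leave it implicit.
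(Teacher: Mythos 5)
Your proof is correct and follows the same route as the paper: apply Corollary~\ref{cor.stablegrouphomotopy} together with the fact that $C_n$ is abelian (so $k[C_n/[C_n,C_n]]\cong k^n$) and the Brown--Loday computation of $\pi_2^s(BC_n)$ that the paper records just before the corollary. Your explicit derivation of $C_n\,\widetilde{\wedge}\,C_n\cong\Z/\gcd(n,2)\Z$ and your remark that $(\Z/2\Z)^n$ should be read as $(k/2k)^n$ are both sound and make explicit what the paper leaves implicit.
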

\begin{proof}
  The result follows from Cor.~\ref{cor.stablegrouphomotopy}, as
  $k\big[ C_n/[C_n, C_n] \big] \cong k[C_n] \cong k^n$, as $k$-module.
\end{proof}


\section{Relations to Cyclic Homology}\label{sec.cyc-homology}  %

The relation between the symmetric bar construction and the cyclic bar
construction arising from the chain of inclusions~(\ref{eq.delta-C-S-chain})
gives rise to a natural map
\begin{equation}\label{eq.HCtoHS}
  HC_*(A) \to HS_*(A)
\end{equation}
Indeed, by remark~\ref{rmk.HC},
we may define cyclic homology via:
\[
  HC_*(A) = \mathrm{Tor}_*^{\Delta C}( \underline{k}, B_*^{sym}A ).
\]

Using the partial complex of Thm.~\ref{thm.partial_resolution}, and an
analogous one for computing cyclic homology (c.f.~\cite{L}, p. 59), the
map~(\ref{eq.HCtoHS}) for degrees $0$ and $1$ is induced by the following
partial chain map:
\[
  \begin{diagram}
    \node{ 0 }
    \node{ A }
    \arrow{w}
    \arrow{s,r}{ \gamma_0 = \mathrm{id} }
    \node{ A \otimes A }
    \arrow{w,t}{ \partial_1^C }
    \arrow{s,r}{ \gamma_1 }
    \node{ A^{\otimes 3} \oplus A }
    \arrow{w,t}{ \partial_2^C }
    \arrow{s,r}{ \gamma_2 }
    \\
    \node{ 0 }
    \node{ A }
    \arrow{w}
    \node{ A^{\otimes 3} }
    \arrow{w,t}{ \partial_1^S }
    \node{ A^{\otimes 4} \oplus A }
    \arrow{w,t}{ \partial_2^S }
  \end{diagram}
\] 
In this diagram, the boundary maps in the upper row are defined as follows:
\[
  \partial_1^C : a \otimes b \mapsto ab - ba
\]
\[
  \partial_2^C : \left\{\begin{array}{ll}
                   a \otimes b \otimes c &\mapsto ab \otimes c - a \otimes bc + ca \otimes b\\
                   a &\mapsto 1 \otimes a - a \otimes 1
                 \end{array}\right.
\]
The boundary maps in the lower row are defined as in Thm.~\ref{thm.partial_resolution}.
\[
  \partial_1^S : a \otimes b \otimes c \mapsto abc - cba
\]
\[
  \partial_2^S : \left\{\begin{array}{ll}
                   a \otimes b \otimes c \otimes d &\mapsto ab \otimes c \otimes d 
                     - d \otimes ca \otimes b + bca \otimes 1 \otimes d
                     + d \otimes bc \otimes a\\
                   a &\mapsto 1 \otimes a \otimes 1
                 \end{array}\right.
\]
The partial chain map is given in degree 1 by $\gamma_1(a \otimes b) := a \otimes b 
\otimes 1$.  In degree 2, $\gamma_2$ is defined on the summand $A^{\otimes 3}$ via
\[
  a \otimes b \otimes c \mapsto (a \otimes b \otimes c \otimes 1
                     - 1 \otimes a \otimes bc \otimes 1 
                     + 1 \otimes ca \otimes b \otimes 1
                     + 1 \otimes 1 \otimes abc \otimes 1
                     - b \otimes ca \otimes 1 \otimes 1)
                     - 2abc - cab,
\]
and on the summand $A$ via
\[
  a \mapsto (-1 \otimes 1 \otimes a \otimes 1) + (4a).
\]

\backmatter

\appendix

\chapter{SCRIPTS USED FOR COMPUTER CALCULATIONS}\label{app.comp}

The computer algebra systems \verb+GAP+, \verb+Octave+ and \verb+Fermat+
were used to verify proposed theorems and also to obtain some concrete
computations of symmetric homology for some small algebras.  Here is a link to 
a tar-file of the scripts that were created in the course of writing this
dissertation as well as the \LaTeX source of this dissertation.

\begin{center}
  \url{http://arxiv.org/e-print/0807.4521v1}
\end{center}

The tar-file contains the following files:
\begin{itemize}
  \item \verb+Basic.g+ \quad - Some elementary functions, necessary for some 
  functions in \verb+DeltaS.g+
  
  \item \verb+HomAlg.g+ \quad - Homological Algebra functions, such as computation
  of homology groups for chain complexes.
  
  \item \verb+Fermat.g+ \quad - Functions necessary to invoke \verb+Fermat+ 
  for fast sparse matrix computations.
  
  \item \verb+fermattogap+, \verb+gaptofermat+ \quad - Auxiliary text files for
  use when invoking \verb+Fermat+ from \verb+GAP+.

  \item \verb+DeltaS.g+ \quad - This is the main repository of scripts used to
  compute various quantities associated with the category $\Delta S$, including
  $HS_1(A)$ for finite-dimensional algebras $A$.
\end{itemize}
  
In order to use the functions of \verb+DeltaS.g+, simply copy the above files into
the working directory (such as \verb+~/gap/+), invoke \verb+GAP+, then read
in \verb+DeltaS.g+ at the prompt.  The dependent modules will automatically be
loaded (hence they must be present in the same directory as \verb+DeltaS.g+).
Note, most of the computations involving homology require substantial memory
to run.  I recommend calling \verb+GAP+ with the command line option 
``\verb+-o +{\it mem}'', where {\it mem} is the amount of memory to be allocated
to this instance of \verb+GAP+.  All computations done in this dissertation can
be accomplished by allocating 20 gigabytes of memory.  The following provides a
few examples of using the functions of \verb+DeltaS.g+

\begin{verbatim}
[ault@math gap]$ gap -o 20g

gap> Read("DeltaS.g");
gap> 
gap> ## Number of morphisms [6] --> [4]
gap> SizeDeltaS( 6, 4 );
1663200
gap> 
gap> ## Generate the set of morphisms of Delta S, [2] --> [2]
gap> EnumerateDeltaS( 2, 2 );
[ [ [ 0, 1, 2 ], [  ], [  ] ], [ [ 0, 2, 1 ], [  ], [  ] ], 
  [ [ 1, 0, 2 ], [  ], [  ] ], [ [ 1, 2, 0 ], [  ], [  ] ], 
  [ [ 2, 0, 1 ], [  ], [  ] ], [ [ 2, 1, 0 ], [  ], [  ] ], 
  [ [ 0, 1 ], [ 2 ], [  ] ], [ [ 0, 2 ], [ 1 ], [  ] ], 
  [ [ 1, 0 ], [ 2 ], [  ] ], [ [ 1, 2 ], [ 0 ], [  ] ], 
  [ [ 2, 0 ], [ 1 ], [  ] ], [ [ 2, 1 ], [ 0 ], [  ] ], 
  [ [ 0, 1 ], [  ], [ 2 ] ], [ [ 0, 2 ], [  ], [ 1 ] ], 
  [ [ 1, 0 ], [  ], [ 2 ] ], [ [ 1, 2 ], [  ], [ 0 ] ], 
  [ [ 2, 0 ], [  ], [ 1 ] ], [ [ 2, 1 ], [  ], [ 0 ] ], 
  [ [ 0 ], [ 1, 2 ], [  ] ], [ [ 0 ], [ 2, 1 ], [  ] ], 
  [ [ 1 ], [ 0, 2 ], [  ] ], [ [ 1 ], [ 2, 0 ], [  ] ], 
  [ [ 2 ], [ 0, 1 ], [  ] ], [ [ 2 ], [ 1, 0 ], [  ] ], 
  [ [ 0 ], [ 1 ], [ 2 ] ], [ [ 0 ], [ 2 ], [ 1 ] ], [ [ 1 ], [ 0 ], [ 2 ] ], 
  [ [ 1 ], [ 2 ], [ 0 ] ], [ [ 2 ], [ 0 ], [ 1 ] ], [ [ 2 ], [ 1 ], [ 0 ] ], 
  [ [ 0 ], [  ], [ 1, 2 ] ], [ [ 0 ], [  ], [ 2, 1 ] ], 
  [ [ 1 ], [  ], [ 0, 2 ] ], [ [ 1 ], [  ], [ 2, 0 ] ], 
  [ [ 2 ], [  ], [ 0, 1 ] ], [ [ 2 ], [  ], [ 1, 0 ] ], 
  [ [  ], [ 0, 1, 2 ], [  ] ], [ [  ], [ 0, 2, 1 ], [  ] ], 
  [ [  ], [ 1, 0, 2 ], [  ] ], [ [  ], [ 1, 2, 0 ], [  ] ], 
  [ [  ], [ 2, 0, 1 ], [  ] ], [ [  ], [ 2, 1, 0 ], [  ] ], 
  [ [  ], [ 0, 1 ], [ 2 ] ], [ [  ], [ 0, 2 ], [ 1 ] ], 
  [ [  ], [ 1, 0 ], [ 2 ] ], [ [  ], [ 1, 2 ], [ 0 ] ], 
  [ [  ], [ 2, 0 ], [ 1 ] ], [ [  ], [ 2, 1 ], [ 0 ] ], 
  [ [  ], [ 0 ], [ 1, 2 ] ], [ [  ], [ 0 ], [ 2, 1 ] ], 
  [ [  ], [ 1 ], [ 0, 2 ] ], [ [  ], [ 1 ], [ 2, 0 ] ], 
  [ [  ], [ 2 ], [ 0, 1 ] ], [ [  ], [ 2 ], [ 1, 0 ] ], 
  [ [  ], [  ], [ 0, 1, 2 ] ], [ [  ], [  ], [ 0, 2, 1 ] ], 
  [ [  ], [  ], [ 1, 0, 2 ] ], [ [  ], [  ], [ 1, 2, 0 ] ], 
  [ [  ], [  ], [ 2, 0, 1 ] ], [ [  ], [  ], [ 2, 1, 0 ] ] ]
gap> 
gap> ## Generate only the epimorphisms [2] -->> [2]
gap> EnumerateDeltaS( 2, 2 : epi ); 
[ [ [ 0 ], [ 1 ], [ 2 ] ], [ [ 0 ], [ 2 ], [ 1 ] ], 
  [ [ 1 ], [ 0 ], [ 2 ] ], [ [ 1 ], [ 2 ], [ 0 ] ], 
  [ [ 2 ], [ 0 ], [ 1 ] ], [ [ 2 ], [ 1 ], [ 0 ] ] ]
gap> 
gap> ## Compose two morphisms of Delta S.
gap> a := Random(EnumerateDeltaS(4,3));
[ [ 0 ], [ 2, 4, 1 ], [  ], [ 3 ] ]
gap> b := Random(EnumerateDeltaS(3,2));
[ [  ], [ 3, 0, 2 ], [ 1 ] ]
gap> MultDeltaS(b, a);
[ [  ], [ 3, 0 ], [ 2, 4, 1 ] ]
gap> MultDeltaS(a, b);
Maps incomposeable
[  ]
gap> 
gap> ## Examples of using morphisms of Delta S to act on simple tensors
gap> A := TruncPolyAlg([3,2]);
<algebra of dimension 6 over Rationals>
gap> ## TruncPolyAlg is defined in Basic.g
gap> ##  TruncPolyAlg([i_1, i_2, ..., i_n]) is generated by
gap> ##  x_1, x_2, ..., x_n, under the relation (x_j)^(i_j) = 0.
gap> g := GeneratorsOfLeftModule(A);
[ X^[ 0, 0 ], X^[ 0, 1 ], X^[ 1, 0 ], X^[ 1, 1 ], X^[ 2, 0 ], X^[ 2, 1 ] ]
gap> x := g[2]; y := g[3];
X^[ 0, 1 ]
X^[ 1, 0 ]
gap> v := [ x*y, 1, y^2 ];
gap> ## v represents the simple tensor  xy \otimes 1 \otimes y^2.
[ X^[ 1, 1 ], 1, X^[ 2, 0 ] ]
gap> ActByDeltaS( v, [[2], [], [0], [1]] );
[ X^[ 2, 0 ], 1, X^[ 1, 1 ], 1 ]
gap> ActByDeltaS( v, [[2], [0,1]] );       
[ X^[ 2, 0 ], X^[ 1, 1 ] ]
gap> ActByDeltaS( v, [[2,0], [1]] );  
[ 0*X^[ 0, 0 ], 1 ]
gap> 
gap> ## Symmetric monoidal product on DeltaS_+
gap> a := Random(EnumerateDeltaS(4,2));
[ [  ], [ 2, 1, 0 ], [ 3, 4 ] ]
gap> b := Random(EnumerateDeltaS(3,3));
[ [  ], [  ], [  ], [ 1, 3, 2, 0 ] ]
gap> MonoidProductDeltaS(a, b);
[ [  ], [ 2, 1, 0 ], [ 3, 4 ], [  ], [  ], [  ], [ 6, 8, 7, 5 ] ]
gap> MonoidProductDeltaS(b, a);
[ [  ], [  ], [  ], [ 1, 3, 2, 0 ], [  ], [ 6, 5, 4 ], [ 7, 8 ] ]
gap> MonoidProductDeltaS(a, []);
[ [  ], [ 2, 1, 0 ], [ 3, 4 ] ]
gap> 
gap> ## Symmetric Homology of the algebra A, in degrees 0 and 1.
gap> SymHomUnitalAlg(A);
[ [ 0, 0, 0, 0, 0, 0 ], [ 2, 2, 2, 2, 2, 2, 2, 2, 2, 2, 2, 6, 0, 0 ] ]
gap> ## '0' represents a factor of Z, while a non-zero p represents
gap> ## a factor of Z/pZ.
gap> 
gap> ## Using layers to compute symmetric homology
gap> C2 := CyclicGroup(2);                                       
<pc group of size 2 with 1 generators>
gap> A := GroupRing(Rationals, DirectProduct(C2, C2));
<algebra-with-one over Rationals, with 2 generators>
gap> ## First, a direct computation without layers:
gap> SymHomUnitalAlg(A);
[ [ 0, 0, 0, 0 ], [ 2, 2, 2, 2, 2, 2, 2, 2, 2, 2, 2, 2 ] ]
gap> ## Next, compute HS_0(A)_u and HS_1(A)_u for each generator u.
gap> g := GeneratorsOfLeftModule(A);
[ (1)*<identity> of ..., (1)*f2, (1)*f1, (1)*f1*f2 ]
gap> SymHomUnitalAlgLayered(A, g[1]);
[ [ 0 ], [ 2, 2, 2 ] ]
gap> SymHomUnitalAlgLayered(A, g[2]);
[ [ 0 ], [ 2, 2, 2 ] ]
gap> SymHomUnitalAlgLayered(A, g[3]);
[ [ 0 ], [ 2, 2, 2 ] ]
gap> SymHomUnitalAlgLayered(A, g[4]);
[ [ 0 ], [ 2, 2, 2 ] ]
gap> ## Computing HS_1( Z[t] ) by layers:
gap> SymHomFreeMonoid(0,10);
HS_1(k[t])_{t^0} :  [  ]
HS_1(k[t])_{t^1} :  [  ]
HS_1(k[t])_{t^2} :  [ 2 ]
HS_1(k[t])_{t^3} :  [ 2 ]
HS_1(k[t])_{t^4} :  [ 2 ]
HS_1(k[t])_{t^5} :  [ 2 ]
HS_1(k[t])_{t^6} :  [ 2 ]
HS_1(k[t])_{t^7} :  [ 2 ]
HS_1(k[t])_{t^8} :  [ 2 ]
HS_1(k[t])_{t^9} :  [ 2 ]
HS_1(k[t])_{t^10} :  [ 2 ]
gap> ## Poincare polynomial of Sym_*^{(p)} for small p.
gap> ##  There is a check for torsion, using a call to Fermat
gap> ##  to find Smith Normal Form of the differential matrices.
gap> PoincarePolynomialSymComplex(2);
C_0  Dimension: 1
C_1  Dimension: 6
C_2  Dimension: 6
D_1
SNF(D_1)
D_2
SNF(D_2)
2*t^2+t
gap> PoincarePolynomialSymComplex(5);
C_0  Dimension: 1
C_1  Dimension: 30
C_2  Dimension: 300
C_3  Dimension: 1200
C_4  Dimension: 1800
C_5  Dimension: 720
D_1
SNF(D_1)
D_2
SNF(D_2)
D_3
SNF(D_3)
D_4
SNF(D_4)
D_5
SNF(D_5)
120*t^5+272*t^4+t^3
\end{verbatim}

\bibliographystyle{plain}
\bibliography{refs}

\end{document}